\newtheorem{theorem}{Theorem}[chapter]
\newtheorem{Thm}[theorem]{Theorem}
\newtheorem{lemma}[theorem]{Lemma}
\newtheorem{Lem}[theorem]{Lemma}
\newtheorem{corollary}[theorem]{Corollary}
\newtheorem{proposition}[theorem]{Proposition}
\newtheorem{Prop}[theorem]{Proposition}
\newtheorem{Conj}[theorem]{Conjecture}
\newtheorem{conjecture}[theorem]{Conjecture}
\newtheorem{question}[theorem]{Question}
\newtheorem*{claim}{Claim}
\newtheorem{claim-swm}{Claim}
\newtheorem{case-wm}{Case}
\newtheorem{case-He}{Case}
\newtheorem{case-sc}{Case}
\newtheorem{case-ppm}{Case}
\newtheorem{case-fan}{Case}
\newtheorem{case-mpm}{Case}
\newtheorem{case-Wa}{Case}
\newtheorem{case-CG}{Case}
\newtheorem{case-A5}{Case}
\newtheorem{case-A34}{Case}
\newtheorem{subcase-A34-2}{Subcase}[case-A34]
\newtheorem{case-ar}{Case}
\theoremstyle{definition}
\newtheorem{definition}[theorem]{Definition}
\newtheorem{example}[theorem]{Example}
\newtheorem{Rem}[theorem]{Remark}
\newtheorem{remark}[theorem]{Remark}
\newcommand{\ta}{\ensuremath{\widetilde{a}}\xspace}
\newcommand{\tb}{\ensuremath{\widetilde{b}}\xspace}
\newcommand{\tc}{\ensuremath{\widetilde{c}}\xspace}
\newcommand{\td}{\ensuremath{\widetilde{d}}\xspace}
\newcommand{\te}{\ensuremath{\widetilde{e}}\xspace}
\newcommand{\tu}{\ensuremath{\widetilde{u}}\xspace}
\newcommand{\tv}{\ensuremath{\widetilde{v}}\xspace}
\newcommand{\tw}{\ensuremath{\widetilde{w}}\xspace}
\newcommand{\tx}{\ensuremath{\widetilde{x}}\xspace}
\newcommand{\ty}{\ensuremath{\widetilde{y}}\xspace}
\newcommand{\tz}{\ensuremath{\widetilde{z}}\xspace}
\newcommand{\ts}{\ensuremath{\widetilde{s}}\xspace}
\newcommand{\tB}{\ensuremath{\widetilde{B}}\xspace}
\newcommand{\tS}{\ensuremath{\widetilde{S}}\xspace}
\newcommand{\tG}{\ensuremath{\widetilde{G}}\xspace}
\newcommand{\tX}{\ensuremath{\widetilde{X}}\xspace}
\newcommand{\tK}{\ensuremath{\widetilde{K}}\xspace}
\newcommand{\mr}{\mathrm}
\newcommand{\tr}{_{\scalebox{0.53}{$\triangle$}}}
\newcommand{\trsq}{_{\scalebox{0.53}{$\triangle\square$}}}
\newcommand{\sq}{_{\scalebox{0.53}{$\square$}}}
\newcommand{\ar}{\mathop{\rm Area}\xspace}
\newcommand{\lgate}{{\langle\langle}}
\newcommand{\rgate}{{\rangle\rangle}}
\numberwithin{section}{chapter}
\numberwithin{equation}{chapter}
\numberwithin{figure}{chapter}
\newcommand{\RR}{\mathbb R}
\newcommand{\ZZ}{\mathbb Z}
\begin{document}

\title[Weakly Modular Graphs]{Weakly Modular Graphs \\ and Nonpositive Curvature}

\begin{abstract}
This article investigates structural, geometrical, and topological
characterizations and properties of weakly modular graphs and of cell complexes derived from them.
The unifying themes of  our investigation are various ``nonpositive curvature" and ``local-to-global'' properties and characterizations
of weakly modular graphs  and their subclasses.
Weakly modular graphs  have been introduced
as a far-reaching common generalization of median graphs (and more generally,
of modular and orientable modular graphs), Helly graphs, bridged graphs, and dual polar graphs occurring under different disguises
($1$--skeletons, collinearity graphs, covering graphs, domains, etc.) in several seemingly-unrelated fields of mathematics:
\begin{itemize}
\item Metric graph theory
\item Geometric group theory
\item Incidence geometries and buildings
\item Theoretical computer science and combinatorial optimization
\end{itemize}
We give a local-to-global characterization of weakly modular graphs and their subclasses in terms of simple connectedness of associated
triangle-square complexes and specific local combinatorial
conditions. In particular, we revisit characterizations of dual polar
graphs by Cameron and by Brouwer-Cohen. We also show that (disk-)Helly
graphs are precisely the clique-Helly graphs with simply connected
clique complexes.  With $l_1$--embeddable weakly modular and sweakly modular
graphs we associate high-dimensional cell complexes, having several
strong topological and geometrical properties (contractibility and the
CAT(0) property). Their cells have a specific structure: they are
basis polyhedra of even $\triangle$--matroids in the first case and
orthoscheme complexes of gated dual polar subgraphs in the second
case.  We resolve some open problems concerning subclasses of weakly
modular graphs: we prove a Brady-McCammond conjecture about CAT(0)
metric on the orthoscheme complexes of modular lattices; we answer
Chastand's question about prime graphs for pre-median graphs.  We also
explore negative curvature for weakly modular graphs.

\end{abstract}

\author[J.\ Chalopin]{J\' er\'emie Chalopin}
\address{Laboratoire d'Informatique Fondamentale, Aix-Marseille Universit\'e and CNRS, Facult\'e des Sciences de Luminy, F-13288 Marseille Cedex 9, France}
\email{jeremie.chalopin@lif.univ-mrs.fr}
\thanks{J.C. was partially supported by ANR project MACARON (\textsc{anr-13-js02-0002}).}

\author[V.\ Chepoi]{Victor Chepoi}
\address{Laboratoire d'Informatique Fondamentale, Aix-Marseille Universit\'e and CNRS, Facult\'e des Sciences de Luminy, F-13288 Marseille Cedex 9, France}
\email{victor.chepoi@lif.univ-mrs.fr}
\thanks{V.C.\ was partially supported by ANR projects TEOMATRO (\textsc{anr-10-blan-0207}) and GGAA (\textsc{anr-10-blan-0116}).}

\author[H.\ Hirai]{Hiroshi Hirai}
\address{Department of Mathematical Informatics,
Graduate School of Information Science and Technology,
The University of Tokyo, Tokyo, 113-8656, Japan}
\email{hirai@mist.i.u-tokyo.ac.jp}
\thanks{H.H.\ was partially supported by JSPS KAKENHI Grant Numbers 25280004, 26330023, 17K00029.}

\author[D.\ Osajda]{Damian Osajda}
\address{Instytut Matematyczny,
Uniwersytet Wroc\l awski,
pl.\ Grunwaldzki 2/4,
50--384 Wroc{\l}aw, Poland}
\address{Institute of Mathematics, Polish Academy of Sciences,
	\'Sniadeckich 8, 00-656 War\-sza\-wa, Poland}
\email{dosaj@math.uni.wroc.pl}
\thanks{D.O.\ was partially supported by Narodowe Centrum Nauki, grants no.\ UMO-2012/06/A/ST1/00259 and UMO-2015/\-18/\-M/\-ST1/\-00050.}

\subjclass[2010]{{05C12, 51K05, 20F67, 90C27}}

\keywords{Weakly modular graph, nonpositive curvature, CAT(0) space, lattice, building, incidence geometry, group action, combinatorial optimization}


\maketitle

\tableofcontents


\mainmatter
\chapter{Introduction}
\label{s:intro}

\section{Avant-propos} This article investigates structural, geometrical, and topological
characterizations and properties of weakly modular graphs and of cell complexes derived from them.
{\it Weakly modular graphs} are defined as graphs satisfying the following two simple distance conditions (for every $k > 0$):
\begin{itemize}
\item \emph{Triangle condition} (TC): For any vertex $u$ and
any two adjacent vertices $v,w$ at distance $k$ to $u$,
there exists a common neighbor $x$ of $v,w$ at distance $k-1$ to $u$.
\item \emph{Quadrangle condition} (QC):
For any vertices $u,z$ at distance $k$ and any two neighbors $v,w$ of $z$ at distance
$k-1$ to $u$, there exists a common neighbor $x$ of $v,w$ at distance $k-2$ from $u$.
\end{itemize}
Weakly modular graphs have been introduced in \cite{Ch_metric} and \cite{BaCh_helly}  as
a far-reaching common generalization of graphs studied in {\it metric graph theory}:
examples are median graphs,  modular graphs,
Helly graphs, and bridged graphs.

Our investigation is motivated by metric graph theory as well as by
recent unexpected appearances of weakly modular graphs
in 
other fields of mathematics:
\begin{itemize}
\item Geometric group theory
\item Incidence geometries and buildings
\item Theoretical computer science and combinatorial optimization
\end{itemize}
The unifying themes of  our investigation
are various ``nonpositive-curvature" and ``local-to-global'' properties and characterizations
of weakly modular graphs  and their subclasses.

\section{Motivation}
Let us mention some motivating examples and predecessors of this article (for some undefined terms,
see Chapter~\ref{s:prel}).
%

\subsection*{Metric graph theory}
The main subject of metric graph theory is the investigation and
structural characterization of graph classes whose standard graph-metric
satisfies the main metric and convexity properties of
classical metric geometries  like ${\mathbb R}^n$
endowed with $l_2$, $l_1$, or $l_{\infty}$--metric, hyperbolic spaces,
Boolean spaces, or trees. Among such properties one can mention
convexity of balls or of neighborhoods of convex sets, Helly property for balls, isometric and
low-distortion embeddings into  classical host spaces, retractions, various four-point conditions,
uniqueness or existence of medians, etc.; for a survey of this theory, see \cite{BaCh}.

The main classes of graphs occurring in metric graph theory are median graphs, modular graphs, Helly graphs,
bridged graphs, $\delta$--hyperbolic graphs, $l_1$--graphs, and isometric subgraphs of hypercubes (partial cubes).
{\it Helly graphs} are the graphs in which the balls satisfy the Helly property.
The {\it bridged graphs} are the graphs in which the neighborhoods of convex sets are convex (one of the basic properties of
Euclidean convexity).
Finally, {\it median graphs} can be viewed as graphs
satisfying a basic property of trees and Boolean cubes (and more generally, covering graphs of distributive lattices):
each triplet of vertices admits a unique
median, i.e., a vertex lying simultaneously on shortest paths between any pair of vertices from the triplet.
They also can be viewed as the discrete median algebras or as covering graphs of median semilattices. Similarly to median graphs,
{\it modular graphs}  are the graphs in which each triplet of vertices admits a (not necessarily unique)
median.

Later, it turned out that these graphs give rise to important
cubical and simplicial complexes. The cubical complexes associated with median graphs
(called {\it median complexes}) endowed
with any of the intrinsic $l_1,l_2,$ or $l_{\infty}$--metrics constitute
geodesic metric spaces with strong properties. For example, median complexes
equipped with the $l_1$--metric are  median metric spaces  (i.e.,
every triplet of points has a unique median) and are isometric subspaces of
$l_1$--spaces \cite{vdV}. If a median complex carries  the intrinsic
$l_{\infty}$--metric instead, then the resulting metric space is
injective \cites{MaTa, vdV2}. Finally, if we impose the intrinsic
$l_2$--metric,  we obtain a metric space with global
non-positive curvature in the sense of Gromov \cite{Gr}.
In fact, median graphs are exactly the $1$--skeletons of CAT(0) cubical
complexes \cite{Ch_CAT}. CAT(0) cubical complexes were
characterized by Gromov \cite{Gr} in a local-to-global combinatorial
way as the simply connected cubical complexes
in which the links of vertices ($0$--cubes) are simplicial flag complexes.

Analogously to median graphs, bridged
graphs have been characterized in \cite{Ch_CAT} as the $1$--skeletons of simply connected simplicial flag
complexes in which the links of vertices ($0$--simplices) do not contain induced $4$-- and $5$--cycles.
Those simplicial complexes were rediscovered in \cites{Hag,JS} and  dubbed {\it systolic complexes}.
Systolic complexes satisfy many global properties of CAT(0) spaces (contractibility, fixed point property)
and were suggested in \cite{JS} as a variant of simplicial complexes of combinatorial nonpositive curvature.

Another interesting subclass of weakly modular graphs forbidding
generalizing median graphs is that of {\it weakly median graphs}:
those are the weakly modular graphs in which the quasi-medians of triplets are unique
(see Chapter~\ref{s:prel} for the definition of quasi-medians). Weakly median graphs can also be characterized as weakly
modular graphs not containing four forbidden induced subgraphs.
Similarly to the way the median graphs are built from cubes (which are Cartesian products of $K_2$),
it was shown in \cite{BaCh_weak} that weakly median graphs are obtained from Cartesian products of
some simple graphs (suboctahedra, $5$--wheels, and systolic plane triangulations), via successive gated amalgamations.

Generalizing the proof of the decomposition
theorem of \cite{BaCh_weak}, Chastand \cites{Cha1,Cha2} presented a general framework of fiber-complemented graphs allowing to
establish many  general properties, previously
proved only for particular classes of graphs.  An important subclass
of  fiber-complemented and weakly modular graphs is the class of pre-median graphs \cites{Cha1,Cha2}, defined by forbidding only two of the
four forbidden for weakly median graphs. However the structure of prime pre-median graphs was not clear and it was formulated in  \cite{Cha1}*{p.\ 121}
as an open problem to characterize all prime pre-median graphs. Bridged, weakly bridged, and Helly graphs are prime graphs, moreover bridged and weakly bridged
graphs are prime pre-median graphs (Weakly bridged graphs (and their complexes) defined in \cite{O-cnpc} and investigated in \cites{ChOs,O-cnpc} are the weakly modular
graphs with convex balls).  Bucolic graphs introduced in \cite{BCC+} via local conditions are the weakly modular
graphs whose primes are weakly bridged graphs. Triangle-square complexes of bucolic graphs were characterized in \cite{BCC+} in a local-to-global manner, like in the case of CAT(0) cubical complexes
or systolic complexes. Moreover, the prism complexes of bucolic graphs, called in \cite{BCC+} bucolic complexes have been proved to be contractible cell
complexes satisfying other  nonpositive-curvature-like properties.

Finally, let us mention a research on {\it basis graphs of matroids},  close in spirit to the above-mentioned work
(for definition, see Subsection~\ref{s:bamat}). Maurer \cite{Mau} characterized  graphs which can be represented as
basis graphs of matroids using two local conditions and a global metric condition (which is close to but weaker than the quadrangle
condition); this result has  been extended in \cite{Che_bas} to even
$\triangle$--matroids.  In particular, all basis graphs (of matroids and even $\triangle$--matroids) are meshed. Maurer \cite{Mau} conjectured
that the global metric condition  can be replaced by simple connectivity of the triangle-square complex of the graph and this conjecture
was confirmed in \cite{ChChOs_matroid}.

\subsection*{Geometric group theory}
Geometric group theory investigates  groups by exploring connections between them and topological and geometric properties of spaces
on which these groups act. Another important principle of this area is to consider groups themselves as geometric objects; see e.g., \cites{BrHa,Davis,Gr} as rich sources of ideas, examples, and references.
Weakly modular graphs and associated complexes appear in many contexts in this field, and play often an important role in understanding properties of corresponding groups.


As we noticed already, median graphs are precisely the $1$--skeletons of {CAT(0) cube complexes}. The latter ones were introduced into
geometric group theory by Gromov~\cite{Gr}, although they had appeared naturally in many situations before.
For example, products of trees constitute one of the simplest source of such complexes, and lattices in the isometry groups of
products of trees were thoroughly investigated; see e.g., \cite{Mozes}. The other natural sources of CAT(0) cube complexes are
spaces associated naturally with right-angled Coxeter groups and right-angled Artin groups (RAAG's); cf.\ \cite{Davis}.
Sageev~\cite{Sag} showed that groups endowed with a kind of ``walls" admit geometric (that is, isometric, proper, and cocompact)
actions on CAT(0) cube complexes. This triggered the development of the so-called ``cubulations" of various groups. Such a procedure
relies on finding a nice action of a given group on \emph{space with walls}; see \cite{HaPau}.
Subsequently many classical groups were cubulated, that is they were shown to act geometrically on CAT(0) cubical complexes; see e.g.,
\cites{W-sc,W-qch}. Cubulated groups posses many nice properties; see e.g., \cite{ChDrHag}.
Using the so-called \emph{special cube complexes} \cite{HagWise}, by cubulating fundamental groups of $3$--manifolds, the
old standing Virtual Haken Conjecture was solved \cites{W-qch,Agol}.

{\it Buildings} were introduced by Tits~\cite{Ti}. They are certain simplicial complexes which are very symmetric, so that their automorphisms groups are rich and provide
many interesting examples; see e.g.\ \cites{BuildingBook,Ti}. Some buildings posses a natural structure of CAT(0) cubical complexes
(e.g., products of trees or buildings associated to right-angled Coxeter groups; see \cite{Davis}). Some other ones give rise to (weakly) systolic complexes (see e.g., \cites{JS,O-cnpc}),
or to other weakly modular graphs (see Section~\ref{subsec:building}).

Systolic complexes, being the clique complexes of bridged graphs, were introduced into geometric group theory by Januszkiewicz-\'Swi{\c a}tkowski \cites{JS} and Haglund \cite{Hag}.
Systolic and weakly systolic complexes \cites{O-cnpc,ChOs} lead
to numerous constructions of ``highly dimensional" (hyperbolic) groups
\cites{JS,O-chg}, with often unexpected properties. Moreover, finitely presented classical \cite{LySch} and graphical small cancellation groups
act geometrically on systolic complexes \cite{Wise-manu}.

{\it Orthoscheme complexes} for some lattices (see Chapter~\ref{s:ortho}) appear in \cite{BradyMcCammond} in relations to braid groups. Many other instances of weakly modular graphs 
appear in other places in geometric group theory and related fields. As the closest in spirit examples let us mention here
the studies of the Kakimizu graphs \cite{PrzSch} and of $1$--skeletons of some triangulations of $3$--manifolds \cite{O-cnct}.

\subsection*{Incidence geometries and buildings}
The main subject of incidence geometry  is the characterization and classification of
geometries defined by simple axioms on points and lines. The characterization of some such geometries
involve only metric and combinatorial properties of their collinearity graphs, and their
subspaces  can be recovered from these graphs. Projective and polar spaces are the most fundamental types of
incidence geometries; for definition, see Subsection~\ref{prel:dps} and for a full account of their theory, see
\cites{Sh,Foundation}.  Complemented modular lattices (whose covering graphs are modular) are equal to
subspace lattices of  projective spaces. Other weakly modular graphs arise from polar spaces:
it was recently observed in \cite{BaCh} that {\it dual polar graphs},
which are the collinearity graphs of incidence geometries dual to polar spaces,  are weakly modular.
This is a simple consequence of Cameron's characterization~\cite{Ca} of dual polar graphs.
Notice also that there is a local-to-global characterization
of dual polar spaces, due to Brouwer and Cohen \cite{BroCo}.

Projective and polar spaces also give rise to particular buildings.
Indeed, the order complexes of the subspaces of projective and polar spaces are spherical buildings
of special type. Spherical buildings are CAT$(1)$ spaces. By using this fact, Haettel, Kielak, and Schwer~\cite{HKS}
showed that the orthoscheme complexes of  complemented modular lattices are  CAT$(0)$ spaces.
Nonpositive curvature property of buildings of affine type (Euclidean buildings)
is well known: they are CAT$(0)$ spaces \cite{Davis}.
%

\subsection*{Theoretical computer science and combinatorial optimization}
Except geometric group theory, median graphs and their algebraic analogs --
median algebras and median semilattices -- naturally
arise in
some areas of theoretical computer science.
Median semilattices can be viewed as
domains \cites{ArOwSu,BaCo} of coherent event structures
\cite{WiNi}, a classical by now model on concurrency in computer science. For a presentation of
CAT(0) cube complexes from this perspective, see \cite{ChHa}; this geometric point of view
allowed already to disprove a conjecture on event structures, see \cite{Chepev}.
On the other hand, median--stable subsets of Boolean algebras
characterize the solution sets of certain Boolean expressions, namely of $2$--SAT instances \cite{Scha}.

A remarkable appearance of (weakly) modular graphs
has occurred in classifying computational complexity of the $0$--extension problem.
The {\it $0$--extension problem} is a version of facility location problems on graphs,
originated from operations research and known under the name of {\it multifacility location problem}.
This problem generalizes some basic combinatorial optimization problems,
such as minimum cut problem, and
has numerous applications in machine learning and computer vision.
In mid 90's, Chepoi \cite{Ch_facility} and Karzanov \cite{Kar98a} independently
found that the computational complexity of the $0$--extension problem
is much deeper and intrinsically connected with the (weak) modularity of the underlying graph.
Both works showed that the 0-extension problem on median graphs is polynomially  solvable.
The latter work, furthermore, showed the polynomial time solvability for the 0-extension problem
on a related subclass of modular graphs, and showed the NP-hardness for a graph that is not orientable modular.
Following a subsequent development \cite{Kar04},
Hirai \cite{HH12} finally proved the polynomial time solvability of
the $0$--extension problem on orientable modular graphs, and
thus a complete dichotomy characterization of the tractability of the 0-extension problem
was established: if $G$ is an orientable modular graph,
then the $0$--extension problem on $G$ is polynomially solvable, otherwise it is NP-hard.

Here a modular graph is called {\it orientable} if its edges can be directed in a such a way
that the opposite edges of each square have the same direction
(median graphs are  orientable by directing the edges away from a fixed base-point,
i.e., by viewing them as covering graphs of median semilattices). The covering graph of
a modular lattice is a fundamental example of an orientable modular graph.

Hirai \cite{HH12} formulated the $0$--extension problem as an optimization on
a certain simplicial complex
associated with an orientable modular graph $G$. Unexpectedly,
it turned out that this simplicial complex can be viewed as a gluing of
the orthoscheme complexes of complemented modular lattices corresponding to some
subgraphs of $G$.

\section{Main results}
In most of the above cases, the importance of the classes of graphs is often related to their nonpositive-curvature-like
properties. In this paper, we explore such properties for the class of weakly modular graphs and --- when it goes to some stronger
results --- for certain large subclasses  (pre-median and prime pre-median graphs, Helly graphs,
dual polar graphs, and so-called sweakly modular graphs). The graphs from those subclasses are or can be defined as weakly modular graphs
satisfying some local conditions (namely, by forbidding some small induced or isometric subgraphs). With an arbitrary
weakly modular graph $G$ we associate  its clique complex $X(G)$, its triangle complex $X\tr(G)$, its square complex $X\sq(G)$, and its triangle-square complex $X\trsq(G)$ (with  the graphs from some above-mentioned
subclasses we can also associate high-dimensional cell complexes, see below).

First (Chapter~\ref{s:plotoglo}) we present a local-to-global characterization of the
triangle-square complexes  $X\trsq(G)$ of all weakly modular graphs: they are exactly the simply connected triangle-square complexes whose graphs are locally weakly
modular (i.e., their balls of radius $3$ are weakly modular).  This result
may be viewed as an analogue of Cartan-Hadamard theorem for globally convex and globally nonpositively curved spaces, and
has several consequences for particular classes of weakly modular graphs (Helly, modular,
dual polar, etc). In particular, we show that Helly graphs are exactly the clique-Helly graphs $G$ whose clique  complexes $X(G)$ are simply connected
(or  contractible). Analogously,  modular graphs are the graphs which are locally modular and whose square complexes $X\sq(G)$ are simply connected. On the other hand,
an example shows that similar local-to-global characterizations do not longer hold for meshed graphs.

Next (Chapter~\ref{sec:premedian}) we turn to {\it pre-median graphs}, which are the weakly modular graphs not containing two induced subgraphs: $K_{2,3}$ and $W^-_4$. In the same (topological)
vein --- but using different techniques --- we prove that prime pre-median graphs are the pre-median graphs $G$ whose triangle complexes $X\tr(G)$ are simply
connected. This answers the question by Chastand mentioned above.
We prove that several known instances of polyhedral graphs
are prime pre-median graphs: Schl\"afli and Gosset graphs, hyperoctahedra and their subgraphs, Johnson graphs and half-cubes, as well as
all weakly modular graphs which are basis graphs of matroids and even $\triangle$--matroids. We believe  that
{\it thick} (i.e., each pair of vertices at distance $2$ is included in a square)  isometric weakly modular subgraphs of all pre-median graphs $G$ define
contractible cell complexes, but we were able to confirm this only in the case of $L_1$--embeddable pre-median graphs: in this case,
these contractible complexes $C(G)$ have Euclidean cells which arise from matroidal and subhyperoctahedral
subgraphs of $G$.
The construction of $C(G)$ generalizes constructions of
the above-mentioned nonpositively curved complexes: indeed,
if $G$ is a median graph, then $C(G)$ coincides with the median complex of $G$,
if $G$ is a (weakly) bridged graph, then $C(G)$ coincides
with the (weakly) systolic complex of $G$, and more generally,
if $G$ is a bucolic graph, then $C(G)$ coincides with the bucolic complex of $G$.

Then (Chapter~\ref{s:dupol}) we revisit Cameron's characterization of dual polar graphs and significantly simplify it. Namely, we show that in fact dual polar graphs are exactly
the thick weakly modular graphs not containing induced $K^-_4$ and isometric $K^-_{3,3}$ (i.e., $K_4$ and $K_{3,3}$ minus one edge). Using this new characterization
of dual polarity  and the local-to-global characterization of weak modularity, we provide a completely different approach to the
local-to-global characterization of dual polar graphs of Brouwer and Cohen.

The {\it swm-graphs} (sweakly modular graphs) represent a
natural extension of dual polar graphs, because they are defined as
the weakly modular graphs not containing induced $K^-_4$ and isometric
$K^-_{3,3}$. On the other hand, they naturally generalize the
orientable modular graphs (and strongly modular graphs of \cite{BVV})
\footnote{According to the  urbain dictionary http://fr.urbandictionary.com/, {\it sweak}
is a  hybrid of  {\it super and weak}. This is because {\it sweakly modular graphs} inherit the properties of strongly modular
graphs and weakly modular graphs.}.
We establish (Chapter~\ref{sec:swm}) that particular
swm-graphs also arise in a natural way from Euclidean buildings of
type $C_n$.  However, it was not a priori clear how swm-graphs are
related to dual polar and to orientable modular graphs. We elucidate
those two connections, which both turn out to be crucial.  Namely, we
show that all thick gated subgraphs (which we call ``Boolean-gated'')
of an swm-graph $G$ are dual polar graphs.  In the case of median
graphs, the Boolean-gated subgraphs are all cubes. Extending the fact
that each median graph is a gluing of its cubes, we obtain that each
swm-graph is a gluing of its gated dual polar subgraphs. Analogously
to (and generalizing) the well-known result that a thickening of a
median graph is Helly, we show that the thickening $G^{\Delta}$ of an
swm-graph $G$ is also a Helly graph, and thus its clique complex is
contractible (two vertices are adjacent in $G^{\Delta}$ if and only if
their gated hull in $G$ is a dual polar subgraph).
Analogously to normal cube paths in CAT(0) cube complexes of Niblo and
Reeves~\cite{NiRe}, we define normal Boolean-gated paths in
swm-graphs. Extending the biautomaticity of groups acting
geometrically on CAT(0) cube complexes~\cite{NiRe}, we prove a similar
result for groups acting geometrically on swm-graphs.
Denoting by ${\mathcal B}(G)$ the set of all Boolean-gated subgraphs and considering it as a (graded)
poset with respect to the reverse inclusion, we can associate to $G$ two objects:
the covering graph $G^*$ of ${\mathcal B}(G)$ and
the orthoscheme complex $K(G) $ of ${\mathcal B}(G)$. We call $G^*$ the {\it barycentric graph} of $G$ and show that $G^*$ is an orientable modular graph and that
$G$ is isometrically embeddable in $G^*$ (the edges of $G^*$ have length $1/2$).
The sequence of barycentric graphs $G,G^*,(G^*)^*,\ldots$ of swm-graphs
{\em converges} to $K(G)$ (in some sense),
where each orthoscheme complex $K(G^{*i})$
is isometric to $K(G)$ and $K(G^{*i})$ is a simplicial subdivision of $K(G^{*i-1})$.
As a simple bi-product of those results and the result of \cite{HH12}, we immediately obtain a factor $2$ approximation
to the $0$--extension problem on swm-graphs.

The orthoscheme complex $K(G)$ of an swm-graph $G$ has many important
structural properties analogous to the metric properties of median
complexes (CAT(0) cube complexes) mentioned above.  We show that
$K(G)$ is contractible (Chapter~\ref{sec:complex}). Moreover, if $G$
is locally finite and $K(G)$ is endowed with the intrinsic
$l_{\infty}$--metric, then $K(G)$ is injective and $G^{\Delta}$ is an
isometric subspace. If $K(G)$ is endowed with the intrinsic
$l_1$--metric, then $K(G)$ is a strongly modular metric space (modular
space without isometrically embedded copies of $K^-_{3,3}$) and $G$ is
an isometric subspace. Finally, we conjecture that if $K(G)$ is
endowed with the $l_2$--metric, then $K(G)$ is a CAT$(0)$ space.  We
were not able to settle this conjecture in its full generality, but we
prove it in several important cases (Chapter~\ref{s:ortho} and
Chapter~\ref{sec:complex}). Our main result here is the proof that the
orthoscheme complexes of modular lattices are CAT$(0)$, thus answering
a question by Brady and McCammond~\cite{BradyMcCammond}. We also prove
that the orthoscheme complexes of median semilattices are CAT$(0)$ and
we show that to prove the conjecture for orthoscheme complexes of all
swm-graphs it suffices to prove it for orthoscheme complexes of
modular semilattices.

We conclude the paper with some further results about general weakly modular graphs (Chapter~\ref{s:metric}). We prove that weakly modular graphs (and more generally, meshed graphs) satisfy the quadratic isoperimetric inequality. Similarly to the fact that hyperbolicity of median graphs is controlled by the size of isometrically embedded square grids, we show that the hyperbolicity of weakly modular graphs is controlled by the sizes of metric triangles and square grids. Answering a question of \cite{Ch_dism} we prove that
any weakly modular graph admits a distance-preserving ordering of its vertices and that such an ordering can be found using breadth-first-search. (Notice that for many classes of weakly
modular graphs, like bridged, weakly bridged and Helly graphs, as well as for Kakimizu graphs, breadth-first-search or its stronger or weaker versions provide a dismantling order,
which imply that the clique complexes of the respective graphs are contractible.)
We finish by proposing a notion of a ``weakly modular" complex. We show that in some special cases our general definition results
in well-known complexes. We were not able to prove that the complex has ``nice" properties in general, and we believe it is worth further studies.

\section*{Acknowledgements}
 We are very grateful to the anonymous referee for the careful reading of all parts of the paper, for all his efforts and
 time spent, and for his numerous corrections and improvements.

\chapter{Preliminaries}
\label{s:prel}

\section{Basic notions}
\label{s:bapre}

\subsection{Graphs}
\label{s:bagraphs}
A \emph{graph} $G=(V,E)$ consists of a set of vertices $V:=V(G)$ and a set of edges $E:=E(G)\subseteq V\times V$. All graphs considered in this paper are
undirected, connected, contain no multiple edges, neither loops, and are not
necessarily finite or locally finite. That is, they are \emph{one-dimensional simplicial complexes}.
For two distinct vertices $v,w\in V$ we write $v\sim w$ (respectively, $v\nsim w$) when there is an (respectively, there is no) edge connecting
$v$ with $w$, that is, when $vw:=\{v,w \} \in E$.
(Using ``$\sim$" we specify the underlying graph if it is not clear.)
For vertices $v,w_1,\ldots,w_k$, we write $v\sim w_1,\ldots,w_k$ (respectively, $v\nsim w_1,\ldots,w_k$) or $v\sim A$ (respectively,
$v\nsim A$) when $v\sim w_i$ (respectively, $v\nsim w_i$), for each $i=1,\ldots, k$, where $A=\{ w_1,\ldots,w_k\}$.
As maps between graphs $G=(V,E)$ and $G'=(V',E')$ we always consider \emph{simplicial maps}, that is functions of the form
$f\colon V\to V'$ such that if $v\sim w$ in $G$ then $f(v)=f(w)$ or $f(v)\sim f(w)$ in $G'$.
A $(u,w)$--path $(v_0=u,v_1,\ldots,v_k=w)$ of \emph{length} $k$ is a sequence of vertices with $v_i\sim v_{i+1}$. If $k=2,$
then we call $P$ a {\it 2-path} of $G$. If $x_i\ne x_j$ for $|i-j|\ge 2$, then $P$ is
called a {\it simple $(a,b)$--path}.
A $k$--cycle $(v_0,v_1,\ldots,v_{k-1})$ is a path $(v_0,v_1,\ldots,v_{k-1},v_0)$.
For a subset
$A\subseteq V,$ the subgraph of $G=(V,E)$  {\it induced by} $A$
is the graph $G(A)=(A,E')$ such that $uv\in E'$ if and only if $uv\in E$.
$G(A)$ is also called a {\it full subgraph} of $G$. We will say
that a graph $H$ is {\it not an induced subgraph} of $G$ if $H$ is not isomorphic
to any induced subgraph $G(A)$ of $G$. A \emph{square} $uvwz$ (respectively, \emph{triangle} $uvw$) is an induced $4$--cycle $(u,v,w,z)$ (respectively, $3$--cycle $(u,v,w)$).

The {\it  distance}
$d(u,v)=d_G(u,v)$ between two vertices $u$ and $v$ of a graph $G$ is the
length of a shortest $(u,v)$--path.  For a vertex $v$ of $G$ and an integer $r\ge 1$, we will denote  by $B_r(v,G)$ (or by $B_r(v)$)
the \emph{ball} in $G$
(and the subgraph induced by this ball)  of radius $r$ centered at  $v$, i.e.,
$B_r(v,G)=\{ x\in V: d(v,x)\le r\}.$ More generally, the $r$--{\it ball  around a set} $A\subseteq V$
is the set (or the subgraph induced by) $B_r(A,G)=\{ v\in V: d(v,A)\le r\},$ where $d(v,A)=\mbox{min} \{ d(v,x): x\in A\}$.
As usual, $N(v)=B_1(v,G)\setminus\{ v\}$ denotes the set of neighbors of a vertex
$v$ in $G$. The \emph{link} of $v \in V(G)$ is the subgraph of $G$ induced by $N(v)$. A graph $G=(V,E)$
is {\it isometrically embeddable} into a graph $H=(W,F)$
if there exists a mapping $\varphi : V\rightarrow W$ such that $d_H(\varphi (u),\varphi
(v))=d_G(u,v)$ for all vertices $u,v\in V$.

The {\em wheel} $W_k$ is a graph obtained by connecting a single
vertex -- the {\em central vertex} $c$ -- to all vertices of the
$k$--cycle $(x_1,x_2, \ldots, x_k)$; the {\it almost wheel}
$W_k^{-}$ is the graph obtained from $W_k$ by deleting a spoke (i.e.,
an edge between the central vertex $c$ and a vertex $x_i$ of the
$k$--cycle). Analogously $K^-_4$ and $K^-_{3,3}$ are the graphs obtained from $K_4$ and $K_{3,3}$ by removing one edge.

An $n$--{\it octahedron} $K_{n\times 2}$ (or, a {\it hyperoctahedron}, for short) is the complete graph $K_{2n}$
on $2n$ vertices minus a perfect matching. Any induced subgraph of $K_{n\times 2}$ is called a {\it subhyperoctahedron}.
A {\it hypercube} $H(X)$ is a graph having the finite subsets of $X$ as vertices
and two  such sets $A,B$ are adjacent in $H(X)$
if and only if $|A\triangle B|=1$. A {\it half-cube} $\frac{1}{2}H(X)$ has the vertices of a hypercube $H(X)$ corresponding to finite subsets of $X$ of even cardinality
as vertices and two  such vertices are adjacent in $\frac{1}{2}H(X)$  if and only if their  distance
in $H(X)$ is 2 (analogously one can define a half-cube on finite subsets of odd cardinality). If $X$ is finite and has size $n$, then the half-cube on $X$ is denoted by $\frac{1}{2}H_n$.  For a positive integer $k$, the {\it Johnson graph}  $J(X,k)$ has the subsets of $X$ of size $k$ as vertices and two such vertices are adjacent in $J(X,k)$
if and only if their  distance in $H(X)$ is $2$.  All Johnson graphs $J(X,k)$ with even $k$ are isometric subgraphs of the half-cube $\frac{1}{2}H(X)$.  If $X$ is finite and $|X|=n$, then the hypercube, the half-cube, and the Johnson graphs are usually denoted by $H_n, \frac{1}{2}H_n,$ and $J(n,k),$
respectively. Finite hypercubes,  half-cubes, Johnson graphs, and hyperoctahedra are distance-regular graphs \cite{BrCoNeu}.

A {\it retraction}
$\varphi$ of a graph $G$ is an idempotent nonexpansive mapping of $G$ into
itself, that is, $\varphi^2=\varphi:V(G)\rightarrow V(G)$ with $d(\varphi
(x),\varphi (y))\le d(x,y)$ for all $x,y\in W$ (equivalently, a retraction is a
simplicial idempotent map $\varphi: G\rightarrow G$). The subgraph of $G$
induced by the image of $G$ under $\varphi$ is referred to as a {\it retract} of $G$.

The {\it interval}
$I(u,v)$ between $u$ and $v$ consists of all vertices on shortest
$(u,v)$--paths, that is, of all vertices (metrically) {\it between} $u$
and $v$: $I(u,v)=\{ x\in V: d(u,x)+d(x,v)=d(u,v)\}.$ If $d(u,v)=2,$ then
$I(u,v)$ is called a {\it 2-interval}. A $2$--interval $I(u,v)$ of a graph
$G$ is called {\it thick} if $I(u,v)$ contains a square (induced
$4$--cycle). Obviously each thick $2$--interval $I(u,v)$ has a square containing the vertices $u$
and $v$. A graph $G$ is called {\it thick} (or {\it 2-thick}) if all its 2-intervals are thick.
A graph $G$ is called {\it thin} (or {\it 2-thin}) if neither of its 2-intervals is thick.
Equivalently, a graph is thin if and only if it does not contains squares.

An induced
subgraph of $G$ (or the corresponding vertex set $A$) is called {\it  convex}
if it includes the interval of $G$ between any pair of its
vertices. The smallest convex subgraph containing a given subgraph $S$
is called the {\it convex hull} of $S$ and is denoted by conv$(S)$. An induced
subgraph of $G$ (or the corresponding vertex set $A$) is called {\it
locally  convex} (or {\it 2-convex}) if it includes the interval of $G$
between any pair of its vertices at distance two having a common neighbor
in $A$.  An induced subgraph $H$ (or the corresponding vertex set of $H$)
of a graph $G$
is {\it gated} \cite{DrSch} if for every vertex $x$ outside $H$ there
exists a vertex $x'$ in $H$ (the {\it  gate} of $x$)
such that  $x'\in I(x,y)$ for any $y$ of $H$. Gated sets are convex and
the intersection of two gated sets is
gated. By Zorn's lemma there exists a smallest gated subgraph
$\lgate S \rgate$ containing a given subgraph $S$, called the
{\it gated hull} of $S$. A graph $G$ is a {\it gated amalgam} of two
graphs $G_1$ and $G_2$ if $G_1$ and $G_2$ are (isomorphic to) two intersecting
gated subgraphs of $G$ whose union is all of $G.$

Let $G_{i}$, $i \in \Lambda$ be an arbitrary family of graphs. The
\emph{Cartesian product} $\prod_{i \in \Lambda} G_{i}$ is a graph
whose vertices are all functions $x: i \mapsto x_{i}$, $x_{i} \in
V(G_{i})$.  Two vertices $x,y$ are adjacent if there exists an index
$j \in \Lambda$ such that $x_{j} y_{j} \in E(G_{j})$ and $x_{i} =
y_{i}$ for all $i \neq j$. Note that a Cartesian product of infinitely
many nontrivial graphs is disconnected. Therefore, in this case the
connected components of the Cartesian product are called {\it weak
  Cartesian products}.  Given an arbitrary family of graphs $G_{i}$,
$i \in \Lambda$, the \emph{strong product} $\boxtimes_{i \in \Lambda}
G_{i}$ is a graph whose vertices are all functions $x: i \mapsto
x_{i}$, $x_{i} \in V(G_{i})$.  Two distinct vertices $x,y$ are
adjacent if for every index $j \in \Lambda$, either $x_j = y_j$ or
$x_{j} y_{j} \in E(G_{j})$.

\subsection{Complexes}
\label{s:bacom}
All complexes considered in this paper are CW complexes. Following \cite{Hat}*{Chapter 0}, we call them simply \emph{cell complexes} or
just \emph{complexes}.
If all cells are simplices and the nonempty intersections of two cells is their common face,
then $X$ is called a {\em simplicial complex}.
For a cell complex $X$, by $X^{(k)}$ we denote its \emph{$k$--skeleton}. All cell complexes considered in this paper will have graphs
(that is, one-dimensional simplicial complexes)
as their $1$--skeleta. Therefore, we use the notation $G(X):=X^{(1)}$.
As morphisms between cell complexes we always consider \emph{cellular maps}, that is, maps sending $k$--skeleton into the $k$--skeleton.

For a graph $G$, we define its \emph{triangle} (respectively, \emph{square})
complex $X\tr$ (respectively, $X\sq$) as a two-dimensional cell complex with $1$--skeleton $G$, and such that the two-cells
are (solid) triangles (respectively, squares) whose boundaries are identified by isomorphisms with (graph) triangles (respectively, squares)
in $G$. A \emph{triangle-square complex} $X\trsq(G)$ is defined analogously, as the union of $X\tr$ and $X\sq$ sharing common
$1$--skeleton $G$. A triangle-square complex is {\it flag}
if it coincides with the triangle-square complex of its 1-skeleton.

The \emph{star} of a vertex $v$ in a complex $X$, denoted $\mr{St}(v,X)$, is the subcomplex spanned by all cells containing $v$.

An {\em abstract simplicial complex} $\Delta$ on a set $V$
is a set of nonempty subsets of $V$ such that each member of $\Delta$, called a {\em simplex},
is a finite set, and any nonempty subset of a simplex is also a simplex.
A simplicial complex $X$ naturally gives rise
to an abstract simplicial complex $\Delta$ on the set of vertices ($0$--dimensional cells) of $X$
by: $U \in \Delta$ if and only if there is a simplex in $X$ having $U$ as its vertices.
Combinatorial and topological structures of $X$ are completely recovered from $\Delta$.
Hence we sometimes identify simplicial complexes and abstract simplicial complexes.

The {\it clique complex}
of a graph $G$ is the abstract simplicial complex $X(G)$ having the cliques (i.e.,
complete subgraphs) of $G$ as simplices. A simplicial complex $X$ is a {\it flag simplicial complex} if $X$ is the clique
complex of its $1$--skeleton.

Let $X$ be a cell complex and $C$ be a cycle in the $1$--skeleton of $X$. Then a cell complex $D$ is called a  {\it singular disk diagram}
(or Van Kampen diagram) for $C$ if the $1$--skeleton of $D$ is a plane graph whose inner faces are exactly the $2$--cells of $D$ and
there exists   a cellular map $\varphi:D\rightarrow X$  such that $\varphi|_{\partial D}=C$ (for more details see \cite{LySch}*{Chapter V}).
According to Van Kampen's lemma (\cite{LySch}, pp.\ 150--151), for every
cycle $C$ of a simply connected simplicial complex, one can construct a
singular disk diagram. A singular disk diagram with no cut vertices (i.e.,
its $1$--skeleton is $2$--connected) is
called a {\it disk diagram.} A {\it minimal (singular) disk} for $C$ is a
(singular) disk diagram ${D}$ for $C$ with a minimum number of $2$--faces.
This number is called the {\it (combinatorial) area} of $C$ and is
denoted Area$(C).$ If $X$ is a simply connected triangle-square complex, then
for each cycle $C$ all inner faces in a singular disk diagram $D$ of $C$ are
triangles or squares.

\subsection{Lattices}
\label{s:balat}
We continue with some standard definitions concerning partially ordered sets and lattices.
Let  $({\mathcal L}, \preceq)$ be a partially ordered set.
We set $x\prec y$ if $x\preceq y$ and $x\ne y$.
The maximum element, if it exists, is denoted by $1$,  and the minimum element, if it exists,
is denoted by $0$. An element $q$ {\it covers}
element $p$ if $p\prec q$ and $p\preceq x\preceq q$ implies that either $x=p$ or $x=q$. The underlying undirected graph of the Hasse diagram of
$\mathcal L$ is called the {\it covering graph} of $\mathcal L$. If $\mathcal L$ has a minimum element 0,
then an {\it atom} is an element that covers 0.
Given two elements $p,q$ of $\mathcal L$ for $p \preceq q$,
the {\it interval} $[p,q]$ is the set  $\{ x\in {\mathcal L}: p\preceq x\preceq q\}$.
A {\it chain} is a totally ordered subset $x_0\prec x_1\prec \cdots \prec x_k$,
and the {\it length} of this chain is $k$.
The {\it length} $r[x,y]$ of an interval $[x,y]$ is defined as
the maximum length of a chain from $x$ to $y$.
If $\mathcal L$ has the minimum element 0, then the {\it rank} (or  the {\it  height}) $r(x)$ of an element $x\in {\mathcal L}$ is defined as
$r(x)=r[0,x]$.
A {\it grade function} is a function $f: \mathcal{L} \to \ZZ$
such that $f(y) = f(x) +1$ holds provided $y$ covers $x$.
If a grade function $f$ exists, then $\mathcal{L}$ is called {\em graded}.
In this case, for any two elements
$x\prec y$, all maximal chains between $x$ and $y$
have the same length $r[x,y] = f(y) - f(x)$.
In addition, if $\mathcal L$ has the minimum element $0$,
then the grade function $f$ and the rank function $r$
are the same up to a constant, i.e.,  $f= r + f(0)$.

A pair $x,y$ of elements is said to be {\em upper-bounded}
if there is a common upper bound, and is said to be
{\em lower-bounded}  if  there is a common lower bound.
For $x,y\in {\mathcal L}$, the least common upper bound, if it exists is denoted by $x\vee y$, and the greatest common lower bound, if it exists,
is denoted by $x\wedge y$. The elements $x\vee y$ and $x\wedge y$ are called, respectively, the {\it join} and {\it meet} of $x$ and $y$.
$\mathcal L$ is said to be a {\it lattice} if both $x\vee y$ and $x\wedge y$ exist for any $x,y \in {\mathcal L}$, and is said to be
a {\it (meet-)semilattice} if $x\wedge y$ exists for any $x,y\in {\mathcal L}.$
Note that if $\mathcal L$ is a semilattice, then any upper-bounded pair $x,y$ has the join.
For a set $X \subseteq \mathcal{L}$,
let $\bigvee X = \bigvee_{x \in X} x$ and $\bigwedge X = \bigwedge_{x \in X} x$ denote
the least common upper bound and the greatest common lower bound of $X$, which are also called the join and meet of $X$, respectively,

The (principal) {\em ideal} $(p)^{\downarrow}$ of an element $p$
is the subset $\{ q \in \mathcal{L}: q \preceq p \}$.
Dually, the (principal) {\em filter} $(p)^{\uparrow}$ of an element $p$
is the subset $\{ q \in \mathcal{L}: p \preceq q \}$.

A lattice $\mathcal L$ is called {\it modular}
if $x\vee (y\wedge z)=(x\vee y)\wedge z$  for any $x,y,z\in{\mathcal L}$ with $x\preceq z$. Equivalently,
a lattice $\mathcal L$ is a modular lattice if and only if its rank function  satisfies the {\it modular equality}
$r(x)+r(y)=r(x\wedge y)+r(x\vee y)$ for all $x,y\in {\mathcal L}$ \cite{Birkhoff}*{Chapter III, Corollary 1}.
A lattice is said to be {\em complemented}
if every element $p$ has another element $q$ with property $p \wedge q = 0$
and $p \vee q = 1$; such an element is called a {\it complement} of $p$.
It is known \cite{Birkhoff} that a modular lattice of a finite rank is complemented
if and only if the maximum element is the join of atoms.
More generally, a  lattice is said to be {\em relatively complemented}
if every interval $[x,y]$ is a complemented lattice.
In the case of a modular lattice with finite rank,
the complementarity and the relative complementarity are equivalent.

The {\em order complex} of a poset $\mathcal{L}$ is
an abstract simplicial complex $\Delta(\mathcal{L})$ consisting of all chains of finite lengths of $\mathcal{L}$.
In the case when $\mathcal{L}$ has $0$ or $1$ or both,
the order complex of $\mathcal{L} \setminus \{0,1\}$ is often useful,
and is called the {\em reduced order complex} of $\mathcal{L}$.

\subsection{Incidence geometries}
\label{s:bageo}

We recall some basic notions and facts about incidence geometries; see
\cites{Foundation, Sh}.
A {\em point-line geometry} is a triple $\Pi = (P, L;R)$ of sets $P, L$
and a relation $R \subseteq P \times L$ between $P$ and $L$.
Elements of $P$ are called  {\em points}, and  elements of $L$ are called  {\em lines}.
If $(p,\ell) \in R$, then we say
that the point $p$ {\it lies} on the line $\ell$  or that the line $\ell$ {\it contains} the point $p$.
If two points $p,q$ lie on a common line $\ell$, then we say that $p$ and $q$ are {\it collinear}.
The {\em collinearity graph} $G:=G(\Pi)$ of $\Pi$
is the graph whose vertex set is the set $P$ of points so that $p,q \in P$ define an edge if and only if
$p$ and $q$ are collinear. A set $S \subseteq P$ of points is called a {\em subspace} of $\Pi$ if for every line $\ell$ either
$|\ell\cap S|\le 1$ or $\ell\subseteq S$. Any subspace can be regarded as a point-line geometry $\Pi_S$ consisting of
the points $S$ and the lines of $\Pi$ generated by $S$ together with the relation obtained by restricting $R$.
The intersection of any collection of subspaces is a subspace, thus for any subset
$X$ of $P$ there exists the smallest subspace containing $X$. A subspace $S$ is called a {\it singular subspace} if
any two points of $S$ are collinear, i.e., the subgraph of $G(\Pi)$ induced by $S$ is a clique. A subspace $S$ of $\Pi$
is called a {\it convex subspace} (respectively, a {\it gated subspace}) if $S$ induces a convex (respectively, gated) subgraph of $G(\Pi).$
A point $y$ of a subspace $S$ (in particular,  of a line $\ell$) of a point-line geometry $\Pi$ is a {\it nearest point} to a point $x$,
if $y$ is a closest to $x$ point of $S$ with respect to the graph-metric of $G$, i.e., $d_G(x,y)=\min \{ d_G(x,y'): y'\in S\}$.

\subsection{Matroids}
\label{s:bamat}

A {\it matroid} on a finite set $I$ is a collection
$\mathcal B$ of subsets of $I,$ called {\it bases,}  which satisfy the
following exchange property: for all $A,B\in {\mathcal B}$ and $a\in A\setminus B$
there exists $b\in B\setminus A$ such that $A\setminus
\{ a\}\cup \{ b\}\in {\mathcal B}$ (the base $A\setminus \{ a\} \cup \{ b\}$ is obtained
from the base $A$ by an elementary exchange). It is
well-known that all the bases of a matroid have the same
cardinality. The {\it basis graph} $G=G({\mathcal B})$ of a matroid $\mathcal B$ is the
graph whose vertices are the bases of $\mathcal B$ and edges are the
pairs  $A,B$ of bases differing by a single exchange (i.e., $\vert
A\triangle B\vert=2,$ where the {\it symmetric difference} of two
sets $A$ and $B$ is written and defined by $A\triangle B=(A\setminus
B)\cup (B\setminus A)$).

A {\it $\triangle$--matroid} is a collection $\mathcal B$
of subsets of a finite set $I,$  called {\it bases} (not
necessarily equicardinal) satisfying the  symmetric exchange
property: for any  $A,B\in {\mathcal B}$ and $a\in A\triangle B,$ there
exists $b\in B\triangle A$ such that $A\triangle\{ a,b\}\in {\mathcal
B}.$ A $\triangle$--matroid whose bases all have the same cardinality
modulo 2 is called an {\it even $\triangle$--matroid}. The {\it basis
graph} $G=G({\mathcal B})$ of an even $\triangle$--matroid $\mathcal B$ is the
graph whose vertices are the bases of $\mathcal B$ and edges are the
pairs $A,B$ of bases differing by a single exchange, i.e., $\vert
A\triangle B\vert=2$ (the basis graphs of arbitrary collections
of subsets of even size of $I$ can be defined in a similar way).

\subsection{Group actions}
\label{s:bagroupactions}

For a set $X$ and a group $\Gamma$, a \emph{$\Gamma$--action on $X$} is a group homomorphism $\Gamma \to \mr{Aut}(X)$.
If $X$ is equipped with an additional structure then $\mr{Aut}(X)$ refers to the automorphisms group of this structure.
We say then that \emph{$\Gamma$ acts on $X$ by automorphisms}, and $x\mapsto gx$ denotes the automorphism being the image of $g$.
In the current paper $X$ will be a graph or a complex, and thus $\mr{Aut}(X)$ will denote graph automorphisms or cellular
automorphisms.
For a group $\Gamma$ acting by automorphisms on a complex $X$, the \emph{minimal displacement} for the action is the number
$\min \{ d(v,gv)\; : \; g\in \Gamma \setminus \{ 1 \}, \; v\in X^{(0)} \}$.

\subsection{CAT(0) spaces and Gromov hyperbolicity} Let $(X,d)$ be a metric space.
A {\it geodesic segment} joining two
points $x$ and $y$ from $X$ is a map $\rho$ from the segment $[a,b]$
of ${\mathbb R}^1$ of length $|a-b|=d(x,y)$ to $X$ such that
$\rho(a)=x, \rho(b)=y,$ and $d(\rho(s),\rho(t))=|s-t|$ for all $s,t\in
[a,b].$ A metric space $(X,d)$ is {\it geodesic} if every pair of
points in $X$ can be joined by a geodesic segment. Every (combinatorial)
graph $G=(V,E)$ equipped with its standard
distance $d:=d_G$ can be transformed into a geodesic (network-like)
space $(X_G,d)$ by replacing every edge $e=uv$ by a segment
$\gamma_{uv}=[u,v]$ of length 1; the segments may intersect only at
common ends.  Then $(V,d_G)$ is isometrically embedded in a natural
way in $(X_G,d)$.

A {\it geodesic triangle} $\Delta (x_1,x_2,x_3)$ in a geodesic
metric space $(X,d)$ consists of three points in $X$ (the vertices
of $\Delta$) and a geodesic  between each pair of vertices (the
edges of $\Delta$). A {\it comparison triangle} for $\Delta
(x_1,x_2,x_3)$ is a triangle $\Delta (x'_1,x'_2,x'_3)$ in the
Euclidean plane  ${\mathbb E}^2$ such that $d_{{\mathbb
E}^2}(x'_i,x'_j)=d(x_i,x_j)$ for $i,j\in \{ 1,2,3\}.$ A geodesic
metric space $(X,d)$ is defined to be a {\it CAT(0) space}
\cite{Gr} if all geodesic triangles $\Delta (x_1,x_2,x_3)$ of $X$
satisfy the comparison axiom of Cartan--Alexandrov--Toponogov:

\medskip
{\it If $y$ is a point on the side of $\Delta(x_1,x_2,x_3)$ with
vertices $x_1$ and $x_2$ and $y'$ is the unique point on the line
segment $[x'_1,x'_2]$ of the comparison triangle
$\Delta(x'_1,x'_2,x'_3)$ such that $d_{{\mathbb E}^2}(x'_i,y')=
d(x_i,y)$ for $i=1,2,$ then $d(x_3,y)\le d_{{\mathbb
E}^2}(x'_3,y').$}

\medskip
This simple axiom turned out to be very powerful, because CAT(0)
spaces can be characterized  in several different  natural ways (for
a full account of this theory consult the book \cite{BrHa}).  CAT(0) is
also equivalent to the convexity of the function $f\colon[0,1]\rightarrow X$
given by $f(t)=d(\alpha (t),\beta (t)),$ for any geodesics $\alpha$
and $\beta$ (which is further equivalent to the convexity of the
neighborhoods of convex sets). This implies that CAT(0) spaces are
contractible. Any two points of a CAT(0) space   can be joined
by a unique geodesic.

A metric space $(X,d)$ is $\delta$--{\it hyperbolic}
\cites{BrHa,Gr} if for any four points $u,v,x,y$ of
$X$, the two larger of the three distance sums $d(u,v)+d(x,y)$,
$d(u,x)+d(v,y)$, $d(u,y)+d(v,x)$ differ by at most $2\delta \geq 0$. A
graph $G = (V,E)$ is $\delta$--\emph{hyperbolic} if $(V,d_G)$ is
$\delta$--{hyperbolic}.  In case of geodesic metric spaces and graphs,
$\delta$--hyperbolicity can be defined in several other equivalent
ways. Here we recall some of them, which we will use in our proofs.
For geodesic metric spaces and graphs, $\delta$--hyperbolicity can be
defined (up to a constant factor) as spaces in which all geodesic triangles
are $\delta$--slim. Recall that  a geodesic triangle $\Delta(x,y,z)$ is called
$\delta$--{\it slim} if for
any point $u$ on the side $[x,y]$ the distance from $u$ to $[x,z]\cup
[z,y]$ is at most $\delta$. Equivalently, $\delta$--hyperbolicity can be defined
via the linear isoperimetric inequality: all cycles in a  $\delta$--hyperbolic
graph or  geodesic
metric space admit a disk diagram of linear area and vice-versa all graphs or geodesic
metric spaces  in which all  cycles   admit disk diagrams of linear area are hyperbolic.

\section{Further notions}
\label{s:fupre}

\subsection{Weakly modular graphs}
\label{s:wmgra}

\begin{definition}[Weak modularity] \cites{BaCh_helly,Ch_metric} A graph $G$ is \emph{weakly modular with respect to a vertex $u$}
 if its distance function $d$ satisfies the
following triangle and quadrangle conditions
(see Figure~\ref{fig-conditions}):
\begin{itemize}
\item
\emph{Triangle condition} TC($u$):  for any two vertices $v,w$ with
$1=d(v,w)<d(u,v)=d(u,w)$ there exists a common neighbor $x$ of $v$
and $w$ such that $d(u,x)=d(u,v)-1.$
\item
\emph{Quadrangle condition} QC($u$): for any three vertices $v,w,z$ with
$d(v,z)=d(w,z)=1$ and  $2=d(v,w)\leq d(u,v)=d(u,w)=d(u,z)-1,$ there
exists a common neighbor $x$ of $v$ and $w$ such that
$d(u,x)=d(u,v)-1.$
\end{itemize}
A graph $G$ is called \emph{weakly modular}
if $G$ is weakly modular with respect to any vertex $u$.
\end{definition}

\begin{figure}[ht]
\begin{center}
\includegraphics{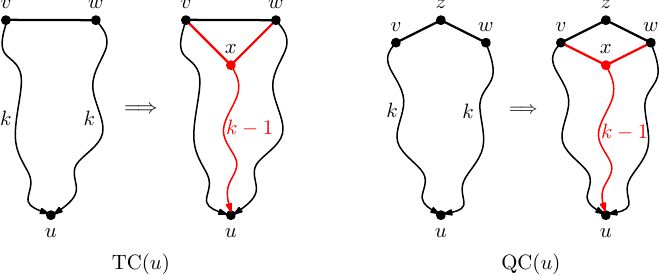}
\end{center}
\caption{Triangle and quadrangle conditions}\label{fig-conditions}
\end{figure}

Vertices $v_1,v_2,v_3$ of a graph $G$ form a {\it metric triangle}
$v_1v_2v_3$ if the intervals $I(v_1,v_2), I(v_2,v_3),$ and
$I(v_3,v_1)$ pairwise intersect only in the common end-vertices, i.e.,
$I(v_i, v_j) \cap I(v_i,v_k) = \{v_i\}$ for any $1 \leq i, j, k \leq 3$.
If $d(v_1,v_2)=d(v_2,v_3)=d(v_3,v_1)=k,$ then this metric triangle is
called {\it equilateral} of {\it size} $k.$ A metric triangle
$v_1v_2v_3$ of $G$ is a {\it quasi-median} of the triplet $x,y,z$
if the following metric equalities are satisfied:
$$\begin{array}{l}
d(x,y)=d(x,v_1)+d(v_1,v_2)+d(v_2,y),\\
d(y,z)=d(y,v_2)+d(v_2,v_3)+d(v_3,z),\\
d(z,x)=d(z,v_3)+d(v_3,v_1)+d(v_1,x).\\
\end{array}$$
If $v_1$, $v_2$, and $v_3$ are the same vertex $v$,
or equivalently, if the size of $v_1v_2v_3$ is zero,
then this vertex $v$ is called a {\em median} of $x,y,z$.
A median may not exist and may not be unique.
On the other hand, a quasi-median of every $x,y,z$ always exists:
first select any vertex $v_1$ from $I(x,y)\cap I(x,z)$ at maximal
distance to $x,$ then select a vertex $v_2$ from $I(y,v_1)\cap
I(y,z)$ at maximal distance to $y,$ and finally select any vertex
$v_3$ from $I(z,v_1)\cap I(z,v_2)$ at maximal distance to $z.$

In the sequel, we will use the following properties and characterizations of weakly modular graphs:

\begin{lemma} \cite{Ch_metric} \label{lem-weakly-modular} A graph $G$
  is weakly modular if and only if for any metric triangle $v_1v_2v_3$
  of $G$ and any two vertices $x,y\in I(v_2,v_3),$ the equality
  $d(v_1,x)=d(v_1,y)$ holds. In particular, all metric triangles of weakly modular graphs are equilateral.
\end{lemma}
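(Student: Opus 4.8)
The ``in particular'' clause is immediate from the main equivalence: applying the displayed property to a metric triangle $v_1v_2v_3$ with $x=v_2,\ y=v_3\in I(v_2,v_3)$ gives $d(v_1,v_2)=d(v_1,v_3)$, and the two remaining equalities follow by relabelling, since the notion of metric triangle is symmetric in its three vertices. So the plan is to prove the biconditional; equilaterality of metric triangles then comes for free. It is also convenient to record at the outset that $I(v_2,v_3)$ is connected and that every one of its vertices is joined to $v_2$ by a path inside $I(v_2,v_3)$ along which $d(v_2,\cdot)$ drops by one at each step; hence the ``constant distance'' assertion reduces to the following: $d(v_1,x)=d(v_1,x')$ for every edge $xx'$ with $x,x'\in I(v_2,v_3)$ and $d(v_2,x')=d(v_2,x)+1$.

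For the implication ``$\Leftarrow$'', assume $G$ satisfies the metric triangle property. By the observation above, all metric triangles of $G$ are equilateral. Given the vertices $u,v,w$ of an instance of TC($u$) (with $v\sim w$) or of QC($u$) (with $v,w$ neighbours of $z$ and $d(v,w)=2$), I would pick a quasi-median $p_1p_2p_3$ of the triple $(u,v,w)$ --- such a thing exists. Writing $s$ for the common side length of the equilateral metric triangle $p_1p_2p_3$ and comparing the three quasi-median equalities, one obtains $d(p_2,v)=d(p_3,w)=:t$ together with $2t+s=d(v,w)$. In the TC case this forces $(t,s)=(0,1)$, so $v=p_2$, $w=p_3$, and $p_1$ is a common neighbour of $v,w$ with $d(u,p_1)=d(u,v)-1$. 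In the QC case it forces $(t,s)\in\{(1,0),(0,2)\}$; when $(t,s)=(1,0)$ we have $p_1=p_2=p_3$ and this vertex is a common neighbour of $v,w$ at distance $d(u,z)-2$ from $u$, as required. The remaining possibility $(t,s)=(0,2)$ is excluded: it would make $p_1vw$ an equilateral metric triangle of size $2$ with $d(u,p_1)=d(u,z)-3$, whence $d(p_1,z)=3$ by the triangle inequality and $z\sim v$; but $z\in I(v,w)$, so the metric triangle property applied to $p_1vw$ yields $d(p_1,z)=d(p_1,v)=2$, a contradiction.

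For the implication ``$\Rightarrow$'', assume $G$ is weakly modular, i.e.\ TC($u$) and QC($u$) hold for every $u$. I would argue by induction on a size parameter of the metric triangle, say its perimeter $d(v_1,v_2)+d(v_2,v_3)+d(v_3,v_1)$, the base cases (triangles and degenerate triangles) being trivial. By the reduction above it suffices to handle an edge $xx'$ with $x,x'\in I(v_2,v_3)$ and $d(v_2,x')=d(v_2,x)+1$. Assuming $d(v_1,x)\neq d(v_1,x')$, I would apply the triangle and quadrangle conditions at $v_1$ --- and where needed at $v_2$ and $v_3$ --- to $x,x'$ and their neighbours on geodesics towards $v_2$ and $v_3$, so as to produce either a vertex contradicting the defining property $I(v_i,v_j)\cap I(v_i,v_k)=\{v_i\}$ of a metric triangle, or a metric triangle of strictly smaller perimeter through $x$ or $x'$, to which the induction hypothesis applies and delivers the contradiction. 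Equilaterality of $v_1v_2v_3$ then follows from the property just established.

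The hard part is the inductive step in ``$\Rightarrow$''. The obvious candidates for a smaller metric triangle --- for instance $v_1v_2x'$ or $v_1x'v_3$ obtained by sliding one apex along $I(v_2,v_3)$ --- need not themselves be metric triangles, and the case analysis around TC and QC at $v_1$ is delicate. The clean way to organize it is to let the failure to be a metric triangle count as progress: whenever some interval $I(v_i,v_j)$ meets $I(v_i,v_k)$ in more than $\{v_i\}$, the quasi-median selection recalled above extracts from the configuration a genuine metric triangle of strictly smaller size, on which the induction hypothesis is available. Getting this reduction right --- and in particular checking that at every step one really either meets a contradiction or strictly decreases the chosen parameter --- is the technical core of the proof.
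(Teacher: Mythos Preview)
The paper does not give its own proof of this lemma; it is quoted from \cite{Ch_metric}. So there is nothing in the paper to compare against, and what matters is whether your argument stands on its own.

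Your ``$\Leftarrow$'' direction is complete and correct. The quasi-median trick is clean: in both the TC and QC cases the equation $2t+s=d(v,w)$ pins down $(t,s)$, and your elimination of $(t,s)=(0,2)$ in the QC case by computing $d(p_1,z)$ in two ways is exactly right. The deduction of equilaterality from the main equivalence is also fine.

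Your ``$\Rightarrow$'' direction, however, is not a proof but a plan --- and you say as much yourself in the final paragraph. The reduction to edges $xx'$ in $I(v_2,v_3)$ with $d(v_2,x')=d(v_2,x)+1$ is correct, and the idea of inducting on the perimeter and using quasi-medians to extract a smaller genuine metric triangle whenever a candidate triple fails to be one is the right instinct. But the sentence ``so as to produce either a vertex contradicting the defining property $I(v_i,v_j)\cap I(v_i,v_k)=\{v_i\}$\ldots or a metric triangle of strictly smaller perimeter, to which the induction hypothesis applies and delivers the contradiction'' hides the entire content of the argument. Concretely: suppose $d(v_1,x')=d(v_1,x)+1$. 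You pass to a quasi-median $q_1q_2q_3$ of $(v_1,v_2,x')$; one checks $q_2=v_2$ and that the perimeter drops. The induction hypothesis then tells you $q_1v_2q_3$ is equilateral and has the constant-distance property --- but you have not explained how to translate that information back into a statement about $d(v_1,x)$ versus $d(v_1,x')$, nor how the symmetric case $d(v_1,x')=d(v_1,x)-1$ is handled without circularity (the obvious ``swap $v_2$ and $v_3$'' does not help, since the two subcases are not symmetric under that swap). That bridging step is the whole difficulty, and it is missing.
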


\begin{lemma} \cite{Ch_metric} \label{lem-weakly-modular_gated_hull} Let $G$ be a weakly modular graph
and $H$ is a connected subgraph of $G$. Then
\begin{itemize}
\item[(i)] $H$ is convex if and only if $H$ is locally convex;
\item[(ii)] $H$ is gated if and only if for any two distinct vertices $u,v$ of $H$,
any common neighbor of $u,v$ belongs to $H$.
\end{itemize}
\end{lemma}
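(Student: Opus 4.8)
\medskip
\noindent\textbf{Proof strategy.}
The two forward implications are quick. In (i), convexity trivially implies local convexity, since local convexity only constrains intervals $I(a,b)$ of distance-two pairs $a,b\in H$ that already possess a common neighbour inside $H$. For the forward implication of (ii): a gated subgraph is convex (hence locally convex), and if $u,v$ are distinct vertices of $H$ and $x\notin H$ is a common neighbour of $u$ and $v$, then the gate $x'$ of $x$ satisfies $x'\in I(x,u)$ with $d(x,u)=1$ and $x'\neq x$, forcing $x'=u$; symmetrically $x'=v$, contradicting $u\neq v$. So the only real content lies in the two reverse implications.

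For the reverse implication of (i) the plan is a minimal-counterexample argument. Assume $H$ is connected and locally convex but not convex, and choose $u,v\in H$ with $I(u,v)\not\subseteq H$ and $k:=d(u,v)$ minimal; fix a ``bad'' vertex $w\in I(u,v)\setminus H$. When $k$ is large I would pick $w$ at minimal distance $j$ to an endpoint, say $j=d(w,v)$, and walk one step from $w$ towards $v$ along a geodesic: the resulting vertex $z$ must lie in $H$ (otherwise it is a closer bad vertex), and since $w\in I(u,z)$ with $d(u,z)=k-j+1<k$, minimality of $k$ gives $w\in I(u,z)\subseteq H$, a contradiction. This reduces matters to a handful of small configurations in which every bad vertex hugs the endpoints (essentially $k=2$). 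Those I would treat using connectedness of $H$: take a shortest path $u=h_0,h_1,\dots,h_m=v$ inside $H$; if $m=2$ then local convexity applied to $h_0,h_2$ (which have the common neighbour $h_1\in H$) finishes, and if $m\ge 3$ one ``folds'' this path towards $u$, applying $\mathrm{TC}(u)$ or $\mathrm{QC}(u)$ at a vertex $h_i$ maximizing $d(u,\cdot)$ along the path to obtain a common neighbour $q$ of $h_{i-1},h_{i+1}$ with $d(u,q)$ strictly smaller; local convexity forces $q\in H$, and replacing $h_i$ by $q$ decreases $\max_i d(u,h_i)$, so iterating contradicts $d(u,v)=2$.

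For the reverse implication of (ii): the hypothesis that $H$ is closed under common neighbours of its distinct vertices trivially implies local convexity (every interior vertex of $I(a,b)$ for a distance-two pair of $H$ is such a common neighbour), hence $H$ is convex by part (i). To manufacture gates I would induct on $d(x,H)$: given $x\notin H$, choose a neighbour $x_1$ with $d(x_1,H)=d(x,H)-1$, let $g\in H$ be its gate (from the induction), note $d(x,g)=d(x,H)=:r$, and check that $g$ is the gate of $x$. If not, take a counterexample $y\in H$ (i.e.\ $d(x,y)<r+d(g,y)$) with $d(x,y)$ minimal and a neighbour $y_2\in N(y)\cap I(g,y)$, which lies in $H$ by convexity; then $y,y_2$ are distinct vertices of $H$, and according to whether $d(x,y_2)$ equals $d(x,y)-1$ or $d(x,y)$ one obtains respectively a counterexample closer to $x$, or --- via $\mathrm{TC}(x)$ applied to the adjacent equidistant pair $y,y_2$ together with the common-neighbour hypothesis --- a vertex $t\in H$ with $d(x,t)=d(x,y)-1$ which is again a counterexample, in both cases contradicting minimality; the remaining case $d(x,y_2)=d(x,y)+1$ is pushed through by a secondary application of the quadrangle condition (or by re-minimizing with $d(g,y)$ as a tie-breaker), and once $d(x,y)$ has been driven down to $r$ the vertex produced lies in $H$ at distance $r-1$ from $x$, contradicting $d(x,H)=r$.

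The hard part, in each reverse implication, is guaranteeing that the descent never stalls: in (i) this is the ``folding'' step for $k=2$, where one must choose a suitably minimal connecting path in $H$ so that the vertex output by $\mathrm{TC}(u)$ or $\mathrm{QC}(u)$ genuinely lands in $H$; in (ii) it is the case where the minimal counterexample $y$ and its neighbour $y_2$ fail to be equidistant from $x$, where the quadrangle condition must be invoked to continue. Everything else is routine bookkeeping with the triangle inequality and with the metric-triangle / quasi-median machinery recorded in Lemma~\ref{lem-weakly-modular}.
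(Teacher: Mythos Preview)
The paper itself does not prove this lemma (it is quoted from \cite{Ch_metric}), so there is no in-paper argument to compare against. Your treatment of (ii) is sound: the tricky case $d(x,y_2)=d(x,y)+1$ does require the secondary QC step and the $d(g,y)$ tie-breaker you mention, but it goes through.

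There is, however, a genuine gap in your folding step for the reverse direction of (i). You claim that at a vertex $h_i$ maximising $d(u,\cdot)$ along the $H$-path one applies ``$\mathrm{TC}(u)$ or $\mathrm{QC}(u)$'' to obtain a common neighbour $q$ of $h_{i-1}$ and $h_{i+1}$, after which local convexity (via $h_i\in H$) forces $q\in H$. But $\mathrm{QC}(u)$ requires both $h_{i-1}$ and $h_{i+1}$ to sit at level $M-1$, which fails whenever two consecutive path-vertices share the maximum (e.g.\ distance profile $0,1,2,3,3,2$ from $u$); and $\mathrm{TC}(u)$, applied to the edge $h_ih_{i+1}$, yields a vertex adjacent to $h_i,h_{i+1}$ but not to $h_{i-1}$, so local convexity cannot be invoked to place it in $H$. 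The repair is to use meshedness (Lemma~\ref{wm-meshed}) instead: choosing $i$ minimal with $d(u,h_i)=M$ ensures $d(u,h_{i-1})=M-1$, and meshedness applied to the nonadjacent pair $h_{i-1},h_{i+1}$ (nonadjacent since $P$ is shortest in $H$) produces a common neighbour $q$ with $2d(u,q)\le (M-1)+M$, hence $d(u,q)\le M-1$; now local convexity (with witness $h_i\in H$) gives $q\in I(h_{i-1},h_{i+1})\subseteq H$, and replacing $h_i$ by $q$ makes progress. With this correction the folding establishes that $H$ is isometric in $G$, after which a direct induction on $k=d(u,v)$ using TC/QC proves convexity --- and this also absorbs the $k=3$ case that your ``reduction to essentially $k=2$'' does not cover (for $k=3$ every interior vertex of $I(u,v)$ is adjacent to an endpoint, so $j=1$ is unavoidable there).
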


By Lemma \ref{lem-weakly-modular_gated_hull},
the gated hull $\lgate S\rgate$ of any set $S$ inducing a connected subgraph
of a finite weakly modular graph $G$ can be constructed by the following procedure:


\medskip

\begin{itemize}
\item[] {\textbf{GATED-HULL}}($S$)

\item[] $U \leftarrow S$;
\item[] \textbf{while} there exist $u, v \in U$ and $w \notin U$ such that $w \sim u,v$ \textbf{do}
\item[] \qquad \quad $U \leftarrow U \cup \{w\}$
\item[] \textbf{return} $U$.
\end{itemize}

\medskip

We can extend the procedure GATED-HULL to arbitrary weakly modular graphs $G$ in the following way.
Let $\triangleleft$ be a well-order on $V(G)$ and let $S$ be any subset of vertices inducing a
connected subgraph of $G$.   We define a subgraph $K$ of
$G$ by (possibly transfinite) induction as follows. Set $H_0:=G(S)$. Given an ordinal $\alpha$, assume that for
every $\beta < \alpha$, we have defined $H_\beta$, and let
$H_{<\alpha}$ be the subgraph induced by $\bigcup_{\beta < \alpha}V(H_\beta)$.  Let
$$X=\{v \in V(G)\setminus V(H_{< \alpha}):
\mbox{ there exist } x,y \in V(H_{<\alpha}) \mbox{ such that } v \sim x,y\}.$$
If $X$ is nonempty, then let $v$ be the least element of $(X,\triangleleft)$
and define $H_{\alpha}$ to be the subgraph of $G$ induced by $V(H_{< \alpha}\cup\{v\})$.
If $X$ is empty, then set $K:=H_{< \alpha}$.

\begin{lemma} \label{p:gate3}
$K$ is the gated hull of $S$ in $G$.
\end{lemma}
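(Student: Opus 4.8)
The plan is to establish the two inclusions $V(K)\subseteq V(\lgate S\rgate)$ and $V(\lgate S\rgate)\subseteq V(K)$, using in both directions the characterization of gated subgraphs of weakly modular graphs from Lemma~\ref{lem-weakly-modular_gated_hull}(ii). Two preliminary remarks. First, $\lgate S\rgate$ exists (Zorn's lemma) and, being gated, is convex and hence connected, so that Lemma~\ref{lem-weakly-modular_gated_hull}(ii) is applicable to it. Second, the construction terminates: whenever $X\neq\emptyset$ a vertex lying in no previous $H_\beta$ is adjoined, so the vertex sets $V(H_{<\alpha})$ strictly grow at each such step, and since $V(G)$ is a set the recursion must reach a stage $\alpha^*$ with $X=\emptyset$, whereupon $K=H_{<\alpha^*}$. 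Moreover, each $H_\alpha$, and therefore $K$, is connected: this is clear for $H_0=G(S)$, it passes to the nested unions $H_{<\alpha}$ (any two of whose vertices already lie in a common $H_\beta$ with $\beta<\alpha$), and adjoining a vertex adjacent to a vertex of $H_{<\alpha}$ preserves it.

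For the inclusion $V(K)\subseteq V(\lgate S\rgate)$ I would prove by transfinite induction that $V(H_\alpha)\subseteq V(\lgate S\rgate)$ for every $\alpha$. The base case $V(H_0)=V(S)\subseteq V(\lgate S\rgate)$ is immediate. For $\alpha>0$, the induction hypothesis gives $V(H_{<\alpha})\subseteq V(\lgate S\rgate)$; if at this stage a vertex $v$ with two distinct neighbours $x,y\in V(H_{<\alpha})$ is adjoined, then $x,y\in V(\lgate S\rgate)$ and, as $\lgate S\rgate$ is gated and connected, Lemma~\ref{lem-weakly-modular_gated_hull}(ii) forces $v\in V(\lgate S\rgate)$; hence $V(H_\alpha)\subseteq V(\lgate S\rgate)$. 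Taking $\alpha=\alpha^*$ gives $V(K)\subseteq V(\lgate S\rgate)$.

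For the reverse inclusion it is enough to check that $K$ is a gated subgraph of $G$ containing $S$, since $\lgate S\rgate$ is by definition the smallest such. Containment holds because $V(S)=V(H_0)\subseteq V(K)$ and $K$ is induced. Gatedness follows from Lemma~\ref{lem-weakly-modular_gated_hull}(ii) used the other way: $K$ is connected, and if $x,y$ are distinct vertices of $K$ with a common neighbour $w$ in $G$, then $w\in V(K)$, for otherwise $w\in V(G)\setminus V(H_{<\alpha^*})$ is adjacent to $x,y\in V(H_{<\alpha^*})$ and would lie in the set $X$ computed at stage $\alpha^*$, contradicting $X=\emptyset$; since $K$ is induced, $w$ is then a common neighbour of $x,y$ inside $K$. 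Thus $K$ is gated, so $\lgate S\rgate\subseteq K$, and combined with the first inclusion, $K=\lgate S\rgate$.

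I do not anticipate a genuine obstacle: this is simply the transfinite, well-order-driven version of the finite procedure GATED-HULL, and Lemma~\ref{lem-weakly-modular_gated_hull}(ii) supplies all the substance in both directions. The only points needing a little care are the transfinite bookkeeping at limit stages together with the termination argument, and the observation that the two vertices $x,y$ witnessing membership in $X$ should be distinct (the intended reading, matching step~1 of GATED-HULL), so that Lemma~\ref{lem-weakly-modular_gated_hull}(ii) applies.
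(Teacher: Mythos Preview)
Your proposal is correct and follows essentially the same approach as the paper's proof: both inclusions are established via Lemma~\ref{lem-weakly-modular_gated_hull}(ii), first showing each $H_\alpha$ stays inside $\lgate S\rgate$ and then verifying that $K$ itself satisfies the common-neighbour closure condition. The paper is slightly terser---it derives $V(K)\subseteq V(\lgate S\rgate)$ by observing directly that a vertex outside $\lgate S\rgate$ with two neighbours inside cannot have a gate, rather than invoking Lemma~\ref{lem-weakly-modular_gated_hull}(ii)---but this is an inessential variation, and your extra care with termination, connectedness, and the distinctness of $x,y$ is well placed.
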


\begin{proof}
Let $A$ be the gated hull of $S$. First we prove that all vertices
of $K$ belong to $A$. Suppose by way of contradiction that
$K\setminus A \neq \emptyset.$ From all vertices in $K \setminus A$
we choose $v$ with smallest $\alpha$, such that $v \notin H_{<\alpha}$,
$v \in V(H_\alpha)$, i.e., all vertices from $H_{<\alpha}$ are contained
in $A$. Since $v \in V(H_\alpha)$, it has at least two neighbors in $H_{< \alpha}$
and thus in $A$. Therefore, there is no gate of $v$ in $A$, a contradiction.

On the other hand, since $G$ is weakly modular, by Lemma \ref{lem-weakly-modular}, $K$ is gated if
and only if $K$ is connected and for every $x,y\in V(K)$ at distance at most $2$, any common
neighbor $v$ of $x$ and $y$ also belongs to $K$.
All these are obviously true by the definition of $K$.
\end{proof}

The following property of gated sets in arbitrary metric spaces is fundamental. Recall that a
family of subsets $\mathcal F$ of a set $X$ satisfies the {\it (finite) Helly property} if for any (finite)
subfamily ${\mathcal F}'$ of $\mathcal F$, the intersection $\bigcap {\mathcal F}'=\bigcap \{ F: F\in {\mathcal F}'\}$
is nonempty if and only if $F\cap F'\ne \emptyset$ for any pair $F,F'\in {\mathcal F}'$. The finite Helly property
for gated sets is well-known, see, for example, \cite{vdV}*{Proposition 5.12 (2)}.

\begin{lemma}\label{lem:gated_Helly}
The family of gated sets of any metric space $(X,d)$ has the finite Helly property.
\end{lemma}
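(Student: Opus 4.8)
The plan is to prove this by induction on the cardinality of the finite subfamily, reducing everything to the case of three gated sets, which is handled directly by a ``gate chasing'' argument. The base cases of one or two sets are vacuous, so the real content is a three-set lemma plus a standard Helly-type reduction.

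First I would settle the case of three gated sets $A,B,C$ with $A\cap B\ne\emptyset$, $B\cap C\ne\emptyset$, $C\cap A\ne\emptyset$. Pick a point $p\in B\cap C$ and let $p'$ be the gate of $p$ in $A$; I claim $p'\in A\cap B\cap C$. By construction $p'\in A$. To see $p'\in B$, choose any $q\in A\cap B$: since $p'$ is the gate of $p$ in $A$ and $q\in A$, we have $p'\in I(p,q)$; as $B$ is gated, hence convex, and $p,q\in B$, the interval $I(p,q)$ lies in $B$, so $p'\in B$. Repeating the argument with a point $r\in A\cap C$ in place of $q$, and using convexity of $C$ together with $p,r\in C$, gives $p'\in C$. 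This proves the three-set case.

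Then I would run the usual induction. Suppose $\mathcal{F}'=\{F_1,\dots,F_n\}$ with $n\ge 4$ is a family of gated sets whose pairwise intersections are all nonempty, and consider the auxiliary family $\mathcal{G}=\{F_1\cap F_n,\dots,F_{n-1}\cap F_n\}$. Each $F_i\cap F_n$ is gated, since (as recorded above) the intersection of two gated sets is gated. Moreover any two members $F_i\cap F_n$ and $F_j\cap F_n$ meet in $F_i\cap F_j\cap F_n$, which is nonempty by the three-set case applied to the pairwise-intersecting gated sets $F_i,F_j,F_n$. Hence $\mathcal{G}$ consists of $n-1$ pairwise-intersecting gated sets, and the induction hypothesis produces a point in $\bigcap_{i=1}^{n-1}(F_i\cap F_n)=\bigcap_{i=1}^{n}F_i$, completing the induction.

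The argument is essentially routine; the only point that needs care is that the facts ``a gated set is convex'' and ``the intersection of two gated sets is gated'', stated earlier in the graph setting, are used here for an arbitrary metric space $(X,d)$. This is harmless: both gatedness and convexity are purely metric notions, with $I(x,y)=\{z:d(x,z)+d(z,y)=d(x,y)\}$, and the standard proofs go through verbatim; alternatively one may simply invoke \cite{vdV}. With this understood, there is no genuine obstacle, and the step I would be most careful about is just keeping the roles of the chosen points $p,q,r$ straight in the three-set case.
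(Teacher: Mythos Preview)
Your proof is correct. The paper does not actually give a proof of this lemma; it simply records the result as well-known and cites \cite[Proposition 5.12 (2)]{vdV}. Your argument---reducing to the three-set case by gate chasing, then running the standard induction using that intersections of gated sets are gated---is exactly the classical proof, and the caveat you flag (that convexity of gated sets and closure under intersection hold in arbitrary metric spaces, not just graphs) is correctly handled. One small stylistic point: when you take the gate $p'$ of $p\in B\cap C$ in $A$, you should note that if $p\in A$ already then $p'=p$ and you are done; the paper's definition of a gate only covers points outside the gated set, so the convention that a point in the set is its own gate should be made explicit.
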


\begin{definition}[Local weak modularity]
\label{d:loc-weak-mod}
A graph $G$ is \emph{locally weakly modular with respect to a vertex $u$}
 if its distance function $d$ satisfies the
following local triangle and quadrangle conditions
(see Figure~\ref{fig-conditions}):
\begin{itemize}
\item
\emph{Local triangle condition} LTC($u$):  for any two adjacent vertices $v,w$
 such that $d(u,v)=d(u,w)=2$ there exists a common neighbor $x$ of $u$,  $v$
and $w$.
\item
\emph{Local quadrangle condition} LQC($u$): for any three vertices
$v,w,z$ such that $z \sim v, w$ and $d(v,w)=d(u,v)=d(u,w)=d(u,z)-1=
2$, there exists a common neighbor $x$ of $u$, $v$ and $w$.
\end{itemize}
A graph $G$ is \emph{locally weakly modular} if $G$ is locally weakly modular with respect to any vertex $u$.
\end{definition}

\subsection{Classes of weakly modular graphs}\label{classes}
We continue with the definition and basic properties of several known or new classes of weakly modular graphs. These classes
are defined either by forbidden isometric or induced subgraphs or by restricting the size of the metric
triangles of $G$.


\subsubsection{Median, modular, orientable modular, strongly modular graphs, and swm-graphs}
A graph $G$  is called {\it median} if  $|I(u,v)\cap I(v,w)\cap I(w,v)|=1$ for every triplet $u,v,w$ of vertices, i.e., every triplet of vertices has a unique median.
Median graphs
can be characterized in several different ways and that they play an important role in geometric group theory.
Here we present only the following characterizations and properties of median graphs,
which are related with the subject of our paper:
\begin{theorem} \label{median_survey} For a graph $G$, the following conditions are equivalent:
\begin{itemize}
\item[(i)] $G$ is a median graph;
\item[(ii)] \cite{Ba_retracts} $G$ is a retract of a hypercube;
\item[(iii)] $G$ is an isometric subgraph of a hypercube in which the half-spaces have convex boundaries;
\item[(iv)] \cite{Ch_CAT} $G$ is the 1-skeleton of a CAT(0) cube complex $X$;
\item[(v)] \cite{Gr} $G$ is the 1-skeleton of a simply connected cube complex $X$
satisfying the cube condition.
\end{itemize}
\end{theorem}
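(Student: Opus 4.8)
The plan is to establish the cycle (i)$\Rightarrow$(iii)$\Rightarrow$(iv)$\Rightarrow$(i), to add the equivalence (iv)$\Leftrightarrow$(v), and to treat (i)$\Leftrightarrow$(ii) separately; most individual implications are classical (due to the cited authors), so I concentrate on how the pieces fit together. Whenever $G$ is a median graph, $X$ will denote the cube complex obtained from $G$ by filling in each subgraph isomorphic to a hypercube $H_k$ with a $k$--cube; the median property makes these subgraphs gated, so $X$ is a well-defined cube complex with $G(X)=G$. For (i)$\Rightarrow$(iii): $G$ is bipartite (a triangle has no median, and a shortest longer odd cycle is excluded by applying the triangle condition at the vertex antipodal to a midpoint edge and using minimality); a bipartite median graph has a transitive Djokovi\'c--Winkler relation $\Theta$ (this is where the quadrangle condition enters), so by Winkler's criterion $G$ embeds isometrically into a hypercube with the $\Theta$--classes as coordinates. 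In a partial cube a geodesic crosses each $\Theta$--class at most once, so each halfspace is convex; the one place where the full median property is needed is to see that each halfspace \emph{boundary} $\partial H_i^{+}$ is convex, which follows from a short argument locating, for $x,y\in\partial H_i^{+}$ and $z\in I(x,y)$, a coordinate-$i$ neighbour of $z$ as a median of $z$ with the $\Theta$--partners of $x$ and $y$.

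For (iii)$\Rightarrow$(iv), take the cube complex $X$ above. Convexity of the halfspace boundaries is exactly what makes $X$ nonpositively curved: if $u_1,u_2,u_3\in N(v)$ pairwise lie in squares through $v$, then in the ambient hypercube they are $v\triangle c_1,\ v\triangle c_2,\ v\triangle c_3$ with each $v\triangle c_i\triangle c_j\in G$, and convexity of $\partial H_{c_1}^{+}$ applied to the $2$--interval $I(v\triangle c_1\triangle c_2,\ v\triangle c_1\triangle c_3)$ forces $v\triangle c_1\triangle c_2\triangle c_3\in G$; hence these eight vertices span a $3$--cube, so the link of $v$ is flag, and the higher-dimensional cases are analogous. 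Simple connectedness of $X$ follows by the standard contraction of a cycle of $G$ across a $\Theta$--class that it crosses, again using convexity of halfspace boundaries. Gromov's local-to-global theorem for cube complexes then yields that $X$ is CAT(0).

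For (iv)$\Rightarrow$(i) and (iv)$\Leftrightarrow$(v): let $X$ be a CAT(0) cube complex and $G=G(X)$. By Sageev's theorem each hyperplane of $X$ is convex and separates $X$ into two convex halfspaces; recording the sides of a vertex with respect to all hyperplanes gives an isometric embedding of $G$ into a hypercube on the set of hyperplanes, so $G$ is a partial cube. Moreover the halfspaces are gated in $G$, so for $x,y,z\in V(G)$ the halfspaces containing at least two of $x,y,z$ pairwise intersect (any two of them already share one of $x,y,z$) and hence, by the finite Helly property for gated sets (Lemma~\ref{lem:gated_Helly}), have a common vertex; a vertex lies on the majority side of every halfspace in this way precisely when it is a median of $x,y,z$, and such a vertex is unique in a partial cube, so $G$ is median. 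The equivalence (iv)$\Leftrightarrow$(v) is Gromov's combinatorial characterization: a cube complex is CAT(0) iff it is simply connected and every vertex link is a flag simplicial complex, and the flag-link condition is precisely what the cube condition asserts for cube complexes.

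For (i)$\Leftrightarrow$(ii): if $H$ is a retract of a median graph $G$ via a nonexpansive idempotent $\varphi$, then $H$ is isometrically embedded, and for $x,y,z\in H$ with median $m$ in $G$ one has $d(x,\varphi m)+d(\varphi m,y)\le d(x,m)+d(m,y)=d(x,y)$ together with the two symmetric inequalities, so $\varphi m\in I(x,y)\cap I(y,z)\cap I(z,x)$ is a median of $x,y,z$ in $G$; by uniqueness $\varphi m=m\in H$, and then medians are unique in $H$ as well. Conversely, embedding a median graph $G$ isometrically into a hypercube $Q$ as in (iii), Bandelt's construction produces an idempotent nonexpansive map $Q\to G$ (sending a vertex of $Q$ to the appropriate coordinatewise median in $G$), exhibiting $G$ as a retract of $Q$. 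The step I expect to be the main obstacle is (iv)$\Rightarrow$(i) together with the criterion (iv)$\Leftrightarrow$(v): extracting the median structure from the hyperplane/halfspace combinatorics of a CAT(0) cube complex (Sageev--Roller), and proving that simple connectedness plus the cube condition already force the CAT(0) metric, is where the genuine work lies, whereas (i)$\Rightarrow$(iii), (iii)$\Rightarrow$(iv) and (i)$\Leftrightarrow$(ii) are largely bookkeeping with the median operation.
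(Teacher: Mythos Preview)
The paper does not prove this theorem; it is stated as a survey of known characterizations with citations to the original sources, so there is no proof in the paper to compare against. Your outline is a reasonable synthesis of the literature, but two steps need repair. In (iii)$\Rightarrow$(iv), convexity of $\partial H_{c_1}^{+}$ applied to $I(v\triangle c_1\triangle c_2,\ v\triangle c_1\triangle c_3)$ does not force $v\triangle c_1\triangle c_2\triangle c_3\in G$: a priori that interval in $G$ contains only $v\triangle c_1$, and convexity then tells you nothing new. The correct move is to use the opposite boundary: $v\triangle c_2$ and $v\triangle c_3$ both lie in $\partial H_{c_1}^{-}$ (witnessed by their neighbours $v\triangle c_1\triangle c_2$ and $v\triangle c_1\triangle c_3$), and their $G$--interval genuinely contains $v\triangle c_2\triangle c_3$; convexity of $\partial H_{c_1}^{-}$ then puts $v\triangle c_2\triangle c_3$ in that boundary, which says precisely that its $c_1$--neighbour $v\triangle c_1\triangle c_2\triangle c_3$ lies in $G$.

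The more serious gap is in (iv)$\Rightarrow$(i). Lemma~\ref{lem:gated_Helly} is the \emph{finite} Helly property, whereas the family of halfspaces containing at least two of $x,y,z$ has one member for every hyperplane and is therefore generally infinite. Restricting to the finitely many hyperplanes separating some pair among $\{x,y,z\}$ does not rescue the argument: a vertex common to those majority halfspaces can still lie on the wrong side of a hyperplane that does not separate $\{x,y,z\}$, and then it fails to lie in $I(x,y)$. The standard proofs (Chepoi~\cite{Ch_CAT}, Roller, Sageev) avoid this by either constructing the coordinate-wise majority vertex and proving inductively, via the flag link condition, that it lies in $G$, or by deriving the quadrangle condition and $K_{2,3}$--freeness from that link condition and then using that bipartite weakly modular $K_{2,3}$--free graphs are median. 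Your Helly shortcut would need an additional ingredient (for instance, that the gated hull of $\{x,y,z\}$ is finite), and establishing that is essentially as hard as the direct arguments.
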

For other properties and characterizations of median graphs, see the survey \cite{BaCh}; for some other results on CAT(0) cube complexes, see the  paper \cite{Sag}.

A graph $G$ is called {\it modular} if $I(u,v)\cap I(v,w)\cap I(w,v)\ne \emptyset$ for every triplet $u,v,w$ of vertices, i.e., every triplet of vertices admits
a (not necessarily unique) median. Clearly a median graph is modular.
In view of Lemma \ref{lem-weakly-modular}, modular graphs are weakly modular.
Moreover, summarizing some well-known results from \cite{BVV} and \cite{Ch_metric},
modular graphs can be characterized in the following way:
\begin{lemma} \label{lem-modular} For a graph $G$, the following conditions are equivalent:
\begin{itemize}
\item[(i)] $G$ is modular;
\item[(ii)] all metric triangles of $G$ have size 0;
\item[(iii)] $G$ is a bipartite weakly modular graph;
\item[(iv)] $G$ is a triangle-free graph satisfying the quadrangle condition.
\end{itemize}
\end{lemma}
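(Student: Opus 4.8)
The plan is to run the implications (i)\,$\Rightarrow$\,(iii)\,$\Rightarrow$\,(ii)\,$\Rightarrow$\,(i), with (i)\,$\Leftrightarrow$\,(ii) and (iii)\,$\Leftrightarrow$\,(iv) established directly; the step (iii)\,$\Rightarrow$\,(ii) carries the real weight, the others being short. The equivalence (i)\,$\Leftrightarrow$\,(ii) is a quasi-median argument. If $G$ is modular and $v_1v_2v_3$ is a metric triangle, a median $m$ of $v_1,v_2,v_3$ lies in $I(v_1,v_2)\cap I(v_1,v_3)=\{v_1\}$ and, symmetrically, equals $v_2$ and $v_3$, so the triangle is trivial and (ii) holds. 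Conversely, under (ii) any quasi-median of a triple $x,y,z$ — such a metric triangle always exists — has size $0$, i.e. is a single vertex $m$, and its defining equalities then say $m\in I(x,y)\cap I(y,z)\cap I(z,x)$, so $m$ is a median and $G$ is modular.

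For (i)\,$\Rightarrow$\,(iii), suppose $G$ is modular. If $G$ were not bipartite, a breadth-first-search layer around some vertex $u$ would contain an edge $vw$ with $d(u,v)=d(u,w)=:k$; then a median $m$ of $u,v,w$ would lie in $I(v,w)=\{v,w\}$, forcing $d(u,v)$ and $d(u,w)$ to differ by $1$, a contradiction. So $G$ is bipartite, and by the same reasoning the hypothesis of $\mathrm{TC}(u)$ is never met, so $\mathrm{TC}(u)$ holds vacuously. For $\mathrm{QC}(u)$, given $z\sim v,w$ with $d(v,w)=2$ and $d(u,v)=d(u,w)=d(u,z)-1=k$, a median $m$ of $u,v,w$ lies in $I(v,w)$ and cannot be $v$ or $w$ (otherwise $d(u,w)=k+2$ or $d(u,v)=k+2$); hence $m$ is a common neighbour of $v$ and $w$, and $m\in I(u,v)$ gives $d(u,m)=k-1$, exactly the vertex demanded by $\mathrm{QC}(u)$. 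Thus $G$ is bipartite and weakly modular, which is (iii).

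The main step is (iii)\,$\Rightarrow$\,(ii). Assume $G$ bipartite and weakly modular, and suppose some metric triangle is nontrivial; by Lemma~\ref{lem-weakly-modular} every metric triangle is equilateral, so choose one, $v_1v_2v_3$, of minimum positive size $k$. Its three shortest sides form a closed walk of even length $3k$, so $k$ is even and $k\ge 2$. Let $a$ be the neighbour of $v_1$ on a shortest $(v_1,v_2)$-path, so $d(v_1,a)=1$, $d(a,v_2)=k-1$, and by Lemma~\ref{lem-weakly-modular} applied to $a,v_1\in I(v_1,v_2)$ also $d(a,v_3)=d(v_1,v_3)=k$. Take a quasi-median $w_1w_2w_3$ of $a,v_2,v_3$: it is an equilateral metric triangle of size at most $d(a,v_2)=k-1$, so by minimality of $k$ it is a single vertex $m$, a median of $a,v_2,v_3$. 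From $m\in I(v_2,v_3)$ and Lemma~\ref{lem-weakly-modular} we get $d(v_1,m)=d(v_1,v_2)=k$, whence $d(a,m)\ge d(v_1,m)-d(v_1,a)=k-1$; since $m\in I(a,v_2)$ gives $d(a,m)+d(m,v_2)=k-1$, we conclude $m=v_2$. But $m=v_2\in I(a,v_3)$ reads $(k-1)+k=d(a,v_3)=k$, so $k=1$, contradicting $k\ge 2$. Hence no nontrivial metric triangle exists, proving (ii). I expect the delicate part to be precisely this descent: isolating the reduced triple $a,v_2,v_3$ whose quasi-median must degenerate, then using Lemma~\ref{lem-weakly-modular} twice to pin the resulting median to $v_2$ and extract the numerical contradiction.

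It remains to note (iii)\,$\Leftrightarrow$\,(iv). For a triangle-free graph the triangle condition is equivalent to bipartiteness: in a bipartite graph no edge has its two endpoints equidistant from a third vertex, so $\mathrm{TC}$ is vacuous and weak modularity amounts to the quadrangle condition; conversely, in a triangle-free graph a common neighbour of two adjacent vertices would be a triangle, so $\mathrm{TC}(u)$ can hold only when no edge has equidistant endpoints from $u$, i.e. $G$ is bipartite. Hence the graph classes described in (iii) and (iv) coincide, which closes the cycle of equivalences.
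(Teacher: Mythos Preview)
Your proof of the equivalence of (i), (ii), and (iii) is correct; the paper gives no proof of this lemma (it merely cites it), so there is nothing to compare against on that front. The (iii)$\,\Rightarrow\,$(ii) descent --- taking a minimal nontrivial metric triangle, passing to the median $m$ of the perturbed triple $a,v_2,v_3$, and using Lemma~\ref{lem-weakly-modular} twice to pin $m=v_2$ and extract the numerical contradiction --- is clean and complete.

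The genuine gap is in (iv)$\,\Rightarrow\,$(iii). You establish that in a triangle-free graph the triangle condition is equivalent to bipartiteness, and then conclude that the classes in (iii) and (iv) coincide. But (iv) assumes only the quadrangle condition, not TC; your equivalence would carry TC over to bipartiteness, yet under (iv) you never have TC in hand. What you would actually need is that ``triangle-free $+$ QC'' forces bipartiteness, and you have not argued this.

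In fact that implication is false with the paper's definition of QC: the $5$-cycle is triangle-free and satisfies QC$(u)$ vacuously for every $u$ (the hypothesis of QC$(u)$ requires a vertex $z$ with $d(u,z)=d(u,v)+1\ge 3$, impossible in a graph of diameter $2$), yet it is neither bipartite nor modular --- the triple $u_0,u_2,u_3$ has $I(u_0,u_2)\cap I(u_0,u_3)\cap I(u_2,u_3)=\emptyset$. Any odd cycle of length $\ge 5$ behaves the same way (for each $z$ the two neighbours of $z$ lie at different distances from any fixed $u$, so QC is vacuous). Hence condition (iv) as literally written is strictly weaker than (i)--(iii); the intended hypothesis is presumably ``bipartite'' rather than ``triangle-free'', and with that reading (iii)$\,\Leftrightarrow\,$(iv) is immediate since TC is vacuous in bipartite graphs. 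Your argument is fine once (iv) is read that way, but you should flag the discrepancy rather than paper over it.
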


The term ``modular" comes from a connection to modular lattices.
Indeed, it is known that a lattice is modular if and only if
its covering graph is modular.
There is a semilattice analogue of a modular lattice satisfying this property.
A meet-semilattice ${\mathcal L}$ is called {\it modular}
if for any $x\in {\mathcal L}$ the principal ideal $(x)^{\downarrow} = [0,x]$ is
a modular lattice and  $x \vee y \vee z$ exists provided each of the joins $x\vee y,x\vee z,$ and $y\vee z$ exist.
This notion is due to Bandelt, van de Vel, and Verheul~\cite{BVV}.
\begin{Thm}[{\cite{BVV}*{Theorem 5.4}}]\label{thm:BVV1}
A discrete semilattice is modular if and only if its covering graph is modular.
\end{Thm}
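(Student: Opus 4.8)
The plan is to establish the two implications separately. First I would show that if a discrete (meet-)semilattice $\mathcal{L}$ is modular in the sense of Bandelt--van de Vel--Verheul, then its covering graph $G$ is a modular graph. Since modularity of $G$ is a local-to-global condition, the natural route is Lemma~\ref{lem-modular}(iv): it suffices to verify that $G$ is triangle-free and satisfies the quadrangle condition. Triangle-freeness is immediate because in a semilattice the covering relation is a partial order, so a triangle $xyz$ would force a $3$-cycle among comparabilities, which is impossible once we note that any edge of $G$ joins an element to one it covers (and the grade function, which exists by discreteness, would then have to increase strictly around the triangle). For the quadrangle condition, I would take $u,z$ with $d(u,z)=k$, neighbors $v,w$ of $z$ with $d(u,v)=d(u,w)=k-1$, and locate a common neighbor $x$ of $v,w$ with $d(u,x)=k-2$. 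The key point is that $v,w$ both cover $z$ or are both covered by $z$ (using the grade function: the edge $zv$ and $zw$ must both decrease the distance-to-$u$ relative to $z$, so $v,w \prec z$ or $v,w \succ z$ consistently); the candidate for $x$ is then $v \wedge w$ (in the first case) or $v \vee w$ (in the second, which exists because $v,w$ have the common upper bound $z$ and $\mathcal{L}$ is a semilattice, hence upper-bounded pairs have joins). Modularity of the intervals $[0,z]$ (resp.\ $[v\wedge w, \ast]$) guarantees via the rank/grade equality $r(v)+r(w)=r(v\wedge w)+r(v\vee w)$ that $v\wedge w$ (resp.\ $v\vee w$) has the correct grade, namely is covered by both $v$ and $w$, which is exactly what the quadrangle condition demands.

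For the converse, suppose the covering graph $G$ of a discrete semilattice $\mathcal{L}$ is modular. I would first check that each principal ideal $(x)^{\downarrow}=[0,x]$ has modular covering graph: $[0,x]$ is an \emph{isometric} (indeed convex) subgraph of $G$, because a shortest path in $G$ between two elements below $x$ can be pushed down under $x$ using the meet operation without increasing length — a standard argument with the grade function. A convex subgraph of a modular graph is modular (modularity passes to isometric subgraphs closed under medians, or more directly one re-runs the quadrangle condition inside the convex set), hence $[0,x]$ is a modular graph, and since it is the covering graph of the lattice $[0,x]$, the well-known lattice fact ``a lattice is modular iff its covering graph is modular'' (invoked in the text right after Lemma~\ref{lem-modular}) gives that $[0,x]$ is a modular lattice. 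It remains to verify the join condition: if $x\vee y$, $x\vee z$, $y\vee z$ all exist, then $x\vee y\vee z$ exists. Here I would use that $p:=x\vee y$, $q:=x\vee z$ have a common lower bound, namely $x$, and we want a common upper bound; since $y\vee z$ exists and $p,q,y\vee z$ are pairwise upper-bounded it is natural to look at the median of $p,q,y\vee z$ in the modular graph $G$ and show it is the desired join. Concretely, a median $m$ of the triple lies in $I(p,q)\cap I(q, y\vee z)\cap I(p, y\vee z)$; using that $G$ is a covering graph one shows $m$ is an upper bound of $\{x,y,z\}$, and minimality of $m$ among such (from the interval conditions, together with the semilattice structure) forces $m = x\vee y\vee z$.

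The main obstacle I expect is the converse's join condition: translating the purely order-theoretic statement ``$x\vee y, x\vee z, y\vee z$ exist $\Rightarrow$ $x\vee y\vee z$ exists'' into a statement about medians in $G$ requires care, because existence of a common upper bound of $\{x,y,z\}$ is not automatic and one must produce it. The device to make this work is to show that in the modular covering graph, whenever two elements $a,b$ have a common upper bound at all, their join $a\vee b$ equals a suitably-chosen vertex on a shortest $(a,b)$-path through that upper bound — an ``interval'' characterization of joins — and then to realize $x\vee y\vee z$ as the median of $x\vee y$, $x\vee z$, $y\vee z$, checking that the three pairwise joins give enough common upper bounds to pin the median down. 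I would isolate this as the central lemma. The triangle-free/quadrangle-condition direction and the convexity of principal ideals are comparatively routine once the grade function (available by discreteness) is used systematically.
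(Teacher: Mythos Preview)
The paper does not prove this theorem; it is quoted from \cite{BVV} without proof, so there is no in-paper argument to compare against. I can, however, point out where your plan breaks.

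Your forward direction has a real gap in the quadrangle-condition argument. The dichotomy ``$v,w$ are both covered by $z$ or both cover $z$'' is unjustified and in fact false: the mixed case $v\prec z\prec w$ occurs. In the Boolean lattice $2^{\{1,2,3\}}$ (a modular, indeed distributive, lattice whose covering graph is the $3$--cube) take $u=\{3\}$, $z=\{1,2\}$, $v=\{1\}$, $w=\{1,2,3\}$; then $d(u,z)=3$, $d(u,v)=d(u,w)=2$, $v\prec z\prec w$, $v\nsim w$, and the common neighbour demanded by (QC) is $\{1,3\}$, which is neither $v\wedge w=v$ nor $v\vee w=w$. Even in the two cases you do treat there are problems: when $v,w\succ z$ you claim $v\vee w$ exists ``because $v,w$ have the common upper bound $z$'', but $z$ is a common \emph{lower} bound in that case, so existence of the join is genuinely at issue; and in every case you still owe the verification that your candidate $x$ has $d(u,x)=k-2$ rather than $k$ --- adjacency to $v,w$ gives only $d(u,x)\in\{k-2,k\}$, and ruling out $k$ is the whole content of (QC). A workable repair is to first establish the distance formula $d(p,q)=r(p)+r(q)-2r(p\wedge q)$ in the covering graph (reducing to the known modular-lattice case inside $[0,p\vee q]$ when the join exists, and handling the unbounded case separately), after which medians of triples can be produced explicitly.

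Your converse is in much better shape. The scheme of using a median of $x\vee y$, $y\vee z$, $z\vee x$ to produce a common upper bound is exactly what the paper carries out, in the more general orientable setting, in the proof of Theorem~\ref{thm:chara_om}(d); the missing ingredient you allude to --- that principal ideals and filters are \emph{convex} in the covering graph once it is modular --- is supplied there as Lemma~\ref{lem:filter_is_convex}, and that convexity is precisely what forces the median to dominate $x,y,z$.
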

Here a semilattice is said to be {\em discrete} if every element has  finite rank.
In a poset, the covering relation naturally
induces an orientation of its covering graph, which we call the {\em Hasse orientation}.
One can see that
the Hasse orientation of a modular (semi)lattice has the following property:
for each square $x_1x_2x_3x_4$, we have
$x_i \rightarrow x_{i+1}$ if and only if $x_{i+3} \rightarrow x_{i+2}$,
where $\rightarrow$ denotes the direction of the orientation and the indices
are considered cyclically. See Figure~\ref{fig:adm_orientation}.
\begin{figure}[ht]
\begin{center}
\includegraphics[scale=0.3]{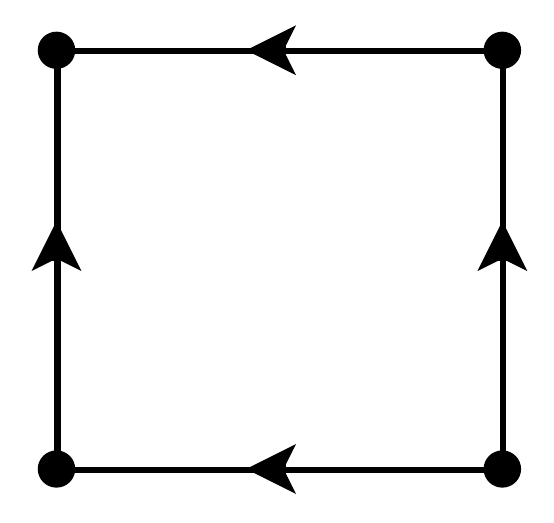}
\end{center}
\caption{Admissible orientation}\label{fig:adm_orientation}
\end{figure}

Such an orientation of a graph is called {\it admissible}.
A modular graph is {\it orientable}~\cite{Kar98a}
if it has an admissible orientation.
In particular, the covering graph of a modular (semi)lattice is
orientable modular with respect to the Hasse orientation.
Orientable modular graphs occur in connection with the so-called $0$--{\it extension problem}
\cite{Kar98a} and characterize the polynomial instances of this problem \cite{HH12}.
Notice that median graphs are orientable.
On the other hand, orientable modular graphs do not contain
$K^-_{3,3}$ as an isometric subgraph (but this does not characterize orientable modular graphs).
Another related class is the class of modular graphs not containing  isometric $Q_3$ (the 3-cube), $K^-_{3,3}$, and $K_{3,3}$, investigated in \cites{Ch_CAT,HH_folder,Kar98a}
in relation with so-called {\it folder complexes}, a class of 2-dimensional CAT(0) complexes with right triangles as cells.

In this paper, we will further investigate orientable modular graphs
and their following generalization:
a graph $G$ is called {\it strongly modular} if $G$ is a modular graph not containing $K^-_{3,3}$ as an isometric subgraph. Strongly modular graphs appeared in~\cite{BVV}.
A strongly modular graph can be characterized by
the modular-lattice structure of intervals in the following way.
For a vertex $b$, the {\em base point order} on $V(G)$ is
a partial order $\preceq_b$ defined by: $x \preceq_b y$ if $x \in I(b,y)$.
Regard $I(p,q)$ as a poset with respect to a base point order $\preceq_p$.
\begin{Thm}[{\cite{BVV}*{Theorem 4.7}}]\label{thm:BVV2}
A bipartite graph is strongly modular if and only if every interval is a modular lattice.
\end{Thm}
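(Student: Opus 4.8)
The plan is to analyse, for a bipartite graph $G$ and vertices $p,q$, the interval $I(p,q)$ with the base point order $\preceq_p$. A preliminary observation used throughout: $(I(p,q),\preceq_p)$ has least element $p$ and greatest element $q$, and $d(p,\cdot)$ is a grade function on it, since for $x\prec_p y$ a shortest $(p,y)$--path through $x$ shows $y$ covers $x$ exactly when $d(p,y)=d(p,x)+1$, and in that case $d(x,y)=1$. Hence, \emph{as soon as} $I(p,q)$ is known to be a lattice, it is graded with rank function $r=d(p,\cdot)$, and by the rank criterion for modularity \cite[Chapter III, Corollary 1]{Birkhoff} it is a \emph{modular} lattice precisely when $r(x)+r(y)=r(x\wedge y)+r(x\vee y)$ for all $x,y$.

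For ``$\Leftarrow$'', assume every interval of the bipartite graph $G$ (with its base point order) is a modular lattice. I would first check the quadrangle condition QC($u$): given $u,z,v,w$ as in the definition and $n:=d(u,z)$, both $v$ and $w$ lie in $I(u,z)$ with rank $n-1$, so they are coatoms of the modular lattice $L:=I(u,z)$; as $v\neq w$ this forces $v\vee w=z$, hence the modular equality gives $r(v\wedge w)=n-2$, and $x:=v\wedge w$ lies in $I(u,v)$ at distance $d(u,v)-1=n-2$ from $u$ (so $x\sim v$) and likewise $x\sim w$. As $G$ is triangle--free (being bipartite) and now satisfies QC, it is modular by Lemma~\ref{lem-modular}. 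Next, $G$ has no isometric $K^-_{3,3}$: such an embedding with missing edge $a_3b_3$ would give $d(a_3,b_3)=3$, and in the lattice $I(a_3,b_3)$ the images of $b_1,b_2$ have rank $1$, those of $a_1,a_2$ have rank $2$ and are both upper bounds of $\{b_1,b_2\}$; since $b_1\neq b_2$ this forces $r(b_1\vee b_2)=2$, whence $b_1\vee b_2=a_1$ and also $b_1\vee b_2=a_2$, contradicting injectivity. Thus $G$ is strongly modular.

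For ``$\Rightarrow$'', let $G$ be strongly modular and fix $p,q$. Step~A (grading) is the preliminary observation. Step~B is that $I(p,q)$ is a lattice, which I would prove by induction on $k:=d(p,q)$. For $k\le 2$ the interval is a point, an edge, or a complete bipartite graph $K_{2,s}$, all modular lattices. For $k\ge 3$, the coatoms $q_i$ of $I(p,q)$ are exactly the neighbours of $q$ in $I(p,q)$, each $I(p,q_i)$ is a modular lattice by induction, and $I(p,q)\setminus\{q\}=\bigcup_i I(p,q_i)$ (any vertex $\neq q$ lies on a shortest path to $q$ whose penultimate vertex is some $q_i$). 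Given $x,y\in I(p,q)$: if $x$ or $y$ equals $q$ there is nothing to prove; if $x,y$ lie below a common coatom $q_i$, then $x\wedge_{I(p,q_i)}y$ is already the largest common lower bound of $x,y$ in $I(p,q)$. The remaining case---$x,y$ under no common coatom (in particular, two distinct coatoms)---is the heart of the matter: if the largest common lower bound failed to exist, I would pass to a minimal such configuration and, exploiting that joins \emph{do} exist in the inductively available modular lattices $I(p,q_i)$, produce two distinct medians $\mu_1,\mu_2$ of $p,x,y$, their common lower bound $c=\mu_1\wedge\mu_2$, and their joins $\sigma,\tau$ computed in the two relevant coatom--sublattices; careful bookkeeping of the pairwise distances among $c,\mu_1,\mu_2,\sigma,\tau$ (and of $x,y$ sitting above $\sigma,\tau$) should exhibit an isometrically embedded $K^-_{3,3}$, a contradiction. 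Producing this forbidden isometric subgraph out of a hypothetically missing meet is the step I expect to be the main obstacle; the rest is comparatively formal.

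Granting Step~B, the upper bounds of any pair form a non--empty set closed under meets, so joins exist as well, and $I(p,q)$ is a lattice with least and greatest element and rank function $r=d(p,\cdot)$. Step~C, the modularity of this rank function, goes via medians. By modularity of $G$ the triple $p,x,y$ has a median $\mu\in I(p,x)\cap I(p,y)\cap I(x,y)$; since $\mu$ is a common lower bound of $x,y$ we have $\mu\preceq_p x\wedge y\preceq_p x$, so $d(\mu,x)=d(\mu,x\wedge y)+d(x\wedge y,x)$ and likewise with $y$, and then $\mu\in I(x,y)$ together with the triangle inequality $d(x,y)\le d(x,x\wedge y)+d(x\wedge y,y)$ forces $\mu=x\wedge y$; in particular $x\wedge y\in I(x,y)$, i.e.\ $d(x,y)=(r(x)-r(x\wedge y))+(r(y)-r(x\wedge y))$. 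The dual argument applied to a median $\nu$ of $q,x,y$ (which lies in $I(p,q)$ and is an upper bound of $x$ and $y$) identifies $\nu$ with $x\vee y$ and shows $x\vee y\in I(x,y)$, i.e.\ $d(x,y)=(r(x\vee y)-r(x))+(r(x\vee y)-r(y))$. Equating the two expressions for $d(x,y)$ gives $r(x)+r(y)=r(x\wedge y)+r(x\vee y)$, so $I(p,q)$ is a modular lattice by \cite[Chapter III, Corollary 1]{Birkhoff}.
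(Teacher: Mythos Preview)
The paper does not prove this cited result directly, but it proves the more general Theorem~\ref{thm:chara_swm}, whose argument specialises to the bipartite case and is the natural comparison point. Your ``$\Leftarrow$'' direction and your Step~C are essentially the paper's argument; the divergence is Step~B, where you have a genuine gap.

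In Step~B you try induction on $d(p,q)$, and the case you flag as ``the heart of the matter''---$x,y$ below no common coatom---is indeed the whole difficulty, and you do not resolve it. You propose that from two distinct maximal common lower bounds $\mu_1,\mu_2$ you can form joins $\sigma,\tau$ in the sublattices $I(p,q_i)$, $I(p,q_j)$ and then read off an isometric $K^-_{3,3}$, but nothing in your sketch controls $d(\sigma,\tau)$, nor shows the six points have the right pairwise distances and non-adjacencies in $G$; the ``careful bookkeeping'' you invoke is precisely the content of the lemma.

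The paper's route avoids the induction on $d(p,q)$ entirely. It proves directly (Claim~2 in the proof of Theorem~\ref{thm:chara_swm}) that for every $x,y\in I(p,q)$ the triplet $x,y,p$ (and dually $x,y,q$) has a \emph{unique} median: existence follows because any quasi-median collapses (an equilateral metric triangle inside $I(p,q)$ would force an edge between equidistant vertices, hence a $K_4^-$ via TC, impossible in the bipartite case); uniqueness is a minimal-counterexample argument on $d(x,y)$, where the base case $d(x,y)=2$ assembles an isometric $K^-_{3,3}$ from the two medians $m,m'$, a median $u$ of $x,y,p$ and a median $u'$ of $m,m',q$, and the case $d(x,y)>2$ reduces to a strictly shorter pair. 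Once medians are unique, one \emph{defines} $x\wedge y$ and $x\vee y$ to be those medians; the lattice axioms and the modular rank identity $d_x+d_y=d_{x\wedge y}+d_{x\vee y}$ are then immediate from the median equalities. This is both shorter and complete: it never needs to know that smaller intervals are already lattices, and the $K^-_{3,3}$ is produced in one explicit step at $d(x,y)=2$ rather than through an unspecified bookkeeping across two sublattices.
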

Notice that the modular-lattice structure of intervals
directly implies the quadrangle condition.

We will consider a nonbipartite generalization of strongly modular graphs,
called {\em sweakly modular graphs} or {\em swm-graphs},
which are defined as weakly modular graphs without induced $K_{4}^-$ and isometric $K_{3,3}^-$.
We will extend Theorem \ref{thm:BVV2} and many other important properties previously
known for median graphs to all swm-graphs.

\subsubsection{Pseudo-modular,  Helly graphs, and quasi-median graphs}
    A graph $G$ is called {\it pseudo-modular} if any three pairwise
    intersecting balls of $G$ have a nonempty intersection
    \cite{BaMu_pmg}. This condition easily implies both the triangle
    and quadrangle conditions, and thus pseudo-modular graphs are
    weakly modular. In fact, pseudo-modular graphs are quite specific
    weakly modular graphs: from the definition also follows that all
    metric triangles of pseudo-modular graphs have size 0 or 1.
    Pseudo-modular graphs can be also characterized by a single metric
    condition similar to (but stronger than) both triangle and
    quadrangle conditions:

\begin{proposition} \cite{BaMu_pmg} \label{pseudo-modular}
A graph $G$ is pseudo-modular if and only if for any three vertices $u,v,w$ such that
$1\le d(u,w)\le 2$ and $d(v,u)=d(v,w)=k\ge 2,$ there exists
a vertex  $x\sim u,w$ and $d(v,x)=k-1$.
\end{proposition}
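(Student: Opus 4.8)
The plan is to prove the two directions separately: the forward implication is a one-paragraph application of the defining Helly property for balls, while the converse is the substantial part and splits into a preliminary reduction to weak modularity and then an inductive proof of the three-ball Helly property.

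\emph{Forward direction.} Assume $G$ is pseudo-modular and let $u,v,w$ satisfy $1\le d(u,w)\le 2$ and $d(v,u)=d(v,w)=k\ge 2$. I would apply the definition to the three balls $B_1(u)$, $B_1(w)$, $B_{k-1}(v)$. They are pairwise intersecting: $B_1(u)\cap B_1(w)\ne\emptyset$ since $d(u,w)\le 2$ (it contains $u$, $w$, or a common neighbour of $u,w$), and the neighbour of $u$ (respectively of $w$) on a shortest path to $v$ lies in $B_1(u)\cap B_{k-1}(v)$ (respectively $B_1(w)\cap B_{k-1}(v)$), using $k\ge 1$. Hence there is $x$ with $d(u,x)\le 1$, $d(w,x)\le 1$, $d(v,x)\le k-1$. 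Since $d(v,u)=d(v,w)=k>k-1$, we have $x\notin\{u,w\}$, so $x\sim u,w$; and $d(v,x)\ge d(v,u)-d(u,x)=k-1$, so $d(v,x)=k-1$, which is the assertion.

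\emph{Converse, preliminary reduction.} Now assume the displayed metric condition holds. Reading it with $d(u,w)=1$ is exactly the triangle condition with apex $v$, and with $d(u,w)=2$ it implies the quadrangle condition with apex $v$; hence $G$ is weakly modular and Lemma~\ref{lem-weakly-modular} applies, so every metric triangle of $G$ is equilateral. I claim that no metric triangle has size $\ge 2$. Indeed, if $v_1v_2v_3$ were one, of size $k$, then along a shortest $(v_2,v_3)$--path $v_2=z_0,\dots,z_k=v_3$ all vertices are at distance $k$ from $v_1$ by Lemma~\ref{lem-weakly-modular}; applying the metric condition with apex $v_1$ to the adjacent pair $z_0,z_1$ produces $x\sim z_0,z_1$ with $d(v_1,x)=k-1$, hence $x\in I(v_1,v_2)\setminus\{v_2\}$. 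When $k=2$ one checks directly that $x\in I(v_2,v_3)$ as well, contradicting the definition of a metric triangle; for larger $k$ a similar iteration, pushing alternately from $v_1$ and from $v_3$ (using the triangle condition), reduces the size and eventually reaches the contradiction at size $2$. Thus all metric triangles of $G$ have size $0$ or $1$.

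\emph{Converse, the Helly step.} It remains to show that three pairwise intersecting balls $B_i=B_{r_i}(c_i)$ have a common vertex, which I would prove by induction on $r_1+r_2+r_3$. If some $r_i=0$ then $c_i$ lies in the other two balls. Otherwise all $r_i\ge 1$; if the radius of some ball can be lowered by one while all three pairwise intersections persist, do so and apply induction (a common vertex of the smaller balls is a common vertex of the original ones). In the remaining case each index belongs to a tangent pair $r_i+r_j=d(c_i,c_j)$, so either all three pairs are tangent or exactly two are. If all three are tangent, take a quasi-median $v_1v_2v_3$ of $c_1,c_2,c_3$; writing $a_i=d(c_i,v_i)$ and $s$ for its equilateral size, the three equalities $a_i+a_j+s=r_i+r_j$ force $s$ even, hence $s=0$ by the previous paragraph, and then $a_i=r_i$ for each $i$, so $v_1=v_2=v_3$ lies in all three balls. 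If exactly two pairs are tangent, say $\{1,2\}$ and $\{2,3\}$, then $c_2\notin I(c_1,c_3)$, and using that $G$ is weakly modular with all metric triangles of size $\le 1$ one locates a neighbour $c_2'$ of $c_2$ with $d(c_i,c_2')=d(c_i,c_2)-1$ for $i=1,3$; then $B_{r_2-1}(c_2')$ together with $B_1,B_3$ is again pairwise intersecting with strictly smaller radius sum, and induction finishes. I expect the main obstacle to be exactly this last step, the production of $c_2'$ --- equivalently the structural lemma ``if $d(c_1,c_2),d(c_3,c_2)\ge 2$ and $c_2\notin I(c_1,c_3)$ then $c_2$ has a neighbour on shortest paths to both $c_1$ and $c_3$'' --- since it genuinely relies on the absence of size-$2$ metric triangles, as witnessed by the $6$--cycle, which is weakly modular, has a size-$2$ metric triangle, and is not pseudo-modular.
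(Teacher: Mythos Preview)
The paper does not prove this proposition; it is quoted without proof from \cite{BaMu_pmg}. Your argument is therefore being assessed on its own merits rather than against anything in the present paper.

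The forward direction is correct and is the standard three-ball argument.

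For the converse, your overall architecture---weak modularity, then ``metric triangles have size $\le 1$'', then an induction on $r_1+r_2+r_3$---is sound, and the Helly induction goes through. In the two-tangent case the neighbour $c_2'$ you want really exists: for a quasi-median $v_1v_2v_3$ of $c_1,c_2,c_3$ one checks $v_2\ne c_2$, since otherwise $d(c_1,c_3)=d(c_1,v_1)+s+d(v_3,c_3)=r_1+r_3+2r_2-2+s\ge r_1+r_3+1$ (using $r_2\ge 1$ and $s\le 1$), contradicting $B_1\cap B_3\ne\emptyset$; any neighbour of $c_2$ towards $v_2$ then does the job.

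There is, however, a genuine gap in your exclusion of metric triangles of size $\ge 2$. You apply the hypothesis to the \emph{adjacent} pair $z_0=v_2,\,z_1$ with apex $v_1$ and obtain $x\sim v_2,z_1$ with $d(v_1,x)=k-1$; this yields $x\in I(v_1,v_2)\setminus\{v_2\}$, but your claim that $x\in I(v_2,v_3)$ does not follow---even for $k=2$ one only gets $d(x,v_3)\le 2$ via $z_1$, and nothing forces $d(x,v_3)=1$. The fix is to use the \emph{distance-two} pair $u=v_2$, $w=z_2$ (which is $v_3$ when $k=2$): both lie at distance $k$ from $v_1$ by Lemma~\ref{lem-weakly-modular}, so the hypothesis yields $x\sim v_2,z_2$ with $d(v_1,x)=k-1$; then $d(x,v_3)\le d(z_2,v_3)+1=k-1$ and $d(x,v_3)\ge d(v_2,v_3)-1=k-1$, whence $x\in I(v_2,v_3)\cap I(v_1,v_2)\setminus\{v_2\}$, contradicting the metric-triangle condition directly for every $k\ge 2$. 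No ``similar iteration'' is needed.

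One small slip: the $6$--cycle is not weakly modular (the triple $0,2,4$ witnesses failure of QC, equivalently of modularity), so it does not serve as the cautionary example you intend; it simply fails the hypothesis of the proposition outright.
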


An important subclass of pseudo-modular graphs is constituted by Helly
graphs.  A graph $G$ is a {\it (finitely) Helly graph} if the family
of balls of $G$ has the (finite) Helly property, that is, every
(finite) collection of pairwise intersecting balls of $G$ has a
nonempty intersection.  Helly graphs are the discrete analogues of
hyperconvex spaces: namely, the requirement that radii of balls are
from the nonnegative reals is modified by replacing the reals by the
integers.  In perfect analogy with hyperconvexity, there is a close
relationship between Helly graphs and absolute retracts. A graph is an
{\it absolute retract} exactly when it is a retract of any larger
graph into which it embeds isometrically.  Then absolute retracts and
Helly graphs are the same \cite{HeRi}.  In particular, for any graph
$G$ there exists a smallest Helly graph comprising $G$ as an isometric
subgraph. A graph $G$ is a {\it (finitely) $1$--Helly graph} if the
family of unit balls (i.e., balls of radius $1$) of $G$ has the (finite)
Helly property.  A {\it (finitely) clique-Helly graph} is a graph in
which the collection of maximal cliques has the (finite) Helly
property.  A vertex $x$ of a graph $G$ is {\it dominated} by another
vertex $y$ if the unit ball $B_1(y)$ includes $B_1(x).$ A graph $G$ is
{\it dismantlable} if its vertices can be well-ordered $\prec$ so
that, for each $v$ there is a neighbor $w$ of $v$ with $w\prec v$
which dominates $v$ in the subgraph of $G$ induced by the vertices
$u\preceq v$.  The following theorem summarizes some of the
characterizations of finite Helly graphs:

\begin{theorem} \label{Helly} For a finite  graph $G$, the following
statements  are equivalent:

\begin{itemize}

\item[(i)] $G$ is a Helly graph;

\item[(ii)]  \cite{HeRi} $G$ is a retract of a strong product of
paths;

\item[(iii)] \cite{BaPr} $G$ is a dismantlable clique-Helly graph;

\item[(iv)] \cite{BP-absolute} $G$ is a weakly modular $1$--Helly graph.

\end{itemize}
\end{theorem}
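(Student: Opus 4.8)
The plan is to prove the four conditions equivalent by establishing (i) $\Rightarrow$ (ii), (ii) $\Rightarrow$ (i), (i) $\Rightarrow$ (iii) and (i) $\Rightarrow$ (iv) directly, and invoking \cite{BaPr} and \cite{BP-absolute} for the two substantial converses (iii) $\Rightarrow$ (i) and (iv) $\Rightarrow$ (i). For (i) $\Rightarrow$ (ii) I would embed $G$ isometrically into the strong product $P$ of $|V(G)|$ sufficiently long paths via $x \mapsto (d(x,v))_{v \in V(G)}$; this is an isometry since distances in a strong product are taken coordinatewise with the $\ell_\infty$ rule and $|d(x,v) - d(y,v)| \le d(x,y)$, with equality at $v = x$. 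A path is an isometric subgraph of $\ZZ$, whose intervals satisfy the one-dimensional Helly property, and balls of a strong product of paths are products of such intervals, so $P$ is a Helly graph; as Helly graphs coincide with absolute retracts \cite{HeRi}, the isometrically embedded Helly graph $G$ is a retract of $P$. For (ii) $\Rightarrow$ (i) I would use that a retract is an isometric subgraph and that retracts of Helly graphs are Helly: a pairwise-intersecting family of balls of $G$, read as balls of $P$, stays pairwise intersecting, has a common point by Helliness of $P$, and the image of that point under the (nonexpansive) retraction lies in the original intersection.

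For (i) $\Rightarrow$ (iii) I would proceed in two steps. First, $G$ is clique-Helly: given pairwise-intersecting maximal cliques, the unit balls centered at all of their vertices are pairwise intersecting (two vertices lying in two intersecting cliques are at distance at most $2$), hence have a common vertex $z$ by Helliness, and maximality forces $z$ to lie in each clique. Second, $G$ is dismantlable, which I would reduce to the lemma that a finite Helly graph with at least two vertices has a vertex dominated by a neighbor. Choosing $u,v$ with $d(u,v) = \diam(G) =: D$ (the case $D \le 1$ being trivial), the family $\{B_1(x) : x \in B_1(u)\} \cup \{B_{D-1}(v)\}$ is pairwise intersecting --- any two members of $B_1(u)$ are at distance at most $2$, and for $x \in B_1(u)$ either $x$ itself or a neighbor of $x$ one step closer to $v$ lies in $B_{D-1}(v)$ --- so it has a common vertex $z$; then $d(z,u) \le 1$ and $d(z,v) \le D-1 < D$ force $z \sim u$, while $z \in B_1(x)$ for every $x \in B_1(u)$ gives $B_1(u) \subseteq B_1(z)$, i.e. $z$ dominates $u$. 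Folding $u$ onto $z$ realizes $G \setminus \{u\}$ as a retract, hence a smaller Helly graph, and iterating yields a dismantling order.

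For (i) $\Rightarrow$ (iv) the $1$-Helly property is immediate, and weak modularity follows by the same ball trick: for the triangle condition, if $v \sim w$ with $d(u,v) = d(u,w) = k$, then the pairwise-intersecting balls $B_1(v)$, $B_1(w)$, $B_{k-1}(u)$ have a common vertex $x$ with $x \sim v,w$ and $d(u,x) = k-1$, and the quadrangle condition is analogous. For the converse (iv) $\Rightarrow$ (i) of \cite{BP-absolute}, I would induct on $\sum_i r_i$ over a pairwise-intersecting family $B_{r_1}(c_1), \ldots, B_{r_n}(c_n)$; the base case $\max_i r_i \le 1$ reduces to $1$-Helly (a radius-$0$ ball forces its center into the intersection), and when $R := \max_i r_i \ge 2$ one replaces a ball $B_R(c)$ of maximal radius by $B_{R-1}(c')$ for a suitable neighbor $c'$ of $c$ that still meets every other ball, decreasing $\sum_i r_i$; a common point of the new family lies in $B_{R-1}(c') \subseteq B_R(c)$ and hence in the original intersection.

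The main obstacles are the two cited converses. Establishing (iii) $\Rightarrow$ (i) requires manufacturing a common point of an arbitrary pairwise-intersecting family of balls by inducting along a dismantling order and invoking clique-Helliness at each fold. Establishing (iv) $\Rightarrow$ (i) hinges on the radius-reduction step above: one must produce a neighbor $c'$ of $c$ that simultaneously lies on a shortest path toward every \emph{tight} center $c_j$ (those with $d(c,c_j) = R + r_j$) while not moving away from the remaining centers, and it is precisely here that the triangle and quadrangle conditions are used --- this is the technical heart of the argument.
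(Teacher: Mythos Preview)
The paper does not prove Theorem~\ref{Helly}: it is stated as a summary of known characterizations, with the nontrivial implications attributed to \cite{HeRi}, \cite{BaPr}, and \cite{BP-absolute}. Your proposal is therefore more detailed than what the paper provides, and your arguments for the easy directions (i)$\Leftrightarrow$(ii), (i)$\Rightarrow$(iii), (i)$\Rightarrow$(iv) are correct and standard; in particular your diameter-based domination argument for dismantlability is clean and valid.

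One comparison worth noting: the paper later gives its own proof of (iv)$\Rightarrow$(i), even for infinite graphs (Proposition~\ref{prop-1-Helly-Helly}), and it does \emph{not} proceed by the radius-reduction scheme you sketch of finding a neighbor $c'$ of $c$ that moves toward all tight centers simultaneously. Instead it first derives pseudo-modularity (any three pairwise intersecting balls meet) directly from (TC), (QC) and $1$--Helly; then, following \cite{Po_helly}, reduces a Helly-critical family with smallest radius $r>1$ to one containing a unit ball by shrinking $B_r(c)$ to $B_{r-1}(c)$ while \emph{enlarging} every other radius by~$1$ (a common point of the enlarged family yields a center for a new unit ball that, added to the original family, keeps it pairwise intersecting and still critical); finally it uses Proposition~\ref{prop-clh-domination} to convert everything to pairwise-intersecting unit balls. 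The trick of enlarging the remaining radii is precisely what sidesteps the delicate step you flagged --- producing a single neighbor of $c$ that is good for all centers at once --- so your identification of that as the technical heart is accurate, but the actual proofs route around it.
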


For arbitrary graphs, the following compactness result for Helly property has been
proved by Polat and Pouzet:

\begin{proposition} \label{Helly_Polat} \cite{Po_helly} A graph $G$ not containing infinite cliques
is Helly if and only if $G$ is finitely Helly.
\end{proposition}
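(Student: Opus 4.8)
The forward implication needs no argument: a Helly graph is by definition closed under intersection of arbitrary pairwise-intersecting families of balls, hence in particular under finite ones. So the entire content lies in the converse, which I would phrase as a compactness statement. Let $\mathcal{F}=\{B_{r_i}(v_i):i\in I\}$ be a pairwise-intersecting family of balls of $G$. The finite Helly property says exactly that $\mathcal{F}$ is \emph{finitely centered}, i.e.\ $\bigcap_{i\in F}B_{r_i}(v_i)\ne\emptyset$ for every finite $F\subseteq I$; the goal is to upgrade this to the existence of a vertex in $\bigcap\mathcal{F}$. Before starting, I would record two structural facts that finite Helly buys us for free: three pairwise-intersecting balls having a common vertex is precisely the hypothesis of Proposition~\ref{pseudo-modular} (the case $1\le d(u,w)\le 2$), so a finitely Helly graph is pseudo-modular, hence weakly modular, and all its metric triangles have size $0$ or $1$.

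The first real step is to shrink the ambient space. Among the (non-negative integer) radii $r_i$ choose a minimal one, say $r=r_{i_0}$, and set $B:=B_{r}(v_{i_0})$. Every vertex of $\bigcap\mathcal{F}$ lies in $B$, so it suffices to produce a common point inside $B$. If $B$ is finite we are done immediately, since a finitely centered family of subsets of a finite set has nonempty total intersection (finite intersection property in a finite set). Hence the only genuine case is the one where $B$ is an \emph{infinite} ball, and this is exactly where the hypothesis ``$G$ has no infinite clique'' has to be used, together with the weak-modularity structure just isolated.

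For the infinite case I would argue by induction on the minimal radius $r$. If $r=0$ then $B=\{v_{i_0}\}$ and pairwise intersection forces $v_{i_0}\in B_{r_i}(v_i)$ for all $i$, so $v_{i_0}\in\bigcap\mathcal F$. If $r\ge 1$, the plan is to replace $B$ by a ball of radius $r-1$: for each $i$ choose a vertex $p_i$ of $B_{r_i}(v_i)\cap B$ nearest to $v_{i_0}$, use the triangle and quadrangle conditions to ``push the configuration toward $v_{i_0}$'', and show that the vertices that can legitimately serve as the new center near $v_{i_0}$ range over only finitely many possibilities. This is the precise point at which the absence of infinite cliques enters: the set of admissible contraction directions sits inside the link of $v_{i_0}$, and once its combinatorial complexity is bounded, a \emph{single} direction valid for the whole (possibly infinite) family $\mathcal F$ can be extracted by applying the finite Helly property one dimension lower, in that link. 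With such a coherent choice in hand one obtains a pairwise-intersecting, finitely centered family with strictly smaller minimal radius, the induction hypothesis applies, and its common point is a common point of $\mathcal F$.

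The main obstacle — and essentially the only place where work is needed beyond bookkeeping — is this coherent-choice step when $r\ge 1$: each finite subfamily can be contracted toward $v_{i_0}$ along some direction, but distinct finite subfamilies may demand distinct directions, and without local finiteness one cannot take a subsequential limit; ruling out an infinite clique is exactly what keeps the set of admissible directions small enough for finite Helly to close the loop. I expect making the contraction and the radius-decrement precise, and re-verifying that the modified family is still finitely centered, to be the technical heart; the remainder is the routine compactness packaging described above. (An alternative, perhaps heavier, route would be to build the smallest Helly graph $\widehat G$ containing $G$ isometrically and to show that any point of $\widehat G\setminus G$ together with the $G$-traces of balls around it would force an infinite clique in $G$; I would fall back on this only if the direct contraction argument turns out to be awkward to formalize.)
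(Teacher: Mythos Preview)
The paper does not prove this proposition directly --- it is quoted from Polat~\cite{Po_helly} --- but it does give an independent derivation in the Remark immediately following Theorem~\ref{t:lotogloHell}, and that derivation is instructive to compare with your outline. The paper argues: finitely Helly (applied to triples and quadruples of balls) gives weak modularity and the $(C_4,W_4)$-condition, so the triangle complex, and hence the clique complex, is simply connected; finitely Helly trivially gives finitely clique-Helly; and now the no-infinite-cliques hypothesis enters via exactly the compactness you wrote down for the ``$B$ finite'' case, but applied to \emph{cliques} rather than balls: fix one maximal clique $C_0$ in a pairwise-intersecting family, observe that every finite partial intersection is a nonempty subset of the finite set $C_0$, and conclude. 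Thus $G$ is clique-Helly with simply connected clique complex, and Theorem~\ref{t:lotogloHell}(iv)$\Rightarrow$(i) yields Helly.

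The essential difference is that the paper transfers the compactness from balls to cliques, where finiteness is given by hypothesis. Your direct attack stalls precisely because balls need not be finite, forcing you into the inductive contraction on $r$, and that step is a genuine gap as you have written it. The link of $v_{i_0}$ is not finite in general (only clique-finite), so ``the set of admissible directions has bounded combinatorial complexity'' is not literally true, and ``applying the finite Helly property one dimension lower, in that link'' has no clear meaning: the link is not a priori Helly, and the traces in the link of the balls $B_{r_i}(v_i)$ are not balls of the link. Polat's original proof does carry out a radius-reduction of roughly the shape you sketch, but it needs more than the one-line appeal to clique-finiteness you propose; the paper's route via clique-Helly sidesteps the issue entirely by passing to objects that are finite by hypothesis, at the cost of invoking the heavier machinery of Theorem~\ref{t:lotogloHell}.
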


We also recall the following simple characterization of clique-Helly graphs:

\begin{proposition} \label{clique_Helly_triangle} \cites{Dragan,Szwarc} A graph $G$ is clique-Helly if and only
if for any triangle $T$ of $G$ the set $T^*$ of all vertices of $G$ adjacent with at least two vertices of $T$ contains a universal vertex, i.e., a vertex
adjacent to all remaining vertices of $T^*$.
\end{proposition}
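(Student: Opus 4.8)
The plan is to prove the two implications of the equivalence separately, in both cases reducing everything to a single triangle via its extended vertex set $T^*$. Throughout I would use two elementary facts. First, if $C$ is a maximal clique and a vertex $v$ is adjacent to every vertex of $C$ distinct from $v$, then $v\in C$ (otherwise $C\cup\{v\}$ would be a strictly larger clique). Second, if a clique $C$ contains at least two vertices of a triangle $S$, then $C\subseteq S^*$, since every vertex of $C$ is then adjacent to those two vertices of $S$, hence to at least two vertices of $S$.

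For the forward implication, suppose $G$ is clique-Helly and fix a triangle $T=\{a,b,c\}$. First I would associate to every $x\in T^*$ an edge $e_x$ of $T$ both of whose endpoints are adjacent to $x$ (such an edge exists because $x$ is adjacent to at least two of $a,b,c$; for $x\in\{a,b,c\}$ one takes $e_x$ to be the opposite edge, so that $\{x\}\cup e_x=T$), and then extend the clique $\{x\}\cup e_x$ to a maximal clique $M_x$, so that in particular $M_a,M_b,M_c\supseteq T$. Since any two edges of a triangle share an endpoint, $M_x\cap M_y\neq\emptyset$ for all $x,y\in T^*$, so $\{M_x:x\in T^*\}$ is a pairwise intersecting family of maximal cliques and the clique-Helly property yields a common vertex $v$. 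Because $v\in M_a\supseteq\{a,b,c\}$ we get $v\in T^*$, and because $v\in M_x$ for every $x\in T^*$ we get that $v$ is adjacent to every other vertex of $T^*$; thus $v$ is a universal vertex of $T^*$.

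For the converse, suppose every extended triangle has a universal vertex and let $C_1,\dots,C_n$ be pairwise intersecting maximal cliques (pairwise distinct, and $n\le 2$ being trivial); I would argue by induction on $n$. For $n=3$, pick $x_{ij}\in C_i\cap C_j$: if two of $x_{12},x_{13},x_{23}$ coincide this gives a common vertex directly, and otherwise they form a triangle $S$, so each $C_i\subseteq S^*$ by the second fact, and the universal vertex of $S^*$ lies in every $C_i$ by the first fact. For $n\ge 4$, I would apply the induction hypothesis to the three $(n-1)$-element subfamilies $\{C_1,\dots,C_{n-1}\}$, $\{C_1,\dots,C_{n-2},C_n\}$ and $\{C_1,\dots,C_{n-3},C_{n-1},C_n\}$, producing vertices $p$, $q$, $r$; a routine check shows that $p,q,r$ are pairwise adjacent and that each $C_i$ contains at least two of them (and if two of $p,q,r$ coincide, that vertex already lies in all of $C_1,\dots,C_n$). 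Setting $S=\{p,q,r\}$, the two facts then place the universal vertex of $S^*$ in $\bigcap_{i=1}^n C_i$.

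The main obstacle is the converse, and inside it the design of the three $(n-1)$-element subfamilies so that every original clique keeps at least two of the three produced vertices — this is precisely the bookkeeping that lets the hypothesis be reapplied — while the $n=3$ case and the entire forward direction are then immediate. One further point needing a short separate argument (not a conceptual difficulty) is extending the converse from finite to arbitrary families of maximal cliques, which can be handled by a standard compactness argument under the hypotheses in force.
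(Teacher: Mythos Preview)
The paper does not prove this proposition; it is quoted from \cite{Dragan,Szwarc} (both treating finite graphs), so there is no in-paper argument to compare yours against.

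Your proof of both implications is correct and is essentially the standard one found in those references: in the forward direction you attach to each $x\in T^*$ a maximal clique through an edge of $T$, and in the converse you run the usual induction, reducing $n$ pairwise intersecting maximal cliques to three and then to a single extended triangle. The only point that needs correction is your closing sentence. Extending the converse from finite to arbitrary families of maximal cliques is \emph{not} a routine compactness step: the triangle condition characterizes the \emph{finitely} clique-Helly property, and passing to the full clique-Helly property requires an additional hypothesis such as the absence of infinite cliques (compare Proposition~\ref{Helly_Polat} in the paper). Since the cited sources and the paper's own applications of this result are in the finite setting, you should simply drop that final remark rather than try to substantiate it.
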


From Proposition \ref{clique_Helly_triangle} and Theorem \ref{Helly} it immediately follows that finite clique-Helly and Helly
graphs can be recognized in polynomial time.

\medskip
In analogy with the fixed-cube property of median graphs,  every
automorphism of a Helly graph has a fixed clique.

The strong product  is the $l_{\infty}$ version of the Cartesian
product. Thus, when we turn all $k$--cubes of the Cartesian product
of $k$ paths into simplices, then we have the corresponding strong
product of $k$ paths. More generally, a similar operator
transforms median graphs into Helly graphs: let $G^{\Delta}$ be
the graph having the same vertex set as $G,$ where two vertices
are adjacent if and only if they belong to a common cube of $G$; $G^{\Delta}$ is called the
{\it thickening} of $G$.

\begin{proposition}
\label{p:medHel}
\cite{BavdV3}
If $G$ is a median graph, then $G^{\Delta}$ is a finitely Helly graph.
\end{proposition}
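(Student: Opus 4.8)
The plan is to derive the finite Helly property for balls of $G^{\Delta}$ from the finite Helly property for gated sets in the median graph $G$ itself (Lemma~\ref{lem:gated_Helly}). The bridge will be the assertion that \emph{every ball of $G^{\Delta}$ is a convex subgraph of $G$} --- recall that a convex subgraph of a median graph is gated (apply Lemma~\ref{lem-weakly-modular_gated_hull}(ii): median graphs being bipartite, any common neighbor of two vertices of a convex set lies on a shortest path between them, hence in the set). Granting this, the proposition follows at once: since $V(G^{\Delta})=V(G)$, a ball of $G^{\Delta}$ is a set of vertices of $G$, so if $B_{r_1}(v_1,G^{\Delta}),\dots,B_{r_n}(v_n,G^{\Delta})$ are pairwise intersecting, they are pairwise intersecting gated subsets of $(V(G),d_G)$; by Lemma~\ref{lem:gated_Helly} they have a common vertex $x$, and $d_{G^{\Delta}}(v_i,x)\le r_i$ for every $i$, so the $G^{\Delta}$-balls intersect.

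To establish the bridge, embed $G$ isometrically into a hypercube $H(X)$ (Theorem~\ref{median_survey}(iii)), so that vertices of $G$ become finite subsets of $X$, $I_G(u,v)=I_{H(X)}(u,v)\cap V(G)$, and $V(G)$ is closed under the median map $m(a,b,c)=(a\cap b)\cup(b\cap c)\cup(c\cap a)$. Two vertices $v,w$ lie in a common cube of $G$ exactly when the full hypercube interval $\{s:v\cap w\subseteq s\subseteq v\cup w\}$ is contained in $V(G)$; for $S\subseteq V(G)$ let $S^{\Delta}$ denote $S$ together with all vertices lying in a common cube of $G$ with some vertex of $S$. Then $B_0(v,G^{\Delta})=\{v\}$ and $B_r(v,G^{\Delta})=(B_{r-1}(v,G^{\Delta}))^{\Delta}$, so by induction on $r$ it suffices to prove: \emph{if $C$ is a convex subgraph of $G$, then $C^{\Delta}$ is convex.}

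So let $w_1,w_2\in C^{\Delta}$, choose $c_i\in C$ with $I_{H(X)}(c_i,w_i)\subseteq V(G)$ (possible by the cube description of $C^{\Delta}$), let $w_3\in I_G(w_1,w_2)$, and let $c_3\in C$ be the gate of $w_3$ in $C$ (which exists since $C$ is gated). After translating coordinates we may assume $c_3=\emptyset$; then $w_3\cap c_1=w_3\cap c_2=\emptyset$ (because $c_3\in I(w_3,c_i)$ forces $w_3\cap c_i\subseteq c_3$), and $w_1\cap w_2\subseteq w_3\subseteq w_1\cup w_2$. It is enough to show $2^{w_3}\subseteq V(G)$, i.e.\ $I_{H(X)}(c_3,w_3)\subseteq V(G)$, for then $w_3$ lies in a cube of $G$ through $c_3\in C$, whence $w_3\in C^{\Delta}$. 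Fix $t\subseteq w_3$. Since $c_i\cap w_i\subseteq a_i:=(t\cap w_i)\cup(c_i\cap w_i)\subseteq w_i$, we get $a_i\in I_{H(X)}(c_i,w_i)\subseteq V(G)$; then, using $t\subseteq w_3$ and $w_3\cap c_i=\emptyset$, a direct computation gives $m(a_i,w_3,\emptyset)=a_i\cap w_3=t\cap w_i$, so $t\cap w_i\in V(G)$; finally $m(t\cap w_1,t\cap w_2,w_3)=t$, so $t\in V(G)$, as required.

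The only step demanding real work is the last one: the bookkeeping in the hypercube model --- taking the gate $c_3$ as basepoint and running the three median computations so as to conclude that the entire hypercube interval between $w_3$ and $c_3$ stays inside $G$. Everything else is formal, and the argument applies to arbitrary (not necessarily finite or locally finite) median graphs $G$, which is why the conclusion is the \emph{finite} Helly property.
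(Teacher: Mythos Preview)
Your proof is correct. The paper itself does not prove this proposition---it is cited from Bandelt and van~de~Vel---but later (Theorem~\ref{thm:Helly}) the paper establishes the more general statement that $G^{\Delta}$ is finitely Helly for every swm-graph $G$. That proof proceeds by a different route: it shows that the clique complex of $G^{\Delta}$ is simply connected, that \emph{unit} balls of $G^{\Delta}$ are gated in $G$ (hence maximal cliques of $G^{\Delta}$, being intersections of unit balls, are gated and the clique-Helly property follows from Lemma~\ref{lem:gated_Helly}), and then invokes the characterization of finitely Helly graphs as clique-Helly graphs with simply connected clique complex (Theorem~\ref{t:lotogloHell}). Only afterwards, in Proposition~\ref{gated_ball}, does the paper deduce that \emph{all} balls of $G^{\Delta}$ are gated in $G$, and that deduction uses the Helly property already established.

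Your argument is more direct: you prove by induction that every ball of $G^{\Delta}$ is convex in $G$, and then apply the finite Helly property of gated sets immediately, without passing through the clique-Helly/simple-connectivity machinery. The inductive step---that $C^{\Delta}$ is convex whenever $C$ is---is carried out by explicit median calculus in the hypercube model, which is natural and elementary for median graphs but does not generalize to swm-graphs (where there is no hypercube embedding and the r\^ole of cubes is played by the more intricate Boolean-gated dual-polar subgraphs). So your route is shorter and self-contained for the median case; the paper's route is designed for the swm generality, where the Boolean-pair machinery of Lemma~\ref{lem:Boolean123} replaces your coordinate computations.
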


Finally, {\it quasi-median graphs} \cite{BaMuWi} are the $K^-_4$ and $K_{2,3}$--free weakly modular
graphs; equivalently, they are exactly the retracts of Hamming graphs (weak Cartesian products
of complete graphs).

\subsubsection{Bridged and weakly bridged graphs}
    Bridged and weakly bridged graphs constitute other important
    subclasses of weakly modular graphs. A graph $G$ is called {\it
      bridged} \cites{FaJa,SoCh} if it does not contain any isometric
    cycle of length greater than $3$.  Alternatively, a graph $G$ is
    bridged if and only if the balls $B_r(A,G)=\{ v\in V: d(v,A)\le
    r\}$ around convex sets $A$ of $G$ are convex.  Bridged graphs are
    exactly weakly modular graphs that do not contain induced $4$--
    and $5$--cycles (and therefore do not contain $4$-- and
    $5$--wheels) \cite{Ch_metric}. A graph $G$ is {\it weakly bridged}
    \cites{ChOs,O-cnpc} if $G$ is a weakly modular graph with convex
    balls $B_r(x,G).$ Equivalently, weakly bridged graphs are exactly
    the weakly modular graphs without induced $4$--cycles $C_4$
    \cite{ChOs}.

\subsubsection{Pre-median graphs}
    A graph $G$ is called {\it pre-median} \cites{Cha1,Cha2} ({\it
      pm-graph}, for short) if $G$ is a weakly modular graph without
    induced $K_{2,3}$ and $W^-_4$. Here are the main properties of
    pre-median graphs (for definitions, see below):

\begin{theorem} \cites{Cha1,Cha2} \label{premedian_Chastand} Let $G$ be a pre-median graph. Then:
\begin{itemize}
\item[(i)] $G$ is elementary if and only if $G$ is prime;
\item[(ii)] $G$ is fiber-complemented;
\item[(iii)] $G$ is isometrically embeddable in a weak Cartesian product of its primes;
\item[(iv)] if each prime subgraph of $G$ is a moorable graph, then $G$ is the retract of a weak Cartesian product of its primes;
\item[(v)] if $G$ is finite, then $G$ can be obtained by gated amalgams from Cartesian products of
its prime subgraphs.
\end{itemize}
\end{theorem}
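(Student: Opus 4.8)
The plan is to derive all five statements from the single assertion (ii), that every pre-median graph $G$ is fiber-complemented, and then to obtain (i), (iii), (iv), (v) from Chastand's general theory of fiber-complemented graphs \cite{Cha1,Cha2}. Thus the only pre-median--specific work is the verification of (ii); the remaining items are applications of that theory together with a description of which gated subgraphs of $G$ are prime.

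To prove (ii), fix a gated subgraph $W$ of $G$, let $\mathrm{pr}_W\colon V(G)\to V(W)$ be the gate map, and for $w\in W$ set $F_w=\mathrm{pr}_W^{-1}(w)$. By Lemma~\ref{lem-weakly-modular_gated_hull}(ii) it suffices to show that $F_w$ induces a connected subgraph and that every common neighbour of two distinct vertices of $F_w$ again lies in $F_w$. Connectedness holds in any graph: if $u\in F_w$ and $x$ lies on a shortest $(u,w)$--path, then $x\in I(u,w)$ and $w$ is the gate of $u$ in $W$ give $d(x,z)=d(x,w)+d(w,z)$ for all $z\in W$, i.e.\ $\mathrm{pr}_W(x)=w$, so $F_w$ is connected through $w$. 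The substantial point is closure under common neighbours, and this is where the absence of induced $K_{2,3}$ and $W^-_4$ is used. Suppose $u,u'\in F_w$ with $1\le d(u,u')\le 2$, let $v$ be a common neighbour of $u$ and $u'$, and assume for contradiction that $w':=\mathrm{pr}_W(v)\ne w$. Using that $w\in I(u,z)\cap I(u',z)$ and $w'\in I(v,z)$ for every $z\in W$, the triangle inequality first forces $d(w,w')\le 1$, hence $ww'\in E(W)$; then a short case analysis on whether $d(u,u')$ is $1$ or $2$ and on $d(v,W)$, invoking the triangle and quadrangle conditions to create the missing common neighbours, exhibits an induced $K_{2,3}$ or an induced $W^-_4$ among $u,u',v,w,w'$ and one or two auxiliary vertices. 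The cleanest organization is through imprints: in a $K_{2,3}$-- and $W^-_4$--free weakly modular graph the imprint of any vertex on any gated subgraph is a single vertex that changes by at most one step along each edge, and this is violated by $v$ being adjacent to both $u,u'$ while $\mathrm{pr}_W(v)=w'\ne w=\mathrm{pr}_W(u)=\mathrm{pr}_W(u')$.

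Granting (ii), the rest is the fiber-complemented machinery. The canonical ``Cartesian'' congruence on $E(G)$ --- the transitive closure of ``being opposite edges across a gated square, and more generally being parallel across a gated product subgraph'' --- has classes that index the prime factors $G_i$ of $G$, these are realized as gated subgraphs of $G$, and $v\mapsto(\mathrm{pr}_{G_i}(v))_i$ embeds $G$ isometrically into the weak Cartesian product $\prod_i G_i$, which is (iii); in the finite case the same congruence, read through gated subgraphs rather than through the product, presents $G$ as a sequence of gated amalgams of Cartesian products of its prime (equivalently, by (i), elementary) gated subgraphs, which is (v). Statement (i) asserts that for pre-median graphs the intrinsic ``elementary'' condition coincides with primeness, i.e.\ with admitting no nontrivial decomposition as a Cartesian product or as a gated amalgam: one direction is immediate since a decomposable graph fails the elementary condition, and for the other one uses (ii) to show that failure of the elementary condition produces an explicit Cartesian factor or a splitting gated subgraph. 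Finally (iv) holds because moorability passes to (weak) Cartesian products, so $\prod_i G_i$ is moorable whenever every prime factor $G_i$ is; the fiber-complemented structure of $G$ then yields, by Chastand's retraction criterion, a retraction $\prod_i G_i\to G$ compatible with the embedding of (iii).

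The step I expect to be the main obstacle is the case analysis inside (ii): one must check, for each value of $d(v,W)$ and each relative position of $u,u',v,w,w'$, that the vertices produced by the triangle and quadrangle conditions are genuinely new, that they create no shortcut, and that the resulting five- or six-vertex subgraph really is induced and really is $K_{2,3}$ or $W^-_4$. A second, smaller obstacle is the precise matching in (i) of the combinatorial definition of an elementary pre-median graph with Cartesian-and-amalgam indecomposability, which requires extracting the explicit splitting from a non-elementary configuration.
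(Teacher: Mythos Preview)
The paper does not prove this theorem: it is quoted from Chastand \cite{Cha1,Cha2} as background and used as a black box (see the statement header and the surrounding discussion in Subsection~\ref{classes}). There is therefore no ``paper's own proof'' to compare your proposal against.

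That said, your outline is broadly faithful to how Chastand's theory is organized: one first establishes that pre-median graphs are fiber-complemented (your item (ii)), and then (i), (iii), (iv), (v) are instances of his general results on fiber-complemented graphs. Your identification of the crux --- closure of fibers under common neighbours, forced by the $K_{2,3}$/$W_4^-$--freeness together with (TC)/(QC) --- is correct, and your use of Lemma~\ref{lem-weakly-modular_gated_hull}(ii) is the right reduction. What you have written for (ii), however, is a plan rather than a proof: the ``short case analysis'' you allude to is precisely the content, and your sketch does not yet produce the forbidden induced subgraph in each configuration (for instance, when $d(u,u')=2$ and $d(v,W)=d(u,W)$ you need an auxiliary vertex from (TC) before any $K_{2,3}$ or $W_4^-$ can appear, and you must also rule out the possibility that the auxiliary vertex collapses the configuration). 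Likewise your treatment of (i) is circular as stated: you invoke (ii) to show that non-elementary implies decomposable, but the actual argument requires exhibiting either a nontrivial Cartesian factor or a gated amalgam from a proper gated subgraph, and this step has content beyond fiber-complementedness.

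If your goal is to supply a self-contained proof here, you should either reproduce Chastand's argument for (ii) in full (the imprint formulation you mention is indeed the cleanest route) and then cite his general theorems for (i), (iii)--(v), or simply cite \cite{Cha1,Cha2} for the whole statement, as the present paper does.
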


A graph $G$ is said to be {\it elementary} if the only proper gated
subgraphs of $G$ are singletons.  A graph with at least two vertices
is said to be \emph{prime} if it is neither a Cartesian product nor a
gated amalgam of smaller graphs. The prime gated subgraphs of a graph
$G$ are called the {\it primes} of $G$.  A {\it prime pre-median
  graph} (a {\it ppm-graph} for short) is a pre-median graph which is
a prime graph.

A gated subset $S$ of a graph $G$ gives rise to a partition $F_{a}$
$(a\in S)$ of the vertex-set of $G;$ viz., the {\em fiber} $F_{a}$ of
$a$ relative to $S$ consists of all vertices $x$ (including $a$
itself) having $a$ as their gate in $S$.  According to Chastand
\cites{Cha1,Cha2}, a graph $G$ is called {\it fiber-complemented} if
for any gated set $S$ all fibers $F_{a}$ $(a\in S)$ are gated sets of
$G$.

A map $f: V(G) \to V(G)$ is a \emph{mooring} of a graph $G$ onto $u$ if the following holds:
\begin{enumerate}
\item $f(u) = u$ and for every $v \neq u$, $f(v)\sim v$ and $d(f(v),u)
  = d(v,u)-1$.
\item for every edge $vw$ of $G$, $f(v)$ and $f(w)$ coincide or are
  adjacent.
\end{enumerate}
A graph $G$ is \emph{moorable} if, for every $u \in V(G)$, there
exists a mooring of $G$ onto $u$.

Known sub-classes of pre-median graphs are the median, quasi-median, weakly median, and bucolic graphs. They
can be characterized via their primes in the following way:

\begin{theorem} \label{premedian_primes} Let $G$ be a pre-median graph. Then:
\begin{itemize}
\item[(i)] \cite{Is} $G$ is median if and only if all its primes are $K_2$;
\item[(ii)] \cite{BaMuWi} $G$ is quasi-median if and only if all its primes are cliques;
\item[(iii)] \cite{BaCh_weak} $G$ is weakly median if and only if all its primes are subgraphs of hyperoctahedra,
the 5-wheel $W_5$, or planar $K_4$--free bridged graphs;
\item[(iv)] \cite{BCC+} $G$ is bucolic if and only if all its primes are weakly bridged graphs.
\end{itemize}
\end{theorem}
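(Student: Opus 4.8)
The plan is to deduce each of the four equivalences from the corresponding structural theorem cited in the statement, organized around the theory of fiber-complemented graphs recorded in Theorem~\ref{premedian_Chastand}. The common mechanism is that each of the classes in question --- median, quasi-median, weakly median, bucolic --- is a subclass of pre-median graphs closed under taking gated subgraphs, under (weak) Cartesian products, and under gated amalgams. Since by Theorem~\ref{premedian_Chastand}(iii),(v) a pre-median graph $G$ is an isometric subgraph of a weak Cartesian product of its primes and, in the finite case, is assembled from Cartesian products of its primes by successive gated amalgamations, membership of $G$ in such a class will be equivalent to membership of all primes of $G$; the task then reduces to recognizing the prime members of each class.

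For the forward implications I would observe that if $G$ lies in one of these classes then so does every gated subgraph of $G$, since the defining conditions (uniqueness of medians, absence of prescribed induced subgraphs, etc.) pass to convex, hence to gated, subgraphs. In particular every prime of $G$ lies in the class, and being a prime graph it is elementary by Theorem~\ref{premedian_Chastand}(i). Identifying the elementary members is then exactly the content of the cited references: a prime median graph is $K_2$ (\cite{Is}; any median graph on more than two vertices is a proper Cartesian product or gated amalgam, cf.\ Theorem~\ref{median_survey}); a prime quasi-median graph is a clique (\cite{BaMuWi}, via the Hamming-retract characterization); a prime weakly median graph is a subgraph of a hyperoctahedron, the $5$--wheel $W_5$, or a planar $K_4$--free bridged graph (the decomposition theorem of \cite{BaCh_weak}); and a prime bucolic graph is a weakly bridged graph (\cite{BCC+}).

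For the backward implications, suppose all primes of $G$ are of the prescribed form. In the finite case Theorem~\ref{premedian_Chastand}(v) builds $G$ from Cartesian products of such primes by successive gated amalgamations, so closure of the target class under Cartesian products and gated amalgams finishes the argument. In the general case one instead invokes Theorem~\ref{premedian_Chastand}(iii),(iv): $G$ embeds isometrically in a weak Cartesian product of such primes and, when these primes are moorable --- which is automatic for $K_2$ and cliques, and known for the remaining primes --- $G$ is even a retract of such a product; for median and quasi-median graphs this recovers, respectively, retracts of hypercubes and retracts of Hamming graphs, i.e.\ the classical characterizations (cf.\ Theorem~\ref{median_survey}), while for weakly median and bucolic graphs one again uses closure of the class under weak products and retracts.

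I expect the genuine difficulty to lie in the external inputs rather than in this assembly, which is bookkeeping: namely the decomposition theorem for weakly median graphs of \cite{BaCh_weak} that produces the three-item list of prime weakly median graphs, and its bucolic analogue in \cite{BCC+}. Verifying that the weakly median and bucolic classes are closed under gated amalgams and Cartesian products, and that the elementary/prime dichotomy of Theorem~\ref{premedian_Chastand}(i) matches those lists, is where the real work resides; for median and quasi-median graphs the corresponding facts are classical (\cite{Is}, \cite{BaMuWi}).
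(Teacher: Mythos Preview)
The paper does not give its own proof of this theorem: it is stated in the preliminaries as a compilation of known results, with each item attributed to the cited reference (\cite{Is}, \cite{BaMuWi}, \cite{BaCh_weak}, \cite{BCC+}) and no argument supplied. There is therefore nothing in the paper to compare your proposal against.

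That said, your sketch is a sensible organizational framework and you correctly identify where the real content lies: in the external decomposition theorems for weakly median graphs \cite{BaCh_weak} and bucolic graphs \cite{BCC+}, which pin down the prime members of those classes. The assembly via Theorem~\ref{premedian_Chastand} (primes coincide with elementary gated subgraphs, embedding in weak Cartesian products, gated amalgamation in the finite case) is exactly the intended route, and the closure of each class under gated subgraphs, Cartesian products, gated amalgams, and retracts is indeed the bookkeeping that makes the equivalences work. Your caveat that moorability is needed to upgrade the isometric embedding of Theorem~\ref{premedian_Chastand}(iii) to a retraction in (iv) is apt; for the bucolic case this is supplied by the LexBFS/SimpLexBFS dismantling results for weakly bridged graphs recorded in \cite{ChOs,BCC+}.
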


Analogously to the definition of median graphs, {\it weakly median graphs} are exactly the weakly modular graphs in which
all triplets have a unique quasi-median.
In particular, quasi-median graphs are the $K_4^-$--free weakly median graphs.
It was shown
in \cite{BaCh_weak} that all weakly median graphs are $L_1$--graphs (which are defined below).
{\it Bucolic graphs} are exactly the weakly modular graphs
without induced $K_{2,3}$, $W_4$, and $W_4^-$.
It was shown in \cite{BCC+} that bucolic graphs give raise
(i.e., are the 1-skeletons)
to contractible prism complexes (called {\it bucolic complexes}), which satisfy many properties of spaces of non-positive
curvature. In particular, they can be characterized in a local-to-global manner similar to CAT(0) cube complexes
(Theorem \ref{median_survey}(v)).

Chastand \cite{Cha1}*{p.\ 121} formulated as an open problem the
question of a characterization of all prime pre-median graphs. We will
answer this question in Chapter \ref{sec:premedian}.  For every
$L_1$--weakly modular graph $G$ admitting a scale-embedding into an
hypercube, we also show how to construct a contractible topological
space $C(G)$, which is a union of Euclidean polyhedra, such that the
union of 1-skeleta of cells of $C(G)$ coincides with $G$.

We will consider two classes of weakly modular graphs (pre-median graphs and swm-graphs)  which are
defined as weakly modular graphs not containing some subgraphs from the list ${\mathcal L}=\{ K^-_4,K_{2,3},K^-_{3,3},W^-_4\}$ as
isometric subgraphs ($W^-_4$ and $K_{2,3}$ are forbidden in pre-median graphs, and $K^-_4$ and $K^-_{3,3}$ are forbidden in swm-graphs).
The following result shows that the class of weakly modular graphs and its two subclasses are closed by
basic operations:

\begin{proposition} \label{prod-retracts} The classes  of weakly modular graphs, pre-median graphs, and swm-graphs are closed
under taking (weak) Cartesian products, gated amalgams, and retracts.
\end{proposition}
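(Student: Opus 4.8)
The plan is to deal with the three closure operations one at a time, first for the class of weakly modular graphs and then upgrading to \emph{pre-median} and \emph{swm} graphs by checking that none of the finitely many forbidden configurations is created. For a (weak) Cartesian product $G=\prod_i G_i$ the distance is additive, $d_G(x,y)=\sum_i d_{G_i}(x_i,y_i)$, so TC$(u)$ and QC$(u)$ are verified coordinate-wise: two adjacent vertices differ in a unique coordinate, which reduces TC to TC in that factor, and for QC the two neighbours of $z$ either differ from $z$ in the same coordinate (apply QC in that factor) or in two distinct coordinates (take the vertex obtained from $z$ by performing both elementary moves). For a retract $H=\varphi(G)$ — recall a retract is an isometric, induced subgraph of $G$ — I would transport witnesses: if $x\in G$ is the vertex produced by TC or QC in $G$, then $\varphi(x)\in H$ satisfies $d_H(u,\varphi(x))\le d_G(u,x)$, which is strictly smaller than $d_H(u,v)=d_H(u,w)$; hence $\varphi(x)\ne v,w$, so (by simpliciality of $\varphi$) $\varphi(x)$ is adjacent to $v$ and to $w$, and the triangle inequality then pins $d_H(u,\varphi(x))$ to exactly the required value.

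For a gated amalgam $G=G_1\cup_{G_0}G_2$ (with $G_0=G_1\cap G_2$) I would argue through Lemma~\ref{lem-weakly-modular}. First I would show that every metric triangle $v_1v_2v_3$ of $G$ lies entirely in $G_1$ or entirely in $G_2$: if some $v_i\in G_1\setminus G_0$ and some $v_j\in G_2\setminus G_0$, relabel so that $v_1\in G_1\setminus G_0$ and $v_2\in G_2\setminus G_0$; then, depending on whether $v_3\in G_1$ or $v_3\in G_2$, the gate of $v_2$ in $G_1$ (respectively of $v_1$ in $G_2$) lies in $G_0$ and belongs to $I(v_2,v_1)\cap I(v_2,v_3)$ (respectively to $I(v_1,v_2)\cap I(v_1,v_3)$), contradicting the definition of a metric triangle. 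Since $G_1$ and $G_2$ are convex, both $\{v_1,v_2,v_3\}$ and every interval $I(v_2,v_3)$ then live in a single $G_\ell$, which is weakly modular, so the conclusion $d(v_1,x)=d(v_1,y)$ of Lemma~\ref{lem-weakly-modular} carries over from $G_\ell$. Hence $G$ is weakly modular.

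For the subclasses it remains to check that none of $F\in\{K^-_4,K_{2,3},W^-_4,K^-_{3,3}\}$ is created (as an induced subgraph, except $K^-_{3,3}$, which is excluded only as an isometric subgraph). Retracts are immediate, since a retract is an isometric induced subgraph, so any induced (resp.\ isometric) copy of $F$ in it is one in the ambient graph. For a Cartesian product I would show any such copy of $F$ lies in a single factor: in a Cartesian product, three pairwise adjacent vertices agree in all coordinates but one, so applying this to the two triangles of $K^-_4$, or of $W^-_4$, that share an edge confines all their vertices to one factor; for $K_{2,3}$ and $K^-_{3,3}$ I would instead use that a pair of vertices at distance $2$ with three common neighbours must differ in a single coordinate (differing in two coordinates leaves at most two common neighbours), which pins the three neighbours, and then the remaining vertices, into one factor. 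As the set of vertices agreeing with a fixed vertex in all but one coordinate is a convex (hence isometric) induced subgraph isomorphic to the corresponding factor, this produces a copy of $F$ of the appropriate kind inside a graph of the class, a contradiction.

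The main obstacle is the gated-amalgam case for the subclasses. Here the structural fact I would establish first is that a vertex $v\in G_1\setminus G_0$ has no neighbour in $G_2\setminus G_0$ and at most one neighbour in $G_0$ — its gate in $G_2$ — since if $v\sim r\in G_0$ that gate lies in $I(v,r)=\{v,r\}$ and is not $v$, hence equals $r$ (symmetrically for $G_2\setminus G_0$). Now suppose a copy of $F$ meets both $G_1\setminus G_0$ and $G_2\setminus G_0$, and colour its vertices \emph{red}, \emph{blue}, \emph{green} according as they lie in $G_1\setminus G_0$, $G_2\setminus G_0$, $G_0$. Then there is no red--blue edge, every red and every blue vertex has at most one green neighbour, and all three colours occur. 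As each of the four graphs has minimum degree $2$, every red vertex has a red neighbour, so there are at least two red vertices inducing a subgraph with no isolated vertex, and likewise at least two blue vertices. Counting vertices: in $K^-_4$ this already exhausts all four vertices, leaving none green — contradiction; in $K_{2,3}$, $W^-_4$, $K^-_{3,3}$ it forces the red class (or, symmetrically, the blue class) to be a single edge, and that edge necessarily contains a degree-$3$ vertex of $F$ (in $W^-_4$ all but one vertex have degree $3$; in the bipartite graphs $K_{2,3}$ and $K^-_{3,3}$ the two red vertices lie in different parts, and in $K^-_{3,3}$ the only two degree-$2$ vertices are non-adjacent), which would need two red neighbours — impossible. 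Hence no copy of $F$ is split across the amalgam, so it lies in $G_1$ or in $G_2$, contradicting membership of $G_1,G_2$ in the class. This last degree/colouring bookkeeping over the four forbidden graphs is the part I expect to be the most delicate to write cleanly, but it is the whole difficulty.
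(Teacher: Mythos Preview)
Your proof is correct and follows the same overall strategy as the paper: verify that weak modularity is preserved by each of the three operations, then check that none of the forbidden configurations $K^-_4$, $K_{2,3}$, $W^-_4$, $K^-_{3,3}$ is created. The execution differs in a few places worth noting.

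For weak modularity under gated amalgams the paper simply declares the proof ``standard'' and gives no details; your route via Lemma~\ref{lem-weakly-modular}, confining every metric triangle to one side of the amalgam and then using convexity of $G_\ell$ to identify the intervals, is clean and correct. For forbidden subgraphs in Cartesian products the paper invokes as a black box that the four graphs are irreducible in the sense of Graham--Winkler \cite{GrWi}; your direct argument (triangles live in one coordinate; a pair at distance~$2$ with three common neighbours must differ in a single coordinate) is more elementary and self-contained. For forbidden subgraphs in gated amalgams both the paper and you observe that a split copy of $F$ would meet $G_0$ in a separator, but the paper then says only that a contradiction with gatedness is ``easy to see''; your observation that a vertex outside $G_0$ has at most one neighbour in $G_0$ (its gate), combined with the red/blue/green colouring and minimum-degree count, is exactly how one makes that step precise.

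In short: same approach, with your version trading external citations and omitted details for explicit elementary arguments.
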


\begin{proof} The proof that Cartesian products and gated amalgams of weakly modular graphs are weakly modular is standard. Now, let $G$ be a retract of a weakly modular
graph $H$ and let $\varphi: V(H)\rightarrow V(G)$ be a retraction map. As a retract, $G$ is an isometric subgraph of $H$.
Let $u,v,w$ be three vertices of $G$ such that $v\sim w$ and $d_G(u,v)=d_G(u,w)$. Let $x'$ be a common neighbor of $v,w$ in $H$ having distance $k-1$ to $u$. Let $x=\varphi(x').$ Since the map
$\varphi$ is non-expansive, $\varphi(u)=u, \varphi(v)=v,$ and $\varphi(w)=w,$ and $G$ is an isometric subgraph of $H$, we conclude that $d_G(x,u)=k-1,d_G(x,v)=d_G(x,w)=1,$ thus $G$ satisfies the triangle condition (TC). The quadrangle condition (QC) can be verified in an analogous way. This shows that $G$ is weakly modular.

Notice that all four graphs from $\mathcal L$ are {\it irreducible graphs} 
sensu \cite{GrWi}, i.e., in any isometric embedding into the Cartesian
product of graphs, they appear as isometric subgraphs of a factor.
This means that $K_{2,3}$ and $W^-_4$ cannot arise in Cartesian
products of pre-median graphs and $K^-_4$ and $K_{3,3}^-$ cannot arise
in the Cartesian products of swm-graphs, establishing that those two
classes of graphs are closed under Cartesian products. If $G$ is a
retract of $H$, then $G$ is an isometric subgraph of $H$. Thus, if $H$
does not contain one or several subgraphs of $\mathcal L$ as isometric
subgraphs, then $G$ cannot contain them either. Therefore, if $H$ is
pre-median or sweakly modular, then $G$ is also pre-median or sweakly
modular.

Suppose now that $G$ is the gated amalgam of two weakly modular graphs $H$ and $H'$
along a common gated subgraph $H_0$. Suppose that $H$ and $H'$ do not contain a graph $L$ of
$\mathcal L$ as an isometric subgraph. Suppose to the contrary that $G$ contains $L$ as an
isometric subgraph. Then necessarily $L$ contains a vertex $x$ in $H\setminus H'$ and a vertex
$y\in H'\setminus H$. Then $L_0:=L\cap H_0$ must be an $(x,y)$--separator of $L$. Necessarily, $L_0$
contains at least two vertices. Then one can easily see that such a separator $L_0$ in each of the
four graphs of $\mathcal L$ leads to a contradiction with the fact that
$H_0$ is a gated subgraph of $H$ and $H'$. This shows that pre-median and swm-graphs
are also closed by taking gated amalgams.
\end{proof}

\subsection{Polar and dual polar spaces} \label{prel:dps}

For a point-line geometry $\Pi = (P, L; R)$, consider the following conditions:
\begin{itemize}
\item[(P0)] For every two points of $P$, there is a unique line in $L$ containing them.
\item[(P1)] (Veblen-Young axiom) For any four lines $\ell_1,\ell_2,\ell_3,\ell_4$ of $\mathcal L$ such that $\ell_1$ and $\ell_2$ intersect in a point $p$ and each of the lines
 $\ell_3,\ell_4$ intersects each of the lines $\ell_1,\ell_2$ in a point different from $p,$ the lines $\ell_3$ and $\ell_4$ themselves intersect at a point.
\item[(P2)] Every line contains at least three points.
\item[(P2$'$)] Every line contains at least two points.
\end{itemize}

A {\em  projective space} (respectively, {\em generalized projective space}) is a point-line geometry $\Pi = ({\mathcal P},{\mathcal L}; R)$
satisfying (P0), (P1), and (P2) (respectively, (P2$'$)). The {\em dimension} of a subspace $S$
of a projective space $\Pi$
is the length of a maximal chain of subspaces from $\emptyset$ to $S$ minus $1$.
Let ${\mathcal S}(\Pi) \subseteq 2^{P}$ denote the set of all subspaces of
$\Pi$.
We regard ${\mathcal S}(\Pi)$ as a poset with respect to the inclusion order $\subseteq$.

\begin{theorem}[\cite{Birkhoff}]\label{thm:Birkhoff}\ 
  
\begin{itemize}
\item[{\rm (1)}] For a generalized projective space $\Pi$ of dimension $n-1$,
the subspace poset ${\mathcal S}(\Pi)$ is a complemented modular lattice of rank $n$ with $\wedge = \cap$.
\item[{\rm (2)}]
For a complemented modular lattice ${\mathcal M}$ of rank $n$,
let $P$ and $L$ be the sets of  rank $1$  and rank $2$ elements of ${\mathcal M}$, respectively, and let $R \subseteq P \times L$ be a relation defined
as $(a,b) \in R$ if $a \preceq b$. Then  $\Pi = (P, L; R)$ is a generalized projective space of dimension $n-1$
 with ${\mathcal S}(\Pi) = {\mathcal M}$.
 \end{itemize}
\end{theorem}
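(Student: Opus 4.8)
The plan is to prove the two halves separately; in each direction the claimed object is constructed explicitly and the axioms are checked, the only genuinely substantial inputs being the Veblen--Young axiom (P1) on one side and the modular rank equality on the other.

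\emph{Part (1).} An arbitrary intersection of subspaces of $\Pi$ is again a subspace and $P$ is a subspace, so ${\mathcal S}(\Pi)$ is a complete lattice with $S\wedge T=S\cap T$ and with $S\vee T$ equal to the span $\langle S\cup T\rangle$, the smallest subspace containing $S\cup T$. Using (P0) and (P1) one checks that $\langle\,\cdot\,\rangle$ is a matroid closure operator on $P$ (Steinitz exchange: if $p\in\langle A\cup\{q\}\rangle\setminus\langle A\rangle$ then $q\in\langle A\cup\{p\}\rangle$) whose flats are exactly the subspaces; hence ${\mathcal S}(\Pi)$ is atomistic, every $S$ has a finite basis, and $r(S):=\dim S+1$ is the common size of its bases. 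The key step is that this rank function is modular, i.e. any two subspaces form a modular pair: given $S,T$, take a basis $C$ of $S\cap T$, extend it to bases $C\cup C_1$ of $S$ and $C\cup C_2$ of $T$, and observe that $C\cup C_1\cup C_2$ is still independent --- this is precisely what (P1) guarantees --- so $r(S\vee T)=r(S)+r(T)-r(S\cap T)$. By the recalled characterization of modular lattices via the rank equality, ${\mathcal S}(\Pi)$ is modular. Finally, extending a basis of $S$ to a basis $B$ of $P$ and putting $T:=\langle B\setminus(B\cap S)\rangle$ yields $S\cap T=\emptyset$ and $S\vee T=P$, so ${\mathcal S}(\Pi)$ is complemented, and $r(P)=\dim\Pi+1=n$ gives rank $n$.

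\emph{Part (2), the axioms.} Let ${\mathcal M}$ be complemented modular of rank $n$; as recalled above it is then relatively complemented and atomistic, so every $x$ equals $\bigvee S_x$ where $S_x:=\{a\in P:a\preceq x\}$, and the modular equality $r(x)+r(y)=r(x\wedge y)+r(x\vee y)$ is available. For distinct atoms $a,a'$ the modular equality gives $r(a\vee a')=2$, so $a\vee a'\in L$, it contains both, and any line through $a$ and $a'$ contains $a\vee a'$ and hence (gradedness) equals it --- this is (P0). For $\ell\in L$, the interval $[0,\ell]$ is a complemented modular lattice of rank $2$; taking an atom $a\preceq\ell$ and a complement $q$ of $a$ in $[0,\ell]$, the modular equality forces $r(q)=1$ and $q\ne a$, so $\ell$ carries at least two points, which is (P2$'$). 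For (P1), if $\ell_1,\ell_2$ meet in the point $p$ and $\ell_3,\ell_4$ each meet both of $\ell_1,\ell_2$ away from $p$, then $M:=\ell_1\vee\ell_2$ has rank $2+2-1=3$; each of $\ell_3,\ell_4$ is the join of its two intersection points (which are distinct, since a common one would lie below $\ell_1\wedge\ell_2=p$) and so lies below $M$, whence $r(\ell_3\wedge\ell_4)=r(\ell_3)+r(\ell_4)-r(\ell_3\vee\ell_4)\ge 4-3=1$, so $\ell_3$ and $\ell_4$ meet in a point. Thus $\Pi$ is a generalized projective space.

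\emph{Part (2), the identification ${\mathcal S}(\Pi)={\mathcal M}$.} The map $x\mapsto S_x$ sends $x$ to a subspace (if a line $\ell$ meets $S_x$ in two distinct points $a,a'$, then $\ell=a\vee a'\preceq x$, so every point of $\ell$ lies in $S_x$), is order-preserving, and is injective because $x=\bigvee S_x$. For surjectivity, given a subspace $S$ write $\bigvee S$ as a finite join $a_1\vee\cdots\vee a_m$ of points of $S$ (possible since ${\mathcal M}$ has finite rank), and prove $S_{a_1\vee\cdots\vee a_k}\subseteq S$ by induction on $k$: in the inductive step, for a point $a\preceq a_1\vee\cdots\vee a_k$ not already handled, the rank computation above applied inside the rank-$3$ interval shows the line $a\vee a_k$ meets $a_1\vee\cdots\vee a_{k-1}$ in a point, which is in $S$ by induction, and then the subspace property of $S$ forces $a\in S$. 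Hence $S=S_{\bigvee S}$, so $x\mapsto S_x$ is a lattice isomorphism ${\mathcal M}\to{\mathcal S}(\Pi)$; it carries covers to covers, matching maximal chains from $0$ to $x$ with maximal chains of subspaces from $\emptyset$ to $S_x$, which gives $\dim S_x=r(x)-1$ and in particular $\dim\Pi=\dim P=r(1)-1=n-1$. The main obstacle throughout is the interplay between (P1) and the modular equality --- in (1) that the union of a basis of $S\cap T$ with complementary bases of $S$ and $T$ stays independent, in (2) that the Veblen--Young intersection can be read off a rank inequality in a rank-$3$ interval --- while the remaining verifications (closure under intersection, atomisticity, the dimension bookkeeping, surjectivity) are routine once these are in place.
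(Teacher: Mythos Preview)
The paper does not prove this theorem; it is quoted as a classical result from Birkhoff's lattice theory text and stated without argument. Your proposal supplies what is essentially the standard proof and is correct in outline.

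Two small points of precision are worth noting. In Part~(1), the claim that $C\cup C_1\cup C_2$ remains independent ``is precisely what (P1) guarantees'' compresses a genuine induction: what (P1) gives directly is the Steinitz exchange property for the span operator (via the description of $\langle A\cup\{q\}\rangle$ as the union of lines from $\langle A\rangle$ to $q$), and the independence claim then follows by the usual matroid-theoretic argument. This is the right idea but would benefit from one more sentence. In Part~(2), the phrase ``rank-$3$ interval'' in the surjectivity step is a slip: the actual computation is
\[
r\bigl((a\vee a_k)\wedge(a_1\vee\cdots\vee a_{k-1})\bigr)=2+r(a_1\vee\cdots\vee a_{k-1})-r(a_1\vee\cdots\vee a_k)\ge 1,
\]
which lives in the rank-$k$ interval $[0,a_1\vee\cdots\vee a_k]$. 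The conclusion you draw is nonetheless correct.
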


For a point-line geometry $\Pi = (P, L; R)$, consider the following conditions:
\begin{itemize}
\item[(Q1)] For a point $p$ and a line $\ell$ not containing $p$,
either exactly one point on $\ell$ is collinear with $p$, or
all points on $\ell$ are collinear with $p$.
\item[(Q2)] Every line contains at least three points.
\item[(Q2$'$)] Every line contains at least two points.
\item[(Q3)] For every point $p$ there exists a point $q$ such that $p$ and $q$ are not collinear.
\end{itemize}
A {\em  polar space} (respectively,  {\em generalized polar space})
is a point-line geometry $\Pi = (P, L; R)$
satisfying (Q1) and (Q2) (respectively,  (Q2$'$)). In addition, if (Q3) is satisfied, then
$\Pi$ is said to be {\em nondegenerate}. The {\it rank} of a polar space $\Pi$  is the length $n$
of maximal chains of subspaces (ordered by inclusion).
\begin{Thm}[{\cite{Foundation}*{Theorem 2.18}}]\label{thm:Tits}
Let  $\Pi = (P, L; R)$ be a nondegenerate (generalized) polar space of rank $n$.
\begin{itemize}
\item[{\rm (P1)}] Any maximal proper subspace $S$ together with the subspaces it contains
is a (generalized) projective space of dimension $n-1$.
\item[{\rm (P2)}] The intersection of subspaces is a subspace.
\item[{\rm (P3)}] For a maximal subspace $U$ and a point $p \in P \setminus U$
there exists a unique maximal subspace $W$ such that $W$ contains $p$ and the dimension of $U \cap W$ is $n - 2$.
Furthermore, the points of $U$ collinear with $p$ are exactly the points of $W \cap U$.
\item[{\rm (P4)}] There exist two disjoint subspaces of dimension $n-1$.
\end{itemize}
\end{Thm}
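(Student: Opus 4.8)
The plan is to handle the four assertions in the order (P2), (P1), (P3), (P4), using throughout one elementary consequence of the one-or-all axiom (Q1), which I will call the \emph{cover lemma}: if a point $x$ is collinear with two distinct points of a line $\ell$, then $x$ is collinear with every point of $\ell$ (by (Q1) the set of points of $\ell$ collinear with $x$ is either a single point or all of $\ell$, and here it has at least two elements). From the cover lemma I would first derive the \emph{span lemma}: for any set $X$ of pairwise collinear points, the smallest subspace $\langle X\rangle$ containing $X$ is again singular. One builds $\langle X\rangle$ by transfinitely adjoining the lines determined by pairs of already-constructed points; pairwise collinearity is preserved at each step, since every newly added point lies on a line two of whose points are collinear with any given old point, and the cover lemma applies. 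The same one-or-all argument shows that for each point $p$ the set $p^{\perp}$ of points collinear with or equal to $p$ is a subspace. Assertion (P2) is then immediate: if a line $\ell$ meets $\bigcap_i S_i$ in two points, then it meets each $S_i$ in two points, hence lies in each $S_i$, hence in the intersection; in particular $S\mapsto\langle S\rangle$ is a well-defined closure operator.

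For (P1), let $S$ be a maximal proper subspace; it is singular, so by the span lemma it is closed under span and, together with the lines it contains, is a linear space. To see it is a (generalized) projective space I would verify the Veblen--Young axiom of Theorem~\ref{thm:Birkhoff}: a Veblen configuration of four lines inside $S$ involves only pairwise collinear points, so it lies inside the singular plane they generate, and inside that plane the one-or-all axiom forces the two transversal lines to intersect; equivalently, one can show directly that the poset of subspaces of $S$ is a complemented modular lattice and invoke Theorem~\ref{thm:Birkhoff}. Either way $S$ is a generalized projective space, and its subspace poset, being complemented modular, is graded; since the rank $n$ of $\Pi$ is by definition the common length of maximal chains of subspaces, every maximal proper subspace has dimension exactly $n-1$. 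A caveat: the assertion that \emph{all} maximal singular subspaces have the same dimension is not honestly available before (P3), so a careful proof interleaves (P1) with (P3), arguing by induction on $n$ with the rank-one case trivial and the inductive step passing to the residue geometry of a point (itself a nondegenerate polar space of rank $n-1$).

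For (P3), fix a maximal subspace $U$ and a point $p\notin U$. First, $U^{\perp}=U$: any $x\in U^{\perp}\setminus U$ would give a singular subspace $\langle U\cup\{x\}\rangle$ strictly larger than $U$ by the span lemma, contradicting maximality. Hence $p\notin U^{\perp}$, so some point of $U$ is not collinear with $p$ and $p^{\perp}\cap U$ is a \emph{proper} subspace of $U$; meanwhile the cover lemma shows $p^{\perp}\cap U$ meets every line of $U$, so it is a geometric hyperplane of the projective space $U$ and has dimension $n-2$. Then $W:=\langle\{p\}\cup(p^{\perp}\cap U)\rangle$ is singular (span lemma), has dimension $(n-2)+1=n-1$, hence is a maximal subspace containing $p$, and $W\cap U=p^{\perp}\cap U$ (points of $W$ are collinear with $p$, while $p^{\perp}\cap U\subseteq W\cap U$ by construction) — which also gives the last statement of (P3). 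For uniqueness, if $W'$ is maximal with $p\in W'$ and $\dim(U\cap W')=n-2$, then $U\cap W'\subseteq p^{\perp}\cap U$, so by equality of dimensions $U\cap W'=p^{\perp}\cap U$, whence $W'\supseteq\langle\{p\}\cup(p^{\perp}\cap U)\rangle=W$, and both being maximal of dimension $n-1$ forces $W'=W$.

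For (P4), I would induct on $n$. If $n=1$, generators are points and (Q3) supplies two non-collinear ones, i.e.\ two disjoint generators. For the inductive step, take a maximal subspace $U$, pick $p\notin U=U^{\perp}$ and $q\in U$ not collinear with $p$; then $\{p,q\}^{\perp}=p^{\perp}\cap q^{\perp}$ is a nondegenerate polar space of rank $n-1$, and $p^{\perp}\cap U$ (which equals $U\cap\{p,q\}^{\perp}$ since $U\subseteq q^{\perp}$) is one of its maximal subspaces. By induction $\{p,q\}^{\perp}$ has a generator $W'$ disjoint from $p^{\perp}\cap U$, and $W:=\langle\{p\}\cup W'\rangle$ is a maximal subspace of $\Pi$; intersecting $W\cap U$ with $\{p,q\}^{\perp}$ and using $p\notin\{p,q\}^{\perp}$ shows $W\cap U\subseteq W'\cap(p^{\perp}\cap U)=\emptyset$. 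The step I expect to be the main obstacle is the dimension clause of (P1): showing that every maximal singular subspace has dimension $n-1$ cannot really be separated from (P3) and forces the joint induction on the rank together with a careful analysis of how taking $\perp$ behaves on residues, whereas the Veblen--Young check inside a singular plane and the geometric-hyperplane and hyperbolic-pair facts used in (P3)--(P4) are routine once the cover and span lemmas are in place.
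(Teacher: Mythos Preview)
The paper does not prove Theorem~\ref{thm:Tits}; it is quoted verbatim from \cite[Theorem 2.18]{Foundation} as a background result on polar spaces, with no proof given. So there is no ``paper's own proof'' to compare your proposal against.

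Your sketch is a recognizable outline of the standard proof (essentially the Buekenhout--Shult approach): the cover lemma and span lemma are the right starting point, the identification of $p^{\perp}\cap U$ as a geometric hyperplane is the heart of (P3), and the induction on rank via residues for (P4) is the classical route. You are also right to flag the circularity risk between (P1) and (P3): the equidimensionality of maximal singular subspaces genuinely requires the joint induction you describe, and a clean exposition (as in Ueberberg or Tits) does interleave them. If you want to fill this in completely, the main technical point you have not made explicit is verifying that the residue $\{p,q\}^{\perp}$ of a hyperbolic pair is again a nondegenerate polar space of rank exactly $n-1$; this is where the induction actually lives, and it needs a short argument that maximal singular subspaces of the residue correspond bijectively to maximal singular subspaces of $\Pi$ through $p$ (respectively $q$).
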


Tits \cite{Ti}*{p.102} defines a polar space as a geometry
satisfying (P1) to (P4); in his definition, $\Pi_S$ is supposed to be  a ``generalized" projective space in (P1).
Polar spaces represent one of the fundamental types of incidence
geometries. Polar spaces of rank at least 3 with
thick lines (i.e., all lines contain at least three points) have
been classified in a seminal work by Tits \cite{Ti}: they can be
constructed from sesquilinear or pseudoquadratic forms on vector
spaces; cf. \cites{Foundation,Sh}. Polar spaces and spherical buildings 
of type $C$ constitute the same objects by a theorem of Tits \cite{Ti}; see the discussion on buildings in Section~\ref{subsec:building}
and the formulation of Tits' result in Theorem~\ref{thm:Tits_polar}.

A (generalized) polar space $\Pi = (P, L; R)$ gives rise to
another point-line geometry $\Pi^* = (P^*, L^*; R^*)$.
The point set $P^*$ is the set of all $n$--dimensional subspaces of $\Pi$,
and the line set $L^*$ is the set of all $(n-1)$--dimensional subspaces of $\Pi$, where
the relation $R^* \subseteq P^* \times L^*$ is defined as
$(W,U) \in R^*$ if $W \supseteq U$. A {\em dual polar space} is a point-line geometry $\Pi^*$
obtained from some (generalized) polar space $\Pi$ in this way. A {\em dual polar graph} $G$
is the collinearity graph of a dual polar space $\Pi^*$. A characterization of dual polar graphs was given by
Cameron \cite{Ca}; we will present and use this characterization in Chapter \ref{s:dupol}. As noticed
in \cite{BaCh}, from this characterization immediately follows that dual polar graphs are weakly modular.
Moreover the subspace poset of a polar space is a modular semilattice,
and its covering graph is orientable modular.
Here a modular semilattice is called {\em complemented} if each principal ideal is a complemented modular lattice.
\begin{Lem}\label{lem:polar=>modular_semilattice}
For a polar space $\Pi$, the subspace poset ${\mathcal S}(\Pi)$
is a complemented modular semilattice,
and its covering graph is orientable modular.
\end{Lem}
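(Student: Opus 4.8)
The plan is to verify the three assertions in turn: that $\mathcal{S}(\Pi)$ is a meet-semilattice, that each principal ideal is a complemented modular lattice, and that the covering graph carries an admissible orientation coming from the Hasse orientation. For the semilattice property I would use (P2) of Theorem~\ref{thm:Tits}: the intersection of any family of subspaces is again a subspace, so $\bigwedge$ exists and equals $\cap$; thus $\mathcal{S}(\Pi)$ is a meet-semilattice. Note that it is not a lattice in general, since two maximal subspaces need not have a common upper bound other than a (nonexistent) top element — this is exactly the content of (Q3)/(P4).

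Next I would show each principal ideal $(S)^{\downarrow}=[\emptyset,S]$ is a complemented modular lattice. For this, take $S$ to be any subspace; by (P1) of Theorem~\ref{thm:Tits}, if $S$ is contained in a maximal proper subspace then $S$ together with the subspaces it contains forms a (generalized) projective space, and by Theorem~\ref{thm:Birkhoff}(1) the subspace lattice of a projective space is a complemented modular lattice of finite rank. One has to handle the case where $S$ is itself maximal (dimension $n-1$), but then $S$ with the subspaces it contains is directly a generalized projective space of dimension $n-1$ by (P1), so the same conclusion applies. Hence $[\emptyset,S]\cong\mathcal{S}(\Pi_S)$ is a complemented modular lattice, which is precisely the definition of $\mathcal{S}(\Pi)$ being a \emph{complemented modular semilattice}. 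To see that $\mathcal{S}(\Pi)$ is a \emph{modular} semilattice in the sense recalled before Theorem~\ref{thm:BVV1}, I still need: (a) each principal ideal is a modular lattice — done; and (b) whenever $x\vee y$, $x\vee z$, $y\vee z$ all exist, then $x\vee y\vee z$ exists. For (b), if $x\vee y$ exists then $x,y$ lie in a common subspace, and in a polar space the join of two upper-bounded subspaces is their linear span; pairwise spans existing forces the three subspaces to lie in a common singular or non-singular subspace bounded by some maximal subspace, and inside that projective geometry the triple join exists. This last point — showing pairwise boundedness of three subspaces implies joint boundedness — is the step I expect to be the main obstacle, and it is where one must genuinely invoke the polar-space axioms (essentially (Q1) and the structure in (P3)) rather than just projective-geometry facts; the cleanest route is probably to reduce to the case where all three join to a common subspace by repeatedly applying the exchange/span behavior governed by (P3).

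Finally, for orientable modularity of the covering graph: since $\mathcal{S}(\Pi)$ is now a (discrete) modular semilattice, Theorem~\ref{thm:BVV1} gives that its covering graph is modular, and the discussion following Theorem~\ref{thm:BVV1} shows the Hasse orientation of any modular semilattice is admissible, i.e.\ in every square $x_1x_2x_3x_4$ we have $x_i\to x_{i+1}$ iff $x_{i+3}\to x_{i+2}$. Thus the covering graph is orientable modular by definition. The rank function of $\mathcal{S}(\Pi)$ — namely $S\mapsto \dim S + 1$ — serves as the grade function witnessing discreteness, since polar spaces have finite rank $n$. I would close by remarking that this lemma makes precise the sentence in the text immediately preceding it, and that it is the structural fact underlying the later identification of dual polar graphs as a subclass of swm-graphs.
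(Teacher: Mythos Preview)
Your outline matches the paper's approach exactly for the semilattice structure (via (P2) and $\wedge=\cap$), for the complemented modular lattice structure of each principal ideal (via (P1) and Theorem~\ref{thm:Birkhoff}), and for the orientable modularity of the covering graph (via Theorem~\ref{thm:BVV1}). The one place where your plan is not yet a proof is precisely the step you flag as the main obstacle: showing that pairwise upper-bounded triples have a join. Your sketch (``pairwise spans existing forces the three subspaces to lie in a common \ldots\ subspace'') is essentially a restatement of the claim, and the phrase ``repeatedly applying the exchange/span behavior governed by (P3)'' does not yet pin down an argument.

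The paper fills this gap with two moves you are missing. First, it reduces to atoms: since every principal ideal is complemented modular, each element is a join of atoms (points of $\Pi$), so it suffices to show that any finite set of \emph{pairwise collinear points} $u_1,\ldots,u_k$ has a join. Second, it runs a short maximality argument: choose a maximal subspace $X$ containing as many of the $u_i$ as possible; if some $u_i\notin X$, then (P3) produces a maximal subspace $Y\ni u_i$ with $\dim(X\cap Y)=n-2$, and moreover $Y$ contains every point of $X$ collinear with $u_i$. Since all $u_j$ are collinear with $u_i$ by hypothesis, $Y$ contains strictly more of the $u_j$ than $X$ does, contradicting the choice of $X$. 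Hence $X$ contains all the $u_i$, and their join exists inside the projective lattice $[\emptyset,X]$. This is the concrete use of (P3) you were reaching for.
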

\begin{proof}
By (P2), ${\mathcal S}(\Pi)$ is a semilattice with $\wedge = \cap$.
By (P1) and Theorem~\ref{thm:Birkhoff},
every lower ideal is a complemented modular lattice.
Therefore every element is the join of atoms.
It suffices to show that every pairwise bounded set of atoms $u_1, u_2, u_3,\ldots,u_k$
has the join $u_1 \vee u_2 \vee u_3 \vee \cdots \vee u_k$.
Take a maximal subspace $X$ containing  as many as possible of $u_1,u_2, u_3,\ldots, u_k$.
If $X$ does not contain some $u_i$,
then by (P3) there exists a maximal subspace $Y$ such that
$Y$ contains $u_i$ and all atoms of
$X$ collinear to $u_i$ (having joins with $u_i$).
This is a contradiction to the maximality of $X$.
The latter part follows from Theorem~\ref{thm:BVV1}.
\end{proof}

We will see that dual polar graphs constitute a natural class of swm-graphs.
A dual polar graph can be completely recovered from the original polar space and its subspace poset.
By Lemma \ref{lem:polar=>modular_semilattice}, the covering graph of this poset is an orientable modular
graph. We will show that a similar relation holds for  arbitrary swm-graphs. 

\subsection{Other related graph classes}\label{prel:other} We continue with some classes of graphs
related to weakly modular graphs.

\subsubsection{Meshed graphs}
    A graph $G=(V,E)$ is called {\it meshed} \cite{BaCh} if for any
    three vertices $u,v,w$ with $d(v,w)=2,$ there exists a common
    neighbor $x$ of $v$ and $w$ such that $2d(u,x)\le d(u,v)+d(u,w).$
    Meshed graphs are thus characterized by some (weak) convexity
    property of the radius functions $d(\cdot,u)$ for $u\in V.$ This
    condition ensures that all balls centered at cliques in a meshed
    graph $G$ induce isometric subgraphs of $G$. All basis graphs of
    matroids and even $\triangle$--matroids are meshed
    \cite{Che_bas}. Also it is well known that all weakly modular
    graphs are meshed; for self-completeness, we provide its simple
    proof.

\begin{lemma} \label{wm-meshed}
Any weakly modular graph $G$ is meshed.
\end{lemma}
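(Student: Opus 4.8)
The plan is to verify the meshedness condition directly from the triangle and quadrangle conditions, proceeding by induction on $k := d(u,v) + d(u,w)$ for the three vertices $u,v,w$ with $d(v,w) = 2$. The target is to produce a common neighbor $x$ of $v$ and $w$ with $2d(u,x) \le d(u,v) + d(u,w)$. First I would dispose of the trivial and easy cases: if $u$ itself is a common neighbor of $v$ and $w$ (i.e. $d(u,v) = d(u,w) = 1$), we are done with $x = u$; more generally, if some shortest $(v,w)$-path of length $2$ passes through a vertex $x$ with $d(u,x) \le \max\{d(u,v),d(u,w)\} - 1$ we are again finished, so we may assume every common neighbor $x$ of $v,w$ satisfies $d(u,x) \ge \max\{d(u,v), d(u,w)\}$.

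The heart of the argument is a case split on the relative distances $d(u,v)$ and $d(u,w)$. \emph{Case 1: $d(u,v) = d(u,w) =: p$.} Pick a neighbor $z$ of $v$ on a shortest $(u,v)$-path, so $d(u,z) = p - 1$. If $d(z,w) \le 2$ then one hopes to invoke either TC (if $d(z,w) = 1$, noting $z \sim v$) or a short induction (if $d(z,w) = 2$, applying the meshed inequality inductively to $z,v,w$ whose distance sum is smaller, but one has to be careful that the common neighbor produced is close enough to $u$ — here is where the quadrangle condition QC enters). If instead $d(z,w) = 3$, this contradicts $d(v,w) = 2$ and $z \sim v$, so that case is vacuous. \emph{Case 2: $d(u,v) \ne d(u,w)$}, say $d(u,v) < d(u,w)$. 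Then $d(u,v) = d(u,w) - 1$ (since $v \sim$ (common neighbor) $\sim w$ forces the distances to differ by at most $2$, and equality by $2$ would make the inequality $2d(u,v) \le d(u,v) + d(u,w)$ automatic via $x$ = any common neighbor with $d(u,x) = d(u,v)+1 \le d(u,w)$, wait — one checks this directly). In the genuinely constrained subcase, let $x_0$ be a common neighbor of $v,w$; we have $d(u,x_0) \in \{d(u,w) - 1, d(u,w), d(u,w)+1\}$, and if it ever equals $d(u,w) - 1 = d(u,v)$ we are done since $2d(u,v) < d(u,v) + d(u,w)$. Otherwise use QC on the configuration $u, w$ and the two neighbors $v, x_0'$ of $w$ (for a suitably chosen common neighbor lying on a geodesic) to descend.

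The cleanest route, which I would actually prefer, is to \emph{reduce to Lemma~\ref{lem-weakly-modular}} rather than redo the case analysis: take a quasi-median metric triangle $v_1 v_2 v_3$ of the triple $u, v, w$. Since $d(v,w) = 2$, the metric triangle is equilateral of some size $s \in \{0, 1, 2\}$ by Lemma~\ref{lem-weakly-modular}, and the quasi-median decomposition writes $d(u,v)$ and $d(u,w)$ in terms of distances to $v_1, v_2, v_3$ plus $s$. When $s = 0$ the quasi-median is a genuine median $m$ of $u,v,w$, so $m \in I(v,w)$, hence $m$ is either $v$, $w$, or the (unique up to the two options) midpoint of a geodesic $v$–$w$; in the midpoint case $d(u,m) = d(u,v) - 1 = d(u,w) - 1$ and $x := m$ works. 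When $s \ge 1$, one has $d(v_2, v_3) = s \le 2$ with $v_2 \in I(v, \cdot)$, $v_3 \in I(w, \cdot)$; using $d(v,w) = 2$ one pins down the local picture (the relevant vertices all lie within distance $2$ of each other) and applies TC or QC once more to extract $x$. The main obstacle I anticipate is the bookkeeping in the $s = 1, 2$ cases: ensuring the common neighbor extracted from TC/QC is metrically between the right pair of vertices so that the distance estimate $2 d(u,x) \le d(u,v) + d(u,w)$ actually closes, rather than being off by one. I expect the authors' proof to be short precisely because one of these reductions (most likely a direct TC/QC split of two or three lines) avoids the full quasi-median machinery.
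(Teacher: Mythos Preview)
Your instinct that the proof is a short TC/QC case split is exactly right, and your final sentence correctly anticipates that the paper avoids quasi-medians entirely. But the plan as written has two concrete problems.

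First, a false step in Case~1: the claim that ``$d(z,w)=3$ contradicts $d(v,w)=2$ and $z\sim v$'' is simply wrong --- the triangle inequality only gives $d(z,w)\le 3$, so this case is live and your route via the geodesic neighbour $z$ stalls there. More importantly, this detour through $z$ is unnecessary: the equal-distance case is actually the \emph{easy} one. The paper handles it in one line: if every common neighbour $y$ of $v,w$ has $d(u,y)=k+1$ (otherwise any $y$ with $d(u,y)\le k$ works), then $v,w\in I(u,y)$ with $v,w\sim y$, so QC produces a common neighbour $x$ with $d(u,x)=k-1$. No induction, no auxiliary vertex $z$.

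Second, you have the difficulty backwards. The diff-by-one case $d(u,v)=k-1$, $d(u,w)=k$ is where the real work lies, and your sketch there (``use QC on the configuration $u,w$ and two neighbours of $w$\ldots to descend'') is too vague to see through. The paper's argument is a chained TC--QC--TC: assuming a common neighbour $y$ has $d(u,y)=k$, TC on the edge $yw$ gives $z\sim y,w$ at distance $k-1$; if $z\nsim v$ then QC on $y,v,z$ towards $u$ gives $u'\sim v,z$ at distance $k-2$; finally TC on the edge $vu'$ towards $w$ (both are at distance $2$ from $w$) produces the desired $x\sim v,u',w$ with $d(u,x)\le k-1$. Your quasi-median alternative can be made to work (the $s=0,1$ cases fall out cleanly, and $s=2$ reduces to the equal-distance QC argument above), but it is strictly more machinery than the paper uses.
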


\begin{proof} Let $u,v,w$ be  three vertices of $G$ with $d(v,w)=2$.
If $|d(u,v)-d(u,w)|=2$, then any common neighbor of $v$ and $w$ can be taken as $x$. Now suppose that
$d(u,v)=d(u,w)=k$. If there exists a common neighbor $x$ of $v$ and $w$ with $d(u,x)\le k$, then we are done.
So, suppose that $v$ and $w$ have a common neighbor $y$ with $d(u,y)=k+1$. By (QC) we will find a common neighbor
$x$ of $v$ and $w$ with $d(u,x)=k-1,$ and we are done. Finally, let $d(u,v)=k-1$ and $d(u,w)=k$. Suppose that there exists
a common neighbor $y$ of $v,w$ with $d(u,y)=k$, otherwise we are done. By (TC) there exists $z\sim y,w$ with $d(u,z)=k-1$.
If $z\sim v$, then we are done. Otherwise, by (QC) there exists $u'\sim v,z$ with $d(u,u')=k-2$. Hence $d(w,u')=d(w,v)=2$. By
(TC) there exists $x\sim v,u',w$. Since $x$ is adjacent to $u'$, necessarily $d(u,x)\le k-1$, and $x$ is as required.
\end{proof}

Meshed graphs do not satisfy the quadrangle condition (QC), however they satisfy (TC):

\begin{lemma} \label{TC-meshed} Any meshed graph $G$ satisfies the triangle condition (TC).
\end{lemma}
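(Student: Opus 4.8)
The plan is to induct on $k:=d(u,v)=d(u,w)$, proving the statement simultaneously for all base vertices and all adjacent pairs at a given distance $k$ from the base vertex.

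For the base cases $k\le 2$ I would argue directly. If $k=1$ then $u$ is itself a common neighbor of $v$ and $w$ at distance $0=k-1$. If $k=2$ then $d(u,w)=2$, so the meshed condition may be applied with apex $v$ to the pair $u,w$: it yields a vertex $y\sim u,w$ with $2d(v,y)\le d(v,u)+d(v,w)=3$, hence $d(v,y)\le 1$; since $y\sim u$ forces $y\ne v$, we get $y\sim v$, so $y$ is a common neighbor of $v,w$ with $d(u,y)=1=k-1$.

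For the inductive step $k\ge 3$ I would begin with a neighbor $a$ of $v$ on a shortest $(u,v)$--path, so $d(u,a)=k-1$. If $a\sim w$ we are done. Otherwise $d(a,w)=2$, and the meshed condition applied with apex $u$ to the pair $a,w$ produces a common neighbor $x_0$ of $a$ and $w$; the inequality $2d(u,x_0)\le(k-1)+k$ together with $x_0\sim w$ pins down $d(u,x_0)=k-1$. If $x_0\sim v$ we are done, so the remaining case is $d(x_0,v)=2$. Here the key is a double use of the induction hypothesis: first apply it to the adjacent pair $a\sim x_0$ (both at distance $k-1$ from $u$) to obtain a common neighbor $z$ of $a$ and $x_0$ with $d(u,z)=k-2$; routine triangle--inequality bookkeeping then gives $d(z,v)=d(z,w)=2$ (upper bounds through $a$ and $x_0$, lower bounds from $d(u,\cdot)-d(u,z)$). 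Second, apply the induction hypothesis to the adjacent pair $v\sim w$ with the new base $z$, whose common distance is now $2<k$; this produces a common neighbor $x$ of $v$ and $w$ with $d(z,x)=1$, whence $d(u,x)\le d(u,z)+1=k-1$, and since $x\sim v$ in fact $d(u,x)=k-1$, as required.

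I expect the main obstacle to be precisely the case $d(x_0,v)=2$: the single meshed step does not close, and the naive recursion on $(a,x_0)$ only moves one step toward $u$ without yet delivering a common neighbor of $v$ and $w$ --- the point is to notice that this recursion drops the relevant common distance to $2$, so a second application of the hypothesis with the shifted base $z$ finishes the job. This is also why $k=2$ needs its own treatment: there the shifted base would again be $u$ and the recursion would be circular, so one must instead invoke the distance--$2$ meshed condition directly with $v$ as apex. The remaining verifications --- that the various distances equal exactly $k-1$, $k-2$, or $2$ --- are straightforward applications of the triangle inequality.
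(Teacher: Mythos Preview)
Your proof is correct and follows essentially the same route as the paper's: pick a neighbor $a$ of $v$ one step closer to $u$, use meshedness to find $x_0\sim a,w$ at distance $k-1$, use the inductive hypothesis on the edge $a x_0$ to drop to a vertex $z$ at distance $k-2$, and then finish by producing a common neighbor of $v,w$ adjacent to $z$. The only cosmetic difference is in the final step: the paper applies meshedness directly to the pair $(v,z)$ with apex $w$, whereas you package that same computation as the $k=2$ base case and invoke it via the induction hypothesis with base $z$; the two arguments are the same up to relabeling.
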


\begin{proof}  Let $u,v,w$ be  three vertices of $G$ with $d(v,w)=1$ and $d(u,v)=d(u,w)=k$. We proceed by induction on $k$.
Let $z$ be any neighbor of $v$ with $d(u,z)=k-1$. If $z\sim w$, then we are done. Otherwise, since $G$ is meshed, there exists a common
neighbor $y$ of $z$ and $w$ with $d(u,y)=k-1$. By induction assumption, we can find a vertex $u'\sim z,y$ with $d(u,u')=k-2$. Then $d(w,u')=2$.
By meshedness of $G$ applied to the triplet $w,v,u'$, we can find a vertex $x\sim w,v,u'$. Then necessarily $d(u,x)=k-1$, and we are done.
\end{proof}


\subsubsection{Basis graphs of matroids and $\triangle$--matroids}
    Basis graphs of matroids and even $\triangle$--matroids (see
    Subsection \ref{s:bamat}) constitute a subclass of meshed
    graphs. Basis graphs of matroids of rank $k$ on a set of size $n$
    are isometric subgraphs of the Johnson graph $J(n,k)$ and basis
    graphs of even $\triangle$--matroids on a set of size $n$ are
    isometric subgraphs of the half-cube $\frac{1}{2}H_n$. Johnson
    graphs $J(n,k)$ are the basis graphs of uniform matroids of rank
    $k$, analogously the half-cubes $\frac{1}{2}H_n$ are the basis
    graphs of uniform even $\triangle$--matroids.

We recall now the  characterization of basis graphs of matroids and $\triangle$--matroids.  For this purpose,
we introduce the following {\it positioning} and {\it 2-interval conditions}:
\begin{itemize}
\item
{\it Positioning condition} (PC): for each vertex $u$ and each
square $v_1v_2v_3v_4$ of $G$ the equality $d(u,v_1)+d(u,v_3)=d(u,v_2)+d(u,v_4)$
holds.
\item
{\it 2-Interval condition} (IC$m$):  each 2-interval $I(u,v)$ is an induced subgraph of the $m$--hyperoctahedron $K_{m\times 2}$.
\end{itemize}

It is known that basis graphs of matroids and even $\triangle$--matroid graphs satisfy
(PC) \cites{Che_bas,Mau}. It is well-known \cite{Mau}
that the 2-intervals of basis graphs of matroids are either squares, or pyramids, or 3-octahedra, thus basis graphs of matroids
are thick and satisfy the 2-interval condition (IC3); analogously, the 2-intervals of even $\triangle$--matroids are thick and satisfy  (IC4).
Notice also that (PC) together with (IC3) or (IC4) imply the meshedness of basis graphs.
Maurer \cite{Mau}  presented a full characterization of finite graphs which
are basis graphs of matroids. Recently, answering a question of \cite{Mau}, this characterization
was refined in \cite{ChChOs_matroid}.  Extending Maurer's result, a characterization of basis graphs of even
$\triangle$--matroids was given in \cite{Che_bas}.   These characterizations
can be formulated in the following way:

\begin{theorem} \label{Th_Mau}  \cites{ChChOs_matroid,Che_bas,Mau}
A finite graph $G$ is the basis graph of a
matroid if and only if $G$ is a connected thick graph
satisfying  (IC3) and (PC). A finite graph $G$ is a basis
graph of an even $\triangle$--matroid if and only if $G$ is a connected thick graph
satisfying (IC4), (PC),  and the links of vertices of $G$ do not contain
induced $W_5$ and $W_6$.
\end{theorem}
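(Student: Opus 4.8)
The plan is to establish both equivalences by the same two-step scheme: the ``only if'' direction is a direct verification from the exchange axiom, while the ``if'' direction is obtained by realising $G$ as an isometric subgraph of a Johnson graph (respectively, a half-cube) and then checking that the associated family of sets is a basis system.

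\emph{The ``only if'' direction.} If $G=G(\mathcal B)$ is the basis graph of a matroid (resp.\ even $\triangle$--matroid) on $I$, then $G$ is connected and $d_G(A,B)=\tfrac12|A\triangle B|$, so $A\mapsto A$ embeds $G$ isometrically into $J(|I|,k)$ (resp.\ into $\tfrac12 H_{|I|}$). The positioning condition (PC) holds because in an induced square $v_1v_2v_3v_4$ no ground-set element lies in exactly one of the diagonal pairs $\{v_1,v_3\}$, $\{v_2,v_4\}$ --- otherwise $v_1$ and $v_3$ would be adjacent --- hence $v_1+v_3=v_2+v_4$ as indicator vectors and $|u\triangle v_1|+|u\triangle v_3|=|u\triangle v_2|+|u\triangle v_4|$ for every vertex $u$. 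Applying the exchange axiom to the four ``middle'' exchanges between two bases $A,B$ with $|A\triangle B|=4$ shows that $I(A,B)$ is a square, a pyramid, or a $3$--octahedron (resp.\ an induced subgraph of $K_{4\times 2}$), each of which is thick; this yields thickness and (IC3) (resp.\ (IC4)). Finally, in a half-cube the link of any vertex is a triangular graph $J(\cdot,2)$, which contains no induced $W_5$ or $W_6$.

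\emph{The ``if'' direction.} Let $G$ be a finite connected thick graph satisfying (PC) and (IC3) (resp.\ (IC4) and the link condition). First note that (PC) together with thickness and (IC3)/(IC4) makes $G$ meshed: for $u$ and $v,w$ with $d(v,w)=2$, thickness gives a square $vx_1wx_2$ in $I(v,w)$, and (PC) applied with apex $u$ yields $2d(u,x_i)\le d(u,v)+d(u,w)$ for the nearer of $x_1,x_2$. Fix a base vertex $b$; it is to become a base of the matroid. Reading off from $B_2(b,G)$ integers $n\ge k\ge 1$ and a ground set $I$ with $|I|=n$, one proceeds along the BFS layers from $b$ to extend an assignment $v\mapsto\varphi(v)\in\binom{I}{k}$ (resp.\ $\varphi(v)\subseteq I$ of fixed parity) so that $|\varphi(x)\triangle\varphi(y)|=2$ for each edge $xy$, the single coordinate that changes being prescribed by the local octahedral (resp.\ half-cube) structure of the $2$--interval at the newly processed vertex. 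The crucial point is that $\varphi$ is \emph{well defined}: by meshedness any two shortest $(b,v)$--paths differ by a sequence of elementary moves through $2$--intervals, and across each such move (PC) together with (IC3)/(IC4) shows that the assignment is unchanged. Hence $\varphi$ is an isometric embedding of $G$ into $J(n,k)$ (resp.\ $\tfrac12 H_n$).

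It remains to verify that $\mathcal B:=\varphi(V(G))$ is the basis system of a matroid (resp.\ even $\triangle$--matroid). All members of $\mathcal B$ have the same size $k$ (resp.\ the same parity), since $d_G(b,v)=\tfrac12|\varphi(b)\triangle\varphi(v)|$. For the exchange property, given $A,B\in\mathcal B$ and $a\in A\setminus B$ one argues by induction on $d_G(A,B)=\tfrac12|A\triangle B|$: the base case is trivial, and in the inductive step one examines $I(A,B)$, whose local structure at $A$ is controlled by (IC3) (resp.\ (IC4)) and which contains a square through $A$ by thickness, to produce a neighbour $A-a+b'$ of $A$ with $b'\in B\setminus A$ and $d_G(A-a+b',B)<d_G(A,B)$; the induction closes. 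In the $\triangle$--matroid case, the absence of induced $W_5$ and $W_6$ in links is exactly what excludes the local configurations in which no admissible symmetric exchange from $a$ is available.

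\emph{The main obstacle.} The delicate step is the well-definedness of $\varphi$ from (PC) alone --- this is the place where Maurer originally invoked a stronger metric condition, later weakened in \cite{ChChOs_matroid} and \cite{Che_bas}. Making (PC) suffice amounts to showing that local consistency of the coordinatisation across triangular and square $2$--intervals propagates to global consistency around every cycle, which requires combining meshedness and (PC) carefully. In the $\triangle$--matroid case there is the additional subtlety that, unlike hypercubes, half-cubes are not ``coordinate-additive'', so the embedding must track parities throughout, and the $W_5/W_6$-freeness of links is precisely what keeps every vertex-link a triangular graph and hence compatible with a half-cube coordinatisation.
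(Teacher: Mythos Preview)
The paper does not prove Theorem~\ref{Th_Mau}; it is quoted from \cite{Mau,Che_bas,ChChOs_matroid} as a known characterization and used as a black box, so there is no in-paper argument to compare against. Your outline follows the route of those references: verify the local conditions for basis graphs directly from the exchange axiom, and for the converse construct a coordinatisation $\varphi$ into a Johnson graph or half-cube from a fixed basepoint and then check the exchange property on the image.

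Your sketch is correct in spirit but, as you yourself flag, the well-definedness of $\varphi$ is where the real content lies, and your justification is too thin to count as a proof. The assertion ``by meshedness any two shortest $(b,v)$--paths differ by a sequence of elementary moves through $2$--intervals'' is exactly what has to be \emph{proved}, not invoked: meshedness gives simple connectivity of the triangle-square complex (cf.\ Proposition~\ref{p:meshsc}), which reduces global consistency of the edge-labelling to consistency around each triangle and each square, and it is in verifying the square case that (PC) enters nontrivially. Writing this out, and then showing that $\varphi$ is an \emph{isometric} embedding (not merely a homomorphism), is the substance of \cite{Mau} and \cite{Che_bas}; your paragraph does not supply it. A minor imprecision: in the $\triangle$--matroid case the role of the $W_5/W_6$ exclusion is not to force links to be triangular graphs (they need not be), but --- together with the absence of propellers that already follows from (IC4) --- to rule out the remaining Beineke obstructions so that each link is a line graph, which is what makes the half-cube labelling go through.
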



\subsubsection{$L_1$--graphs}
A graph $G=(V,E)$ is an $l_1$--{\it graph} if it admits an isometric
embedding into some finite-dimensional space ${\mathbb R}^n$ endowed with the $l_1$--metric \cite{DeLa}. 
A {\it cut} (alias a {\it split} or a {\it bipartition}) of $G$ is a pair $\{ A,B\}$ such that $A\cup B=V$ 
and $A\cap B=\emptyset$.  A cut $\{ A,B\}$ {\it separates} two vertices $u$ and $v$ if $u\in A,v\in B$ or $u\in B,v\in A$. 
Then, equivalently, a graph $G$ is an $l_1$-graph if and only if there exists a collection ${\mathcal C}=\{ \{ A_i,B_i\}: i\in I\}$ 
of cuts and positive numbers $\lambda_i, i\in I$ such that any pair of vertices $u,v$ of $G$ is separated 
by a finite number of cuts and 
$d(u,v)=\sum \{\lambda_i: \{ A_i,B_i\} \mbox{ separates } u \mbox{ and } v\}$ \cite{DeLa}.  

For an integer $\lambda>0$,
a {\it scale $\lambda$ isometric embedding} of a graph $G$ into a graph $H$ is a mapping
$\varphi : V\rightarrow W$ such that $d_H(\varphi (u),\varphi(v))=\lambda\cdot d_G(u,v)$ for all vertices $u,v\in V$.
It is well known \cite{DeLa} that a finite graph $G$ is an $l_1$--graph if and only if for some positive integer $\lambda$,  $G$ admits a scale $\lambda$
embedding into a hypercube. This is no longer true in the case of infinite graphs (even if they are locally finite): the infinite binary tree can be
isometrically embedded in a  hypercube but cannot be isometrically embedded in a finite-dimensional $l_1$--space.  Scale embeddable into hypercubes
graphs are not necessarily locally finite: an infinite  clique or star are examples.

All scale embeddable into hypercubes  graphs and all $l_1$--graphs are $L_1$--graphs: a graph $G$ is an $L_1$--{\it graph} if it admits an 
isometric embedding into an $L_1$--space. 
(we refer to \cite{DeLa}*{Chapter 3} for the  definition of $L_1$--spaces). Not every (even locally finite) $L_1$--graph is scale embeddable
into a hypercube:

\begin{example}
Let $G$ be a graph in the form of a one-way infinite path of blocks (2-connected components) $B_1,B_2,\ldots,$ where each $B_n, n=1,2,\ldots,$ is
isomorphic to the $n$--dimensional hyperoctahedron $K_{n\times 2}$ and the two articulation points of $B_n$ $(n\ge 2)$ are antipodal vertices of $K_{n\times 2}$.
Each $B_n$ admits a scale $\lambda_n$ embedding into a hypercube, but $\lambda_n$ is increasing
with $n$; see \cite{DeLa}*{Subsection 7.4}. Hence $G$ is not scale $\lambda$ embeddable in a hypercube for any positive integer $\lambda$. On the other hand,
$G$ is an $L_1$--graph: each  $B_n$ admits an $l_1$--embedding
in which all the coefficients of cuts are $\frac{1}{\lambda_n}$. We can extend the cuts of $B_n$ to the cuts of $G$ by assigning all vertices of $B_1,\ldots,B_{n-1}$ to the half of the cut
containing the articulation point between $B_{n-1}$ and $B_n$ and assigning all vertices of $B_{n+1},B_{n+2},\ldots$ to the half of the cut containing the articulation point between $B_{n}$ and $B_{n+1}$
(we used here the fact that the halves of each cut involved in an $l_1$--embedding are convex; see \cite{DeLa}*{Lemma 4.2.8}). This provides us with an $L_1$--embedding of $G$ with coefficients of cuts converging to 0.
\end{example}

We conclude this subsection with some properties of $L_1$--graphs scale embeddable into hypercubes. In several subsequent results we will use the following fundamental result by Shpectorov \cite{Shpec}:

\begin{theorem} \label{th:shpectorov} \cite{Shpec} If a  graph $G$ is scale embeddable into a hypercube,  then $G$ is isometrically embeddable
into a weak Cartesian product of finite hyperoctahedra and half-cubes.
\end{theorem}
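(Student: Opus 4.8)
The plan is to follow Shpectorov's original strategy, which realises $l_1$-graphs as a ``Cartesian prime factorisation'' whose prime factors are half-cubes and hyperoctahedra. Reading off the coordinates of a scale-$\lambda$ isometric map $\varphi\colon G\to H(X)$, the hypothesis directly supplies a family $\mathcal{S}$ of pairwise distinct cuts of $G$ together with positive integer weights $\lambda_S$ ($S\in\mathcal{S}$) such that $\lambda\, d_G=\sum_{S\in\mathcal{S}}\lambda_S\,\delta(S)$, where $\delta(S)$ denotes the cut semimetric of the bipartition $\{S,V(G)\setminus S\}$. The first, purely bookkeeping, step is a counting lemma: for every edge $uv$ of $G$ the total weight of the cuts of $\mathcal{S}$ separating $u$ from $v$ equals $\lambda$, and consequently every geodesic of $G$ crosses each cut $S\in\mathcal{S}$ \emph{at most once}, and crosses precisely those cuts of $\mathcal{S}$ that separate its endpoints. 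This ``geodesics cross cuts at most once'' property is the only metric input used afterwards.

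Next I would organise the combinatorics of the cuts. On $\mathcal{S}$ consider the natural linkage relation --- two cuts are directly linked if some edge of $G$ crosses both --- and take its transitive closure, obtaining a partition $\mathcal{S}=\bigsqcup_{j}\mathcal{S}_j$ into classes. For each class, contracting all the cuts outside $\mathcal{S}_j$ (that is, identifying two vertices of $G$ whenever no cut of $\mathcal{S}_j$ separates them) produces a graph $G_j$, which inherits the scale embedding coming from $\sum_{S\in\mathcal{S}_j}\lambda_S\,\delta(S)$. Using the counting lemma one checks that shortest paths of $G$ project to shortest paths in each $G_j$ and that the corresponding lengths add up, so the diagonal map $\iota\colon G\to\prod_j G_j$ is an isometry onto its image; hence $G$ embeds isometrically into the connected component of $\prod_j G_j$ containing $\iota(G)$, a weak Cartesian product. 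When $\mathcal{S}$ is infinite the connected component can be a proper ``weak'' Cartesian product, and one must verify compatibility of the projections at this level of generality.

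The heart of the matter is the identification of the factors: one must show that a graph all of whose cuts belong to a single linkage class --- an ``$l_1$-prime'' graph --- is a half-cube or a hyperoctahedron (necessarily a finite one as soon as $G$ has no infinite clique, since then each vertex meets only finitely many cuts of a given class). I would prove this by a local analysis, as Shpectorov does: fix a vertex $v$ of such a prime factor, look at the cuts crossed by the edges incident to $v$, and exploit the ``cross at most once'' property together with the configurations forced when two cuts are linked --- these produce, around $v$, exactly the triangle and quadrangle patterns characteristic of hyperoctahedra and of half-cubes. Propagating this description of vertex neighbourhoods along geodesics then forces the global isomorphism type. This classification of $l_1$-prime graphs --- excluding every ``exotic'' arrangement of mutually linked cuts --- is where essentially all of the difficulty lies; the reduction above and the final reassembly are routine. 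Putting everything together yields the desired isometric embedding of $G$ into a weak Cartesian product of (finite) hyperoctahedra and half-cubes.
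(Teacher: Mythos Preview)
The paper does not contain a proof of this theorem; it is quoted from \cite{Shpec} with only the one-sentence remark that Shpectorov's original argument, written for finite graphs, goes through verbatim once one merely assumes a scale embedding into a hypercube. There is therefore no ``paper's own proof'' to compare your proposal against.

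That said, your sketch is a reasonable outline of Shpectorov's strategy: extract the weighted cut family from the scale embedding, observe convexity of the halves (equivalently, geodesics cross each cut at most once), partition the cuts into linkage classes, project to obtain an isometric embedding into a (weak) Cartesian product of irreducible factors, and finally classify the irreducible factors. One caution about the last step: Shpectorov's actual classification of the $l_1$-irreducible graphs does not proceed by the purely local ``propagate the neighbourhood structure along geodesics'' argument you describe. The key structural ingredient is rather that the link of every vertex in such a graph is a \emph{line graph} (a fact the present paper itself records in Lemma~\ref{lemma:linksl1}), and the classification is then obtained via Whitney's theorem on line graphs together with a careful analysis of how the underlying root graph globalises. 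The local picture you sketch is compatible with this, but the passage from local to global is where the real work sits, and it is not routine; if you intend to reprove the theorem rather than cite it, you would need to either follow Shpectorov's line-graph route or supply a genuinely new argument for that step.
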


In \cite{Shpec}, Theorem \ref{th:shpectorov} was proven only for finite graphs (and provides a characterization of finite $l_1$--graphs), however the finiteness of $G$ is not necessary, but only that $G$ admits a scale isometric embedding into a hypercube.

We will also use the following property:

\begin{lemma} \label{lemma:L1_finete_hull} If $G$ admits a scale embedding $\varphi$ into a hypercube $H(X)$, then the convex hull in $G$ of any finite set $Y$ of vertices
is finite.
\end{lemma}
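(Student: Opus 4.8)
The plan is to avoid constructing $\mathrm{conv}(Y)$ by iterating the interval operation (which for an infinite $G$ may take transfinitely many steps) and instead to exhibit a single concrete \emph{finite} convex subgraph of $G$ that contains $Y$; since $\mathrm{conv}(Y)$ is, by definition, the smallest convex subgraph containing $Y$, finiteness of $\mathrm{conv}(Y)$ follows at once. This finite convex subgraph will be the $\varphi$-preimage of a finite subcube of $H(X)$.

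First I would record two elementary properties of a scale $\lambda$ embedding $\varphi\colon V(G)\to V(H(X))$. Since $\lambda\ge 1$, the identity $d_{H(X)}(\varphi(u),\varphi(v))=\lambda\, d_G(u,v)$ forces $\varphi$ to be injective. Next, $\varphi$ maps intervals into intervals: if $w\in I_G(u,v)$ then $d_G(u,w)+d_G(w,v)=d_G(u,v)$, and multiplying by $\lambda$ yields $d_{H(X)}(\varphi(u),\varphi(w))+d_{H(X)}(\varphi(w),\varphi(v))=d_{H(X)}(\varphi(u),\varphi(v))$, i.e.\ $\varphi(w)\in I_{H(X)}(\varphi(u),\varphi(v))$. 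Combining these, for every convex subset $C$ of $H(X)$ the preimage $\varphi^{-1}(C)$ is convex in $G$: given $u,v\in\varphi^{-1}(C)$ and $w\in I_G(u,v)$, we get $\varphi(w)\in I_{H(X)}(\varphi(u),\varphi(v))\subseteq C$, hence $w\in\varphi^{-1}(C)$.

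It then remains to pick the right $C$. Each vertex of $H(X)$ is a \emph{finite} subset of $X$ and $Y$ is finite, so $R:=\bigcup_{y\in Y}\varphi(y)$ is a finite subset of $X$. Put $Q:=\{A\in V(H(X)):A\subseteq R\}$, the vertex set of the subcube of $H(X)$ obtained by fixing all coordinates outside $R$. Using the standard description $I_{H(X)}(A,B)=\{C:A\cap B\subseteq C\subseteq A\cup B\}$ of intervals in $H(X)$, one sees that $Q$ is convex in $H(X)$: for $A,B\in Q$ every $C\in I_{H(X)}(A,B)$ satisfies $C\subseteq A\cup B\subseteq R$, so $C\in Q$. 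Clearly $Q$ is finite, with $|Q|=2^{|R|}$, and $\varphi(Y)\subseteq Q$ by the choice of $R$. By the previous paragraph $\varphi^{-1}(Q)$ is then a convex subgraph of $G$ containing $Y$, and by injectivity of $\varphi$ it has at most $2^{|R|}$ vertices. Therefore $\mathrm{conv}(Y)\subseteq\varphi^{-1}(Q)$ is finite, as claimed.

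I do not anticipate a genuine obstacle here; the one point worth care is the remark made at the outset, namely resisting the temptation to build the convex hull by a (possibly transfinite) interval-closure process on the (possibly infinite) graph $G$, and instead working with the abstract ``smallest convex superset'' characterization together with the explicit finite convex superset $\varphi^{-1}(Q)$.
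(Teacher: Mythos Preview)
Your proof is correct and follows essentially the same approach as the paper: exhibit a finite convex subset of $H(X)$ containing $\varphi(Y)$ and pull it back to a finite convex subset of $G$ containing $Y$. The only cosmetic difference is that the paper uses the slightly smaller subcube $H'=\{A:\bigcap_{y\in Y}\varphi(y)\subseteq A\subseteq \bigcup_{y\in Y}\varphi(y)\}$ (the actual convex hull of $\varphi(Y)$ in $H(X)$) rather than your $Q=\{A:A\subseteq R\}$, and phrases the key step as ``the image of $\mathrm{conv}(Y)$ lies in $H'$'' rather than ``$\varphi^{-1}(H')$ is convex''; your version is more explicit about why the pullback is convex, which is a plus.
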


\begin{proof} The convex hull in $H(X)$ of the (finite) set $\{ \varphi(x): x\in Y\}$ is a finite dimensional cube $H'$ of $H(X)$ defined by all
finite subsets $A$ of $X$ such that $\bigcap \{ \varphi(x): x\in Y\}\subseteq A\subseteq \bigcup \{ \varphi(x): x\in Y\}$. Since the image in $H(X)$ of the
convex hull of $Y$ in $G$ is contained in $H'$ (in fact, it coincides with the intersection of $H'$ with the vertex set of $G$), this convex hull is finite.
\end{proof}

\subsection{Complexes}
\label{s:fucom}

As morphisms between cell complexes we consider all \emph{cellular maps}, i.e.,
maps sending (linearly) cells to cells. An \emph{isomorphism} is a bijective
cellular map being a linear isomorphism (isometry) on each cell. A
\emph{covering (map)} of a cell complex $X$ is a cellular surjection $p\colon
\widetilde{X} \to X$ such that $p|_{\mbox{St}(\tv,\widetilde{X})}\colon \mbox{St}(\tv,\widetilde{X})\to \mbox{St}(p(\tv),X)$ is
an isomorphism for every vertex $\tv$ in $\tX$; compare \cite{Hat}*{Section 1.3}.
The space $\widetilde{X}$ is then called a \emph{covering space}.
A \emph{universal cover} of $X$ is a simply connected covering space $\widetilde{X}$. It
is unique up to isomorphism. In particular, if $X$ is simply connected, then
its universal cover is $X$ itself. (Note that $X$ is connected iff $G(X)=X^{(1)}$ is connected, and $X$ is
simply connected (i.e., every continuous map $S^1\to X$ is null-homotopic) iff
$X^{(2)}$ is so.) A group $F$ \emph{acts by automorphisms} on a cell complex $X$ if there is a
homomorphism $F\to \mbox{Aut}(X)$ called an \emph{action of $F$}. The action is
\emph{geometric} (or \emph{$F$ acts geometrically}) if it is proper (i.e., cells
stabilizers are finite) and cocompact (i.e., the quotient $X/F$ is compact). In the current paper we usually consider geometric
actions on graphs, viewed as one-dimensional complexes.

The weak modularity implies
the simple connectivity of the triangle-square complex.
The following lemma was proved in \cite{BCC+} in
the case of polyhedral cell complexes, that is, for CW complexes
in which cells intersect along sub-cells. Nevertheless, the proof presented there can be also applied in our setting
without any modifications.
\begin{lemma}[{\cite{BCC+}*{Lemma 5.5}}] \label{simplyconnected}
Let $G$ be a weakly modular graph with respect to a vertex $u$.
Then the triangle-square complex $X\trsq(G)$ of $G$ is simply connected.
\end{lemma}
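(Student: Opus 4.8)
The goal is to show that for a weakly modular graph $G$ (with respect to a fixed vertex $u$), the triangle-square complex $X\trsq(G)$ is simply connected; equivalently, every cycle $C$ in $G$ bounds a singular disk diagram whose $2$-cells are triangles and squares of $G$. The natural strategy is induction on a well-chosen complexity measure of the cycle $C$ — e.g., its length, or lexicographically the pair $(\ell(C), \sum_{x \in C} d(u,x))$. The plan is to take a cycle $C$ that is not already null-homotopic via obvious small moves, locate a vertex $v$ on $C$ that is farthest from $u$, and use the weak modularity conditions at $u$ to ``push $C$ toward $u$'' by an elementary homotopy across a triangle or square.

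\textbf{Key steps.} First I would set up the induction: if $C$ has length $\le 3$ it is the boundary of a triangle (or is degenerate), and nothing is to prove; so assume $\ell(C) \ge 4$ and pick $v$ on $C$ with $k := d(u,v)$ maximal. Let $v', v''$ be the two neighbors of $v$ along $C$. If $k = 0$ then $C$ is constant; otherwise $d(u,v'), d(u,v'') \le k$, and in fact the maximality of $k$ forces $d(u,v') , d(u,v'') \in \{k-1, k\}$. Now distinguish cases. If both neighbors are at distance $k-1$: either $d(v',v'')=2$ and then we are exactly in the situation of the quadrangle condition QC($u$) (with $z = v$), which produces a common neighbor $x$ of $v', v''$ with $d(u,x) = k-2$; replacing the subpath $(v', v, v'')$ of $C$ by $(v', x, v'')$ is an elementary homotopy across the square $v'vv''x$ (or across two triangles if some of these vertices coincide/are adjacent), yielding a shorter cycle — or one of the same length but with strictly smaller distance sum — to which induction applies. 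If $d(v',v'') \le 1$ then $(v',v,v'')$ can be cut across a single triangle directly. The remaining case is when (say) $d(u,v') = k$: then $v, v'$ are adjacent and both at distance $k$ from $u$, so the triangle condition TC($u$) gives a common neighbor $x$ with $d(u,x) = k-1$; homotoping $C$ across the triangle $v v' x$ replaces $C$ by a cycle in which $v$ is replaced by $x$, strictly decreasing the distance sum (or closing up the cycle). One has to be slightly careful to phrase all of this in terms of singular (not embedded) disk diagrams so that coincidences among the newly introduced vertices cause no trouble; the homotopies above are always realizable by gluing on at most one triangle or one square, which is what keeps us inside $X\trsq(G)$.

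\textbf{Main obstacle.} The delicate point is the bookkeeping that guarantees the induction actually terminates: after an elementary move the new cycle may have the same length as $C$, so one must track a secondary quantity — the sum of distances to $u$, or the number of vertices realizing the maximal distance $k$ — and verify it strictly decreases in every case, while also handling degenerate configurations (repeated vertices on $C$, the replacement vertex $x$ already lying on $C$, chords of $C$) without breaking the argument. A clean way around the degeneracies is to work with a geodesic disk-diagram / Van Kampen diagram $D$ for $C$ of minimal area and argue toward a contradiction: pick an inner vertex or boundary vertex of $D$ maximizing distance to $u$ and show the diagram can be locally modified to one of smaller area, using exactly TC($u$) and QC($u$) — this is essentially the argument of \cite[Lemma 5.5]{BCC+}, and since that proof only uses incidences of cells along subcells in a way that holds for arbitrary CW triangle-square complexes, it transfers verbatim to our (non-polyhedral) setting, as claimed.
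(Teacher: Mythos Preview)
Your approach is essentially the same as the one the paper cites from \cite{BCC+}: pick a vertex of $C$ at maximal distance $k$ from $u$ and push it closer using TC or QC. The QC case is handled correctly. However, your TC case contains an imprecision that breaks the induction as you have set it up. When $v$ and $v'$ are adjacent and both at distance $k$, TC produces $x\sim v,v'$ with $d(u,x)=k-1$, but $x$ need not be adjacent to the other neighbor $v''$ of $v$ on $C$; so you cannot ``replace $v$ by $x$'' --- you can only \emph{insert} $x$ between $v$ and $v'$, obtaining $(\ldots,v'',v,x,v',\ldots)$. This move increases $\ell(C)$ by $1$ and increases $\sum_{w\in C} d(u,w)$ by $k-1$, so neither of your proposed complexity measures decreases.

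The fix is exactly the one used in \cite{BCC+} (and reproduced in the Helly variant, Lemma~\ref{simplyconnected-clH}, later in the paper): order cycles lexicographically by $(r,e,m)$ where $r=\max_{w\in C} d(u,w)$, $e$ is the number of edges of $C$ with both endpoints at distance $r$, and $m$ is the number of vertices of $C$ at distance $r$. Then the TC insertion strictly decreases $e$ (the edge $vv'$ on the top sphere disappears), while the QC replacement strictly decreases $m$; in both cases $r$ does not increase. You already identified $m$ as a useful secondary quantity; adding $e$ ahead of it is what makes the TC step go through.
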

The local weak modularity (Definition~\ref{d:loc-weak-mod}) implies the following.
\begin{lemma}
\label{l:no456}
Let $G$ be a locally weakly modular graph. Then every cycle of length $\leq 6$ in $X\trsq(G)$ is homotopically trivial.
\end{lemma}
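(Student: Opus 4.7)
The plan is to induct on the length $k \in \{3,4,5,6\}$ of the cycle, using that triangles and squares are already $2$-cells in $X\trsq(G)$. Thus any decomposition of a longer cycle into $3$- and $4$-cycles suffices; the role of local weak modularity is to supply auxiliary vertices producing such decompositions precisely when the cycle itself is chordless. Throughout, whenever a cycle has a chord, splitting along the chord reduces to two cycles of smaller length, handled by the inductive hypothesis.

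The base cases $k = 3$ and $k = 4$ are immediate: a triangle of $G$ bounds a triangular $2$-cell, while a $4$-cycle is either an induced square (and thus a $2$-cell) or has a chord and decomposes into two triangles. For $k = 5$ with cycle $(v_0, v_1, v_2, v_3, v_4)$, one may assume the cycle is induced, so $d(v_0, v_2) = d(v_0, v_3) = 2$ and $v_2 \sim v_3$. Applying LTC($v_0$) yields a vertex $x$ adjacent to $v_0, v_2, v_3$, which splits the disk into the triangle $(x, v_2, v_3)$ and the two $4$-cycles $(v_0, v_1, v_2, x)$ and $(v_0, x, v_3, v_4)$, all null-homotopic by the base cases.

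For $k = 6$ with cycle $(v_0, v_1, v_2, v_3, v_4, v_5)$, one may assume the cycle is induced, so $d(v_0, v_2) = d(v_0, v_4) = 2$ and $d(v_0, v_3) \in \{2, 3\}$. If $d(v_0, v_3) = 3$, chordlessness also forces $d(v_2, v_4) = 2$ (the path through $v_3$ gives $\leq 2$, and absence of the chord $v_2 v_4$ gives $\geq 2$), so LQC($v_0$) applied to $v = v_2$, $w = v_4$, $z = v_3$ produces a vertex $x$ adjacent to $v_0, v_2, v_4$; this subdivides the disk into the three $4$-cycles $(v_0, v_1, v_2, x)$, $(x, v_2, v_3, v_4)$, and $(v_0, x, v_4, v_5)$. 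If instead $d(v_0, v_3) = 2$, two applications of LTC($v_0$) (to $v_2 \sim v_3$ and to $v_3 \sim v_4$) yield vertices $x, y$ with $x \sim v_0, v_2, v_3$ and $y \sim v_0, v_3, v_4$, and the disk decomposes into the two triangles $(x, v_2, v_3)$ and $(y, v_3, v_4)$ together with the three $4$-cycles $(v_0, v_1, v_2, x)$, $(v_0, x, v_3, y)$, and $(v_0, y, v_4, v_5)$.

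The main obstacle is the $k = 6$ case with $d(v_0, v_3) = 3$, since LQC has several numerical hypotheses that must all be verified; the key observation is that chordlessness of the cycle forces $d(v_2, v_4) = 2$, which is exactly what is needed. Once the auxiliary vertex $x$ (or $x, y$) has been produced, the filling of the disk is a routine verification and the previously handled cases of length $\leq 5$ close the argument.
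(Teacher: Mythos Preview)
Your proof is correct and follows essentially the same approach as the paper: reduce to induced cycles by splitting along chords, then for an induced $5$-cycle apply LTC at a vertex to produce a common neighbor of its two opposite vertices, and for an induced $6$-cycle split according to whether the antipodal distance is $2$ or $3$, using LTC in the first case and LQC in the second. In fact the paper writes out only the $5$-cycle argument and leaves the $6$-cycle case ``to the reader'', so your treatment of $k=6$ is exactly the missing detail one is meant to supply.
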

\begin{proof}
By the definition of $X\trsq(G)$, cycles of length $3$ and $4$ are null-homotopic. Let $C=(v_1,v_2,\ldots,v_5)$ be a $5$--cycle.
If $C$ is not simple, or $v_1\sim v_3$, or $v_1\sim v_4$ then $C$ decomposes into $l$--cycles, with $l\leq 4$, and is thus null-homotopic.
For $d(v_1,v_3)=d(v_1,v_4)=2$, by the local triangle condition, there is $w\sim v_1,v_3,v_4$. Then $C$ is homotopically trivial,
since the cycles $(v_1,v_2,v_3,w)$, $(v_1,v_5,v_4,w)$, and $(v_3,v_4,w)$ are null-homotopic.
The similar, straightforward proof for $6$--cycles is left to the reader.
\end{proof}

Note that the $7$--cycle is a locally weakly modular graph.
The \emph{injectivity radius} of a complex $X$ is the the length of the shortest homotopically nontrivial loop in $X^{(1)}$.
Recall, see e.g.\ \cite{Hat}*{Chapter 1.3}, that the injectivity radius is the minimal displacement for the action
of $\pi_1(X)$ on the universal cover $\tX$ of $X$, by deck transformations.

\begin{lemma}
\label{l:noincov}
Let $\tG$ be the $1$--skeleton of a cover of the triangle-square complex $X\trsq(G)$ of a graph $G$.
Let $L$ be $K_{2,3}$, or $W_4^-$, or $K^-_4$.
Then  $\tG$ contains an induced $L$ if and only if $G$ contains an induced $L$.
\end{lemma}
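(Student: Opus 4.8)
The plan is to exploit the fact that a covering map $p\colon \tX\trsq(G)\to X\trsq(G)$ restricts to an isomorphism on the star of each vertex, so $p$ is a local isomorphism of graphs: if $\tv$ is a vertex of $\tG$ with image $v=p(\tv)$, then $p$ maps $N(\tv)$ bijectively onto $N(v)$ and preserves adjacencies and non-adjacencies among neighbors of $\tv$. In particular, $p$ maps any triangle or square of $\tG$ onto a triangle or square of $G$, and conversely any triangle or square of $G$ incident to $v$ lifts uniquely to one incident to $\tv$. The key structural observation is that each of the three graphs $L\in\{K_{2,3},W_4^-,K_4^-\}$ has a vertex adjacent to all the others (for $K_{2,3}$ take a vertex of the size-$3$ side; for $W_4^-$ take the hub; for $K_4^-$ take an endpoint of the missing edge) — equivalently, $L$ has diameter at most $2$ and, more to the point, $L$ is contained in the closed star of one of its vertices. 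This is exactly the situation in which a local isomorphism detects the subgraph.

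The ``if'' direction is the substantial one. Suppose $G$ contains an induced copy of $L$; I want to lift it to $\tG$. Pick a vertex $v$ of $L$ (in $G$) that is adjacent in $L$ to all other vertices of $L$, as above. Choose any lift $\tv\in p^{-1}(v)$. Since $p|_{\mathrm{St}(\tv,\tX\trsq(G))}$ is an isomorphism onto $\mathrm{St}(v,X\trsq(G))$, and since the whole of $L$ lies in $\mathrm{St}(v,X\sq(G))\subseteq\mathrm{St}(v,X\trsq(G))$ (every edge and every triangle/square face of $L$ through $v$ is a face of the star, and the remaining edges of $L$ among the neighbors of $v$ are edges of the link of $v$, hence recorded in the star), the preimage of the subcomplex spanned by $L$ under this star-isomorphism is a subcomplex of $\tX\trsq(G)$ spanned by vertices $\tv$ together with distinct lifts $\tw_1,\dots,\tw_k$ of the other vertices $w_1,\dots,w_k$ of $L$, with $\tw_i\sim\tw_j$ in $\tG$ if and only if $w_i\sim w_j$ in $G$ (and likewise $\tw_i\sim\tv$ iff $w_i\sim v$). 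Thus $\{\tv,\tw_1,\dots,\tw_k\}$ induces a copy of $L$ in $\tG$; it is induced because $p$ restricted to this vertex set is a graph isomorphism onto the induced $L$ in $G$ and reflects non-edges locally. The ``only if'' direction is easier and essentially symmetric: if $\tG$ contains an induced $L$, again locate a vertex $\tv$ of that copy adjacent to all the others, so the whole copy lies in $\mathrm{St}(\tv,\tX\trsq(G))$; applying the star-isomorphism $p|_{\mathrm{St}(\tv)}$ forward carries it to an induced $L$ in $G$, using that the isomorphism preserves and reflects adjacency among the $\leq 4$ relevant vertices. One must observe that $p$ is injective on this vertex set — true because it is injective on $\mathrm{St}(\tv,\tG)$ and all the vertices of the copy of $L$ lie in that star.

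The step I expect to be the main (minor) obstacle is the bookkeeping that the entire subgraph $L$, not merely its edges through the chosen central vertex $v$, is ``visible'' in $\mathrm{St}(v,X\trsq(G))$: the star of $v$ in the triangle-square complex contains as $1$-cells all edges $vw_i$, and it contains a triangle-cell for each triangle $vw_iw_j$ of $G$ and a square-cell for each square through $v$; in each of $K_{2,3},W_4^-,K_4^-$ every edge among the $w_i$'s does lie in a triangle or square through $v$ (this requires a quick case check), so every edge of $L$ is literally a $1$-cell of $\mathrm{St}(v,X\trsq(G))$, and the star-isomorphism then transports the full subcomplex. If one prefers to avoid even this much case analysis, an alternative is to note that all three graphs $L$ have radius $1$ and diameter $\le 2$, embed inside the ball $B_1(v)$, and use directly that $p$ restricts to an isomorphism between $B_1(\tv,\tG)$ and $B_1(v,G)$ as \emph{induced} subgraphs — this last fact is itself an immediate consequence of the covering-map definition applied to the triangle-square complex (the link of $\tv$ maps isomorphically to the link of $v$), and then the $L$-detection is immediate. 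Either way the argument is short; I would present the radius-$1$ version, remarking that the induced subgraph on $B_1(\tv)$ is isomorphic to that on $B_1(v)$ via $p$, and that each $L$ occurs inside such a ball around a suitable vertex.
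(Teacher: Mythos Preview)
Your central structural claim is false: neither $K_{2,3}$ nor $W_4^-$ has a vertex adjacent to all the others. In $K_{2,3}$ with parts $\{a,b\}$ and $\{x,y,z\}$, every vertex misses at least one other (vertices on the same side are nonadjacent). In $W_4^-$ the hub $c$ is \emph{not} adjacent to the rim vertex $x_1$ whose spoke was deleted, and no rim vertex is adjacent to its opposite. So your ``radius~$1$'' alternative at the end is simply wrong for two of the three graphs, and your main argument --- which picks $v$ adjacent to all other vertices and then reads the rest of $L$ off the link of $v$ --- never gets started. (For $K_4^-$ the claim is true, but you named the wrong vertex: you want a vertex \emph{off} the missing edge, not an endpoint of it.)

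Your star-in-$X\trsq$ idea can be salvaged, because the star of $v$ in the triangle-square complex is genuinely larger than $B_1(v,G)$: it also contains every vertex opposite $v$ in a square. For $K_{2,3}$ take $v=a$; then $b$ lies in the square $axby$, and every edge $bx,by,bz$ lies in a square through $a$. For $W_4^-$ take $v=c$; then $x_1$ lies in the square $x_1x_2cx_4$, and the rim edges $x_1x_2,x_1x_4$ lie in that same square while $x_2x_3,x_3x_4$ lie in triangles through $c$. So $L\subseteq\mathrm{St}(v,X\trsq(G))$ does hold with these choices, and the star isomorphism then transports $L$ in both directions as you wanted. But this is exactly the ``quick case check'' you deferred, and without it the argument is not a proof.

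The paper takes a different and cleaner route: it observes that the triangle-square complex $X\trsq(L)$ is simply connected for each of the three $L$'s, so the inclusion $L\hookrightarrow G$ lifts uniquely through the cover once a basepoint lift is chosen; that the lift is an induced subgraph then follows from local injectivity of $p$. This avoids having to locate $L$ inside a single star.
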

\begin{proof}
Let $p\colon \tX\trsq(G)\to X\trsq(G)$ be the covering map. We denote by $p$ its restriction $p|_{\tG}$
to the $1$--skeleton as well.
Suppose that $\tK$ is an induced subgraph of $\tG$ isomorphic to $L$. Since $p$ is a covering, the graph
induced by images of vertices of $\tK$ is isomorphic to $L$ as well. Conversely, let $K$ be an induced subgraph of $G$ isomorphic
to $L$. Pick a vertex $v\in K$. Since the triangle-square complex of $K$ is simply connected, there exists a unique lift $\tK$ of $K$
in $\tG$ containing $v$. Then the map $p|_{\tK}\colon \tK \to K$ is an isomorphism. It is easy to check that $\tK$ is an induced subgraph.
\end{proof}

\begin{lemma}
\label{l:noiscov}
Let $\tG$ be the $1$--skeleton of a cover of the triangle-square complex $X\trsq (G)$ of a locally weakly modular graph $G$.
If $\tG$ contains an isometric $K_{3,3}^-$ then $G$ contains an isometric $K_{3,3}^-$.
\end{lemma}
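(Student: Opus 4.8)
The plan is to push the given isometric $K^-_{3,3}$ in $\tG$ down to $G$ along the covering map $p\colon\tX\trsq(G)\to X\trsq(G)$ and to check that its image is again an isometric $K^-_{3,3}$. So first I would fix the isometric subgraph $\tK\cong K^-_{3,3}$ of $\tG$, write its vertex classes as $\{\tilde a_1,\tilde a_2,\tilde a_3\}$, $\{\tilde b_1,\tilde b_2,\tilde b_3\}$ with missing edge $\tilde a_3\tilde b_3$, and note that in $K^-_{3,3}$ the pair $\{\tilde a_3,\tilde b_3\}$ is the \emph{unique} one at distance $3$, every other pair being at distance $1$ or $2$; since $\tK$ is isometric it is in particular induced. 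Set $a_i:=p(\tilde a_i)$, $b_j:=p(\tilde b_j)$ and $K:=p(\tK)$. It then suffices to prove (a) $K$ is an induced subgraph of $G$ isomorphic to $K^-_{3,3}$, and (b) $d_G(a_3,b_3)=3$.

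For (a) I would follow the proof of Lemma~\ref{l:noincov}. Since $p$ is an isomorphism on each star, any two vertices of $\tK$ having a common neighbour inside $\tK$ have distinct non-adjacent images; this handles every pair except $(\tilde a_3,\tilde b_3)$. To rule out $a_3=b_3$ and $a_3\sim b_3$ I would use that $2$--cells of a covering lift homeomorphically: if $a_3=b_3=:x$ then $\{x,b_1,a_1\}$ spans a graph--triangle of the flag complex $X\trsq(G)$, whose boundary, lifted from $\tilde a_3$, is forced (by uniqueness of the edge at a vertex over a given edge) to read $\tilde a_3,\tilde b_1,\tilde a_1,\tilde a_3$, giving the non-existent edge $\tilde a_1\tilde a_3$; and if $a_3\sim b_3$ then $(a_3,b_1,a_1,b_3)$ is an induced $4$--cycle (it has no chords, since $a_1\nsim a_3$ and $b_1\nsim b_3$ because these pairs have common neighbours in $\tK$), hence a square $2$--cell of $X\trsq(G)$, whose lifted boundary from $\tilde a_3$ forces $\tilde a_3\sim\tilde b_3$, contradicting $d_{\tG}(\tilde a_3,\tilde b_3)=3$. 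This gives that $p|_{\tK}$ is injective and an isomorphism onto an induced $K^-_{3,3}$, so in particular $d_G(a_3,b_3)\ge2$.

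The real work is (b), and here I would argue by contradiction. If $d_G(a_3,b_3)<3$ then $d_G(a_3,b_3)=2$, so some $y\in V(G)$ is adjacent to both $a_3$ and $b_3$; checking the six vertices of the induced $K$ shows none of them is a common neighbour of $a_3,b_3$, so $y\notin V(K)$ and $C:=(a_3,y,b_3,a_1,b_1)$ is an honest $5$--cycle of $G$. Now I would invoke local weak modularity: by Lemma~\ref{l:no456} the cycle $C$ is null-homotopic in $X\trsq(G)$, hence it lifts to a \emph{loop} $\tilde C=(\tilde a_3,\tilde v_2,\tilde v_3,\tilde v_4,\tilde v_5)$ in $\tG$ based at $\tilde a_3$, with $\tilde v_5\sim\tilde a_3$ and $p$ sending each $\tilde v_i$ to the corresponding vertex of $C$. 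Reading $\tilde C$ backwards from $\tilde a_3$ and repeatedly using that there is exactly one edge at a given vertex over a given edge: the edge at $\tilde a_3$ over $a_3b_1$ is $\tilde a_3\tilde b_1$, so $\tilde v_5=\tilde b_1$; the edge at $\tilde b_1$ over $b_1a_1$ is $\tilde b_1\tilde a_1$, so $\tilde v_4=\tilde a_1$; the edge at $\tilde a_1$ over $a_1b_3$ is $\tilde a_1\tilde b_3$, so $\tilde v_3=\tilde b_3$. Then $\tilde v_2$ is a common neighbour of $\tilde a_3$ and $\tilde b_3=\tilde v_3$, so $d_{\tG}(\tilde a_3,\tilde b_3)\le2$, contradicting that $\tK$ is isometric (where this distance is $3$). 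Hence $d_G(a_3,b_3)=3$ and $K$ is an isometric $K^-_{3,3}$ in $G$.

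The point I expect to be the main obstacle is the one thing a covering map does \emph{not} give for free, namely control of $d_{\tG}$ by $d_G$: the image of an isometric subgraph is a subgraph but a priori not isometric. The device that resolves it is the short-cycle trick of (b) --- a putative shortcut in $G$ closes up with a geodesic of the already-identified induced image $K$ into a $5$--cycle, local weak modularity makes that cycle null-homotopic via Lemma~\ref{l:no456}, and a null-homotopic loop necessarily lifts to a loop, which forces the shortcut to lift to a shortcut in $\tG$. Everything else --- part (a), and the edge-by-edge identification of $\tilde v_3,\tilde v_4,\tilde v_5$ --- is routine covering-space bookkeeping that I would carry out carefully but without surprises.
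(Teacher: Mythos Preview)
Your proof is correct and uses the same key input as the paper---Lemma~\ref{l:no456}---but packages it more concretely. The paper argues that, given a common neighbour $v$ of $p(\tilde a_3)$ and $p(\tilde b_3)$, the entire induced subgraph on $V(K)\cup\{v\}$ lifts to the cover (since every loop in a $7$--vertex graph decomposes via chords into cycles of length $\le 6$, which are null-homotopic by Lemma~\ref{l:no456}); the lifted copy of $v$ then contradicts $d_{\tG}(\tilde a_3,\tilde b_3)=3$. You instead lift a single explicit $5$--cycle and trace its vertices through $\tK$ edge by edge. Your route is more elementary---it needs only path-lifting and the fact that null-homotopic loops lift to loops, not the general $\pi_1$--lifting criterion for subcomplexes---and it also fills in part~(a) more carefully than the paper, which simply points to Lemma~\ref{l:noincov} even though $K_{3,3}^-$ is not on that lemma's list.

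One small wording point in your case $a_3=b_3$: uniqueness of edge-lifts alone gives the lifted $3$--cycle as $\tilde a_3,\tilde b_1,\tilde a_1,\tilde b_3$ (the edge of $\tK$ over $a_1x=a_1b_3$ is $\tilde a_1\tilde b_3$); it is the \emph{loop} property of the lift of a null-homotopic cycle that forces the terminal vertex to be $\tilde a_3$, whence $\tilde b_3=\tilde a_3$, a contradiction. Your stated contradiction $\tilde a_1\sim\tilde a_3$ also follows, but from the loop property rather than from edge-uniqueness. This is purely a phrasing matter and does not affect the validity of the argument.
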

\begin{proof}
Let $p\colon \tG \to G$ be the restriction of the covering map. Let $\tK$ be an isometric subgraph of $\tG$
isomorphic to $K_{3,3}^-$.
As in the proof of Lemma~\ref{l:noincov}, the graph $K$ induced by $p(\tK)$ is isomorphic to $K_{3,3}^-$.
We have to check, that it is isometric.
If not then there exists a vertex $v\sim p(a),p(b)$, where $a,b$ are the vertices at distance $3$ in $\tK$.
Since, by Lemma~\ref{l:no456}, the triangle-square complex of the graph induced by $K\cup \{ v \}$
is simply connected, there exists its preimage $\tK'$ in $\tG$ containing $\tK$. This implies that $\tK$ is not
isometric, a contradiction.
\end{proof}

Note that the above lemma may fail in the non locally weakly modular case: The universal cover of the
triangle-square complex of $K_{3,3}^-$ plus
a vertex adjacent to the vertices at distance $3$, contains an isometric $K_{3,3}^-$.


\chapter{Local-to-Global Characterization}
\label{s:plotoglo}

In this chapter, we present local-to-global characterizations of the
triangle-square complexes of weakly modular graphs.  In this and
subsequent chapters, these results will be specified for some
subclasses of weakly modular graphs.  In particular, in this chapter,
we give a local-to-global characterization of the clique complexes of
the Helly graphs.  Earlier, similar characterizations were given for
median graphs (i.e., 1-skeletons of CAT(0) cube complexes
\cites{Gr,Sag}) \cite{Ch_CAT}, bridged graphs (i.e., 1-skeletons of
systolic complexes \cites{Hag,JS}) \cite{Ch_CAT}, and, in the most
general form, for bucolic graphs (i.e., 1-skeletons of bucolic prism
complexes) \cite{BCC+}.

\section{Main results}

Recall that a graph $G$ is called locally weakly modular if for any vertex of $G$ it satisfies the local triangle and quadrangle conditions (defined in
Subsection \ref{s:wmgra}).  A graph is called \emph{locally modular} if it is locally weakly modular and does not contain triangles.

Here is the first  main result of this chapter and one of the main results of our paper:

\begin{theorem}
\label{t:lotoglo2} Let $G$ be a locally weakly modular graph, and let $\tG$ be the $1$--skeleton of the universal
cover $\widetilde X:=\widetilde{X}\trsq(G)$ of the triangle-square complex $X:=X\trsq(G)$ of $G$. Then $\tG$ is weakly modular. In particular, a graph $G$ is
a weakly modular graph if and only if $G$ is a locally weakly modular graph whose triangle-square complex $X\trsq(G)$ is simply connected.
\end{theorem}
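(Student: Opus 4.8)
The plan is to prove the substantive direction: if $G$ is locally weakly modular, then the $1$--skeleton $\tG$ of the universal cover $\widetilde X\trsq(G)$ is weakly modular. The other direction is immediate: if $G$ is weakly modular then by Lemma~\ref{simplyconnected} its triangle-square complex is simply connected, and weak modularity is clearly a local condition, so $G$ is locally weakly modular; and if $X\trsq(G)$ is already simply connected then $\tG=G$, so $G$ itself must be weakly modular by the first direction. So everything reduces to verifying (TC) and (QC) in $\tG$.

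First I would record the structure of $\tG$ that we get for free. Since $X\trsq(G)$ is a flag triangle-square complex and covering maps are local isomorphisms on stars, $\tG$ is again locally weakly modular (the local conditions LTC, LQC only involve balls of radius $\le 2$, on which the covering $p\colon\tG\to G$ restricts to an isomorphism), and every cycle of length $\le 6$ in $\widetilde X$ is null-homotopic by Lemma~\ref{l:no456}. The main tool will be Van Kampen / singular disk diagrams: since $\widetilde X$ is simply connected, any cycle $C$ in $\tG$ bounds a singular disk diagram $D\to\widetilde X$ all of whose inner faces are triangles or squares, and we may take $D$ of minimal area. The proof of (TC) and (QC) then proceeds by induction on the distance $k=d_{\tG}(u,\cdot)$ together with induction on the area of an associated disk diagram, a standard scheme for this type of local-to-global theorem (as in \cite{Ch_CAT,BCC+}).

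Concretely, to verify (TC) for $\tG$ with respect to a vertex $u$: given $v\sim w$ with $d(u,v)=d(u,w)=k$, pick geodesics $[u,v]$ and $[u,w]$ in $\tG$; together with the edge $vw$ they form a cycle $C$ of length $2k+1$, which bounds a minimal singular disk diagram $D$ in $\widetilde X$. One analyzes the face(s) of $D$ incident to the edge $vw$: using the local triangle and quadrangle conditions at the relevant vertices (lifted through $p$), one finds a common neighbor of $v$ and $w$ one step closer to $u$, after possibly first doing a local surgery on $D$ that strictly decreases its area — the minimality of $D$ then forces the desired configuration. The argument for (QC) is analogous but starts from a $2k+2$ cycle built from two geodesics $[u,v],[u,w]$ and the path $v\sim z\sim w$, and again uses LTC/LQC on the faces of a minimal diagram near the apex $z$. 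One must check that each surgery step genuinely reduces area and that the invariants (the distances to $u$) are preserved; the base cases $k\le 2$ or $k\le 3$ follow directly from the local conditions, and here Lemma~\ref{l:no456} (null-homotopy of short cycles) guarantees the diagrams in question are the expected small ones.

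The main obstacle I anticipate is the case analysis of the faces of the minimal disk diagram adjacent to the ``boundary feature'' (the edge $vw$ for TC, or the path $v,z,w$ for QC): one has to rule out, or reduce, the configurations in which the incident face is a square rather than a triangle, or in which two faces meet the boundary, and in each subcase carefully produce either the sought common neighbor $x$ or an area-reducing modification — and one has to be scrupulous that after the modification the new diagram still bounds the correct cycle in $\widetilde X$ and that the lifted local conditions really apply at the vertices involved (this is where local weak modularity of $\tG$, rather than just of $G$, is essential). Getting the bookkeeping of distances and areas consistent across all these subcases is the delicate part; the rest is routine.
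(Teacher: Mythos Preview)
Your approach via minimal disk diagrams is genuinely different from the paper's. The paper does not take the universal cover as given and then verify (TC)/(QC) in $\tG$; instead it \emph{constructs} $\widetilde X$ inductively as an increasing union of concentric balls $\tB_0\subset\tB_1\subset\cdots$ around a basepoint $\tv$, together with maps $f_i\colon\tX_i\to X$, maintaining at each stage the invariant (Q$_i$) that $\tG_i$ is weakly modular \emph{with respect to $\tv$}. The new sphere $\tS_{i+1}$ is manufactured as a set of equivalence classes of pairs $(\tw,z)$ with $\tw\in\tS_i$ and $z$ an as-yet-unlifted neighbor of $f_i(\tw)$ in $G$; the substance of the proof is that this relation is transitive (Lemma~\ref{equiv}) and that the limit map is a covering (Lemma~\ref{covering_map}). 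This is precisely the method of \cite[Theorem~1.1]{BCC+}, so your attribution of a disk-diagram scheme to that reference is inaccurate.

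Your sketch has a real gap in the inductive step. The hypotheses LTC($u$) and LQC($u$) are \emph{metric}: they apply only to vertices at distance $2$ (resp.\ $3$) from $u$ in $\tG$. When you fill the cycle built from two $u$--geodesics and the edge $vw$ by a minimal diagram $D$, the third vertex $x$ of the face of $D$ containing $vw$ maps to some vertex of $\tG$ with $d_{\tG}(u,x)\in\{k-1,k,k+1\}$, and nothing about the minimality of $D$ constrains which. If $d(u,x)\ge k$ you cannot invoke LTC/LQC (they only fire at distance $2$ from the base), and replacing $vw$ by the path $v,x,w$ reproduces the same problem at the same $k$ --- you have not specified any surgery that decreases a well-founded quantity while preserving the (TC)/(QC) setup. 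This is unlike the bridged/systolic situation, where the local hypothesis is a pure link condition (no induced $C_4,C_5$) with no reference to a basepoint and hence transfers directly to the planar diagram. The paper's layer-by-layer construction is exactly the device that bakes the distance-to-$\tv$ information into every vertex as it is created, so that TC($\tv$) and QC($\tv$) come out essentially by definition of the equivalence $\equiv$ (Lemma~\ref{Qi+1}). Your route may be completable with substantially more work, but as written the heart of the induction is a promissory note.
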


This theorem may be viewed as an analogue of Cartan-Hadamard theorem for globally convex and globally nonpositively curved spaces; see \cite{BrHa}*{Theorem 4.1}.
The proof of Theorem \ref{t:lotoglo2} closely follows the proof of \cite{BCC+}*{Theorem 1.1 (i)$\Rightarrow$(ii)}.
Theorem \ref{t:lotoglo2} implies analogous characterizations for
modular graphs,  which we present next (for other similar consequences, see the next three chapters of the paper).

\begin{corollary}
\label{t:lotoglo_mod}
A graph is modular if and only if it is locally modular and its square complex is simply connected.
\end{corollary}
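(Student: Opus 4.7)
The plan is to deduce this directly from Theorem~\ref{t:lotoglo2} and Lemma~\ref{lem-modular}, exploiting the fact that for a triangle-free graph $G$ the triangle-square complex $X\trsq(G)$ coincides with the square complex $X\sq(G)$, since there are simply no triangles to attach as $2$--cells.

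For the forward direction, suppose $G$ is modular. By Lemma~\ref{lem-modular} (iii), $G$ is triangle-free and weakly modular. Weak modularity is a strictly stronger property than local weak modularity (the global triangle/quadrangle conditions obviously imply their local versions), so $G$ is locally weakly modular, and being triangle-free it is therefore locally modular. Moreover, $X\trsq(G) = X\sq(G)$, and simple connectivity of $X\trsq(G)$ is guaranteed by Lemma~\ref{simplyconnected}.

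For the backward direction, assume $G$ is locally modular (in particular triangle-free and locally weakly modular) and that $X\sq(G)$ is simply connected. Since $G$ has no triangles, $X\trsq(G) = X\sq(G)$, hence $X\trsq(G)$ is simply connected. Theorem~\ref{t:lotoglo2} then yields that $G$ is weakly modular. Being a triangle-free weakly modular graph, $G$ is modular by Lemma~\ref{lem-modular} (iii).

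There is really no obstacle here; the entire content is repackaging Theorem~\ref{t:lotoglo2} in the triangle-free setting, where ``locally weakly modular'' plus ``triangle-free'' is by definition ``locally modular'' and where the triangle-square and square complexes coincide. The only thing to be careful about is not to mistake the statement for one requiring simple connectivity of $X\trsq(G)$ on the nose --- one must remark that under the triangle-free hypothesis the two complexes are literally the same.
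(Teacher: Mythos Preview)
Your proof is correct and follows essentially the same approach as the paper: the corollary is derived directly from Theorem~\ref{t:lotoglo2} by noting that for triangle-free graphs the triangle-square complex coincides with the square complex, and by using the characterization of modular graphs as triangle-free weakly modular graphs (Lemma~\ref{lem-modular}). The paper treats this corollary precisely as such a specialization, so there is nothing to add.
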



Let us also formulate the following version of Theorem~\ref{t:lotoglo2} and Corollary~\ref{t:lotoglo_mod}.
It is in the vein of the ``local" characterization for dual polar spaces of Brouwer-Cohen \cite{BroCo}, and will be of use in
Chapter~\ref{s:dupol}.

\begin{theorem}
\label{t:lotoglo_G}
Every locally weakly modular (respectively, locally modular) graph arises as a quotient of a weakly modular
(respectively, modular) graph by an automorphisms group action
with the minimal displacement at least $7$.
\end{theorem}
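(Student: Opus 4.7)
The plan is to take $\tG$ to be the $1$--skeleton of the universal cover $\tX:=\tX\trsq(G)$ of the triangle-square complex $X:=X\trsq(G)$, and to let $\Gamma:=\pi_1(X)$ act on $\tX$ by deck transformations. In the locally weakly modular case, Theorem~\ref{t:lotoglo2} directly gives that $\tG$ is weakly modular. Since the deck action is cellular and preserves the $1$--skeleton, it restricts to an action of $\Gamma$ on $\tG$ by graph automorphisms whose quotient (on $1$--skeleta) is precisely $G$; this provides the required presentation of $G$ as a quotient.

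The next step is to verify that the minimal displacement of this action is at least $7$. As recalled in Subsection~\ref{s:fucom}, the minimal displacement of the deck action on the universal cover coincides with the injectivity radius of $X$, i.e., with the length of the shortest homotopically nontrivial cycle in $G=X^{(1)}$. But Lemma~\ref{l:no456} says exactly that every cycle of length at most $6$ in $X\trsq(G)$ is null-homotopic when $G$ is locally weakly modular; hence the injectivity radius of $X$ is $\geq 7$, which is the desired bound.

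To handle the locally modular case (using Corollary~\ref{t:lotoglo_mod} in place of Theorem~\ref{t:lotoglo2}), one needs to check further that $\tG$ is triangle-free. This is immediate: if $G$ is triangle-free then $X\trsq(G)=X\sq(G)$ has no triangle $2$--cells, and since the covering $p\colon\tX\to X$ is an isomorphism on every star, any triangle in $\tG$ would project to a triangle in $G$. By Lemma~\ref{lem-modular}, a triangle-free weakly modular graph is modular, so $\tG$ is modular as required.

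The proof is thus essentially a packaging of results already established in the paper: the substantive ``hard part'' — that the universal cover of $X\trsq(G)$ has weakly modular $1$--skeleton — is done in Theorem~\ref{t:lotoglo2}, and the displacement bound reduces cleanly to the local-null-homotopy Lemma~\ref{l:no456}. The only subtlety worth double-checking is the identification of the minimal displacement of the deck action with the injectivity radius, which is standard covering-space theory and recorded in Subsection~\ref{s:fucom}.
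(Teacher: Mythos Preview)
Your proposal is correct and follows essentially the same approach as the paper: take the universal cover of $X\trsq(G)$, invoke Theorem~\ref{t:lotoglo2} (resp.\ Corollary~\ref{t:lotoglo_mod}) for the $1$--skeleton, act by $\pi_1(X\trsq(G))$ via deck transformations, and use Lemma~\ref{l:no456} to bound the injectivity radius (hence the minimal displacement) below by $7$. Your treatment of the locally modular case is slightly more detailed than the paper's, which simply notes that the same proof works.
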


The second main result of this chapter concerns the relationship
between clique-Helly, $1$--Helly, and Helly (i.e., disk-Helly)
graphs. As we noticed already above, finite dismantlable clique-Helly
graphs are exactly the Helly graphs \cite{BP-absolute}. In relation
with this result, the following question was raised in \cite{Prisner}*{p.\ 205} (conjectured in relation with clique-convergence of graphs), \cite{LaPiVi}
(in a different but equivalent form), and independently by the second
author of this paper.

\begin{question} \label{Helly_question} Is it true that a finite graph $G$ is Helly  if and only if $G$ is
clique-Helly  and its clique complex $X(G)$ is simply connected?
\end{question}

The following theorem answers this question in the affirmative for arbitrary, not necessarily
finite or locally finite graphs:

\begin{theorem}
\label{t:lotogloHell_bis}
Let $G$ be a (finitely) clique-Helly graph and let $\tG$ be the
$1$--skeleton of the universal cover $\widetilde
X:=\widetilde{X}\tr(G)$ of the triangle complex $X:=X\tr(G)$ of
$G$. Then $\tG$ is a (finitely) Helly graph.  In particular, $G$ is a
(finitely) Helly graph if and only if $G$ is (finitely) clique-Helly
and its triangle complex (and thus its clique complex) is simply
connected.
\end{theorem}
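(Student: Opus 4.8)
The plan is to prove the first (universal-cover) statement; the ``in particular'' part then follows formally. If $X\tr(G)$ is simply connected then $\widetilde X\tr(G)=X\tr(G)$, so $\tG=G$ and the first statement says $G$ is (finitely) Helly. Conversely, a (finitely) Helly graph is (finitely) $1$--Helly, hence (finitely) clique-Helly (if $B_1(u)\cap B_1(v)\ne\emptyset$ for all $u,v$ in a union of pairwise-intersecting maximal cliques, a common point of these unit balls lies in each of the cliques by maximality); moreover it is dismantlable (in the finite case by Theorem~\ref{Helly}(iii), in general in a transfinite sense, by Polat), and dismantling moves are elementary collapses of the clique complex, so $X(G)$ is contractible, in particular simply connected. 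Since simple connectivity depends only on the $2$--skeleton and $X\tr(G)=X(G)^{(2)}$, this also gives the parenthetical equivalence of the simple connectivity of $X\tr(G)$ and of $X(G)$.

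So fix a (finitely) clique-Helly graph $G$ and let $p\colon\widetilde X:=\widetilde X\tr(G)\to X\tr(G)$ be the universal covering, writing $p\colon\tG\to G$ also for its restriction to $1$--skeleta. Two easy steps come first. First, $\widetilde X$ is the triangle complex $X\tr(\tG)$ (each triangle of $\tG$ is a lift of a triangle of $G$, hence bounds a $2$--cell, and conversely), so $X\tr(\tG)$ is simply connected. Second, $\tG$ is again (finitely) clique-Helly: by Proposition~\ref{clique_Helly_triangle} it suffices, for each triangle $\tilde T$ of $\tG$, to find a universal vertex of $\tilde T^*$. Its image $T=p(\tilde T)$ is a triangle of $G$, so $T^*$ has a universal vertex $x$; lifting $x$ through the star-isomorphisms of $p$ (using a triangle $xab$ of $G$ with $a,b\in T$), one obtains a vertex $\tilde x$ spanning a triangle with two vertices of $\tilde T$, so $\tilde x\in\tilde T^*$. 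For any $\tilde y\in\tilde T^*$ the vertices $\tilde x,\tilde y$ have a common neighbour $\tilde a$ in $\tilde T$, and in $G$ we have $p(\tilde x)=x$ equal or adjacent to $p(\tilde y)\in T^*$; applying $(p|_{\mathrm{St}(\tilde a,\widetilde X)})^{-1}$ to the vertices $x,p(\tilde y)$ of $\mathrm{St}(p(\tilde a),X\tr(G))$ gives $\tilde x=\tilde y$ or $\tilde x\sim\tilde y$. Hence $\tilde x$ is universal in $\tilde T^*$.

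It remains to prove that the (finitely) clique-Helly graph $\tG$, whose triangle complex is simply connected, is (finitely) Helly. By Theorem~\ref{Helly}(iv) it is enough, in the finite case, to show that $\tG$ is weakly modular and $1$--Helly; the general ``finitely Helly'' case then follows via Proposition~\ref{Helly_Polat} together with a transfinite dismantling argument in Polat's spirit. I would establish the weak modularity of $\tG$ (conditions TC and QC) and the $1$--Helly property by a single induction on the combinatorial area of minimal singular disk diagrams in the simply connected, triangulated complex $\widetilde X\tr(G)$, following closely the proof of Theorem~\ref{t:lotoglo2}, i.e.\ of \cite[Theorem~1.1]{BCC+}: a minimal counterexample to one of these properties yields a cycle bounding a minimal triangulated disk in $\widetilde X\tr(G)$; an interior vertex of the disk has a link isomorphic to a link in $G$ and is therefore constrained, through Proposition~\ref{clique_Helly_triangle}, by the clique-Helly condition; and this local constraint is used to modify the diagram so as to strictly decrease its area, a contradiction.

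I expect this disk-diagram induction to be the main obstacle. In contrast with Theorem~\ref{t:lotoglo2}, where the triangle-square complex directly ``sees'' the squares responsible for the quadrangle condition, here every $2$--cell of $\widetilde X\tr(G)$ is a triangle, so QC (and the Helly property for unit balls) must be extracted by propagating the local clique-Helly data across a purely triangulated region. One cannot shortcut through Theorem~\ref{t:lotoglo2}: a clique-Helly graph need not be locally weakly modular ($C_5$ already fails the local triangle condition), and in a cover of a triangle complex (unlike that of a triangle-square complex) local weak modularity need not lift, so the weak modularity of $\tG$ really has to be produced from the clique-Helly and simple-connectivity hypotheses directly. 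A further, more technical, point is the treatment of non-finite and non-locally-finite graphs, where ``Helly'' is obtained from ``finitely Helly'' and dismantlability has to be understood transfinitely.
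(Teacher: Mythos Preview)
Your plan has the right overall shape, and the reduction of the ``in particular'' clause and the lifting of clique-Hellyness via Proposition~\ref{clique_Helly_triangle} are fine. But the core step --- deducing weak modularity and especially $1$--Hellyness of $\tG$ from clique-Hellyness and simple connectivity via a minimal disk-diagram induction --- has a genuine gap, and the paper's argument is organized rather differently.

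First, the $1$--Helly property does not have an obvious disk-diagram formulation: a failure of $1$--Hellyness is a (possibly infinite) family of pairwise intersecting unit balls with empty intersection, and there is no single cycle whose minimal filling you could shrink. The paper does not attempt this. Instead it isolates an intermediate combinatorial invariant, the $(C_4,W_4)$-condition (every induced $4$--cycle extends to a $W_4$), and proves three separate implications: (a) clique-Helly $+$ locally weakly modular $+$ $(C_4,W_4)$ $\Rightarrow$ $1$--Helly (Proposition~\ref{prop-1-Helly-clique-Helly}); (b) weakly modular $+$ $1$--Helly $\Rightarrow$ Helly (Proposition~\ref{prop-1-Helly-Helly}); and (c) $\tG$ is weakly modular, clique-Helly, and satisfies $(C_4,W_4)$ (Proposition~\ref{t:universal-cover-Helly}). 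Step (c) is not done by disk diagrams but by an inductive layer-by-layer construction of the universal cover as in the proof of Theorem~\ref{t:lotoglo2}, with a crucial twist: the equivalence relation~(Z2) is strengthened so that two representatives at distance~$2$ already come equipped with a $W_4$ over their square, which is exactly what forces $(C_4,W_4)$ in $\tG$. Your sketch never mentions $(C_4,W_4)$, and without it there is no visible route from clique-Helly to $1$--Helly.

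Second, your handling of the infinite case is not sound as written. Proposition~\ref{Helly_Polat} upgrades finitely Helly to Helly only under the hypothesis of no infinite cliques, which is not available here; and Theorem~\ref{Helly}(iv) is stated for finite graphs. The paper avoids any reduction to the finite case: Propositions~\ref{prop-clh-domination}, \ref{prop-1-Helly-Helly}, and \ref{prop-1-Helly-clique-Helly} are proved directly for arbitrary graphs by transfinite inductions along well-orders, carefully separating the ``finitely'' and unrestricted readings throughout. If you want to rescue your outline, you would need to discover the $(C_4,W_4)$ bridge (or an equivalent device) and to run the arguments without assuming finiteness.
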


\begin{Rem}
  The formulation of Theorem \ref{t:lotogloHell_bis} (and of the
  propositions in Section \ref{helly:proof}) comprises  two
  statements. The first one is: if $G$ is clique-Helly (for
  arbitrary families of cliques), then the 1-skeleton of its universal
  cover is Helly (for arbitrary families of balls).  The second
  interpretation is: if $G$ is finitely clique-Helly, then the
  1-skeleton of its universal cover is finitely Helly.
\end{Rem}

\begin{Rem} If a family of sets $\mathcal F$ satisfies the Helly property then
any family ${\mathcal F}'$ whose members are obtained as intersections of sets of
$\mathcal F$ also satisfies the Helly property. In particular, since each (maximal)
clique $C$ of a graph is the intersection of unit balls centered at the vertices of $C$,
the Helly property for balls or unit balls of $G$ implies that $G$ is clique-Helly.
Theorem \ref{t:lotogloHell_bis} proves that under simple connectivity the converse implication holds as well.
\end{Rem}

Additionally, we obtain the following characterizations of Helly graphs:

\begin{theorem}
\label{t:lotogloHell}
For a graph $G$, the following conditions are equivalent:
\begin{itemize}
\item[(i)] $G$ is Helly;
\item[(ii)] $G$ is $1$--Helly and weakly modular;
\item[(iii)] $G$ is clique-Helly and dismantlable;
\item[(iv)] $G$ is clique-Helly with a simply connected clique complex.
\end{itemize}
Moreover, if the clique complex $X(G)$ of $G$ is finite-dimensional, then the conditions (i)-(iv) are equivalent to
\begin{itemize}
\item[(v)] $G$ is clique-Helly with a contractible clique complex.
\end{itemize}
\end{theorem}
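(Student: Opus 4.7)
The plan is to establish the cycle (i) $\Rightarrow$ (ii) $\Rightarrow$ (iv) $\Rightarrow$ (i), then add (i) $\Leftrightarrow$ (iii), and finally deduce (v) from (iii) under the finite-dimensional hypothesis. The key step (iv) $\Rightarrow$ (i) is immediate from Theorem~\ref{t:lotogloHell_bis}: since $X(G)$ and $X\tr(G)$ share their $2$--skeleton, their fundamental groups coincide, so the hypotheses of Theorem~\ref{t:lotogloHell_bis} hold and we conclude Helliness.

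For (i) $\Rightarrow$ (ii), $1$--Helliness is trivial, and weak modularity reduces to verifying (TC) and (QC) directly from the Helly property applied to three balls: given $v \sim w$ at distance $k$ from $u$, the balls $B_1(v)$, $B_1(w)$, $B_{k-1}(u)$ are pairwise intersecting, and a common vertex of these balls is a common neighbor of $v,w$ at distance $k-1$ from $u$; (QC) is analogous, with the hypothesized vertex $z$ witnessing $B_1(v) \cap B_1(w) \ne \emptyset$. For (ii) $\Rightarrow$ (iv), Lemma~\ref{simplyconnected} already gives simple connectivity of $X\trsq(G)$, so the task reduces to passing from squares to triangles. I would show that $1$--Helliness forces every square of $G$ to admit a universal vertex: for an induced square $v_1v_2v_3v_4$, the four unit balls at its corners are pairwise intersecting (opposite balls share the two remaining corners, adjacent balls share the endpoints of the connecting edge), so by $1$--Helliness they have a common vertex $x$; since the square is induced, $x$ cannot equal any $v_i$, hence $x$ is universal and triangulates the square through its center, transferring simple connectivity from $X\trsq(G)$ to $X(G)$. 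Clique-Helliness is also a consequence of $1$--Helliness, via the standard identity $C = \bigcap_{v \in C} B_1(v)$ for a maximal clique $C$: a pairwise intersecting family of maximal cliques yields a pairwise intersecting family of unit balls whose common vertex necessarily lies in every clique of the family.

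For (i) $\Leftrightarrow$ (iii), the implication (iii) $\Rightarrow$ (i) is routed through (iii) $\Rightarrow$ (iv) $\Rightarrow$ (i): a dismantling well-order on $V(G)$ realizes $X(G)$ as a nested sequence of elementary collapses, giving contractibility and in particular simple connectivity, together with the hypothesized clique-Helliness. Conversely, (i) $\Rightarrow$ (iii) is Theorem~\ref{Helly}(iii) in the finite case and extends to arbitrary graphs via a transfinite construction of the dismantling order, using at each stage a dominated vertex whose existence is extracted from Helliness (passing to the finitely Helly setting through Proposition~\ref{Helly_Polat} when infinite families are involved). For (v) under finite-dimensionality, the same dismantling argument actually yields contractibility rather than merely simple connectivity, since each elementary step is then a genuine simplicial collapse in the finite-dimensional flag complex $X(G)$; the implication (v) $\Rightarrow$ (iv) is trivial. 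The main obstacle I foresee is the careful transfinite handling of dismantlability in non-locally-finite Helly graphs: the local arguments (three-ball Helly for weak modularity, four-ball $1$--Helly for filling squares) are robust and transfer without difficulty, but guaranteeing the persistence of dominated vertices across limit ordinals during the dismantling --- and matching this with the induced subgraph structure at each stage --- is where the genuinely delicate work lies.
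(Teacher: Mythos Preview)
Your main cycle (i) $\Rightarrow$ (ii) $\Rightarrow$ (iv) $\Rightarrow$ (i) is correct and in fact more economical than the paper's, which takes the longer route (i) $\Rightarrow$ (ii) $\Rightarrow$ (iii) $\Rightarrow$ (iv) $\Rightarrow$ (i). Your direct step (ii) $\Rightarrow$ (iv) --- using $1$--Helliness on the four unit balls of a square to produce a central vertex, so that every square of $G$ is already null-homotopic in $X\tr(G)$ and the simple connectivity of $X\trsq(G)$ (Lemma~\ref{simplyconnected}) transfers to $X(G)$ --- is a genuine shortcut that the paper does not take; the paper instead reaches (iv) only after establishing dismantlability.

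The gap is in your treatment of (i) $\Rightarrow$ (iii). Your appeal to Proposition~\ref{Helly_Polat} is misplaced: that result goes from finitely Helly to Helly under the extra hypothesis of no infinite cliques, and neither the direction nor the hypothesis fits here. More importantly, the phrase ``a dominated vertex whose existence is extracted from Helliness'' hides exactly the substantive work. The paper's mechanism is Proposition~\ref{prop-clh-domination}: for any basepoint $u$ and any $v$ with $d(u,v)=k+1$, there is a neighbor $y$ of $v$ at distance $k$ from $u$ with $B_1(v)\cap B_{k+1}(u)\subseteq B_1(y)$. This is proved by a careful (transfinite) clique-Helly argument and is what makes any BFS well-order from $u$ a dismantling order. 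Without this lemma you have no reason why, at each stage of your transfinite construction, the remaining graph still has a dominated vertex --- and indeed you flag this as your main obstacle without proposing a resolution. Since you already have (i) $\Leftrightarrow$ (ii), the cleanest fix is to prove (ii) $\Rightarrow$ (iii) via this domination lemma plus BFS, exactly as the paper does.

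For (v), your idea of reading contractibility off a dismantling well-order can be made to work in the finite-dimensional flag setting (each step is a deformation retract onto the link-cone over the dominator, and one passes to direct limits), but ``genuine simplicial collapse'' overstates what transfinite dismantling gives. The paper again goes through Proposition~\ref{prop-clh-domination}: it builds explicit simplicial retractions $f_i\colon X_{i+1}\to X_i$ between the clique complexes of successive balls $B_i(u)$ (sending each $v$ at distance $i+1$ to its dominator at distance $i$, and checking that $\sigma\cup f_i(\sigma)$ is always a simplex), so that $X(G)=\bigcup_i X_i$ is a directed union of contractible subcomplexes and Whitehead's theorem applies. This is both more explicit and avoids the transfinite bookkeeping.
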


The following algorithmic result is also an immediate consequence of Theorems \ref{t:lotoglo2} and
\ref{t:lotogloHell_bis}:

\begin{corollary}
\label{t:simple_connectivity_algo}
It can be decided in polynomial time if  a finite triangle-square flag complex $X$ having a locally weakly modular or
clique-Helly 1-skeleton is simply connected.
\end{corollary}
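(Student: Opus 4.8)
The plan is to convert the topological question into a purely combinatorial one by invoking the two local-to-global theorems, and then to observe that the resulting combinatorial conditions can be tested in polynomial time. First I would record that, since $X$ is a flag triangle-square complex, it is completely recovered from its $1$--skeleton $G:=G(X)$, and that simple connectivity depends only on the $2$--skeleton; thus the whole problem is a question about $G$ together with its triangles and squares, and the input may be taken to be just the graph $G$.

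If $G$ is locally weakly modular, I would apply Theorem~\ref{t:lotoglo2}: the complex $X=X\trsq(G)$ is simply connected if and only if $G$ is weakly modular. So it suffices to test weak modularity of $G$. The algorithm is: compute the distance function $d_G$ by running a breadth-first search from each vertex, and then check the triangle condition and the quadrangle condition directly, by scanning over all relevant triples $(u,v,w)$ (resp.\ all $2$--paths $v,z,w$ together with a vertex $u$) and, for each, looking for a common neighbour at the prescribed distance from $u$. There are only polynomially many configurations to examine and each test is a linear scan, so this runs in polynomial time.

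If instead $G$ is clique-Helly, I would invoke Theorem~\ref{t:lotogloHell_bis}, together with the fact that the fundamental group of the clique complex $X(G)$ equals that of its $2$--skeleton $X\tr(G)$: the relevant complex is simply connected if and only if $G$ is a Helly graph. Hellyness of a finite graph is polynomial-time decidable by Theorem~\ref{Helly}(iii), according to which it is equivalent to being a dismantlable clique-Helly graph; clique-Hellyness is tested via Proposition~\ref{clique_Helly_triangle} (for each triangle $T$, form the set $T^{*}$ and search for a universal vertex in it), and dismantlability by the standard greedy procedure that repeatedly deletes a vertex dominated by one of its neighbours and checks whether the graph collapses to a single vertex. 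Finally I would note that the hypotheses themselves --- being locally weakly modular, being clique-Helly --- are polynomial-time verifiable (the former is a condition on the bounded configurations already listed, confined to balls of radius~$3$; the latter is again Proposition~\ref{clique_Helly_triangle}), so the procedure needs no promise about the input: one simply runs whichever of the two tests applies.

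I do not expect a real obstacle: the statement is a repackaging of Theorems~\ref{t:lotoglo2} and~\ref{t:lotogloHell_bis} together with the (essentially folklore) polynomial-time testability of weak modularity and of Hellyness. The only points needing a little care are the bookkeeping that all configurations to be checked for the triangle condition, the quadrangle condition, and clique-Hellyness are polynomial in number, and the justification that greedy deletion of dominated vertices correctly decides dismantlability --- i.e.\ that an arbitrary legal first deletion cannot spoil dismantlability, which is the standard property of dismantlable (cop-win) graphs.
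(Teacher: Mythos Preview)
Your proposal is correct and follows essentially the same route as the paper: reduce simple connectivity to checking weak modularity (via Theorem~\ref{t:lotoglo2}) or Hellyness (via Theorem~\ref{t:lotogloHell_bis}), and then observe that both properties are polynomial-time decidable. You supply more algorithmic detail than the paper does (BFS distance tables, explicit enumeration of triples for TC/QC, the dismantlability test), but the underlying argument is the same.
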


Indeed, according to Theorems \ref{t:lotoglo2} and \ref{t:lotogloHell_bis} it suffices to check that the 1-skeleton of $G$ is weakly modular or Helly. (As we noticed already, a polynomial
algorithm for testing if a finite graph is Helly or clique-Helly follows from Theorem \ref{Helly} and Proposition \ref{clique_Helly_triangle}). An alternative proof of this corollary follows from the
proofs of Theorems \ref{t:lotoglo2} and \ref{t:lotogloHell_bis}. In fact, in both cases it leads to a more efficient $O(nm)$ time, where $n$ is the number of
0-simplices and $m$ is the number of 1-simplices of $X$ (notice that  deciding simple connectivity of a finite simplicial (or triangle) complex is an
undecidable problem \cite{Hak}).


\section{Weakly modular graphs: proofs}

In this section, we provide the proofs of all announced above results about weakly modular graphs.
Lemma \ref{simplyconnected} implies that triangle-square complexes of weakly modular graphs and the square complexes of modular graphs are simply connected, establishing one direction
of the proof of Theorem \ref{t:lotoglo2} and Corollary \ref{t:lotoglo_mod}. In the remaining part of this section we will prove the converse direction.

Let $G$ be a locally weakly modular graph, and let $\tG$ be the $1$--skeleton of the universal cover $\widetilde X:=\widetilde{X}\trsq(G)$ of the triangle-square
complex $X:=X\trsq(G)$ of $G$. To prove that $\tG$ is a weakly modular graph,
we will construct the universal cover $\widetilde{{X}}$ of ${X}$ as an increasing
union $\bigcup_{i\ge 1} \widetilde{{X}}_i$ of triangle-square
complexes. The complexes $\widetilde{{X}}_i$ will be in fact spanned by
concentric combinatorial balls $\widetilde{B}_i$ in $\widetilde{{X}}$.
The covering map $f$ is then the union $\bigcup_{i\ge 1} f_i,$ where
$f_i: \widetilde{{X}}_i\rightarrow {X}$ is a locally injective
cellular map such that $f_i|_{\widetilde{{X}}_j}=f_j$, for every $j\le i$. We
denote by $\widetilde{G}_i=G(\widetilde{{X}}_i)$ the underlying graph of
$\widetilde{{X}}_i$. We denote by $\tS_i$ the set of vertices
$\tB_i\setminus \tB_{i-1}$.

Pick any vertex $v$ of ${X}$ as the base-point. Define $\widetilde{B}_0=\{
\widetilde{v}\}:=\{ v\}, \widetilde{B}_1:=B_1(v,G)$.
Let $\widetilde{{X}}_1$  be the triangle-square complex of $B_1(v,G)$,
and let $f_1\colon \tX_1 \to X$ be the cellular map induced by Id$_{B_1(v,G)}$.
Assume that, for $i\geq 1$,  we have constructed the vertex sets
$\widetilde{B}_1,\ldots,\widetilde{B}_i,$ and we have defined
the triangle-square complexes $\widetilde{{X}}_1\subseteq \cdots\subseteq \widetilde{{X}}_i$
(for any $1\le j<k\le i$ we have an identification map $\widetilde{{X}}_j\rightarrow \widetilde{{X}}_{k}$)
and the corresponding cellular maps $f_1,\ldots,f_i$ from
$\widetilde{{X}}_1,\ldots,\widetilde{{X}}_i,$ respectively,  to ${X}$ so
that the graph $\tG_i=G(\widetilde{X}_i)$ and the complex $\widetilde{X}_i$
satisfy the following conditions:

\begin{enumerate}[(A{$_i$})]
\item[(P$_i$)]
$B_j(\tv,\tG_i)=\widetilde{B}_j$ for any $j\le i$;
\item[(Q$_i$)]
$\widetilde{G}_i$ is weakly modular with respect to $\widetilde{v}$ (i.e.,
$\widetilde{G}_i$ satisfies the conditions TC($\widetilde{v}$) and
QC($\widetilde{v}$));
\item[(R$_i$)]
for any $\widetilde{u}\in \widetilde{B}_{i-1},$ $f_i$
defines an isomorphism between the subgraph of $\tG_i$ induced by
$B_1(\widetilde{u},\tG_i)$ and the subgraph of $G$ induced by
$B_1(f_i(\widetilde{u}),G)$;
\item[(S$_i$)]
for any $\widetilde{w},\widetilde{w}'\in \widetilde{B}_{i-1}$ such that the
vertices $w=f_i(\widetilde{w}),w'=f_i(\widetilde{w}')$ belong to a square
$ww'uu'$ of ${X}$, there exist $\widetilde{u},\widetilde{u}'\in
\widetilde{B}_i$ such that $f_i(\widetilde{u})=u, f_i(\widetilde{u}')=u'$ and
$\widetilde{w}\widetilde{w}'\widetilde{u}\widetilde{u}'$ is a square of
$\widetilde{{G}}_i$ (by (R$_i$),
such $\tilde u, \tilde u'$ are unique in $B_1(\tilde w, \tilde G_i)$ and  $B_1(\tilde w', \tilde G_i),$ respectively).
\item[(T$_i$)]
 for any $\widetilde{w}\in \widetilde{S}_i:=\widetilde{B}_i\setminus
  \widetilde{B}_{i-1},$ $f_i$
  defines an isomorphism between the subgraphs of $\tG_i$ and of $G$ induced by, respectively,
$B_1(\widetilde{w},\tG_i)$ and
  $f_i(B_1(\widetilde{w},\tG_i))$.
\end{enumerate}

 It can be easily checked that
$\widetilde{B}_1,\widetilde{G}_1,\widetilde{X}_1$ and $f_1$ satisfy the
conditions (P$_1$),(Q$_1$),(R$_1$),(S$_1$), and
(T$_1$). Now we construct the set $\widetilde{B}_{i+1},$ the graph
$\widetilde{G}_{i+1}$ having $\widetilde{B}_{i+1}$ as the vertex-set, the
triangle-square complex $\widetilde{{X}}_{i+1}$ having $\tG_{i+1}$ as its
1-skeleton,  and the map $f_{i+1}: \widetilde{{X}}_{i+1}\rightarrow {X}.$
Let
 $$Z=\{ (\widetilde{w},z): \widetilde{w} \in \widetilde{S}_i \mbox{ and } z\in
B_1(f_i(\widetilde{w}),G)\setminus f_i(B_1(\widetilde{w},\tG_i))\}.$$
On $Z$ we define a binary relation $\equiv$ by setting $(\widetilde{w},z)\equiv
(\widetilde{w}',z')$ if and only if $z=z'$  and one of the following two
conditions is satisfied:

\begin{itemize}
\item[(Z1)] $\widetilde{w}$ and $\widetilde{w}'$ are the same or adjacent in
$\widetilde{G}_i$;
\item[(Z2)] there exists $\widetilde{u}\in \widetilde{B}_{i-1}$ adjacent in
$\widetilde{G}_i$ to $\widetilde{w}$ and $\widetilde{w}'$ and such that
$f_i(\widetilde{u})f_i(\widetilde{w})zf_i(\widetilde{w}')$ is a square in $G$.
\end{itemize}

In what follows, the above relation will be used in the inductive step to construct $\tG_{i+1}$, $\tX_{i+1}$, $f_{i+1}$ and all related objects.
\medskip

First, however, we show that the relation $\equiv$ defined above is an equivalence relation.
The set of vertices of the graph $\tG_{i+1}$ will be then defined as the union of the set of vertices of the previously
constructed graph $\tG_{i}$ and the set of equivalence classes of $\equiv$.
In the remaining part of the proof, for a vertex $\widetilde{w}\in \widetilde{B}_i$, we denote by $w$ its image $f_i(\widetilde{w})$ in $X$ under $f_i$.

\begin{lemma} \label{equiv} The relation $\equiv$ is an equivalence relation on
$Z$.
\end{lemma}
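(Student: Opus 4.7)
Reflexivity is immediate from clause (Z1) with $\widetilde{w}=\widetilde{w}'$, and the symmetry of $\equiv$ is visible directly from the statement of (Z1) and (Z2); the content is entirely in transitivity. Assume $(\widetilde{w}_1,z)\equiv(\widetilde{w}_2,z)$ and $(\widetilde{w}_2,z)\equiv(\widetilde{w}_3,z)$, and write $w_j:=f_i(\widetilde{w}_j)$. The plan is a case analysis on which of the two clauses (Z1), (Z2) witnesses each of the two given equivalences; in every subcase the goal is either to produce an edge of $\widetilde{G}_i$ between $\widetilde{w}_1$ and $\widetilde{w}_3$ (yielding (Z1)) or to produce a common lower neighbour $\widetilde{u}\in\widetilde{B}_{i-1}$ adjacent to both such that $u\,w_1\,z\,w_3$ is a square in $G$ (yielding (Z2)).

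The uniform mechanism in each case is a two-step move. First, apply a local weak modularity condition (LTC or LQC) in $G$ to the configuration $\{w_1,w_2,w_3,z\}$, enriched when a (Z2)-clause is in force by the vertex $u\in f_i(\widetilde{B}_{i-1})$ it already supplies, to obtain a vertex $a\in G$ that completes a triangle or square with the pair $w_1,w_3$. Second, lift $a$ back to $\widetilde{G}_i$ using the local-isomorphism clauses (R$_i$) and (T$_i$), and, when an entire square has to be lifted along a prescribed edge in $\widetilde{B}_{i-1}$, clause (S$_i$). Third, translate the lifted data into a witness of (Z1) or (Z2). For example, in the (Z1)-(Z1) case the three vertices $w_1,w_2,w_3$ share the neighbour $z$ and all sit inside $B_1(\widetilde{w}_2,\widetilde{G}_i)$, so either (T$_i$) at $\widetilde{w}_2$ lifts the potential edge $w_1 w_3$ directly, or LTC applied to an adjacent pair with basepoint $z$ produces a common neighbour whose lift gives the required (Z1) or (Z2) witness. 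The mixed (Z1)-(Z2) and purely (Z2)-(Z2) cases proceed identically, the only difference being that the (Z2)-clauses supply additional vertices in $f_i(\widetilde{B}_{i-1})$ which enter the LQC/LTC applications.

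The main obstacle is bookkeeping rather than conceptual: in each subcase one has to certify that the locally constructed vertex of $G$ lifts to a vertex of $\widetilde{B}_{i-1}$, rather than to a vertex of $\widetilde{S}_i$, so that it provides an honest (Z2)-witness. This is precisely where the distance clause (P$_i$) and the weak modularity (Q$_i$) of $\widetilde{G}_i$ with respect to $\widetilde{v}$ must be combined with the local injectivity (R$_i$), (T$_i$) and the square-lifting (S$_i$): (Q$_i$) reduces questions about the $\widetilde{v}$-distance of the candidate lift to questions about the $v$-distance of its image in $G$, which in turn is controlled via the triangle/quadrangle condition used to produce $a$ in the first place. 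Once this distance check is carried out in each of the four subcases, one of (Z1), (Z2) is produced between $(\widetilde{w}_1,z)$ and $(\widetilde{w}_3,z)$ and transitivity follows.
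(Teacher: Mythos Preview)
Your high-level plan---use local weak modularity in $G$, then lift via (R$_i$), (S$_i$), (T$_i$)---is correct, and the paper does exactly this. But your sketch misses the one genuinely nontrivial step, and mis-describes the mechanism for controlling which level a lift lands in.

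The issue is this: once $\widetilde{w}_1\nsim\widetilde{w}_3$ and you are trying to exhibit a common neighbour $\widetilde{u}\in\widetilde{B}_{i-1}$ realising (Z2), applying LTC or LQC ``to the configuration $\{w_1,w_2,w_3,z\}$'' does not work. All of $w_1,w_2,w_3$ are at distance $1$ from $z$, so $z$ is useless as an LTC/LQC basepoint, and any $u$ supplied by a (Z2) clause is at distance $1$ from two of the $w_j$ and distance $2$ from $z$---still not the right shape. What the paper does instead is uniform over the four subcases: regardless of whether (Z1) or (Z2) witnesses each equivalence, one first produces $\widetilde{u},\widetilde{u}'\in\widetilde{B}_{i-1}$ with $\widetilde{u}\sim\widetilde{w}_1,\widetilde{w}_2$ and $\widetilde{u}'\sim\widetilde{w}_2,\widetilde{w}_3$ (from TC($\widetilde{v}$) in the (Z1) case, directly from the definition in the (Z2) case), and then \emph{descends to level $i-2$}: by TC($\widetilde{v}$) or QC($\widetilde{v}$) there is $\widetilde{x}\in\widetilde{B}_{i-2}$ adjacent to $\widetilde{u},\widetilde{u}'$. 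Now $x=f_i(\widetilde{x})$ is at distance $2$ from $w_1$ and $w_3$, and one argues (using properties ($A_3$)--($A_5$) forced by membership in $Z$) that $d(x,z)=3$. \emph{This} is the basepoint at which LQC applies, yielding $y\sim x,w_1,w_3$; lifting $y$ through $\widetilde{x}$ (via (R$_i$), and (S$_i$) for the squares $xu w_1 y$, $xu' w_3 y$ when needed) gives the sought common $\widetilde{B}_{i-1}$-neighbour of $\widetilde{w}_1,\widetilde{w}_3$. Your outline never reaches level $i-2$, and without that descent there is no vertex at which local weak modularity fires.

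Separately, your claim that ``(Q$_i$) reduces questions about the $\widetilde{v}$-distance of the candidate lift to questions about the $v$-distance of its image in $G$'' is incorrect: $f_i$ is only a local isomorphism, so $\widetilde{v}$-distances in $\widetilde{G}_i$ bear no global relation to $v$-distances in $G$. The level of a lift is pinned down by \emph{where} you lift (via (R$_i$) applied at a vertex of known level), not by any distance comparison with $G$.
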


\begin{proof}
Since the binary relation $\equiv$ is reflexive and symmetric, it
suffices to show that $\equiv$ is transitive. Let
$(\widetilde{w},z)\equiv (\widetilde{w}',z')$ and
$(\widetilde{w}',z')\equiv (\widetilde{w}'',z'')$. We will prove that
$(\widetilde{w},z)\equiv (\widetilde{w}'',z'').$ By the definition of
$\equiv,$ we conclude that $z=z'=z''$ and that $z\in B_1(w,G)\cap
B_1(w',G)\cap B_1(w'',G).$

If $\tw = \tw''$ or $\tw \sim \tw''$ (in $\widetilde{G}_i$) then, by the definition of $\equiv$,
$(\tw,z)\equiv
(\tw'',z)$ and we are done. If $\tw \nsim \tw'' \neq \tw$ and if there exists
$\tu \in \tB_{i-1}$ such that $\tu \sim \tw, \tw''$ then, by (R$_i$)
applied to $\tu$, we obtain that $u \sim w, w''$ and $w \nsim
w''$. Since $(\tw,z), (\tw'',z) \in Z$, we have $z\sim w,w''$. Moreover,
if $z\sim u$ then, by (R$_i$) applied to $u$, there exists $\tz \in
\tB_i$, such that $\tz \sim \tu, \tw, \tw''$ and $f_i(\tz)=z$.  Thus
$(\tw,z), (\tw',z) \notin Z$, which is a contradiction. Consequently,
if $\tw \nsim \tw''$ and if there exists $\tu \in \tB_{i-1}$ such that
$\tu \sim \tw, \tw''$, then $uwzw''$ is an induced
square in $G$,
and by condition (Z2), $(\widetilde{w},z)\equiv (\widetilde{w}'',z'')$. Therefore, in the rest of the
proof, we will make the following assumptions and show that they lead to a
contradiction.
\begin{enumerate}
\item[($A_1$)] $\tw \nsim \tw''$;
\item[($A_2$)] there is no $\tu \in \tB_{i-1}$ such that $\tu \sim
\tw, \tw''$.
\end{enumerate}

\begin{claim}
For any ordered pair $(\tw,z) \in Z$ the following properties hold:
\begin{enumerate}
\item[($A_3$)] there is no neighbor $\tz \in
  \tB_{i-1}$ of $\tw$ such that $f_i(\tz)=z$;
\item [($A_4$)] there is no neighbor  $\tu \in
  \tB_{i-1}$ of $\tw$ such that $u \sim z$;
\item [($A_5$)] there are no $\tx, \ty \in \tB_{i-1}$ such that
  $\tx \sim \tw, \ty$ and   $y \sim z$.
\end{enumerate}
\end{claim}

\begin{proof}
If $\tw$ has a neighbor $\tz \in \tB_{i-1}$ such that $f_i(\tz)=z$,
then $(\tw,z) \notin Z$,  a contradiction. This establishes $(A_3)$.

If $\tw$ has a neighbor $\tu \in \tB_{i-1}$ such that $u \sim z$, then
by (R$_i$) applied to $\tu$, there exists $\tz \in \tB_i$ such that
$\tz \sim \tu,\tw$ and $f_i(\tz)=z$. Thus $(\tw,z) \notin Z$, a contradiction, establishing $(A_4)$.

If there exist $\tx, \ty \in \tB_{i-1}$ such that
$\tx \sim \tw, \ty$ and $y \sim z$ then, by $(A_4)$, $yxwz$ is an
induced square in $G$. From (S$_i$) applied to $\ty,\tx$, there
exists $\tz \in \tB_i$ such that $\tz \sim \ty,\tw$ and $f_i(\tz)=z$. Thus
$(\tw,z)
\notin Z$, a contradiction. Therefore $(A_5)$ holds as well, and the claim is established.
\end{proof}

\begin{claim}
There are vertices $\tu,\tu'\in \tB_{i-1}$ and $\tx \in \tB_{i-2}$ with the following properties:
$\tu \sim \tw, \tw',\tx$; $\tu' \sim \tw',\tw'',\tx$; $\tu \nsim \tw''$; $\tu'\nsim \tw$.
\end{claim}
\begin{proof}
If $\tw \sim \tw'$ then there is $\tu \in \tB_{i-1}$ with $\tu \sim \tw,\tw'$,
by the triangle condition TC($\tv$) from (Q$_i$). If $\tw \nsim \tw'$ then the existence of such $\tu$ follows from the definition
of $\equiv$.
Similarly for $\tu'$. By ($A_2$) we have $\tu\neq\tu'$, $\tu \nsim \tw''$, and $\tu' \nsim \tw$.
If $\tu \sim \tu'$ then, by the triangle condition TC($\tv$), there exists a vertex $\tx \in \tB_{i-2}$ adjacent to both
$\tu$ and $\tu'$. If $\tu \nsim \tu'$, then the existence of $\tx$ follows from the quadrangle condition QC($\tv$).
\end{proof}
From ($A_4)\&(A_5$), $x \neq z$ and $x \nsim z$.  By (R$_i$) applied
to $\tu$ or $\tu'$, we have $d(w,x)=d(w'',x)=2$.  We claim that
$d(x,z)=3$. If not, i.e., if $d(x,z)=2$, then by the local triangle
condition, there exists $s\sim x,w,z$.  By (R$_i$) there is $\ts \in
\tB_{i-1}$ with $\ts \sim \tx$ and $f_i(\ts)=s$. By (S$_i$) if $s\nsim u$, or by
(R$_i$) otherwise, we have $\ts \sim \tw$, which contradicts
($A_4$). Thus $d(x,z)=3$.  Hence, by the local quadrangle condition, there
is $y\sim x,w,w''$.  By (R$_i$) applied to $\tu$ if $u \sim y$, and by
(S$_i$) applied to the square $xuwy$ otherwise, there exists $\ty \sim
\tx, \tw$ with $f_i(\ty)=y$. By (R$_i$) applied to $\tu'$ if $u' \sim y$, and by (S$_i$)
applied to the square $xu'w''y$ otherwise, we have $\ty \sim
\tw''$. Consequently, $\ty \sim \tx,\tw,\tw''$, contradicting $(A_2)$.
This finishes the proof of the lemma.
\end{proof}

Let $\widetilde{S}_{i+1}$ denote the set of equivalence classes of $\equiv$, i.e.,
$\widetilde{S}_{i+1}=Z/_{\equiv}$. For an ordered pair $(\widetilde{w},z)\in Z$,
we will denote by $[\widetilde{w},z]$ the equivalence class of $\equiv$
containing  $(\widetilde{w},z).$ Set $\widetilde{B}_{i+1}:=\widetilde{B}_i\cup
\widetilde{S}_{i+1}.$
Let $\widetilde{G}_{i+1}$ be the graph having  $\widetilde{B}_{i+1}$ as the
vertex set, in which two vertices $\widetilde{a},\widetilde{b}$ are adjacent if
and
only if one of the following conditions holds:
\begin{itemize}
\item[(1)] $\widetilde{a},\widetilde{b}\in \widetilde{B}_i$ and
$\widetilde{a}\widetilde{b}$ is an edge of $\widetilde{G}_i$,
\item[(2)] $\widetilde{a}\in \widetilde{B}_i$,  $\widetilde{b}\in
\widetilde{S}_{i+1}$ and $\widetilde{b}=[\widetilde{a},z]$,
\item[(3)] $\widetilde{a},\widetilde{b}\in \widetilde{S}_{i+1},$
$\widetilde{a}=[\widetilde{w},z]$,
$\widetilde{b}=[\widetilde{w},z']$ for a vertex $\widetilde{w}\in \tB_i,$ and
$z\sim z'$ in the graph $G$.
\end{itemize}

Finally, we define the map $f_{i+1}: \widetilde{B}_{i+1}\rightarrow V({X})$ in
the
following way: if $\widetilde{a}\in \widetilde{B}_i$, then
$f_{i+1}(\widetilde{a})=f_i(\widetilde{a}),$  otherwise, if $\widetilde{a}\in
\widetilde{S}_{i+1}$ and
$\widetilde{a}=[\widetilde{w},z],$ then $f_{i+1}(\widetilde{a})=z.$ Notice that
$f_{i+1}$ is well-defined because all ordered pairs representing $\widetilde{a}$ have one and the same vertex $z$ in the
second argument. Following our convention, in the sequel, all vertices of $\widetilde{B}_{i+1}$ will
be denoted with a tilde and their images in $G$ under $f_{i+1}$ will be denoted
without tilde, e.g. if $\widetilde{w}\in \widetilde{B}_{i+1},$
then $f_{i+1}(\widetilde{w})=w$.
\medskip

Now we check our inductive assumptions, verifying the
properties (P$_{i+1}$), (Q$_{i+1}$),(R$_{i+1}$), (S$_{i+1}$), and  (T$_{i+1}$) for $\tG_{i+1}$ and $f_{i+1}$
defined above. In particular, it allows us to define the corresponding
complex $\tX_{i+1}$.
\medskip

The following four lemmata together with their proofs are the same as, respectively,
Lemmata 5.7, 5.8, 5.9 \& 5.10 in \cite{BCC+} (since weak modularity is our main topic,
we reproduce the proof of Lemma 5.8 that the graph $\widetilde{G}_{i+1}$
is weakly modular with respect to  $\widetilde v$).

\begin{lemma} \label{Pi+1}  $\tG_{i+1}$ satisfies the property $(P_{i+1})$,
i.e.,
$B_j(\widetilde{v},\tG_{i+1})=\widetilde{B}_j$ for any $j\le i+1.$
\end{lemma}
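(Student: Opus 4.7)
The plan is to exploit the hierarchical (level) structure of the just-built graph $\tG_{i+1}$. For each vertex $\tu\in\tB_{i+1}$, I define the \emph{level} $\ell(\tu)$ as the unique index in $\{0,1,\ldots,i+1\}$ for which $\tu\in \tS_{\ell(\tu)}$ (setting $\tS_0:=\{\tv\}$ and recalling $\tS_j=\tB_j\setminus\tB_{j-1}$). The conclusion $B_j(\tv,\tG_{i+1})=\tB_j$ for all $j\le i+1$ is equivalent to the single identity $d_{\tG_{i+1}}(\tv,\tu)=\ell(\tu)$ for every $\tu\in\tB_{i+1}$, and I will establish this by proving separately a matching upper and lower bound.

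For the upper bound $d_{\tG_{i+1}}(\tv,\tu)\le\ell(\tu)$, I would argue as follows. If $\ell(\tu)=j\le i$, then $(P_i)$ yields $d_{\tG_i}(\tv,\tu)=j$, and since $\tG_i$ is by construction a subgraph of $\tG_{i+1}$, the same bound carries over. If $\ell(\tu)=i+1$, then $\tu$ is represented by some couple $(\tw,z)\in Z$, so in particular $\tw\in\tS_i$ and the edge rule of type (2) makes $\tu=[\tw,z]$ adjacent to $\tw$; the upper bound for $\tw$ already established then gives $d_{\tG_{i+1}}(\tv,\tu)\le d_{\tG_{i+1}}(\tv,\tw)+1\le i+1$.

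For the lower bound $d_{\tG_{i+1}}(\tv,\tu)\ge\ell(\tu)$, the plan is to verify that every edge $\ta\tb$ of $\tG_{i+1}$ joins vertices with $|\ell(\ta)-\ell(\tb)|\le 1$; this immediately implies the lower bound by tracing any walk from $\tv$ to $\tu$. I would inspect the three edge types in turn. A type (1) edge is an edge of $\tG_i$, so $(P_i)$ forces its endpoints to have levels differing by at most one. A type (2) edge has endpoints $\ta\in\tB_i$ and $\tb=[\ta,z]\in\tS_{i+1}$, but the notation $[\ta,z]$ presupposes $(\ta,z)\in Z$, which by definition forces $\ta\in\tS_i$; the levels are thus precisely $i$ and $i+1$. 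A type (3) edge has both endpoints in $\tS_{i+1}$, so the levels agree. Combining the two bounds proves $(P_{i+1})$.

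I do not expect any serious obstacle in this particular lemma: the entire argument is combinatorial and rests only on the defining constraint that any $(\tw,z)\in Z$ has first coordinate in $\tS_i$, together with the inductive hypothesis $(P_i)$. Notably, neither the equivalence relation $\equiv$ nor the conditions (Z1)/(Z2), nor the weak modularity of $G$, intervene here; these are reserved for the verification of the more delicate inductive conditions $(Q_{i+1})$, $(R_{i+1})$, $(S_{i+1})$, and $(T_{i+1})$.
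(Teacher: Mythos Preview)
Your argument is correct and is precisely the natural approach: upper bound via $(P_i)$ and the edge of type~(2), lower bound via the level-Lipschitz property of edges, with the crucial point for type~(2) edges being the constraint $\tw\in\tS_i$ built into the definition of $Z$. The paper does not spell out a proof here but simply cites \cite[Lemma~5.7]{BCC+}, whose argument follows essentially the same line.
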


\begin{lemma} \label{Qi+1}  $\widetilde{G}_{i+1}$ satisfies the property
$(Q_{i+1}),$ i.e., the graph $\widetilde{G}_{i+1}$
is weakly modular with respect to the base-point $\widetilde v$.
\end{lemma}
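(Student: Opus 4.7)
The plan is to verify the triangle condition $\mr{TC}(\tv)$ and the quadrangle condition $\mr{QC}(\tv)$ for $\tG_{i+1}$, leveraging the inductive hypothesis $(Q_i)$ for $\tG_i$ together with the explicit description of edges of $\tG_{i+1}$ given by rules (1)--(3). By Lemma~\ref{Pi+1} (property $(P_{i+1})$), the distance-$j$ ball of $\tv$ in $\tG_{i+1}$ coincides with $\tB_j$ for every $j\le i+1$; in particular, $\tG_i$ embeds isometrically into $\tG_{i+1}$ on $\tB_i$, and edges between vertices of $\tB_i$ in $\tG_{i+1}$ are exactly the edges of $\tG_i$. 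Consequently, every configuration entirely contained in $\tB_i$ is taken care of by $(Q_i)$, so the work reduces to configurations in which at least one vertex lies in the new layer $\tS_{i+1}$.

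For $\mr{TC}(\tv)$ the only remaining case is an edge $\ta\sim\tb$ with both endpoints at distance $i+1$ from $\tv$, so $\ta,\tb\in\tS_{i+1}$. By edge rule (3), such an edge forces $\ta=[\tw,z]$ and $\tb=[\tw,z']$ for a common $\tw\in\tS_i$ and vertices $z\sim z'$ of $G$. The vertex $\tw$, adjacent to both $\ta$ and $\tb$ via rule (2) and satisfying $d(\tv,\tw)=i$, is precisely the common neighbor required by $\mr{TC}(\tv)$.

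For $\mr{QC}(\tv)$ the new case is a vertex $\tz\in\tS_{i+1}$ with two distinct neighbors $\tb,\tc\in\tS_i$ such that $d_{\tG_{i+1}}(\tb,\tc)=2$. Edge rule (2) presents $\tz$ simultaneously as $[\tb,z]$ and as $[\tc,z']$; since both representations share the second coordinate $f_{i+1}(\tz)$, we have $z=z'$, and hence $(\tb,z)\equiv(\tc,z)$. Because $\tb\nsim\tc$ in $\tG_i$ (otherwise their distance in $\tG_{i+1}$ would be $1$) and $\tb\neq\tc$, clause (Z1) fails for this equivalence. By Lemma~\ref{equiv}, $\equiv$ is transitive, so $(\tb,z)\equiv(\tc,z)$ must already be witnessed by a direct instance of (Z2): there exists $\tu\in\tB_{i-1}$ adjacent in $\tG_i$ to both $\tb$ and $\tc$. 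This $\tu$ is the common neighbor at distance $i-1=(k-1)-1$ from $\tv$ demanded by $\mr{QC}(\tv)$.

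The main obstacle of the argument is the availability of a \emph{single} low-layer witness $\tu\in\tB_{i-1}$ for the QC step; without transitivity of $\equiv$, one could only chain through intermediate classes without obtaining such a witness in one move. Transitivity, already secured by Lemma~\ref{equiv}, is exactly what bridges this gap and lets the verification of $(Q_{i+1})$ reduce to the direct application of the edge rules together with the definition of $\equiv$.
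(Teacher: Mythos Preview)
Your proof is correct and follows essentially the same approach as the paper's: reduce to configurations involving $\tS_{i+1}$ using $(P_{i+1})$ and $(Q_i)$, then use edge rule (3) for $\mr{TC}(\tv)$ and the direct equivalence $(\tb,z)\equiv(\tc,z)$ together with clause (Z2) for $\mr{QC}(\tv)$. Your explicit emphasis on why transitivity of $\equiv$ (Lemma~\ref{equiv}) is needed---so that membership in the same class yields a direct (Z2) witness rather than merely a chain---is a helpful clarification that the paper leaves implicit.
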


\begin{proof} First we show that  $\widetilde{G}_{i+1}$ satisfies the triangle
condition TC($\widetilde{v}$).
Pick two adjacent vertices $\widetilde{x},\widetilde{y}$ having in
$\widetilde{G}_{i+1}$ the same distance to $\widetilde{v}.$
Since by Lemma \ref{Pi+1}, $\widetilde{G}_{i+1}$ satisfies the property
(P$_{i+1}$) and the graph $\tG_i$ is weakly modular
with respect to $\widetilde v$, we can suppose that
$\widetilde{x},\widetilde{y}\in \tS_{i+1}.$ From the definition of the edges
of $\tG_{i+1}$, there exist two ordered pairs $(\widetilde{w},z),(\widetilde{w},z')\in
Z$ such that $\widetilde{w}\in \tB_i,$ $z$ is
adjacent to $z'$ in $G,$ and
$\widetilde{x}=[\widetilde{w},z],\widetilde{y}=[\widetilde{w},z'].$ Since
$\widetilde{w}$ is
adjacent in $\tG_{i+1}$ to both $\widetilde{x}$ and $\widetilde{y},$ the
triangle condition TC($\widetilde{v}$) is established.

Now we show that $\widetilde{G}_{i+1}$ satisfies the quadrangle condition
QC($\widetilde{v}$). Since  the properties
(P$_{i+1}$) and (Q$_i$) hold, it suffices to consider a vertex $\widetilde{x}\in
\tS_{i+1}$ having two nonadjacent
neighbors $\widetilde{w},\widetilde{w}'$ in $\widetilde{S}_i.$ By the definition of
$\tG_{i+1},$ there exists
a vertex $z$ of $G$ and ordered pairs $(\widetilde{w},z),(\widetilde{w}',z)\in Z$
such that $\widetilde{x}=[\widetilde{w},z]$
and $\widetilde{x}=[\widetilde{w}',z].$ Hence $(\widetilde{w},z)\equiv
(\widetilde{w}',z).$ Since $\widetilde{w}$ and $\widetilde{w}'$
are not adjacent, by condition (Z2) in the definition of $\equiv$ there exists
$\widetilde{u}\in \tB_{i-1}$ adjacent
to $\widetilde{w}$ and $\widetilde{w}'$, whence
$\widetilde{x},\widetilde{w},\widetilde{w}'$ satisfy QC($\widetilde{v}$).
\end{proof}

\begin{lemma}\label{lem-homomorphism}
For any edge $\ta\tb$ of $\tG_{i+1}$, $ab$ is an edge of $G$ (in
  particular $a \neq b$).
\end{lemma}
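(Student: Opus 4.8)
The plan is a direct case analysis following the definition of the edges of $\widetilde{G}_{i+1}$, using that $f_{i+1}$ restricts to $f_i$ on $\tB_i$ and sends a class $[\widetilde{w},z]\in\tS_{i+1}$ to $z$. There are three types of edges to treat.

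First I would handle edges of type (1): $\ta,\tb\in\tB_i$ with $\ta\tb$ an edge of $\widetilde{G}_i$. Since $\tB_i=\tB_{i-1}\cup\tS_i$, one endpoint, say $\ta$, lies in $\tB_{i-1}$ or in $\tS_i$; in the first case property (R$_i$), and in the second case property (T$_i$), says that $f_i$ maps the subgraph induced by $B_1(\ta,\widetilde{G}_i)$ isomorphically onto its image. As $\tb\in B_1(\ta,\widetilde{G}_i)$ and $\ta\neq\tb$, this gives $a=f_{i+1}(\ta)\neq f_{i+1}(\tb)=b$ and that $ab$ is an edge of $G$.

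Next, for edges of type (2) --- $\ta\in\tB_i$, $\tb=[\ta,z]$ for some $(\ta,z)\in Z$ --- by the very definition of $Z$ we have $\ta\in\tS_i$ and $z\in B_1(f_i(\ta),G)\setminus f_i(B_1(\ta,\widetilde{G}_i))$, so $a=f_i(\ta)$ and $b=z$ satisfy $d_G(a,b)\leq 1$; and $a\neq b$, because $a=b$ would force $z=f_i(\ta)\in f_i(B_1(\ta,\widetilde{G}_i))$, contradicting the choice of $z$. Hence $ab$ is an edge of $G$. Finally, edges of type (3) have the form $\ta=[\widetilde{w},z]$, $\tb=[\widetilde{w},z']$ with $z\sim z'$ in $G$, so $a=z$ and $b=z'$ are adjacent, hence distinct since $G$ has no loops. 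This exhausts all cases.

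There is no real obstacle here: the statement simply records that the map $f_{i+1}$ is a graph homomorphism, and it follows by unwinding the definition of $\widetilde{G}_{i+1}$ together with the inductive hypotheses (R$_i$) and (T$_i$). The one spot needing a moment of attention is the ``in particular $a\neq b$'' assertion in the type-(2) case, which is precisely where the clause $z\notin f_i(B_1(\ta,\widetilde{G}_i))$ in the definition of $Z$ is used.
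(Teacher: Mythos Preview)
Your proof is correct and is exactly the natural case analysis following the three clauses in the definition of the edges of $\widetilde{G}_{i+1}$; the paper itself does not spell out a proof but refers to \cite[Lemma 5.9]{BCC+}, where the same straightforward verification is carried out.
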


\begin{lemma}\label{lem-locally-surjective}
If $\ta \in \tB_i$ and if $b \sim a$ in $G,$ then there exists
a vertex $\tb$ of $\tG_{i+1}$  adjacent to $\ta$ such that $f_{i+1}(\tb) = b$.
\end{lemma}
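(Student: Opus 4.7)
The plan is to split into cases according to whether $\ta$ lies in the interior $\tB_{i-1}$ of $\tB_i$ or on the sphere $\tS_i = \tB_i \setminus \tB_{i-1}$, and then in each case apply the appropriate one of the inductive properties (R$_i$), (T$_i$), or invoke directly the construction of $\tG_{i+1}$ from the equivalence classes of $\equiv$.

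First I would dispatch the case $\ta \in \tB_{i-1}$ using condition (R$_i$): this provides an isomorphism between the subgraph of $\tG_i$ induced by $B_1(\ta,\tG_i)$ and the subgraph of $G$ induced by $B_1(a,G)$. Since $b \in B_1(a,G)$ with $b \neq a$, it lifts to a unique vertex $\tb \in B_1(\ta,\tG_i)$ with $f_i(\tb)=b$ and $\tb \sim \ta$. Because $f_{i+1}$ restricts to $f_i$ on $\tB_i$, this $\tb$ already witnesses the claim.

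Next I would handle $\ta \in \tS_i$, splitting into two subcases. If $b \in f_i(B_1(\ta,\tG_i))$, then (T$_i$) gives an isomorphism between $B_1(\ta,\tG_i)$ and $f_i(B_1(\ta,\tG_i))$; lifting $b$ through this isomorphism yields the required neighbor $\tb \sim \ta$ in $\tG_i$. Otherwise $b \in B_1(a,G) \setminus f_i(B_1(\ta,\tG_i))$, so by definition $(\ta,b) \in Z$, and the equivalence class $\tb := [\ta,b]$ belongs to $\tS_{i+1}$. By rule (2) in the definition of the edges of $\tG_{i+1}$, the vertices $\ta$ and $[\ta,b]$ are adjacent, and $f_{i+1}([\ta,b]) = b$ by construction, as required.

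The main point is simply to verify that every neighbor $b$ of $a$ in $G$ has a preimage adjacent to $\ta$, regardless of whether it is already present in $\tG_i$ or must be freshly added to $\tS_{i+1}$. There is no genuine obstacle here: all of the substantive work has been done in proving Lemma \ref{equiv} (so that $\tS_{i+1}$ is well defined) and in setting up the inductive hypotheses (R$_i$), (T$_i$); the present lemma is essentially a bookkeeping statement that records that the construction of $\tG_{i+1}$ adds back exactly those neighbors of $a$ that are missing from $f_i(B_1(\ta,\tG_i))$.
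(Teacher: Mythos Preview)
Your proposal is correct and follows exactly the natural case analysis that the paper intends (the paper defers to \cite[Lemma 5.10]{BCC+} for the proof, which proceeds in the same way). There is nothing to add: the split into $\ta\in\tB_{i-1}$ versus $\ta\in\tS_i$, with the latter further split according to whether $b$ already lies in $f_i(B_1(\ta,\tG_i))$, is precisely how (R$_i$), (T$_i$), and the definition of $Z$ combine to yield the lemma.
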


We prove now that $f_{i+1}$ is locally injective.

\begin{lemma}\label{lem-locally-injective}
If $\ta \in \tB_{i+1}$ and  $\tb, \tc$ are distinct neighbors of
$\ta$ in $\tG_{i+1}$, then $b \neq c$.
\end{lemma}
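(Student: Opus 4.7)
The plan is to assume for contradiction that $\tb \neq \tc$ are distinct neighbors of $\ta$ in $\tG_{i+1}$ with $b = c$, and to carry out a case analysis based on which of the strata $\tB_{i-1}, \tS_i, \tS_{i+1}$ contains $\ta$ (and analogously for $\tb, \tc$). The edge rules (1)--(3) combined with the inductive properties (R$_i$) and (T$_i$) dispose of most configurations at once, so the real work happens only when $\ta \in \tS_{i+1}$.

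I would first clear the two easy outer cases. If $\ta \in \tB_{i-1}$, then rules (2) and (3) attach no new neighbors to $\ta$, hence $\tb, \tc \in \tB_i$; then (R$_i$) identifies $B_1(\ta, \tG_i)$ isomorphically with $B_1(a, G)$ and forces $b \neq c$. If $\ta \in \tS_i$, I would split on whether each of $\tb, \tc$ lies in $\tB_i$ or $\tS_{i+1}$: property (T$_i$) gives injectivity when both lie in $\tB_i$; the defining condition $c \notin f_i(B_1(\ta, \tG_i))$ of $Z$ handles the mixed situation; and the uniqueness of the equivalence class $[\ta, b] = [\ta, c]$ handles the case when both lie in $\tS_{i+1}$.

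The substantive case is $\ta \in \tS_{i+1}$. When $\tb, \tc \in \tS_i$, one has $\ta = [\tb, a] = [\tc, a]$, so $(\tb, a) \equiv (\tc, a)$; assuming $b = c$, neither clause (Z1) (which would yield the nonexistent edge $b \sim c$) nor clause (Z2) (whose required square $ubac$ degenerates) of the definition of $\equiv$ can hold. When $\tb \in \tS_i$ and $\tc = [\tw, c] \in \tS_{i+1}$ (the mixed subcase), I would combine $(\tw, c) \in Z$, which forces $c \sim w$, with the two possibilities for $(\tb, a) \equiv (\tw, a)$: each option either places $b = f_i(\tb)$ in $f_i(B_1(\tw, \tG_i))$ (contradicting $(\tw, c) \in Z$ under $c = b$) or produces an induced square $ubaw$ with $b \nsim w$ (contradicting $c \sim w$).

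The hardest subcase, where I expect the main obstacle, is $\tb, \tc \in \tS_{i+1}$, with representatives $\tb = [\tw, b]$, $\tc = [\tw', c]$, and $\ta = [\tw, a] = [\tw', a]$, so $(\tw, a) \equiv (\tw', a)$. Assuming $b = c$, the goal is to prove $(\tw, b) \equiv (\tw', b)$, which would force $\tb = \tc$ and yield the desired contradiction. In clause (Z1) of the hypothesised equivalence, the memberships $(\tw, b), (\tw', b) \in Z$ immediately give $b \sim w, w'$, and (Z1) replays with second argument $b$. In clause (Z2), I would show that the same witness $\tu \in \tB_{i-1}$ producing the square $uwaw'$ also produces a square $uwbw'$: indeed $b \nsim u$, for otherwise (R$_i$) applied to $\tu$ would place a preimage of $b$ in $B_1(\tw, \tG_i)$, contradicting $(\tw, b) \in Z$; the four vertices $u, w, b, w'$ are then distinct and form an induced $4$-cycle in $G$. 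Transferring the witness $\tu$ from the second argument $a$ to $b$ is the delicate step, and reuses the geometric setup from the proof of Lemma~\ref{equiv}.
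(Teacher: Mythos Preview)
Your proposal is correct and follows essentially the same approach as the paper's own proof: the same stratification by the position of $\ta$, the same use of (R$_i$)/(T$_i$) and the definition of $Z$ for the routine cases, and in the hardest subcase the same key step of transferring the $(\tw,a)\equiv(\tw',a)$ witness to second argument $b$ via the observation that $b\nsim u$ (so $uwbw'$ is again a square). The paper dispatches $\tb\sim\tc$ up front via Lemma~\ref{lem-homomorphism} whereas you absorb it into the (Z1) branches, but this is only a cosmetic reorganisation.
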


\begin{proof}
First, note that if $\tb \sim \tc$ then the assertion holds by
Lemma~\ref{lem-homomorphism}; thus further we assume that $\tb
\nsim \tc$. If $\ta, \tb, \tc \in \tB_i$, the lemma holds by (R$_i$)
or (T$_i$) applied to $\ta$.
Suppose first that $\ta \in \tB_i$. If $\tb, \tc \in \tS_{i+1}$, then
$\tb = [\ta,b]$ and $\tc = [\ta,c],$ and thus $b\neq c$. If $\tb \in
\tB_i$ and $\tc = [\ta,c] \in \tS_{i+1}$, then $(\ta,b) \notin Z$, and
thus $c \neq b$. Therefore further we consider $\ta\in \tS_{i+1}$.

If $\tb, \tc \in \tB_i$ then $\ta =
[\tb,a] = [\tc,a]$. Since $(\tb,a) \equiv (\tc,a)$ and since $\tb
\nsim \tc$, there exists $\tu \in \tB_{i-1}$ such that $\tu \sim \tb,
\tc$ and $abuc$ is an induced square of $G$. This
implies that $b \neq c$.

If $\ta, \tb, \tc \in \tS_{i+1}$, then there exist
  $\tw, \tw' \in \tB_i$ such that $\tb = [\tw,b]$, $\tc =[\tw',c],$
  and $\ta= [\tw,a] = [\tw',a]$. Suppose that $b=c$; note that this
  implies that $\tw \neq \tw'$.  Since $(\tw,a) \equiv (\tw',a)$
  either $\tw\sim \tw'$ or there exists $\tu\in \tB_{i-1}$ such that
  $uwaw'$ is a square.  If $\tw \sim \tw'$ then $\tb=\tc$ ---
  contradiction. Thus $\tw \nsim \tw'$. Let $\tu$ be as above.  If
  $uwbw'$ is not a square then $b\sim u$ and, by (R$_i$) applied to
  $\tu$, we have that $(\tw,b)\notin Z$ --- contradiction.  Thus
  $uwbw'$ is a square and hence $\tb=[\tw,b]=[\tw',c]=\tc$ ---
  contradiction.

If $\ta, \tb \in \tS_{i+1}$ and $\tc \in \tS_i$, then there exists
$\tw \in \tS_i$ such that $\tb = [\tw,b]$ and $\ta = [\tw,a] =
[\tc,a]$. If $\tw \sim \tc$, then $(\tw,c) \notin Z$, and thus
$(\tw,c) \neq (\tw,b)$, i.e., $b \neq c$. If $\tw \nsim \tc$, since
$[\tw,a] = [\tc,a]$, there exists $\tu \in \tS_{i-1}$ such that
$\tu\sim \tw,\tc$ and such that $acuw$ is an induced square of
$G$. Since $\tw$ and $\tc$ are not adjacent, by (R$_i$) applied
to $\tu$, $w$ and
$c$ are not adjacent as well. Since $w \sim b$, this implies that $b \neq c$.
\end{proof}

\begin{lemma}\label{lem-triangles}
If $\ta \sim \tb, \tc$ in $\tG_{i+1}$, then $\tb \sim \tc$ if and only
if $b \sim c$.
\end{lemma}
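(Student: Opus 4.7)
The forward direction is an immediate consequence of Lemma~\ref{lem-homomorphism}: the image of any edge in $\tG_{i+1}$ is an edge of $G$. For the converse, assume $b\sim c$ in $G$; the plan is a case analysis on where $\ta,\tb,\tc$ sit in the partition $\tB_{i-1}\cup\tS_i\cup\tS_{i+1}$ of $V(\tG_{i+1})$. The driving observation is that every pair $(\cdot,z)\in Z$ has its first coordinate in $\tS_i$, so edges of type (2) or (3) meet only $\tS_i\cup\tS_{i+1}$; in particular, if $\ta\in\tB_{i-1}$ then $\tb,\tc\in\tB_i$ and the conclusion follows from property (R$_i$) applied to $\ta$, which makes $f_i$ a graph isomorphism on $B_1(\ta,\tG_i)$ onto $B_1(a,G)$, transporting the edge $bc$ back to $\tb\tc$.

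Next I would handle $\ta\in\tS_i$. When $\tb,\tc\in\tB_i$, property (T$_i$) at $\ta$ again supplies the required isomorphism and hence the edge $\tb\tc$ in $\tG_i\subseteq\tG_{i+1}$. If one neighbor, say $\tc$, lies in $\tS_{i+1}$ then $\tc=[\ta,c]$. In the sub-case $\tb\in\tS_{i+1}$ also $\tb=[\ta,b]$; then $\ta$ is a common witness and $b\sim c$ directly yields the type-(3) edge $\tb\tc$. In the sub-case $\tb\in\tB_i$, the target is the type-(2) edge $\tc=[\tb,c]$, which requires both $(\tb,c)\in Z$ and $(\tb,c)\equiv(\ta,c)$. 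I would first argue $\tb\in\tS_i$: if instead $\tb\in\tB_{i-1}$, then (R$_i$) at $\tb$ and the relation $a\sim c$ would produce a preimage of $c$ in $\tG_i$ adjacent to $\ta$, violating $(\ta,c)\in Z$. With $\tb\in\tS_i$ in hand, the analogous application of (T$_i$) at $\tb$ shows $c\notin f_i(B_1(\tb,\tG_i))$, so $(\tb,c)\in Z$; and since $\ta\sim\tb$ with $c\in B_1(a,G)\cap B_1(b,G)$, clause (Z1) of $\equiv$ gives the required equivalence.

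The remaining case $\ta\in\tS_{i+1}$ splits similarly; the two workhorses are again the isomorphisms (R$_i$)/(T$_i$) together with a careful analysis of how the $a$-coordinate equivalences that witness the edges $\ta\sim\tb$ and $\ta\sim\tc$ propagate to the corresponding $b$- and $c$-coordinate equivalences. When $\tb,\tc\in\tS_i$ both edges are of type (2), so $\ta=[\tb,a]=[\tc,a]$; if $\tb\nsim\tc$ then clause (Z2) forces an \emph{induced} square $ubac$ in $G$, whose definition rules out the diagonal $bc$, contradicting $b\sim c$. The step I expect to be the main obstacle is the sub-case $\tb,\tc\in\tS_{i+1}$: here $\ta=[\tw_1,a]=[\tw_2,a]$, $\tb=[\tw_1,b]$, $\tc=[\tw_2,c]$, and one must exhibit a single witness $\tw$ with $\tb=[\tw,b]$ and $\tc=[\tw,c]$ so that the type-(3) edge $\tb\tc$ exists. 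The delicate point arises when $(\tw_1,a)\equiv(\tw_2,a)$ is realized only through clause (Z2) via some $\tu\in\tB_{i-1}$: the resulting configuration in $G$ contains the square $uw_1aw_2$ and the triangle $abc$ glued along $a$, and the argument must combine (R$_i$) at $\tu$, (T$_i$) at $\tw_1$ and $\tw_2$, and the triangle/quadrangle conditions (Q$_i$) in $\tG_i$ to transport the equivalence from the $a$-coordinate to the $b$- and $c$-coordinates without conflicting with the $Z$-membership requirements of the pairs involved.
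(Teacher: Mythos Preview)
Your case analysis matches the paper's, and your treatment of the easier cases ($\ta\in\tB_i$, or $\ta\in\tS_{i+1}$ with $\tb,\tc\in\tS_i$) is correct; in places you are even more careful than the paper in distinguishing (R$_i$) from (T$_i$). There are, however, two genuine gaps.

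First, you skip the mixed sub-case $\ta\in\tS_{i+1}$, $\tc\in\tS_i$, $\tb\in\tS_{i+1}$. This is not symmetric to anything you have already done: here $\ta=[\tw,a]=[\tc,a]$ and $\tb=[\tw,b]$ for some $\tw\in\tS_i$, and one must show $\tc\sim\tb$. The paper handles this separately and it is not short; it requires producing (when $\tw\nsim\tc$) a vertex $\tu\in\tB_{i-1}$ with $\tu\sim\tw,\tc$, then descending to $G$, invoking the \emph{local triangle condition} to find $y\sim b,w,z$ for a suitable $z$, lifting $y$ back via (R$_i$)/(S$_i$), and tracking adjacencies.

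Second, for the sub-case $\ta,\tb,\tc\in\tS_{i+1}$ your plan names only (R$_i$), (T$_i$), and (Q$_i$), but these are insufficient. The engine in the paper's argument is the \emph{local weak modularity of $G$}: after reducing to $\tw\nsim\tc$, $\tw'\nsim\tb$ and finding $\tu\in\tB_{i-1}$ adjacent to $\tw,\tw'$ (via TC($\tv$) or (Z2)), one works in $G$ where $d(u,b)=d(u,c)=2$ and $b\sim c$, and applies the local triangle condition at $u$ to produce $y\sim b,c,u$. The lift $\ty$ of $y$ (via (R$_i$) at $\tu$) is the common witness you are looking for. None of the tools you list can manufacture such a $y$; it must come from the hypothesis on $G$. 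Moreover, showing that $\ty$ actually works---i.e., that $(\ty,b)\in Z$---is itself delicate: the paper needs an embedded Claim with its own case split on whether $\ty\in\tS_{i-1}$ or $\ty\in\tS_i$, again using (Q$_i$), (R$_i$), (S$_i$), and one further application of the local triangle condition. Your phrase ``without conflicting with the $Z$-membership requirements'' correctly anticipates this obstacle, but you have not indicated how to resolve it.
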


\begin{proof}
If $\tb \sim \tc$, then  $b \sim c$ by Lemma~\ref{lem-homomorphism}.
Conversely, suppose that $b \sim c$ in $G$.
If $\ta, \tb, \tc \in
\tB_i$ then  $\tb \sim \tc$ by conditions (R$_i$) and (T$_i$).
Therefore, further we assume that at least one of the
vertices $\ta, \tb, \tc$ does not belong to $\tB_i$.

First, suppose that $\ta \in \tB_i$. If $\tb, \tc \in \tS_{i+1}$ then $\tb =
[\ta,b]$ and $\tc = [\ta,c]$. Since $b\sim c$, by the construction of $\tG_{i+1}$, we
have $\tb \sim \tc$ in $\tG_{i+1}$. Suppose now that $\tb = [\ta,b]
\in S_{i+1}$ and $\tc \in \tB_i$. If there exists $\tb' \sim \tc$ in
$\tG_i$ such that $f_i(\tb') = b$ then, by (T$_i$) applied to $\tc$, we have
$\ta \sim \tb'$ and $(\ta,b) \notin Z$, which is a contradiction. Thus
$(\tc,b) \in Z$ and, since $\tc \sim \ta$, we have $[\tc,b]=[\ta,b]=\tb$, and
consequently, $\tc \sim \tb$. Therefore, further we consider $\ta \in \tS_{i+1}$.

If $\tb, \tc \in \tB_i$ and $\ta \in \tS_{i+1}$, then
$\ta =[\tb,a] = [\tc,a]$ and either $\tb \sim \tc$, or there exists
$\tu \in \tS_{i-1}$ such that $\tu \sim \tb, \tc$ and $ubac$ is an
induced square in $G$, which is impossible because $b \sim c$.

If $\ta, \tb \in \tS_{i+1}$ and $\tc \in \tB_i$, then there exists
$\tw \in \tB_i$ such that $\tb = [\tw,b]$ and $\ta = [\tw,a] =
[\tc,a]$.
If $(\tc,b)\notin Z$ then there is $\tb'\in \tB_i$ with $\tc\sim\tb'$ and $f_i(\tb')=b$.
Thus $\ta \sim \tb'$ by a previous case. This however contradicts the local injectivity of $f_{i+1}$ (Lemma~\ref{lem-locally-injective}).
Therefore $(\tc,b)\in Z$. Since $(\tc,a)\equiv (\tw,a)$, either $\tw \sim \tc$ or there is $\tu \in \tB_{i-1}$ adjacent to
$\tw,\tc$ and with $uwac$ being a square.
If $\tw \sim \tc$ then $\tb = [\tc,b]$ and thus $\tb \sim \tc$. If $\tw \nsim \tc$ then let $\tu$ be as above.
If $uwbc$ is not a square, then $b\sim u$ and, by (R$_i$) applied to $\tu$, we conclude that $(\tw,b)\notin Z$ --- contradiction.
Thus $uwbc$ is a square, and hence $\tb=[\tc,b]$.
Consequently, $\tc \sim \tb$.

If $\ta, \tb, \tc \in \tS_{i+1}$ then there exist $\tw, \tw' \in
\tB_i$ such that $\tb = [\tw,b]$, $\tc = [\tw',c]$ and $\ta = [\tw,a]
= [\tw',a]$. If $\tw \sim \tc$ (respectively, $\tw' \sim \tb$), then we are in
a previous case, replacing $\ta$ by $\tw$ (respectively, $\tw'$) and
consequently $\tb \sim \tc$. Suppose now that $\tw \nsim \tc$ and
$\tw' \nsim \tb$. From a previous case applied to $\ta,\tb \in
\tS_{i+1}$ (respectively, $\ta,\tc \in \tS_{i+1}$) and $\tw' \in
\tB_i$ (respectively, $\tw \in \tB_i$), it follows that $w \nsim c$
and $w' \nsim b$.  By the triangle condition TC($\tv$) when $\tw \sim
\tw'$, or by the definition of $\tS_{i+1}$ when $\tw \nsim \tw'$,
there exists a vertex $\tu \in \tB_{i-1}$ adjacent to both
$\tw,\tw'$. By a previous case, $u \nsim a,b,c$.  By local weak
modularity, there is a vertex $y\sim b,c,u$. By (R$_i$), there is a
vertex $\ty \sim \tu$ with $f_i(\ty)=y$. If $\ty \in \tB_{i-1}$ then,
by (S$_i$) if $y \nsim w$, or by (R$_i$) otherwise, there exists $\tb'
\in \tB_i$ such that $\tb' \sim \ty, \tw$ and $f_i(\tb')=b$. This
however contradicts the fact that $(\tw,b)\in Z$.  Therefore $\ty \in
\tS_i$.
\begin{claim}
$(\ty,b)\in Z$.
\end{claim}
\begin{proof}
Suppose this is not the case. Then there is $\tb'\in \tB_i$ adjacent
to $\ty$ and such that $f_i(\tb')=b$. By (T$_i$) and since $b\nsim u$,
we have $\tb' \nsim \tu$.  If $\tb' \in \tS_{i-1}$ then, by (Q$_i$),
in $\tS_{i-2}$ there is a vertex $\tx \sim \tb',\tu$. By (R$_i$)
applied to $\tu$, we have $x \neq w$ and $x \nsim w$.  Consequently, $uxbw$ is
a square, and by (S$_i$) applied to $\tu$ and $\tx$, we have $\tb' \sim \tw$,
contradicting the fact that $(\tw,b) \in Z$. Therefore $\tb' \in
\tS_i$. By (Q$_i$), in $\tS_{i-1}$ there is a vertex $\tx \sim
\tb',\ty$. By (T$_i$), we have $x\sim b,y$.  Observe that $\tx \neq \tu$. If
$\tx \sim \tu$ then $x\sim u$ and, by (S$_i$) applied to the square
$uxbw$ if $w \nsim x$, or by (R$_i$) if $w \sim x$, we have that $\tb'
\sim \tw$. This, however, contradicts the fact that $(\tw,b)\in Z$.
Therefore $\tx \nsim \tu$. By (Q$_i$), there is $\tz \in \tS_{i-2}$
adjacent to $\tx,\tu$.  By (R$_i$), we have $z\nsim b,w$. Hence, by
the local triangle condition, there is $s\sim b,w,z$.  By (R$_i$) there
is $\ts \in \tB_{i-1}$ adjacent to $\tz$, with $f_i(\ts)=s$.  By
(R$_i$) or by (S$_i$) (depending whether $s\sim x$ and/or $s\sim u$),
we obtain that $\ts \sim \tb', \tw$. By (R$_i$), it follows that $\tw
\sim \tb'$. This, however, contradicts the fact that $(\tw,b)\in
Z$. Hence the claim holds.
\end{proof}
Analogously, we have that $(\ty,c) \in Z$. It follows that $\tb = [\tw,b]=[\ty,b]\sim [\ty,c]=[\tw',c]=\tc$.
\end{proof}

We can now prove that the image under $f_{i+1}$ of a triangle (respectively, a
square) is a triangle (respectively, a square).  This will allow us to extend
the map $f_{i+1}$ to a cellular map $f_{i+1}\colon \tX_{i+1} \to X$.
The following two lemmata together with their proofs correspond to
Lemmata 5.14 \& 5.15 from \cite{BCC+}.

\begin{lemma} \label{cellular}
If $\widetilde{a}\widetilde{b}\widetilde{c}$ is a triangle in $\tG_{i+1}$, then
$abc$ is a triangle in $G$. If
$\widetilde{a}\widetilde{b}\widetilde{c}\widetilde{d}$ is a square in
$\tG_{i+1}$, then $abcd$ is a square in $G$.
\end{lemma}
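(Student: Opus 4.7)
The plan is to derive this lemma as an essentially immediate corollary of the three local properties of $f_{i+1}$ just established: edge preservation (Lemma~\ref{lem-homomorphism}), local injectivity (Lemma~\ref{lem-locally-injective}), and the triangle criterion (Lemma~\ref{lem-triangles}). In fact no further appeal to (TC), (QC), or the inductive hypotheses $(P_i)$--$(T_i)$ should be needed; the statement is really a bookkeeping step that will license the extension of $f_{i+1}$ to a cellular map on $2$-cells.

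For the triangle case, I would start with a triangle $\ta\tb\tc$ in $\tG_{i+1}$, apply Lemma~\ref{lem-homomorphism} to each of the three edges to get that $ab$, $bc$, $ac$ are edges of $G$, and note that the same lemma also delivers $a\ne b$, $b\ne c$, $a\ne c$. Alternatively, pairwise distinctness of $a,b,c$ can be obtained from Lemma~\ref{lem-locally-injective}: for instance, since $\tb$ has $\ta$ and $\tc$ as distinct neighbors in $\tG_{i+1}$, their images $a$ and $c$ must be distinct. Either way, $abc$ is then a genuine triangle of $G$.

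For the square case, let $\ta\tb\tc\td$ be a square of $\tG_{i+1}$, so $\ta\sim\tb\sim\tc\sim\td\sim\ta$ with $\ta\nsim\tc$ and $\tb\nsim\td$. Lemma~\ref{lem-homomorphism} immediately provides the four edges $ab$, $bc$, $cd$, $da$ in $G$, together with $a\ne b$, $b\ne c$, $c\ne d$, $d\ne a$. The key additional step is to rule out the two diagonals. Since $\tb$ is a common neighbor of $\ta$ and $\tc$ and $\ta\nsim\tc$, Lemma~\ref{lem-triangles} forces $a\nsim c$ in $G$; symmetrically, taking $\ta$ as a common neighbor of $\tb,\td$ yields $b\nsim d$. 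Finally, applying Lemma~\ref{lem-locally-injective} at $\tb$ (with neighbors $\ta,\tc$) gives $a\ne c$, and at $\ta$ (with neighbors $\tb,\td$) gives $b\ne d$, so the four images $a,b,c,d$ are pairwise distinct and $abcd$ is an induced $4$-cycle of $G$.

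There is no serious obstacle: the only thing to double-check is that one never confuses ``$\tb\nsim\tc$ in $\tG_{i+1}$'' with ``$bc$ is not an edge of $G$'', which is precisely the content of the nontrivial direction of Lemma~\ref{lem-triangles} and is exactly where that lemma is invoked. With this in hand, the map $f_{i+1}$ can be consistently extended from the $1$-skeleton $\tG_{i+1}$ to a cellular map $f_{i+1}\colon \tX_{i+1}\to X$ by sending each triangle (respectively, each square) of $\tX_{i+1}$ affinely onto the triangle (respectively, square) of $X$ determined by its vertex images.
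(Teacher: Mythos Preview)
Your proposal is correct and matches the intended argument: the paper does not reprove this lemma but defers to \cite[Lemma~5.14]{BCC+}, where the proof is exactly this direct combination of Lemmata~\ref{lem-homomorphism}, \ref{lem-locally-injective}, and \ref{lem-triangles}. Your bookkeeping is accurate, including the use of Lemma~\ref{lem-triangles} at a common neighbor to rule out the diagonals and of Lemma~\ref{lem-locally-injective} to ensure opposite images are distinct.
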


\begin{lemma}\label{Ri+1}\label{Ti+1}
$f_{i+1}$ satisfies the conditions $(R_{i+1})$ and $(T_{i+1}).$
\end{lemma}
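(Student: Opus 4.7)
The plan is to assemble the four technical lemmas already established (Lemmas~\ref{lem-homomorphism}, \ref{lem-locally-surjective}, \ref{lem-locally-injective}, and~\ref{lem-triangles}) into the required local isomorphism statements. For both $(R_{i+1})$ and $(T_{i+1})$ we must verify that $f_{i+1}$, restricted to the star of a chosen vertex, is a bijection of vertex sets that preserves and reflects adjacency; the difference between the two conditions is only whether we also demand that the image of the ball is all of $B_1(f_{i+1}(\tw),G)$ (required for $(R_{i+1})$) or just the induced subgraph on $f_{i+1}(B_1(\tw,\tG_{i+1}))$ (for $(T_{i+1})$).

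To prove $(R_{i+1})$, fix $\tu\in\tB_i$ and let $u=f_{i+1}(\tu)$. First I would establish surjectivity of $f_{i+1}:B_1(\tu,\tG_{i+1})\to B_1(u,G)$: the vertex $u$ is the image of $\tu$, and for every neighbor $b$ of $u$ in $G$, Lemma~\ref{lem-locally-surjective} produces a neighbor $\tb$ of $\tu$ in $\tG_{i+1}$ with $f_{i+1}(\tb)=b$. Next, injectivity on $B_1(\tu,\tG_{i+1})$ is exactly the content of Lemma~\ref{lem-locally-injective}: two distinct neighbors of $\tu$ cannot map to the same vertex of $G$, and by Lemma~\ref{lem-homomorphism} no neighbor of $\tu$ maps to $u$. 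Finally, for the adjacency structure: if $\tb,\tc\in B_1(\tu,\tG_{i+1})$ and $\tb\sim\tc$ in $\tG_{i+1}$, then $b\sim c$ by Lemma~\ref{lem-homomorphism}; conversely, if $b\sim c$ in $G$, then Lemma~\ref{lem-triangles} (applied with the common neighbor $\tu$ and the neighbors $\tb,\tc$) yields $\tb\sim\tc$ in $\tG_{i+1}$. Thus $f_{i+1}$ restricts to a graph isomorphism between the two unit balls, proving $(R_{i+1})$.

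For $(T_{i+1})$, fix $\tw\in\tS_{i+1}$ and let $w=f_{i+1}(\tw)$. Here we do not claim that $f_{i+1}(B_1(\tw,\tG_{i+1}))=B_1(w,G)$, only that $f_{i+1}$ is an isomorphism onto its image. Injectivity on $B_1(\tw,\tG_{i+1})$ is again immediate from Lemma~\ref{lem-locally-injective}; edge preservation is Lemma~\ref{lem-homomorphism}; and edge reflection follows once more from Lemma~\ref{lem-triangles} applied with common neighbor $\tw$. This is exactly the definition of an isomorphism between the subgraph of $\tG_{i+1}$ induced by $B_1(\tw,\tG_{i+1})$ and the subgraph of $G$ induced by $f_{i+1}(B_1(\tw,\tG_{i+1}))$.

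There is no substantive obstacle here: the real difficulty was already discharged in Lemmas~\ref{lem-locally-surjective}--\ref{lem-triangles}, where the case analysis on whether the vertices belong to $\tB_{i-1}$, $\tS_i$, or $\tS_{i+1}$ was carried out using the equivalence relation $\equiv$, the triangle/quadrangle conditions in $\tG_i$, and the local weak modularity of $G$. The present lemma is merely the packaging step that promotes those statements to the inductive hypotheses needed to carry on the construction of $\widetilde{X}_{i+2}$ in the next stage of the induction.
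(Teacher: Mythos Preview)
Your proposal is correct and is exactly the intended packaging argument: the paper does not spell out a proof here but simply cites \cite[Lemma~5.15]{BCC+}, where the statement is derived in precisely this way from the four preparatory lemmas (edge preservation, local surjectivity for $\tB_i$, local injectivity, and the triangle lemma). There is nothing to add.
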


\begin{lemma} \label{Si+1} For any adjacent vertices $\widetilde{w},\widetilde{w}'\in
  \widetilde{B}_{i}$ such that the vertices
  $w=f_{i+1}(\widetilde{w}),w'=f_{i+1}(\widetilde{w}')$ belong to a
  square $ww'u'u$ of ${X}$, there exist
  $\widetilde{u},\widetilde{u}'\in \widetilde{B}_{i+1}$ such that
  $f_{i+1}(\widetilde{u})=u, f_{i+1}(\widetilde{u}')=u'$ and
  $\widetilde{w}\widetilde{w}'\widetilde{u}'\widetilde{u}$ is a square
  of $\widetilde{{X}}_{i+1}$, that is, $\widetilde{{X}}_{i+1}$
  satisfies the property $(S_{i+1}).$
\end{lemma}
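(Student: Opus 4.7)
The plan is to apply Lemma~\ref{lem-locally-surjective} at $\tw$ and at $\tw'$ to produce candidate lifts $\tu, \tu' \in \widetilde{B}_{i+1}$ with $\tu \sim \tw$, $\tu' \sim \tw'$, $f_{i+1}(\tu) = u$ and $f_{i+1}(\tu') = u'$, and then to verify the remaining edge $\tu \sim \tu'$ by a case analysis. The non-adjacencies $\tu \nsim \tw'$ and $\tu' \nsim \tw$ follow immediately from the contrapositive of Lemma~\ref{lem-homomorphism}, since $ww'u'u$ is an induced square with $u \nsim w'$ and $u' \nsim w$; once the $1$-skeletal $4$-cycle $\tw\tw'\tu'\tu$ is in place, attaching a $2$-cell and extending $f_{i+1}$ cellularly from the $2$-cell of $X$ corresponding to the square $ww'u'u$ is routine.

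The main case analysis is on the distances $j = d(\tv,\tw)$ and $j' = d(\tv,\tw')$, which differ by at most $1$ since $\tw \sim \tw'$. In the easy case both $\tw, \tw' \in \widetilde{B}_{i-1}$: property $(S_i)$ applied at the previous inductive stage yields a square $\tw\tw'\tilde{x}'\tilde{x}$ already in $\tG_i$, and local injectivity of $f_{i+1}$ (Lemma~\ref{lem-locally-injective}) forces $\tilde{x} = \tu$ and $\tilde{x}' = \tu'$. Otherwise at least one of $\tw, \tw'$ lies in $\widetilde{S}_i$, and one subdivides according to whether $u$ and $u'$ belong to $f_i(B_1(\tw,\tG_i))$ and $f_i(B_1(\tw',\tG_i))$ respectively; this determines whether each of $\tu, \tu'$ lies in $\tB_i$ or is freshly created as the equivalence class $[\tw,u]$, respectively $[\tw',u']$, in $\widetilde{S}_{i+1}$. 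For each combination the adjacency $\tu\sim\tu'$ is exhibited through the appropriate clause (1), (2), or (3) of the definition of edges in $\tG_{i+1}$, invoking $(R_i)$, $(T_i)$, the weak modularity of $\tG_i$ granted by $(Q_i)$, and the equivalence relation $\equiv$.

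The main obstacle is the sub-case $\tw, \tw' \in \widetilde{S}_i$ with $\tu = [\tw,u]$ and $\tu' = [\tw',u']$ both in $\widetilde{S}_{i+1}$: since $u \neq u'$, the relation $\equiv$ cannot link the pairs $(\tw,u)$ and $(\tw',u')$ directly, and clause (3) requires representing $\tu$ and $\tu'$ on a common base $\tw'' \in \widetilde{S}_i$. To resolve it, I would use TC($\tv$) in $\tG_i$ to locate a common neighbor $\tz \in \widetilde{S}_{i-1}$ of $\tw, \tw'$, first ruling out $u \sim z$ and $u' \sim z$ in $G$ (each such adjacency would, via $(R_i)$ at $\tz$ combined with local injectivity, lift $u$ (resp.\ $u'$) to an element of $\tB_i$ adjacent to $\tw$ (resp.\ $\tw'$), contradicting $\tu, \tu' \in \widetilde{S}_{i+1}$). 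One then invokes the local weak modularity of $G$ (LTC and LQC) on the configuration $\{z, w, w', u, u'\}$ (noting $d(z,u), d(z,u') \leq 2$ and $u \sim u'$, $z \sim w \sim w' \sim z$) to produce an auxiliary vertex that, together with $(R_i)$ at $\tz$ and the criteria (Z1)--(Z2), realises both $\tu$ and $\tu'$ on the same base $\tw'' \in \widetilde{S}_i$; clause (3) then applies since $u \sim u'$ in $G$, yielding the missing edge. This completes the verification of $(S_{i+1})$ and allows the inductive construction of the universal cover to continue.
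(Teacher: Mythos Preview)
Your outline follows the paper's overall strategy: produce the candidate lifts via local surjectivity, reduce to showing $\tu\sim\tu'$, and case-split on the BFS-levels of $\tw,\tw',\tu,\tu'$. You also correctly isolate the hardest sub-case ($\tw,\tw'\in\tS_i$ with $\tu=[\tw,u]$ and $\tu'=[\tw',u']$ both in $\tS_{i+1}$) and the right mechanism for it: pick a common neighbour $\tz\in\tS_{i-1}$ of $\tw,\tw'$ via TC$(\tv)$, rule out $z\sim u$ and $z\sim u'$, and then use LTC at $z$ on the edge $uu'$ to produce an auxiliary $y\sim z,u,u'$, lifted to $\ty$.

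There is, however, a genuine gap in how you close this sub-case. You want to conclude $\tu=[\ty,u]$ and $\tu'=[\ty,u']$ by invoking (Z1)--(Z2) directly, but the relation $\equiv$ is only defined on $Z\times Z$: you must first know that $(\ty,u),(\ty,u')\in Z$, in particular that $\ty\in\tS_i$ and that no neighbour of $\ty$ in $\tG_i$ maps to $u$ (resp.\ $u'$). This is not automatic. If one tries to prove it and a putative $\tu''\sim\ty$ with $f_i(\tu'')=u$ exists, the needed contradiction $\tu''\sim\tw$ (violating $(\tw,u)\in Z$) requires lifting the square $zyuw$ with $\tz\in\tS_{i-1}$ and $\ty\in\tS_i$ --- precisely the kind of mixed-level square-lifting that you dismissed as routine clause-chasing. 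The paper does not bypass this: it proves the mixed case ($\tw\in\tS_i$, $\tw'\in\tS_{i-1}$) first as a standalone ``Case~1'' with its own multi-branch analysis (using QC$(\tv)$, TC$(\tv)$, LTC in $G$, and $(S_i)$), and then in the hardest sub-case applies that already-established Case~1 to the auxiliary square $xwuy$ to get $\ty\sim\tu$ directly; the identity $\tu=[\ty,u]$ then falls out of clause~(2) of the edge definition. So the ``intermediate'' cases are not just bookkeeping --- they are the lemma on which the final step rests, and your sketch should treat Case~1 with the same care you gave the last sub-case.
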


\begin{proof}
By Lemma~\ref{Ri+1} applied to $\tw$ and $\tw'$, we know that in
$\tG_{i+1}$ there exists a unique $\tu$ (respectively, a unique $\tu'$) such
that $\tu\sim\tw$ (respectively, $\tu'\sim\tw'$) and $f_{i+1}(\tu)=u$
(respectively, $f_{i+1}(\tu)=u'$).

Note that if $\tw, \tw' \in \tB_{i-1}$, the lemma holds by
condition (S$_i$). Let us assume further that $\tw \in \tS_i$.


\begin{case-wm}
  $\tw' \in \tS_{i-1}$.
\end{case-wm}

If $\tu' \in \tB_{i-1}$ then, by (S$_i$) applied to $\tw'$ and $\tu'$, we
conclude that $\tw\tw'\tu'\tu$ is a square in $\tG_{i+1}$.
If $\tu' \in \tS_i$ and $\tu \in \tS_{i-1}$, then Lemma~\ref{Ri+1}
applied to $\tw$, implies that $\tu$ is not adjacent to $\tw'$. Thus,
by the quadrangle condition QC($\tv$), there exists $\tx \in \tS_{i-2}$
such that $\tx\sim\tu, \tw'$.
By (R$_i$), we have
$x\nsim u'$. Thus, by (S$_i$)
applied to the square $xw'u'u$, we obtain that $\tu \sim \tu'$.
Hence $\tw\tw'\tu'\tu$ is a square in $\tG_{i+1}$.

Suppose now that $\tu', \tu \in \tS_i$. By TC($\tv$), there exists
$\tx \in \tS_{i-1}$ different from $\tw'$, such that $\tx\sim\tu,\tw$.
If $x\sim w'$ then by (R$_i$), $\tx \sim \tw'$, and by (R$_i$) when
$x\sim u'$, or by (S$_i$) otherwise, we obtain that $\tu \sim \tu'$,
and hence $\tw\tw'\tu'\tu$ is a square.  If $x\nsim w'$, then $\tilde x \nsim \tilde w'$ and
by (Q$_i$) there is $\ty \in \tS_{i-2}$ adjacent to $\tx,\tw'$. By
(R$_i$), we have $y\nsim u,u'$. By the local triangle condition, there
exists $s\sim u,u',y$. By (R$_i$) there is $\ts \sim \ty$ with
$f_i(\ts)=s$.  By (S$_i$) when $s\nsim x$, or by (R$_i$) otherwise, we
have that $\ts \sim \tu$.  Similarly $\ts \sim \tu'$.  Thus, by
(R$_i$), we have $\tu \sim \tu'$ and $\tw\tw'\tu'\tu$ is a square in
$\tG_{i+1}$.

Finally, we consider the case when $\tu' \in \tS_i$ and $\tu \in \tS_{i+1}$.
If $(\tu',u) \in Z$ then, by definition, $\tu \sim \tu'$ and we are done.
If not then there is $\tu'' \sim  \tu'$ contained in $\tB_i$ and such that
$f_i(\tu'')=u$. Then we are in one of the preceding cases (after exchanging:
$\tw$ by $\tu'$, $\tu'$ by $\tw$, $\tu$ by $\tu''$), and $\tu'' \sim \tw$, contradicting Lemma~\ref{lem-locally-injective}.



\begin{case-wm}
  $\tw' \in \tS_{i}$. 
\end{case-wm}

If $\tu \in \tS_{i-1}$, exchanging the role of $\tw'$ and $\tu$, we
are in the previous case and thus there exists $\tu'' \sim \tw',\tu$
such that $f_{i+1}(\tu'') = u'$. By Lemma~\ref{Ri+1}, we obtain that
$\tu'=\tu''$ and we are done. For the same reasons, if $\tu' \in
\tS_{i-1}$, applying Case 1 with $\tw'$ in the role of $\tw$ and
$\tu'$ in the role of $\tw'$, we are done.

If $\tu \in \tS_{i}$ then, by TC($\tv$), there exists $\tx
\in \tB_{i-1}$ such that $\tx\sim \tw,\tu$.
If $x\sim u'$ then, by (R$_{i+1}$), we have $\tu\sim \tu'$ and we
are done. If $x \sim w'$ and $x \nsim u'$, then by Case 1 for the
square $xw'u'u$, we obtain $\tu \sim \tu'$.

Suppose now that $x\nsim u',w'$. By the local triangle condition,
there is $y$ in $G$ such that $y \sim u',w',x$. Observe that $y\neq
w,u$. By (R$_{i+1}$), there exists $\ty \sim \tx$ such that $f_i(\ty)
= y$. If $y\sim w$ then, by (R$_{i+1}$) applied to $\tw$, we have $\ty \sim
\tw'$. If $y\nsim w$ then, by Case 1 applied to the square $xww'y$,
we have $\ty \sim \tw'$.  By (R$_{i+1}$) applied to $\tw'$, we obtain $\ty\sim \tu'$.
If $y \sim u$, by (R$_{i+1}$) applied to $\tx$ and $\ty$, we have $\ty \sim
\tu$ and $\tu \sim \tu'$. If $y \nsim u$, by Case 1 applied to the
square $xyu'u$, we conclude that $\tu \sim \tu'$.

Suppose now that $\tw$ has no neighbor in $\tB_i$ mapped to $u$ and
that $\tw'$ has no neighbor in $\tB_i$ mapped to $u'$. Thus, there
exist $[\tw,u]$ and $[\tw',u']$ in $\tS_{i+1}$. By TC($\tv$), there
exists $\tx \in \tS_{i-1}$ such that $\tx \sim \tw, \tw'$.  By
Lemma~\ref{lem-triangles}, $x \nsim u, u'$. By the local triangle
condition, there exists $y \sim u,u',x$. By (R$_i$) applied to $\tx$,
there exists $\ty$ in $\tB_i$ such that $\ty \sim \tx$ and $\ty \sim
\tw$ (respectively, $\ty \sim \tw'$) if and only if $y \sim w$ (respectively, $y \sim
w'$). By Lemma~\ref{lem-triangles} if $\tw \sim \ty$ and by Case 1
otherwise, we have $\ty \sim \tu$. Similarly, one shows that $\tu' \sim
\ty$. Consequently, $\ty \in \tS_i$, $\tu = [\ty,u] = [\tw,u]$ and
$\tu' = [\ty,u'] = [\tw',u']$. By the definition of $\tG_{i+1}$, we
get that $\tu \sim \tu'$ and, consequently, $\tw\tw'\tu'\tu$ is a
square.
\end{proof}

\subsection*{The universal cover $\tX$}
Let $\widetilde{X}_v$ denote the triangle-square complex obtained as the
directed union
$\bigcup_{i\ge 0} \widetilde{X}_i$, with a vertex $v$ of $X$ as the
base-point. Denote by $\tG_v$ the
$1$--skeleton of $\widetilde{X}_v.$ Since each $\tG_i$
is weakly modular with respect to $\tv,$ the graph $\tG_v$ is also weakly
modular with respect to $\tv$. Therefore, the complex $\widetilde{X}_v$ is
simply connected, by virtue of Lemma \ref{simplyconnected}. Let
$f=\bigcup_{i\ge 0}f_i$ be the map from $\widetilde{X}_v$ to $X$.

\begin{lemma} \label{covering_map}
For any $\tw \in \widetilde{X}_v$, the map
$f|_{{\rm St}(\tw,\widetilde{X}_v)}$ is an isomorphism onto
${\rm St}(w,X)$, where $w=f(\tw)$. Consequently,
$f:~\!\!\widetilde{{X}}_v\rightarrow~\!\!X$ is a covering
map.
\end{lemma}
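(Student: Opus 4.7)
The plan is to deduce the star isomorphism from the inductive properties (R$_i$), (S$_{i+1}$), and (T$_i$) established for the graphs $\tG_i$ together with Lemmata~\ref{cellular}, \ref{lem-triangles}, and \ref{lem-locally-injective}. Fix $\tw\in\widetilde{X}_v$ and pick $i$ large enough that $\tw\in\tB_{i-1}$, so that property (R$_i$) is available at $\tw$. Every vertex of a cell through $\tw$ is at graph-distance at most $2$ from $\tw$, so the whole star $\mr{St}(\tw,\widetilde{X}_v)$ sits inside $\widetilde{X}_{i+1}$; moreover, property (P$_i$) together with the fact that no edges are ever added in later stages between vertices already present at stage $i$, ensures that $B_1(\tw,\tG_v)$ coincides with $B_1(\tw,\tG_i)$.

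I would first verify the bijection on the $1$--skeleton of the star: property (R$_i$) gives a graph isomorphism between the subgraph induced by $B_1(\tw,\tG_i)$ and the subgraph induced by $B_1(w,G)$, which immediately yields the bijection on vertices and edges of the respective stars. For the $2$--cells I would treat triangles and squares separately. Any triangle $\tw\ta\tb$ of $\mr{St}(\tw,\widetilde{X}_v)$ has $\ta,\tb\sim\tw$ with $\ta\sim\tb$; by Lemma~\ref{cellular} its image $wab$ is a triangle of $X$, and conversely for any triangle $wab$ of $\mr{St}(w,X)$ the vertices $a,b$ lift to unique neighbors $\ta,\tb$ of $\tw$ by (R$_i$), and their adjacency in $\tG_v$ follows from Lemma~\ref{lem-triangles}, providing a unique lifted triangle.

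For squares, given $wxyz\in \mr{St}(w,X)$ (with $w\sim x\sim y\sim z\sim w$ and $w\nsim y$, $x\nsim z$), the lifts $\tx,\tz$ of $x,z$ as neighbors of $\tw$ are supplied by (R$_i$); then property (S$_{i+1}$) applied to the pair $\tw,\tx\in\tB_i$ and to the square $wxyz$ of $X$ yields vertices $\ty,\tz'\in\tB_{i+1}$ such that $\tw\tx\ty\tz'$ is a square of $\widetilde{X}_{i+1}$ and $f(\ty)=y$, $f(\tz')=z$. The uniqueness half of (R$_i$) forces $\tz'=\tz$, and Lemma~\ref{lem-locally-injective} applied at $\tx$ guarantees the uniqueness of $\ty$ as a lift of $y$ adjacent to $\tx$. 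The converse direction—that every square of the star of $\tw$ maps to a square of $\mr{St}(w,X)$—is furnished by Lemma~\ref{cellular}. This establishes the star isomorphism.

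Finally, $f$ is cellularly surjective: any vertex of $G$ is reachable from $v$ by a path, which lifts inductively starting at $\tv$ by the local surjectivity of Lemma~\ref{lem-locally-surjective}, and every cell of $X$ then appears in the star of some lifted vertex. Combining surjectivity with the star-isomorphism matches the definition of a covering map given in Section~\ref{s:fucom}. The main obstacle is essentially bookkeeping—choosing the correct index $i$ so that all of (R$_i$), (S$_{i+1}$) and local injectivity apply simultaneously, and matching the cyclic labelling of the square in the hypothesis of (S$_{i+1}$) to the square $wxyz$ at hand—rather than any genuinely new conceptual input, since the heavy lifting was performed in the preceding lemmata of the induction.
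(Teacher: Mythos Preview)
Your approach is the same as the paper's---you correctly identify (R$_i$), (S$_{i+1}$), and Lemmata~\ref{cellular}, \ref{lem-triangles}, \ref{lem-locally-injective} as the engine---but there is a genuine gap in the injectivity part.

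First, a misstatement: the $1$--skeleton of $\mr{St}(\tw,\widetilde{X}_v)$ is \emph{not} the subgraph induced by $B_1(\tw,\tG_v)$. The star of $\tw$ in a triangle-square complex also contains the far corner $\ty$ of every square $\tw\tx\ty\tz$ through $\tw$, together with the two far edges $\tx\ty$ and $\ty\tz$. So (R$_i$) alone does not ``immediately yield the bijection on vertices and edges of the respective stars''; it only handles $B_1(\tw)$.

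Your square-lifting argument does produce these far corners, and your uniqueness claim (Lemma~\ref{lem-locally-injective} at $\tx$) shows that the lift of a \emph{fixed} square $wxyz$ is unique. What it does not show is that $f$ is injective on the far-corner vertices: if $\tw\tx_1\ty_1\tz_1$ and $\tw\tx_2\ty_2\tz_2$ are two squares in $\widetilde{X}_v$ with $f(\ty_1)=f(\ty_2)=y$ but, say, $\tx_1\neq\tx_2$ and $\tx_1\neq\tz_2$, your argument gives no reason why $\ty_1=\ty_2$. Without this, you have a bijection on square-cells but not on their vertex sets, hence not a cellular isomorphism. The paper handles exactly this case by a short analysis: pick $\ta\sim\tu,\tw$ and $\ta'\sim\tu',\tw$ (in your notation $\ta=\tx_1$, $\ta'=\tx_2$); if $\ta=\ta'$ or $\ta\sim\ta'$ one gets $\ty_1\sim\ta'$ via Lemma~\ref{lem-triangles} and finishes with local injectivity at $\ta'$; if $\ta\nsim\ta'$ then $a w a' y$ is a square, and (S$_{i+2}$) applied to $\tw,\ta'$ produces a neighbor of $\ta$ mapping to $y$, again forcing $\ty_1=\ty_2$. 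Adding this paragraph closes the gap.
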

\begin{proof}
Note that, since $\widetilde{X}_v$ is a flag complex, a vertex $\tx$
of $\widetilde{X}_v$ belongs to $\mbox{St}(\tw,\widetilde{X}_v)$ if and only if either $\tx \in
B_1(\tw,\tG_v)$ or $\tx$ has two non-adjacent neighbors in
$B_1(\tw,\tG_v)$.

Consider a vertex $\tw$ of $\tX_v$. Let $i$ be the distance between
$\tv$ and $\tw$ in $\tG_v$ and consider the set $\tB_{i+2}$.  Then the
vertex set of $\mbox{St}(\tw,\widetilde{X}_v)$ is included in
$\tB_{i+2}$. From ($R_{i+2}$) we know that $f$ is an isomorphism
between the graphs induced by $B_1(\tw,\tG_v)$ and $B_1(w,G).$

For any vertex $x$ in  $\mbox{St}(w,X)\setminus B_1(w,G)$ there exists an
induced square $wuxu'$ in $G$. From ($R_{i+2}$), there exist $\tu,
\tu' \sim \tw$ in $\tG_v$ such that $\tu \nsim \tu'$. From ($S_{i+2}$) applied
to $\tw,\tu$ and since $\tw$ has a unique neighbor $\tu'$ mapped to
$u'$, (by R$_{i+2}$) there exists a vertex $\tx$ in  $\tG_v$ such that $f(\tx) = x$, $\tx \sim
\tu, \tu'$ and $\tx \nsim \tw$. Consequently, $f$ is a surjection
from $V(\mbox{St}(\tw,\widetilde{X}_v))$ to $V(\mbox{St}(w,X))$.

Suppose, by way of contradiction, that there exist two distinct vertices
$\tu, \tu'$ of $\mbox{St}(\tw,\widetilde{X}_v)$ such that $f(\tu) =
f(\tu') = u$. If $\tu,\tu' \sim \tw$ then, by condition ($R_{i+1}$) applied
to $\tw$, we obtain a contradiction.  Suppose now that $\tu \sim \tw$ and
$\tu' \nsim \tw$ and let $\tz \sim \tw, \tu'$. This implies that $w,
u, z$ are pairwise adjacent in $G$. Since $f$ is an isomorphism
between the graphs induced by $B_1(\tw,\tG_v)$ and $B_1(w,G)$, we
conclude that $\tz \sim \tu$. But then $f$ is not locally injective
around $\tz$, contradicting the condition (R$_{i+2}$).  Suppose now
that $\tu, \tu' \nsim \tw$. Let $\ta \sim \tu, \tw$ and $\ta' \sim
\tu',\tw$. If $\ta' = \ta$ then, by (R$_{i+2}$) applied to $\ta$, we obtain a
contradiction. If $\ta \sim \ta'$ then, by (R$_{i+2}$) applied to $\ta'$, we have
a contradiction.
Hence $\ta \nsim \ta' \neq \ta$.
Consequently, by
(R$_{i+1}$) applied to $\tw$, we have $a\nsim a'$, and $awa'u$ is a square. By
(S$_{i+2}$) applied to $\tw$ and $\ta'$, we obtain $\ta \sim \tu'$; again, we
get a contradiction.

Therefore $f$ is a bijection between the vertex sets of
$\mbox{St}(\tw,\widetilde{X}_v)$ and $\mbox{St}(w,X)$. Since
$\tX_v$ is a flag complex, by (R$_{i+2}$), $\ta\sim \tb$ in
$\mbox{St}(\tw,\widetilde{X}_v)$ if and only if $a \sim b$ in
$\mbox{St}(w,X)$.  Applying (R$_{i+2}$) to $w$ and taking into account that $X$ and
$\widetilde{X}_v$ are flag complexes, we infer that $\ta\tb\tw$ is a triangle in
$\mbox{St}(\tw,\widetilde{X}_v)$ if and only if $abw$ is a triangle
in $\mbox{St}(w,X)$. For the same reasons (($R_{i+2}$) and flagness of $X$),
if $\ta\tb\tc\tw$ is a square in
$\mbox{St}(\tw,\widetilde{X})$, then $abcw$ is a square in
$\mbox{St}(w,X)$. Conversely, by the conditions ($R_{i+2}$) and
($S_{i+2}$) and flagness of $\widetilde{X}_v$, we conclude that if
$abcw$ is a square in $\mbox{St}(w,X)$, then $\ta\tb\tc\tw$ is a
square in $\mbox{St}(\tw,\widetilde{X}_v)$. Consequently, for any
$\tw \in \widetilde{X}_v$, $f$ defines an isomorphism between
$\mbox{St}(\tw,\widetilde{X}_v)$ and $\mbox{St}(w,X)$, and thus
$f$ is a covering map.
\end{proof}

This finishes the proof of Theorem \ref{t:lotoglo2} and Corollary \ref{t:lotoglo_mod}, since $\tX_v$ is a simply connected, and thus it is the universal covering
space of $X$. It is then unique, i.e., not depending on $v$ and thus $\tG(=\tG_v)$  is weakly modular with respect to every  vertex $v$.
\medskip

We continue with the proof of Theorem \ref{t:lotoglo_G}. Let $G$ be a locally weakly modular graph. Then the $1$--skeleton $\tX\trsq(G)^{(1)}$ of the universal cover of its triangle-square
complex $X\trsq(G)$ is weakly modular, by Theorem~\ref{t:lotoglo2}. It follows that $X\trsq(G)$ is a quotient of $\tX\trsq(G)$
by a group action --- the action of the fundamental group $\pi_1(X\trsq)$ on $\tX\trsq$; cf.\ e.g.\ \cite{Hat}*{Chapter 1.3}.
By Lemma~\ref{l:no456}, the injectivity radius of $X\trsq$ --- that is, the minimal displacement for the $\pi_1(X\trsq)$--action --- is at least $7$. Thus $G=X\trsq(G)^{(1)}$ is the quotient of the $\pi_1(X\trsq)$--action on $\tX\trsq(G)^{(1)}$. The same proof works for the locally modular case.
\medskip

\section{Helly and clique-Helly graphs: proofs}\label{helly:proof}

The proofs of Theorems \ref{t:lotogloHell_bis} and \ref{t:lotogloHell}
reside on several propositions which we prove first. In Proposition
\ref{prop-1-Helly-clique-Helly} we show that under some local
conditions (local weak modularity and the $(C_4,W_4)$-condition
defined below), the clique-Helly property implies the $1$--Helly property. In Proposition
\ref{prop-1-Helly-Helly} we prove that for weakly modular graphs, the
$1$--Helly property implies the Helly property, thus extending a result
by Bandelt and Pesch \cite{BP-absolute} for finite graphs. For
establishing these two results, in Proposition
\ref{prop-clh-domination} we extend to arbitrary graphs (and under
weaker assumptions) a domination property established in
\cite{BP-absolute} for finite graphs. 
Then, in Proposition \ref{t:universal-cover-Helly}, we adapt our methods of
proof of Theorem \ref{t:lotoglo2} to show that if $G$ is a
clique-Helly graph and $\tG$ is the $1$--skeleton of the universal
cover $\widetilde X:=\widetilde{X}\tr(G)$ of the triangle complex
$X:=X\tr(G)$ of $G$, then $\tG$ is weakly modular, clique-Helly, and
satisfies the $(C_4,W_4)$-condition. Unfortunately, even if the main
steps of both proofs are similar, the proofs of these steps are
different. This is because the local conditions in both results are
different: local weak modularity in Theorem \ref{t:lotoglo2} and the
clique-Helly property in Proposition
\ref{t:universal-cover-Helly}. Therefore, the definitions of the
equivalence relation $\equiv$ in both proofs are different.

\medskip

We will say that a graph $G$ satisfies the $(C_4,W_4)$-{\it condition} if every square of $G$ ``lives'' in a $W_4$, i.e., for
every square $abcd$ of $G$, there exists a vertex $x \sim a,b,c,d$.

In this section, we will use transfinite induction. Given
a set $X$ and a well-order $\prec$ on $X$, for any $x \in X$, we will
denote the set $\{x'\in X: x' \prec x\}$ by $X_{\prec x}$ and the set
$\{x'\in X: x' \preceq x\}$ by $X_{\preceq x}$.

In the following proofs, by ``clique'' we will mean a complete
subgraph and when we apply the Helly property to pairwise intersecting
cliques $C_i, i\in I$, in fact we apply it to a collection of maximal
cliques $C'_i, i\in I$, extending them in order to derive a vertex $x$
either belonging to all $C_i$ or adjacent to all vertices of all
$C_i$, i.e., $x\in B_1(v)$ for all $v\in \bigcup_{i\in I} C_i$.

\begin{proposition} \label{prop-clh-domination} Let $G$ be a (finitely) clique-Helly weakly modular graph
satisfying the $(C_4,W_4)$-condition.  Then for any two vertices $u,v$
of $G$ with $d(u,v)=k+1 \ge 1$ and for any (finite) subset $W
\subseteq B_1(v) \cap B_{k+1}(u)$, there exists a vertex $y \in B_1(v)
\cap B_k(u)$ that is adjacent to all vertices of $W \setminus \{y\}$.

Moreover, if $G$ is a (finitely) clique-Helly locally weakly modular
graph satisfying the $(C_4,W_4)$-condition, then the property holds
for any vertices $u, v$ at distance  $2$.
\end{proposition}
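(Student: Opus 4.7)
My plan is to proceed by induction on $|W|$, with the finite-$W$ case relying on the finitely clique-Helly hypothesis and the infinite case using the full clique-Helly assumption (together with a Zorn-type compactness argument applied to nested families of maximal cliques). The base case $|W| = 1$ reduces directly to the triangle condition: for $W = \{w\}$ with $w \sim v$ and $d(u,w) = k+1$, TC$(u)$ applied to the adjacent pair $v, w$ yields a common neighbor at distance $k$ from $u$; if $d(u,w) = k$ then $y = w$ works; and if $w = v$, any neighbor of $v$ on a shortest $(u,v)$--path suffices.

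For the inductive step, I will associate to each $w \in W$ a maximal clique
\[
K_w \subseteq B_1(v) \cap B_k(u) \cap B_1(w),
\]
nonempty by the base case. Once pairwise intersection of the family $\{K_w : w \in W\}$ is established, the (finitely) clique-Helly property delivers a common vertex $y$ in all $K_w$, which is the sought dominator. The crucial intermediate step is the two-element case: given $w_1, w_2 \in W$ with base-case witnesses $y_1, y_2$, I must either find a single witness adjacent to both $w_1$ and $w_2$, or place $y_1, y_2$ in a common clique meeting both $K_{w_1}$ and $K_{w_2}$. If $y_1 \sim y_2$, applying TC$(u)$ to the triangle $\{v, y_1, y_2\}$ (and iterating via Proposition~\ref{clique_Helly_triangle}) extracts such a witness. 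If $y_1 \nsim y_2$, the $(C_4, W_4)$-condition applied to a suitable induced square built from $\{v, w_i, y_i\}$ produces a central wheel vertex that belongs to both $K_{w_1}$ and $K_{w_2}$.

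The hard part will be the detailed case analysis for pairwise intersection, especially in the presence of an induced $K^-_4$ on $\{v, w_i, y_1, y_2\}$ (which the $(C_4,W_4)$-condition does \emph{not} directly exclude, since a chord $v \sim w_i$ destroys the underlying $C_4$). Here I expect to lean on Proposition~\ref{clique_Helly_triangle}, finding a universal vertex of $T^{*}$ for an appropriate triangle $T \subseteq \{v, w_1, w_2, y_1, y_2\}$ and then using TC$(u)$ once more to push that vertex into $B_k(u)$. For the moreover part, the identical argument works with $k = 1$ and $d(u,v) = 2$, replacing all invocations of TC$(u)$ and QC$(u)$ by their local counterparts LTC$(u)$ and LQC$(u)$, since every vertex encountered lies in $B_2(u)$, where local weak modularity coincides with weak modularity restricted to this ball.
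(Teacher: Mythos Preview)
Your plan has a real gap in the two-element case, and the induction variable you choose ($|W|$ rather than $k$) leaves you without the tool the paper relies on for the infinite case.

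For the pairwise step, take $y_1 \nsim y_2$ with $y_i \in B_1(v)\cap B_k(u)\cap B_1(w_i)$. The only natural square in sight is $v\,y_1\,z\,y_2$, where $z$ comes from QC$(u)$ applied to $y_1,y_2\in I(v,u)$; the $(C_4,W_4)$ centre $c$ is then adjacent to $v,y_1,y_2,z$ and lies in $B_k(u)$ since $c\sim z$ and $d(u,z)=k-1$. But nothing forces $c\sim w_1$ or $c\sim w_2$, so $c$ neither dominates $\{w_1,w_2\}$ nor lies in your cliques $K_{w_i}$. The paper closes exactly this gap with an auxiliary lemma (Lemma~\ref{lem-clique-Helly-carre-triangle}): given $t,v,w,y$ with $v\sim y,w$, $t\sim y$, and $d(v,t)=d(w,t)=d(w,y)=2$, one combines LTC, the $(C_4,W_4)$-condition, and a three-clique Helly step to produce $z\sim t,v,w,y$. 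This is what lets a partial dominator absorb the next $w$.

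More structurally, the paper inducts on $k=d(u,v)-1$, not on $|W|$, and this is essential for infinite $W$. After building (by transfinite induction along a well-order on $W$) a coherent system of partial dominators $x(w')\in B_1(v)\cap B_k(u)$ with $\{x(w'):w'\in W\}$ a clique, one must still force the eventual Helly intersection to land in $B_k(u)$. The paper does this by invoking the induction hypothesis at level $k-1$: it finds an anchor $t\in B_{k-1}(u)$ adjacent to every $x(w')$, and the clique $\{t\}\cup\{x(w'):w'\in W\}$ pins the common point into $B_1(t)\subseteq B_k(u)$. Your ``Zorn-type compactness'' sketch has no substitute for this anchor; without the induction on $k$ there is nothing to say about $B_{k-1}(u)$, and the Helly common point could sit outside $B_k(u)$.

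A related issue: your $K_w\subseteq B_1(v)\cap B_k(u)\cap B_1(w)$ are not maximal cliques of $G$, so the clique-Helly hypothesis does not apply to them. The paper works instead with cliques of the form $\{v,w'\}\cup\{x(w''):w'\preceq w''\}$ (and one extra clique containing the anchor $t$), extends them to maximal cliques of $G$, and verifies pairwise intersection by construction.
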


\begin{proof}  We first state a lemma that will be useful in the proof:

\begin{lemma}\label{lem-clique-Helly-carre-triangle} Let $G$ be a finitely clique-Helly locally weakly
modular graph satisfying the $(C_4,W_4)$-condition. Then for any  vertices $t,v,w,y$ such that $v \sim y,w$,
such that $t \sim y$,  and such that $d(v,t) = d(w,t) = d(w,y) = 2$, there exists $z \sim  t,v,w,y$.
\end{lemma}

\begin{proof}  By the local triangle condition, there exists $x \sim v,w,t$.
If  $x\sim y$, we are done. Otherwise, by the $(C_4,W_4)$-condition applied to the square $vxty$, there exists $u \sim v,x,t,y$.
If  $u \sim w$, we are done. Otherwise, consider a maximal clique  containing the triangle $vwx$, a maximal clique containing the
triangle $tux$, and a maximal clique containing the triangle  $uvy$. Note that these three cliques pairwise intersect in $u$, $v$, or $x$.
Since $G$ is clique-Helly, there exists a vertex $z$  in the intersection of these cliques. Consequently, $z \sim  t,v,w,y$.
\end{proof}

We prove Proposition \ref{prop-clh-domination} by induction on $k=d(u,v)-1$. If $k=0$, then set $y:= u$.
Assume now that the result  holds for $k-1$.  Consider a well-order $\prec$ on the vertices of  $W$.

\begin{lemma}\label{lem-clh-dom-lim}
  Let $W'$ be a non-empty subset of $W$ and assume that there exists a
  function $x: W' \to B_1(v) \cap B_k(u)$ such that $x(w') \in
  B_1(w'') \cap B_1(x(w''))$ for every $w''\in W' \cap W_{\preceq w'
  }$. Then there exists $y \in B_1(v) \cap B_k(u)$ such that for every
  $w' \in W'$, $y \in B_1(w') \cap B_1(x(w'))$.
\end{lemma}

\begin{proof} If there exists $w' \in W'$ such that $W' \subseteq W_{\preceq  w'}$, we can set $y:= x(w')$.
Note that if $W'$ (or $W$) is finite, we are necessarily in this case.
Otherwise, for every $w' \in W'$, let $K(w'):= \{v,w'\} \cup \{x(w'')
: w'' \in W' \mbox{ and } w' \preceq w'' \}$. By our assumptions, for
any $w' \in W'$, $K(w')$ is a clique, and for any $w', w'' \in W'$
such that $w' \prec w''$, the cliques $K(w')$ and $K(w'')$ intersect
in $x(w'')$. Let $w'_0$ be the least element of $W'$. Since $d(u,x(w_0')) \leq  k$, by our induction hypothesis on $k$, there
exists $t \in B_1(x(w'_0)) \cap B_{k-1}(u)$ such that $t \sim z$ for any other neighbor $z$ of $x(w'_0)$ that
is at distance at most $k$ from $u$. Since $K(w'_0) \setminus \{w'_0,v\} = \{x(w'): w' \in W'\} \subseteq B_1(x(w'_0)) \cap B_k(u)$,
we have that $K(w_0) \setminus \{w_0,v\} \cup \{t\}$ is a clique of $G$ that intersects $K(w')$ on $x(w')$ for every $w' \in W'$.
Consequently, all these cliques pairwise intersect, and since $G$ is clique-Helly, there exists a vertex
$y \in B_1(v) \cap B_1(t) \cap \bigcap_{w' \in W'} B_1(w') \cap \bigcap_{w' \in W'} B_1(x(w'))$. Since
$y \in B_1(v) \cap B_1(t)$, necessarily  $d(u,y)=k$. This concludes the proof of the lemma.
\end{proof}

\begin{lemma}\label{lem-clh-dom-w}
  There exists a function $x: W' \to B_1(v)\cap B_k(u)$ such that
  $x(w) \in B_1(w')\cap B_1(x(w'))$ for any $w'\in W_{\preceq w }$.
\end{lemma}

\begin{proof} We define the function $x$ by (possibly transfinite) induction.
  Let $w_0$ be the least element of $(W,\prec)$. If $d(w_0,u) = k$,
  then set $x(w_0):= w_0$. If $d(w_0,u)=k+1$, by TC($u$) (or LTC($u$)
  when $k = 1$) applied to the edge $vw_0$ and $u$, there exists $x_0
  \in B_k(u)$ such that $x_0\sim v,w_0$; we set $x(w_0) := x_0$.

  Given a vertex $w$, assume that for every $w' \in W_{\prec w}$, the
  vertex $x(w')$ has been defined and that $x(w') \in B_1(w'')\cap
  B_1(x(w''))$ for any $w''\in W_{\preceq w'}$. Applying
  Lemma~\ref{lem-clh-dom-lim} with $W' = W_{\prec w}$, there exists a
  vertex $y \in B_k(u) \cap B_1(v)$ such that for every $w'\in
  W_{\prec w}$, $y \in B_1(w') \cap B_1(x(w'))$.

  If $y \in B_1(w)$, then let $x(w):= y$ and then $x(w) \in
  B_1(w')\cap B_1(x(w'))$ for any $w'\in W_{\preceq w }$.  Suppose now
  that $y \notin B_1(w)$. If $d(u,w) = k$, by QC($u$), there exists $t
  \in B_{k-1}(u)$ such that $t \sim y, w$ (Note that $t = u$ if $k =
  1$).  Consequently, by the $(C_4,W_4)$-condition, there exists
  $z\sim t,v,w,y$. If $d(u,w) = k+1$, by TC($u$) (or LTC($u$) if
  $k=1$), there exists $s \sim v,w$ such that $d(u,w) = k$.  By
  TC($u$) if $s \sim y$ or by QC($u$) otherwise, there exists $t \in
  B_{k-1}(u)$ such that $t \sim y,s$ (Note that $t = u$ if $k =
  1$). Consequently, $d(t,w)=2$. By
  Lemma~\ref{lem-clique-Helly-carre-triangle}, there exists $z\sim
  t,v,w,y$.

Consider the cliques $K(w)= \{v,w,z\}$ and $K(t) = \{t,y,z\}$. For any
$w'\in W_{\prec w}$, consider the clique $K(w')=\{v,w',x(w'),y\}$.
Note that all these cliques pairwise intersect in $v$, $y$, or
$z$. Moreover, note that if $W$ is finite, we only have a finite
number of cliques.  Since $G$ is clique-Helly (or finitely
clique-Helly when $W$ is finite), there exists a vertex $x$ common to
maximal cliques extending all these cliques. Let $x(w) := x$ and note
that $x(w) \in B_1(v) \cap B_1(w) \cap B_1(t) \cap \bigcap_{w' \prec
  w} B_1(w') \cap \bigcap_{w' \prec w} B_1(x(w'))$.  Since $x(v) \in
B_1(v) \cap B_1(t)$, necessarily $d(u,x(w))=k$. This ends the proof of
the lemma.
\end{proof}

From Lemma~\ref{lem-clh-dom-w}, for any $w \in W$, there exists $x(w)
\in B_1(v) \cap B_k(u)$ such that for any $w' \in W_{\preceq w}$, we have $x(w) \in
B_1(w')$ and $x(w) \in B_1(x(w'))$. Therefore, we can apply
Lemma~\ref{lem-clh-dom-lim} with $W'= W$. By this lemma, there exists
a vertex $y \in B_1(v) \cap B_k(u)$ such that for any $w \in W$, $y
\in B_1(w)$.  This finishes the proof of
Proposition~\ref{prop-clh-domination}.
\end{proof}

\begin{proposition} \label{prop-1-Helly-Helly} A weakly modular
  graph $G$ is (finitely) Helly if and only if it is (finitely)
  $1$--Helly.
\end{proposition}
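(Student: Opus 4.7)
The implication that Helly implies $1$--Helly is immediate, since the family of unit balls is a subfamily of the family of all balls. For the converse, assume $G$ is weakly modular and (finitely) $1$--Helly; I must show that every (finite) family $\{B_{r_i}(v_i)\}_{i\in I}$ of pairwise intersecting balls has nonempty intersection.

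First I verify that the hypotheses of Proposition~\ref{prop-clh-domination} hold. Clique-Hellyness is automatic: any clique $C$ of $G$ equals $\bigcap_{x\in C}B_1(x)$, so a pairwise intersecting family of maximal cliques yields a pairwise intersecting family of unit balls, whose common vertex lies in every clique. For the $(C_4,W_4)$-condition, given a square $abcd$, the four unit balls $B_1(a),B_1(b),B_1(c),B_1(d)$ pairwise intersect (each pair contains one of $a,b,c,d$), so by $1$--Helly they share a common vertex $x\sim a,b,c,d$. Thus Proposition~\ref{prop-clh-domination} is available.

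The bulk of the proof is an induction on $R=\sup_i r_i$. The base case $R\le 1$ is precisely the $1$--Helly assumption; the case $R=0$ reduces to checking that if all $B_0(v_i)=\{v_i\}$ pairwise intersect then all $v_i$ coincide. For the inductive step with $R\ge 2$, the plan is to shrink one ball $B_R(v)$ of maximal radius to a ball $B_{R-1}(v')$ centered at some $v'\in B_1(v)$ while preserving pairwise intersection; since $B_{R-1}(v')\subseteq B_R(v)$, any common vertex of the reduced family also lies in the original family, and the inductive hypothesis applies to finish.

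To construct $v'$: for each $j\ne i$, pairwise intersection yields $x_j\in B_R(v)\cap B_{r_j}(v_j)$. If $d(v,x_j)\le R-1$ then $x_j\in B_{R-1}(v)$ and this index imposes no constraint; otherwise $d(v,x_j)=R$ and a neighbor $w_j\in B_1(v)$ on a shortest $(v,x_j)$-path satisfies $x_j\in B_{R-1}(w_j)$. Collecting the $w_j$ into a set $W\subseteq B_1(v)$, I would then invoke Proposition~\ref{prop-clh-domination} to select $v'\in B_1(v)$ adjacent to (or equal to) each $w_j$, which, combined with a $1$--Helly step applied to the unit balls $B_1(v'),B_1(w_j),B_1(x_j)$, would produce a vertex in $B_{R-1}(v')\cap B_{r_j}(v_j)$. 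The main obstacle is that Proposition~\ref{prop-clh-domination} is formulated with respect to a single reference vertex $u$, while our constraints come from many indices $j$; the expected resolution is to iterate the domination property, each application reducing the distance to one reference point while preserving the adjacencies accumulated in previous steps, so that the final $v'$ simultaneously witnesses all constraints. For infinite $I$, a transfinite recursion patterned on Lemma~\ref{p:gate3} reduces the general case to the case of finite subfamilies handled above.
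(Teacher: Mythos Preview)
Your inductive scheme has a genuine gap at the heart of the inductive step. You want a single $v'\in B_1(v)$ such that $B_{R-1}(v')\cap B_{r_j}(v_j)\neq\emptyset$ for every $j$, which amounts to demanding $d(v',v_j)\le R-1+r_j$ simultaneously for all $j$. Proposition~\ref{prop-clh-domination} is a one-target statement: it produces $y\in B_1(v)$ one step closer to a \emph{single} vertex $u$ while dominating a prescribed set $W\subseteq B_1(v)$. Iterating it with different targets $v_j$ does not preserve what earlier steps achieved: moving closer to $v_k$ can undo the gain toward $v_j$. Your fallback ``$1$--Helly step applied to $B_1(v'),B_1(w_j),B_1(x_j)$'' does not rescue this either, since for $R\ge 3$ the balls $B_1(v')$ and $B_1(x_j)$ need not intersect (you only have $d(v',x_j)\le R$), and even when they do, a common vertex of those three unit balls does not witness $B_{R-1}(v')\cap B_{r_j}(v_j)\neq\emptyset$.

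The paper sidesteps this difficulty by a different reduction. First, a Polat--Pouzet trick: if $\{B_{r_i}(v_i)\}$ is Helly-critical with smallest radius $r>1$, enlarge every other ball by $1$ and shrink one ball of radius $r$ to $r-1$; the new family still pairwise intersects, and by minimality of $r$ it has a common vertex $v$, so adding $B_1(v)$ to the original family yields a Helly-critical family containing a \emph{unit} ball $B_1(v_{i_0})$. Second, pseudo-modularity (proved directly from weak modularity and $1$--Helly) gives, for each pair $i,j$, a witness $w_{ij}\in B_{r_i}(v_i)\cap B_{r_j}(v_j)\cap B_1(v_{i_0})$. Third, Proposition~\ref{prop-clh-domination} is applied \emph{once per index} $i$ (with the single target $u=v_i$) to produce $v'_i\in B_1(v_{i_0})$ with $B_1(v'_i)\subseteq B_{r_i}(v_i)$ and $v'_i\sim w_{ij}$ for all $j$. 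Now the unit balls $\{B_1(v'_i)\}$ pairwise intersect in the $w_{ij}$, so $1$--Helly finishes. The key point you are missing is that the reduction to a family containing a unit ball localizes everything inside $B_1(v_{i_0})$, so that one invocation of domination per ball (each with its own single target) suffices; no simultaneous multi-target shrinking is ever needed.
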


\begin{proof}
Note that any (finitely) Helly graph is trivially (finitely)
$1$--Helly. Assume now that $G$ is (finitely) $1$--Helly and weakly
modular. First we will show that $G$ is pseudo-modular.  By
Proposition \ref{pseudo-modular} it suffices to check that if $1\le
d(u,w)\le 2$ and $d(v,u)=d(v,w)=k\ge 2,$ then there exists a vertex
$x\sim u,w$ with $d(v,x)=k-1$. If $d(u,w)=1$, then as $x$ we can take
any vertex provided by (TC) for vertices $u,w,v$. So, let
$d(u,w)=2$. Pick an arbitrary common neighbor $y$ of $u,w$. If
$d(v,y)=k-1,$ then we are done. If $d(v,y)=k+1$, then applying (QC) we
will find a common neighbor $x$ of $u,w$ with $d(v,x)=k-1$, and we are
done. Hence $d(v,y)=k$. Applying the triangle condition twice, we will
find vertices $z'\sim u,y$ and $z''\sim y,w$ with
$d(v,z')=d(v,z'')=k-1$. We can suppose that $z'\ne z''$, otherwise we
can take $z'=z''$ as $x$. By applying (QC) if $z'\nsim z''$ or (TC) if
$z'\sim z''$ we will find a vertex $z\sim z',z''$ with
$d(v,z)=k-2$. The unit balls centered at $u,w,$ and $z$ pairwise
intersect. By the $1$--Helly property, they have a common vertex
$x$. Since $u,w,z$ are pairwise non-adjacent, $x$ is a common neighbor
of $u,w,$ and $z$. Then $d(v,x)=k-1$, and $x$ is the required
vertex. This shows that $G$ is pseudo-modular, and therefore any
collection of three pairwise intersecting balls of $G$ has a nonempty
intersection.

Now, we will prove that $G$ is (finitely) Helly. A collection
${\mathcal S}=\{ S_i: i\in I\}$ of balls is called {\it
  Helly-critical} \cite{Po_helly} if the sets of $\mathcal S$ pairwise
intersect but their intersection is empty. Now, following the proof of
assertion (a) of Proposition 3.1.2 of \cite{Po_helly} we will show
that if $G$ is not (finitely) Helly, then $G$ has a (finite)
Helly-critical collection of balls in which one ball has radius 1. Let
$r$ be the smallest positive integer such that there exists a (finite)
Helly-critical family ${\mathcal B}=\{ B_{r_i}(v_i): i\in I\}$ with
$r_{i_0}=r$ for some $i_0\in I$.  Assume $r>1$.  Consider the new
family of balls ${\mathcal B}'=\{ B_{r'_i}(v_i): i\in I\}$ with the
same set of centers and $r'_i=r_i+1$ if $i\ne i_0$ and
$r'_{i_0}=r_{i_0}-1$. Clearly, the balls of ${\mathcal B}'$ pairwise
intersect and ${\mathcal B}'$ is finite if ${\mathcal B}$ is. By the
minimality choice of $r$, ${\mathcal B}'$ is not Helly-critical, i.e.,
there exists a vertex $v$ common to all balls $B_{r'_i}(v_i)$ of
${\mathcal B}'$. Then the family of balls ${\mathcal B}''$ obtained by
adding $B_1(v)$ to the family $\mathcal B$ consists of pairwise
intersecting balls but has an empty intersection because $\mathcal B$
is Helly-critical.  Hence ${\mathcal B}''$ is a (finite)
Helly-critical family of balls of $G$ containing a ball of unit
radius.

Now, suppose that $G$ is (finitely) $1$--Helly and weakly modular but
is not (finitely) Helly. From the previous assertion we conclude that
$G$ contains a (finite) Helly-critical family of balls ${\mathcal
  B}=\{ B_{r_i}(v_i): i\in I\}$ with $r_{i_0}=1$ for some $i_0\in I$.
Since $G$ is pseudo-modular, for any $i,j\in I$ the balls
$B_{r_i}(v_i),B_{r_j}(v_j),$ and $B_1(v_{i_0})$ have a nonempty
intersection $S_{ij}$. For any $i,j\in I$, let $w_{ij}$ be an
arbitrary vertex from $S_{ij}$, and for any $i \in I$, let $W_i =
\{w_{ij}: j \in I\setminus \{ i\}\}$. Notice that if
${\mathcal B}$ is finite, then $W_i$ is also finite.

For any $i \in I$, we define a vertex $v_i'$ as follows depending on
the distance $d(v_i,v_{i_0})$. Note that since $B_{r_i}(v_i) \cap
B_1(v_{i_0})\ne\emptyset$, we have $d(v_i,v_{i_0}) \leq r_i+1$.
If $d(v_i,v_{i_0}) \leq r_i - 1$, let $v_i' = v_{i_0}$.
If $d(v_i,v_{i_0}) = r_i$, for any $w_{ij} \in W_i$, $d(v_i,w_{ij})
\leq d(v_i,v_{i_0})$ and $w_{ij} \in B_1(v_{i_0})$.  Consequently, by
Proposition~\ref{prop-clh-domination}, there exists $v_i' \in
B_1(v_{i_0}) \cap B_{r_i-1}(v_i)$ such that $v_i' \in \bigcap_{w_{ij}
  \in W_i} B_1(w_{ij})$.
If $d(v_i,v_{i_0}) = r_i + 1$, for any $w_{ij} \in W_i$,
$d(v_i,w_{ij}) = d(v_i,v_{i_0})-1$ and $w_{ij} \in
B_1(v_{i_0})$. Consequently, by Proposition~\ref{prop-clh-domination}, there
exists $x_i \in B_1(v_{i_0}) \cap B_{r_i}(v_i)$ such that $x_i \in
\bigcap_{w_{ij} \in W_i} B_1(w_{ij})$. Note that for any $w_{ij} \in
W_i$, $d(v_i,w_{ij}) = d(v_i,x_i)$. Consequently, by
Proposition~\ref{prop-clh-domination}, there exists $v_i' \in B_1(x_i)
\cap B_{r_i-1}(v_i)$ such that $v_i' \in \bigcap_{w_{ij} \in W_i}
B_1(w_{ij})$.
Note that for any $i \in I$, $B_1(v_i') \subseteq
B_{r_i}(v_i)$ and for any $w_{ij} \in W_i$, $w_{ij} \in B_1(v'_1) \cap
B_1(v_{i_0})$.

Consider the collection of unit balls ${\mathcal B}'=\{ B_{1}(v'_i):
i\in I\}$. Note that ${\mathcal B}'$ is finite if ${\mathcal B}$ is
finite.  Since any two balls $B_{1}(v'_i), B_{1}(v'_j)$ of ${\mathcal
  B}'$ intersect in $w_{ij}$ and since $G$ is (finitely) $1$--Helly,
there exists a vertex $x \in \bigcap_{i \in I} B_1(v_i') \subseteq
\bigcap_{i \in I} B_{r_i}(v_i)$. Thus, we have found a common vertex
of the balls of $\mathcal B$, contrary to the assumption that
$\mathcal B$ is Helly-critical.
\end{proof}



\begin{proposition}\label{prop-1-Helly-clique-Helly}
A graph $G$ is (finitely) $1$--Helly if and only if $G$ is (finitely)
clique-Helly, locally weakly modular graph, and it satisfies the
$(C_4,W_4)$-condition.
\end{proposition}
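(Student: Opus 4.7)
Assume $G$ is (finitely) $1$--Helly. Each of the three claimed properties follows by applying $1$--Helly to a carefully chosen (finite) family of pairwise intersecting unit balls.

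For clique-Helly, given a (finite) family $\{C_\alpha\}$ of pairwise intersecting cliques, the unit balls $\{B_1(v):v\in\bigcup_\alpha C_\alpha\}$ are pairwise intersecting (a witness in $C_\alpha\cap C_\beta$ is adjacent or equal to each vertex of $C_\alpha\cup C_\beta$), so $1$--Helly yields a common vertex $x$; by maximality, $x$ lies in every maximal clique containing $C_\alpha$. For LTC($u$) and LQC($u$), the three unit balls $B_1(u),B_1(v),B_1(w)$ pairwise intersect, and the distance hypotheses force the vertex supplied by $1$--Helly to be a genuine neighbor of $u,v,w$. For the $(C_4,W_4)$-condition, the four unit balls at the corners of a square pairwise intersect via the edges, and again the common vertex cannot coincide with any corner since opposite corners are non-adjacent.

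\textbf{Reverse direction.} Assume $G$ is (finitely) clique-Helly, locally weakly modular, and satisfies the $(C_4,W_4)$-condition. Given a (finite) family of pairwise intersecting unit balls $\{B_1(v_i):i\in I\}$, we construct a common vertex.

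We first treat three unit balls $B_1(v_1),B_1(v_2),B_1(v_3)$. If some center is adjacent or equal to both others, the conclusion is immediate. The only remaining case is when the three centers are pairwise at distance exactly $2$. Pick common neighbors $w_{ij}\sim v_i,v_j$; if some $w_{ij}$ is adjacent to the third center, we are done. Otherwise apply LTC($v_3$) to the adjacent pairs $(v_1,w_{12})$ and $(v_2,w_{12})$, both at distance $2$ from $v_3$, obtaining vertices $y\sim v_1,w_{12},v_3$ and $y'\sim v_2,w_{12},v_3$. If $y\sim y'$, the triangles $\{v_1,y,w_{12}\}$, $\{v_2,y',w_{12}\}$, $\{v_3,y,y'\}$ pairwise intersect; extending to maximal cliques, (finitely) clique-Helly yields a common vertex $x\in B_1(v_1)\cap B_1(v_2)\cap B_1(v_3)$. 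If $y\nsim y'$, then $yv_3y'w_{12}$ is a square, and the $(C_4,W_4)$-condition supplies a vertex $y''$ adjacent to all four corners; using $y''$ in place of $y$ returns us to the previous subcase.

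The second step extends the result to arbitrary families, by (possibly transfinite) induction on a well-ordering of $I$, following the pattern of the proofs of Proposition~\ref{prop-clh-domination} and Proposition~\ref{prop-1-Helly-Helly}. The essential technical ingredient is the ``moreover'' part of Proposition~\ref{prop-clh-domination}: starting from any $v_i$, one collapses the witnesses of pairwise intersection around $v_i$ into a single dominating vertex, and then invokes (finitely) clique-Helly to assemble these dominating vertices into the sought common neighbor of all $v_i$. The main obstacle is precisely this second step, namely orchestrating the domination property and clique-Helly so that the dominating vertices selected at each stage produce cliques that pairwise intersect and can be amalgamated at limit stages, for which a careful bookkeeping parallel to that of Proposition~\ref{prop-1-Helly-Helly} is required.
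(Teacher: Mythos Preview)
Your forward direction is correct and matches the paper. The reverse direction has a genuine gap and the general step is under-specified.

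\medskip

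\textbf{Gap in the three-ball case.} In the subcase $y\nsim y'$, after the $(C_4,W_4)$-condition produces $y''\sim y,y',v_3,w_{12}$, you claim that ``using $y''$ in place of $y$ returns us to the previous subcase.'' But the previous subcase required triangles $\{v_1,y,w_{12}\}$, $\{v_2,y',w_{12}\}$, $\{v_3,y,y'\}$ to pairwise intersect, which worked because $y\sim y'$. After the substitution, the candidate triangles $\{v_1,y,w_{12}\}$, $\{v_2,y',w_{12}\}$, $\{v_3,y'',y'\}$ (or $\{v_3,y'',y\}$) do \emph{not} pairwise intersect: the first and third share no vertex, since $y''$ need not be adjacent to $v_1$ and $y'\ne y$. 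One can attempt to repair this with further applications of LTC and clique-Helly (e.g.\ Lemma~\ref{lem-clique-Helly-carre-triangle}), but as written the case does not close.

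\medskip

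\textbf{The general step is where the real content lies.} The paper does not isolate the three-ball case at all; it proceeds directly by transfinite induction. The key idea, which your sketch does not capture, is to construct a function $y:S\to V(H)$ (where $H$ is induced by $\bigcap_{v\in S}B_2(v)$) with the invariant that $y(v)\in B_1(v')\cap B_1(y(v'))$ for all $v'\preceq v$. This invariant forces the values $\{y(v'):v'\in S'\}$ to form a \emph{clique}, which is precisely what makes clique-Helly applicable at each inductive and limit stage. Your phrase ``collapses the witnesses\ldots into a single dominating vertex'' gestures at Proposition~\ref{prop-clh-domination}, but the crucial point is maintaining the clique structure among the dominating vertices themselves across the induction. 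Your reference to Proposition~\ref{prop-1-Helly-Helly} is misleading: that argument reduces to a Helly-critical family containing a unit ball via pseudo-modularity, which is not available here; the bookkeeping required is genuinely different and is carried out in the paper via two dedicated lemmas.
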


\begin{proof}
  If $G$ is a $1$--Helly graph, then the $1$--Helly property for unit
  balls implies that $G$ satisfies the local triangle and the local
  quadrangle conditions; hence $G$ is locally weakly modular.
  Moreover, any $1$--Helly graph is trivially a clique-Helly
  graph. Finally, for any square $abcd$ of $G$, the $1$-balls centered
  at $a$, $b$, $c$, and $d$ pairwise intersect and consequently, there
  exists a vertex $e \sim a,b,c,d$.

  In the following, we show the reverse implication.  Consider a
  (finite) set $S$ of vertices of $G$ such that the 1-balls of the
  family ${\mathcal B}=\{ B_{1}(v): v \in S \}$ pairwise intersect.
  Note that for any $v, v' \in S$, we have $d(v,v') \leq 2$. Denote by
  $H$ the subgraph of $G$ induced by the set $\bigcap_{v \in S}
  B_2(v)$.
  We claim that there exists a vertex $x$ of $H$ belonging to
  $\bigcap_{v \in S} B_1(v)$.  Consider a well-order $\prec$ on the
  vertices of $S$.

  \begin{lemma}\label{lem-1H-clh-lim}
    Let $S'$ be a non-empty subset of $S$ and assume that there exists
    a function $y:S' \to V(H)$ such that $y(v') \in B_1(v'')\cap
    B_1(y(v''))$ for every $v'' \in S_{\preceq v'} \cap S'$.  Then
    there exists a vertex $x \in V(H)$ such that $x \in \bigcap_{v'
      \in S'} B_1(v') \cap \bigcap_{v' \in S'} B_1(y(v'))$.
  \end{lemma}

  \begin{proof}
    If there exists $v' \in S'$ such that $S' \subseteq S_{\preceq
      v'}$, we can set $x:= y(v')$ and we are done. Note that if $S'$
    (or $S$) is finite, we are necessarily in this case.

    Otherwise, let $Y' = \{y(v') : v' \in S'\}$. Note that by our
    assumptions, $Y'$ is a clique.  For any $v' \in S'$, consider the
    clique $K(v') = \{y(v''): v'' \in S \mbox{ and } v' \preceq v''\}
    \cup \{v'\}$. Note that for any $v' \in S'$, $K(v')$ is a
    clique. Moreover, for any $v', v'' \in S'$ with $v' \prec v''$, we
    have $K(v') \cap K(v'') \neq \emptyset$ since $y(v'') \in K(v')
    \cap K(v'')$.

    For any $v \in S \setminus S'$, if $d(v,y') \leq 1$ for some $y'
    \in Y'$, let $t(v)= y'$. Otherwise, note that $d(y',v) \leq 2$
    for all $y' \in Y'$ since $Y' \subseteq V(H)$. Applying
    Proposition~\ref{prop-clh-domination} for the vertices $v$ and
    some $y'_0 \in Y'$ with $W=Y'$, we know that there exists $t(v) \sim v$
    such that $t(v) \sim y'$ for all $y' \in Y'$. In both cases,
    consider the clique $K(v) = Y' \cup \{t(v)\}$.

    Since $G$ is a clique-Helly graph, there exists a vertex $x$ such
    that $x \in \bigcap_{v' \in S'} B_1(v') \cap \bigcap_{v' \in S'}
    B_1(y(v')) \cap \bigcap_{v \in S \setminus S'} B_1(t(v))$. Since
    for any $v \in S \setminus S'$, $t(v) \in B_1(v)$, necessarily, $x
    \in V(H) = \bigcap_{v \in S} B_2(v)$. Consequently, $x$ is a
    vertex of $H$ that lies in $\bigcap_{v' \in S'} B_1(v') \cap
    \bigcap_{v' \in S'} B_1(y(v'))$.
  \end{proof}

  \begin{lemma}\label{lem-1H-clh-v}
    There exists a function $y: S \to V(H)$ such that for every $v'
    \in S_{\preceq v}$, $y(v) \in B_1(v')\cap B_1(y(v'))$.
  \end{lemma}

  \begin{proof}
    We define the function $y$ by (possibly transfinite) induction.
    Let $v_0$ be the least element of $(S,\prec)$ and set $y(v_0):=
    v_0$.

    Given a vertex $v$, assume that for every $v' \in S_{\prec v}$,
    $y(v') \in V(H)$ has been defined and that for every $v'' \in
    S_{\preceq v'}$, $y(v') \in B_1(v'')\cap B_1(y(v''))$.  Note that
    for every $v'$, $d(v,y(v')) \leq 2$ since $y(v') \in V(H)$.  By
    Lemma~\ref{lem-1H-clh-lim} applied with $S' = S_{\prec v}$, there
    exists a vertex $x \in V(H)$ such that $x \in \bigcap_{v' \in
      S_{\prec v}} B_1(y(v')) \cap \bigcap_{v' \in S_{\prec v}}
    B_1(v')$. Note that since $x \in V(H)$, $d(v,x) \leq 2$. If $v \in
    B_1(x)$, let $y(v) = x$ and we are done.

    Suppose now that $d(v,x) = 2$. Note that for every $v'\in S_{\prec v}$,
    $d(v,v') \leq d(v,x) = 2$, $d(v,y(v')) \leq d(v,x) = 2$, and $x \in
    B_1(v') \cap B_1(y(v'))$. Moreover, for any $v'' \in S$, we have
    $d(v'',x) \leq 2$. If $d(v'',x) \leq 1$, let $t(v'') =
    x$. Otherwise, let $s$ be a common neighbor of $v''$ and $x$. If
    $d(v,s) \leq 2$, let $t(v'') = s$. If $d(v,s) = 3$, by LQC($v$),
    there exists $s' \sim v'',x,v$; in this case, let $t(v'' ) =
    s'$. In any case, $t(v'')\in B_1(v'')$ and $t(v'') \in B_2(v) \cap
    B_1(x)$. Consider the set $W = \{v' : v' \in S_{\prec v}\} \cup
    \{y(v') : v' \in S_{\prec v}\} \cup \{t(v'') : v'' \in S\}$. Note
    that if $S$ is finite, $W$ is necessarily also finite.  Consequently,
    by Proposition~\ref{prop-clh-domination} applied to the vertices
    $v$ and $x$ and to the set $W$, there exists $t \sim v,x$ such
    that $t \sim v$ and $t \in \bigcap_{v' \in S_{\prec v}} B_1(y(v')) \cap
    \bigcap_{v' \in S_{\prec v}} B_1(v') \cap \bigcap_{v'' \in S}
    B_1(t(v'')) $. Since for any $v'' \in S$, $B_1(t(v'')) \subseteq
    B_2(v'')$, we conclude  that $t \in V(H)$. Consequently, we can set
    $y(v) := t$ and we are done.
  \end{proof}

  From Lemma~\ref{lem-1H-clh-v}, for any $v \in S$, there exists $y(v)
  \in V(H)$ such that $y(v) \in \bigcap_{v' \in S_{\preceq v}}
  B_1(y(v')) \cap \bigcap_{v' \in S_{\preceq v}} B_1(v')$.  Therefore,
  we can apply Lemma~\ref{lem-1H-clh-lim} with $S' = S$. By this
  lemma, there exists a vertex $x \in \bigcap_{v \in S} B_1(y(v)) \cap
  \bigcap_{v \in S} B_1(v)$. Consequently, $x$ belongs to $\bigcap_{v
    \in S} B_1(v)$. This shows that $G$ is $1$--Helly and this ends
  the proof of Proposition~\ref{prop-1-Helly-clique-Helly}.
\end{proof}

\begin{proposition} \label{t:universal-cover-Helly} Let $G$ be a
  (finitely) clique-Helly graph and let $\tG$ be the $1$--skeleton of
  the universal cover $\widetilde X:=\widetilde{X}\tr(G)$ of the
  triangle complex $X:=X\tr(G)$ of $G$. Then $\tG$ is weakly modular,
  (finitely) clique-Helly, and satisfies the $(C_4,W_4)$-condition.
\end{proposition}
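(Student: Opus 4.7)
The plan is to adapt the inductive construction used in the proof of Theorem~\ref{t:lotoglo2}, now working with the triangle complex $X := X\tr(G)$ in place of the triangle-square complex, and with the (finite) clique-Helly hypothesis replacing local weak modularity. Fix a base vertex $v$ of $G$ and build the universal cover $\tX$ as an increasing union $\tX = \bigcup_{i\ge 0} \tX_i$ of triangle complexes, where $\tG_i := \tX_i^{(1)}$ is the combinatorial ball $\tB_i$ of radius $i$ around a base vertex $\tv$ with $f(\tv) = v$. At each step we produce a cellular map $f_i\colon \tX_i \to X$ satisfying inductive properties analogous to (P$_i$)--(T$_i$); property (Q$_i$) will assert that $\tG_i$ is weakly modular with respect to $\tv$, while (R$_i$), (T$_i$) assert that $f_i$ is a local isomorphism on stars around vertices of $\tB_{i-1}$ and of $\tS_i$.

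The crucial novelty is the equivalence relation $\equiv$ on
\[
Z=\{(\tw,z) : \tw\in\tS_i,\; z\in B_1(f_i(\tw),G)\setminus f_i(B_1(\tw,\tG_i))\}
\]
used to define $\tS_{i+1}=Z/\!\!\equiv$. In Theorem~\ref{t:lotoglo2}, condition (Z2) was expressed via squares, a feature of the triangle-square complex; since $X\tr(G)$ has no square cells, that condition must be replaced by one expressed via triangles and the clique-Helly characterization of Proposition~\ref{clique_Helly_triangle}. I would set $(\tw,z)\equiv(\tw',z)$ whenever either (Z1) $\tw\sim\tw'$ in $\tG_i$ with $z\sim f_i(\tw),f_i(\tw')$ in $G$, or (Z2$'$) there exists $\tu\in\tB_{i-1}$ adjacent to both $\tw,\tw'$ such that the triangles $f_i(\tu)f_i(\tw)z$ and $f_i(\tu)f_i(\tw')z$ force identification through the universal vertex supplied by Proposition~\ref{clique_Helly_triangle} applied to those triangles. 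The main obstacle is verifying transitivity of $\equiv$: given $(\tw,z)\equiv(\tw',z)\equiv(\tw'',z)$, one must iteratively invoke Proposition~\ref{clique_Helly_triangle} applied to the triangles built around $z$ (together with the triangle and quadrangle conditions already available in $\tG_i$ via (Q$_i$)) to produce the required common witness. This mirrors the transitivity argument of Lemma~\ref{equiv} but substitutes clique-Helly for local weak modularity.

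After verifying the inductive step (analogs of Lemmas~\ref{Pi+1}--\ref{Si+1} and~\ref{lem-triangles}), we form $\tX_v = \bigcup_{i\ge 0}\tX_i$. Weak modularity of each $\tG_i$ with respect to $\tv$ transfers to $\tG_v$; by Lemma~\ref{simplyconnected} the complex $\tX_v$ is simply connected, so it is the universal cover of $X$, and $\tG$ is weakly modular with respect to every vertex. The (finite) clique-Helly property lifts from $G$ to $\tG$: a (finite) pairwise intersecting family of maximal cliques in $\tG$ projects under $f$ to a (finite) pairwise intersecting family of maximal cliques in $G$ (since $f$ is a local isomorphism on stars, so each clique of $\tG$ maps injectively onto a clique of $G$); the common vertex produced by (finite) clique-Helly of $G$ then lifts uniquely to a common vertex in $\tG$, using that the union of stars around the lifts is simply connected. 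Finally, for the $(C_4,W_4)$-condition, an induced square $\tilde a\tilde b\tilde c\tilde d$ in $\tG$ must project to an induced $4$--cycle $abcd$ in $G$: a diagonal in $G$ would lift via a triangle cell of $X\tr(G)$ to a diagonal of $\tilde a\tilde b\tilde c\tilde d$ in $\tG$. This $4$--cycle, being null-homotopic in $X\tr(G)$, bounds a disk of triangles, and an analysis of a minimal such disk using Proposition~\ref{clique_Helly_triangle} (iteratively collapsing adjacent triangles meeting at a universal vertex) produces $x\sim a,b,c,d$ in $G$; its unique lift $\tx$ is adjacent to $\tilde a,\tilde b,\tilde c,\tilde d$, as required. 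The hardest steps are expected to be the transitivity of $\equiv$ and the disk-diagram analysis giving the $(C_4,W_4)$-condition, both of which require careful iterative use of Proposition~\ref{clique_Helly_triangle}.
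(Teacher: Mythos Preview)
Your overall strategy---adapt the inductive construction of Theorem~\ref{t:lotoglo2} with a modified equivalence relation, then lift clique-Helly and verify $(C_4,W_4)$---matches the paper's approach. However, two concrete gaps show that you have not yet identified the key modification.

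First, your proposed condition (Z2$'$) is ill-formed: for $\tu\in\tB_{i-1}$ adjacent to $\tw$, one always has $f_i(\tu)\nsim z$ in $G$ (otherwise by (R$_i$) the vertex $z$ would lift to a neighbor of $\tw$ in $\tB_i$, contradicting $(\tw,z)\in Z$). Thus $f_i(\tu)f_i(\tw)z$ is never a triangle, and Proposition~\ref{clique_Helly_triangle} cannot be applied to it. The paper's (Z2) keeps the square $uwzw'$ as in Theorem~\ref{t:lotoglo2} but adds a further witness: a vertex $\tu'\in\tS_i$ with $\tu'\sim\tu,\tw,\tw'$ and $f_i(\tu')\sim u,w,z,w'$ in $G$. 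This ``center'' vertex turns the configuration into a $W_4$ in $G$ and into a fan of triangles in $\tG_i$, and it is produced in the transitivity proof precisely by applying Proposition~\ref{clique_Helly_triangle} to triples of genuine triangles such as $uu'w'$, $w'w''z$, $u'wz$.

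Second, and more seriously, your appeal to Lemma~\ref{simplyconnected} does not establish simple connectivity of the \emph{triangle} complex $\tX_v$; that lemma concerns the triangle-square complex. To contract a $4$-cycle of $\tG_v$ using only triangles one needs more than QC$(\tv)$: one needs that whenever $\tw_1,\tw_3\in\tS_{i-1}$ are two nonadjacent neighbours of $\tw_2\in\tS_i$, there is a common neighbour of \emph{all three} of $\tw_1,\tw_2,\tw_3$ at level $i-1$. The paper builds exactly this into a strengthened inductive condition (S$_i$) (recording both the square vertex $\tu$ and the center $\tu'$), proves it is propagated by the modified (Z2), and then obtains both the simple connectivity of $\tX_v$ and the $(C_4,W_4)$-condition directly from (S$_2$)---no separate disk-diagram argument is needed. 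Your disk-diagram plan for $(C_4,W_4)$ presupposes the simple connectivity you have not yet secured; once the strengthened (S$_i$) is in place, that plan becomes unnecessary.
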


\begin{proof}
Let $G$ be a finitely clique-Helly graph, and let $\tG$ be the
$1$--skeleton of the universal cover $\widetilde
X:=\widetilde{X}\tr(G)$ of the triangle complex $X:=X\tr(G)$ of
$G$. To prove that $\tG$ is a weakly modular graph satisfying the
$(C_4,W_4)$-condition, we will construct the universal cover
$\widetilde{{X}}$ of ${X}$ as an increasing union $\bigcup_{i\ge 1}
\widetilde{{X}}_i$ of triangle complexes. The complexes
$\widetilde{{X}}_i$ are in fact spanned by concentric combinatorial
balls $\widetilde{B}_i$ in $\widetilde{{X}}$.  The covering map $f$ is
then the union $\bigcup_{i\ge 1} f_i,$ where $f_i:
\widetilde{{X}}_i\rightarrow {X}$ is a locally injective simplicial
map such that $f_i|_{\widetilde{{X}}_j}=f_j$, for every $j\le i$. We
denote by $\widetilde{G}_i=G(\widetilde{{X}}_i)$ the underlying graph
of $\widetilde{{X}}_i$. We denote by $\tS_i$ the set of vertices
$\tB_i\setminus \tB_{i-1}$.

Pick any vertex $v$ of ${X}$ as the base-point. Define
$\widetilde{B}_0=\{ \widetilde{v}\}:=\{ v\},
\widetilde{B}_1:=B_1(v,G)$.  Let $\widetilde{{X}}_1$ be the triangle
complex of $B_1(v,G)$ and let $f_1\colon \tX_1 \to X$ be the
simplicial map induced by Id$_{B_1(v,G)}$.  Assume that, for $i\geq
1$, we have constructed the vertex sets
$\widetilde{B}_1,\ldots,\widetilde{B}_i,$ and we have defined the
triangle complexes $\widetilde{{X}}_1\subseteq \cdots\subseteq
\widetilde{{X}}_i$ (for any $1\le j<k\le i$ we have an identification
map $\widetilde{{X}}_j\rightarrow \widetilde{{X}}_{k}$) and the
corresponding simplicial maps $f_1,\ldots,f_i$ from
$\widetilde{{X}}_1,\ldots,\widetilde{{X}}_i,$ respectively, to ${X}$
so that the graph $\tG_i=G(\widetilde{X}_i)$ and the complex
$\widetilde{X}_i$ satisfy the following conditions:

\begin{enumerate}[(A{$_i$})]
\item[(P$_i$)]
$B_j(\tv,\tG_i)=\widetilde{B}_j$ for any $j\le i$;
\item[(Q$_i$)]
$\widetilde{G}_i$ is weakly modular with respect to $\widetilde{v}$;
%
\item[(R$_i$)]
for any $\widetilde{u}\in \widetilde{B}_{i-1},$ $f_i$
defines an isomorphism between the subgraph of $\tG_i$ induced by
$B_1(\widetilde{u},\tG_i)$ and the subgraph of $G$ induced by
$B_1(f_i(\widetilde{u}),G)$;
%
%
\item[(S$_i$)] for any $j \leq i$ and for any $\tw,\tw'\in \tS_{j-1}$
  which are not adjacent and have a common neighbor $\tz$ in $\tS_j$,
  there exist $\tu\in \tS_{j-2}$ and $\tu'\in \tS_{j-1}$ such that
  $\tz\tw\tu\tw'$ is a square and $\tu' \sim \tz, \tw, \tu, \tw'$.
\item[(T$_i$)] for any $\widetilde{w}\in
  \widetilde{S}_i:=\widetilde{B}_i\setminus \widetilde{B}_{i-1},$
  $f_i$ defines an isomorphism between the subgraphs of $\tG_i$ and of
  $G$ induced by, respectively, $B_1(\widetilde{w},\tG_i)$ and
  $f_i(B_1(\widetilde{w},\tG_i))$.
\end{enumerate}

Note that the main difference with the proof of
Theorem~\ref{t:lotoglo2} (the case of general weakly modular graphs)
is that we strengthen the quadrangle condition QC($\tv$) (condition
(S$_i$)). This ensures that the triangle complex of $\tG$ is simply
connected and $\tG$ satisfies the $(C_4,W_4)$-condition.

It can be easily checked that $\widetilde{B}_1, \widetilde{G}_1,
\widetilde{X}_1$ and $f_1$ satisfy the conditions (P$_1$), (Q$_1$),
(R$_1$), (S$_1$), and (T$_1$). Now we construct the set
$\widetilde{B}_{i+1},$ the graph $\widetilde{G}_{i+1}$ 
the triangle complex
$\widetilde{{X}}_{i+1}$, 
and the map $f_{i+1}: \widetilde{{X}}_{i+1}\rightarrow {X}.$ Let
 $$Z=\{ (\widetilde{w},z): \widetilde{w} \in \widetilde{S}_i \mbox{ and } z\in
B_1(f_i(\widetilde{w}),G)\setminus f_i(B_1(\widetilde{w},\tG_i))\}.$$
On $Z$ we define a binary relation $\equiv$ by setting $(\widetilde{w},z)\equiv
(\widetilde{w}',z')$ if and only if $z=z'$  and one of the following two
conditions is satisfied:

\begin{itemize}
\item[(Z1)] $\widetilde{w}$ and $\widetilde{w}'$ are the same or adjacent in
$\widetilde{G}_i$;
\item[(Z2)] there exist $\widetilde{u}\in \widetilde{B}_{i-1}$ and
  $\widetilde{u}'\in \widetilde{B}_{i}$ such that $\tu \sim \tw,\tw'$,
  that $\tu' \sim \tu, \tw, \tw'$, that
  $f_i(\widetilde{u})f_i(\widetilde{w})zf_i(\widetilde{w}')$ is a
  square in $G$, and that $f_i(\tu') \sim
  f_i(\tw),f_i(\tu),f_i(\tw'),z$ in $G$.
\end{itemize}

In what follows, the above relation will be used in the inductive step to
construct $\tG_{i+1}$, $\tX_{i+1}$, $f_{i+1}$ and all related objects.
\medskip

First, however, we show that the relation $\equiv$ defined above is an
equivalence relation.  The set of vertices of the graph $\tG_{i+1}$
will be then defined as the union of the set of vertices of the
previously constructed graph $\tG_{i}$ and the set of equivalence
classes of $\equiv$.  In the remaining part of the proof, for a vertex
$\widetilde{w}\in \widetilde{B}_i$, we denote by $w$ its image $f_i(\widetilde{w})$
in $X$ under $f_i$.

\begin{Rem}\label{rem-local-inj}
     For any $(\tw,z) \in Z$ and for any neighbor $\tu$ of $\tw$
     in $\tB_i$, $u \neq z$.

     Moreover, for any neighbor $\tu$ of $\tw$ in $\tB_{i-1}$, if $u
     \sim z$, then by (R$_i$) applied to $\tu$ there exists $\tz \in \tB_i$
     such that $\tz \sim \tu$ and $\tz \sim \tw$, a
     contradiction. Similarly, for any neighbor $\tu$ of $\tw$ in
     $\tS_{i}$, if $u \sim z$ and $(\tu,z) \notin Z$, by (T$_i$)
     applied to $\tu$, there exists $\tz \in \tB_i$ such that $\tz
     \sim \tu$ and $\tz \sim \tw$, a contradiction.

     Consequently, for any neighbor $\tu \in \tB_{i-1}$ of $\tw$, we have $u
     \nsim z$. Furthermore,  for any neighbor $\tu \in \tS_{i}$, either $u \nsim
     z$ or $(\tu,z)\in Z$.
\end{Rem}

\begin{lemma} \label{equiv-clh} The relation $\equiv$ is an equivalence relation on
$Z$.
\end{lemma}

\begin{proof}
Since the binary relation $\equiv$ is reflexive and symmetric, it
suffices to show that $\equiv$ is transitive. Let $(\tw,z)\equiv
(\tw',z')$ and $(\tw',z')\equiv
(\tw'',z'')$. We will prove that $(\widetilde{w},z)\equiv
(\widetilde{w}'',z'').$ By the definition of $\equiv,$ we conclude
that $z=z'=z''$ and that $z\in B_1(w,G)\cap B_1(w',G)\cap B_1(w'',G)$.

If $\tw = \tw''$ or $\tw \sim \tw''$ (in $\widetilde{G}_i$) then, by
the definition of $\equiv$, $(\tw,z)\equiv (\tw'',z)$ and we are done.
Therefore, further we assume that  $\tw \neq \tw'' \nsim \tw$.
In the following, we distinguish three cases: either $\tw' \sim \tw$,
$\tw''$, or $\tw'$ is adjacent to only one of $\tw, \tw''$, or $\tw'
\nsim \tw, \tw''$.


\begin{case-He}
  $\tw \sim \tw'$ and $\tw' \sim \tw''$.
\end{case-He}

By (Q$_i$), there exist $\tx,\tx' \in \tS_{i-1}$ such that $\tx \sim
\tw, \tw'$ and $\tx' \sim \tw', \tw''$. By Remark~\ref{rem-local-inj},
$x \neq z$, $x' \neq z$, and $z \nsim x,x'$.

Note that if $\tx \sim \tw''$, then by (T$_i$) applied to $\tw'$, $x
\sim w,w',w''$ and $w \nsim w''$. Moreover, by
Remark~\ref{rem-local-inj}, $x \nsim z$. Consequently, $xwzw''$ is a
square in $G$ and $w' \sim w,x,z,w''$. By (Z2) with $\tu = \tx$ and
$\tu' = \tw'$, $(\tw,z) \equiv (\tw',z)$ and we are done. Similarly,
if $\tx' \sim \tw$, we have that $(\tw,z) \equiv (\tw',z)$ by (Z2).

Suppose now that $\tx \nsim \tw''$ and $\tx' \nsim \tw$. Note that by
(T$_i$) applied to $\tw'$ and since $(\tw,z) \in Z$, all vertices in
$\{w,w',w'',x,x',z\}$ are distinct and $w' \sim w,w'',x,x'$; $x \sim
w$, and $x' \sim w''$. Moreover, $\tx \sim \tx'$ if and only if $x \sim
x'$.

We claim that we can assume that $\tx \sim \tx'$. Suppose that $\tx
\nsim \tx'$.  By (S$_i$), there exist $\ty\in \tS_{i-2}$ and $\ty' \in
\tS_{i-1}$ such that $\tx\tw'\tx'\ty$ is a square and $\ty' \sim
\ty,\tx,\tw',\tx'$.  By (T$_i$) applied to $\tw'$ and (R$_i$) applied
to $\ty'$, we have $y' \sim x, x',w'$ and $y \sim x, x',y'$. Note that
by Remark~\ref{rem-local-inj}, $y \neq z$ and $y' \neq z$.
Consequently, by (R$_i$) applied to $\tx$ and $\tx'$, all vertices in
$\{w,w',w'',x,x',y,y',z\}$ are distinct.
In $G$, consider a maximal clique containing the triangle $w'w''x'$, a
maximal clique containing the triangle $w'xy'$, and a maximal clique
containing the triangle $x'yy'$. Note that these three cliques
pairwise intersect in $w'$, $x'$, or $y'$.  Consequently, since $G$ is
finitely clique-Helly, there exists $x'' \sim x,w',w'',y$.
By (R$_i$) applied to $\tx$ and (T$_i$) applied to
$\tw'$, there exists $\tx'' \sim \tx,\tw',\tw'',\ty$. Since $\tx''
\sim \tw', \ty$, we have that $\tx'' \in \tS_{i-1}$ and we can replace
$\tx'$ by $\tx''$ and assume that $\tx \sim \tx'$.

Note that by (R$_i$) applied to $\tw'$, we have $x \sim x'$.  By
(Q$_i$), there exists $\ty \in \tS_{i-2}$ such that $\ty\sim
\tx,\tx'$. By Remark~\ref{rem-local-inj}, $y \neq z$ and by (R$_i$)
applied to $\tx$ and $\tx'$, all vertices in $\{w,w',w'',x,x',y,z\}$ are
distinct and $y \sim x,x'$.
In $G$, consider a maximal clique containing the triangle $ww'x$, a
maximal clique containing the triangle $w'w''x'$, and a maximal clique
containing the triangle $xx'y$. Note that these three cliques pairwise
intersect in $w'$, $x$, or $x'$. Since $G$ is
finitely clique-Helly, there exists $u \sim w,w',w'',x,x',y$.
By (R$_i$) applied to $\ty$, $\tx$, and $\tx'$, there exists $\tu \sim
\tw,\tw',\tw'',\ty$ such that $f(\tu) = u$. Since $\tu \sim \ty,
\tw'$, necessarily $\tu \in \tS_{i-1}$. By Remark~\ref{rem-local-inj},
$u \neq z$ and $u \nsim z$. Consequently, $uwzw''$ is a square in $G$
and $w' \sim u,w,z,w''$. Therefore, by (Z2), $(\tw,z) \equiv
(\tw'',z)$, and we are done.


\begin{case-He}
  $\tw' \sim \tw''$ and there exist $\tu' \in
  \tS_i$, $\tu \in \tS_{i-1}$ such that $\tu \sim \tw, \tw'$; $\tu' \sim
  \tw, \tw', \tu$ and $uwzw'$ is a square in $G$ with $u' \sim
  u,w,z,w'$.
\end{case-He}

Since $\tw'' \nsim \tw$, we have $\tu' \neq \tw''$, and by (T$_i)$
applied to $\tu'$ and $\tw'$, all vertices in $\{w,w',w'',u,u',z\}$
are distinct.
In $G$, consider a maximal clique containing the triangle $u'wz$, a
maximal clique containing the triangle $w'w''z$, and a maximal clique
containing the triangle $uu'w'$. These three cliques pairwise
intersect in $u'$, $w'$, or $z$.
Since $G$ is finitely clique-Helly, there exists $y \sim z,w,w', w'', u,u'$. By
(R$_i$) applied to $\tu$, and (T$_i$) applied to $\tw'$, there exists
$\ty \in \tB_i$ such that $\ty \sim \tu,\tu',\tw,\tw',\tw''$. Since $y
\sim w,z$, we know by Remark~\ref{rem-local-inj} that $\ty \in \tS_i$
and $(y,z) \in Z$. Consequently, we can replace $\tw'$ by $\ty$ and we
are in Case 1.

\begin{case-He}
  There exist $\tu_1' \in \tS_i$, $\tu_1 \in \tS_{i-1}$ such that
  $\tu_1 \sim \tw, \tw'$, $\tu_1' \sim \tw, \tw', \tu_1$ and $u_1wzw'$
  is a square in $G$ with $u_1' \sim u_1,w,z,w'$, and there exist
  $\tu_2' \in \tS_i$, $\tu_2 \in \tS_{i-1}$ such that $\tu_2 \sim
  \tw', \tw''$, $\tu_2' \sim \tw', \tw'', \tu_2$ and $u_2w'zw''$ is a
  square in $G$ with $u_2' \sim u_2,w',z,w''$.
\end{case-He}


If $\tw \sim \tu'_2$ (respectively, $\tw'' \sim \tu_1'$), then we can replace
$\tw'$ by $\tu_2'$ (respectively,  $\tu_1'$) and we are in Case~1. Applying
Case~2 where we replace $\tw''$ by $\tu_2'$, we know that there exist
$\tu_3' \in \tS_i$ and $\tu_3 \in \tS_{i-1}$ such that $\tu_3 \sim \tw,
\tu_2'$; $\tu_3' \sim \tw, \tu_2', \tu_3$ and $u_3wzu_2'$ is a square
in $G$ with $u_3' \sim u_3,w,z,u_2'$. Consequently, we can replace
$\tw'$ by $\tu_2'$ and we are in Case~2.
\end{proof}

Let $\widetilde{S}_{i+1}$ denote the set of equivalence classes of
$\equiv$, i.e., $\widetilde{S}_{i+1}=Z/_{\equiv}$. For an ordered pair
$(\widetilde{w},z)\in Z$, we will denote by $[\widetilde{w},z]$ the
equivalence class of $\equiv$ containing $(\widetilde{w},z)$. Set
$\widetilde{B}_{i+1}:=\widetilde{B}_i\cup \widetilde{S}_{i+1}$. Let
$\widetilde{G}_{i+1}$ be the graph having $\widetilde{B}_{i+1}$ as the
vertex set, in which two vertices $\widetilde{a},\widetilde{b}$ are
adjacent if and only if one of the following conditions holds:
\begin{itemize}
\item[(1)] $\widetilde{a},\widetilde{b}\in \widetilde{B}_i$ and
$\widetilde{a}\widetilde{b}$ is an edge of $\widetilde{G}_i$,
\item[(2)] $\widetilde{a}\in \widetilde{B}_i$,  $\widetilde{b}\in
\widetilde{S}_{i+1}$ and $\widetilde{b}=[\widetilde{a},z]$,
\item[(3)] $\widetilde{a},\widetilde{b}\in \widetilde{S}_{i+1},$
$\widetilde{a}=[\widetilde{w},z]$,
$\widetilde{b}=[\widetilde{w},z']$ for a vertex $\widetilde{w}\in \tB_i,$ and
$z\sim z'$ in the graph $G$.
\end{itemize}

Let $\tX_{i+1}=X\tr(\tG_{i+1})$ be the triangle complex of
$\tG_{i+1}$. Finally, we define the map $f_{i+1}:
\widetilde{B}_{i+1}\rightarrow V({X})$ in the following way: if
$\widetilde{a}\in \widetilde{B}_i$, then
$f_{i+1}(\widetilde{a})=f_i(\widetilde{a}),$ otherwise, if
$\widetilde{a}\in \widetilde{S}_{i+1}$ and
$\widetilde{a}=[\widetilde{w},z],$ then
$f_{i+1}(\widetilde{a})=z$. As in the proof of Theorem~\ref{t:lotoglo2},
all vertices of $\widetilde{B}_{i+1}$ will be denoted with a tilde and
their images in $G$ under $f_{i+1}$ will be denoted without tilde.
\medskip

Now we check our inductive assumptions, verifying the properties
(P$_{i+1}$),(Q$_{i+1}$),(R$_{i+1}$), (S$_{i+1}$), and (T$_{i+1}$) for
$\tG_{i+1}$ and $f_{i+1}$ defined above.
\medskip

The following four lemmata together with their proofs are the same as,
respectively, Lemmata 5.7, 5.8, 5.9, and 5.10 in \cite{BCC+}, and
Lemmata~\ref{Pi+1}, \ref{Qi+1}, \ref{lem-homomorphism}, and
\ref{lem-locally-surjective}.

\begin{lemma} \label{Pi+1-clH}  $\tG_{i+1}$ satisfies the property $(P_{i+1})$,
i.e.,
$B_j(\tv,\tG_{i+1})=\widetilde{B}_j$ for any $j\le i+1.$
\end{lemma}

\begin{lemma} \label{Qi+1-clH}  $\widetilde{G}_{i+1}$ satisfies the property
$(Q_{i+1}),$ i.e., the graph $\widetilde{G}_{i+1}$
is weakly modular with respect to the base-point $\widetilde v$.
\end{lemma}

\begin{lemma}\label{lem-homomorphism-clH}
For any edge $\ta\tb$ of $\tG_{i+1}$, $ab$ is an edge of $G$ (in
  particular $a \neq b$).
\end{lemma}

\begin{lemma}\label{lem-locally-surjective-clH}
If $\ta \in \tB_i$ and if $b \sim a$ in $G$, then there exists a
vertex $\tb$ of $\tG_{i+1}$ adjacent to $\ta$ such that $f_{i+1}(\tb)
= b$.
\end{lemma}

We prove now that $f_{i+1}$ is locally injective. The proof is almost
the same as the proof of Lemma \ref{lem-locally-injective}.

\begin{lemma}\label{lem-locally-injective-clH}
If $\ta \in \tB_{i+1}$ and  $\tb, \tc$ are distinct neighbors of
$\ta$ in $\tG_{i+1}$, then $b \neq c$.
\end{lemma}

\begin{proof}
  First, note that if $\tb \sim \tc$ then the assertion holds by
  Lemma~\ref{lem-homomorphism-clH}; thus further we assume that $\tb \nsim
  \tc$. If $\ta, \tb, \tc \in \tB_i$, the lemma holds by (R$_i$) or
  (T$_i$) applied to $\ta$.  Suppose first that $\ta \in \tB_i$. If
  $\tb, \tc \in \tS_{i+1}$, then $\tb = [\ta,b]$ and $\tc = [\ta,c],$
  and thus $b\neq c$. If $\tb \in \tB_i$ and $\tc = [\ta,c] \in
  \tS_{i+1}$, then $(\ta,b) \notin Z$, and thus $c \neq b$. Therefore,
  further we consider $\ta\in \tS_{i+1}$.

  If $\tb, \tc \in \tB_i$ then $\ta = [\tb,a] = [\tc,a]$. Since
  $(\tb,a) \equiv (\tc,a)$ and since $\tb \nsim \tc$, there exists
  $\tu \in \tB_{i-1}$ such that $\tu \sim \tb, \tc$ and $abuc$ is an
  induced square of $G$. This implies that $b \neq c$.

  If $\ta, \tb, \tc \in \tS_{i+1}$, then there exist $\tw, \tw' \in
  \tB_i$ such that $\tb = [\tw,b]$, $\tc =[\tw',c],$ and $\ta= [\tw,a]
  = [\tw',a]$. Suppose by way of contradiction that $b=c$.  Note that
  this implies that $\tw \neq \tw'$. If $\tw \sim \tw'$ then
  $\tb=\tc$, a contradiction; consequently, $\tw \nsim \tw'$.  Since
  $(\tw,a) \equiv (\tw',a)$ there exists $\tu\in \tB_{i-1}$ and $\tu'
  \in \tS_i$ such that $\tu \sim \tw,\tw'$, $\tu'\sim \tu,\tw, \tw'$,
  $uwaw'$ is a square in $G$ and $u'\sim u,w,a,w'$.  If $uwbw'$ is not
  a square then $b\sim u$ and, by (R$_i$) applied to $\tu$, we have
  that $(\tw,b)\notin Z$, a contradiction.  Thus $uwbw'$ is a square.
  Consider a maximal clique containing the triangle $awb$, a maximal
  clique containing the triangle $au'w'$, and a maximal clique
  containing the triangle $uu'w$. These three cliques
  pairwise intersect in $a$, $w$, or $u'$. Since $G$ is finitely clique-Helly,
  there exists $u'' \sim b, u, w, w'$.
  By (R$_i$) applied to $\tu \in \tB_{i-1}$, there exists $\tu''$ such
  that $\tu'' \sim \tw, \tw', \tu$ and $f_{i+1}(\tu'') = u''$. By
  Remark~\ref{rem-local-inj}, $\tu''\in \tS_i$ and thus, replacing
  $\tw$ by $\tu''$, we get that $\tb=[\tw,b]=[\tw',b]=\tc$, a
  contradiction.

  If $\ta, \tb \in \tS_{i+1}$ and $\tc \in \tS_i$, then there exists
  $\tw \in \tS_i$ such that $\tb = [\tw,b]$ and $\ta = [\tw,a] =
  [\tc,a]$. If $\tw \sim \tc$, then $(\tw,c) \notin Z$, and thus
  $(\tw,c) \neq (\tw,b)$, i.e., $b \neq c$. If $\tw \nsim \tc$, since
  $[\tw,a] = [\tc,a]$, there exists $\tu \in \tS_{i-1}$ such that
  $\tu\sim \tw,\tc$ and such that $acuw$ is an induced square of
  $G$. Consequently, the vertices $w$ and $c$ are not adjacent; since
  $w \sim b$, this implies that $b \neq c$.
\end{proof}

The following lemma is the counterpart of
Lemma~\ref{lem-triangles}.

\begin{lemma}\label{lem-triangles-clH}
If $\ta \sim \tb, \tc$ in $\tG_{i+1}$, then $\tb \sim \tc$ if and only
if $b \sim c$.
\end{lemma}

\begin{proof}
If $\tb \sim \tc$, then $b \sim c$ by Lemma~\ref{lem-homomorphism-clH}.
Conversely, suppose that $b \sim c$ in $G$.  If $\ta, \tb, \tc \in
\tB_i$ then $\tb \sim \tc$ by conditions (R$_i$) and (T$_i$).
Therefore, further we assume that at least one of the vertices $\ta,
\tb, \tc$ does not belong to $\tB_i$.

First, suppose that $\ta \in \tB_i$. If $\tb, \tc \in \tS_{i+1}$ then
$\tb = [\ta,b]$ and $\tc = [\ta,c]$. Since $b\sim c$, by the
construction of $\tG_{i+1}$, we have $\tb \sim \tc$ in
$\tG_{i+1}$. Suppose now that $\tb = [\ta,b] \in S_{i+1}$ and $\tc \in
\tB_i$. If there exists $\tb' \sim \tc$ in $\tG_i$ such that
$f_i(\tb') = b$ then, by (T$_i$) applied to $\tc$, we have $\ta \sim
\tb'$ and $(\ta,b) \notin Z$, which is a contradiction. Thus $(\tc,b)
\in Z$ and, since $\tc \sim \ta$, we have $[\tc,b]=[\ta,b]=\tb$, and
consequently, $\tc \sim \tb$. Therefore, further we consider $\ta \in
\tS_{i+1}$.

If $\tb, \tc \in \tB_i$ and $\ta \in \tS_{i+1}$, then $\ta =[\tb,a] =
[\tc,a]$ and either $\tb \sim \tc$, or there exists $\tu \in
\tS_{i-1}$ such that $\tu \sim \tb, \tc$ and $ubac$ is an induced
square in $G$, which is impossible because $b \sim c$.

If $\ta, \tb \in \tS_{i+1}$ and $\tc \in \tB_i$, then by
Lemma~\ref{lem-locally-surjective-clH}, there exists $\tb' \sim \tc$
such that $f_{i+1}(\tb') = b$. By a previous case applied to $\tc,
\ta, \tb'$, we have that $\ta \sim \tb'$. By Lemma,
~\ref{lem-locally-injective-clH} applied to $\ta$, we obtain $\tb = \tb'$ and we
are done.

If $\ta, \tb, \tc \in \tS_{i+1}$ then there exist $\tw, \tw' \in
\tB_i$ such that $\tb = [\tw,b]$, $\tc = [\tw',c]$, and $\ta = [\tw,a]
= [\tw',a]$. If $\tw \sim \tc$ (respectively, $\tw' \sim \tb$), then
we are in a previous case, replacing $\ta$ by $\tw$ (respectively,
$\tw'$) and consequently $\tb \sim \tc$. Suppose now that $\tw \nsim
\tc$ and $\tw' \nsim \tb$. From a previous case applied to $\ta,\tb
\in \tS_{i+1}$ (respectively, $\ta,\tc \in \tS_{i+1}$) and $\tw' \in
\tB_i$ (respectively, $\tw \in \tB_i$), it follows that $w' \nsim b$
(respectively $w \nsim c$).

We claim that we can assume that $\tw \sim \tw'$. Suppose that $\tw
\nsim \tw'$. Since $(\tw,a) = (\tw',a)$, there exists $\tu \in
\tB_{i-1}$ and $\tu' \in \tS_i$ such that $\tu \sim \tw, \tw'$, $\tu'
\sim \tu, \tw, \tw'$, $uwaw'$ is an induced square in $G$ and $u' \sim
u,w,a,w'$.  Since $c \nsim w$ and $b \nsim w'$, $u, u' \notin
\{b,c\}$.
  Consider a maximal clique containing the triangle $au'w$, a maximal
  clique containing the triangle $acw'$, and a maximal clique
  containing the triangle $uu'w'$. These three cliques
  pairwise intersect in $a$, $u'$, or $w'$. Since $G$ is finitely clique-Helly,
  there exists $u'' \sim a, c, w, u$.
By (R$_i$) applied to $\tu$, there exists $\tu'' \sim \tu, \tw$. By a
previous case applied to $\tw, \tu'' \in \tB_i$ and $\ta \in
\tS_{i+1}$, we have $\tu''\sim \ta$. By a previous case applied to
$\tu'' \in \tB_i$ and $\ta, \tc \in \tS_{i+1}$, we have $\tu''\sim
\tc$. Consequently, we can replace $\tw'$ by $\tu''$ and assume that
$\tw \sim \tw'$.

By the triangle condition TC($\tv$), there exist $\tu \in \tB_{i-1}$
adjacent to both $\tw,\tw'$. By Lemma~\ref{lem-locally-injective-clH},
$u \notin \{a,b,c\}$ and by Lemma~\ref{lem-homomorphism-clH}, $u \sim
w,w'$.
Consider a maximal clique containing the triangle $abw$, a maximal
clique containing the triangle $acw'$, and a maximal clique containing
the triangle $uww'$. Note that these three cliques pairwise intersect
in $a$, $w$, or $w'$. Since $G$ is finitely clique-Helly, there exists $u''
\sim a, b, c, w, w', u$.
By (R$_i$) applied to $\tu$, there exists $\tu'' \sim \tu, \tw,
\tw'$. Since $\tu \in \tB_{i-1}$ and from Remark~\ref{rem-local-inj},
$\tu'' \in \tS_i$.  By a previous case applied to $\tw, \tu'' \in
\tB_i$ (respectively, $\tw', \tu''$) and $\tb \in \tS_{i+1}$
(respectively, $\tc$), we obtain $\tu'' \sim \tb$ (respectively,
$\tu''\sim \tc$). Consequently, $\tb = [\tu'',b]$ and $\tc =
   [\tu'',c]$ and thus $\tb \sim \tc$.
\end{proof}

From the previous lemma, the image under $f_{i+1}$ of a triangle is a
triangle.  This allows us to extend the map $f_{i+1}$ to a simplicial
map $f_{i+1}\colon \tX_{i+1} \to X$.

\begin{lemma} \label{cellular-clH}
If $\widetilde{a}\widetilde{b}\widetilde{c}$ is a triangle in
$\tG_{i+1}$, then $abc$ is a triangle in $G$.
\end{lemma}

From Lemmata~\ref{lem-homomorphism-clH},
\ref{lem-locally-surjective-clH}, \ref{lem-locally-injective-clH}, and
\ref{lem-triangles-clH}, we immediatly get the following lemma.

\begin{lemma}\label{Ri+1-clH}\label{Ti+1-clH}
$f_{i+1}$ satisfies the conditions $(R_{i+1})$ and $(T_{i+1}).$
\end{lemma}

We now show that condition (S$_{i+1}$) also holds.
\begin{lemma} \label{Si+1-clH} For any $j \leq i $ and for any $\tw,\tw'\in \tS_{i}$
which are not adjacent and have a common neighbor  $\tz$ in $\tS_{i+1}$, there exist
$\tu\in \tS_{i-1}$ and $\tu'\in \tS_{i}$ such that $\tz\tw\tu\tw'$ is
  a square and $\tu' \sim \tz, \tw, \tu, \tw'$.
\end{lemma}

\begin{proof}
Since $\tz = [\tw,z] = [\tw',z]$, and since $\tw
\nsim \tw'$, by the definition of $\equiv$, there exist $\tu\in
\tS_{i-1}$ and $\tu'\in \tS_{i}$ such that $\tz\tw\tu\tw'$ is a square
and $\tu' \sim \tz, \tw, \tu, \tw'$.
\end{proof}




\subsection*{The universal cover $\tX$}  Let $\widetilde{X}_v$ denote the
triangle complex obtained as the directed union $\bigcup_{i\ge 0}
\widetilde{X}_i$, with a vertex $v$ of $X$ as the base-point. Denote
by $\tG_v$ the $1$--skeleton of $\widetilde{X}_v.$ Since each $\tG_i$
is weakly modular with respect to $\tv$, the graph $\tG_v$ is also
weakly modular with respect to $\tv$. Let $f=\bigcup_{i\ge 0}f_i$ be
the map from $\widetilde{X}_v$ to $X$.

In the next lemma, we show that the triangle complex of $\tG_v$ is
simply connected. The proof is similar to the proof of
Lemma~\ref{simplyconnected} (Lemma 5.5 in \cite{BCC+}).

\begin{lemma} \label{simplyconnected-clH}
The simplicial complex $\tX_v$ is simply connected.
\end{lemma}

\begin{proof}
 By contradiction, let $A$ be the set of cycles in $\tG_v$, which are
 not freely homotopic to $\tv$, and assume that $A$ is non-empty.  For
 a cycle $C\in A$, let $r(C)$ denote the maximal distance $d(\tw,\tv)$
 of a vertex $\tw\in C$ to the basepoint $\tv$.  Clearly $r(C)\geq 2$
 for any cycle $C\in A$ (otherwise $C$ would be null-homotopic). Let
 $B\subseteq A$ be the set of cycles $C$ with minimal $r(C)$ among
 cycles in $A$. Let $r:= r(C)$ for some $C\in B$. Let $D\subseteq B$
 be the set of cycles having minimal number $e$ of edges in $\tS_r$,
 i.e., with both endpoints at distance $r$ from $\tv$. Further, let
 $E\subseteq D$ be the set of cycles with the minimal number $m$ of
 vertices at distance $r$ from $\tv$.

 Consider a cycle $C=(\tw_1,\tw_2,...,\tw_k,\tw_1)\in E$.  We can
 assume without loss of generality that $d(\tw_2,\tv)=r$.  We
 distinguish two cases, depending on whether $\tw_2$ has a neighbor at
 distance $r$ from $\tv$ in $C$ or not.

 
 \begin{case-sc}
   $d(\tw_1,\tv)=r$ or $d(\tw_3,\tv)=r$.
 \end{case-sc}

  Assume without loss of generality that $d(\tw_1,\tv)=r$. Then, by
  (Q$_r$), there exists a vertex $\tw\sim \tw_1,\tw_2$ with
  $d(\tw,\tv)=r-1$.  Observe that the cycle
  $C'=(\tw_1,\tw,\tw_2,\ldots,\tw_k,\tw_1)$ belongs to $B$ -- in
  ${\tX_v}$ it is freely homotopic to $C$ by a homotopy going through
  the triangle $\tw\tw_1\tw_2$. The number of edges of $C'$ lying on
  the $r$-sphere around $\tv$ is less than $e$ (we removed the edge
  $\tw_1\tw_2$). This contradicts the choice of the number $e$.

  \begin{case-sc}
    $d(\tw_1,\tv)=d(\tw_3,\tv)=r-1$.
  \end{case-sc}

  By (S$_r$), there exists a vertex $\tw\sim \tw_1,\tw_2,\tw_3$ with
  $d(\tw,\tv)=r-1$.  Again, the cycle
  $C'=(\tw_1,\tw,\tw_3,...,\tw_k,\tw_1)$ is freely homotopic to $C$
  (via the triangles $\tw_1\tw\tw_2$ and $\tw_2\tw\tw_3$). Thus $C'$
  belongs to $D$ and the number of its vertices at distance $r$ from
  $\tv$ is equal to $m-1$. This contradicts the choice of the number
  $m$.
\medskip

In both cases above we get a contradiction. It follows that the set $A$
is empty and hence the lemma is proved.
\end{proof}

\begin{lemma} \label{covering_map-clH}
For any $\tw \in \widetilde{X}_v$, the map
$f|_{{\rm St}(\tw,\widetilde{X}_v)}$ is an isomorphism onto
${\rm St}(w,X)$, where $w=f(\tw)$. Consequently,
$f:~\!\!\widetilde{{X}}_v\rightarrow~\!\!X$ is a covering
map.
\end{lemma}

\begin{proof}
  Consider a vertex $\tw$ of $\tX_v$. Let $i$ be the distance between
  $\tv$ and $\tw$ in $\tG_v$ and consider the set $\tB_{i+2}$.  Then,
  from ($R_{i+2}$) we know that $f$ is an isomorphism between the
  graphs induced by $B_1(\tw,\tG_v)$ and $B_1(w,G)$, and consequently
  since $\tX_v$ and $X$ are flag complexes,
  $f:~\!\!\widetilde{{X}}_v\rightarrow~\!\!X$ is a covering map.
\end{proof}

Consequently, since $\tX_v$ is simply connected, it is the universal
covering space of $X$. It is then unique, i.e., not depending on $v$
and thus $\tG(=\tG_{v})$ is weakly modular with respect to every
vertex $\tv$ of $\tG$.

\begin{lemma}\label{lem-tG-clH}
  The graph $\tG$ is (finitely) clique-Helly and it satisfies the
  $(C_4,W_4)$-condition.
\end{lemma}

\begin{proof}
  We first show that $\tG$ is (finitely) clique-Helly.  Consider any
  (finite) set $S$ of pairwise intersecting maximal cliques of $\tG$.
  Note that since $f$ is a covering map, for each $K \in S$, $f(K)$ is
  a maximal clique in $G$. Let $K_0 \in S$ and for each $K \in S$, let
  $\tz(K)$ be any vertex in $K_0 \cap K$. Note that for any $K \in S$,
  $\tz(K_0) \sim \tz(K)$ and $f(\tz(K_0)) \sim f(\tz(K))$.

  Since $G$ is (finitely) clique Helly, there exists $y \in \bigcap_{K
    \in S} f(K)$.  In particular, for each $K \in S$, $f(\tz(K)) \sim
  y$.  Since $f$ is a covering map from $\tX$ to $X$,
  $f(N_{\tG}(\tz(K_0)))$ is isomorphic to $N_G(f(\tz(K_0)))$ and thus
  there exists $\ty$ in $\tG$ such that $\ty \sim \tz(K)$ for all $K
  \in S$. Moreover, for each $K \in S$, since $y \in f(K)$ and since
  $f(N_{\tG}(\tz(K)))$ is isomorphic to $N_G(f(\tz(K)))$, necessarily
  $\ty \in K$.

  \medskip

  We now show that $\tG$ satisfies the $(C_4,W_4)$-condition. For any
  square $\ta\tb\tc\td$ of $\tG$, let $a = f(\ta), b = f(\tb), c =
  f(\tc)$, and $d = f(\td)$. Since $f$ is a covering map, $abcd$ is a
  square in $G$. In $\tX = \tX_{a}$, note that $\tc \in \tS_2$ and
  that $\tb, \td \in \tS_1$. Consequently, by (S$_2$), there exists
  $\te \in \tS_1$ and $\ta' \in \tS_0$ such that $\te \sim
  \ta',\tb,\tc,\td$. Since $\tS_0=\{\ta\}$, we have $\ta = \ta'$ and
  we are done.
\end{proof}

Summarizing, $\tX$ is simply connected, $\tG = \tX^{(1)}$ is weakly
modular, (finitely) clique-Helly, and it satisfies the
$(C_4,W_4)$-condition. This ends the proof of
Proposition~\ref{t:universal-cover-Helly}.
\end{proof}

We can now prove Theorem~\ref{t:lotogloHell_bis}. Given a (finitely)
clique-Helly graph $G$, by Proposition~\ref{t:universal-cover-Helly},
the $1$-skeleton $\tG$ of the universal cover of the triangle complex
of $G$ is weakly modular graph, (finitely) clique-Helly, and satisfies
the $(C_4,W_4)$-condition. Therefore, by
Proposition~\ref{prop-1-Helly-clique-Helly}, $\tG$ is (finitely)
$1$--Helly. Consequently, by Proposition~\ref{prop-1-Helly-Helly}, $G$
is (finitely) Helly. This finishes  the proof of Theorem
\ref{t:lotogloHell_bis}.

We conclude this section with the proof of Theorem
\ref{t:lotogloHell}.
The finite Helly property easily implies that
Helly graphs are weakly modular, thus (i)$\Rightarrow$(ii).
If $G$ is a weakly modular $1$--Helly graph, then the fact that $G$ is
dismantlable follows from Proposition~\ref{prop-clh-domination}.  By
this proposition, any vertex $v$ at distance $k+1$ from a basepoint
$u$ is dominated in the ball $B_{k+1}(u)$ by a neighbor $x$ with
$d(u,x)=k$. This implies that any breadth-first-search order of the
vertices of $G$ starting with $u$ provides a dominating order of $G$
(By~\cite{Po-infbrid}, any graph admits a BFS ordering that is a
well-order). Since any $1$--Helly graph trivially is clique-Helly,
this establishes (ii)$\Rightarrow$ (iii) (notice that the equivalence
of (ii) and (iii) for finite graphs was proved in \cite{BP-absolute}).
The proof of the implication (iii)$\Rightarrow$(iv) is similar to the
proof of Lemma~\ref{simplyconnected-clH}. Let $\prec$ be a dismantling
order of $V(G)$ and let $v_0$ be the least element of this well-order.
We will show that any cycle is freely homotopic to $v_0$. Suppose this
is not the case. Among all cycles that are not freely homotopic to
$v_0$, consider a cycle $C=(w_1,w_2,\ldots,w_k)$ that minimizes
$\alpha(C) = \max_\prec \{w_i : w_i \in V(C)\}$. Note that any cycle
has a finite number of vertices and thus $\alpha(C)$ is
well-defined. Moreover, since $\prec$ is a well-order, there exists a
cycle $C$ that minimizes $\alpha(C)$. Among all such cycles $C$,
consider a shorter cycle $C$. If $w_i = w_j$ for some $1 \leq i < j
\leq k$, either $C_1=(w_i, \ldots, w_{j-1})$ or $C_2=(w_j,w_{j+1},
\ldots, w_k, w_1, \ldots, w_{i-1})$ is non contractible and since
$\alpha(C_1) \preceq \alpha(C)$ and $\alpha(C_2) \preceq \alpha(C)$,
it contradicts our choice of $C$; consequently, we can assume that $C$
is a simple cycle. Since $C$ is not freely homotopic to $v_0$,
necessarily $k \geq 3$ and $v_0 \prec \alpha(C)$. Without loss of
generality, assume that $\alpha(C) = w_2$. Note that there exists
$w_2' \prec w_2$ such that $w'_2 \in B_1(w_1)\cap B_1(w_2) \cap
B_1(w_3)$ and consider the cycle $C'=(w_1,w_2',w_3,\ldots,w_k)$. Note
that $C$ is freely homotopic to $C'$ via the triangles $w_1w_2w_2'$
and $w_2w_2'w_3$. Moreover, since $\alpha(C') \prec \alpha(C)$, from
our choice of $C$, $C'$ is freely homotopic to $v_0$. Consequently,
$C$ is freely homotopic to $v_0$, a contradiction. This establishes
that (iii)$\Rightarrow$(iv).  The implication (iv)$\Rightarrow$(i) is
the content of Theorem \ref{t:lotogloHell_bis}. Notice also that the
implication (v)$\Rightarrow$(iv) holds for all graphs

To conclude the proof, let $G$ be a graph with a finite-dimensional
clique complex $X(G)$. We will
show that (ii)$\Rightarrow$(v) holds in this case. Let $v_0$ be a basepoint of
$G$. For any integer $i\ge 0$ let $X_i$ denote the subcomplex of
$X(G)$ spanned by the vertices of $G$ in the ball $B_i(v_0)$. We
define a map $f_i: X_{i+1}\rightarrow X_i$ as follows.  First, we set
$f_i(v):=v$ for any vertex $v\in B_i(v_0)$. By Proposition
\ref{prop-clh-domination}, for any vertex $v\in B_{i+1}(v_0)\setminus
B_i(v_0)$ there exists a vertex $x\in B_1(v)\cap B_i(v_0)$ such that
$B_1(v)\cap B_{i+1}(v_0)\subseteq B_1(x)$.  We set $f_i(v):=x$.  We
assert that $f_i$ can be extended to a simplicial retraction map from
$X_{i+1}$ to $X_i$. Let $\sigma$ be a simplex of $X_{i+1}$. By the
definition of $f_i(v)$ for $v\in \sigma$ we conclude that
$\bigcup_{v\in \sigma} f_i(v)$ is a simplex $\sigma'$ of $X_i$ (in fact, $\sigma\cup \sigma'$
is a simplex of $X_{i+1}$), thus
we can extend $f_i$ by setting $f_i(\sigma)=\sigma'$. Since $X(G)$ is
the directed union $\bigcup_{i\geq 0} X_i$, $X(G)$ is contractible by
Whitehead's theorem.  This finishes the proof of
Theorem~\ref{t:lotogloHell}.

\begin{Rem} Theorem \ref{t:lotogloHell} implies the Proposition \ref{Helly_Polat} of Polat and Pouzet \cite{Po_helly}. Indeed,
if $G$ is a finitely Helly graph not containing infinite cliques, then $G$ is weakly modular and satisfies the
$(C_4,W_4)$-condition (by applying the Helly property for triplets and quadruplets of balls). Therefore the clique complex of $G$ is simply connected.
Since $G$ is finitely Helly, obviously $G$ is  finitely clique-Helly. By compactness since all cliques of $G$ are finite, $G$ is clique-Helly, thus $G$ is Helly by (iv).
\end{Rem}

\section{A note on meshed graphs}
\label{s:locmesh}
The notion of a meshed graph (defined in Subsection~\ref{prel:other}) seems to be a natural
generalization of a weakly modular graph. In Chapter~\ref{s:metric} we show that meshed graphs have some features
typical for nonpositive curvature. However, in contrast with Theorem~\ref{t:lotoglo2}, there is no local-to-global characterization
for meshed graphs as we show below. This is a strong restriction for considering this natural class of graphs as further analogues for
nonpositive curvature. Some counterexamples for the local-to-global behavior are depicted in Figure~\ref{fig-lomeshnomesh}, and further examples (for
larger radii) can be constructed analogously.

\begin{figure}[ht]
\begin{center}
\scalebox{0.5}{\includegraphics{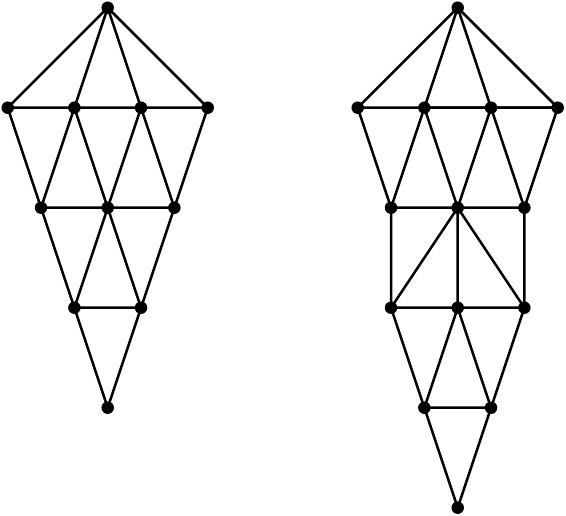}}
\end{center}
\caption{Balls of radii $1$ (on the left) or up to $2$ (on the right) in those graphs are meshed, but the whole graphs are not meshed.}\label{fig-lomeshnomesh}
\end{figure}

\chapter{Pre-Median Graphs}\label{sec:premedian}
In this chapter, we study the structure of pre-median graphs (pm-graphs) introduced and investigated
by Chastand \cites{Cha1,Cha2} (for definition, see
Subsection \ref{classes}). We characterize the prime pre-median graphs (ppm-graphs), thus answering a question by M. Chastand.
We also present several examples of thick pre-median graphs. Some of them are related to basis graphs of matroids,
even $\triangle$--matroids, and, more generally, to $l_1$--graphs and hypermetric graphs. We believe  that
thick pre-median (or convex) subgraphs of all pre-median graphs $G$ define  contractible cell complexes on $G$.
We confirm this  in the case
of $L_1$--embeddable weakly modular graphs (which are all pre-median): in this case, these contractible complexes
have cells derived from octahedral and matroidal thick subgraphs of $G$.

\section{Main results}
The following two results are immediate corollaries of Theorems~\ref{t:lotoglo2} and Lemma~\ref{l:noincov}.

\begin{theorem}
\label{t:lotoglo_prem}
A graph $G$ is pre-median if and only if $G$  is locally weakly modular, does not contain induced $K_{2,3}$ and $W^-_4$, and
its triangle-square complex is simply connected.
\end{theorem}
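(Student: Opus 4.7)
The plan is to derive Theorem~\ref{t:lotoglo_prem} as a direct combination of Theorem~\ref{t:lotoglo2}, Lemma~\ref{simplyconnected}, and the definitional characterization of pre-median graphs. There are no surprises hidden in the argument, so the write-up should be short; the only subtlety is to note that the forbidden subgraph conditions in $G$ pass transparently back and forth under the local-to-global equivalence provided by Theorem~\ref{t:lotoglo2}, a point that is formally underwritten by Lemma~\ref{l:noincov}.

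First I would handle the forward direction. Suppose $G$ is pre-median. By definition (see Subsection~\ref{classes}), this means that $G$ is weakly modular and contains no induced $K_{2,3}$ and no induced $W_4^-$. Weak modularity immediately implies local weak modularity (since the global triangle and quadrangle conditions specialize to their local versions). Moreover, Lemma~\ref{simplyconnected} gives that the triangle-square complex $X\trsq(G)$ of any weakly modular graph is simply connected. Thus all three properties listed in the theorem hold for $G$.

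For the backward direction, assume $G$ is locally weakly modular, has no induced $K_{2,3}$ and $W_4^-$, and $X\trsq(G)$ is simply connected. The simple connectivity of $X\trsq(G)$ means that its universal cover coincides with $X\trsq(G)$ itself, and so the $1$--skeleton $\tG$ of the universal cover equals $G$. Theorem~\ref{t:lotoglo2} now directly applies and yields that $G$ is weakly modular. Combined with the assumed absence of induced $K_{2,3}$ and $W_4^-$, this gives the definition of a pre-median graph. (Lemma~\ref{l:noincov} is the clean statement that the presence of induced $K_{2,3}$ or $W_4^-$ is preserved between $G$ and the $1$--skeleton of any cover of $X\trsq(G)$, which is what ensures the forbidden-subgraph conditions transfer without loss.)

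Since both implications follow immediately by quoting the previously established results, I expect no genuine obstacle. The main content of the theorem is really Theorem~\ref{t:lotoglo2}; here we are simply intersecting that characterization with the two additional forbidden subgraphs $K_{2,3}$ and $W_4^-$ that cut out the pre-median subclass.
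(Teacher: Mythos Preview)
Your proposal is correct and matches the paper's approach: the paper simply states that Theorem~\ref{t:lotoglo_prem} is an immediate corollary of Theorem~\ref{t:lotoglo2} and Lemma~\ref{l:noincov}, which is exactly the combination you invoke. Your observation that Lemma~\ref{l:noincov} is not strictly needed for this particular statement (since simple connectivity makes the universal cover coincide with $X\trsq(G)$ itself) is accurate; that lemma is genuinely required only for the companion result, Theorem~\ref{t:lotoglo_prem2}.
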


\begin{theorem}
\label{t:lotoglo_prem2}
Let $G$ be a locally weakly modular graph not containing induced $K_{2,3}$ and $W^-_4$, and let $\tG$ be the $1$--skeleton of the universal cover
$\widetilde{X}\trsq(G)$ of the triangle-square complex $X\trsq(G)$ of $G$. Then $\tG$ is pre-median.
\end{theorem}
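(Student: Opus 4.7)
The plan is to obtain the conclusion by directly combining two already-established tools: Theorem~\ref{t:lotoglo2}, which controls the global metric structure of the universal cover, and Lemma~\ref{l:noincov}, which controls which induced subgraphs can appear in $\tG$. Recall that by definition (see Subsection~\ref{classes}, item~4), a pre-median graph is a weakly modular graph without induced $K_{2,3}$ and without induced $W_4^-$. So the task reduces to verifying these three properties for the $1$--skeleton $\tG$ of the universal cover $\widetilde{X}\trsq(G)$.

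First I would apply Theorem~\ref{t:lotoglo2}: since $G$ is locally weakly modular by hypothesis, that theorem states precisely that $\tG$ is weakly modular. This handles the main structural property, and is really the substantive ingredient — all the hard work of constructing concentric balls, checking the (TC), (QC) conditions and verifying the properties (P$_i$)--(T$_i$) has already been performed there.

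Next I would rule out the two forbidden induced subgraphs. Lemma~\ref{l:noincov} says that for $L\in\{K_{2,3},W_4^-,K_4^-\}$, the covering graph $\tG$ contains an induced copy of $L$ if and only if $G$ contains an induced copy of $L$. Applying this with $L=K_{2,3}$ and $L=W_4^-$, and using the hypothesis that $G$ has no induced $K_{2,3}$ or $W_4^-$, we conclude that $\tG$ also has none.

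Combining these two observations — $\tG$ is weakly modular, and $\tG$ has no induced $K_{2,3}$ nor $W_4^-$ — yields that $\tG$ is pre-median. There is no real obstacle here: both ingredients (Theorem~\ref{t:lotoglo2} for the metric part, Lemma~\ref{l:noincov} for the forbidden subgraph part) do all the work, so the proof is one short paragraph noting that local weak modularity is preserved under the absence of the additional forbidden subgraphs $K_{2,3}$ and $W_4^-$, and then quoting the two results. As a corollary, the ``in particular'' form of Theorem~\ref{t:lotoglo_prem} follows by specializing to the case where $X\trsq(G)$ is already simply connected, so that $\tG=G$.
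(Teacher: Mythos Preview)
Your proposal is correct and matches the paper's approach exactly: the paper explicitly states that Theorems~\ref{t:lotoglo_prem} and~\ref{t:lotoglo_prem2} are ``immediate corollaries of Theorems~\ref{t:lotoglo2} and Lemma~\ref{l:noincov},'' which is precisely the combination you invoke.
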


\begin{figure}[ht]\label{W_4M_4}
\begin{center}
\includegraphics[scale=0.5]{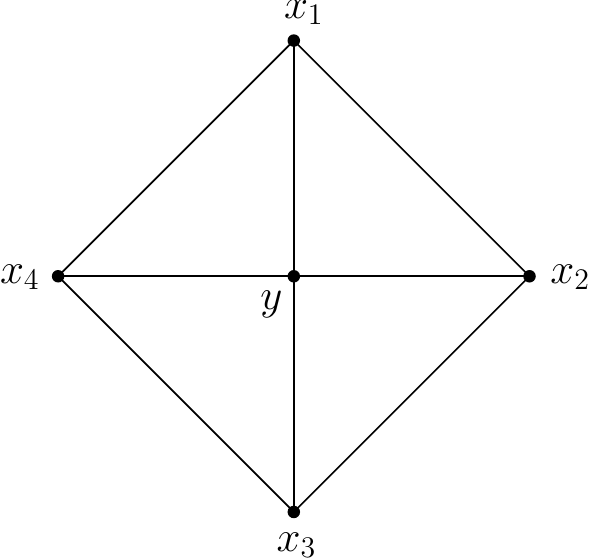}\qquad\qquad\includegraphics[scale=0.5]{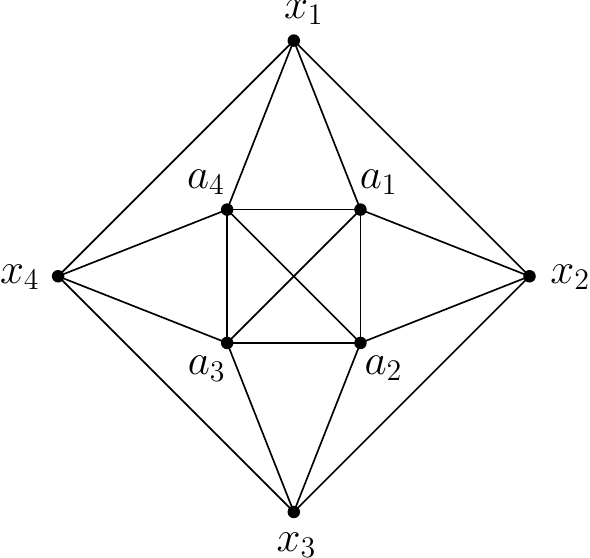}
\end{center}
\caption{A $W_4$ (left) and a $M_4$ (right)}\label{fig-M4-W4}
\end{figure}

We continue with the  characterization of ppm-graphs. Let $W_k^-=(c,x_1,\ldots,x_k)$ denote the almost $k$--wheel in which $C=(x_1,\ldots,x_k,x_1)$
is an induced $k$--cycle and $c$
is adjacent to all vertices of $C$ except $x_1$. By $M_4$ we denote the graph
consisting of an induced $4$--cycle $(x_1,x_2,x_3,x_4)$ and four
pairwise adjacent vertices $a_1,a_2,a_3,a_4$ such that $a_1\sim
x_1,x_2; a_2\sim x_2,x_3; a_3\sim x_3,x_4; a_4\sim x_4,x_1$ and
$a_1\nsim x_3,x_4; a_2\nsim x_1,x_4; a_3\nsim x_1,x_2; a_4\nsim
x_2,x_3$ (see Figure~\ref{fig-M4-W4}, right).

\begin{theorem} \label{prime_premedian} For a graph $G$, the following conditions are equivalent:
\begin{itemize}
\item[(i)] $G$ is a prime pre-median graph;
\item[(ii)] $G$ is a 2-connected pre-median graph and each square of $G$ is included in an induced $W_4$ or  $M_4$;
\item[(iii)]  $G$ is a 2-connected pre-median graph and its triangle complex $X\tr(G)$ (and hence its clique complex $X(G)$)
is simply connected;
\item[(iv)] $G$ is a 2-connected locally weakly modular graph not containing induced $K_{2,3},W^-_4$, and its triangle
complex $X\tr(G)$ is simply connected.
\end{itemize}
\end{theorem}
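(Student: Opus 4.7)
My plan is to prove the chain $\text{(i)} \Rightarrow \text{(ii)} \Rightarrow \text{(iii)} \Rightarrow \text{(iv)} \Rightarrow \text{(i)}$. Two of these implications are essentially immediate: $\text{(iii)} \Rightarrow \text{(iv)}$ follows directly from the definition of a pre-median graph, and for the pre-medianess part of $\text{(iv)} \Rightarrow \text{(i)}$ I observe that $X\tr(G)$ is a subcomplex of $X\trsq(G)$ with the same $1$--skeleton, so simple connectedness of the former forces that of the latter, and Theorem~\ref{t:lotoglo_prem} then yields that $G$ is pre-median. The $2$--connectedness hypothesis in $\text{(i)} \Rightarrow \text{(ii)}$ follows from primality, since a cut vertex $v$ would decompose $G$ as a gated amalgam along $\{v\}$.

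For $\text{(ii)} \Rightarrow \text{(iii)}$, since $G$ is pre-median, Lemma~\ref{simplyconnected} yields simple connectedness of $X\trsq(G)$; it therefore suffices to fill every square cycle of $G$ by triangles in $X\tr(G)$. A square $(v_1, v_2, v_3, v_4)$ in an induced $W_4$ with center $c$ is filled by the four triangles $\{v_i, v_{i+1}, c\}$. A square in an induced $M_4$ with pairwise adjacent cap vertices $a_1, a_2, a_3, a_4$ (where $a_i \sim v_i, v_{i+1}$) is filled in three stages: first the triangles $\{v_i, v_{i+1}, a_i\}$ replace the square by an $8$--cycle through the caps; next the corner triangles $\{v_{i+1}, a_i, a_{i+1}\}$ reduce this to the $4$--cycle $(a_1, a_2, a_3, a_4)$ on the caps; finally the two triangles $\{a_1, a_2, a_3\}$ and $\{a_1, a_3, a_4\}$ of the $K_4$ on the caps complete the null-homotopy.

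The main work lies in the two remaining implications, which share a common structural theme. For $\text{(i)} \Rightarrow \text{(ii)}$ I argue contrapositively: if some square $Q$ of $G$ is contained in no $W_4$ or $M_4$, then the absence of triangle completions of $Q$, combined with the quadrangle condition and the forbidden induced subgraphs $K_{2,3}$ and $W_4^-$, will force a global parallelism relation on the two pairs of opposite edges of $Q$ whose equivalence classes partition $V(G)$ into convex halfspaces; this yields a nontrivial Cartesian factorization of $G$, contradicting primality. Here Chastand's fiber-complementation (Theorem~\ref{premedian_Chastand}(ii)) is essential. For the primality part of $\text{(iv)} \Rightarrow \text{(i)}$, I use Chastand's equivalence \emph{prime $\Leftrightarrow$ elementary} in pre-median graphs (Theorem~\ref{premedian_Chastand}(i)), reducing the task to the exclusion of proper gated subgraphs of size $\geq 2$. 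A nontrivial Cartesian factor $G = G_1 \square G_2$ produces an induced product square $Q$ in which every triangle of $G$ lies in a single fiber, so $X\tr(G)$ is a ``prism'' with the product squares missing; computing $\pi_1$ via the Euler characteristic of a minimal disk diagram (or, equivalently, via a suitable cohomological obstruction on the cellular chain complex) shows that $Q$ is not null-homotopic in $X\tr(G)$. A gated amalgam along a gated subgraph of size $\geq 2$ (the singleton case being excluded by $2$--connectedness) admits an analogous obstruction.

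I expect the main obstacle to be the direction $\text{(i)} \Rightarrow \text{(ii)}$: converting the local absence of a $W_4$ or $M_4$ completion of a single square into a global product decomposition requires establishing transitivity of the candidate parallelism relation on edges, which in turn rests on the full strength of Chastand's fiber-complementation theory together with a delicate case analysis combining the quadrangle condition with the prohibitions of $K_{2,3}$ and $W_4^-$.
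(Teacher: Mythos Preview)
Your treatment of $(ii)\Rightarrow(iii)$ and $(iii)\Rightarrow(iv)$ matches the paper. Both substantive implications, however, have genuine gaps.

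For $(i)\Rightarrow(ii)$: your contrapositive claim --- that a square in no $W_4$ or $M_4$ forces a Cartesian factorization --- is false, not merely difficult. The L-tromino (the median graph on $\{0,1,2\}^2\setminus\{(2,2)\}$ with grid edges) is a $2$--connected pre-median graph in which every square is ``bad'' (there are no triangles, hence no $W_4$ or $M_4$), yet it is not a nontrivial Cartesian product: its degree sequence $(4,3,3,2,2,2,2,2)$ has the value~$4$ occurring once, whereas in any $H\times K_2$ each degree occurs with even multiplicity, and no other nontrivial factorization of eight vertices is available. The L-tromino \emph{is} non-prime --- it is a gated amalgam of two dominoes along the corner square --- so the theorem survives, but your parallelism/halfspace mechanism does not detect this. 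The paper's argument is entirely different: it grows the gated hull $K$ of a triangle of $G$ by transfinite induction, proving along the way (via a theory of ``fanned'' $2$--paths and a sequence of lemmata on $W_k^-$ configurations) that every square appearing in $K$ lies in a $W_4$ or $M_4$ (Lemmata~\ref{lem-prem-Wk}, \ref{fanned-4cycle}, \ref{p:gate1}, \ref{p:null-homotopic}); primality then gives $K=G$.

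For $(iv)\Rightarrow(i)$: your Cartesian-product case can indeed be made rigorous homologically (every triangle lives in a single fiber, so a product square represents a nonzero class in $H_1(X\tr(G))$). But the gated-amalgam case is not ``analogous'': Seifert--van Kampen gives $\pi_1(X\tr(G))\cong\pi_1(X\tr(G_1))\ast_{\pi_1(X\tr(G_0))}\pi_1(X\tr(G_2))$, and nothing you have said rules out this pushout being trivial. The paper bypasses the case split entirely: Lemma~\ref{lem-simpl-connected-elmentary} shows directly, via a minimal disk diagram and a two-colouring of the planar link of a boundary vertex, that any $2$--connected graph with simply connected clique complex is elementary. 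Combined with Chastand's prime~$\Leftrightarrow$~elementary, this yields $(iii)\Rightarrow(i)$ in one stroke.
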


In the sequel, with some abuse of notation (and in view of Theorem
\ref{prime_premedian}(ii)), we will call a graph $G$ {\it prime
  pre-median} if $G$ is a pre-median graph in which each square is
included in an induced $W_4$ or $M_4$ (i.e., we do not require
2-connectedness). Recall that a graph $G$ satisfies the 2-interval condition
(IC$m$) if each 2-interval $I(u,v)$ is an induced subgraph of the
$m$--hyperoctahedron.
A {\it propeller} is the graph $K_5-K_3$, i.e.,
the graph consisting of three triangles glued along a common edge. Recall that a graph $G$ is thick if each $2$--interval of $G$ contains an induced square.
The following result
summarizes several new examples of
pm- and ppm-graphs (recall that thick graphs are 2-connected):

\begin{proposition} \label{premedian-examples}\ 
  
  \begin{itemize}
  \item[(1)] Hyperoctahedra, half-cubes, Johnson graphs, the Schl\"afli graph, and the Gosset graphs are
thick ppm-graphs.
Their retracts are (not necessarily 2-connected) ppm-graphs and the retracts of their weak Cartesian products are pm-graphs.

\item[(2)] Thick $L_1$--pm-graphs are weak Cartesian products of thick $L_1$--ppm-graphs. Finite thick $L_1$--ppm-graphs are either
subgraphs of hyperoctahedra or  basis graphs of even $\triangle$--matroids (and thus isometric subgraphs of half-cubes). The
latter graphs are exactly the finite ppm-graphs satisfying the condition (IC4), not containing propellers, and the links
of vertices of which do not contain induced $W_5$.

\item[(3)] Prime $L_1$--pm-graphs are 2-connected subhyperoctahedra and isometric ppm-graphs of half-cubes.
  \end{itemize}
\end{proposition}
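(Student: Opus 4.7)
The plan is to prove the three parts sequentially, using the characterization of ppm-graphs in Theorem~\ref{prime_premedian}, the closure Proposition~\ref{prod-retracts}, Shpectorov's embedding Theorem~\ref{th:shpectorov}, Chastand's decomposition Theorem~\ref{premedian_Chastand}, the matroid recognition Theorem~\ref{Th_Mau}, and the finiteness Lemma~\ref{lemma:L1_finete_hull}.

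\textbf{Part (1).} For each graph $H$ in the list (hyperoctahedra $K_{n\times 2}$, half-cubes $\tfrac12 H_n$, Johnson graphs $J(n,k)$, and the Schl\"afli and Gosset graphs) I would verify directly the four conditions in Theorem~\ref{prime_premedian}(ii): weak modularity, absence of induced $K_{2,3}$ and $W_4^-$, thickness, and the property that each square is contained in an induced $W_4$ or $M_4$. For half-cubes and Johnson graphs, the set-theoretic description with $d(A,B)=\tfrac12|A\triangle B|$ translates the triangle and quadrangle conditions into straightforward identities on pairs of coordinate swaps, and a square coming from two disjoint swaps is either completed to a $W_4$ (when a joint extension exists) or produces an $M_4$. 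For hyperoctahedra everything is immediate from diameter $2$, the five remaining vertices of a $W_4$ apex being any vertex nonantipodal to the four of the square. For the Schl\"afli and the two Gosset graphs I would use their standard realizations as root-set halves inside $\tfrac12 H_n$ (for $E_6,E_7,E_8$-root systems), transferring the absence of $K_{2,3}$ and $W_4^-$ from the ambient half-cube and using vertex-transitivity to reduce the ``each square in a $W_4$ or $M_4$'' check to a single orbit representative. The ``retract'' and ``weak Cartesian product'' statements then follow from Proposition~\ref{prod-retracts}.

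\textbf{Part (2).} Let $G$ be a thick $L_1$-pm-graph. By Chastand's Theorem~\ref{premedian_Chastand}(iii),(v), $G$ embeds isometrically into a weak Cartesian product of its prime gated subgraphs, each of which is a ppm-graph; thickness and $L_1$-embeddability pass to gated subgraphs, so every prime factor is a thick $L_1$-ppm-graph, reducing the classification to the prime case. For a finite thick $L_1$-ppm-graph $H$, Shpectorov's Theorem~\ref{th:shpectorov} embeds $H$ isometrically into a weak Cartesian product of finite hyperoctahedra and half-cubes; the primeness of $H$, together with the irreducibility argument in the proof of Proposition~\ref{prod-retracts}, forces $H$ to live inside a single factor, so $H$ is either a subhyperoctahedron or an isometric thick subgraph of some $\tfrac12 H_n$. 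In the half-cube case, the scale-$2$ embedding automatically yields the positioning condition (PC); thickness is the hypothesis; (IC4) holds because every $2$-interval of $\tfrac12 H_n$ is a subhyperoctahedron and thickness of $H$ promotes each $2$-interval to a full $K_{m\times 2}$; and excluding propellers together with the ppm condition forces vertex-links of $H$ to be free of induced $W_5$ (the propeller would give a $W_5$ in the link) and $W_6$ (which would force a $W_4^-$ in $G$). Theorem~\ref{Th_Mau} then identifies $H$ as the basis graph of an even $\triangle$-matroid. For the converse direction, a basis graph of an even $\triangle$-matroid is meshed and satisfies (IC4) and (PC) by \cite{Che_bas}; the weakly modularity and absence of induced $K_{2,3},W_4^-$ are inherited from the half-cube, and every square (two commuting exchanges) either is completed to a $W_4$ by a simultaneous triple-exchange basis or else constitutes the top of an induced $M_4$ via two adjacent single-exchange neighbors of each corner, giving the prime ppm property.

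\textbf{Part (3).} A prime $L_1$-pm-graph $G$ is, by definition, a ppm-graph; applying the analysis of (2) to $G$ itself (without the finiteness hypothesis), Lemma~\ref{lemma:L1_finete_hull} shows that the convex hull of any finite vertex set of $G$ is finite, and on such finite convex subgraphs the dichotomy of (2) applies uniformly, placing $G$ as a directed union of finite prime subgraphs all sitting inside a single fixed hyperoctahedron or half-cube. Hence $G$ is either a subhyperoctahedron or an isometric ppm-subgraph of a half-cube, establishing (3).

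The main obstacle will be the matroidal recognition step in (2): matching the thick $L_1$-ppm hypotheses with those of Theorem~\ref{Th_Mau}, particularly deriving the absence of induced $W_5$ and $W_6$ in vertex-links from ``no propellers'' and ``no induced $W_4^-$'', and checking in the converse direction that every square in an even $\triangle$-matroid basis graph is really in a $W_4$ or $M_4$ via the single-exchange structure. The verification of weak modularity for the Schl\"afli and Gosset graphs in (1) is largely organizational but needs care, since a direct case analysis on $27$ or $56/240$ vertices is best replaced by the root-system realization and an appeal to Shpectorov-type $l_1$-embedding arguments.
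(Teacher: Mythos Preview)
Your proposal has several concrete gaps.

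\textbf{Part (1), retracts.} Proposition~\ref{prod-retracts} only tells you that retracts of pre-median graphs are pre-median; it says nothing about primeness. To show that a retract $G$ of, say, a half-cube is a \emph{prime} pre-median graph you must verify that every square of $G$ lies in a $W_4$ or $M_4$. The paper does this (Proposition~\ref{half-cube}) by using that in the ambient graph each $2$--interval is a hyperoctahedron of dimension $\ge 3$, so every square has two nonadjacent apex candidates, and the retraction image of such an apex provides the required $W_4$--center in $G$. Your citation of Proposition~\ref{prod-retracts} does not supply this.

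\textbf{Part (1), Schl\"afli and Gosset.} Embedding these graphs isometrically in a half-cube does not give you weak modularity: an isometric subgraph of a weakly modular graph need not be weakly modular (indeed the paper exhibits basis graphs of even $\triangle$--matroids, hence isometric subgraphs of half-cubes, that fail (QC)). The paper instead proves (TC) and (QC) for $G_{27}$ and $G_{56}$ directly from their parameters: for $G_{27}$ a counting argument ($27$ vertices, $10$ and $8$ common neighbors), and for $G_{56}$ the fact that antipodal closed neighborhoods partition the $56$ vertices.

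\textbf{Part (2), link conditions.} Your mechanism is wrong. A propeller at a vertex $v$ gives an induced $K_{1,3}$ in the link of $v$, not a $W_5$; conversely $W_5$ contains no $K_{1,3}$, so forbidding propellers does not by itself exclude $W_5$ in links. The paper's route is: isometric subgraphs of half-cubes have links that are line graphs (Lemma~\ref{lemma:linksl1}), and $W_5$ is one of Beineke's forbidden subgraphs for line graphs. The $W_6$ exclusion, on the other hand, \emph{does} follow from no-propellers (a $W_6$ in the link of $v$ yields a propeller on $v$, the wheel center, and three alternate rim vertices), so you had the two cases reversed.

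\textbf{Part (2), ``converse direction''.} You are proving the wrong equivalence. The statement asserts that the finite thick $L_1$--ppm-graphs arising as basis graphs of even $\triangle$--matroids are exactly the finite ppm-graphs in the class $\mathcal{M}_4$ (i.e., with (IC4), no propellers, no $W_5$ in links). One direction is Lemma~\ref{thick-prime-pre-med}; the other is Proposition~\ref{prime-to-matroids}, which uses Proposition~\ref{positioning-condition} (pm-graphs without propellers satisfy (PC)) together with Theorem~\ref{Th_Mau}. You instead try to show that an arbitrary basis graph of an even $\triangle$--matroid is a ppm-graph; this is false, as the paper's example following Proposition~\ref{prime-to-matroids} shows.
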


Let now $G$ be an $L_1$--embeddable weakly modular graph admitting a scale embedding into a hypercube $H(X)$.
Since $K_{2,3}$ and $W^-_4$ are not $L_1$--graphs, $G$ is a pre-median graph.
Let $\varphi$ be a (scale) isometric embedding of $G$ into a hypercube $H(X)$ on a set $X$ and suppose that $X$ is countable.
Let ${\mathcal C}(G)$ be the set of all finite thick isometric  weakly modular subgraphs of $G$. By Proposition \ref{premedian-examples}(2), each subgraph $H$ of ${\mathcal C}(G)$ is
a weak Cartesian product of  basis graphs of even $\triangle$--matroids or thick subgraphs of hyperoctahedra. By this and the results of \cites{Che_bas,GeGoMcPhSe} (see Proposition \ref{basis-polyhedron} below),
$H$ is the 1-skeleton of a  polyhedron $[H]$ of ${\mathbb R}^X$, which is the convex hull of
the characteristic vectors $\varphi(v)$ of the vertices $v$ of $H$. Let $C(G)=\bigcup \{ [H]: H\in {\mathcal C}(G)\}$ be the subspace of ${\mathbb R}^X$, which is the union of all such
polyhedra (the formal definition of $C(G)$ is given in Section \ref{top-space}).
This construction of $C(G)$ generalizes known constructions of
several contractible nonpositively curved complexes.
Examples are median complexes for median graphs, weakly systolic complexes of weakly systolic graphs, and more generally, bucolic complexes for bucolic graphs.

\begin{theorem} \label{contractible} Let $G$ be an $L_1$--weakly modular graph admitting a scale embedding into a hypercube $H(X)$ with
countable $X$. Then $C(G)$ is a contractible subspace of ${\mathbb R}^X$ and the union of 1--skeleta  of cells of $C(G)$ coincides with $G$.
\end{theorem}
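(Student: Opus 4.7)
The plan is to prove Theorem~\ref{contractible} in three stages: verify the 1-skeleton assertion, establish contractibility of $C(K)$ for every finite $L_1$-embeddable weakly modular graph $K$ by induction along Chastand's gated amalgam decomposition (Theorem~\ref{premedian_Chastand}(v)), and conclude the general case via a directed colimit using finiteness of convex hulls (Lemma~\ref{lemma:L1_finete_hull}).

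For the 1-skeleton claim, any edge $uv \in E(G)$ yields $\{u,v\} \in \mathcal{C}(G)$ (vacuously thick, trivially isometric and weakly modular), whose associated cell is the segment $[\varphi(u),\varphi(v)] \subset \mathbb{R}^X$, realizing $uv$. Conversely, any $H \in \mathcal{C}(G)$ is, by Proposition~\ref{premedian-examples}(2), a weak Cartesian product of subhyperoctahedra and basis graphs of even $\triangle$-matroids, and the associated product of basis polytopes has exactly $H$ as its 1-skeleton; hence every edge of every cell of $C(G)$ is an edge of $G$.

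For the finite case, I induct on $|V(K)|$. If $K$ is prime, Proposition~\ref{premedian-examples}(3) identifies $K$ as a subhyperoctahedron or a basis graph of an even $\triangle$-matroid, so $K \in \mathcal{C}(K)$, every other element of $\mathcal{C}(K)$ is a face of $[K]$, and $C(K) = [K]$ is a convex Euclidean polytope. If $K = K_1 \square K_2$ is a nontrivial Cartesian product, I will show that $\mathcal{C}(K) = \{F_1 \square F_2 : F_i \in \mathcal{C}(K_i)\}$: the projections preserve thickness, isometry, and weak modularity via the factorization of distances in products, and thickness combined with weak modularity forces closure under the rectangular completion $(a,b'),(a',b) \mapsto (a,b)$; hence $C(K) \cong C(K_1) \times C(K_2)$ is contractible by induction. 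If $K = K_1 \cup_{K_0} K_2$ is a gated amalgam of strictly smaller pre-median graphs, I claim no $F \in \mathcal{C}(K)$ straddles $K_0$. Assume $F$ contains $x \in K_1 \setminus K_0$ and $y \in K_2 \setminus K_0$ with minimal $k := d_F(x,y) \geq 2$. For $k=2$, the unique common neighbor of $x,y$ in $K$ is the gate $g_{K_0}(y) = g_{K_0}(x)$, so $I_F(x,y)$ has at most three vertices and cannot contain a square -- contradicting thickness. For $k \geq 3$, minimality forces every interior vertex $z_1,\ldots,z_{k-1}$ of any shortest $(x,y)$-path in $F$ to lie in $K_0$ (else $(z_i,y)$ or $(x,z_i)$ would yield a shorter straddling pair), and then thickness applied to $I_F(x,z_2)$ supplies two non-adjacent common neighbors $a,b \in F$ of $x,z_2$; these again must lie in $K_0$, and the uniqueness of the gate of $x$ in $K_0$ forces $a = b = g_{K_0}(x)$, a contradiction. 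Hence $\mathcal{C}(K) = \mathcal{C}(K_1) \cup \mathcal{C}(K_2)$ with $\mathcal{C}(K_1) \cap \mathcal{C}(K_2) = \mathcal{C}(K_0)$, so $C(K)$ is the union of the two contractibles $C(K_1), C(K_2)$ along the contractible subspace $C(K_0)$, and thus is itself contractible.

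For the general case, let $V(G) = \bigcup_n S_n$ be an increasing exhaustion by finite sets and set $G_n := \Conv(S_n)$ in $G$. By Lemma~\ref{lemma:L1_finete_hull} each $G_n$ is finite; being convex in $G$ it is isometric, inherits the absence of induced $K_{2,3}$ and $W_4^-$, and the triangle and quadrangle conditions carry over by convexity, so $G_n$ is a finite $L_1$-pre-median graph and $C(G_n)$ is contractible by the finite case. Every $H \in \mathcal{C}(G)$ is finite, hence contained in some $G_n$ and still in $\mathcal{C}(G_n)$, so $C(G) = \bigcup_n C(G_n)$ is a directed colimit of contractible finite polyhedral complexes along inclusions of CW subcomplexes. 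Consequently $\pi_k(C(G)) = \varinjlim \pi_k(C(G_n)) = 0$ for every $k \geq 0$, and Whitehead's theorem applied to the CW complex $C(G)$ yields contractibility.

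The principal technical obstacle is the Cartesian product step of the finite induction, where the splitting $\mathcal{C}(K_1 \square K_2) = \{F_1 \square F_2 : F_i \in \mathcal{C}(K_i)\}$ requires a delicate analysis of isometric projections in products of pre-median graphs together with a rectangular-completion argument extracted from the weak modularity and thickness of $F$. The gated amalgam step, though subtle, is already resolved by the gate-uniqueness argument described above, and the colimit step is routine once one verifies that convex subgraphs of $L_1$-pre-median graphs remain $L_1$-pre-median (convexity preserves the triangle and quadrangle conditions, and isometric embeddings preserve the $L_1$ structure and the forbidden induced subgraphs).
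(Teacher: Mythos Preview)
Your reduction to the finite case via convex hulls and Whitehead's theorem matches the paper, and your Cartesian-product splitting of $\mathcal{C}(K_1\square K_2)$ is essentially Proposition~\ref{thick_Gamma}. But your base case is broken. You claim that a prime $L_1$--pm-graph $K$ lies in $\mathcal{C}(K)$ and hence $C(K)=[K]$ is a single polytope, citing Proposition~\ref{premedian-examples}(3); however, that proposition only says such $K$ are subhyperoctahedra or isometric ppm-subgraphs of half-cubes. The further identification with basis graphs of even $\triangle$-matroids in Proposition~\ref{premedian-examples}(2) requires \emph{thickness}, which primality alone does not provide. Concretely, $K_4^-$ is a prime, thin, $L_1$--pm-subhyperoctahedron, so $K_4^-\notin\mathcal{C}(K_4^-)$ and $C(K_4^-)$ is two triangles glued along an edge rather than a polytope. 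More tellingly, the $5$-wheel $W_5$ is a prime $L_1$--pm-graph (bridged, hence ppm; $L_1$ because it is weakly median) embedding isometrically in a half-cube, and every $2$-interval of $W_5$ is thin, so $C(W_5)$ is five triangles arranged as a cone over $C_5$. Chastand's gated-amalgam decomposition therefore does not terminate at polytopes, and your induction has no base.

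This is why the paper abandons the prime/amalgam decomposition altogether and inducts on $|V(G)|$ via the half-space structure of the scale embedding (Theorem~\ref{boundary}). When $G$ has a thin $2$-interval with at most three common neighbors, one finds a coordinate $a\in X$ so that $G$ splits into isometric weakly modular proper subgraphs $G_1=G(W_a)$, $G_2=G(W_{a^*}\cup\partial W_a)$ overlapping in $G_0=G(\partial W_a)$; otherwise $G$ is a Cartesian product. The half-space split also supplies for free the geometric ingredient your amalgam argument is missing: the coordinate hyperplane $\{a=1\}\subset\mathbb{R}^X$ is a supporting hyperplane for every cell $[H]$ with $H\subseteq G_2$, and this is precisely what forces $C_\Gamma(G_1)\cap C_\Gamma(G_2)=C_\Gamma(G_0)$ in Lemma~\ref{gluing}. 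Your gated-amalgam gluing would need an analogous linear separation of $C(K_1)$ from $C(K_2)$ in $\mathbb{R}^X$, but gated subgraphs of a pre-median graph do not in general correspond to coordinate half-spaces of the ambient hypercube.
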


\section{Prime pre-median graphs}\label{ppm}
In this section, we prove Theorem \ref{prime_premedian}. The equivalence (iii)$\Leftrightarrow$(iv) follows from Theorem \ref{t:lotoglo_prem}.
First we prove that if the clique complex of a $2$--connected pre-median graph $G$ is simply connected, then $G$ is prime, thus
establishing the implication (iii)$\Rightarrow$(i).
Since a pre-median graph is prime if and only if it is elementary by Theorem~\ref{premedian_Chastand}(i), it suffices to show that $G$ is elementary. This is an immediate consequence
of the following more general result:

\begin{lemma}\label{lem-simpl-connected-elmentary} If $G$ is a $2$--connected graph whose clique complex is simply
connected, then $G$ is elementary.
\end{lemma}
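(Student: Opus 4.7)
Plan: Assume, toward a contradiction, that $G$ admits a proper gated subgraph $H\subsetneq G$ with $|V(H)|\ge 2$; I will produce a non-contractible loop in $X(G)$, contradicting simple connectivity.

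The first step is to set up the gate retraction and derive the one geometric fact that drives the argument. The gate map $r\colon V(G)\to V(H)$ sending each vertex $v$ to its unique gate $r(v)\in V(H)$ is a graph homomorphism: for adjacent $x,y\in V(G)$ with gates $a=r(x)$, $b=r(y)$, the relations $a\in I(x,b)$ and $b\in I(y,a)$ together with $d(x,y)=1$ give $d(a,b)\le 1$. Hence $r$ extends to a simplicial retraction $X(G)\to X(H)$. The key local consequence is: \emph{for every edge $ab\in E(H)$, any common neighbor of $a$ and $b$ in $G$ lies in $V(H)$.} Indeed, a common neighbor $x\notin V(H)$ would force its gate $r(x)$ to lie in $I(x,a)\cap I(x,b)$ with $d(x,a)=d(x,b)=1$, hence simultaneously $r(x)=a$ and $r(x)=b$, absurd.

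The second step is to build a specific cycle using 2-connectedness. Since $H$ is proper, some fiber $F_c=r^{-1}(c)$ strictly contains $c$; because shortest paths from a vertex of $F_c$ to $c$ remain in $F_c$, one finds $v\in F_c\setminus\{c\}$ with $v\sim c$. Relabeling within $H$ we may take $c=a$ and choose an edge $ab\in E(H)$, so $v\notin V(H)$, $v\sim a$, $r(v)=a$, and $v\not\sim b$ (by the gate property, the only possible neighbor of $v$ in $V(H)$ is $a$). Since $G$ is $2$-connected, $G\setminus\{a\}$ is connected, providing a path $P$ from $v$ to $b$ avoiding $a$. Concatenating the arc $v\text{-}a\text{-}b$ with the reverse of $P$ yields a cycle $C$ in $G$ whose boundary uses the edge $ab$ exactly once.

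The third step is the contradiction via a singular disk diagram. Simple connectivity of $X(G)$ gives a minimal singular disk diagram $D\to X(G)$ with $\partial D=C$. The boundary edge $ab$ lies on a unique $2$-cell of $D$, whose image in $G$ is a triangle $abx$; by the local consequence of Step~1, $x\in V(H)$. Substituting the boundary arc $a\text{-}b$ by $a\text{-}x\text{-}b$ gives a strictly smaller disk diagram whose boundary still traverses $H$ on one side (now along $a\text{-}x\text{-}b$) and contains $P$ on the other. Iterating: each time a boundary edge belongs to $E(H)$, the $2$-cell hanging on it is a triangle whose third vertex is again forced into $V(H)$, so the diagram is progressively ``peeled'' into $V(H)$. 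This peeling must eventually expose a boundary $2$-cell sharing a vertex with an interior vertex of $P$; that $2$-cell is a triangle in $G$ whose vertices include a vertex of $V(H)$ and a vertex $u\in V(P)\setminus V(H)$ adjacent to two distinct vertices of $V(H)$, contradicting the gate property ($u$'s only neighbor in $V(H)$ is $r(u)$).

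The main obstacle is making the iterative peeling argument of Step~3 rigorous. One has to exhibit a monovariant — for instance the area of $D$, or the number of $2$-cells of $D$ adjacent to boundary edges in $E(H)$ — that strictly decreases under each reduction, and identify precisely the terminal configuration which is ruled out by the gate property. A clean alternative is to track a relative winding invariant obtained by composing $r$ with the diagram and comparing the boundary contribution along $P$ (which sees a genuine change of fiber) against the interior filling (each $2$-cell of $D$ gives an even count of fiber-crossings); combined with the two forced crossings on $C$ (at $ab$ and at the unique cross-edge of $P$), one obtains the desired parity/area obstruction.
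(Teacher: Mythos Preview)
Your Steps 1 and 2 are correct and essentially match the paper's setup: you identify the key local fact (any common neighbor of two adjacent vertices of $H$ must lie in $H$), and you construct a cycle $C$ through the pivot vertex $a\in V(H)$ whose two boundary neighbours along $C$ are $b\in V(H)$ and $v\notin V(H)$.

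The gap is in Step~3, as you yourself acknowledge. The peeling argument is not rigorous: each removal of a boundary triangle may destroy $2$-connectedness of the remaining diagram (if the third vertex of the removed cell already lies on the boundary), so the ``next'' boundary triangle is no longer well defined and you cannot simply iterate. Tracking which boundary edges lie in $E(H)$ after repeated peeling and arguing that the process must eventually reach $P$ requires a careful invariant that you have not supplied; the parity/winding alternative you sketch in the final paragraph is also not carried out.

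The paper avoids peeling entirely with a one-shot argument. Take a minimal disk diagram $(D,\varphi)$ for $C$; then $D$ is a $2$-connected plane triangulation, so the neighbours in $D$ of the boundary vertex (the preimage of) $a$ form a simple path $R$ running from (the preimage of) $v$ to (the preimage of) $b$. Label each vertex $x$ of $D$ according to whether $\varphi(x)\in V(H)$ or not. Since $v$ is labelled ``out'' and $b$ is labelled ``in'', there exist two consecutive vertices $v',w'$ on $R$ carrying opposite labels. The triangle of $D$ on $a,v',w'$ maps to a genuine triangle of $G$ with two vertices in $V(H)$ (namely $a$ and one of $\varphi(v'),\varphi(w')$) and one outside $V(H)$, contradicting the local fact from your Step~1. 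This link-of-$a$ trick replaces the entire peeling scheme and needs no induction or monovariant.
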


\begin{proof} To prove this lemma, we will use minimal disk diagrams defined in Subsection \ref{s:bacom}. Let $G=(V,E)$ be a 2-connected graph
such that its clique complex $X=X(G)$ is simply connected. Suppose, by way of contradiction, that
$G$ is not elementary: then $G$ contains a proper gated subset $A$ containing at least two vertices. Let $u$ be a vertex of $A$
having a neighbor $w\in V\setminus A$. Since $A$ is connected and contains at least two vertices, $u$ is adjacent to a vertex $v$ of $A$.
Then $w\nsim v$, otherwise we obtain a contradiction with the assumption that $A$ is gated. Since $G$ is 2-connected, there exists a $(v,w)$--path
$P=(v=x_1,x_2\ldots,x_{k-1},x_k=w)$ of $G$ avoiding $u$. Let $C=(v,x_2\ldots,x_{k-1},w,u)$ be the simple cycle of $G$  obtained by concatenating $P$ with the 2-path
$(v,u,w)$. Let $(D,\varphi)$ be a minimal disk diagram for $C$ (such a diagram exists because the clique complex of $G$ is simply connected). Then $D$ is a 2-connected
plane triangulation  having the following properties  \cite{Ch_CAT}: (1) $\varphi$ bijectively maps
$\partial {D}$ to $C$ and (2) the image of a $2$--simplex of $D$ under
$\varphi$ is a $2$--simplex, and two adjacent $2$--simplices of $D$ have distinct images under $\varphi.$
For simplicity,
we will identify the vertices of $\partial D$ with the corresponding vertices of $C$. We will label the vertices of
$D$ with $a$ and $b$ in the following way: a vertex $x$ of $D$ gets label $a$ if and only if $\varphi(x)\in A$, otherwise, if $\varphi(x)\in V\setminus A,$ then the label
of $x$ is $b$. Consider the vertices of $D$ adjacent to $u$. Since $D$ is a planar 2-connected triangulation, these vertices induce in $D$ a
simple $(v,w)$--path $R$. Each vertex of $R$ is labeled $a$ or $b$. Since $v$ is labeled $a$ and $w$ is labeled $b$, necessarily $R$ contains
two adjacent vertices $v'$ and $w'$, such that $v'$ is labeled $a$ and $w'$ is labeled $b$. Hence $u=\varphi(u),\varphi(v')\in A$ and
$\varphi(w')\in V\setminus A$. By (2), the image of the triangle $uv'w'$ of $D$ under $\varphi$ is a 2-simplex of $X$, yielding a
contradiction that $A$ is gated.
\end{proof}

We continue by showing that (ii)$\Rightarrow$(iii). Let $G$ be a pre-median $2$--connected graph such that each square of $G$ is included in a $W_4$ or $M_4$. Since $G$ is
a weakly modular graph, its triangle-square complex is simply connected. Since the triangle complexes of $W_4$ and $M_4$ are both simply connected, by condition (ii),
each induced $4$--cycle of $G$ is null-homotopic in the triangle complex $X\tr(G)$ of $G$.
Therefore, $X\tr(G)$ is simply connected, whence the clique complex $X(G)$ of $G$ is simply
connected as well.

The remaining part is dedicated to the proof of the implication (i)$\Rightarrow$(ii). In the first part of the proof
we establish some local properties of pre-median graphs. In the second part, we adapt  the proof of Theorem 2 of \cite{BCC+} for
primes of bucolic graphs to our more general setting.

\begin{lemma}\label{lem-prem-W5}
In a pre-median graph $G$, for every $W_5^-=(c,x_1,x_2,x_3,x_4,x_5)$, where $c\nsim x_1$, either there exists $y\sim c,x_1,x_2,x_5$,
or there exist $a,b$ such that $a \sim x_1,x_2,x_3,x_4$, $b \sim
a,x_1,x_3,x_4,x_5$, $a \nsim x_5$, and $b \nsim x_2$.
\end{lemma}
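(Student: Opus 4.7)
The plan is to apply the triangle condition twice (producing candidate vertices $u$ and $u'$ on opposite sides of the $5$--cycle) and then use the forbidden-subgraph conditions defining pre-medianness---in particular the absence of induced $W_4^-$---to force all further adjacencies.

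First I would apply TC at $x_3$ to the edge $x_1x_5$ (both at distance $2$ from $x_3$) to obtain a vertex $u\sim x_1,x_3,x_5$; note that $u\notin\{c,x_2,x_4\}$ since $c\nsim x_1$, $x_2\nsim x_5$, and $x_4\nsim x_1$. If $u\sim x_2$, then the $4$--cycle $(u,x_2,c,x_5)$ is an induced square unless $u\sim c$, and in the induced case $x_1$ would be adjacent to the three consecutive corners $x_5,u,x_2$, giving a forbidden induced $W_4^-$. Hence $u\sim c$, and $y:=u$ fulfills the first alternative.

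Assuming instead $u\nsim x_2$, I would first show $u\sim x_4$: otherwise the $4$--cycle $(u,x_3,x_4,x_5)$ is induced, so either $c\nsim u$ (and then $c$ together with this $4$--cycle forms a $W_4^-$) or $c\sim u$, which makes $(u,x_1,x_2,c)$ induced and puts $x_3$ adjacent to three of its corners---again a $W_4^-$. Next, applying TC at $x_4$ to the edge $x_1x_2$ produces $u'\sim x_1,x_2,x_4$. If $u'\sim x_5$, the same argument on $(u',x_2,c,x_5)$ forces $u'\sim c$ and $y:=u'$ works. Otherwise $u'\nsim x_5$, and two further $W_4^-$ arguments---first on $(u',x_2,x_3,x_4)$ with center $c$, then on $(u',x_1,x_5,c)$ with center $x_2$---force $u'\sim x_3$.

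At this point $u'\sim x_1,x_2,x_3,x_4$ with $u'\nsim x_5$, and $u\sim x_1,x_3,x_4,x_5$ with $u\nsim x_2$. A final $W_4^-$ argument on the induced $4$--cycle $(u,x_3,u',x_1)$, with $x_2$ as the potential three-neighbor center, forces $u\sim u'$; setting $a:=u'$ and $b:=u$ then satisfies every condition of the second alternative. The main obstacle throughout is bookkeeping: at each step one must verify that the $4$--cycle in question is genuinely induced before invoking the $W_4^-$ prohibition, which is where the assumptions $u\nsim x_2$, $u'\nsim x_5$, and the non-edges $x_ix_{i+2}$ of the original $5$--cycle enter repeatedly.
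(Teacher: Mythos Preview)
Your proof is correct and follows essentially the same approach as the paper's: apply the triangle condition twice (once at $x_3$ to the edge $x_1x_5$, once at $x_4$ to the edge $x_1x_2$) and then repeatedly invoke the absence of induced $W_4^-$ to force the remaining adjacencies. The paper organises the argument slightly differently---it assumes at the outset that no $y\sim c,x_1,x_2,x_5$ exists and thereby avoids your intermediate sub-cases $u\sim x_2$ and $u'\sim x_5$---and for the final step it uses the $W_4^-$ on $\{a,b,x_1,x_3,x_4\}$ (with $x_4$ as centre) rather than your square $(u,x_3,u',x_1)$ with centre $x_2$, but these are cosmetic differences.
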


\begin{proof}
Suppose there does not exist $y\sim c,x_1,x_2,x_5$.  By TC($x_4$)
applied to the edge $x_1x_2$, there exists $a \sim x_1,x_2,x_4$. Since
$G$ does not contain $W_4^-$ and there is no $y\sim
c,x_1,x_2,x_5$, we have $a \nsim c,x_5$. To avoid a forbidden $W_4^-$ induced by $a,c,x_2,x_3,x_4,$ necessarily
$a \sim x_3$. Similarly, one can show that there exists $b \sim
x_1,x_3,x_4,x_5$ and $b \nsim c,x_2$. If $a \nsim b$, then $a,b,x_1,x_3,x_4$ will induce a forbidden
$W_4^-$. Hence $a\sim b$ and we are done.
\end{proof}

\begin{lemma}\label{lem-prem-W6}
In a pre-median graph $G$, for every $W_6^-=(c,x_1,x_2,x_3,x_4,
x_5,x_6)$ with $c\nsim x_1$,  there exists $y\sim c,x_1,x_2,x_6$.
\end{lemma}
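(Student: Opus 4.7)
The plan is to prove the lemma by distinguishing the two possible values of the graph-distance $d(x_1,x_4)$ in $G$: this distance is at least $2$ (since the rim is induced, so $x_1\nsim x_4$) and at most $3$ (via the rim path $x_1x_2x_3x_4$).

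When $d(x_1,x_4)=3$, I apply $\mathrm{QC}(x_1)$ to the triple $v=x_3$, $w=x_5$, $z=x_4$: all the hypotheses of QC hold, namely $z\sim v,w$; $d(v,w)=2$ (the chord $x_3x_5$ is absent in the induced $6$-cycle, with common neighbors $x_4,c$); $d(x_1,v)=d(x_1,w)=2$; and $d(x_1,z)=3$. This yields a vertex $y$ with $y\sim x_3,x_5$ and $d(x_1,y)=1$, so $y\sim x_1$. Easy distance and adjacency checks rule out $y\in\{x_2,x_6,x_4,c\}$ (for instance, $y=x_2$ would create the chord $x_2x_5$, and $y\sim x_4$ would give $d(x_1,x_4)\le 2$); in particular $y\nsim x_4$. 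The forbidden $W_4^-$ condition is then invoked three times, each time observing an induced $4$-cycle whose chords are absent and using the fact that $c$ cannot form a $W_4^-$ with it: (i) the induced $C_4$ $(y,x_3,x_4,x_5)$ together with $c$ (adjacent to $x_3,x_4,x_5$) forces $c\sim y$, else $c$ becomes the center of a forbidden $W_4^-$ with missing spoke $y$; (ii) the induced $C_4$ $(y,x_1,x_2,x_3)$ together with $c$ (now known to be adjacent to $y,x_2,x_3$ but not $x_1$) forces $y\sim x_2$; (iii) symmetrically, the induced $C_4$ $(y,x_1,x_6,x_5)$ together with $c$ forces $y\sim x_6$. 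Combining these gives $y\sim c,x_1,x_2,x_6$.

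When $d(x_1,x_4)=2$, there is a common neighbor $a$ of $x_1$ and $x_4$ distinct from $c$. The plan is to use $a$ to build an induced $W_5^-$ and invoke Lemma~\ref{lem-prem-W5}. The typical subcase (when $a\sim c$ and $a\nsim x_2,x_3$) yields the induced $W_5^-$ on $\{c,a,x_1,x_2,x_3,x_4\}$ with center $c$, rim $(x_1,x_2,x_3,x_4,a)$, and missing spoke $x_1$. Lemma~\ref{lem-prem-W5} then furnishes either a vertex $y'\sim c,x_1,x_2,a$, or a pair $p,q$ with specified adjacencies. In the first branch, a single $W_4^-$-forbidden argument using the induced $C_4$ $(y',x_1,x_6,c)$ and the vertex $x_2$ (which is adjacent to $y',x_1,c$ but not to $x_6$) forces $y'\sim x_6$, delivering the desired conclusion. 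The remaining adjacency patterns of $a$ and the second branch of Lemma~\ref{lem-prem-W5} are handled by a symmetric construction on the $\{c,a,x_1,x_6,x_5,x_4\}$ side, or by direct applications of TC/QC at appropriate vertices combined with the forbidden $W_4^-$ and $K_{2,3}$ conditions.

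The main obstacle is the subcase analysis in the second case, where the shortcut vertex $a$ can exhibit many different adjacency patterns with the rim vertices and with $c$, and where the second branch of Lemma~\ref{lem-prem-W5} requires an additional forbidden-subgraph argument to extract a vertex $y$ with all four required adjacencies. Throughout, the unifying technique is: produce an auxiliary vertex from a local weak-modularity condition (TC, QC, or Lemma~\ref{lem-prem-W5}), then use the absence of induced $W_4^-$ and $K_{2,3}$ in pre-median graphs to force the further adjacencies of that vertex.
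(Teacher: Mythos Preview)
Your case $d(x_1,x_4)=3$ is correct and essentially identical to the paper's argument.

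In the case $d(x_1,x_4)=2$, however, you miss the key simplification and as a result leave a real gap. The paper applies $\mathrm{TC}(x_1)$ directly to the edge $x_4c$: since $d(x_1,x_4)=d(x_1,c)=2$ and $x_4\sim c$, there is a vertex $y$ with $y\sim x_1,x_4,c$. Now $y$ is adjacent to the two opposite vertices $x_1,c$ of the induced square $x_1x_2cx_6$; since $y\sim x_4$ forces $y\notin\{x_2,x_6\}$, the forbidden $K_{2,3}$ and $W_4^-$ immediately yield $y\sim x_2,x_6$. That is the whole argument.

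Your detour via a common neighbour $a$ of $x_1,x_4$ and Lemma~\ref{lem-prem-W5} is not only far more laborious, it is genuinely incomplete as stated. You only treat the subcase $a\sim c$, $a\nsim x_2,x_3$, and even there only the first branch of Lemma~\ref{lem-prem-W5}; in the second branch the vertices $p,q$ you obtain satisfy $p\sim x_1,x_2,x_3,x_4$ and $q\sim p,x_1,x_3,x_4,a$, but neither is known to be adjacent to $c$ or to $x_6$, so you still need to produce the desired $y$ from scratch. The other adjacency patterns of $a$ (e.g.\ $a\nsim c$, or $a\sim x_2$) are not handled at all, and ``a symmetric construction'' does not obviously close them, since the configuration is not symmetric in $x_2$ and $x_6$ once you have fixed $a\sim x_4$. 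Replace this entire case by the two-line $\mathrm{TC}(x_1)$ argument above.
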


\begin{proof}
We distinguish two cases, depending whether $d(x_4,x_1)=2$ or
$d(x_4,x_1)=3$. If $d(x_4,x_1) = 2$, by TC($x_1$) applied to the edge
$x_4c$, there exists $y \sim x_1, x_4, c$. Since $y$ and the square
$x_1x_2cx_6$ cannot induce a $K_{2,3}$ or a $W_4^-$, it holds $y\sim
c,x_1,x_2,x_6$ and we are done. Suppose now that $d(x_4,x_1) = 3$; by
QC($x_1$) applied to $x_3,x_5\in I(x_4,x_1)$, there exists $y \sim
x_1,x_3,x_5$. Since $c \sim x_3,x_4,x_5$, there is a square $yx_3x_4x_5$, and
$G$ does not contain an induced $W_4^-$, we have $c\sim y$. Since $y$ and the
square $x_1x_2cx_6$ cannot induce a $K_{2,3}$ or a $W_4^-$, necessarily
$y \sim x_2,x_6$. Since $y\sim x_1,c,$  we are done.
\end{proof}

\begin{lemma}\label{lem-prem-Wk}
In a pre-median graph $G$, for every $W_k^-=(c,x_1,x_2,\ldots,
x_k)$ with $c\nsim x_1$,  the induced $4$--cycle $C=(x_1,x_2,c,x_k)$ of $W_k^-$ is included in an induced  $W_4$ or in an $M_4$, i.e., either
there exists $y\sim C$ or there exist four pairwise adjacent vertices $a_1,a_2,a_3,a_4$ such that $a_1 \sim x_1,x_2$; $a_2\sim x_2,c$;
$a_3\sim c,x_k$; $a_4\sim x_1,x_k$; $a_1 \nsim x_k,c$; $a_2 \nsim x_1,x_k$; $a_3 \nsim x_1,x_2$, and $a_4 \nsim x_2,c$.
In particular, the $4$--cycle $(c,x_2,x_1,x_k)$ is null-homotopic in the clique complex of $G$.
\end{lemma}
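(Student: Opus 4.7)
I would argue by induction on $k \geq 5$, the base cases $k = 5$ and $k = 6$ being handled by Lemmas~\ref{lem-prem-W5} and \ref{lem-prem-W6}: Lemma~\ref{lem-prem-W6} directly provides a common neighbor of $(x_1,x_2,c,x_6)$, yielding a $W_4$; and in the second alternative of Lemma~\ref{lem-prem-W5} the four vertices $\{a, x_3, x_4, b\}$ together with the fixed 4-cycle $(x_1,x_2,c,x_5)$ form an $M_4$, since the required pairwise adjacencies inside $\{a, x_3, x_4, b\}$ are either asserted in that lemma ($a\sim b$, $a\sim x_3,x_4$, $b\sim x_3,x_4$) or immediate from the wheel ($x_3\sim x_4$), and the incidences with the corners of $C$ match the $M_4$-pattern directly.

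For the inductive step $k \geq 7$, I would first apply TC($x_3$) to the edge $x_1 x_k$: inducedness of $W_k^-$ forces $d(x_3, x_1) = d(x_3, x_k) = 2$, producing a vertex $y \sim x_1, x_3, x_k$. If $y \sim c$, then I would claim $y \sim x_2$: otherwise the induced $4$-cycle $(y, x_1, x_2, c)$ together with $x_3$ (adjacent to $y, x_2, c$ but not to $x_1$) would form an induced $W_4^-$, contradicting that $G$ is pre-median; hence $y$ is a common neighbor of $C=(x_1,x_2,c,x_k)$, yielding the desired $W_4$. If $y \nsim c$, the plan is to show that $y$ is nonadjacent to every interior vertex $x_i$ with $4 \leq i \leq k-1$ by repeated $K_{2,3}$/$W_4^-$ avoidance arguments applied to the short cycles that would otherwise appear among $\{y, x_i, c, x_{i\pm 1}\}$; consequently $(y, x_3, x_4, \ldots, x_{k-1}, x_k)$ is an induced $(k-1)$-cycle to which $c$ is adjacent everywhere except at $y$, producing an induced $W_{k-1}^-$ to which the induction hypothesis applies.

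The induction then yields an induced $W_4$ or $M_4$ containing the $4$-cycle $(y, x_3, c, x_k)$. To transfer this into a filling of $C = (x_1, x_2, c, x_k)$, I would apply the symmetric construction at the other end: TC($x_{k-1}$) on the edge $x_k x_1$ produces $y' \sim x_{k-1}, x_k, x_1$, and the same case analysis (with $y'$ in place of $y$) yields a $W_4$ or $M_4$ around $(y', x_{k-1}, c, x_2)$. I would then splice these two fillings along the shared edges meeting $c$, using the triangles $x_2 c x_3$ and $x_{k-1} c x_k$ (already present in the wheel), and identify the resulting combined filling as a $W_4$ or $M_4$ around $C$ itself (the quasi-median of the three candidate ``centers'' $y, y', c$ collapses, via pre-medianness, to a single common structure).

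The main obstacle will be the combinatorial bookkeeping in the $y \nsim c$ branch: ruling out the mixed-adjacency patterns of $y$ with $\{x_4, \ldots, x_{k-1}\}$, and then cleanly splicing the two inductive $W_4$/$M_4$-fillings at the triangles $y x_3 c$ and $y' x_{k-1} c$ into a single $W_4$ or $M_4$ containing $C$; handling the latter will rely on forbidden $K_{2,3}$ to identify the two candidate ``centers'' produced by the two smaller wheels. The final claim that $(c, x_2, x_1, x_k)$ is null-homotopic in $X\tr(G)$ is then immediate: a $W_4$ containing $C$ triangulates it as four triangles sharing the central vertex, while an $M_4$ containing $C$ triangulates it as the four boundary triangles $x_1 x_2 a_1$, $x_2 c a_2$, $c x_k a_3$, $x_k x_1 a_4$ together with the (contractible) $3$-simplex on $\{a_1, a_2, a_3, a_4\}$.
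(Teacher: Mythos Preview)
Your setup is essentially the paper's, reflected through $x_1$: both arguments use the triangle condition to produce an auxiliary vertex adjacent to $x_1$ and to two nonconsecutive cycle vertices, show by $K_{2,3}$- and $W_4^-$-freeness that it misses the interior cycle vertices, and thereby obtain a smaller almost-wheel. Your $y\sim x_1,x_3,x_k$ (from TC$(x_3)$ applied to the edge $x_1x_k$) is the mirror image of the paper's auxiliary vertex (obtained from TC applied to the edge $x_1x_2$), and your verification that the smaller $W_{k-1}^-$ is induced is correct.

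The gap is in the transfer step. Induction hands you a $W_4$ or $M_4$ containing the \emph{smaller} square $(y,x_3,c,x_k)$, which shares only the edge $cx_k$ with the target $C=(x_1,x_2,c,x_k)$. Your proposal to run a symmetric second reduction and ``splice'' does not work as written. First, TC$(x_{k-1})$ ``on the edge $x_kx_1$'' is ill-posed, since $d(x_{k-1},x_k)=1\neq 2=d(x_{k-1},x_1)$; presumably you meant the edge $x_1x_2$. More seriously, even granting both inductive fillings, a splicing argument could at best show that $C$ is null-homotopic in the clique complex, not that $C$ itself lies in an induced $W_4$ or $M_4$ --- and it is the latter structural claim that the lemma asserts. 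The appeal to a ``quasi-median of $y,y',c$'' has no content here. The paper does not sidestep this difficulty: once induction supplies either a common neighbour $z$ of the smaller square or four $M_4$-vertices $b_1,\ldots,b_4$, it carries out a substantial case analysis on how these auxiliary vertices meet the two remaining corners of $C$, in each case manufacturing a $W_5^-$ or $W_6^-$ containing $C$ and invoking Lemmas~\ref{lem-prem-W5}--\ref{lem-prem-W6} directly. That case analysis is exactly what your proof is missing.
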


\begin{proof}
We prove the assertion of the lemma by induction on $k$. If $k=5,$ the assertion holds by Lemma~\ref{lem-prem-W5}:
either there exists $y\sim C$ or we set
$a_1=a;a_2=x_3;a_3=x_4$ and $a_4=b$. If $k=6,$ the assertion follows from Lemma~\ref{lem-prem-W6}.
Suppose now that the lemma holds for $k$, and consider a $W_{k+1}^-$
$(c,x_1,x_2,\ldots ,x_{k+1})$ with $c\nsim x_1$. By TC($x_k$) applied to the edge
$x_1x_2$, there exists $u \sim x_1,x_2,x_k$. Note that since $G$ does
not contain $W_4^-$, $u \sim c$ if and only if $u \sim x_{k+1}$. Hence, if $u
\sim c$ or $u \sim x_{k+1}$, then the lemma holds by setting $y=u$.

Suppose now that $u \nsim c, x_{k+1}$. Then $ux_2cx_k$ is a square. Since $x_k\nsim x_3$, $x_{k-1} \nsim x_2$, and $G$ does not contain
$W_4^-$, necessarily we have $u \nsim x_3,x_{k-1}$. If there exists
an index $4\le i\le k-2$ such that $x_i \sim u$, then the vertices $u,c,x_2,x_i,x_k$
induce a $K_{2,3}$, a contradiction. Consequently, $(c,u,x_2,x_3,\ldots,x_k)$
is a $W_k^-$. By induction assumption, either there exists $z \sim u,x_2,c,x_k$ or
there exist $b_1,b_2,b_3,b_4$ such that $b_1 \sim u,x_2$; $b_2\sim
x_2,c,b_1$; $b_3\sim c,x_k,b_1,b_2$; $b_4\sim u,x_k,b_1,b_2,b_3$; $b_1
\nsim x_k,c$; $b_2 \nsim u,x_k$; $b_3 \nsim u,x_2$; and $b_4 \nsim
x_2,c$. We consider the two cases separately and in both cases, we are
going to show that there is a common neighbor of $x_1,x_2,c,x_{k+1}$
or that $x_1,x_2,c,x_{k+1}$ belong to a $W_5^-$ or to a $W_6^-$.

\begin{case-ppm}
  There exists $z \sim u,x_2,c,x_k$.
\end{case-ppm}
    
Since $z \sim x_2,c$, we have $z \notin \{x_1,x_{k+1}\}$. Since $G$ does not
contain $W_4^-$ and $cx_2x_1x_{k+1}$ is a square, either $z \sim
x_1,x_{k+1}$, or $z \nsim x_{1},x_{k+1}$. In the first case, we have
found a common neighbor $z$ of $x_1,x_2,c,x_{k+1}$, establishing our
assertion for $y=z$.  In the second case, $(c,x_1,x_2,z,x_k,x_{k+1})$
is a $W_5^-$. By Lemma~\ref{lem-prem-W5}, either there exists $y \sim
x_1,x_2,c,x_{k+1}$, or there exists $a,b$ such that $a \sim
x_1,x_2,z,x_k$, $b \sim x_1,z,x_k,x_{k+1},a$, $a \nsim x_{k+1}, c$ and
$b \nsim x_2,c$. In this second case, let $a_1 = a$, $a_2 = z$, $a_3 =
x_k$ and $a_4 = b$, and the lemma holds.

\begin{case-ppm}
There exist $b_1,b_2,b_3,b_4$ such that $b_1 \sim
u,x_2$; $b_2\sim x_2,c,b_1$; $b_3\sim c,x_k,b_1,b_2$; $b_4\sim
u,x_k,b_1,b_2,b_3$; $b_1 \nsim x_k,c$; $b_2 \nsim u,x_k$; $b_3 \nsim
u,x_2$; and $b_4 \nsim x_2,c$.
\end{case-ppm}

Since $c \sim b_2,b_3$ and $c\nsim
x_1$, we have $x_1 \notin \{b_2,b_3\}$; analogously, since $b_2 \sim x_2$ and
$x_2\nsim x_{k+1}$, we have $b_2 \neq x_{k+1}$. If $b_3 = x_{k+1}$ and $b_2
\sim x_1$ then $b_2 \sim x_1,x_2,c,x_{k+1}$ and we are done. If $b_3
= x_{k+1}$ and $b_2 \nsim x_1$ then $c,x_1,x_2,b_2,x_{k+1}$ induce a
$W_4^-$, a contradiction. So we assume now that $b_3 \neq x_{k+1}$.
Since $G$ does not contain $W_4^-$ or $K_{2,3}$, if $x_1 \sim b_2$
(respectively: $x_1 \sim b_3$; $b_2 \sim x_{k+1}$), then $b_2 \sim
x_1,x_2,c,x_{k+1}$ (respectively: $b_3 \sim x_1,x_2,c,x_{k+1}$; $b_2
\sim x_1,x_2,c,x_{k+1}$), and we are done. So, suppose now that $x_1
\nsim b_2,b_3$ and $b_2\nsim x_{k+1}$. If $b_3 \sim x_{k+1}$, then
$(c,x_1,x_2,b_2,b_3,x_{k+1})$ is a $W_5^-$ and the lemma holds by
Lemma~\ref{lem-prem-W5}. Finally, if $b_3 \nsim x_{k+1}$, then
$(c,x_1,x_2,b_2,b_3,x_k,x_{k+1})$ is a $W_6^-$ and the lemma holds by
Lemma~\ref{lem-prem-W6}.
\end{proof}

Let $H$ be an induced
subgraph of a graph $G$. A 2-path $P=(a,v,b)$ of $G$ is
\emph{$H$--fanned} \cite{BCC+} if $a,v,b \in V(H)$ and if there exists an
$(a,b)$--path $P'$ in $H$ avoiding $v$ and such that $v$ is
adjacent to all vertices of $P'$, i.e., $v\sim P'$.  Notice that
$P'$ can be chosen to be an induced path of $G$. A path $P=(x_0,x_1,
\ldots,x_{k-1},x_k)$ of $G$ with $k>2$ is \emph{$H$--fanned} if every
three consecutive vertices $(x_i,x_{i+1},x_{i+2})$ of $P$ form an
$H$--fanned 2-path. When $H$ is clear from the context (typically
when $H = G$), we say that $P$ is \emph{fanned}.  If the end-vertices
of a $2$--path $P=(a,v,b)$ coincide or are adjacent, then $P$ is fanned.
Here is a simple generalization of this remark:

\begin{lemma} \cite{BCC+}
\label{l:chord}
If $P=(x_0,x_1,\ldots,x_k)$ is a fanned path and
$x_{i-1}$ and $x_{i+1}$ coincide or are adjacent, then the paths
$P'=(x_0,\ldots,x_{i-2},x_{i+1},x_{i+2}, \ldots,x_k)$ in the first
case and $P''=(x_0,\ldots,x_{i-1},x_{i+1}, \ldots,x_k)$ in the second
case are also fanned.
\end{lemma}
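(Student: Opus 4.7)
The plan is to verify the fanned condition triple by triple. Since a path is fanned exactly when each of its consecutive $2$--paths is fanned, the strategy is to identify which consecutive triples in $P'$ (respectively $P''$) coincide with triples already present in $P$ --- those are fanned for free --- and then exhibit, for each new triple introduced by the cut, an $(a,b)$--path witnessing the fanning, obtained by splicing together fanning paths from $P$.

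First I would treat the case $x_{i-1}\sim x_{i+1}$. Here $P''=(x_0,\dots,x_{i-1},x_{i+1},\dots,x_k)$ and the only two consecutive triples that do not appear in $P$ are $(x_{i-2},x_{i-1},x_{i+1})$ and $(x_{i-1},x_{i+1},x_{i+2})$. For the first one, since $(x_{i-2},x_{i-1},x_i)$ is fanned in $P$, there exists a path $Q$ from $x_{i-2}$ to $x_i$ with every vertex adjacent to $x_{i-1}$; appending the edge $x_ix_{i+1}$ (whose endpoint $x_{i+1}$ is adjacent to $x_{i-1}$ by hypothesis) produces an $(x_{i-2},x_{i+1})$--path fanning $x_{i-1}$. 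The second new triple is handled symmetrically using the fanning of $(x_i,x_{i+1},x_{i+2})$ in $P$.

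Next I would turn to the case $x_{i-1}=x_{i+1}$, where $P'=(x_0,\dots,x_{i-2},x_{i+1},x_{i+2},\dots,x_k)$. The new triples to check are $(x_{i-3},x_{i-2},x_{i+1})$ and $(x_{i-2},x_{i+1},x_{i+2})$ (the triple at $x_{i+1}$ in $P'$ with right neighbour $x_{i+2}$). The former is literally $(x_{i-3},x_{i-2},x_{i-1})$ after substituting $x_{i+1}=x_{i-1}$, hence already fanned in $P$. For the latter, I would concatenate two fanning paths from $P$: the one from $x_{i-2}$ to $x_i$ provided by the fanned triple $(x_{i-2},x_{i-1},x_i)$ and the one from $x_i$ to $x_{i+2}$ provided by $(x_i,x_{i+1},x_{i+2})$. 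All vertices in both paths are adjacent to $x_{i-1}=x_{i+1}$, so the concatenation fans the new triple.

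I do not expect any serious obstacle; the lemma is essentially a bookkeeping statement, and the only mild subtlety is recognising that the identification $x_{i-1}=x_{i+1}$ lets us glue two fanning paths that separately witness the fanning of $x_{i-1}$ and of $x_{i+1}$. If one wishes the fanning paths to be induced, as noted in the definition, one can shortcut the concatenated paths to obtain an induced subpath without losing the adjacency to $x_{i-1}$.
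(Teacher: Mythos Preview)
Your proof is correct. The paper does not provide its own proof of this lemma; it simply cites \cite{BCC+}, so there is nothing to compare against here beyond confirming that your triple-by-triple verification is sound (in particular, the concatenated and appended fanning paths you construct do avoid the middle vertex, as the definition requires).
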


\begin{lemma}\label{fanned-4cycle} If $C=(v_1,v_2,v_3,v_4)$ is
an induced $4$--cycle of a pre-median graph $G$ such that the
$2$--path $(v_1,v_2,v_3)$ if fanned, then $C$ is included in an
induced $W_4$ or $M_4$. In particular, $C$ is null-homotopic and all
2-paths of $C$ are fanned.
\end{lemma}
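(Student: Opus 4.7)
The plan is to reason directly (no induction needed) about a shortest witness $P' = (v_1 = u_0, u_1, \ldots, u_{k-1}, u_k = v_3)$ for the fanned 2-path $(v_1,v_2,v_3)$; so $v_2 \sim u_j$ for every $j$, and $v_2 \notin P'$. Since $v_1 \nsim v_3$ we have $k \geq 2$, and every $u_i$ differs from $v_4$ because $v_2 \sim u_i$ while $v_2 \nsim v_4$. Minimality of $P'$ forces it to be induced and, more precisely, forces $v_1 \nsim u_j$ for $j \geq 2$ and $v_3 \nsim u_j$ for $j \leq k-2$; otherwise one could shortcut the witness while preserving the property that $v_2$ is adjacent to all its vertices.

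If $k = 2$, the single internal vertex $u_1$ satisfies $u_1 \sim v_1, v_2, v_3$. Either $u_1 \sim v_4$, in which case $\{u_1,v_1,v_2,v_3,v_4\}$ induces a $W_4$ containing $C$; or $u_1 \nsim v_4$, and then $\{u_1,v_1,v_2,v_3,v_4\}$ induces a $W_4^-$ centered at $u_1$ with missing spoke $u_1v_4$, contradicting pre-medianness. For $k \geq 3$ the plan is to show that $v_4 \nsim u_i$ for every $1 \leq i \leq k-1$. If $i = 1$, minimality gives $u_1 \nsim v_3$, so $\{u_1,v_1,v_2,v_3,v_4\}$ induces a forbidden $W_4^-$ centered at $u_1$; the case $i = k-1$ is symmetric. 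If $2 \leq i \leq k-2$, then $u_i \sim v_2, v_4$ and $u_i \nsim v_1,v_3$, so that $\{v_1,v_3,u_i,v_2,v_4\}$ induces a forbidden $K_{2,3}$ with parts $\{v_2,v_4\}$ and $\{v_1,v_3,u_i\}$ — one checks all six required edges and the four required non-edges. Hence $v_4$ avoids every interior vertex of $P'$, and the $(k+2)$-cycle $(v_4,v_1,u_1,\ldots,u_{k-1},v_3)$ is induced; together with $v_2$, adjacent to everything on this cycle except $v_4$, it forms an induced $W_{k+2}^-$. Lemma~\ref{lem-prem-Wk} then places its 4-cycle $(v_4,v_1,v_2,v_3) = C$ in an induced $W_4$ or $M_4$, as desired.

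The remaining assertions are immediate from inspection of $W_4$ and $M_4$: the clique complex of each is contractible (the central vertex in $W_4$ and the spanning $K_4$ on the four auxiliary vertices in $M_4$ witness collapsibility), so $C$ is null-homotopic in the clique complex of $G$; moreover every 2-path of $C$ is fanned, either by the length-2 path through the central vertex of $W_4$, or by the length-3 path through the two consecutive auxiliary vertices of $M_4$ sharing the middle vertex of the 2-path. The main obstacle in the whole argument is the sub-case $2 \leq i \leq k-2$, where recognizing the induced $K_{2,3}$ among $\{v_1,v_3,u_i,v_2,v_4\}$ is what rules out interior chords of the fanning witness to $v_4$; this is precisely the role played by forbidding $K_{2,3}$ (alongside $W_4^-$) in the definition of a pre-median graph.
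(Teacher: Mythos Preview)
Your proof is correct and takes essentially the same approach as the paper: both obtain an induced fanning path (you via a shortest witness, the paper via an induced subpath of an arbitrary witness), rule out adjacencies from $v_4$ to its interior using the forbidden $W_4^-$ and $K_{2,3}$, and then invoke Lemma~\ref{lem-prem-Wk} on the resulting $W_{k+2}^-$. The only cosmetic difference is that the paper allows the case $y_1\sim v_3$ to yield a $W_4$ directly, whereas you preclude it by minimality and handle it in your separate $k=2$ case.
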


\begin{proof} Let $P=(x_0=v_1,x_1,\ldots,x_{k-1},x_k=v_3)$ be a $(v_1,v_3)$--path of $G$ avoiding $v_2$ such that $v_2$ is
adjacent to all vertices of $P$ (such a path exists, because the 2-path $(v_1,v_2,v_3)$ is fanned). In view of Lemma \ref{l:chord},
we can suppose that for any $0<i<k$ the vertices $x_{i-1}$ and $x_{i+1}$ do not coincide neither are adjacent.
Since $v_2\nsim v_4,$ we have $v_4\notin P.$ Let $P_0=(y_0=v_1,y_{1},\ldots,y_{p-1},y_p=v_3)$ be an induced $(v_1,v_3)$--path of
$G$ in which all vertices belong to $P$.
Since $v_1\nsim v_3,$ we have $y_{1}\ne v_3$ and $y_{p-1}\ne v_1$.
If $v_4\sim y_{1},$ then necessarily $y_{1}\sim v_3$, otherwise the vertices
$v_1,v_2,v_3,v_4,y_{1}$ induce a forbidden $W_4^-$. Hence, $y_{1}$ is adjacent to all vertices of $C$ and $C$ together with $y_1$ induce a $W_4$.
Analogously, if $v_4\sim y_{p-1},$ then $C$ together with $y_{p-1}$ induce a $W_4$. Now, suppose that $v_4\nsim y_1,y_{p-1}$. If $v_4$
is adjacent to a vertex $y_j$ of $P_0$ with $1<j<p-1,$ then the vertices $v_1,v_2,v_3,v_4,y_j$ induce a forbidden $K_{2,3}.$
As a result, since $v_2$ is adjacent to all vertices of the induced path $P_0$ and $v_4$ is adjacent only to $v_1$ and $v_3,$
the path $P_0$ together with $v_2$ and $v_4$ induce in $G$ an almost-wheel $W_{p+2}^-$. By Lemma \ref{lem-prem-Wk}, $C$ is
included in an induced $W_4$ or $M_4$.
\end{proof}

\begin{lemma} \label{l:covered} Let $a,b,$ and $v$ be vertices of a pre-median graph $G$ such that $a$ and  $b$ can be connected
by a fanned path avoiding $v$. If $v\sim a,b$, then  there exists a fanned $(a,b)$--path $P$ such that $v\sim P$; in particular, the 2-path $(a,v,b)$ is fanned.
If $v\sim a$ and $d(v,b)=2,$ then there exists a fanned $(a,b)$--path $P$ such that $v \sim P\setminus\{b\}$.
\end{lemma}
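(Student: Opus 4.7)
The plan is to establish both assertions by a joint induction on the length $k$ of the given fanned $(a,b)$-path $Q=(a=x_0,x_1,\ldots,x_k=b)$ avoiding $v$. The base case $k=1$ is immediate: $P=(a,b)$ serves in both statements, and in the first case $P$ is itself an $(a,b)$-path avoiding $v$ on which $v$ is adjacent to every vertex, so the 2-path $(a,v,b)$ is fanned.

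For the inductive step I would first treat the second assertion. In the setting $v\sim a$ and $d(v,b)=2$, let $j\ge 1$ be the first index with $v\nsim x_j$; since $v\sim x_{j-1}\sim x_j$, we have $d(v,x_j)=2$. Using the triangle and quadrangle conditions combined with the exclusion of induced $K_{2,3}$ and $W_4^-$, I would construct a neighbor $y_j$ of $v$ that is adjacent to both $x_{j-1}$ and $x_{j+1}$ and forms a square (or triangle) together with $x_{j-1},x_j,x_{j+1}$, so that $y_j$ can replace $x_j$. The new 2-path $(x_{j-1},y_j,x_{j+1})$ is then fanned because the old 2-path $(x_{j-1},x_j,x_{j+1})$ avoids $y_j$ and $y_j$ is adjacent to all its vertices, while the 2-path $(x_{j-2},x_{j-1},y_j)$ is fanned by concatenating the fanning witness of $(x_{j-2},x_{j-1},x_j)$ with the single vertex $y_j$ (or by invoking Lemma~\ref{fanned-4cycle} on the newly created $4$-cycle $x_{j-1}x_jx_{j+1}y_j$ when it is induced). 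The resulting path is still fanned and has strictly more initial vertices adjacent to $v$, so iterating yields a fanned $(a,b)$-path $P$ with $v$ adjacent to all of $P\setminus\{b\}$.

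For the first assertion, assume $v\sim a,b$ and consider $x_{k-1}$. If $v\sim x_{k-1}$, apply the first assertion inductively to $(x_0,\ldots,x_{k-1})$ to obtain a fanned $(a,x_{k-1})$-path $P'$ with $v\sim P'$, then append $b$ to form $P$; the terminal 2-path $(z,x_{k-1},b)$ (where $z$ is the predecessor of $x_{k-1}$ in $P'$) is fanned because the path $(z,v,b)$ avoids $x_{k-1}$ and has both of its vertices adjacent to $x_{k-1}$. If $v\nsim x_{k-1}$, then $d(v,x_{k-1})=2$ because $v\sim b\sim x_{k-1}$, so the second assertion applied to $(x_0,\ldots,x_{k-1})$ yields a fanned $(a,x_{k-1})$-path $P'$ with $v$ adjacent to $P'\setminus\{x_{k-1}\}$; one then uses QC$(v)$ applied to the configuration $v,z,x_{k-1},b$ (where $z$ is the predecessor of $x_{k-1}$ in $P'$, so that $v\sim z,b$ and $d(v,x_{k-1})=2$) to find a common neighbor $y$ of $v$ and $x_{k-1}$ that patches the terminal segment into a fanned path dominated by $v$ throughout.

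The principal obstacle will be the bookkeeping required to keep the whole path fanned after every local modification: when $x_j$ is swapped for $y_j$, both flanking 2-paths must be re-certified as fanned, and this reduces to exhibiting auxiliary paths through $x_j$ or $y_j$ in the neighborhoods of $x_{j-1}$ and $x_{j+1}$. This is precisely where the absence of $K_{2,3}$ and $W_4^-$ is crucial: by Lemmas~\ref{lem-prem-W5}--\ref{lem-prem-Wk} and Lemma~\ref{fanned-4cycle}, any potentially obstructing induced $4$-cycle or almost-wheel encountered along the way is forced to split through an additional common neighbor, which can in turn be incorporated into the fanning witnesses.
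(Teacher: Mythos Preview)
Your approach has a genuine gap that the paper's proof is specifically designed to avoid.

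The central problem is the replacement step in the second assertion. At the first index $j$ with $v\nsim x_j$ you claim to produce a vertex $y_j\sim v,x_{j-1},x_{j+1}$. But nothing guarantees $v\sim x_{j+1}$; when $d(v,x_{j+1})\ge 2$ (which is the generic situation as soon as the path wanders away from $v$), the triangle and quadrangle conditions together with the forbidden $K_{2,3}$ and $W_4^-$ do not produce such a $y_j$. The right configuration for QC appears not at the \emph{first} vertex off $v$ but at a \emph{peak} of the distance function $d(v,\cdot)$ along the path, and even there the square one obtains via QC need only lie in an $M_4$ rather than a $W_4$ (cf.\ Lemma~\ref{fanned-4cycle}). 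In the $M_4$ case there is no single common neighbour; one must replace $x_j$ by \emph{two} vertices $a_1,a_2$, strictly increasing the length of the path. This kills any induction on path length.

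The paper's proof handles exactly these two issues by abandoning induction on length and instead selecting, among \emph{all} fanned $(a,b)$-paths avoiding $v$, one that lexicographically minimizes $(\Delta_v(P),\gamma_v(P),\kappa_v(P))$, where $\Delta_v$ is the maximum distance to $v$, $\gamma_v$ counts edges at that maximum, and $\kappa_v$ counts vertices at that maximum. Working at a peak vertex $x_j$ guarantees $d(v,x_{j-1}),d(v,x_{j+1})\le d(v,x_j)$, so TC/QC apply; and when an $M_4$ forces replacement by two vertices, the invariant $(\Delta_v,\gamma_v,\kappa_v)$ still strictly decreases even though the length grows. Your sketch also misapplies QC in the final step of the first assertion: with base point $v$ and neighbours $z,b$ of $x_{k-1}$ at distance $1$ from $v$, QC returns $v$ itself, not a new vertex adjacent to $v$.
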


\begin{proof}  For an $(a,b)$--path $P$ and a vertex $v$ of $G$, let $\Delta_v(P)=\max\{ d(v,x): x\in P\}$ be the
largest distance from $v$ to a vertex of $P$.  Let $\gamma_v(P) = |\{ xy \in P : d(v,x) = d(v,y) = \Delta_v(P)\}|$
denote the number of edges of
$P$ whose both ends are at maximal distance from $v$.  Let
$\kappa_v(P)=|\{ x\in P: d(v,x)=\Delta_v(P)\}|$ denote the number of vertices of $P$ at
maximal distance from $v$.

Among all fanned $(a,b)$--paths avoiding $v$, consider a path $P=(a=x_0, x_1,\ldots,x_m=b)$ lexicographically minimizing
$(\Delta_v(p),\gamma_v(p),\kappa_v(p))$ and let $k=\Delta_v(p)$.  Clearly, if $k=1$, then
$v\sim P,$ and the 2-path $(a,v,b)$ is fanned. Now, suppose that
$k\ge 2$.  Let $j$ be the smallest index such that $d(x_{j},v)=k$. Since $v\sim a,$ we have $j>0.$
If $j=m$ and $d(v,b)=2$, then $v$ is adjacent to all vertices of $P$ except $b$, i.e., $v \sim P \setminus
\{b\}$, and we are done.  So, further we assume that $0<j<m$.

If $x_{j-1}=x_{j+1}$ (respectively, $x_{j-1}\sim x_{j+1}$), then Lemma
\ref{l:chord} implies that the path
$P_1=(a=x_0,\ldots,x_{j-2},x_{j+1}, x_{j+2},\ldots,x_m=b)$
(respectively, $P_1=(a=x_0,\ldots,x_{j-2},x_{j-1}, x_{j+1},
x_{j+2},\ldots,x_m=b)$) is fanned. Note that $\Delta_v(P_1) \leq
\Delta_v(P)$.  Moreover, if $\Delta_v(P_1)=\Delta_v(P)$, then by our
choice of $j$, $\gamma_v(P_1) \leq \gamma_v(P)$ and
$\kappa_v(P_1)<\kappa_v(P)$, contrary to the minimality choice of $P$.
Thus $x_{j-1}\ne x_{j+1}$ and $x_{j-1}\nsim x_{j+1}$.  Note that
$d(x_{j-1},v)=k-1$ and $k-1 \le d(x_{j+1},v)\le k$.

If $d(x_{j+1},v)=k$, by triangle condition TC($v$) applied to
$x_jx_{j+1}$, there exists $y \sim x_j,x_{j+1}$ such that
$d(y,v)=k-1$. We assert that the path $P_2=(a=x_0,\ldots,x_{j}, y,
x_{j+1},\ldots,x_m=b)$ is fanned. Since the path $P$ is fanned, it
suffices to show that the $2$--paths $(x_{j-1},x_j,y)$,
$(x_j,y,x_{j+1})$, and $(y,x_{j+1},x_{j+2})$ are fanned. Since $x_j\sim
x_{j+1}$, the path $(x_j,y,x_{j+1})$ is trivially fanned. Since $P$ is
fanned, $(x_{j-1},x_j,x_{j+1})$ is fanned, i.e., there exists an
$(x_{j-1},x_{j+1})$--path $Q_0$ such that $x_j \sim Q_0$. Since $y \sim
x_j,x_{j+1}$, $(Q_0,y)$ is an $(x_{j-1},y)$--path such that $x_j \sim
Q_0$ and thus $(x_{j-1},x_j,y)$ is fanned. Similarly, one can show
that $(y,x_{j+1},x_{j+2})$ is fanned.  Consequently, $P_2$ is fanned.
Note that $\Delta_v(P_2) = \Delta_v(P)$ and
$\gamma_v(P_2)<\gamma_v(P)$, contrary to the minimality choice of $P$.

Now, suppose that $d(x_{j+1},v) = k-1$. By the quadrangle condition QC($v$)
applied to $x_{j-1},x_{j+1}\in I(x_j,v)$,
there exists a vertex $y\sim x_{j-1},x_{j+1}$ such that $d(y,v)=k-2$.
Since $(x_{j-1},x_j,x_{j+1})$ is fanned (as a $2$--path of a fanned
path $P$), by Lemma~\ref{fanned-4cycle} the induced $4$--cycle $C=(x_j,x_{j+1},y,x_{j-1})$
either is included in an induced $W_4$ or $M_4$.

First suppose that $C$ is included in a $W_4$ and let $z$ be the vertex of $W_4$ adjacent
to all vertices of $C$. We assert that the path $P_3=(x_0=a,\ldots,x_{j-1}, z, x_{j+1},\ldots,x_m=b)$ is fanned.
Since the path $P$ is fanned, for this it suffices
to show that the 2-paths $(x_{j-2},x_{j-1},z),(x_{j-1},z,x_{j+1}),$ and $(z,x_{j+1},x_{j+2})$ of $P_3$ are fanned.
Since $z$ is adjacent to all vertices of the path $(x_{j-1},x_j,x_{j+1}),$ the 2-path $(x_{j-1},z,x_{j+1})$ is fanned.
Since the 2-path $(x_{j-2},x_{j-1},x_j)$ of $P$ is fanned, there exists
a $(x_{j-2},x_{j})$--path $Q_0$ avoiding  $x_{j-1}$ such that $x_{j-1}\sim Q_0$. Since $x_{j-1}\sim z,$ $(Q_0,z)$ is a $(x_{j-2},y)$--path
avoiding $x_{j-1}$ and whose all vertices are adjacent to $x_{j-1}$. This establishes that the 2-path $(x_{j-2},x_{j-1},z)$ is fanned.
Analogously, one can show that $(z,x_{j+1},x_{j+2})$ is fanned as well. Therefore the path $P_3$ is fanned.
Since $d(z,v)=k-1,$ we conclude that either $\Delta_v(P_3)<\Delta_v(P)$ or $\Delta_v(P_3)=\Delta_v(P),$ $\gamma_v(P_3)=\gamma_v(P),$ and $\kappa_v(P_3)<\kappa_v(P)$,
in both cases getting a contradiction with the minimality choice of $P$.

Now suppose that the $4$--cycle $C$ is included in an induced $M_4$. Denote by
$a_1,a_2,a_3,a_4$ the four additional vertices of $M_4$ not included in
$C$ such that $a_1\sim y,x_{j-1},$ $a_2\sim y,x_{j+1},$ $a_3\sim
x_{j+1},x_j,$ and $a_4\sim x_j,x_{j-1}$. If $k = 2$, then $y = v$ and the
vertices $a_1,a_2,a_3,a_4$ are all distinct from $v$. If $k > 2$,
$a_1,a_2,a_3,a_4 \in B_2(x_j,G)$ and then $a_1,a_2,a_3,a_4$ are all
different from $v$.  We assert that the path $P_4=
(x_0=a,\ldots,x_{j-1}, a_1,a_2,x_{j+1},\ldots,x_m=b)$ is fanned. Since the
path $P$ is fanned, for this it suffices to show that the 2-paths
$(x_{j-2},x_{j-1},a_1),(x_{j-1},a_1,a_2),(a_1,a_2,x_{j+1}),$ and
$(a_2,x_{j+1},x_{j+2})$ of $P_4$ are fanned.  The 2-paths
$(x_{j-1},a_1,a_2)$ and $(a_1,a_2,x_{j+1})$ are fanned
because $y\sim x_{j-1},a_1,a_2,x_{j+1}$.
Analogously to the previous case, let $Q_0$ be a
$(x_{j-2},x_{j})$--path certifying that the 2-path
$(x_{j-2},x_{j-1},x_j)$ of $P$ is fanned, i.e., $Q_0$ avoids $x_{j-1}$
and $x_{j-1}\sim Q_0$.  Since $x_{j-1}\sim a_4,a_1$, $(Q_0,a_4,a_1)$
is a $(x_{j-2},a_1)$--path avoiding $x_{j-1}$ and whose all
vertices are adjacent to $x_{j-1}$. This establishes that the 2-path
$(x_{j-2},x_{j-1},a_1)$ is fanned.  Analogously, one can show that the
2-path $(a_2,x_{j+1},x_{j+2})$ is fanned as well. Hence the path $P_4$
is fanned and $v\notin P_4$. Since $k-2\le d(a_1,v),d(a_2,v)\le k-1$,
we conclude that either $\Delta_v(P_4)<\Delta_v(P)$ or
$\Delta_v(P_4)=\Delta_v(P), \gamma_v(P_4)=\gamma_v(P)$, and
$\kappa_v(P_4)<\kappa_v(P)$, in both cases getting a contradiction
with the minimality choice of $P$. This concludes the proof of the
lemma.
\end{proof}

Let $T=a_0b_0c_0$
be a triangle in $G$ and let $H_0, H_1,
H_2$ be the subgraphs respectively induced by $\{a_0\}, \{a_0, b_0\}$
and $ \{a_0, b_0,c_0\}$. Then for any ordinal $\alpha$ we define the subgraphs $H_{<\alpha}$ and $H_{\alpha}$
as in the transfinite version of the algorithm GATED-HULL (see Subsection~\ref{s:wmgra}). Analogously, we define the subgraph $K$ of $G$. By Lemma \ref{p:gate3},
$K$ is the gated hull in $G$ of the triangle $T$.

\begin{lemma}
\label{p:gate1}
For any ordinal $\alpha$, $H_{\alpha}$ is 2-connected and any 2-path
of $H_\alpha$ is $K$--fanned. In particular, $K$ is 2-connected and any 2-path
of $K$ is $K$--fanned.
\end{lemma}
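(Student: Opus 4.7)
My plan is a transfinite induction on the ordinal $\alpha$, simultaneously establishing that $H_\alpha$ is $2$--connected and that every $2$--path of $H_\alpha$ is $K$--fanned. The base case $\alpha\le 2$ is immediate because $H_2$ is the triangle $T=a_0b_0c_0$: it is $2$--connected and each of its three $2$--paths $(x,y,z)$ is witnessed by the single edge from $x$ to $z$, since the third vertex of the triangle is adjacent to both endpoints. The limit case is also straightforward: if $\alpha$ is a limit ordinal, then $H_{<\alpha}=\bigcup_{\beta<\alpha}H_\beta$ is a directed union of $2$--connected graphs sharing the triangle $T$, hence $2$--connected; and any $2$--path of $H_{<\alpha}$ already lies in some $H_\beta$, so is $K$--fanned by induction hypothesis.

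The real content is the successor step $\alpha=\beta+1$, where $H_\alpha=H_\beta\cup\{v\}$ and the construction guarantees that $v$ has at least two neighbors in $H_\beta$. That $H_\alpha$ is $2$--connected is a routine consequence: adjoining a vertex of degree at least two to a $2$--connected graph produces a $2$--connected graph. For the fanning property I will use that $K$, being a gated subgraph of the pre-median graph $G$, is itself pre-median (it is convex and is closed under taking common neighbors by Lemma~\ref{lem-weakly-modular_gated_hull}(ii)), so Lemmas~\ref{fanned-4cycle} and~\ref{l:covered} are available \emph{inside} $K$. Consider a $2$--path $(x_1,x_2,x_3)$ of $H_\alpha$; if all three vertices lie in $H_\beta$ it is $K$--fanned by induction, so assume $v\in\{x_1,x_2,x_3\}$. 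When $v=x_2$ is the middle, the $2$--connectedness of $H_\beta$ furnishes an $(x_1,x_3)$--path in $H_\beta$ avoiding $v$; by induction this path is $K$--fanned. Applying the first assertion of Lemma~\ref{l:covered} in the pre-median graph $K$, with $v\sim x_1,x_3$, upgrades this to a fanned $(x_1,x_3)$--path with $v$ adjacent to every vertex, which is exactly the $K$--fanning of $(x_1,v,x_3)$.

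The subtle case, which will be the main obstacle, is when $v$ appears as an endpoint, say $x_1=v$, while the middle $x_2$ and the other endpoint $x_3$ lie in $H_\beta$. If $v\sim x_3$ the edge $(v,x_3)$ itself witnesses $K$--fanning. Otherwise $d(v,x_3)=2$, and I will handle this using the Type A case (middle $v$) already proven in the current successor step. Pick another neighbor $c$ of $v$ in $H_\beta$ distinct from $x_2$. If some such $c$ is adjacent to $x_2$, then the $2$--path $(c,x_2,x_3)$ lies in $H_\beta$ and is $K$--fanned by induction, giving a $(c,x_3)$--path $R$ in $K$ avoiding $x_2$ with $x_2$ adjacent to every vertex; prepending $v$ to $R$ (along the edge $vc$) produces the required witness. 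In the remaining case, no neighbor of $v$ in $H_\beta$ is adjacent to $x_2$; I plan to apply Lemma~\ref{l:covered} in $K$ with the roles arranged so that the fanned $(x_2,x_3)$--path in $H_\beta$ (provided by induction and the $2$--connectedness of $H_\beta$) serves as input, and the vertex $v$ plays the role of the adjacent vertex of the second assertion. This yields a fanned $(x_2,x_3)$--path with $v$ adjacent to all but $x_3$; together with the edges $vx_2$ and $x_2x_3$ and the already established Type A fanning for $2$--paths of the form $(y,v,x_2)$, one assembles the required $(v,x_3)$--path in $K$ avoiding $x_2$ all of whose vertices are adjacent to $x_2$.

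Finally, the statement about $K$ is obtained for free: the algorithm terminates (transfinitely) with $K=H_{<\gamma}$ for some ordinal $\gamma$ by Lemma~\ref{p:gate3}, so $K$ is a directed union of the $H_\alpha$'s and inherits both the $2$--connectedness and the property that every $2$--path is $K$--fanned.
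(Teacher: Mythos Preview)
Your overall strategy matches the paper's, and the base case, the directed-union passage to $H_{<\alpha}$, the $2$-connectedness argument, and the ``middle vertex'' case (your Type~A) are all correct. One minor structural point: in the construction, $H_\alpha=H_{<\alpha}\cup\{v\}$ at \emph{every} ordinal $\alpha$, limit or successor (at a limit ordinal the algorithm first forms the union $H_{<\alpha}$ and then still adds a new vertex). So your ``limit case'' only covers $H_{<\alpha}$; the passage to $H_\alpha$ requires the same argument as your successor step. This is cosmetic---just phrase the induction as ``$H_{<\alpha}$ has the properties, hence so does $H_\alpha=H_{<\alpha}\cup\{v\}$'' uniformly in $\alpha$.

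The real gap is in your endpoint sub-case where no neighbor $c\ne x_2$ of $v$ in $H_\beta$ is adjacent to $x_2$. You propose to apply Lemma~\ref{l:covered} with $v$ as the central vertex to a fanned $(x_2,x_3)$-path. But the witness you need is a $(v,x_3)$-path avoiding $x_2$ with $x_2$ adjacent to every vertex: it is $x_2$, not $v$, that must play the role of the center in Lemma~\ref{l:covered}. With center $v$ the lemma outputs a fanned $(x_2,x_3)$-path with $v$ adjacent to all but $x_3$; since $x_2\sim x_3$, the edge $x_2x_3$ already satisfies this and you gain nothing. The closing ``one assembles the required path'' does not go through. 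The paper's argument instead uses the second neighbor $a$ of $v$ and $2$-connectedness of $H_{<\alpha}$ to obtain a $K$-fanned $(a,x_3)$-path $P_0$ \emph{avoiding $x_2$}, then applies Lemma~\ref{l:covered} with center $x_2$ (using $x_2\sim x_3$ and $d(x_2,a)\le 2$). When $x_2\sim a$ this gives your first sub-case; when $d(x_2,a)=2$ one gets $x_2\sim P\setminus\{a\}$, and the remaining step---bridging from the neighbor $a'$ of $a$ on $P$ to $v$---is handled by noting that $(a',a,v,x_2)$ is an induced $4$-cycle whose $2$-path $(a,v,x_2)$ is $K$-fanned by the already-proved Type~A case, so Lemma~\ref{fanned-4cycle} makes $(a',x_2,v)$ $K$-fanned, supplying an $(a',v)$-path avoiding $x_2$ with $x_2$ adjacent to all vertices to concatenate with $P\setminus\{a\}$.
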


\begin{proof}
We proceed by induction on $\alpha$.  Clearly, $H_0, H_1, H_2=T$
fulfill these properties. Assume by induction hypothesis that for
every $\beta < \alpha$, $H_\beta$ is $2$--connected and that any 2-path
of $H_{\beta}$ is $K$--fanned.

First we show that $H_{< \alpha}$ is 2-connected and that any 2-path
of $H_{< \alpha}$ is $K$--fanned. Consider any three vertices $a, b, u
\in V(H_{<\alpha})$. There exists $\beta<\alpha$ such that $a,b,u
\in V(H_\beta)$. By the induction hypothesis, there exists a path from
$a$ to $b$ in $H_\beta\setminus \{u\}$. Since $H_\beta\setminus \{u\}$
is a subgraph of $H_{< \alpha}\setminus \{u\}$, $a$ is not
disconnected from $b$ in $H_{< \alpha}\setminus \{u\}$, and thus $H_{<
  \alpha}$ is 2-connected.  For every 2-path $(a,b,c)$ in $H_{<
  \alpha}$, there exists $\beta < \alpha$ such that $a,b,c \in
V(H_\beta)$. By the induction hypothesis, the 2-path $(a,b,c)$ is
$K$--fanned.

If $K = H_{< \alpha}$, we are done. Otherwise, let $v$ be the unique
vertex of $V(G)$ such that $V(H_\alpha) = V(H_{<\alpha}) \cup \{v\}$.
By the definition of $H_{\alpha}$, $v$ has at least two neighbors
$x,y$ in $H_{<\alpha}$.

Suppose that $H_\alpha$ is not 2-connected. Consider three distinct
vertices $a,b,u \in V(H_\alpha)$.  If $a, b \in V(H_{<\alpha}) $, we
know that there exists a path from $a$ to $b$ in $H_{<\alpha} \setminus
\{u\}$. Without loss of generality, assume now that $b = v$ and $u
\neq x$. We know that there exists a path from $a$ to $x$ in
$H_{<\alpha} \setminus \{u\}$ and consequently, there exists a path
from $a$ to $b = v$ in $H_{\alpha} \setminus \{u\}$ since $x \sim
v$. Consequently, for every $a, b, u \in V(H_\alpha)$, $u$ does not
disconnect $a$ from $b$, i.e., $H_\alpha$ is 2-connected.

We will prove that any $2$--path of $H_{\alpha}$ is $K$--fanned.  It
suffices to consider the $2$--paths $Q$ of $H_{\alpha}$ that contain $v$,
since all other $2$--paths lie in $H_{<\alpha}$ and are $K$--fanned.

\begin{case-fan}
  $Q=(a,v,b)$.
\end{case-fan}

Since $H_{< \alpha}$ is connected and $a,b \in V(H_{< \alpha})$, there
exists an $(a,b)$--path $R$ in $H_{<\alpha}$. Since any 2-path of $H_{<
  \alpha}$ is $K$--fanned by induction hypothesis, $R$ itself is
$K$--fanned.  As $H_{< \alpha}$ is a subgraph of $K$, $R$ belongs to
$K$.  By Lemma \ref{p:gate3}, $K$ is a pre-median
graph, thus we can apply Lemma \ref{l:covered} and conclude that the 2-path
$(a,v,b)$ is $K$--fanned.

\begin{case-fan}
  $Q=(c,b,v)$.
\end{case-fan}

If $c$ and $v$ coincide or are adjacent, then $Q$ is trivially
fanned. Thus we may assume that $c\ne v$ and $c\nsim v$. Since $v$
has at least two neighbors in $H_{< \alpha}$, there exists a vertex $a
\in H_{< \alpha}$ adjacent to $v$ and different from $b$.  Since
$H_{<\alpha}$ is 2-connected and $a,c \in H_{<\alpha}$, there exists
an $(a,c)$--path $P_0$ in $H_{< \alpha}$ that avoids $b$. The path
$P_0$ is $K$--fanned because all its 2-paths are fanned by the
induction hypothesis. Since $b\sim c$ and $d(b,a) \in \{1,2\},$ by
Lemma \ref{l:covered} there exists a fanned $(a,c)$--path $P$ in $K$
avoiding $b$ such that $b\sim P$ if $b\sim a$ and $b\sim
P\setminus \{ a\}$ if $d(b,a)=2.$ In the first case obviously $b$ is
adjacent to all vertices of the $(c,v)$--path $(P,v),$ yielding that
the 2-path $Q=(c,b,v)$ is fanned. Now assume that $d(b,a)=2$ and
$b\sim P\setminus \{ a\}$. Let $a'$ be the neighbor of $a$ in $P$ and
let $P_1=P\setminus \{ a\}$. If $v\sim a',$ then $b$ is adjacent to
all vertices of the $(c,v)$--path $(P_1,v),$ hence $(c,b,v)$ is
fanned. Suppose now that $v\nsim a'$, i.e., $C=(a',a,v,b)$ is an
induced $4$--cycle of $H_{\alpha}$ and $K$. Since the $2$--path  $(a,v,b)$ is
$K$--fanned by Case 1, by Lemma \ref{fanned-4cycle} applied to the
induced $4$--cycle $C$ of the pre-median graph $K$, we conclude that the
$2$--path $(a',b,v)$ of $C$ is $K$--fanned. Thus in $K$ there exists an
$(a',v)$--path $P'$ avoiding $b$ such that $b\sim P'$.  But then $P_1$
followed by $P'$ is a $(c,v)$--path of $K$ avoiding $b$ whose all
vertices are adjacent to $b$.  This shows that the $2$--path $(c,b,v)$ if
$K$--fanned.
\end{proof}

\begin{lemma}
\label{p:null-homotopic}
For any ordinal $\alpha$, each induced $4$--cycle $C$ of $H_{<\alpha}$
and $H_{\alpha}$ is included in $K$ in an induced $W_4$ or $M_4$. In particular, 
each induced 4-cycle $C$ of $K$ is included in an induced $W_4$ or $M_4$. 
\end{lemma}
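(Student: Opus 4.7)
The plan is to deduce this lemma essentially as an immediate corollary of Lemma~\ref{fanned-4cycle} applied inside the gated hull $K$, leveraging the fanning property established in Lemma~\ref{p:gate1}. The key observation is that $K$, being gated, is a retract of the ambient pre-median graph $G$, hence by Proposition~\ref{prod-retracts} the subgraph $K$ is itself pre-median. Therefore Lemma~\ref{fanned-4cycle} applies to $K$: any induced $4$-cycle of $K$ admitting a $K$-fanned $2$-path lies in an induced $W_4$ or $M_4$ of $K$.

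Let $C = (v_1, v_2, v_3, v_4)$ be an induced $4$-cycle of $H_{\alpha}$ (the case of $H_{<\alpha}$ being analogous, or reduced to the $H_{\alpha}$ case since $H_{<\alpha} = \bigcup_{\beta < \alpha} H_\beta$ and $C$ lives in some $H_\beta$). First I would observe that since both $H_\alpha$ and $K$ are induced subgraphs of $G$ and $V(H_\alpha) \subseteq V(K)$, the subgraph $H_\alpha$ is itself induced in $K$, so $C$ is an induced $4$-cycle of $K$ as well. Next, by Lemma~\ref{p:gate1}, the $2$-path $(v_1, v_2, v_3)$ of $H_\alpha$ is $K$-fanned, that is, there exists a $(v_1, v_3)$-path in $K$ avoiding $v_2$ all of whose vertices are adjacent to $v_2$.

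With these ingredients in hand, I would invoke Lemma~\ref{fanned-4cycle} in the pre-median graph $K$: the induced $4$-cycle $C$ of $K$ has a $K$-fanned $2$-path $(v_1, v_2, v_3)$, so $C$ is included in an induced $W_4$ or $M_4$ of $K$, which is exactly the conclusion. I do not foresee a serious obstacle here; the entire content has been pushed into Lemmata~\ref{fanned-4cycle} and~\ref{p:gate1}, together with the closure of the class of pre-median graphs under retracts (Proposition~\ref{prod-retracts}). The only point requiring a moment of care is verifying that $C$ remains induced when passing from $H_\alpha$ to $K$, which follows formally from both being induced subgraphs of the common ambient graph $G$.
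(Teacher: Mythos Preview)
Your proposal is correct and follows essentially the same route as the paper: both arguments rest on Lemma~\ref{p:gate1} (to obtain a $K$-fanned $2$-path in $C$) together with Lemma~\ref{fanned-4cycle} applied in the pre-median graph $K$. The paper organizes the proof as a transfinite induction on $\alpha$, reducing to $4$-cycles through the newly added vertex, whereas you apply the fanning property directly to an arbitrary $4$-cycle of $H_\alpha$; this makes your version slightly more streamlined, since the induction is already absorbed into Lemma~\ref{p:gate1}.
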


\begin{proof} Again we proceed by induction on $\alpha$. Suppose by induction hypothesis that
the assertion holds for every $H_{\beta}$ with $\beta < \alpha$. If there exists a
$4$--cycle $(a,b,c,d)$ in $H_{<\alpha}$, then there exists $\beta < \alpha$
such that $a, b, c, d \in V(H_\beta)$. Since $H_\beta$ is an induced
subgraph of $H_{<\alpha}$, $(a,b,c,d)$ is an induced $4$--cycle of
$H_\beta$, and we can apply the induction hypothesis to $H_\beta$.

If $K = H_{<\alpha}$, we are done. Otherwise, let $v$ be the unique
vertex of $V(G)$ such that $V(H_\alpha) = V(H_{<\alpha}) \cup \{v\}$.
Suppose that $H_{\alpha}$ contains an induced $4$--cycle $C$ such that
$v$ belongs to $C$. Let $C=(v,a,b,c,v).$ Since by Lemma \ref{p:gate1}
the $2$--paths of $H_{\alpha}$ are $K$--fanned, the simple $2$--path
$(a,b,c)$ of $C$ is $K$--fanned and, by Lemma~\ref{fanned-4cycle}, $C$
is included in a $W_4$ or in an $M_4$.
\end{proof}

Since $G$ is a prime pre-median graph, by Theorem
\ref{premedian_Chastand}(i), we have $G=K$.  Therefore, by the
previous lemma, each square of $G$ belongs to an induced $W_4$ or
$M_4$.
This concludes the proof of the implication
(i)$\Rightarrow$(ii) and of the Theorem \ref{prime_premedian}.

For a subclass $\mathcal C$ of ppm-graphs, let $\nabla(\mathcal C)$ denote
the class of all pre-median graphs in which all prime subgraphs belong to the class $\mathcal C$.
The class $\nabla(\mathcal C)$
consists of fiber-complemented graphs to which all the results of Chastand \cites{Cha1,Cha2} apply. In particular, each
graph $G\in \nabla({\mathcal C})$ is isometrically embeddable in a weak  Cartesian product of its
prime subgraphs. Moreover, if each graph of ${\mathcal C}$ is moorable, then all graphs $G\in \nabla({\mathcal C})$ are
retracts of a weak Cartesian products of their prime subgraphs (for the definition of moorings, see Chapter~\ref{s:prel}).
We know that some classes of prime pre-median graphs (in particular, weakly bridged graphs --- see the next section)
are moorable. However, we do not know if this holds for all ppm-graphs:

\begin{question} Is it true that all prime pre-median graphs are moorable?
\end{question}

If this is true, then we will obtain that each pre-median graph is a retract of a weak Cartesian product of prime pre-median graphs.

\section{Examples}\label{examples}

In this section, we present examples of thin and thick pm- and ppm-graphs. In particular, we prove
Proposition \ref{premedian-examples}(1).

\subsection{Thin ppm's: weakly bridged graphs} First consider the class of weakly bridged graphs. A graph $G$ is called {\it weakly
bridged} \cites{ChOs, O-cnpc} if $G$
is a weakly modular graph without induced $4$--cycles. It was shown in \cite{BCC+}*{Theorem 2(iii)} and
it also follows from Theorem \ref{prime_premedian} that 2-connected weakly bridged graphs are prime
pre-median graphs. Equivalently, the 2-connected weakly bridged graphs are exactly the thin ppm-graphs.
It was shown in \cites{BCC+,ChOs} that (a) weakly bridged graphs are
dismantlable, thus their clique complexes are contractible. Also weakly bridged graphs have the following
fixed point property \cites{BCC+,ChOs}: (b) any finite group acting on a weakly bridged graphs by automorphisms
fixes a simplex of its clique complex. The dismantling of weakly bridged graphs implies that (c) weakly bridged graphs
are moorable (or equivalently, that they satisfy the 1-fellow traveler property).

Bucolic graphs are the pre-median graphs whose prime graphs are weakly
bridged and bucolic complexes are the prism complexes derived from bucolic graphs by replacing each Cartesian
product of cliques by a prism cell. Namely, if ${\mathcal W}{\mathcal B}$ denote the class of weakly bridged graphs, then
the class of bucolic graphs coincide with $\nabla({\mathcal W}{\mathcal B})$. Using the properties (a)
and (b), it was shown in \cite{BCC+}*{Theorems 3$\&$4} that locally finite bucolic complexes are contractible and that any finite
group acting by automorphisms on a locally finite bucolic complex $X$  fixes a prism of $X$. Finally, from (c) and the results
of \cites{Cha1,Cha2} it follows that bucolic graphs are retracts of a weak  Cartesian product of their prime subgraphs. Notice also,
that the thick convex subgraphs of a bucolic graph $G$ are weak Cartesian products of complete subgraphs. Thus $C(G)$ coincides with the clique complex
of $G$ and $C(G)$ is a bucolic complex.
Therefore weakly bridged graphs
constitute a subclass of ppm-graphs with contractible clique complexes  and which lead to the class of bucolic graphs whose cell (prism) complexes satisfy  main
nonpositive-curvature-like properties.

\subsection{Half-cubes and Gosset graphs}

We start with the definition of several  distance-regular graphs; we follow \cite{DeLa} and \cite{BrCoNeu} (the hypercubes, half-cubes, Johnson graphs, and
hyperoctahedra have been defined in Subsection \ref{s:bagraphs}).
The {\it Schl\"afli graph} $G_{27}$ is a distance-regular graph of
diameter 2 with parameters $(27,16,10,8)$: it has 27 vertices, is 16-regular (each vertex has 16 neighbors), each pair of adjacent
vertices has 10 common neighbors, and each pair of non-adjacent vertices has 8 common neighbors.
Each 2-interval of $G_{27}$ is the 6-hyperoctahedron. The {\it Gosset graph} $G_{56}$ is a 27-regular graph on  56 vertices, has
diameter and radius 3, and the link of each vertex is the Schl\"afli graph $G_{27}$.
As noticed in \cite{KooMoSte},  $\frac{1}{2}H_{n},$ $J(n,k)$, $G_{27}$, and $G_{56}$ are {\it spherical graphs}, i.e., for each pair of non-adjacent
vertices $u,v$ and each vertex $x\in I(u,v)$ there exists a unique vertex $y\in I(u,v)$ such that $I(x,y)=I(u,v)$. The following two propositions
provide a proof of  Proposition \ref{premedian-examples}(1).

\begin{proposition} \label{half-cubes-Gosset} Half-cubes, Johnson graphs, the Schl\"afli graph, and the Gosset graph are thick ppm-graphs.
\end{proposition}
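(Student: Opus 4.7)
The plan is to apply Theorem~\ref{prime_premedian}(ii), which reduces the ppm-property to: (a) $2$-connectedness and weak modularity, (b) the absence of induced $K_{2,3}$ and $W_4^-$, and (c) that every square lies in an induced $W_4$ or $M_4$. Thickness is an additional check. Since each of the four graphs is vertex-transitive, connected, and non-complete, $2$-connectedness is automatic.

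The unifying structural input, which I will verify from the combinatorial descriptions of each graph (symmetric differences of $k$-subsets for half-cubes and Johnson graphs, and the intersection-array data of Brouwer--Cohen--Neumaier for the sporadic Schl\"afli and Gosset graphs), is that every $2$-interval $I(u,v)$ induces a hyperoctahedron $K_{m\times 2}$: with $m=3$ for $J(X,k)$, $m=4$ for $\frac{1}{2}H(X)$, $m=6$ for $G_{27}$ (stated in the excerpt), and $m=6$ for $G_{56}$ (read off from $\mu=10$ together with the $2$ endpoints forming another antipodal pair). Since $m\geq 3$, each $2$-interval contains a square, giving thickness. Weak modularity now follows: the quadrangle condition is immediate because two non-adjacent $v,w\in I(u,z)\cap N(z)$ are antipodes in $I(u,z)\cong K_{m\times 2}$, so every other vertex of $I(u,z)$ (in particular one closer to $u$, which exists by distance-regularity) is a common neighbor of $v,w$; the triangle condition is similarly a direct check in the links, which are themselves distance-regular graphs of known type ($J(X,2)$ for $\frac{1}{2}H(X)$, Johnson-like graphs for $J(X,k)$, the Clebsch-type graph in $G_{27}$, and $G_{27}$ itself as the link of $G_{56}$). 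The absence of an induced $K_{2,3}$ is immediate, since three pairwise non-adjacent common neighbors of $u,v$ would form an independent set of size $3$ in $I(u,v)=K_{m\times 2}$, whose independence number is $2$.

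The absence of an induced $W_4^-=(c,x_1,x_2,x_3,x_4)$ with $c\nsim x_1$ is the main obstacle. Here $x_2,x_4\in I(c,x_1)$ must be antipodes in the hyperoctahedral interval, and $x_3$ lies in the link of $c$ as a common neighbor of $x_2,x_4$ with the extra property $x_3\nsim x_1$. A local analysis in the link of $c$ --- exploiting, for each of the four graph classes, that common neighbors of antipodes in a hyperoctahedral $\mu$-graph are precisely the remaining vertices of that $\mu$-graph, all of which are adjacent to the other endpoint $x_1$ --- rules out such an $x_3$. Finally, to verify that each square $abcd$ lies in an induced $W_4$, note that $b,d$ are antipodes in $I(a,c)\cong K_{m\times 2}$ with $m\geq 3$, so one can pick $e\in I(a,c)$ distinct from $a,b,c,d$ and not antipodal to any of them, obtaining $e\sim a,b,c,d$, hence a $W_4$ containing the square. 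The delicate part of the proof is the $W_4^-$ check for the Schl\"afli and Gosset graphs, for which one cannot rely on a generic subset model and must invoke the explicit link structure instead.
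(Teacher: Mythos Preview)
Your overall strategy—hyperoctahedral $2$-intervals plus Theorem~\ref{prime_premedian}(ii)—is essentially the paper's, and your arguments for thickness, the absence of induced $K_{2,3}$, and every square lying in an induced $W_4$ are correct. The real gap is in verifying weak modularity.

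For (QC) you assert $I(u,z)\cong K_{m\times2}$, but this only holds when $d(u,z)=2$; for larger distances the interval is not a hyperoctahedron and your argument collapses. The paper works instead inside the $2$-interval $I(v,w)$: since $z\in I(v,w)$ and $I(v,w)$ is hyperoctahedral, $z$ has an antipode $z'$ there, and the positioning condition (PC)—available for Johnson graphs and half-cubes because they are basis graphs of (even $\triangle$-)matroids—applied to $u$ and the square $vzwz'$ forces $d(u,z')=d(u,v)-1$. For $G_{27}$ the diameter is $2$, so (QC) is vacuous; for $G_{56}$ the paper uses the special antipodal property that for $d(x,y)=3$ the closed neighbourhoods of $x$ and $y$ partition $V(G_{56})$. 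Appealing to ``distance-regularity'' alone does not supply (PC) or these facts.

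Likewise, ``a direct check in the links'' is too thin for (TC): for $G_{27}$ the paper uses a pigeonhole count on common-neighbour sizes, and for $G_{56}$ the argument is genuinely non-trivial, combining the antipodal covering at distance $3$ with a short case analysis ruling out a forbidden $W_4^-$. Finally, your $W_4^-$ exclusion is more roundabout than necessary and implicitly uses that $I(x_2,x_4)=I(c,x_1)$ (a sphericality statement you do not justify); the direct route is to observe that $x_1,x_3,c$ are all common neighbours of $x_2,x_4$ and hence lie in the hyperoctahedron $I(x_2,x_4)$, where $x_1$ would then have two non-neighbours ($c$ and $x_3$), contradicting the unique-antipode property of $K_{m\times2}$.
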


\begin{proof}  As basis graphs of matroids and even $\triangle$--matroids,  $\frac{1}{2}H_{n}$ and $J(n,k)$ satisfy (TC) \cites{Che_bas,Mau}.
Each 2-interval of $J(n,k)$ induces a 3-hyperoctahedron and each
2-interval of $\frac{1}{2}H_{n}$ induces a 4-hyperoctahedron. Thus each square of $\frac{1}{2}H_{n}$ or $J(n,k)$ is included in a $W_4$. This also shows that
$\frac{1}{2}H_{n}$ and $J(n,k)$ do not contain induced $K_{2,3}$ and $W^-_4$. So, to show that $\frac{1}{2}H_{n}$ and
$J(n,k)$ are ppm-graphs it suffices to show that they satisfy (QC). Let $x,y\in I(u,v)$ be two non-adjacent neighbors of $v$. Since the 2-interval
$I(x,y)$ is a 3- or a 4-hyperoctahedron and $v\in I(x,y),$ the vertices $x,y,v$ belong to a square $xvyu'$ of this hyperoctahedron. Since $d(u,v)>d(u,x)=d(u,y),$
by positioning condition (PC),
we conclude that $d(u,u')<d(u,x),$ thus establishing (QC). This shows that $\frac{1}{2}H_{n}$ and $J(n,k)$ are thick ppm-graphs.

Now, we will establish the same assertion for $G_{27}$ and
$G_{56}$. Since these graphs are spherical \cite{KooMoSte}, each
2-interval contains a square and is a hyperoctahedron. Thus
$G_{27}$ and $G_{56}$ are thick graphs not containing $K_{2,3}$ and
$W^-_4$, and each square of $G_{27}$ or $G_{56}$ is included in a $W_4$. Since $G_{27}$ has diameter 2, it obviously satisfies (QC).
To show that $G_{27}$ satisfies (TC), pick three vertices $x,y,z$ such that $x\sim y$ and
$d(z,x)=d(z,y)=2.$ Since $x$ and $y$ have 10 common neighbors, each
pair $x,z$ and $y,z$ have 8 common neighbors, and $G_{27}$ has 27
vertices, necessarily two of these three sets of common neighbors will
intersect, showing that $x,y,$ and $z$ have a common neighbor.

Finally, consider the Gosset graph $G_{56}$. As noticed above, $G_{56}$ does not contain induced $K_{2,3}$ and $W^-_4$.
We will also use the following property of $G_{56}$: if $d(x,y)=3,$ then the closed neighborhoods of $x$ and $y$ cover all vertices
of $G_{56}$. Indeed, this easily follows by noticing that  $G_{56}$ contains 56 vertices and is 27-regular and that those two
neighborhoods are disjoint.

To establish (QC), let $d(u,v)=3$ and $x,y\in I(u,v),$ $x,y\sim u$, and $x\nsim y$. Then the closed neighborhoods of $u$ and $v$ are disjoint and
cover all vertices of $G_{56}$. Since $G_{56}$ is spherical, the vertices $x,u,y$ belong to a square. Let $w$ be the fourth vertex of this square. Since $w\nsim u,$ necessarily $w\sim v,$ thus $w$ is a common neighbor of $x,y,$ and $v$.

To establish (TC), pick three vertices $x,y,z$ of $G_{56}$ such that $x\sim y$ and $1<d(z,x)=d(z,y):=k\le 3.$ First suppose that $k=3$. Pick any neighbor $u$ of $y$ in $I(y,z)$. If $u\sim x$, then we are done. Otherwise, since the closed neighborhoods of $x$ and $z$ covers $G_{56}$, necessarily $u\sim z$, which is impossible because $d(y,z)=3$. So, suppose that $k=2$. Let $u$ be a vertex of $G_{56}$ such that $d(x,u)=3$ (such a vertex exists because the radius of $G_{56}$ is 3). Since the closed neighborhoods of $x$ and $u$ covers all vertices of $G_{56},$ necessarily, $z$ is adjacent to $u$. The vertices $z$ and $y$ are at distance 2. Thus they belong to the link of some vertex $w$. Since the link $S$ of $w$ is the Schl\"afli graph, in $S$, $z$ and $y$ have 8 common neighbors inducing a 4-hyperoctahedron. If one of these vertices is adjacent to $x$, then we are done. Otherwise, since the closed neighborhoods of $x$ and $u$ covers the vertices of $G_{56},$ all common neighbors of $z$ and $y$ are neighbors of $u$. Pick two such non-adjacent neighbors $s$ and $t$. Since $d(u,x) = 3$, we have $u \nsim y$ and thus the subgraph induced by $u,s,t,z,$ and $y$ is a $W^-_4$, which is forbidden in $G_{56}$.
\end{proof}

\begin{proposition} \label{half-cube} \

  \begin{itemize}
    \item[(1)] If $G$  is a retract of $H$, where $H$ is one of the graphs  $K_{n\times 2}$, $\frac{1}{2}H_{n}$, $J(n,k)$, $G_{27}$, or $G_{56}$, then $G$ is a ppm-graph. Additionally, the retracts of  $K_{n\times 2}$, $\frac{1}{2}H_{n}$,  and $J(n,k)$ do not contain propellers.
    \item[(2)] If $G$  is a retract of a weak Cartesian product of graphs from the  family $K_{n\times 2}$, $\frac{1}{2}H_{n}$, $J(n,k)$, $G_{27}$, $G_{56}$ (the same factor can be taken several times), then $G$ is a pm-graph.
  \end{itemize}
\end{proposition}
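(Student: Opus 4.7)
My plan has three components, one for each of the two main ingredients of part (1) and one for part (2). For part (1), I will combine Proposition~\ref{half-cubes-Gosset} (each of the listed graphs is a thick ppm-graph), Proposition~\ref{prod-retracts} (pre-median graphs are closed under retracts), and the characterization of ppm-graphs given by Theorem~\ref{prime_premedian}(ii). Proposition~\ref{prod-retracts} immediately gives that $G$ is pre-median; it remains to check that every square of $G$ lies in an induced $W_4$ or $M_4$ of $G$, and in fact we will always land in a $W_4$.

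The main step is the square-to-$W_4$ argument. Fix a retraction $\varphi\colon H\to G$ and a square $abcd$ of $G$. Since $G$ is a full (induced) subgraph of $H$, the cycle $abcd$ is also a square in $H$. By Proposition~\ref{half-cubes-Gosset}, all $2$--intervals in any of the listed host graphs are (sub)hyperoctahedra, so the square $abcd$ extends in $H$ to an induced $W_4$ with some apex $e\sim a,b,c,d$. Because $\varphi$ is simplicial and fixes $a,b,c,d$, the image $\varphi(e)$ is adjacent to or coincides with each of $a,b,c,d$. If $\varphi(e)=a$, the edge $e\sim c$ would yield $a=\varphi(e)\sim\varphi(c)=c$ or $a=c$, contradicting that $abcd$ is an induced square; ruling this out for each corner forces $\varphi(e)\notin\{a,b,c,d\}$, so $\varphi(e)$ is a vertex of $G$ adjacent to all four corners, producing the required induced $W_4$ in $G$. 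Theorem~\ref{prime_premedian}(ii) then gives that $G$ is a ppm-graph.

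For the no-propeller claim, observe that any induced propeller of $G$ is still induced in $H$ (full subgraph), so it suffices to verify that $K_{n\times 2}$, $\tfrac12 H_n$, and $J(n,k)$ contain no induced propeller. A propeller requires an edge $uv$ with three pairwise non-adjacent common neighbors. For $K_{n\times 2}$, the common neighborhood of an edge is $K_{(n-2)\times 2}$, whose non-adjacency relation is a perfect matching and thus contains no triangle of pairwise non-adjacent vertices. For $J(n,k)$, writing $u=B$ and $v=B-x+y$, a short case analysis shows that every common neighbor has the form $B-x+w$ with $w\notin B,\,w\neq y$ (``type A'') or $B-z+y$ with $z\in B,\,z\neq x$ (``type B''); a direct computation of symmetric differences shows type A is a clique, type B is a clique, and any type A vertex is at distance $4$ (in the hypercube) from any type B vertex. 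Hence at most two common neighbors can be pairwise non-adjacent. The same analysis goes through verbatim for $\tfrac12 H_n$, with ``$\{x,r\}$'' and ``$\{y,r\}$'' exchanges playing the roles of types A and B. This routine case analysis is the only mildly technical step in part (1); I expect no real obstacle, only bookkeeping.

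Part (2) is a direct application of Proposition~\ref{prod-retracts}: the class of pre-median graphs is closed under Cartesian products, hence under weak Cartesian products (taking connected components), and under retracts. Since each of the listed factors is a ppm-graph by Proposition~\ref{half-cubes-Gosset}, hence in particular a pm-graph, any weak Cartesian product of such factors is pm, and so is any retract thereof. No further work is needed.
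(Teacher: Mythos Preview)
Your proposal is correct and follows essentially the same route as the paper: pre-median via Proposition~\ref{prod-retracts}, then the square-to-$W_4$ retraction argument, then Theorem~\ref{prime_premedian}(ii); part~(2) is immediate from Proposition~\ref{prod-retracts} in both versions.

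Two small remarks. First, your phrase ``(sub)hyperoctahedra'' is imprecise: a subhyperoctahedron could be just a square, with no apex available. What you actually need (and what the proof of Proposition~\ref{half-cubes-Gosset} supplies) is that every $2$--interval in each host graph is a full hyperoctahedron of dimension $\ge 3$, which guarantees a $W_4$ apex for every square; the paper makes this explicit. Second, for the propeller claim the paper simply notes that $K_{n\times 2}$, $\tfrac12 H_n$, $J(n,k)$ contain no induced propeller (this follows e.g.\ from Lemma~\ref{lemma:linksl1}, since these are $L_1$--graphs), whereas you give a direct combinatorial check. Your check is fine, but the half-cube case is not quite ``verbatim'': for an edge $A,\,A\triangle\{x,y\}$ the common neighbours $A\triangle\{x,r\}$ and $A\triangle\{y,r\}$ (same $r$) \emph{are} adjacent, so the two ``types'' are not edge-disjoint cliques as in the Johnson case. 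The conclusion still holds, since any independent set in the common neighbourhood meets each clique in at most one vertex and hence has size at most~$2$.
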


\begin{proof} Let $\varphi: V(H)\rightarrow V(G)$ be a retraction map. That $G$ is a pre-median graph follows from Propositions \ref{half-cubes-Gosset} and  \ref{prod-retracts}.
To complete the proof of assertion (1) it remains to show that if $G$ is a retract of $H$ and $H$ is one of the graphs  $K_{n\times 2}$, $\frac{1}{2}H_{n}$, $J(n,k)$, $G_{27}$, $G_{56}$, then $G$ is prime. In view of Theorem \ref{prime_premedian} it suffices to show that each square $C=v_1v_2v_3v_4$ of $G$ is included
in $G$ in a $W_4$. In $H$ the 2-interval between $v_1$ and $v_3$ is an $n$--hyperoctahedron with $n\ge 3$. Hence in $H$ there exist two non-adjacent vertices $x_1,x_2$ which are adjacent to all vertices of $C$. If $x_1$ (or $x_2$) belongs to $G$, then we are done. Otherwise, $\varphi(x_1)$ (and $\varphi(x_2)$) cannot coincide with a vertex of $C$ and must be a common neighbor
of all vertices of $C$. Hence, $C$ is included in a $W_4$. This establishes the first assertion. The second assertion of (1) is immediate because  $K_{n\times 2},\frac{1}{2}H_{n},$ and $J(n,k)$ do not contain propellers.
The assertion (2) is an immediate consequence of Proposition \ref{prod-retracts}.
\end{proof}

The retracts of hypercubes are exactly the median graphs \cite{Ba_retracts} and  the retracts of weak Cartesian products of complete graphs are the quasi-median graphs \cite{BaMuWi}.
We conclude this subsection with the following open question:

\begin{question} \ 
  \begin{itemize}
  \item[(1)] Characterize the retracts of Johnson graphs and half-cubes. In particular, we conjecture that the retracts of Johnson graphs are the  ppm-graphs satisfying (IC3) and not containing propellers.
  \item[(2)] More generally,  characterize the retracts of Cartesian products of graphs from $K_{n\times 2},\frac{1}{2}H_{n},J(n,k),G_{27},G_{56}$.
  \end{itemize}
\end{question}

\subsection{Matroidal pre-median graphs} Now, we consider the finite thick pre-median graphs that are basis graphs of matroids or of even $\triangle$--matroids (for a formal definition, see Subsection \ref{s:bamat});
as such, they are isometrically embeddable into Johnson graphs and half-cubes, respectively.
The \emph{house} is the graph obtained by gluing a triangle and a
square along an edge (see Figure~\ref{fig-houses}, left).  The {\it
  double-house} $H+C_4=2C_4+C_3$ is the graph consisting of two
squares and a triangle, in which each two of them share one edge and
they all share one vertex.  The double-house can be viewed as a house
plus a square sharing with the house two incident edges, one from the
square and another from the triangle (see Figure~\ref{fig-houses}, left).

\begin{figure}[ht]
\begin{center}
\includegraphics[scale=0.5]{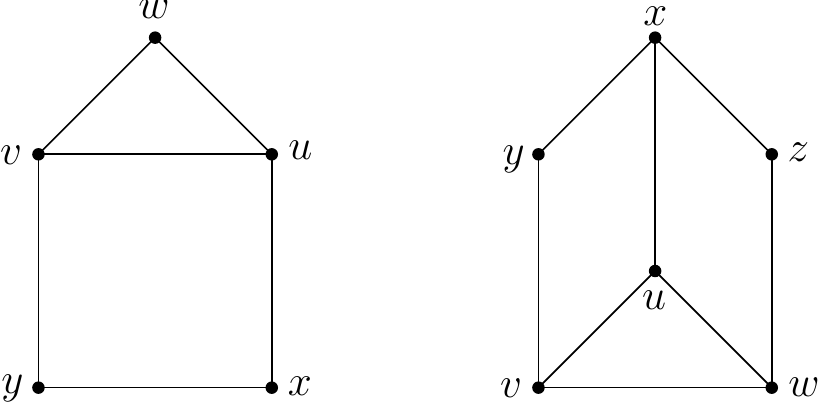}
\end{center}
\caption{A house (left) and a double-house (right)}\label{fig-houses}
\end{figure}

\begin{proposition} \label{positioning-condition} If $G$ is a pre-median graph not containing  propellers, then $G$ satisfies
the positioning condition (PC). In particular, $G$ does not contain induced double-houses.
\end{proposition}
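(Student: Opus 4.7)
I would argue by contradiction. Suppose there exist a square $v_1v_2v_3v_4$ of $G$ and a vertex $u$ with $d(u,v_1)+d(u,v_3)\neq d(u,v_2)+d(u,v_4)$. Since the distances of consecutive corners of the square to $u$ differ by at most $1$, up to the symmetries of the square the distance tuple $(d(u,v_1),d(u,v_2),d(u,v_3),d(u,v_4))$ falls into one of three patterns: (A) alternating $(k{+}1,k,k{+}1,k)$ for some $k\ge 1$; (B1) one corner strictly farther, e.g.\ $(k,k,k{+}1,k)$; or (B2) one corner strictly closer, e.g.\ $(k,k,k{-}1,k)$. Patterns (A) and (B1) will be excluded using only the pre-median conditions, whereas (B2) is where the no-propeller hypothesis is required.

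In case (A), QC($u$) applied at $v_1$ with its non-adjacent neighbors $v_2,v_4$ yields a vertex $x\sim v_2,v_4$ with $d(u,x)=k-1$. The distance constraints force $x\notin\{v_1,v_3\}$ and $x\nsim v_1,v_3$, so $\{v_1,v_3,x\}$ is a triple of pairwise non-adjacent common neighbors of $\{v_2,v_4\}$, inducing a forbidden $K_{2,3}$. In case (B1), QC($u$) at the farthest corner $v_3$ with $v_2,v_4$ produces $z\sim v_2,v_4$ with $d(u,z)=k-1$; if $z\sim v_1$ then $z$ is adjacent to three of the four corners of the square but not to $v_3$ (by distance), giving an induced $W_4^-$, while if $z\nsim v_1$ then $\{v_1,v_3,z\}$ again induces a $K_{2,3}$ with $\{v_2,v_4\}$. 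Both alternatives are forbidden.

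The substantive case is (B2), where QC is unavailable at the closest corner. I would instead invoke the quasi-median $(a,b,c)$ of the triplet $u,v_1,v_3$: it exists, and by Lemma~\ref{lem-weakly-modular} it is an equilateral metric triangle. Solving the three defining equalities with data $(d(u,v_1),d(u,v_3),d(v_1,v_3))=(k,k-1,2)$ forces its size to be $s=1$ with outer distances $\alpha=k-2$, $\beta=1$, $\gamma=0$; thus $c=v_3$, there exists a common neighbor $z:=b$ of $v_1,v_3$ with $d(u,z)=k-1$ (so $z\notin\{v_2,v_4\}$, which are at distance $k$ from $u$), together with a vertex $a$ satisfying $a\sim v_3,z$ and $d(u,a)=k-2$. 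The same $W_4^-$/$K_{2,3}$ dichotomy applied to $z$'s adjacencies with $v_2,v_4$ now forces $z\sim v_2,v_4$, so $z$ is the centre of an induced $W_4$ on the square. The edge $v_3z$ of this $W_4$ has three pairwise non-adjacent common neighbors $v_2,v_4,a$ (the non-adjacencies of $a$ with $v_2,v_4$ coming from the distance gap of $2$), and the subgraph induced on $\{v_2,v_3,v_4,z,a\}$ is an induced propeller, contradicting the hypothesis.

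For the ``in particular'' clause, a direct calculation in the double-house (notation as in Figure~\ref{fig-houses}, central vertex $v_0$) shows that (PC) fails: choosing $u=v_4$ and the square $(v_0,v_1,v_2,v_3)$, one finds $d(u,v_0)+d(u,v_2)=2+2=4$ while $d(u,v_1)+d(u,v_3)=2+1=3$. Since each relevant non-adjacent pair is already at distance $2$ in the double-house and hence remains at distance exactly $2$ in any ambient graph that contains it as an induced subgraph, the same violation persists in $G$, contradicting what we just proved. The main obstacle is case (B2): the pre-median conditions alone do not directly produce a propeller, and one must read it off the structure of the quasi-median---the $W_4$-centre $z$ together with the refining vertex $a$ at distance $k-2$ from $u$.
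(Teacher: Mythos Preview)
Your proof is correct. Cases (A) and (B1) are essentially the paper's Case~1 (applying QC at a farthest corner and invoking the $K_{2,3}$/$W_4^-$ dichotomy), just split according to whether one or two corners attain the maximum distance.

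Your treatment of case (B2) is genuinely different from the paper's Case~2 and is cleaner. The paper applies TC to the edge $v_2v_3$ to obtain a common neighbor $w$ of $v_2,v_3$ at distance $k-1$ from $u$, and then splits: if $w$ happens to be adjacent to the remaining two corners, one more TC on the edge $v_1w$ produces the propeller; if not, an auxiliary vertex $x\sim w,v_1,v_4$ is constructed and the argument reduces back to Case~1 via a new square. Your quasi-median argument bypasses this split: the equilateral size-one triangle $abc$ of $(u,v_1,v_3)$ directly hands you both the common diagonal neighbor $z=b$ (which is then forced to be a $W_4$--centre by the dichotomy) and the vertex $a$ at distance $k-2$ adjacent to $z$ and $v_3$, so the propeller on $\{v_2,v_3,v_4,z,a\}$ appears in one stroke. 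The price is invoking Lemma~\ref{lem-weakly-modular} (equilaterality of metric triangles), which the paper's proof avoids; the benefit is a single-branch argument in the case where the no-propeller hypothesis is actually used.

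For the ``in particular'' clause, your diameter-two observation (every non-edge of the double-house has endpoints at distance~2, so all distances are preserved in any graph containing it as an induced subgraph) is exactly what makes the PC violation persist in $G$; this is left implicit in the paper's proof. One presentational issue: you label the double-house vertices $v_0,\ldots$ with reference to Figure~\ref{fig-houses}, but that figure carries no such labels, and the paper uses the names $x,y,u,v,w,z$ with triangle $uvw$ and squares $xyvu$, $xuwz$. You should either adopt the paper's labeling or define yours explicitly before using it.
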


\begin{proof}   Suppose by way of contradiction
that $G$ contains a square $C=v_1v_2v_3v_4$ and a vertex $u$ such that $d(u,v_1)+d(u,v_3)<d(u,v_2)+d(u,v_4).$ Suppose without
loss of generality that $d(u,v_1)\le d(u,v_3)=:k$ and $d(u,v_2)\le d(u,v_4).$ Notice that $d(u,v_1) = k-2$ is impossible.


\begin{case-mpm}
$d(u,v_1)=k$.
\end{case-mpm}

Then $d(u,v_4)=k+1$ and $d(u,v_2)\ge k$. Hence $v_1,v_3\in I(u,v_4).$ By (QC) there exists a vertex $u'\sim v_1,v_3$
such that $d(u,u')=k-1.$ Then $u'\nsim v_4$. If $u'\nsim v_2,$ then the vertices $v_1,v_2,v_3,v_4,u'$ induce a $K_{2,3}$. If
$u'\sim v_2,$ then the same vertices induce a $W^-_4$.

\begin{case-mpm}
$d(u,v_1)=k-1$.
\end{case-mpm}

Since $v_1$ is adjacent to $v_2$ and $v_4$, we conclude that
$d(u,v_2),d(u,v_4)\le k.$ Since $d(u,v_1)+d(u,v_3)<d(u,v_2)+d(u,v_4),$
necessarily $d(u,v_2)=d(u,v_4)=k=d(u,v_3).$ By (TC) applied to $u$ and
the edge $v_2v_3$, there exists a vertex $w\sim v_2,v_3$ such that
$d(u,w)=k-1$. Since $G$ does not contain induced $W_4^-$, if $w \sim
v_1$ (respectively, $w \sim v_4$), then $w \sim v_4$ (respectively, $w \sim v_1$).

First, suppose that there exists a vertex $w\sim v_2,v_3$ with
$d(u,w)=k-1$ such that $w \sim v_1,v_4$.  By (TC) applied to $u$ and
the edge $v_1w$, there exists $y\sim w,v_1$ such that $d(u,y)=k-2$. But
then the vertices $v_2,v_4,y,w,v_1$ induce a forbidden propeller.

Suppose now that for any common neighbor $w$ of $v_2,v_3$ with
$d(u,w)=k-1$ we have $w\nsim v_1,v_4$. By (TC) applied to $w$ and
the edge $v_1 v_4$
there exists a
vertex $x\sim w,v_1,v_4$.  If $x\nsim v_2$, then the vertices
$v_1,x,w,v_2$ induce a $4$--cycle satisfying the conditions of Case 1,
which is impossible.  Thus $x\sim v_2$. If $x \nsim v_3$, the vertices
$v_2,x,w,v_3,v_4$ induce a $W_4^-$, which is impossible. Consequently,
$x \sim v_1,v_2,v_3,v_4$. Since $x$ is adjacent to $v_1,v_4$
necessarily $k-1 \leq d(u,x)\leq k$. Since $v_1,v_2,v_3,v_4$ do not
have a common neighbor $w$ such that $d(u,w) = k-1$, necessarily
$d(u,x) = k$.  By (QC) for $u$ and $w,v_1,x$, there exists $u'\sim w,v_1$
with $d(u,u')=k-2$. Since $d(u,v_2)=k,$ $u'\nsim v_2$ and the vertices
$v_2,w,x,v_1,u'$ induce a forbidden $W^-_4$. This shows that $G$
satisfies (PC).

Finally, suppose by contradiction that $G$ contains an induced double-house having $x,y,u,v,w,z$ as the set of vertices,  where $uvw$ is a triangle and $xyvu$ and $xuwz$ are the
two squares of  this house. If $y\nsim z$, then the square $xyvu$ and the vertex $z$ violate the positioning condition (PC).
\end{proof}

Let ${\mathcal M}_3$ denote the class of all thick  pre-median graphs satisfying (IC3) and not containing  propellers
($W_4$ and $M_4$ are two examples of such graphs). Let also ${\mathcal M}_4$ denote the class of all thick pre-median graphs satisfying (IC4), not containing propellers,
and in which the links of vertices do not contain induced $W_5$.   Proposition \ref{positioning-condition} implies
that the graphs of  ${\mathcal M}_3$ and  ${\mathcal M}_4$ satisfy the positioning condition (PC). Since the graphs of ${\mathcal M}_4$ do not contains propellers, the links of their vertices also do not
contain induced $W_6$. From the characterizations of basis graphs of matroids and  even $\triangle$--matroids from Theorem \ref{Th_Mau}(1), we obtain
the following result:

\begin{proposition} \label{prime-to-matroids}  All graphs of ${\mathcal M}_3$ are basis graphs of matroids and all graphs of ${\mathcal M}_4$ are basis graphs of even $\triangle$--matroids. In particular,  ${\mathcal M}_3\subsetneq {\mathcal M}_4$.
\end{proposition}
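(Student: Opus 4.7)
The plan is to reduce both assertions to Theorem~\ref{Th_Mau}. Every $G \in {\mathcal M}_3 \cup {\mathcal M}_4$ is a pre-median graph without induced propeller, so by Proposition~\ref{positioning-condition} it satisfies the positioning condition (PC). For $G \in {\mathcal M}_3$, thickness, (IC3), and (PC) are thus all in place, and the first half of Theorem~\ref{Th_Mau} immediately identifies $G$ as the basis graph of a matroid.

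For $G \in {\mathcal M}_4$ one likewise has thickness, (IC4), (PC), and the absence of induced $W_5$ in vertex-links; the one remaining hypothesis of the second half of Theorem~\ref{Th_Mau} to verify is that links carry no induced $W_6$. I would derive this directly from the no-propeller assumption: if the link of some vertex $v$ contained an induced $W_6$ with centre $c$ and rim $(x_1,\ldots,x_6)$, then $v$ and $c$ would both be adjacent to each of the three pairwise non-adjacent vertices $x_1, x_3, x_5$, so that $\{v, c, x_1, x_3, x_5\}$ would induce the propeller $K_5 - K_3$, contradicting $G \in {\mathcal M}_4$.

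To show ${\mathcal M}_3 \subseteq {\mathcal M}_4$, (IC3) trivially implies (IC4) since $K_{3\times 2}$ is an induced subgraph of $K_{4\times 2}$, so the only remaining point is to exclude induced $W_5$ from vertex-links of graphs in ${\mathcal M}_3$. The key observation will be that in every induced subgraph of $K_{3\times 2}$ containing a pair of antipodes, the remaining vertices span a subgraph of $C_4$ and are therefore triangle-free. A putative induced $W_5$ with centre $c$ and rim $(x_1,\ldots,x_5)$ sitting in the link of $v$ would, however, place the triangle $\{v, c, x_2\}$ among the intermediate vertices of the $2$-interval $I(x_1, x_3)$, contradicting (IC3). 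Strictness is witnessed by the $4$-hyperoctahedron $K_{4\times 2} \cong \frac{1}{2}H_4$, which lies in ${\mathcal M}_4$ (its vertex-links are copies of the $4$-regular graph $K_{3\times 2}$ and so trivially contain no $W_5$) but fails (IC3), because its $2$-interval between antipodes is the whole graph. The main---though still rather short---obstacle is this reduction from the link-$W_5$ condition to (IC3); all other steps are bookkeeping on top of Theorem~\ref{Th_Mau} and Proposition~\ref{positioning-condition}.
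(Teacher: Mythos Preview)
Your proof is correct and follows essentially the same route as the paper: derive (PC) from Proposition~\ref{positioning-condition}, observe that the no-propeller hypothesis kills induced $W_6$ in links, and invoke Theorem~\ref{Th_Mau}. Your write-up is in fact more complete than the paper's: the paper asserts ``\emph{In particular, ${\mathcal M}_3\subsetneq {\mathcal M}_4$}'' without further comment, whereas you supply a direct argument that (IC3) forbids an induced $W_5$ in any link (via the triangle $\{v,c,x_2\}$ inside $I(x_1,x_3)$) and exhibit $K_{4\times 2}$ for strictness; both of these verifications are correct as written.
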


The following example shows that the converse of Proposition \ref{prime-to-matroids} is not true, namely, there exist many basis graphs of matroids or even $\triangle$--matroids which are not weakly modular:

\begin{example} The simplest example is the graph $H'$ obtained from the half-cube $\frac{1}{2}H_n$ for $n\ge 6$ by removing a single vertex; for example, let $H'$ be obtained from $\frac{1}{2}H_6$ by removing the vertex corresponding to $\emptyset$. It can be easily seen that $H'$ is a basis graph of an even $\triangle$--matroid. However, $H'$ violates the quadrangle condition (QC)  for vertices $u=\{ 1,2,3,4\}, x=\{ 1,2\}, y=\{ 3,4\},$ and $w=\{ 5,6\}$. Indeed, in $H'$ and in $\frac{1}{2}H_6$ we have $x,y\in I(u,w), u\sim x,y$ and $d(x,w)=d(y,w)=2$. However, the unique common neighbor of $x,y,$ and $w$ in $\frac{1}{2}H_6$ is the vertex encoded by $\emptyset$.
\end{example}

The following example shows that there exist thick pre-median graphs
containing propellers (other such examples are the graphs $G_{27}$
and $G_{56}$). This graph also satisfies (PC). This shows that for
pre-median graphs, the converse of
Proposition~\ref{positioning-condition} does not hold.

\begin{example} \label{ex:propellers} Let $H'=K_3\times K_3$ be the Cartesian product of two $K_3$.  Now, add to $H'$ two new adjacent vertices $x_1,x_2$ adjacent to all vertices of $H'$. Denote this graph by $H$. Since $H$ contains two universal vertices, it satisfies (TC), has diameter 2, and thus $H$ is weakly modular. The interval between any two nonadjacent vertices $u,v$  consists of a square (which is the interval between $u$ and $v$ in $H'$) and $x_1,x_2$. Thus $H$ is thick and does not contain induced $K_{2,3}$ and $W^-_4$, hence $H$ is a thick pm-graph. It can be also checked that $H$ satisfies (PC). On the other hand, $H$ contain propellers induced by $x_1,x_2$ and any the triplet $y_1,y_2,y_3$ of pairwise non-adjacent vertices of $H'$.
\end{example}

We continue with an interesting example of a ppm-graph $G$ from ${\mathcal M}_3$.

\begin{example} \label{ex:M3}  The vertex set of $G$ consists of $12$ vertices $x_1,x_2,x_3,x_4,a_1,a_2,a_3,a_4,b_1,b_2,b_3,b_4$ such that $x_1,x_2,x_3,x_4$ induces a $4$--cycle $C$ which is included in two (subgraphs isomorphic to)  $M_4$, one generated by $C$ and the vertices
$A=\{ a_1,a_2,a_3,a_4\}$ and another one generated by $C$ and $B=\{
  b_1,b_2,b_3,b_4\}$ (the labeling of the vertices of each $M_4$
  follows that from Fig. \ref{W_4M_4}). Suppose also that $a_i$ is
  adjacent to $b_j$ if and only if $i = j$.  It can be easily checked
  that $G$ is a pre-median graph (of diameter 2). Moreover, $G$ is
  prime because each square extends either to a single $W_4$ or to two
  $M_4$. The 2-intervals of $G$ are either squares or pyramids, thus
  $G$ satisfies (IC3), whence $G$ belongs to ${\mathcal M}_3$. The two
  $M_4$ induced by $A\cup C$ and $B\cup C$ are convex thick
  ppm-subgraphs of $G$ but their intersection -- the $4$--cycle $C$ --
  is thick and convex but no longer a ppm-graph. Notice also that $G$
  contains several other convex subgraphs isomorphic to $M_4$.
\end{example}

\section{$L_1$--Weakly modular graphs}\label{l1wmg}

In this section, we investigate the $L_1$--weakly modular graphs admitting a scale embedding $\varphi$ into a hypercube $H(X)$.
Since $K_{2,3}$ and $W^-_4$ are not $L_1$--graphs (see Lemma \ref{lemma:linksl1}),
all $L_1$--weakly modular graphs are pre-median. It was shown in \cite{BaCh_weak} that all weakly
median graphs are $L_1$--graphs. Next lemma recalls some simple and  well-known properties of $L_1$--graphs
(see for example, Theorem 17.1.8 and Chapter 21  of \cite{DeLa}). Recall that the {\it line-graph} $L(H)$ of a graph $H$
has the edges of $H$ as the set of vertices and
the pairs of incident edges of $H$ as edges.
It is well-known that line-graphs can be characterized as graphs not containing 9 forbidden
subgraphs; for the list, see for example \cite{DeLa}*{Fig. 17.1.3}.

\begin{lemma} \label{lemma:linksl1} If $G$ is an $L_1$--graph, then $G$ does not contain induced $K_{2,3},W^-_4,$ and propellers; in particular,
all $L_1$--weakly modular graphs are pre-median graphs not containing propellers.
If $G$ is an isometric subgraph of a half-cube, then all links of vertices of $G$ are line-graphs. In particular,
the links of vertices of an isometric subgraph $G$ of a half-cube do not contain $K_{1,3}$ (i.e., $G$ does not
contain propellers), $W_5$, and $K_5-e$ (i.e., $G$ does not contain $K_6-e$).
\end{lemma}

\begin{lemma} \label{thick-pm} If $G$ is a finite thick pre-median graph, then $G$ is a Cartesian product of its primes, which are all thick ppm-graphs.
\end{lemma}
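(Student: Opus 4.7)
The plan is to rely on Chastand's decomposition (Theorem~\ref{premedian_Chastand}(v)), which expresses a finite pre-median graph $G$ as an iterated gated amalgam of Cartesian products of its prime subgraphs. Thus it suffices to prove the following structural lemma: \emph{a thick pre-median graph cannot be a non-trivial gated amalgam}. Once this is established, the decomposition theorem immediately yields that $G=P_{1}\square\cdots\square P_{k}$ is the Cartesian product of its primes.

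To prove the structural lemma, suppose for contradiction that $G=G_{1}\cup_{G_{0}}G_{2}$ with $G_{0}\subsetneq G_{1}$ and $G_{0}\subsetneq G_{2}$. Two standard consequences of gatedness are crucial: (i) there is no edge between $G_{1}\setminus G_{0}$ and $G_{2}\setminus G_{0}$, because such an edge $xy$ would force the gate of $y$ in $G_{1}$ to equal $x\notin G_{0}$; and (ii) every $u\in G_{1}\setminus G_{0}$ has a \emph{unique} neighbor in $G_{0}$ (namely its gate there), since a second neighbor $g\ne u^{\ast}$ of $u$ in $G_{0}$ would force $u^{\ast}\in I(u,g)\cap G_{0}=\{g\}$. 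The corresponding statement holds for $G_{2}\setminus G_{0}$. The core step is now to produce a vertex $w'\in G_{0}$ which has simultaneously a neighbor in $G_{1}\setminus G_{0}$ and a neighbor in $G_{2}\setminus G_{0}$. Pick any $u\in G_{1}\setminus G_{0}$ with neighbor $w\in G_{0}$ and any $v\in G_{2}\setminus G_{0}$ with neighbor $w'\in G_{0}$, and let $w=w_{0},w_{1},\ldots,w_{k}=w'$ be a shortest path in $G_{0}$ (which exists since gated subgraphs are connected and convex). By induction on $i$, we claim that each $w_{i}$ has a neighbor $u_{i}\in G_{1}\setminus G_{0}$. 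For the step, $u_{i-1}\nsim w_{i}$ by uniqueness, so $d(u_{i-1},w_{i})=2$; thickness of the $2$-interval $I(u_{i-1},w_{i})$ gives a square, i.e.\ another common neighbor $x$ of $u_{i-1},w_{i}$ non-adjacent to $w_{i-1}$. Since $x\sim u_{i-1}$ forces $x\in G_{1}$, and since any $x\in G_{0}$ would violate uniqueness for $u_{i-1}$, we must have $x\in G_{1}\setminus G_{0}$ with $x\sim w_{i}$; take $u_{i}:=x$. In particular $w'=w_{k}$ has a neighbor $u'\in G_{1}\setminus G_{0}$, so $d(u',v)=2$.

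The final contradiction is immediate: any common neighbor of $u'$ and $v$ lies in $G_{1}\cap G_{2}=G_{0}$ by (i), and by uniqueness in (ii) it must equal $w'$; hence $I(u',v)=\{u',w',v\}$ is not thick, contradicting the thickness of $G$. This establishes the lemma and hence $G=P_{1}\square\cdots\square P_{k}$. Each factor $P_{i}$ is prime by construction, and it is pre-median because it is a retract of $G$ (Proposition~\ref{prod-retracts}). Finally, each $P_{i}$ is thick: for $u_{i},v_{i}\in V(P_{i})$ with $d_{P_{i}}(u_{i},v_{i})=2$, fixing the remaining coordinates gives vertices $u,v\in V(G)$ with $I_{G}(u,v)$ isomorphic to $I_{P_{i}}(u_{i},v_{i})$ (movement in other factors strictly increases distance), so thickness of $G$ transfers to $P_{i}$. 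Combined with primeness and Theorem~\ref{premedian_Chastand}(i), each $P_{i}$ is a thick ppm-graph, completing the proof.

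The main obstacle is the inductive thickness argument along the path $w_{0},\ldots,w_{k}$: one has to propagate the existence of a $(G_{1}\setminus G_{0})$-neighbor from $w_{i-1}$ to $w_{i}$ using thickness alone, being careful that the new common neighbor supplied by thickness cannot accidentally lie in $G_{0}$ (which is ruled out by the uniqueness of the $G_{0}$-gate). Everything else is structural book-keeping.
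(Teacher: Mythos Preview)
Your proof is correct and follows the same strategy as the paper: use Chastand's decomposition (Theorem~\ref{premedian_Chastand}(v)) and show that a thick graph cannot be a non-trivial gated amalgam, so that $G$ is simply a Cartesian product of its primes, each of which is then thick and prime pre-median. The only technical difference lies in how the key claim is established: the paper picks a \emph{closest} pair $u\in G_1\setminus G_0$, $v\in G_2\setminus G_0$ and argues directly that $d(u,v)=2$ with $I(u,v)=\{u,z,v\}$, whereas you propagate a $(G_1\setminus G_0)$-neighbor along a shortest path in $G_0$; both arguments work. One small wording issue: your statement (ii) should say that each $u\in G_1\setminus G_0$ has \emph{at most one} neighbor in $G_0$ (a vertex far from $G_0$ has none) --- this is exactly what your justification proves and exactly how you use it, so the slip is harmless.
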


\begin{proof} Since  the gated subgraphs of $G$ are thick, all primes (i.e., prime gated subgraphs) of $G$ are thick ppm-graphs.
Since $G$ is a finite pre-median graph, by Theorem \ref{premedian_Chastand}(v),  $G$ can be obtained by gated amalgams from Cartesian products of its
primes. Clearly, Cartesian products of thick ppm-graphs are thick pm-graphs. Therefore, to conclude the
proof, it suffices to show a general fact that a thick graph cannot be represented as a gated amalgam of thick graphs. Suppose that a
graph $G$ is the gated amalgam of two thick graphs $G_1$ and $G_2$ along a common gated subgraph $G_0$. That the gated amalgam of two
thick graphs is not thick
is an immediate consequence of the following assertion:

\begin{claim} There exist  three vertices $u\in V(G_1)\setminus V(G_0),$ $v\in V(G_2)\setminus V(G_0),$ and $z\in V(G_0),$ such that $u\sim z\sim v,$ $d(u,v)=2$, and $I(u,v)=\{ u,z,v\}$.
\end{claim}

\begin{proof}[Proof of the Claim]
Pick two closest vertices  $u\in V(G_1)\setminus V(G_0)$ and $v\in V(G_2)\setminus V(G_0).$ Let $P=(u,u',u'',\ldots,v',v)$ be a shortest path between $u$ and $v$ in $G$. From the choice of $u,v$, the vertices $u',v'$ as well as the vertices of the subpath of $P$  between $u',v'$ are all contained in $G_0$. First suppose that $u'\ne v'$. Then $u''$ exists but can coincide with $v'$. Since $u,u''\in V(G_1)$ and $G_1$ is thick, $u,u''$ belong to a square $uxu''y$ of $G_1$. Since $G_0$ is a gated and thus is a convex subgraph of $G_1,$ $x$ and $y$ cannot both belong to $G_0$. If say $x\in V(G_1)\setminus V(G_0),$ then replacing $u$ by $x$ we obtain a contradiction with the minimality choice of the pair $u,v$. Hence, suppose that $u'=v':=z$, i.e., $d(u,v)=2$. If $I(u,v)\ne \{ u,z,v\}$, then we can find a common neighbor $x\ne z$ of $u$ and $v$. Then necessarily $x\in V(G_0)$. Since both $x$ and $z$ are adjacent to $u$ and $v$, from the choice of the vertices $u,v$ we obtain a contradiction with the fact that  $G_0$ is a gated subgraph of $G$.
\end{proof}
\end{proof}

\begin{lemma} \label{l_1-pre-med} Let $G$ be an $L_1$--weakly modular graph admitting a scale embedding into a hypercube. Then all primes  of $G$ are either finite subhyperoctahedra or isometric subgraphs of  half-cubes.
\end{lemma}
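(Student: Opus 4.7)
The plan is to combine Shpectorov's embedding theorem (Theorem~\ref{th:shpectorov}) with the canonical prime decomposition of pre-median graphs due to Chastand (Theorem~\ref{premedian_Chastand}). By Lemma~\ref{lemma:linksl1}, $G$ is pre-median. By Theorem~\ref{th:shpectorov}, $G$ embeds isometrically into a weak Cartesian product $H := \prod_{i \in \Lambda} H_i$ whose factors are finite hyperoctahedra and half-cubes. Let $P$ be a prime of $G$, i.e., a prime gated subgraph. Via the Shpectorov embedding, view $P$ as isometrically embedded in $H$, and set $P_i := \pi_i(P) \subseteq H_i$ for each coordinate projection $\pi_i$; then $P$ embeds isometrically into $\prod_{i \in \Lambda} P_i$.

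The key step is to show that $|P_i| > 1$ for exactly one index $i_0 \in \Lambda$: once this is established, the embedding factors through the single factor $H_{i_0}$ and the lemma follows, since isometric subgraphs of a finite hyperoctahedron are (by definition) finite subhyperoctahedra, and isometric subgraphs of a half-cube are precisely the remaining case.

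For the single-factor reduction, I would use the canonical nature of the prime decomposition. Each $H_i$ is pre-median and can itself be further decomposed into canonical primes (for instance, the small exceptional hyperoctahedron $K_{2\times 2}$ splits as $K_2 \square K_2$), so we may assume from the outset that every $H_i$ is a prime pre-median graph. Since $P$ is itself a prime pre-median graph, its canonical decomposition (Theorem~\ref{premedian_Chastand}(iii)) consists of the single factor $P$. Any isometric embedding of the prime pre-median graph $P$ into a weak Cartesian product of prime pre-median graphs must be compatible with Chastand's canonical decomposition, forcing the image of $P$ to lie inside a single factor $H_{i_0}$; equivalently, the projections $\pi_i|_P$ must be constant for every $i \neq i_0$. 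Fiber-complementation of $G$ (Theorem~\ref{premedian_Chastand}(ii)) supplies the gatedness of the relevant fibers needed to invoke elementariness of $P$ (Theorem~\ref{premedian_Chastand}(i)) against a nontrivial splitting.

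The main obstacle will be the careful justification of this reduction to a single factor: an isometric subgraph of a Cartesian product is not automatically a product of its projections, so the argument cannot proceed by a naive projection decomposition. One must go through the uniqueness of the canonical prime decomposition for fiber-complemented pre-median graphs, using that primeness of $P$ (equivalently, elementariness) rules out any nontrivial partition into gated fibers induced by more than one coordinate of the ambient product. Once this compatibility is in place, the conclusion — that $P$ is a finite subhyperoctahedron when $H_{i_0}$ is a finite hyperoctahedron, and an isometric subgraph of a half-cube when $H_{i_0}$ is a half-cube — is immediate.
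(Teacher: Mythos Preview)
Your overall strategy matches the paper's: use Lemma~\ref{lemma:linksl1} to get that $G$ is pre-median, invoke Shpectorov's Theorem~\ref{th:shpectorov} to embed $G$ isometrically in a weak Cartesian product of finite hyperoctahedra and half-cubes, and then argue that each prime of $G$ must land in a single factor.

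The difference lies in how the ``single-factor reduction'' is carried out, and here your proposal is vaguer than it needs to be. You correctly identify this as the main obstacle and try to resolve it through elementariness (Theorem~\ref{premedian_Chastand}(i)) and fiber-complementation (Theorem~\ref{premedian_Chastand}(ii)), but the argument you sketch --- that ``any isometric embedding of the prime pre-median graph $P$ into a weak Cartesian product of prime pre-median graphs must be compatible with Chastand's canonical decomposition'' --- is not justified by the parts of Theorem~\ref{premedian_Chastand} you cite. Fiber-complementation gives you gated fibers inside $G$, not inside the ambient Shpectorov product, and elementariness of $P$ only rules out proper gated subgraphs of $P$ itself; it does not, on its own, prevent a nontrivial projection onto several factors of an ambient product in which $P$ merely sits isometrically.

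The paper resolves this cleanly by going through a different result of Chastand (\cite[Corollary~6.2]{Cha1}), not recorded in Theorem~\ref{premedian_Chastand}: the primes of a pre-median graph coincide with the irreducible factors of the Graham--Winkler canonical isometric embedding \cite{GrWi}. Thus a prime $H$ of $G$ is irreducible in the Graham--Winkler sense (a single $\Theta^*$-class), and irreducibility is precisely the property that forces any isometric embedding into a Cartesian product to factor through a single coordinate. With this in hand, the conclusion is immediate. Your approach is salvageable, but the cleanest fix is to replace the fiber-complementation/elementariness argument by this Graham--Winkler irreducibility step.
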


\begin{proof}
By Theorem \ref{th:shpectorov}, $G$ admits an
isometric embedding into a weak Cartesian product of finite hyperoctahedra and
half-cubes. On the other hand, Graham and Winkler \cite{GrWi} proved
that any graph admits a canonical isometric embedding into a weak Cartesian
product of irreducible graphs; for definitions, see Subsection \ref{prel:other} and
\cite{DeLa}*{Chapter 20}.
Chastand \cite{Cha1}*{Corollary 6.2} proved that
the primes of any pre-median graph (and $G$ is pre-median in view of Lemma \ref{lemma:linksl1})
coincide with the irreducible factors in the Graham and Winkler's  representation.

Let $H$ be a prime of $G$. Then, on the one hand, $H$ is irreducible and, on the other hand, since $H$ is a gated (and thus isometric)
subgraph of $G$, $H$ can be isometrically  embedded into a weak  Cartesian product of hyperoctahedra and
half-cubes (into which $G$ is embedded). Therefore the irreducibility of $H$ implies that  $H$ belongs as an isometric subgraph either to
a hyperoctahedron or  to a half-cube.
\end{proof}

\begin{lemma} \label{thick-prime-pre-med} $G$ is a finite thick prime  $L_1$--weakly modular  graph if and only if either $G$ is a thick subgraph of a finite hyperoctahedron or $G$ belongs to ${\mathcal M}_4$.
\end{lemma}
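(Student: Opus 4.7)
The plan is to prove both directions by reducing to the structural theorem for primes of $L_1$--weakly modular graphs (Lemma~\ref{l_1-pre-med}) and the characterization of $\mathcal{M}_4$ via the conditions listed in Subsection~\ref{examples}.

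For the forward direction, let $G$ be finite, thick, prime, and $L_1$--weakly modular. By Lemma~\ref{lemma:linksl1}, $G$ is pre-median and contains no induced propeller. Since $G$ is prime, it is its own only prime, so applying Lemma~\ref{l_1-pre-med} to $G$ gives two cases: either $G$ is a finite subhyperoctahedron (in which case $G$ is already a thick subgraph of a finite hyperoctahedron and we are done), or $G$ isometrically embeds in a half-cube $\frac{1}{2}H_n$. In the latter case, I will verify that $G \in \mathcal{M}_4$ by checking the four defining properties: thickness and pre-medianness are already established; the condition (IC4) follows because the $2$--intervals of $\frac{1}{2}H_n$ are $4$--hyperoctahedra $K_{4\times 2}$, and an isometric subgraph inherits its $2$--intervals as induced subgraphs of these; absence of induced propellers comes from Lemma~\ref{lemma:linksl1}.

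The remaining condition -- that the links of vertices contain no induced $W_5$ -- is the one requiring a separate short argument. By Lemma~\ref{lemma:linksl1}, each vertex link of $G$ is a line graph, so it suffices to observe that $W_5$ is not the line graph of any graph. Indeed, if $W_5 = L(H)$, the hub corresponds to an edge $e=ab$ of $H$ and the five cycle vertices to edges $e_1,\ldots,e_5$ each sharing an endpoint with $e$. By pigeonhole, at least three of the $e_i$ share a common endpoint $a$ or $b$, making them pairwise adjacent in $L(H)$; but no three vertices of an induced $C_5$ are pairwise adjacent, a contradiction.

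For the backward direction, I treat the two cases. If $G$ is a thick subhyperoctahedron (of the appropriate prime form implicit in the statement), then $G$ is finite, thick, weakly modular (hyperoctahedra and their induced subgraphs satisfy TC and QC since they have diameter $\leq 2$ with transitively acting structure), and $L_1$--embeddable since $K_{n\times 2}$ admits a scale-$2$ embedding into a hypercube which restricts to an $L_1$--embedding of any induced subgraph. If $G \in \mathcal{M}_4$, then by Proposition~\ref{prime-to-matroids}, $G$ is a basis graph of an even $\triangle$--matroid, hence an isometric subgraph of a half-cube; this gives the $L_1$--structure and (together with pre-medianness built into $\mathcal{M}_4$) weak modularity. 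For primality in this case, I will apply the characterization in Theorem~\ref{prime_premedian}(ii): the condition (IC4) combined with thickness means each square of $G$ lies in a $4$--hyperoctahedral $2$--interval, so extends to an induced $W_4$ in $G$; together with $2$--connectedness this gives primality.

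The main obstacle I anticipate is the primality question on the hyperoctahedral side: not every thick induced subgraph of $K_{n\times 2}$ is prime (e.g.\ $C_4 = K_{2\times 2}$ is a Cartesian product), so I will need to be careful about what ``thick subgraph of a finite hyperoctahedron'' means in the backward direction, essentially restricting to those subhyperoctahedra that are themselves elementary in the sense of Theorem~\ref{premedian_Chastand}(i). The forward direction faces no such ambiguity because Lemma~\ref{l_1-pre-med} directly delivers a subhyperoctahedron whose primality is inherited from $G$.
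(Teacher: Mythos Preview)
Your forward (``only if'') direction is correct and essentially identical to the paper's: both invoke Lemma~\ref{l_1-pre-med} to get the subhyperoctahedron/half-cube dichotomy, and in the half-cube case use Lemma~\ref{lemma:linksl1} (links are line graphs, hence no induced $W_5$) together with the fact that $2$--intervals of half-cubes are $4$--hyperoctahedra (giving (IC4)) to place $G$ in $\mathcal{M}_4$. Your pigeonhole argument that $W_5$ is not a line graph is a nice explicit justification of what the paper leaves implicit.

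The concern you raise about the backward direction is real, and it applies to \emph{both} alternatives, not just the hyperoctahedral one. Your claim that (IC4) plus thickness forces each square into a $W_4$ fails already for $C_4=K_{2\times 2}$: it lies in $\mathcal{M}_4$ (thick, median, satisfies (IC4), no propellers, links contain no $W_5$) yet is $K_2\times K_2$, hence not prime; the cube $Q_3$ is another such example. The paper's proof of the ``if'' direction is a single clause (``directly follows from the definition of $\mathcal{M}_4$ and Proposition~\ref{prime-to-matroids}'') and does not address primality either. In the paper's actual applications (Proposition~\ref{premedian-examples}(2)) only the forward implication is used, so the stated equivalence appears to be loosely phrased rather than something you should expect to prove in full. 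One minor correction: $K_{n\times2}$ does not admit a scale-$2$ hypercube embedding for large $n$---the required scale grows with $n$ (see the discussion around Proposition~\ref{finite-scale})---though this does not affect your $L_1$--embeddability conclusion for subhyperoctahedra.
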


\begin{proof}  The ``if''  part  directly follows from the definition of ${\mathcal M}_4$ and Proposition \ref{prime-to-matroids}. Now, let $G$ be a finite thick prime $L_1$--weakly modular graph. Then $G$ is pre-median. By
Lemma \ref{l_1-pre-med} either $G$ is a subgraph of a hyperoctahedron or $G$ is an isometric subgraph of a half-cube. In the first case we are done. So, suppose that  $G$ is an
isometric subgraph of a half-cube. By Lemma \ref{lemma:linksl1}, $G$ does not contain propellers and  the links of vertices of $G$ do not contain induced $W_5$.  As an isometric subgraph of a half-cube,  the 2-intervals
of $G$ are subgraphs of the 4-hyperoctahedron. Since $G$ is thick, this implies that $G$ satisfies (IC4). By \cite{Che_bas}*{Theorem 1(v)},  $G$ belongs to ${\mathcal M}_4$.
\end{proof}

Proposition \ref{prime-to-matroids} and Lemmata \ref{thick-pm}, ~\ref{l_1-pre-med}, and \ref{thick-prime-pre-med} complete the proof of Proposition \ref{premedian-examples}(2).

We continue with a characterization of $L_1$--weakly modular graph admitting a scale embedding into hypercubes. As we noticed already, each 2-interval $I(u,v)$ of such a graph $G$ must be
an induced subgraph
of some finite  hyperoctahedron $K_{n\times 2}$. Call such an embedding of $I(u,v)$ into the smallest dimensional hyperoctahedron the {\it completion} of $I(u,v)$.
Then the {\it octahedron-dimension} of an $L_1$--weakly modular graph $G$ is the largest dimension of a completion over all 2-intervals of $G$.
It is well-known \cite{DeLa}  that a finite graph $G$ is an $l_1$--graph if and only if $G$ admits a scale isometric embedding $\varphi$ into a
hypercube $H(X)$. From Theorem \ref{th:shpectorov} and Lemma \ref{l_1-pre-med} we immediately obtain an extension of this result to
arbitrary $L_1$--weakly modular graphs:

\begin{proposition} \label{finite-scale} An $L_1$--weakly modular graph $G$ has a (finite) scale embedding into a hypercube if and only if $G$ has finite octahedron-dimension.
\end{proposition}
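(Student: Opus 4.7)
The plan is to prove the two implications separately.

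For the forward direction, I would fix a scale-$\lambda$ embedding $\varphi\colon V(G) \to V(H(X))$ and bound the size of each $2$-interval. Given $u,v\in V(G)$ with $d_G(u,v)=2$, the interval $[\varphi(u), \varphi(v)]$ in $H(X)$ is a subcube of dimension $|\varphi(u)\triangle \varphi(v)| = 2\lambda$, and every $\varphi(x)$ with $x\in I(u,v)$ lies in its ``middle layer'' (those vertices at equal distance $\lambda$ from $\varphi(u)$ and $\varphi(v)$). Since $\varphi$ is injective, $|I(u,v)| \le \binom{2\lambda}{\lambda}$. The completion of $I(u,v)$ in some $K_{n\times 2}$ then satisfies $n \le |I(u,v)|$, because the completion only adds missing antipodes of already-present vertices. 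This bounds the octahedron-dimension of $G$ by $\binom{2\lambda}{\lambda}$.

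For the reverse direction, I would start from the observation that $G$ is pre-median, since Lemma~\ref{lemma:linksl1} excludes induced $K_{2,3}$ and $W_4^-$. By Theorem~\ref{premedian_Chastand}(iii), $G$ embeds isometrically into a weak Cartesian product $\prod_i P_i$ of its primes, and each prime $P_i$, being gated in $G$, inherits the $L_1$ property and has octahedron-dimension at most $d$. The key structural step is to show that every such prime is either a subhyperoctahedron of dimension at most $d$ or an isometric subgraph of a half-cube. Once this is in hand, each prime admits a scale embedding into a hypercube whose scale is bounded uniformly: subhyperoctahedra of bounded dimension embed with some scale $\lambda_d$, and half-cube subgraphs inherit a scale-$2$ embedding. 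Setting $\lambda := \mathrm{lcm}(\lambda_d, 2)$ and stretching each factor embedding by duplicating coordinates yields a scale-$\lambda$ embedding of each $P_i$; these combine coordinate-wise into a scale-$\lambda$ embedding of the weak Cartesian product (finite coordinate support relative to a basepoint ensures well-definedness and correct distances), whose restriction to $G$ gives the required embedding.

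The main obstacle will be proving the structural claim about the primes without circularly assuming scale embeddability of $G$, since Lemma~\ref{l_1-pre-med} derives the same conclusion but presupposes it. For finite primes $P_i$, one can bypass this by applying Theorem~\ref{th:shpectorov} directly to $P_i$ (every finite $L_1$-graph is a scale-embeddable $l_1$-graph), after which elementarity of $P_i$ forces it to be either a subhyperoctahedron or a half-cube factor of its own decomposition. For infinite primes, however, one must argue directly that the combination of $L_1$-embeddability, pre-median structure, and bounded octahedron-dimension leaves only isometric subgraphs of half-cubes as possibilities, since subhyperoctahedra are finite; I expect this to follow by locally reconstructing Shpectorov-type cut families from the bounded $2$-interval structure and then invoking Chastand's canonical-factor correspondence.
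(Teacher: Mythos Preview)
Your forward direction is correct and more direct than the paper's: the paper argues by contradiction via arbitrarily large isometric $K_n - e$ forcing unbounded scale (citing \cite{DeLa}), while your middle-layer count gives an explicit bound on the octahedron-dimension in terms of $\lambda$. For the reverse direction you follow the paper's strategy (prime decomposition via Theorem~\ref{premedian_Chastand}(iii), then uniform scale embedding of the primes), and you correctly flag the circularity in appealing to Lemma~\ref{l_1-pre-med}. The paper's own proof in fact invokes that lemma at the outset of the reverse direction, whose hypothesis is precisely the scale embeddability one is trying to establish, so as written it shares this difficulty. Your fix for finite primes is sound: a finite $L_1$--graph is $l_1$, hence scale embeddable, so Theorem~\ref{th:shpectorov} applies to each such prime individually, and irreducibility (in the Graham--Winkler sense, via Chastand) forces it into a single hyperoctahedron or half-cube factor.

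The genuine gap is the infinite-prime case. Your sketch (``locally reconstructing Shpectorov-type cut families'') is not an argument, and Theorem~\ref{th:shpectorov} as stated requires scale embeddability even for infinite graphs. One useful reduction: any subhyperoctahedron prime is automatically finite under the octahedron-dimension bound, since its entire vertex set coincides with the $2$--interval between any two non-adjacent vertices; so the residual task is to show that every infinite prime of an $L_1$--weakly modular graph with bounded octahedron-dimension is an isometric subgraph of a half-cube. Neither your proposal nor the paper's proof, as written, closes this step.
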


\begin{proof} From Lemma \ref{l_1-pre-med} we know that all primes $G_i, i\in \Lambda,$ of $G$ are subhyperoctahedra or isometric subgraphs of  half-cubes. Moreover, by Theorem \ref{premedian_Chastand}(iii), $G$ isometrically embeds into a weak Cartesian product of its primes. If $G$ has finite octahedron-dimension, then for some finite (even) $\lambda$ each prime $G_i$ of $G$ admits a scale $\lambda$ isometric embedding into a hypercube $H(X_i)$. Then $G$ has a scale $\lambda$ embedding into the hypercube $H(X)$, where $X$ is the disjoint union of $X_i, i\in \Lambda$. Conversely, if $G$ has a finite scale embedding into a hypercube, then its octahedron-dimension must be bounded, otherwise, if $G$ contains arbitrarily large suboctahedral 2-intervals, then $G$ will contains arbitrarily large isometric $K_{n}-e$ (i.e. a complete graph $K_n$ minus an edge). Similarly to the hyperoctahedron $K_{n\times 2}$ (which is the completion of $K_{n}-e$), the scale of the scale embedding of $K_n-e$ is an increasing function of $n$, see \cite{DeLa}*{Subsection 7.4, p.101}.
\end{proof}

Let $G$ be an $L_1$--weakly modular graph of finite octahedron-dimension and let $\varphi$ be   a scale isometric embedding of $G$ into a hypercube $H(X)$.
For an element $a\in X,$ let $H_a$ denote the set of all  finite subsets of $X$ (vertices of $H(X)$) which contain the element $a$, and let  $H_{a*}$ denote the set of all finite subsets of $X$ which do not contain $a$. $H_a$ and $H_{a^*}$ induce complementary subhypercubes of $H(X)$. Let $W_a=V(G)\cap H_a$ and $W_{a*}=V(G)\cap H_{a*}$; equivalently,  $W_a=\{ v\in V(G): a\in \varphi(v)\}$ and $W_{a^*}=\{ v\in V(G): a\notin \varphi(v)\}$.   We will call the sets of the form
$W_a, W_{a*},$ $a\in X$, {\it half-spaces} of $G$. Let
$$\partial W_a:=\{ v\in W_a: v\sim u \mbox{ for some } u \in W_{a*}\}$$
denote the {\it boundary} of $W_a$ (the boundary $\partial W_{a*}$ of $W_{a*}$ is defined in a similar way). Let $S_a:=\partial W_a\cup \partial W_{a*}$ and call $S_a$ the {\it carrier} of $a$.
The following result can be viewed as a generalization of a well-known property of CAT(0) cube complexes (or, equivalently, of median graphs) that half-spaces, hyperplanes, and carriers are also
CAT(0) cube complexes.

\begin{theorem} \label{boundary} Let $G$ be an  $L_1$--weakly modular graph  which is scale  embeddable in a hypercube  $H(X)$. Then for any element $a\in X$, the sets $W_a,W_{a*}$
induce convex subgraphs of $G$ and  the boundaries $\partial W_a, \partial W_{a*},$ the carrier $S_a,$ and the sets $W_a\cup \partial W_{a*},$ $W_{a*}\cup \partial W_a$ induce isometric weakly modular subgraphs of  $G$. If $G$ does not contain induced $W_5$, then all those sets induce convex subgraphs of $G$. Additionally, if $G$ is thick, then $W_a=\partial W_a, W_{a^*}=\partial W_{a*}$  and $V(G)$ coincides with the carrier $S_a$.
\end{theorem}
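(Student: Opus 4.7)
The plan is to first establish convexity of the half-spaces $W_a$ and $W_{a^*}$, and then deduce the isometric weak-modularity of the boundary and carrier subgraphs via a ``companion-path'' construction using the triangle and quadrangle conditions. The refinements (no-$W_5$ case and thick case) are handled at the end as local strengthenings.

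\textbf{Step 1: Convexity of $W_a$ and $W_{a^*}$.} A scale-$\lambda$ embedding $\varphi\colon G\to H(X)$ preserves intervals: for $u,v\in V(G)$ and $w\in I_G(u,v)$, multiplying $d_G(u,w)+d_G(w,v)=d_G(u,v)$ by $\lambda$ yields $\varphi(w)\in I_H(\varphi(u),\varphi(v))$, which in the hypercube means $\varphi(u)\cap\varphi(v)\subseteq\varphi(w)\subseteq\varphi(u)\cup\varphi(v)$. If $u,v\in W_a$, then $a\in\varphi(u)\cap\varphi(v)\subseteq\varphi(w)$, whence $w\in W_a$; the argument for $W_{a^*}$ is symmetric. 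Thus $W_a$ and $W_{a^*}$ induce convex subgraphs of $G$.

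\textbf{Step 2: Isometry and weak modularity of the boundaries and carriers.} Given $u,v\in\partial W_a$, I would fix neighbors $u',v'\in W_{a^*}$ of $u,v$ respectively and construct simultaneously a shortest $(u,v)$-path $P$ in $W_a$ and a shortest $(u',v')$-path $P'$ in $W_{a^*}$ so that consecutive vertices of $P$ have their ``companions'' on $P'$ as neighbors in $W_{a^*}$. Starting from $(u,u')$, at each step the quadrangle condition (QC) in $G$, or the triangle condition (TC) in the equidistant case, produces the next companion pair; convexity from Step~1 forces the new vertices onto the correct sides of the cut, and the pre-median prohibitions on $K_{2,3}$ and $W_4^-$ discard the degenerate configurations. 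The resulting $P$ lies entirely in $\partial W_a$ (each vertex of $P$ has a companion in $W_{a^*}$), yielding isometry of $\partial W_a$ in $G$. Weak modularity of $\partial W_a$ then follows because any vertex produced by (TC) or (QC) applied inside $\partial W_a$ lies in $W_a$ by Step~1, and the same companion-path procedure promotes it to $\partial W_a$. The arguments for $\partial W_{a^*}$, $S_a$, $W_a\cup\partial W_{a^*}$, and $W_{a^*}\cup\partial W_a$ are analogous, with the companion path chosen in the appropriate set.

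\textbf{Step 3: Convexity under no-$W_5$, and the thick case.} Under the $W_5$-free hypothesis, I would upgrade isometry to convexity by verifying local convexity and applying Lemma~\ref{lem-weakly-modular_gated_hull}(i). Concretely, for $x,y\in\partial W_a$ with $d_G(x,y)=2$ sharing a common neighbor $z\in\partial W_a$, any common neighbor $z'$ of $x,y$ in $W_a\setminus\partial W_a$ would, together with neighbors $x',y'\in W_{a^*}$ of $x,y$ and the apex vertex supplied by (TC), produce an induced $W_5$, ruled out by assumption; hence $z'\in\partial W_a$, giving 2-convexity. For the thick case, suppose $v\in W_a$ has no neighbor in $W_{a^*}$ and pick $u\in W_{a^*}$ closest to $v$ (so $d_G(v,u)\geq 2$). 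Choose a shortest $(v,u)$-path $v=v_0,v_1,\ldots,v_k=u$ and consider the $2$-interval $I(v_{k-2},v_k)$: by thickness it contains a square $v_{k-2}xv_ky$. Working down the path and using convexity of $W_a, W_{a^*}$ together with the no-$K_{2,3}$ and no-$W_4^-$ constraints on the successive squares, one iteratively produces a neighbor of $v$ in $W_{a^*}$, contradicting the choice of $u$. Hence $W_a=\partial W_a$, symmetrically $W_{a^*}=\partial W_{a^*}$, and $V(G)=S_a$.

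\textbf{Main obstacle.} The technical heart is the companion-path construction in Step~2: unlike in median graphs, where half-spaces are gated and admit a distance-preserving projection that automatically provides companions, here only convexity is available, so the construction must be driven locally by (TC) and (QC). One has to ensure inductively that the produced companions are distinct, adjacent to their counterparts, and lie in the correct half-space, without access to a global projection; the pre-median forbidden subgraphs $K_{2,3}$ and $W_4^-$ are essential for ruling out the pathological configurations at each step.
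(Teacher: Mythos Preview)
Your Step 1 is correct and matches the paper. Your Step 3 thick-case argument is salvageable, but not via ``working down the path'': once the square at $I(v_{k-2},v_k)$ produces a vertex $x\in W_{a^*}$ adjacent to $v_{k-2}$, you already have $d(v,x)\le k-1<k$, contradicting the minimality of $u$ in one step. The paper's argument is even shorter: it takes an edge $yz$ with $y\in W_a\setminus\partial W_a$, $z\in\partial W_a$, picks $x\in W_{a^*}$ adjacent to $z$, and observes that in any square $xv'yv''$ through $x$ and $y$, one of $v',v''$ must lie in $W_{a^*}$ by convexity (else $v_k\in I(v',v'')\subseteq W_a$), forcing $y\in\partial W_a$.

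The substantive gap is in Step 2. Your companion-path scheme, as stated, can stall: if you are at $p_i\in\partial W_a$ with companion $p_i'\in W_{a^*}$ and $d(p_i',v)=d(p_i,v)+1$, then $p_i\in I(p_i',v)$ and neither (TC) nor (QC) hands you a next companion in $W_{a^*}$ for a neighbor of $p_i$ on $I(p_i,v)$. The paper circumvents this with two devices you do not mention. First, it repeatedly selects companions $x',y'\in W_{a^*}$ of $x,y$ that \emph{minimize} $d(x',y')$; this minimality forces relations like $d(y,x')=d(x',y')+1$ (else (TC) would yield a closer companion) and is the engine behind the auxiliary lemma showing that if a neighbor $v$ of $x$ on $I(x,y)$ falls outside $\partial W_a$, then one can instead find a neighbor of $y$ in $I(y,v)\cap\partial W_a$. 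Second, the proofs that $\partial W_a$ satisfies (TC) and (QC) use the positioning condition (PC), available because $L_1$-graphs contain no induced propeller and propeller-free pre-median graphs satisfy (PC); you invoke only the $K_{2,3}$ and $W_4^-$ exclusions, which by themselves do not give the needed distance control on squares. The same closest-pair trick is what aligns $x'\sim y'$ in the no-$W_5$ case so that (TC) on $v,x',y'$ produces the center of the $W_5$; your Step 3 sketch does not explain how to force $x'\sim y'$.
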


\begin{proof} $H_{a}$ and $H_{a*}$ are two complementary subcubes of the hypercube $H(X)$ so that the edges  between $H_{a}$ and $H_{a*}$ define a perfect matching of $H(X)$. Therefore $H_{a}$ and $H_{a*}$ are connected locally convex subgraphs of $H(X)$. Since $H(X)$ is a median graph, by Lemma \ref{lem-weakly-modular_gated_hull},  $H_{a}$ and $H_{a*}$ are  convex subgraphs of $H(X)$. Hence  $W_a$ and $W_{a*}$ are convex as the intersection of a  scale-isometric subgraph $G$ of $H(X)$ with two complementary convex subsets of $H(X).$  Next we will show that the set $\partial W_a$ induces an isometric pre-median subgraph of $G$ (which is convex if $G$ does not contain a $W_5$); the same result for $W_{a*}$ can be proved in the same way.

\begin{lemma} \label{isometric_neighbors} If $x,y\in\partial W_a,$ $x\sim y,$ $x'\sim x,$ and $x'\in \partial W_{a^*},$  then there exists a vertex  $y'\in \partial W_{a*}$
such that $y\sim y'$ and $x'=y'$ or $x'\sim y'$.
\end{lemma}

\begin{proof} Let $y'$ be a neighbor of $y$ in $\partial W_{a*}$. Suppose that $x'$ and $y'$ are different and non-adjacent. Since $W_{a^*}$ is convex, the path $(x',x,y,y')$ (of length 3) cannot be  shortest, and hence $d(x',y')=2.$ By (TC) applied to the vertices $x',y,y'$ there exists a vertex $y''\sim x',y,y'$. Again, the convexity of $W_{a*}$ implies that $y''\in W_{a*}$. But then the pair of adjacent vertices $x',y''$ satisfies the requirement of the lemma.
\end{proof}

\begin{lemma} \label{isometric_aux} If $x,y\in \partial W_a$, $x\nsim y$, and $v\in I(x,y), v\sim x,$ then either $v\in \partial W_a$, or $v\sim y$, or $y$ has a neighbor $w\in I(y,v)$ such that $w\in \partial W_a$.
\end{lemma}

\begin{proof} Suppose $v\notin \partial W_a$ and $v\nsim y,$ otherwise is nothing to prove. Let $x\sim x'$ and $y\sim y'$ for $x',y'\in \partial W_{a*}$ and suppose that such a  pair $x',y'$ is selected to minimize the distance $k=d(x',y')$.

Since the set $W_{a^*}$ is convex, we have $d(y,x'),d(y',x)\ge k$. If $d(y,x')=d(y',x')$, then by (TC) there exists $y''\sim y,y'$ and having distance $k-1$ to $x'$. Since $y''\in I(x',y'),$ we have  $y''\in W_{a^*}$ by convexity of $W_{a^*}$. This contradicts the choice of the pair $x',y'$. Thus $d(y,x')=k+1$. Analogously, if $d(y',x)=d(y',x'),$ by (TC) we can find $x''\in \partial W_{a^*}$ with  $x''\sim x,x'$ and $d(x'',y')=k-1,$ contrary to the choice of $x',y'$. So $d(y',x)=k+1$. Finally, suppose $d(y,x)<d(y,x')$. Let $w$ be a neighbor of $y$ in $I(y,v)$. Then $w\in I(y,v)\subset I(y,x)\subset I(y,x').$ Since $y'\in I(y,x'),$ by (QC) there exists $w'\in W_{a^*}$ adjacent to $w,y'$ and having distance $k-1$ to $x'$. Hence $w\in I(y,v)\cap \partial W_a,$ and we are done.

So further we can assume (by convexity of $W_a$) that
$d(y,x)=d(y,x'),$ i.e., $d(x,y)=k+1.$ By (TC) there exists $z\sim
x,x'$ with $d(y,z)=k.$ The convexity of $W_a$ implies that $z\in
\partial W_a$. Since $v,z$ are distinct neighbors of $x$ in $I(x,y),$
by (TC) if $v\sim z$ or by (QC) if $v\nsim z$, there exists $u\sim
v,z$ with $d(u,y)=k-1.$ If $u=y,$ then we are done. Otherwise, let $w$
be a neighbor of $y$ in $I(y,u)$. Note that $w\in I(y,u)\subset
I(y,v)$ and thus $w\in W_a$. If $w \sim y'$, then $w \in \partial W_a$
and we are done. If $w \nsim y'$, since $w \in I(y,u) \subset I(y,z)
\subset I(y,x')$, by (QC), there exists $w'$ adjacent to
$w,y'$ and having distance $k-1$ to $x'$. Since $W_{a^*}$ is convex
and $w' \in I(y',x')$, we have $w' \in W_{a^*}$. Consequently $w\in
\partial W_a$ and we are done.
\end{proof}

\begin{lemma} \label{isometric} $G(\partial W_a)$ is an  isometric subgraph of $G$.
\end{lemma}

\begin{proof} Let $x,y\in \partial W_a$. We proceed by induction on $k=d(x,y).$ In view of induction, it suffices to show that either $x$ has a neighbor in $I(x,y)\cap \partial W_a$ or $y$ has a neighbor in $I(x,y)\cap \partial W_a$. Let $v$ be an arbitrary neighbor of $x$ in $I(x,y)$. If $v\in \partial W_a,$ then we are done. Otherwise, if $v\notin \partial W_a$ and $k\ge 3,$ then Lemma \ref{isometric_aux} implies that $y$ has a neighbor $w\in I(y,v)\subset I(y,x)$ belonging to $\partial W_a$, and we are done again. So, let $d(x,y)=2,$ i.e., $v\sim x,y$.
Pick $x',y' \in \partial W_a$ with $x'\sim x,$ $y'\sim x$ so that $d(x',y')$ is minimum. Then as in the proof of Lemma \ref{isometric_aux} one can show that $d(x',y')=1$ and $d(y,x')=2$. By (TC) there exists $w\sim y,x,x'$. The convexity of $W_a$ implies that $w\in W_a$. Since $w\sim x',$   we have $w \in \partial W_a$, as required.
\end{proof}

\begin{lemma} \label{isometric_tc} $G(\partial W_a)$ satisfies the triangle condition (TC).
\end{lemma}

\begin{proof} Let $x,y,z\in \partial W_a$ such that $d(x,y)=d(z,y)=k$ and $x\sim z$. Again, we proceed by induction on $k$. By (TC) for $G$, there exists $v\sim x,z$ with $d(y,v)=k-1$. The convexity of $W_a$ implies that $v\in W_a$. If $v\in \partial W_a,$ then we are done. So, let $v\notin \partial W_a$. If $k\ge 3,$ by Lemma \ref{isometric_aux} there exists a neighbor $w$ of $y$ such that
$w\in I(y,v)\cap \partial W_a$. Since $d(w,x)=d(w,z)=k-1$ by induction assumption there exists $u\in \partial W_a$ such that $u\sim x,z$ and $d(u,w)=k-2.$ Since $u\in I(y,x)\cap I(y,z),$
we are done. Now assume that $k=2,$ i.e., $v\sim x,y,z.$ Let $x',z',$ and $y'$ be neighbors in $\partial W_{a^*}$ of $x,z,$ and $y$, respectively. By Lemma \ref{isometric_neighbors},
$x'$ and $z'$  can be selected to coincide or to be adjacent.

\begin{case-Wa}
$x'=z'$.
\end{case-Wa}

We can suppose that $y'$ is as close as possible to $x'$. As in the proof of Lemma \ref{isometric_aux} one can show that $d(x',y')=1$ and $d(y,x')=2$. By (TC) there exists $w\sim x,x',y$. The convexity of $W_a$ implies that $w\in W_a$, hence $w\in \partial W_a$. Moreover, since $W_a$ is convex, the 2-path $(z,x',w)$ cannot be shortest and hence $z\sim w.$ Therefore (TC) holds in $G(\partial W_a)$ for the triple $x,z,y$.

\begin{case-Wa}
  $x'\ne z'$.
\end{case-Wa}

Then we can assume that $x'\nsim z$ and $z'\nsim x,$ otherwise we are
in the conditions of Case 1. Suppose first that $d(y,x') = 3$ and
assume that $y'$ is selected to minimize $d(y',x')$. By convexity of
$W_{a^*}$ and since $d(y,x') = 3$, we have $2 \leq d(y',x') \leq 3$.
If $d(y',x') = 3$, by TC($x'$), there exists $y'' \sim y,y'$ such that
$d(y'',x') = 2$. By convexity of $W_{a^*}$, we have $y'' \in \partial
W_{a^*}$, contradicting our choice of $y'$. If $d(y',x') = 2$, then
$v,y'\in I(y,x')$, and  by QC($x'$), we can find $v'\in
W_{a^*}$ adjacent to $v,y'$, yielding $v\in \partial W_a$; this is a
contradiction.

Consequently, $d(y,x') \leq 2$, and similarly, one can prove that
$d(y,z') \leq 2$. By convexity of $W_a$, we get $d(y,x') = d(y,z') =
2$. Assume that $y'$ is selected to minimize $d(y',x')$. Since
$W_{a^*}$ is convex, $d(y',x') \leq 2$. If $d(y',x') = d(y,x') = 2$,
by TC($x'$), there exists $y'' \sim y, y', x'$, contradicting our
choice of $y'$. Consequently, $y' \sim x'$.

Suppose $d(y',z')=2.$ By Proposition \ref{positioning-condition}, $G$
satisfies (PC). Applying (PC) to $y'$ and the square
$xx'z'z$ (with $d(x,y') = 2$), we conclude that $d(y',z)=3.$ Hence $z',v\in I(z,y')$, and by
(QC) there exists $v'\sim v,z',y'.$ Since $v'\in W_{a^*}$ by convexity
of $W_{a^*},$ we deduce that $v\in \partial W_a$, and we are done.

Thus, $y'\sim x',z'$. Since $d(y',x)=d(y',z)=2,$ by TC($y'$), there
exists $w\sim y',x,z$. If $w\in W_a,$ then the convexity of $W_a$
implies that $w\sim y$, and we are done because $w\sim x,z,y$ and
$w\in \partial W_a$. Thus $w\in W_{a^*}$ and we are in conditions of
Case 1 because we can set $x'=z'=w$.  This establishes (TC) for
$G(\partial W_a)$.
\end{proof}

\begin{lemma} \label{isometric_qc} $G(\partial W_a)$ satisfies the quadrangle condition (QC).
\end{lemma}

\begin{proof} Let $x,y,z_1,z_2\in \partial W_a$ such that $x\sim z_1,z_2,$ $z_1\nsim z_2,$ and $z_1,z_2\in I(x,y).$ Let $k=d(x,y).$ We proceed by induction on $k$.
By (QC) in $G$ there exists a vertex $v\sim z_1,z_2$ having distance $k-2$ to $y$. If $v\in \partial W_a$, then we are done. Thus, suppose that $v\notin \partial W_a$. If $v\nsim y,$ then by Lemma \ref{isometric_aux} there exists a neighbor $w$ of $y$ in $\partial W_a\cap I(y,v).$ Since $d(w,x)=k-1$ and $z_1,z_2\in I(x,w)$ by induction assumption there exists $u\sim z_1,z_2$ such that $u\in I(w,z_1)\cap I(w,z_2)\cap \partial W_a$. Since $I(w,z_i)\subset I(y,z_i), i=1,2,$ we are done.

So, suppose $v\sim y,$ i.e., $k=3$. Let $x',z'_1,z'_2,y'$ be neighbors in $W_{a^*}$ of $x,z_1,z_2,y,$ respectively. Suppose that first $x'$ and $y'$ are selected as close as
possible, and then the vertices $z'_1$ and $z'_2$ are chosen to satisfy the conditions of Lemma  \ref{isometric_neighbors}. Hence $z'_1$ and $z'_2$ either coincide or are adjacent to
$x'$.  Since $W_a$ is convex, necessarily $z'_1\ne z'_2.$

Similarly to the proof of Lemma \ref{isometric_aux} one can show that the choice of
$x',y'$ and $v\notin \partial W_a$ implies that $d(x',y')=2$ and
$d(y,x') = d(y',x)=3$. If $x'=z'_1$, then $v,y'\in I(y,x')$ and by
(QC) we will find in $W_{a^*}$ a vertex $v'\sim v,y',x',$ yielding
$v\in \partial W_a$. Thus we can suppose that $x'\ne z'_1,z'_2,$ i.e.,
that $x'\nsim z_1,z_2$ and $x\nsim z'_1,z'_2$. By $z_1\nsim z_2$ and
the second assertion of Proposition \ref{positioning-condition}, we have $z'_1\nsim
z'_2$. Since $v\nsim z'_1,z'_2,$ applying (PC) to the vertex $v$ and
to the square $xx'z'_1z_1$, we deduce that $d(v,x')=3$. Hence
$z'_1,z'_2\in I(x',v)$ and by (QC) for $G$ we can find a vertex
$v'\sim v,z'_1,z'_2$. The convexity of $W_{a^*}$ implies that $v'\in
W_{a*},$ yielding $v\in \partial W_a$. This establishes (QC) for
$G(\partial W_a)$.
\end{proof}

\begin{lemma} \label{isometric_convex} If $G$ does not contain any induced $W_5$, then $G(\partial W_a)$ is a convex subgraph of $G$.
\end{lemma}

\begin{proof} Let $x,y\in \partial W_a$ and $k=d(x,y)$. We proceed by induction on $k$. In view of induction assumption, to establish the assertion it suffices to show that any neighbor $v$ of $x$ in $I(x,y)$ belongs to $\partial W_a$. Suppose that there is $v$ in $I(x,y)$ with $v\sim x$ and $v \not \in\partial W_a$. By Lemma \ref{isometric_aux}, either $v\sim y$ or $k\ge 3$ and there exists $w\in I(y,v)\cap \partial W_a$ adjacent to $y$. In the second case,  since $d(w,x)=k-1$, by induction hypothesis $I(w,x)\subset \partial W_a$. Since $v\in I(w,x),$ we obtain a contradiction with the assumption that $v\notin \partial W_a$. Thus $k=2$. Pick $x',y'\in W_{a^*}$ such that $x\sim x', y\sim y'$ and $x',y'$ are as close as possible. Then similarly to the proof of Lemma \ref{isometric_aux} we can deduce that $x'\sim y'$ and $d(y,x')=d(y',x)=2.$  By (TC) for $v,x',y'$ there exists $z\sim v,x',y'$. Since $v\notin \partial W_a,$ necessarily $z\in W_{a}.$ The convexity of $W_a$ implies that $z\sim x,y$ and we
obtain a $W_5$ induced by $x,x',y',y,v,z$.
\end{proof}

\begin{lemma} \label{tc_sa} $G(S_a)$ satisfies (TC).
\end{lemma}

\begin{proof} Let $x,y,z\in S_a$ such that $x\sim z$ and
  $d(x,y)=d(y,z)=k.$ In view of Lemma \ref{isometric_tc} we can
  suppose that $x,y,z$ do not belong to the same set $\partial W_a$ or
  $\partial W_{a^*}$. First suppose that $x\in W_a$ and $z\in
  W_{a^*}$. Let $y\in W_a$. By (TC) in $G$ there exists $v\sim x,z$
  with $d(v,y)=k-1$. Then necessarily $v\in \partial W_a,$ and we are
  done. So, suppose $x,z\in \partial W_a$ and $y\in \partial
  W_{a^*}$. Let $v\sim x,z$ with $d(v,y)=k-1$ provided by (TC) in
  $G$. If $v\in \partial W_a \cup \partial W_{a^*}$, then we are done. Otherwise,
  $v\in W_a\setminus \partial W_a$. Let $P$ be any shortest path between $y$ and $v$.
  Since $\partial W_a$ separates $W_a\setminus \partial W_a$ from $W_{a^*}$, $P$ necessarily contains a vertex $w$ belonging to $\partial W_a$. Then $w\in I(y,v)\subset I(y,x)\cap I(y,z)$ and $d(w,x)=d(w,z).$ By (TC) for $G(\partial W_a)$ (Lemma \ref{isometric_tc}), there exists $u\in \partial W_a,$ $u\sim x,z$ and $d(w,u)=d(w,x)-1$. Since $u\in I(y,x)\cap I(y,z),$ we are done.
\end{proof}

\begin{lemma} \label{qc_sa} $G(S_a)$ satisfies (QC).
\end{lemma}

\begin{proof} Let $x,y,z_1,z_2\in S_a$ such that $z_1,z_2\in I(x,y), z_1\nsim z_2,$ and $x\sim z_1,z_2$. If $x,y\in \partial W_a$ (respectively, $x,y \in \partial W_a^*$),
then $z_1,z_2 \in \partial W_a$ (respectively, $z_1,z_2 \in W_a*$)
by convexity of $W_a$, and (QC) follows from Lemma \ref{isometric_qc}.
Let $x\in \partial W_a$ and $y\in \partial W_{a^*}$.  Let $v$ be a
vertex provided by (QC) for $G$: $v$ is adjacent to $z_1,z_2$ and is
one step closer to $y$ than $z_1$ and $z_2$. Since $W_{a^*}$ is
convex, the vertices $z_1,z_2$ cannot both belong to $W_{a^*}$. Let
$z_1\in \partial W_a$. If $z_2\in \partial W_{a^*},$ then since $v\sim
z_1,z_2$ we immediately conclude that $v\in S_a$. Thus suppose that
$z_1,z_2\in \partial W_a$ and that $v\in W_a\setminus \partial
W_a$. Let $P$ be any shortest path between $y$ and $v$. Then
necessarily $P$ contains a vertex $w$ belonging to $\partial
W_a$. Then $w\in I(y,v)\subset I(y,z_1)\cap I(y,z_2) \subset I(y,x)$
and $d(w,x) - 1 =d(w,z_1) =d(w,z_2)$. By QC($w$) for vertices
$x,z_1,z_2$ in $G(\partial W_a),$ there exists a vertex $u\in \partial
W_a$, such that $u\sim z_1,z_2$ and $d(w,u)=d(w,z_1)-1$. Since $u\in
I(y,z_1)\cap I(y,z_2),$ we are done.
\end{proof}

\begin{lemma} \label{thick_boundary} If $G$ is a thick $L_1$--weakly modular graph, then  $W_a=\partial W_a$ and $W_{a^*}=\partial W_{a*}$.
In particular, the boundaries $\partial W_a$ and $\partial W_{a*}$ are thick convex subgraphs of $G$.
\end{lemma}

\begin{proof} Suppose by way of contradiction that $\partial W_a$ is a proper subset of $W_a$. Then, since $W_a$ is convex  and thus induces a connected subgraph of $G,$ we can find three vertices $x,y,z$ of $G$ such that $y\in W_a\setminus \partial W_a,$ $z\in \partial W_a,$ $x\in \partial W_{a^*},$ and $z\sim x,y$. Since $G$ is a thick graph, the vertices $x$ and $y$ belong to a square $C=xv'yv''$ of $G$ (one of the vertices $v',v''$ may coincide with $z$). Since the sets  $W_a$ and $W_{a^*}$ are convex, necessarily one of the vertices $v',v''$ belongs to $W_{a^*},$ whence $y\in \partial W_a,$ a contradiction.
\end{proof}

From Lemmata \ref{isometric}, ~\ref{isometric_tc}, and \ref{isometric_qc}
we obtain that $G(\partial W_a)$ is an isometric weakly modular
subgraph of $G$. Since $G$ is pre-median, $G(\partial W_a)$ is
pre-median as well. Moreover, if $G$ does not contain induced $W_5,$
then Lemma \ref{isometric_convex} implies that $G(\partial W_a)$ is a
convex subgraph of $G$. The fact that $G(\partial W_a)$ and
$G(\partial W_{a^*})$ are isometric subgraphs of $G,$ immediately
establishes that $G(S_a),G(W_a\cup \partial W_{a*}),$ and
$G(W_{a*}\cup \partial W_a)$ are also isometric subgraphs of $G$. From
Lemmata \ref{tc_sa} and \ref{qc_sa} we obtain that $G(S_a)$ is a
pre-median subgraph of $G$.

Now, we will show that $G(W_a\cup \partial W_{a*})$ and $G(W_{a*}\cup
\partial W_a)$ are pre-median graphs. To prove (TC) for $G(W_a\cup \partial W_{a*})$, pick $x,y,z\in W_a\cup \partial W_{a*}$ such that $d(x,y)=d(z,y)=k$ and $x\sim z$.
Suppose by way of contradiction that all common neighbors $y_0$  of $x$ and $z$ with $d(y,y_0)=k-1$  belongs to $W_{a*}\setminus \partial W_{a*}$
(that such vertices $y_0$ exist follows from (TC) for $G$). Then necessarily $x,z$ belong to $\partial W_{a*}$.
Since $G(S_a)$ is pre-median, we have $y\in W_a\setminus \partial W_a$. Since  $G(\partial W_{a^*})$  separates $y$ from $y_0$, any shortest path between
$y$ and $y_0$ contains a vertex $y'\in \partial W_{a^*}$. Since $d(y',x)=d(y',z):=k'$, by (TC) in $G(W_{a^*})$ we will find a common neighbor $y'_0$ of $x,z$ belonging to
 $W_{a^*}$. Then one can easily see that $d(y,y'_0)=k-1,$ contrary to our assumption. This establishes (TC) for  $G(W_a\cup \partial W_{a*})$. The quadrangle condition
(QC) for $G(W_a\cup \partial W_{a*})$
can be proved in a similar way. This shows that $G(W_a\cup \partial W_{a*})$ and $G(W_{a*}\cup
\partial W_a)$ are pre-median graphs. Finally, the last assertion of the theorem follows
from Lemma \ref{thick_boundary}.
\end{proof}

As we noticed already in Lemma \ref{lemma:linksl1}, the links of vertices of isometric subgraphs $G$ of
half-cubes are line-graphs.  In particular,
the links of vertices  do not contain $K_{1,3}$ (i.e., $G$ does not
contain propellers) and $W_5$. This link condition does not
characterize pre-median isometric subgraphs of half-cubes. Let $S_7$
be the graph consisting of 7 vertices $a,b,c_0,c_1,c_2,c_3,c_4$ where
$ac_1c_2, c_1c_2c_3, c_2c_3c_4,$ and $c_3c_4b$ are 4 triangles and
$c_0$ is adjacent to the 4 vertices  $c_1,c_2,c_3,c_4$. This graph
$S_7$ is pre-median, the links of vertices are line-graphs, but $S_7$ does
not admit an isometric embedding into a half-cube because the interval
$I(a,b)$ is not convex: $c_1,c_4\in I(a,b),$ $c_0\in I(c_1,c_4),$ but
$c_0\notin I(a,b)$ (intervals in all $L_1$--graphs are convex).

Still, we conjecture that isometric subgraphs of half-cubes can be characterized in the following local-to-global manner (and possibly via a compact
list of forbidden isometric subgraphs):

\begin{conjecture}  A weakly modular graph $G$ is isometrically embeddable into a half-cube if and only if all subgraphs induced by balls of radius 2 of $G$
are isometrically embeddable into a half-cube.
\end{conjecture}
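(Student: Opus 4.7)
The forward direction is immediate, since isometric embeddability into a half-cube is inherited by isometric subgraphs, and every $B_2(v,G)$ is an isometric subgraph of $G$. I concentrate on the converse: assume every ball $B_2(v,G)$ embeds isometrically into a half-cube. The plan is to extract local combinatorial conditions from this hypothesis, invoke Chastand's prime decomposition of pre-median graphs, embed each prime into a half-cube via Maurer-type characterizations, and then reassemble the global embedding.

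First I would harvest the local data. Every forbidden induced configuration for half-cube embedding that has small diameter -- namely $K_{2,3}$, $W_4^-$, propellers, $W_5$ and $W_6$ in vertex links, and hyperoctahedra of dimension $\geq 5$ as 2-intervals -- fits inside some $B_2(v,G)$ and is therefore excluded from $G$. In particular $G$ has no induced $K_{2,3}$ or $W_4^-$, so by weak modularity $G$ is a pre-median graph; moreover every 2-interval of $G$ is an induced subgraph of $K_{4\times 2}$, vertex links are line graphs with no induced $W_5$ or $W_6$, and $G$ contains no induced propellers. These are precisely the ingredients needed below.

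By Theorem~\ref{premedian_Chastand}(iii), $G$ embeds isometrically in a weak Cartesian product of its primes $\{H_i\}_{i\in\Lambda}$. Each $H_i$ is gated, hence isometric, in $G$, so each $B_2(v,H_i)$ sits isometrically inside $B_2(v,G)$ and inherits a half-cube embedding; thus the problem reduces to prime pre-median graphs satisfying the inherited local conditions. For thick primes, these conditions match exactly the hypotheses of Theorem~\ref{Th_Mau}(2), yielding that $H_i$ is a basis graph of an even $\triangle$-matroid and therefore isometrically embeds into $\frac{1}{2}H(X_i)$; in the locally finite case this is immediate, and in the general case one extends via a gated-hull exhaustion along the lines of Lemma~\ref{p:gate3} combined with Lemma~\ref{lemma:L1_finete_hull}. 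For thin primes (no induced squares), $H_i$ is 2-connected and weakly bridged, and the half-cube embedding of 2-balls forces the absence of isometric wheels; one should argue that this collapses $H_i$ to a trivial graph such as $K_2$ or $K_3$, which trivially embeds.

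The reassembly uses that $\prod_i \frac{1}{2}H(X_i)$ embeds isometrically into $\frac{1}{2}H(\bigsqcup_i X_i)$ by concatenating coordinate embeddings (after a common parity/scale adjustment supplied by Proposition~\ref{finite-scale}), giving the desired $G \hookrightarrow \frac{1}{2}H(X)$. The principal obstacle I foresee lies in the prime analysis: lifting the finite Maurer-type characterization to non-locally-finite thick primes requires a compactness or direct-limit construction for infinite even $\triangle$-matroids, and ruling out nontrivial thin primes requires a delicate direct argument showing that any square-free 2-connected weakly modular graph whose 2-balls isometrically embed in a half-cube must be degenerate. I expect the thin prime case -- essentially proving that nontrivial weakly bridged "systolic flowers" already obstruct half-cube embedding at radius 2 -- to be the most subtle step, while the thick infinite case, once the correct exhaustion is set up, should proceed along the lines of the classification already established in Subsection~\ref{examples}.
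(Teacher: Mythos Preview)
This statement is posed in the paper as an open \emph{conjecture}; the paper gives no proof. Your proposal is therefore an attempt to settle a genuinely open problem, and it has real gaps beyond the ones you flag.

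First, your case split into ``thick primes'' and ``thin primes'' is not exhaustive. Theorem~\ref{prime_premedian} characterizes prime pre-median graphs as those in which every square lies in an induced $W_4$ or $M_4$; it does \emph{not} say that every $2$--interval is thick. A prime can perfectly well contain squares (so it is not thin) while also having $2$--intervals with no square (so it is not thick, and Theorem~\ref{Th_Mau} does not apply). Your argument says nothing about such primes.

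Second, your handling of thin primes is incorrect, not merely incomplete. You propose that a thin prime satisfying the hypothesis must collapse to $K_2$ or $K_3$. But the $5$--wheel $W_5$ is a $2$--connected weakly bridged graph (hence a thin prime), has diameter~$2$ (so every $2$--ball is all of $W_5$), and embeds isometrically in $\tfrac{1}{2}H_5$ via $c\mapsto\emptyset$, $x_i\mapsto\{i,i{+}1\}$ (indices mod~$5$). So $W_5$ satisfies the hypothesis, is a nontrivial thin prime, and does embed in a half-cube. The question of \emph{which} weakly bridged graphs embed isometrically in half-cubes is itself nontrivial and is not addressed by anything in your outline; this is the core of the difficulty, not a side issue. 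Note too that Proposition~\ref{finite-scale}, which you invoke for the reassembly, concerns scale embeddings into hypercubes, not half-cube embeddings; the parity bookkeeping needed to glue half-cube embeddings of primes into a single half-cube embedding of $G$ is not supplied by that proposition.
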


\section{$C(G)$ is contractible}\label{top-space}
Let $G$ be an $L_1$--weakly modular graph which admits a scale embedding into a hypercube $H(X)$ for a countable set $X$.
In this section, we present a proof of Theorem \ref{contractible} that the
topological space $C(G)$ is contractible. First we formally define $C(G)$.

\subsection{Definition of ${\mathcal C}(\Gamma)$}  Let $\Gamma_i, i\in \Lambda,$ be a set of weakly modular graphs (indexed by a set $\Lambda$) and let
$\Gamma$ be a weak Cartesian product, i.e., a connected component of $\prod_{i \in \Lambda} \Gamma_i$. Then $\Gamma$ is also a weakly
modular graph. For each $\Gamma_i, i\in \Lambda$,
let $\mathcal{C}(\Gamma_i)$ denote the set of all {\it finite thick isometric weakly modular subgraphs} of $\Gamma_i$. Analogously,
define $\mathcal{C}(\Gamma)$ by taking all finite subgraphs $H$ of $\Gamma$ which are weak Cartesian products of
$\prod_{i \in \Lambda} H_i$, where each $H_i$ belongs
to $\mathcal{C}(\Gamma_i)$. The subgraphs of $\mathcal{C}(\Gamma)$ can be characterized in the following way:

\begin{proposition} \label{thick_Gamma} $H$ belongs to ${\mathcal C}(\Gamma)$ if and only if $H$ is a finite thick isometric weakly modular subgraph of $\Gamma$.
\end{proposition}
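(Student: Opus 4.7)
For the forward implication, suppose $H = \prod_i H_i$ with each $H_i \in \mathcal{C}(\Gamma_i)$. Finiteness is immediate since only finitely many $H_i$ are non-trivial. Since $d_\Gamma(x,y) = \sum_i d_{\Gamma_i}(x_i, y_i)$ and the same identity holds inside $H$ (using that each $H_i$ is isometric in $\Gamma_i$), $H$ is isometric in $\Gamma$. Weak modularity of $H$ follows from Proposition~\ref{prod-retracts}. For thickness I would analyze a 2-interval $I_H(x,y)$: either $x$ and $y$ differ in one factor $i$ by distance $2$, in which case $I_H(x,y)$ is (up to the constant cross-section) a 2-interval in $H_i$ and contains a square by thickness of $H_i$; or they differ in two factors by distance $1$ each, in which case the unique square in $\Gamma$ with $x,y$ as a diagonal is axis-aligned and sits inside $H$.

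For the reverse implication, let $H$ be a finite thick isometric weakly modular subgraph of $\Gamma$, and set $H_i := \pi_i(H) \subseteq V(\Gamma_i)$, where $\pi_i$ is coordinate projection. Finiteness of $H$ forces all but finitely many $H_i$ to be trivial. The crucial step is to prove that $H$ coincides with the weak Cartesian product component $\prod_i H_i$, i.e., that $H$ is closed under the axis-aligned \emph{mixing} operation: whenever $x,y \in V(H)$ and $z$ satisfies $z_i \in \{x_i, y_i\}$ for every $i$, then $z \in V(H)$. I would prove this by induction on $d_\Gamma(x,y)$. The base case $d_\Gamma(x,y) = 2$ with $x,y$ differing in two factors is handled by thickness: since $I_H(x,y) \subseteq I_\Gamma(x,y)$ consists exactly of the two axis-aligned mixed vertices together with $x,y$, and a square must be present in $I_H(x,y)$, both mixed vertices belong to $H$. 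For the inductive step, given a coordinate $i_0$ with $z_{i_0} = y_{i_0} \neq x_{i_0}$, one selects a neighbor $x' \in V(H)$ of $x$ on a shortest $(x,y)$-path in $H$; using the triangle and quadrangle conditions (TC) and (QC) in $\Gamma$ together with isometry, one successively replaces $x'$ by an axis-aligned neighbor (one which moves only the $i_0$-coordinate) and applies the induction hypothesis to the pair $(x',y)$.

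Once $H = \prod_i H_i$ is established, each $H_i$ inherits the required properties: finiteness is clear; isometry of $H_i$ in $\Gamma_i$ is obtained by projecting shortest paths in $H$ (via a fixed transversal) onto $\Gamma_i$; and both weak modularity and thickness of $H_i$ lift trivially, since any configuration witnessing (TC), (QC), or a square 2-interval in $H_i$ can be reinstated in $H$ by fixing the other coordinates, and conversely such configurations in $H$ project to $H_i$. \textbf{The main obstacle} is the product-closure induction: while thickness and isometry give direct control over 2-intervals, the inductive step requires extracting axis-aligned neighbors using only the weakly modular axioms available in $\Gamma$, and it is here that the triangle/quadrangle conditions must be invoked with care in order to transfer the product structure of $\Gamma$ to $H$.
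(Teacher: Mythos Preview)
Your forward direction and the derivation of the $H_i$-properties once $H=\prod_i H_i$ is known are fine.

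The gap is the inductive step of the mixing claim. When the neighbour $x'$ of $x$ on a shortest $(x,y)$-path moves the wrong coordinate $i_1\neq i_0$, you propose to ``replace $x'$ by an axis-aligned neighbour'' using (TC)/(QC) in $\Gamma$ together with isometry; but those tools only produce vertices of $\Gamma$, and nothing you have listed forces them into $H$. Concretely, if $v$ is the $i_0$-neighbour of $x$ towards $y$ in $\Gamma$, then (QC) in $\Gamma$ supplies the fourth corner $w$ of the square on $x,x',v$, yet neither $v$ nor $w$ is known to lie in $H$. The ingredient that is missing---and that must be reused at every step, not only in the base case---is thickness of $H$. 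Its relevant consequence (the paper isolates it as \emph{almost-thickness}) is: whenever $H$ contains two incident edges with different labels, the unique composite square of $\Gamma$ spanned by those edges lies entirely in $H$, because that square is the whole $2$-interval in $\Gamma$ and thickness forces a square into the corresponding $2$-interval of $H$. With this available your plan works: along any shortest $(x,y)$-path in $H$, consecutive steps with different labels can be swapped by replacing the middle vertex with the opposite corner of the composite square (which is in $H$), so the label sequence can be permuted at will, and a sequence that front-loads the coordinates in $S=\{i:z_i=y_i\}$ passes through $z^S$.

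The paper organises the argument differently. It first extracts almost-thickness from thickness, then proves directly---without yet knowing the product decomposition---that each projection $H_i$ is thick, isometric and weakly modular, and only afterwards shows $H=\prod_i H_i$ via a one-step adjacency argument (every vertex of $\prod_i H_i$ with a neighbour in $H$ lies in $H$) rather than a global mixing induction. Both routes succeed, but in each the decisive mechanism is the composite-square closure supplied by thickness of $H$; (TC)/(QC) in $\Gamma$ are auxiliary.
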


\begin{proof} If $H\in \mathcal{C}(\Gamma)$, then $H$ is a weak Cartesian product of $\prod_{i \in \Lambda} H_i$. Since each $H_i$ is a thick weakly modular graph, $H$ is
also a thick weakly modular graph.

Conversely, let $H$ be a finite thick weakly modular isometric subgraph of $\Gamma$. For a vertex $v_i\in V(\Gamma_i)$, let $\Gamma(v_i)$
denote the subgraph of $\Gamma$ induced by all vertices of $\Gamma$ having $v_i$ as their $i$th coordinate and call $\Gamma(v_i)$  the $v_i$--{\it fiber} of $\Gamma$.
$\Gamma(v_i)$  can be viewed as a weak Cartesian product of $\{v_i\}\times \prod_{j\in \Lambda\setminus \{ i\}} \Gamma_j$. Let $H(v_i)$ denote the intersection of $\Gamma(v_i)$
with $V(H)$. Let $H_i$ be the projection of $H$ on $\Gamma_i,$ $i\in \Lambda$: namely, $H_i$ is the subgraph of $\Gamma_i$ induced by all vertices $v_i$ of $\Gamma_i$
such that $H(v_i)$ is nonempty.

Notice that each fiber $\Gamma(v_i)$ is a convex subgraph of $\Gamma$ (as a weak Cartesian product of $\Gamma_j$ $(j\ne i)$ and $\{ v_i\}$, i.e., of convex subgraphs of factors) and $H(v_i)$ is a convex subgraph of $H$ (as an intersection of a convex subgraph of $\Gamma$ with an isometric subgraph of $\Gamma$). For the same reason, if $u_iv_i$ is an edge of $\Gamma_i$, then the subgraph of $\Gamma$ induced by $\Gamma(u_i)\cup \Gamma(v_i)$ is a convex subgraph of $\Gamma$ and if $u_iv_i$ is an edge of $H_i$, then $H(u_i)\cup H(v_i)$ induces a convex subgraph of $H$. This implies that at least one edge $uv$ of $H$ is projected to $u_iv_i$. Indeed, if $u'$ is a vertex of $H(u_i)$ and $v'$ is a vertex of $H(v_i)$, then any shortest $(u',v')$--path $P$ of $H$ is contained in $H(u_i)\cup H(v_i).$
Necessarily $P$ contains two adjacent vertices $u$ and $v$, $u$ from $H(u_i)$ and $v$ from $H(v_i)$.

For simplicity of notation, for a vertex $v$ of $\Gamma$ and index $i\in \Lambda$,
by $v_i$ we denote the projection of $v$ in $\Gamma_i$, i.e., $v_i$ is the $i$th coordinate of $v$. For any edge $uv$ of $\Gamma$ there exists a single $\Gamma_i$ such that $uv$ is projected to an edge $u_iv_i$ of $\Gamma_i$; for all other $\Gamma_j$, $uv$ is projected to a single vertex. In this case we will say that the label of $uv$ is $i$, and we will denote it by $\lambda(uv)=i$. From the elementary properties of Cartesian products it follows that the edges of any triangle  of $\Gamma$ have the same label. The edges of any square of $\Gamma$ either all have the same label (we will call such a square {\it genuine}) or they have two labels and each pair of opposite edges have the same label (we will call such a square {\it composite}). Moreover, if $xyzu$ is a composite square of $\Gamma$, then $I(x,z)=I(y,u)=\{ x,y,z,u\}$.

Next we will prove that each $H_i$ belongs to ${\mathcal C}(\Gamma_i)$.

\begin{lemma} \label{H_ithick} $H_i$ is thick for each $i\in \Lambda$.
\end{lemma}

\begin{proof}
Since $H$ is finite, each $H_i$ is finite as well, moreover only a finite number of $H_i$ are nontrivial. Now, we will show that each $H_i$ is thick. Pick two arbitrary vertices $u_i,v_i$  at distance 2 in $H_i$. Let $u$ and $v$ be two closest in $H$ vertices, $u$ from $H(u_i)$ and $v$ from $H(v_i)$. We assert that $d_H(u,v)=2$. Let $P=(u,w,w'\ldots,w'',v)$ be any shortest $(u,v)$--path in $H$. Since $H$ is an isometric subgraph of $\Gamma$, all vertices of $P$ between $w$ and $w''$ are all projected to the same common neighbor of $u_i$ and $v_i$ in $H_i$. Hence $\lambda(uw)=i$ and $\lambda(ww')=j$ for some $j\ne i$.  Since $H$ is thick, the vertices $u$ and $w'$ belong to a square $C=upw'q$. If $w$ does not belong to $C$, then the square $C$ cannot be composite, i.e., all edges of $C$ have the same label. But then independently of whether $w$ is adjacent to $p$ and/or to $q$, the edges of $C$ as well as $uw$ and $ww'$ will have the same label, which is impossible. Thus $w$ is a vertex of $C$, say $w=q$. Then $C=upw'w$ is a composite square, thus $p$ belongs to $H(u_i)$. Since $d_H(p,v)<d_H(u,v)$, we obtain a contradiction with the minimality choice of the pair $u,v$. Hence $d_H(u,v)=2$. Since $H$ is thick, the vertices $u$ and $v$ belong to a square $uu'vv'$. Since $d_H(u,v)=d_{\Gamma_i}(u_i,v_i)=2$, this square is genuine and its projection is a square $u_iu'_iv_iv'_i$ of $H_i$. This shows that $H_i$ is thick.
\end{proof}

We will prove that each $H_i$ is an isometric weakly modular subgraph of $\Gamma_i$ under a weaker condition. We will say that an isometric subgraph $H$ of $\Gamma$ is {\it almost-thick}
if any two incident edges $uv$ and $vw$ of $H$ with $\lambda(uv)\ne \lambda(vw)$ belong to a composite square of $H$.

\begin{lemma} \label{H_iweaklymodular} Let $H$ be an isometric almost-thick weakly modular subgraph of $\Gamma$.  Then for each $i\in \Lambda$, $H_i$ is an isometric weakly modular subgraph of $\Gamma_i$.
\end{lemma}

\begin{proof} That $H_i$ is an isometric subgraph of $\Gamma_i$ can be easily shown by noticing that each shortest path of $H$ is projected to a shortest path of $H_i$ since $H$ is an isometric subgraph of $\Gamma$. To verify (TC), let $u_i,v_i,w_i$ be three vertices of $H_i$ such that $d_{H_i}(w_i,u_i)=d_{H_i}(w_i,v_i)=k_i$ and $u_i\sim v_i$. Then in $H$ we can select two adjacent vertices $u$ and $v$, $u$ from $H(u_i)$ and $v$ from $H(v_i)$. Let $w$ be an arbitrary vertex of $H$ from $H(w_i)$. Since $d_{H_j}(u_j,w_j)=d_{H_j}(v_j,w_j)$ for any $H_j$ with  $j\ne i$ (because  $u$ and $v$ project to the same vertex of $\Gamma_j$) and $d_{H_i}(w_i,u_i)=d_{H_i}(w_i,v_i)$, we conclude that $d_H(u,w)=d_H(v,w):=k$. By (TC) for $H$, there exists a common neighbor $x$ of $u$ and $v$ at distance $k-1$ from $w$. Since $uvx$ is a triangle of $H$, all its edges are labeled $i$. Hence for all $j\ne i$ we have $x_j=u_j=v_j$ and $x_i$ is adjacent to $u_i$ and $v_i$. Since $d_H(x,w)=k-1$ and all factors $\Gamma_j$ with $j\ne i$ contribute with the same amount to each $d_H(u,w),d_H(v,w),$ and $d_H(x,w)$, we deduce that $d_{H_i}(x_i,w_i)=k_i-1$. This establishes (TC) for $H_i$.

Next we  establish (QC) for $H_i, i\in \Lambda$. Let $u_i,y_i,x_i,$ and $w_i$ be four vertices of $H_i$ such that $u_i\sim x_i,y_i,$ $d_{H_i}(u_i,w_i)=k_i+1,$ and $d_{H_i}(x_i,w_i)=d_{H_i}(y_i,w_i)=k_i$. In $H$ we can select vertices $x,y,u',$ and $u''$ such that $x\sim u'$, $y\sim u''$, and $x$ projects to $x_i$, $y$ projects to $y_i$, and $u',u''$ both project to $u_i$. Suppose that such a quadruplet is selected to minimize $d_H(u',u'')$. We assert that $u'=u''$. Suppose not, and let $u$ be a neighbor of $u'$ in $H$ on a shortest $(u',u'')$--path. Since $u',u''$ belong to $H(u_i)$ and $H(u_i)$ is a convex subgraph of $H$, the vertex $u$ also belongs to $H(u_i)$. Notice that $\lambda(xu')=\lambda(u''y)=i$ and $\lambda(u'u)=j$ for some $j\ne i$. Since $H$ is almost-thick, the incident edges $xu'$ and $u'u$ belong to a composite square $C$ of $H$.  Let $x'$ be the fourth vertex of $C$. Hence $\lambda(ux')=i$ and $\lambda(xx')=j$. This implies that the projection of $x'$ on $\Gamma_i$ is $x_i$, and we can replace $x$ by $x'$. Since $d_H(u,u'')<d_H(u',u''),$ we get a contradiction with the choice of the quadruplet $x,u',u'',y$. This shows that $u'=u''$. Denote this common neighbor of $x$ and $y$ by $u$. Then $\lambda(xu)=\lambda(yu)=i$. This shows that for any $j\ne i$, the vertices $x,u,y$ project to the same vertex of $H_j$. Therefore if $w$ is any vertex of $H(w_i)$ and $w_j$ is the projection of $w$ in $H_j$, then $d_{H_j}(w_j,x_j)=d_{H_j}(w_j,y_j)=d_{H_j}(w_j,u_j)$. Since $x_i,y_i\in I(u_i,w_i)$ in $H_i$, all this shows that $x,y\in I(u,w)$ in $H$. By (QC) in $H$ there exists a common neighbor $z$ of $x,y$ in $H$ which is one step closer to $w$. Since $\lambda(ux)=\lambda(uy)=i$, the square $xuyz$ is genuine and its projection in $H_i$ is the square $x_iu_iy_iz_i$. Since for all $H_j, j\ne i,$  $z$ has the same projection as $x,u,$ and $y,$  we conclude that $d_{H_i}(z_i,w_i)<d_{H_i}(x_i,w_i)$, establishing (QC) in $H_i$.
\end{proof}

\begin{lemma} \label{Hproduct} Let $H$ be an isometric almost-thick weakly modular subgraph of $\Gamma$. Then $H$ is a weak Cartesian product of $\prod_{i\in \Lambda}H_i$.
\end{lemma}

\begin{proof}
Let $H^*$ be the connected component (weak Cartesian product) of $\prod_{i\in \Lambda} H_i$ contained in $\Gamma$. We will show that $H$ coincide with $H^*$. Clearly, $H$ is included in $H^*$. To prove that any vertex  of $H^*$  belongs to $H$ it suffices to show that any vertex $v$ of $H^*$ having a neighbor $u$ in $H$ also belongs to $H$. Since $u$ and $v$ are adjacent in $\Gamma$, there exists an index $i$ such that $\lambda(uv)=i$ and $u_j=v_j$ for any $j\ne i$. Since $v_i$ is a vertex of $H_i,$ $H(v_i)$ is nonempty. Let $w$ be a closest to $v$ vertex of $H(v_i)$. Notice that $d_{\Gamma}(u,w)=d_{\Gamma}(v,w)+1$. To show that $v$ belongs to $H$ we proceed by induction on $d_{\Gamma}(v,w)$.  If $v = w$, we are done.  Now suppose that $d_{\Gamma}(v,w)\ge 1$. Let $x$ be a neighbor of $w$ on a shortest $(w,u)$--path of $H$. From the choice of $w$ we conclude that $x$ does not belong to $H(v_i)$, i.e., the label of the edge $wx$ is $i$. Notice that $d_{\Gamma}(x,v)\ge d_{\Gamma}(w,v)$, otherwise $x$ will belong to $H(v_i)$. If $d_{\Gamma}(x,v)=d_{\Gamma}(w,v)$, by (TC) for $\Gamma$ we will find a common neighbor $y$ of $x$ and $w$ one step closer to $v$. The vertex $y$ belongs to the fiber $\Gamma(v_i)$ because this fiber is convex and $y\in I(w,v)$, whence the label of the edge $yw$ cannot be $i$. On the other hand, the vertices $x,y,w$ constitute a triangle and the label of the edge $wx$ of this triangle is $i$, a contradiction. Finally, suppose that $d_{\Gamma}(x,v)>d_{\Gamma}(w,v)$ and let $z$ be a neighbor of $x$ on a shortest $(x,u)$--path of $H$. Then $w,z\in I(x,v)$ in $\Gamma$ and by (QC) for $\Gamma$ there exists a common neighbor $y$ of $w$ and $z$, one step closer to $v$. Then $y$ belongs to $\Gamma(v_i)$ by the convexity of this subgraph in $\Gamma$, thus the label of the edge $wy$ is not $i=\lambda(wx)$. Hence $C=xzyw$ is a composite square of $\Gamma$.  Since $\lambda(xz)=\lambda(yw)\ne \lambda(wx)$ and $H$ is almost-thick, necessarily $C$ is a composite square of $H$ and therefore $y$ is  a vertex of $H$. This contradicts the choice of $w$ as a closest to $v$ vertex of $H(v_i)$. This establishes that the vertex sets of $H$ and $H^*$ are equal. Since both $H$ and $H^*$ are (induced) subgraphs of $\Gamma$ with the same set of vertices, we obtain the equality between $H$ and $H^*$, concluding the proof of the proposition.
\end{proof}

To conclude the proof of Proposition \ref{thick_Gamma}, it remains to show that any thick isometric weakly modular subgraph $H$ of $\Gamma$ is almost-thick. Indeed, let $uv$ and $vw$ be two incident edges of $H$  with $\lambda(uv)\ne \lambda(vw)$.  Since  $\lambda(uv)\ne \lambda(vw)$, we have $d_H(u,w)=2$. Since $H$ is thick, the vertices $u$ and $w$ belong to a square $C$ of $H$. If $v\notin C$, then $I(u,w)\ne C$ and all edges of the subgraph induced by $I(u,w)$ will necessarily have the same label, leading to a contradiction. Thus $v\in C$ and the square $C$ is composite.
\end{proof}

\begin{Rem} If all $\Gamma_i, i\in \Lambda$ are pre-median graphs, then $\Gamma$ is a pre-median graph as well. If $H$ is a finite thick isometric weakly modular subgraph of $\Gamma$, then $H$ is pre-median and, by Lemma \ref{thick-pm}, $H$ is a weak Cartesian product of its primes $\prod_{j\in \Lambda'} H'_j$.  Each prime $H'_j$ of $H$ has a simply connected clique complex, thus all
its edges will have the same label $i$, and thus $H'_j$ is projected to a single factor $\Gamma_i$ of $\Gamma$. This projection is an isometric subgraph of $H_i$ (the projection of $H$ on $\Gamma_i$). It may happen that several primes of $H$ are projected to the same $\Gamma_i$. Proposition  \ref{thick_Gamma} indicates that $H_i$ is a weak Cartesian product of the primes of $H$ which project to $\Gamma_i$, but we will not prove it here.
\end{Rem}

\subsection{Definition of $C(\Gamma)$} From now on, let $\Gamma_i, i\in \Lambda,$ be a set of graphs  such that each $\Gamma_i$ is either a finite hyperoctahedron or a half-cube.
Suppose also that the dimension of all $\Gamma_i$ which are hyperoctahedra is uniformly bounded. Let $\Gamma$ be a  weak Cartesian product of $\prod_{i \in \Lambda} \Gamma_i$. Then $\Gamma$
has finite octahedron-dimension, thus $\Gamma$ has a  scale $M$ embedding $\varphi$ into a hypercube $H(X)$ for an appropriately chosen even integer $M:=2m$
as described next. The embedding $\varphi$ is obtained as the combination
of the scale $M$ embeddings $\varphi_i$
of $\Gamma_i, i\in \Lambda,$ into the  hypercubes $H(X_i)$. Thus $H(X)=\prod_{i\in \Lambda}H(X_i)$ and  $X$ is the disjoint union of the sets $X_i (i\in \Lambda)$.
The embeddings $\varphi_i$ of the factors $\Gamma_i$ into $H(X_i)$ are defined in the following way.
If $\Gamma_i$ is the half-cube $\frac{1}{2}H(Y_i)$, then let $X_i$ be the disjoint union of $m$ copies of $Y_i$ and $\varphi_i$
is obtained from the trivial scale 2 isometric embedding of $\Gamma_i$ into $H(Y_i)$ by repeating
each coordinate $m$ times. If $\Gamma_i$ is an $n$--octahedron $K_{n\times 2}$, then a scale $\alpha_n$ embedding of $K_{n\times 2}$
into the hypercube $H(Y_i)$ with $|Y_i|=2\alpha_n$ is described in \cite{DeLa}*{Subsection 7.4} (where $\alpha_n$
is $\binom{n-2}{\frac{n}{2}-1}$ 
if $n$ is even and $2\binom{n-2}{\frac{n-3}{2}}$ 
if $n$ is odd). Let $M$ be a multiple of 2 and of all distinct
$\alpha_n$ (there is a finite number of them) over all hyperoctahedra from the list $\Gamma_i, i\in \Lambda$. Then a  scale $M$ embedding of $\Gamma_i$ into
the hypercube $H(X_i)$ can be obtained by repeating $\beta_n:=M/\alpha_n$ times the coordinates in the embedding of
$K_{n\times 2}$ into $H(Y_i)$ (thus $X_i$ is the disjoint union of $\beta_n$ copies of $Y_i$).

For each  subgraph $H\in \mathcal{C}(\Gamma)$
we will define a finite-dimensional convex Euclidean polytope  $[H]\subset {\mathbb R}^X$
such that $H$ is the 1-skeleton of $[H]$, showing that the graph $H$ is polyhedral  (recall
that a finite graph $F$ is {\it polyhedral} if $F$ is the 1-skeleton of some Euclidean
convex polyhedron).
For this, we will define first the polytopes $[H_i]\subset {\mathbb R}^{X_i}$ for $H_i\in \mathcal{C}(\Gamma_i)$, $i\in \Lambda$.
By Lemma \ref{thick-prime-pre-med} each $H_i$  is a thick subgraph of a hyperoctahedron or  belongs to ${\mathcal M}_4$.

First suppose that $\Gamma_i$ is a finite hyperoctahedron. Let $[\Gamma_i]$ be the convex hull in  ${\mathbb R}^{X_i}$ of
$\varphi_i(v)$ (viewed as $(0,1)$-vectors) over all vertices $v$ of $\Gamma_i$; recall that $\varphi_i$ is a scale $M$
isometric embedding of $\Gamma_i$ into the hypercube $H(X_i)$ defined above. Analogously, if $H_i$ is a thick subgraph of $\Gamma_i$,
then let $[H_i]$ be the convex hull in ${\mathbb R}^{X_i}$ of the incidence vectors of $\varphi_i(v)$ over all vertices $v$ of $H_i$.

\begin{Lem} \label{octahedron}  If $\Gamma_i$ is a finite hyperoctahedron, then for any thick subgraph $H_i$ of $\Gamma_i$,
the 1-skeleton of $[H_i]$ coincides with $H_i$. In particular, the 1-skeleton of $[\Gamma_i]$ is the hyperoctahedron $\Gamma_i$.
\end{Lem}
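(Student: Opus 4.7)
The plan is to show that, after translation, the polytope $[\Gamma_i]$ is an affinely embedded cross-polytope in $\mathbb{R}^{X_i}$, from which the result for both $\Gamma_i$ and its thick subgraphs will follow by elementary polyhedral arguments.

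First I would handle the full case $\Gamma_i = K_{n\times 2}$. Writing $A_k := \varphi_i(v_k)$ and $\bar A_k := \varphi_i(\bar v_k)$ for the antipodal pairs, the first step is to prove that $U := A_k \cup \bar A_k$ and $I := A_k \cap \bar A_k$ are independent of $k$, so that all antipodal pairs share the common midpoint $m := \tfrac12(\chi_U + \chi_I) \in \mathbb{R}^{X_i}$. The argument exploits that every other vertex $u$ lies in the $2$-interval $I(v_k, \bar v_k)$ and that a scale-$M$ isometric embedding into the hypercube turns intervals in $\Gamma_i$ into coordinate-wise intervals, forcing $A_k \cap \bar A_k \subseteq \varphi_i(u) \subseteq A_k \cup \bar A_k$. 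Applying this with $u \in \{v_l, \bar v_l\}$ and invoking symmetry between $k$ and $l$ yields the desired common $U$ and $I$.

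Next, setting $w_k := \chi_{A_k} - m$, I would observe that each $w_k$ is supported on $U \setminus I$ (of cardinality $2M$) with values $\pm 1/2$, and that $\chi_{\bar A_k} - m = -w_k$. A direct computation using the distance identities $|A_k \triangle A_l| = M$ and $|A_k \triangle \bar A_l| = M$ for $l \neq k$ gives $\langle w_k, w_l \rangle = 0$, so the $w_k$ are pairwise orthogonal and in particular linearly independent. Hence $[\Gamma_i] - m = \mathrm{conv}(\pm w_1, \ldots, \pm w_n)$ is an $n$-dimensional cross-polytope. Its extreme points are exactly the $\pm w_k$, i.e.\ the $\varphi_i(v) - m$ for $v \in V(\Gamma_i)$, and its $1$-skeleton is $K_{n \times 2}$: for any two non-antipodal vertices $w_a, w_b$ (with the convention $w_{-k} := -w_k$) the functional $x \mapsto \langle w_a + w_b, x\rangle$ is uniquely maximized on $\{w_a, w_b\}$, whereas no linear functional makes an antipodal segment $[w_k, -w_k]$ a face because a functional achieving its maximum equally at $\pm w_k$ must vanish there, but then takes opposite nonzero values at $\pm w_l$ for any $l \neq k$.

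For a thick induced subgraph $H_i \subseteq \Gamma_i$, the translated polytope $[H_i]-m$ is the convex hull of a subset of $\{\pm w_1, \ldots, \pm w_n\}$. Either (i) $H_i$ contains no complete antipodal pair, in which case it is a clique, the chosen $w$-vectors are pairwise orthogonal, so $[H_i]$ is an affinely embedded simplex whose $1$-skeleton is $H_i$; or (ii) $H_i$ contains a complete antipodal pair, and then thickness forces $H_i$ to contain at least three complete antipodal pairs. In case (ii) the obstruction to antipodal edges persists (the same functional argument applies using any other surviving antipodal pair), while the linear functional $\langle w_a + w_b, \cdot\rangle$ continues to witness edges between the surviving non-antipodal pairs. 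The main obstacle is the first step: establishing the common midpoint $m$. All subsequent arguments reduce to routine inner-product calculations and standard polytope theory, but proving that $U$ and $I$ are genuinely independent of $k$ requires combining the specific metric structure of $K_{n\times 2}$ (every vertex lies in every $2$-interval determined by an antipodal pair) with the characterization of intervals in the hypercube as coordinate-wise intervals.
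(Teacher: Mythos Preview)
Your approach is correct and genuinely different from the paper's, but there is one small slip: in case~(ii), thickness only forces $H_i$ to contain at least \emph{two} complete antipodal pairs, not three (the $4$--cycle $C_4$ sitting inside $K_{n\times 2}$ is already thick). Fortunately your subsequent argument only uses the existence of \emph{some other} antipodal pair, so the logic is unaffected; just correct the count.

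The paper proceeds more directly. To show that each edge $uv$ of $H_i$ is an edge of $[H_i]$, it translates so that $\varphi_i(u)=\emptyset$ and observes that any point on the segment $[\sigma(\emptyset),\sigma(A)]$ (with $A=\varphi_i(v)$) has support contained in $A$; since every other vertex $w$ satisfies $|\varphi_i(w)|\ge |A|$, no nontrivial convex combination is possible. For the converse, given a non-edge $(u,v)$ in $H_i$, thickness supplies a square $uu'vv'$; translating so that $\varphi_i(u)=\emptyset$ and $\varphi_i(v)=X_i$, one checks that $\varphi_i(u')$ and $\varphi_i(v')$ are complementary halves of $X_i$, so the all-$\tfrac12$ point lies on both diagonals, killing the edge $[\sigma(\emptyset),\sigma(X_i)]$. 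Your route instead identifies $[\Gamma_i]$ globally as an affine cross-polytope by exhibiting a common center $m$ and proving orthogonality of the translated vertex vectors; this yields extra structural information (the exact polytope type) and lets you read off the $1$--skeleton of any thick $H_i$ from standard cross-polytope combinatorics. The paper's argument is shorter and entirely elementary, but yours explains \emph{why} the result holds in a way that makes the thick-subgraph case almost automatic.
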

\begin{proof} First we prove that each edge $uv$ of $H_i$ is an edge (1-dimensional face) of $[H_i]$. We denote by  $\varphi_i(v)$
the corresponding subset of $X_i$ and, if $\varphi_i(v)=A$,
we denote by $\sigma(A)$ the incidence vector of the set $A$. Suppose without loss of generality that
$\varphi_i(u)=\emptyset$ (otherwise, we can obtain  an isometric embedding with this property by taking the symmetric difference of all
$\varphi_i(x)$ $(x\in V(\Gamma_i))$ with $\varphi_i(u)$). Let $|X_i|=2n_i$. Then $|\varphi_i(w)|=n_i$ for any neighbor of
$w$ of $u$ in $H_i$ (in particular, $|\varphi_i(v)|=n_i$) and $\varphi_i(u')=X_i$ for the possibly unique non-neighbor $u'$ of $u$
in $H_i$. Let $\varphi_i(v)=A$. If the line segment $[\sigma(\emptyset),\sigma(A)]$ is not an edge of the 1-skeleton of $[H_i]$,
then some point $p$ of $[\sigma(\emptyset),\sigma(A)]$ can be expressed as the convex combination of the incidence vectors of the
remaining vertices of $H_i$. Since the coordinates of $p$ are positive on $A$ and are zero on $X_i\setminus A$, this means that
in the convex combination we can have  non-zero coefficients only for the incidence vectors of sets of the form $\varphi_i(w)$ which are
subsets of $A$. Since each $\varphi_i(w)$ has the same or larger size (if $w=u'$) as $A$, this is possible only if
$\varphi_i(w)=A=\varphi_i(v)$, a  contradiction. This shows that indeed $H_i$ is a subgraph of the 1-skeleton of $[H_i]$.

To prove the converse, it suffices to show that if a pair $(u,v)$ is not an edge of $H_i$,
then the line segment $[u,v]$ is not an edge of $[H_i]$.
Since $H_i$ has diameter $2$ and is thick, $d(u,v) = 2$
and there is a nonadjacent pair $(u',v')$ in $H_i$ such that $u',v' \in I(u,v) \setminus \{u,v\}$.
As before, we can suppose without loss of generality that $\varphi_i(u)=\emptyset$ and $\varphi_i(v)=X_i$. Let $\varphi_i(u')=A$ and $\varphi_i(v')=B$.
Then  $|A|=|B|=n_i$. Since $d(u',v')=2,$ necessarily $A\cap B=\emptyset$ and $A\cup B=X_i$. Therefore, the half-integer point $(\frac{1}{2},\ldots,\frac{1}{2})$ of ${\mathbb R}^{X_i}$ belongs to both segments
$[\sigma(\emptyset),\sigma(X_i)]$ and $[\sigma(A),\sigma(B)]$, establishing that the pair $(u,v)$ is not an edge of the 1-skeleton of $[H_i]$.
\end{proof}

Now suppose that $\Gamma_i$ is a half-cube $\frac{1}{2}H(Y_i)$ and let $\varphi_i$ be the scale $M$ isometric embedding of $\Gamma_i$ into the hypercube $H(X_i)$
defined above.   By Lemma \ref{thick-prime-pre-med}, each $H_i\in \mathcal{C}(\Gamma_i)$ is a basis graph of an even $\triangle$--matroid. According to Edmonds \cite{Ed} and Gelfand et al. \cite{GeGoMcPhSe},
a {\it basis matroid polyhedron} is the convex hull of characteristic vectors of bases of a matroid. Basis graphs of matroids and of
even $\triangle$--matroids are polyhedral; in fact,  the following sharper results hold:

\begin{proposition} \label{basis-polyhedron} \cites{Che_bas,GeGoMcPhSe} For a collection ${\mathcal A}$ of subsets of equal size (respectively, of even size) of an $n$--element set,
the convex hull of characteristic vectors of ${\mathcal A}$  is a basis polytope of a
matroid (respectively, of an even $\triangle$--matroid) if and only if its 1-skeleton is isomorphic to
the basis graph of the family $\mathcal A$.
\end{proposition}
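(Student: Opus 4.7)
The plan is to reduce both directions of the equivalence to the Gelfand--Serganova-type theorems characterizing matroid polytopes (from \cite{GeGoMcPhSe}) and their even $\triangle$-matroid analogue (from \cite{Che_bas}), both of which characterize the corresponding basis polytopes among $\{0,1\}$-polytopes via the directions of their edges: namely, every edge must be parallel to a root $e_i-e_j$ in the matroid case and to a vector $\pm e_i\pm e_j$ in the even $\triangle$-matroid case. Set $P:=\Conv\{\sigma(A):A\in\mathcal A\}$. Since each $\sigma(A)\in\{0,1\}^n$ is a vertex of $[0,1]^n$, it is automatically a vertex of $P$, so the vertex set of $P$ is exactly $\{\sigma(A):A\in\mathcal A\}$ and the $1$-skeleton of $P$ is a well-defined graph $\widetilde G$ on $\mathcal A$. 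The elementary but crucial observation is that for $A,B\in\mathcal A$, the vector $\sigma(B)-\sigma(A)\in\{-1,0,1\}^n$ has support $A\triangle B$, and it is parallel to a root in the sense above if and only if $|A\triangle B|=2$, i.e., if and only if $\{A,B\}$ is an edge of the basis graph $G(\mathcal A)$.

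For the ``if'' direction, assume $\widetilde G=G(\mathcal A)$. Then every edge of $P$ has the form $[\sigma(A),\sigma(B)]$ with $|A\triangle B|=2$, so all edges of $P$ are parallel to a root $e_i-e_j$ in the equicardinal case (and to a vector $\pm e_i\pm e_j$ in the even-cardinality case). Invoking the Gelfand--Serganova theorem (respectively its even $\triangle$-matroid analogue of \cite{Che_bas}), we conclude that $\mathcal A$ is a matroid (respectively an even $\triangle$-matroid). For the ``only if'' direction, assume $\mathcal A$ is a matroid or even $\triangle$-matroid. The same theorems immediately give the inclusion $\widetilde G\subseteq G(\mathcal A)$: every geometric edge of $P$ is a pair with $|A\triangle B|=2$. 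The reverse inclusion $G(\mathcal A)\subseteq\widetilde G$ is proved by constructing, for each $\{A,B\}\in E(G(\mathcal A))$ with $A\triangle B=\{a,b\}$, a linear functional $\ell$ on $\RR^n$ that attains its maximum on $P$ exactly along $[\sigma(A),\sigma(B)]$. A natural choice is $\ell(x)=M\cdot\mathbf 1_{A\cap B}(x)+\mathbf 1_{\{a,b\}}(x)$ with $M$ large; any vertex $\sigma(C)$ maximizing $\ell$ must contain $A\cap B$ and must contain at least one of $a,b$, and the symmetric exchange axiom, applied to $C$ against $A$ or $B$, forces $C\in\{A,B\}$.

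The main obstacle lies in this second half of the ``only if'' direction, i.e., in verifying that every basis-graph edge is realized as a $1$-face of $P$ rather than merely as a chord of a higher-dimensional face. The construction of the separating functional is delicate in the even $\triangle$-matroid case because the three possible types of single-exchange pairs (a genuine swap $a\in A,\ b\in B$; a double insertion $a,b\in B\setminus A$; a double deletion $a,b\in A\setminus B$) require parallel but formally distinct choices of $\ell$, and the argument that no spurious $\sigma(C)$ maximizes $\ell$ relies on the full strength of the symmetric exchange axiom rather than its equicardinal specialization. These verifications are carried out in \cite{GeGoMcPhSe,Che_bas}, and I would cite them rather than reproduce the case analysis.
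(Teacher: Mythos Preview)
The paper does not prove this proposition; it is quoted directly from \cite{GeGoMcPhSe} (matroid case) and \cite{Che_bas} (even $\triangle$-matroid case), so there is no argument in the text to compare against. Your overall strategy---invoking the edge-direction characterization for the ``if'' direction and exhibiting a supporting functional for each basis-graph edge for the ``only if'' direction---is indeed the standard one used in those references.

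However, your concrete functional $\ell(x)=M\cdot\mathbf 1_{A\cap B}(x)+\mathbf 1_{\{a,b\}}(x)$ does \emph{not} certify the edge $[\sigma(A),\sigma(B)]$ in the even $\triangle$-matroid setting, even in the ``swap'' case. Take $\mathcal A=\{\,C\subseteq[4]:|C|\text{ even}\,\}$, $A=\{1,2\}$, $B=\{1,3\}$. Then $\ell(\sigma(A))=\ell(\sigma(B))=M+1$, but $\ell(\sigma(\{1,2,3,4\}))=M+2$, so $A,B$ are not maximizers of $\ell$ at all. Your appeal to the symmetric exchange axiom cannot repair this: the axiom does not exclude a basis $C\supseteq A\cup B$, which is precisely what goes wrong. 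The fix is to penalize coordinates outside $A\cup B$: set
\[
\ell(x)\;=\;\sum_{i\in A\cap B}x_i\;-\;\sum_{i\notin A\cup B}x_i.
\]
Then $\ell(\sigma(C))\le |A\cap B|$ with equality iff $A\cap B\subseteq C\subseteq A\cup B$, and since $|A\cup B|-|A\cap B|=2$, the parity hypothesis alone forces $C\in\{A,B\}$. No exchange axiom is needed here; it is parity (in the even $\triangle$-matroid case) or equicardinality (in the matroid case, where your original $\ell$ does work) that rules out spurious maximizers. Your deferral to the cited references is appropriate, but the sketch you give for that step is not correct as written.
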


From this result and Proposition  \ref{prime-to-matroids} we infer that any $H_i\in {\mathcal C}(\Gamma_i)$ is polyhedral.   We  will denote by $[H_i]$ the copy of the basis  polyhedron of $H_i$ realized as the convex hull in ${\mathbb R}^{X_i}$ of the incidence vectors of $\varphi_i(v)$  over all vertices of $H_i$. Similarly to the proof of Proposition \ref{basis-polyhedron} and  Lemma \ref{octahedron} and taking into account the way $\varphi_i$ was defined, one can
show that $H_i$ is the 1-skeleton of $[H_i]$. Notice that  $[H_i]$ is a finite-dimensional Euclidean polytope of ${\mathbb R}^{X_i}$. Let $C(\Gamma_i)$ be the subspace of ${\mathbb R}^{X_i}$
which is the union of all $[H_i]$ over $H_i\in \mathcal{C}(\Gamma_i)$. (If $X_i$ is finite, then $C(\Gamma_i)$ is just the convex hull of ${\mathbb R}^{X_i}$ defined by the vertices of $\Gamma_i$).

For $H\in \mathcal{C}(\Gamma)$ represented as $H = \prod_{i \in \Lambda} H_i$ with $H_i\in \mathcal{C}(\Gamma_i)$, let $[H]$ be a weak Cartesian product of $[H_i]$ for all $i \in \Lambda$. Note that each $[H]$ is a finite dimensional Euclidean polytope since the number of nontrivial summands $[H_i]$ different from a single point is finite. Let $C(\Gamma)$ be the union of  $[H]$ over all $H \in {\mathcal C}(\Gamma)$:
$$C(\Gamma)=\bigcup \{ [H]: H\in \mathcal{C}(\Gamma)\}.$$
Then $C(\Gamma)$ is a subspace of $\prod_{i \in \Lambda} C(\Gamma_i)$, which itself is a subspace of $\RR^X=\prod_{i \in \Lambda} \RR^{X_i}$.

\subsection{Definition of $C_{\Gamma}(G)$ and $C(G)$}\label{C_Gamma}
Let $G$ be an isometric weakly modular subgraph of $\Gamma$. Then we define $\mathcal{C}(G)$ as the set of all subgraphs $H$ of $G$ which
belong to $\mathcal{C}(\Gamma)$.  By Proposition \ref{thick_Gamma}, $\mathcal{C}(G)$ can be also defined as the set of all finite thick
isometric weakly modular
subgraphs of $G$. Let $C_{\Gamma}(G)$ be the union of all $[H]$ over $H \in {\mathcal C}(G)$:
$$C_{\Gamma}(G)=\bigcup \{ [H]: H\in \mathcal{C}(G)\}.$$
Clearly $\mathcal{C}(G)\subseteq \mathcal{C}(\Gamma)$ and $C_{\Gamma}(G)\subseteq C(\Gamma)$. Moreover, if $G'$ is an isometric weakly modular subgraph of $G$, then
$\mathcal{C}(G')\subseteq \mathcal{C}(G)$ and $C_{\Gamma}(G')\subseteq C_{\Gamma}(G)$.

Let $G$ be an $L_1$--weakly modular graph admitting a scale embedding into a hypercube.  We will define $C(G)$ as $C_{\Gamma}(G)$ for a canonical choice of $\Gamma$.
Let $\{ G_i \}_{i \in \Lambda}$ be the set of all primes of $G$. By Theorem \ref{premedian_Chastand}(iii),
$G$ is an isometric subgraph of a weak Cartesian product of  $\prod_{i \in \Lambda} G_i$. Since $G$ has a scale embedding into a hypercube,
by Lemma \ref{l_1-pre-med} each $G_i$ is either a subgraph of a finite hyperoctahedron $\Gamma^*_i$ or an
isometric subgraph of a half-cube $\Gamma^*_i$. Thus $G$ is also an isometric
subgraph of a connected component $\Gamma^*$ of a weak Cartesian product  $\prod_{i \in \Lambda} \Gamma^*_i$. Since $G$ and all its primes have finite octahedron-dimension,
$\Gamma^*$ and $\Gamma^*_i$ also have finite octahedron-dimension. Hence, $C(\Gamma^*)$ and  $C_{\Gamma^*}(G)$ are well-defined as above. We set $C(G):=C_{\Gamma^*}(G)$ for
this particular choice of $\Gamma^*$.

\begin{Rem}
If $G_i$ is a thin subgraph of a hyperoctahedron $\Gamma^*_i$, then as a suboctahedron of $\Gamma^*_i$, $G_i$ contains only two non-adjacent vertices
$x,y$, all other vertices $S_i$ of $G_i$
are pairwise adjacent and adjacent to $x$  and $y$. Then $G_i$ contains two maximal thick subgraphs $H'_i$ and
$H''_i$, one containing $x$ and $S_i$ and another one containing $y$ and  $S_i$. Then $[H'_i]$ and
$[H''_i]$ are two simplices glued together along the simplex $[S_i]$ defined by $S_i$.
Then $C(G_i)$ is the union of the two simplices $[H'_i]$ and $[H''_i]$ glued along $[S_i]$; $C(G_i)$ can be viewed as
a bipyramid of ${\mathbb R}^{X_i}$.
\end{Rem}

\subsection{Proof of contractibility of $C(G)$}
To prove that $C(G)$ is contractible, we will prove a more general assertion:

\begin{proposition} \label{contractibleC_Gamma}  Let $G$ be an $L_1$--weakly modular graph
which is an isometric subgraph of $\Gamma=\prod_{i\in \Lambda} \Gamma_i$ and suppose that $G$ has a scale embedding into a hypercube $H(Z)$ defined on a countable set $Z$.
Then $C_{\Gamma}(G)$ is contractible.
\end{proposition}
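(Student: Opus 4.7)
The plan is to first reduce to the case when $G$ is finite by an exhaustion argument and then to induct using Chastand's decomposition of finite pre-median graphs (Theorem~\ref{premedian_Chastand}(v)).

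For the reduction, fix a basepoint $v_0 \in V(G)$ and, using Lemma~\ref{lemma:L1_finete_hull}, let $G_n$ denote the convex hull in $G$ of $B_n(v_0) \cap V(G)$. Each $G_n$ is a finite convex (hence isometric) subgraph of $G$ that inherits the $L_1$--weakly modular structure together with a scale embedding into a sub-hypercube, and $G = \bigcup_n G_n$, whence $C_\Gamma(G) = \bigcup_n C_\Gamma(G_n)$. Since $C_\Gamma(G)$ is a CW complex with polytopal cells and every continuous map from a sphere has compact image meeting only finitely many cells, Whitehead's theorem reduces us to showing that each $C_\Gamma(G_n)$ is contractible; we may therefore assume $G$ is finite.

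By Lemma~\ref{lemma:linksl1} $G$ is pre-median, and by Theorem~\ref{premedian_Chastand}(v) any finite pre-median graph is obtained by successive gated amalgams from Cartesian products of its primes; I induct on the length of such a decomposition. In the base case $G = \prod_{i} G_i$ is a Cartesian product of primes. By Proposition~\ref{thick_Gamma} every $H \in \mathcal{C}(G)$ factors as $\prod_i H_i$ with $H_i \in \mathcal{C}(G_i)$, so $C_\Gamma(G) = \prod_i C_\Gamma(G_i)$. Each prime $G_i$ is elementary (Theorem~\ref{premedian_Chastand}(i)), and by Lemma~\ref{l_1-pre-med} together with Proposition~\ref{premedian-examples}(3) is either a subhyperoctahedron or an isometric ppm-subgraph of a half-cube; in each case $G_i$ is itself the maximum element of $\mathcal{C}(G_i)$, so $C_\Gamma(G_i) = [G_i]$ is a single convex polytope by Lemma~\ref{octahedron} or Proposition~\ref{basis-polyhedron}, and hence contractible. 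Therefore the product $C_\Gamma(G) = \prod_i [G_i]$ is contractible. For the inductive step, $G$ is a gated amalgam of two strictly smaller graphs $G_1, G_2$ along a gated subgraph $G_0$, and the convexity and gatedness of $G_0$ force every cell $[H]$ of $C_\Gamma(G)$ to lie entirely in $C_\Gamma(G_1)$ or in $C_\Gamma(G_2)$; this yields the decomposition
\[
C_\Gamma(G) = C_\Gamma(G_1) \cup C_\Gamma(G_2), \qquad C_\Gamma(G_1) \cap C_\Gamma(G_2) = C_\Gamma(G_0).
\]
By the inductive hypothesis each of $C_\Gamma(G_1), C_\Gamma(G_2), C_\Gamma(G_0)$ is contractible, so $C_\Gamma(G)$ is contractible by the gluing lemma.

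The main obstacle is to justify the two cell-level decompositions. For the Cartesian product case, one must verify that every $H \in \mathcal{C}(G)$ factors compatibly with the product structure of $G$: Proposition~\ref{thick_Gamma} supplies the factorization inside $\Gamma$, but one must track its compatibility with the isometric embedding $G \hookrightarrow \prod_i \Gamma_i$ and the identification of primes of $G$ with factors. For the gated-amalgam step, one must rule out cells $[H]$ that ``straddle'' the separator $G_0$: if $H \in \mathcal{C}(G)$ contained vertices of both $V(G_1) \setminus V(G_0)$ and $V(G_2) \setminus V(G_0)$, then the thickness of $H$ combined with the gate structure would force a square sitting in a $2$-interval whose geodesics all pass through $G_0$, contradicting the convexity of $G_0$ and the fact that gates produce unique shortest paths across the separator.
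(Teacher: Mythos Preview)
Your reduction to the finite case is fine, but the induction that follows has a genuine gap in the base case. You assert that each prime $G_i$ is itself the maximum element of $\mathcal{C}(G_i)$, i.e.\ that every prime $L_1$--pre-median graph is thick. This is false. Take for example the $6$--wheel $W_6$: it is a planar $K_4$--free bridged graph, hence a prime $L_1$--weakly modular graph (see Theorem~\ref{premedian_primes}(iii)), and it is an isometric ppm-subgraph of a half-cube; yet the $2$--interval between two opposite rim vertices contains no square, so $W_6$ is not thick and $W_6\notin\mathcal{C}(W_6)$. Here $C_\Gamma(W_6)$ is the clique complex of $W_6$, not a single polytope. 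More generally, any $2$--connected weakly bridged $L_1$--graph is a prime in your sense, and for these $C_\Gamma$ is the clique complex, whose contractibility is a genuine theorem (dismantlability), not a triviality. So your ``base case'' is at least as hard as the full statement; Chastand's decomposition does not reduce the difficulty.

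There is a second gap in the amalgam step. Showing that every cell $[H]$ lies in one side is fine (this is essentially the Claim inside Lemma~\ref{thick-pm}), but the equality $C_\Gamma(G_1)\cap C_\Gamma(G_2)=C_\Gamma(G_0)$ is a statement about intersections of polytopes in $\mathbb{R}^X$, not just about vertex sets. Two cells $[H_1]\subseteq C_\Gamma(G_1)$ and $[H_2]\subseteq C_\Gamma(G_2)$ could a priori meet in their interiors. The paper's proof of the analogous Lemma~\ref{gluing} uses that the decomposition comes from a \emph{coordinate halfspace} $W_a$ of the hypercube, so that the hyperplane $\{a=1\}$ is a supporting hyperplane of $[H_2]$ and the intersection $[H_2]\cap\mathcal{H}$ is the polytope of a thick subgraph of $G_0$. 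A generic gated separator from Chastand's theorem carries no such linear structure, and you give no substitute argument.

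The paper therefore avoids Chastand's decomposition entirely. Its induction is on $|V(G)|$, and the splitting is produced from a \emph{thin} $2$--interval: in Case~1 one locates a hypercube coordinate $a$ so that $G$ decomposes along the convex halfspaces $W_a,W_{a^*}$ of Theorem~\ref{boundary}, which supplies both the isometric weakly modular pieces and the supporting-hyperplane argument for the intersection; Case~2 handles the residual ``almost-thick'' situation via Lemma~\ref{Hproduct}. The thick base case $G\in\mathcal{C}(G)$ is then genuinely trivial ($C_\Gamma(G)=[G]$), unlike your ``Cartesian product of primes'' base case.
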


To prove that the topological space $C_{\Gamma}(G)$
is contractible, we will show that it is sufficient to establish this result
for finite graphs. Let $\psi$ be a scale isometric  embedding of $G$ into $H(Z)$ and suppose without loss of generality
that  $Z=\{ 1,2,3,\ldots\}$. Let $Z_i=\{ 1,2,\ldots,i\}$ and let
$H(Z_i)$ be the $i$--cube defined by all finite subsets of $Z_i$. Then $H(Z)$ is a directed union of the cubes $H(Z_i)$, $i=1,2,\ldots$.
Let $R_i$ denote the subgraph of $G$ induced by all vertices $v$ of $G$ such that $\psi(v)$ belongs to $Z_i$. Then $R_i$ is a convex subgraph of
$G$ because $R_i$ is the intersection of $G$ with the convex subgraph $H(Z_i)$ of $H(Z)$. This shows that each $R_i$ is a finite $L_1$--weakly modular graph
and $G$ is the directed union of $R_i, i=1,2,\ldots$.  Consequently, $C_{\Gamma}(G)$  is a directed union of
$C_{\Gamma}(R_i)$  for finite graphs $R_i$ isometrically embedded in $\Gamma$ (and admitting  a scale embedding in $H(Z)$). By the
classical theorem of Whitehead \cite{Hat}*{{Theorem 4.5}},
it suffices to show that each finite complex  $C_{\Gamma}(R_i)$  is contractible.

So, let $G$ be a finite weakly modular graph isometrically embedded in $\Gamma$. Let $\varphi$ be the scale $M$ isometric embedding of $\Gamma$
and $G$ into the hypercube  $H(X)$ which was defined above (and was used in the definition of $C(\Gamma)$ and $C_{\Gamma}(G)$).
We prove the contractibility of $C_{\Gamma}(G)$   by induction on the number of vertices of
$G$ by using the gluing lemma \cite{Bj}*{Lemma 10.3}. If $G$ is thick, then  $G\in {\mathcal C}(G)\subseteq {\mathcal C}(\Gamma)$ by Proposition  \ref{thick_Gamma}, thus
$G$ is a weak Cartesian product of  $G_i\in {\mathcal C}(\Gamma_i), i\in \Lambda$. Consequently, $C_{\Gamma}(G)=[G]=\prod_{i \in \Lambda} C_{\Gamma}(G_i)=\prod_{i \in \Lambda} [G_i]$
and $C_{\Gamma}(G)$ is contractible  as a convex polyhedron $[G]$.  So, suppose that $G$ contains a thin
2-interval $I(x,z)$. We distinguish two cases.

\begin{case-CG}
$G$ contains two vertices $x,z$ with $d(x,z)=2$ such that  $I(x,z)$ is thin and $x$ and
$z$ have at most three common neighbors $y,y',y''$.
\end{case-CG}

Since $I(x,z)$ is thin, the vertices $y,y',y''$ are
pairwise adjacent. Since $\varphi$ is a scale $M$ isometric embedding
of $G$ into $H(X)$, the Hamming distance between $\varphi(x)$ and
$\varphi(z)$ is $2M$ and all other Hamming distances between the
vertices of $I(x,z)$ is $M$. We can suppose without loss of generality
that $\varphi(x)=\emptyset$.  Let $Z:=\varphi(z)$ and $Y:=\varphi(y),
Y':=\varphi(y'), Y'':=\varphi(y'')$. Then $|Z|=2M,$
$|Y|=|Y'|=|Y''|=M$, $Y\cup Y \cup Y'' \subseteq Z$, and $|Y\cap
Y'|=|Y\cap Y''|=|Y'\cap Y''|=\frac{M}{2}$.

We first show that there exists $a \in Z$ such that either $a \in Y
\cap Y' \cap Y''$, or $a \notin Y \cup Y' \cup Y''$. If $x$ and $z$
have two common neighbors $y,y'$ (respectively, one common neighbor $y$), then
any $a \in Y \cap Y'$ (respectively, any $a \in Y$) shows that we are in the
first case. Suppose now that $I(x,z) = \{y,y',y''\}$ and assume $Y
\cap Y' \cap Y''=\emptyset$. Since $|Y|=|Y'|=|Y''|=M$ and $|Y\cap
Y'|=|Y\cap Y''|=|Y'\cap Y''|=\frac{M}{2}$, we have that $|Y \cup Y'
\cup Y''| = \frac{3M}{2}$. Since $|Z| = 2M$, there exists $a \in Z$
such that $a \notin Y \cup Y' \cup Y''$. Consequently, there exists
$a$ such that $x \in W_{a^*}$ and $z,y,y',y''\in W_{a}$, or $z \in
W_{a^*}$ and $x,y,y',y''\in W_{a}$. Without loss of generality, assume
we are in the first case.

We claim that $z\notin \partial W_{a}$. Indeed, suppose by way of
contradiction that $z\sim u$ with $u\in W_{a^*}$. Since $W_{a^*}$ is
convex and $u\notin I(x,z)$ (because $I(x,z) \cap W_{a^*} = \{x\}$), we have
$d(x,u)=2$. Since $u,x\in W_{a^*}$ and $W_{a^*}$ is convex, $u\nsim
y,y',y''$. By (TC), there exists a vertex $v\ne y,y',y''$ such that
$v\sim x,u,z$, contrary to our assumption about $I(x,z)$. So $z\notin
\partial W_{a},$ i.e., $\partial W_{a}$ is a proper subset of $W_{a}$.

By Theorem \ref{boundary} the subgraphs $G_1,G_0,G_2$ of $G$ induced
by respectively the halfspace $W_{a}$, the boundary set $\partial
W_{a}$, and the union $W_{a^*}\cup \partial W_{a}$ are isometric
pre-median subgraphs of $G$, thus $G_1,G_0,G_2$ are $L_1$--weakly
modular graphs. Since $z\in W_{a}\setminus \partial
W_{a}$ and $x\in W_{a^*},$ $G_1$ and $G_2$ are proper
subgraphs of $G$ and $G_0$ is a proper subgraph of $G_1$ and $G_2$. Since
$G_1,G_2$ and $G_0$ are isometric subgraphs of $G$ and thus of $\Gamma$,
$C_{\Gamma}(G_1),C_{\Gamma}(G_2),$ and $C_{\Gamma}(G_0)$ are contractible
by induction assumption.

\begin{lemma} \label{gluing} $C_{\Gamma}(G)=C_{\Gamma}(G_1)\cup C_{\Gamma}(G_2)$ and $C_{\Gamma}(G_0)=C_{\Gamma}(G_1)\cap C_{\Gamma}(G_2).$
\end{lemma}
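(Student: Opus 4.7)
The plan is to prove both equalities by combining the product decomposition from Proposition \ref{thick_Gamma} with a direct analysis of how members of $\mathcal{C}(G)$ interact with the coordinate hyperplane $x_a=1/2$. The inclusions $C_\Gamma(G_0)\subseteq C_\Gamma(G_1)\cap C_\Gamma(G_2)$ and $C_\Gamma(G_1)\cup C_\Gamma(G_2)\subseteq C_\Gamma(G)$ are immediate because $G_0\subseteq G_1,G_2\subseteq G$ give $\mathcal{C}(G_0)\subseteq\mathcal{C}(G_1)\cap\mathcal{C}(G_2)$ and $\mathcal{C}(G_1)\cup\mathcal{C}(G_2)\subseteq\mathcal{C}(G)$. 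So only the two reverse inclusions require work.

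For $C_\Gamma(G)\subseteq C_\Gamma(G_1)\cup C_\Gamma(G_2)$, I would show that every $H\in\mathcal{C}(G)$ already lies in $\mathcal{C}(G_1)\cup\mathcal{C}(G_2)$. By Proposition \ref{thick_Gamma}, $H$ decomposes as a weak Cartesian product $\prod_{i\in\Lambda}H_i$ with $H_i\in\mathcal{C}(\Gamma_i)$, and since $a\in X_j$ for a unique index $j$, membership in $W_a$ and $W_{a^*}$ depends only on the $j$-th coordinate. If $V(H_j)$ is contained in the halfspace of $\Gamma_j$ corresponding to $a$ or to $a^*$, then $V(H)\subseteq W_a$ or $V(H)\subseteq W_{a^*}$, giving $H\in\mathcal{C}(G_1)$ or $H\in\mathcal{C}(G_2)$. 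Otherwise, the key claim is that every vertex of $V(H_j)$ in the $a$-halfspace of $\Gamma_j$ has a neighbor (in $H_j$) in the $a^*$-halfspace, which forces $V(H)\cap W_a\subseteq\partial W_a$ and hence $H\in\mathcal{C}(G_2)$. When $\Gamma_j$ is a hyperoctahedron $K_{n\times 2}$, this follows because the two halfspaces of $\Gamma_j$ are complementary transversals of the antipodal matching, so thickness of $H_j$ (which forces either a clique structure or the presence of at least two complete antipodal pairs) supplies the required neighbor. When $\Gamma_j$ is a half-cube and $H_j$ is the basis graph of an even $\triangle$-matroid $\mathcal{B}_j$ whose $a$-halfspace consists of bases containing some element $y$, the symmetric exchange axiom applied to a vertex $B$ containing $y$ with distinguished element $y$ and any $B_0$ not containing $y$ yields $b\ne y$ for which $B\triangle\{y,b\}\in\mathcal{B}_j$ is the desired neighbor outside the $a$-halfspace.

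For $C_\Gamma(G_1)\cap C_\Gamma(G_2)\subseteq C_\Gamma(G_0)$, take $p\in[H_1]\cap[H_2]$ with $H_1\in\mathcal{C}(G_1)$ and $H_2\in\mathcal{C}(G_2)$. Every vertex of $[H_1]$ has $a$-coordinate $1$ since its preimage lies in $W_a$, so $x_a(p)=1$. Applied to $[H_2]$, whose vertices have $a$-coordinate $1$ when in $\partial W_a$ and $0$ when in $W_{a^*}$, this forces $p$ into the face $F$ of $[H_2]$ cut out by $x_a=1$, whose vertex set is exactly the image of $V(H_2)\cap\partial W_a$. Letting $H'$ denote the subgraph of $G$ induced on $V(H_2)\cap\partial W_a$, we have $F=[H']$ and $V(H')\subseteq V(G_0)$, so it remains to check $H'\in\mathcal{C}(G_0)$. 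Using the factorization $H_2=\prod_i H_{2,i}$, the subgraph $H'$ splits as $H'_j\times\prod_{i\ne j}H_{2,i}$ where $H'_j$ is the subgraph of $\Gamma_j$ induced on the vertices of $H_{2,j}$ in the $a$-halfspace: in the hyperoctahedron case this is a clique (vacuously thick, isometric, weakly modular), and in the half-cube case it is the basis graph of the even $\triangle$-matroid minor $\{B\in\mathcal{B}_{2,j}:y\in B\}$, thick by Theorem \ref{Th_Mau}, weakly modular as a basis graph, and isometric in $\Gamma_j$ because shortest paths in a basis graph preserve the intersection of their endpoints.

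The main obstacle I foresee is the last step: verifying cleanly that the restriction $H'_j$ of $H_{2,j}$ to the $a$-halfspace of $\Gamma_j$ is again in $\mathcal{C}(\Gamma_j)$, especially that the induced subgraph on bases containing $y$ is both a thick isometric subgraph of the half-cube \emph{and} coincides with the polytope face $F$ under $\varphi_j$. Once this matroid minor bookkeeping is in place, the lemma reduces to the direct polytope computation isolating the $\{0,1\}$ behaviour of the $a$-coordinate across vertices of $[H_1]$ and $[H_2]$.
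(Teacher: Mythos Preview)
Your argument is correct and uses the same supporting-hyperplane idea for the second equality, but you work harder than necessary in two places where the paper exploits Theorem~\ref{boundary} directly.

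For $C_\Gamma(G)\subseteq C_\Gamma(G_1)\cup C_\Gamma(G_2)$, the paper does not decompose $H$ into factors. Instead, since each $H\in\mathcal{C}(G)$ is itself a thick $L_1$--weakly modular graph (being a finite thick isometric weakly modular subgraph of $G$), the last assertion of Theorem~\ref{boundary} (i.e.\ Lemma~\ref{thick_boundary}) applied to $H$ gives that $V(H)\cap W_a$ equals its own boundary inside $H$; hence if $V(H)$ meets $W_{a^*}$ then every vertex of $V(H)\cap W_a$ has a neighbour in $W_{a^*}$, so $V(H)\cap W_a\subseteq\partial W_a$ and $V(H)\subseteq V(G_2)$, while otherwise $V(H)\subseteq W_a=V(G_1)$. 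Your symmetric-exchange argument for the half-cube factor and the transversal observation for the hyperoctahedron factor are correct specialisations of exactly this fact, but they re-prove a result already available.

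For $C_\Gamma(G_1)\cap C_\Gamma(G_2)\subseteq C_\Gamma(G_0)$, the paper dispatches your ``main obstacle'' in one line: because $W_a$ is convex in $G$ (Theorem~\ref{boundary}), the subgraph $H'_2=H_2\cap W_a$ is convex in the thick weakly modular graph $H_2$, hence automatically isometric, thick (any $2$--interval of $H'_2$ lies in $W_a$ by convexity, so the square supplied by thickness of $H_2$ survives in $H'_2$), and weakly modular (convex subgraphs of weakly modular graphs are weakly modular). No factor decomposition or matroid-minor bookkeeping is required. Your route via the product structure and the even $\triangle$--matroid minor $\{B\in\mathcal{B}_{2,j}:y\in B\}$ is valid, but the convexity of $W_a$ renders all of it unnecessary.
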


\begin{proof} By the last assertion of Theorem \ref{boundary} each
isometric thick pre-median subgraph $H$ of $G$ is contained in $G_1$,
in $G_2$, or in their intersection. Hence $[H]$ is contained in
$C_{\Gamma}(G_1)$ or in $C_{\Gamma}(G_2)$, therefore $C_{\Gamma}(G)\subseteq C_{\Gamma}(G_1)\cup
C_{\Gamma}(G_2)$. The converse inclusion $C_{\Gamma}(G_1)\cup
C_{\Gamma}(G_2)\subseteq C_{\Gamma}(G)$ is immediate. Moreover, any $H \in \mathcal{C}(G_0)$ is included in
$\mathcal{C}(G_1)$ and in $\mathcal{C}(G_2)$; consequently, $C_{\Gamma}(G_0)
\subseteq C_{\Gamma}(G_1) \cap C_{\Gamma}(G_2)$.

To prove the converse inclusion $C_{\Gamma}(G_1)\cap C_{\Gamma}(G_2)\subseteq C_{\Gamma}(G_0),$
pick any point $p\in C_{\Gamma}(G_1)\cap C_{\Gamma}(G_2)$. Then there exist two cells $[H_1]$ and $[H_2]$, first from $C_{\Gamma}(G_1)$ and
the second from $C_{\Gamma}(G_2)$, such that $p\in [H_1]\cap [H_2]$. Since $H_1$ is contained in $W_a$, the polytope $[H_1]$ is completely
included in the (coordinate) hyperplane $\mathcal H$  of ${\mathbb R}^X$ defined
by the equation $a=1$.   Thus the intersection of $[H_1]$ and $[H_2]$ is included in $\mathcal H$. Now, the hyperplane $\mathcal H$ is a
support hyperplane of the polyhedron $[H_2]$. Therefore the intersection $I$ of $\mathcal H$  with $[H_2]$ coincides with the convex
hull in ${\mathbb R}^X$ of all vertices of $H_2$ lying in  $\mathcal H$. Denote by $H'_2$ the subgraph of $H_2$ (and of $G$) induced by
those vertices. Then $H'_2$ is a thick isometric subgraph belonging to $G_0$. Indeed, $H'_2$ is the intersection of $H_2$
with the convex (in the sense of $G$) set $W_a$ of $G$. Since $H_2$ is thick and isometric, $H'_2$ is also thick and isometric. As a conclusion,
we obtain that $p\in [H_1]\cap [H_2]=[H_1]\cap [H'_2].$ Since $[H'_2]$ belongs to $C_{\Gamma}(G_0)$, we will obtain that $p\in C_{\Gamma}(G_0)$.
This establishes
the inclusion $C_{\Gamma}\cap C_{\Gamma}(G_2)\subseteq C_{\Gamma}(G_0)$.
\end{proof}

From Lemma \ref{gluing}  we conclude that $C_{\Gamma}(G)$ is
obtained from $C_{\Gamma}(G_1)$ and $C_{\Gamma}(G_2)$ by gluing along $C_{\Gamma}(G_0)$. By the gluing
lemma \cite{Bj}*{Lemma 10.3}, we will obtain that $C_{\Gamma}(G)$  is contractible. This
concludes the proof of Case 1.

\begin{case-CG}
For any thin 2-interval $I(x,z)$ of $G$, $I(x,z)\setminus \{ x,z\}$ contains at least
four pairwise adjacent vertices.
\end{case-CG}

Recall that $G$ is a finite isometric subgraph of $\Gamma$ and that $\Gamma$ is a weak Cartesian product of finite hyperoctahedra or/and half-cubes $\Gamma_i, i\in \Lambda$.
As in the proof of Proposition \ref{thick_Gamma}, let $G_i$ denote the subgraph of $\Gamma_i$ induced by the projection of $G$ on $\Gamma_i$.
By the condition of Case 2, any two edges $uv$ and $vw$ with $\lambda(uv)\ne \lambda(vw)$ are necessarily contained in a composite square of $G$:
otherwise, we will obtain that $I(u,w)=\{ u,v,w\}$, which is impossible. By Lemma \ref{H_iweaklymodular} each $G_i$ is an isometric weakly modular subgraph of $\Gamma_i$ and
by Lemma \ref{Hproduct} $G$ is a weak Cartesian product of $\prod_{i\in \Lambda} G_i$. From the definition of cells of $C_{\Gamma}(G)$ we conclude that
$C_{\Gamma}(G)$ is the weak Cartesian product of $\prod_{i\in \Lambda} C_{\Gamma_i}(G_i)$. If each $G_i$ contains less vertices than $G$, then by the induction
hypothesis each $C_{\Gamma_i}(G_i)$ is contractible, and thus $C_{\Gamma}(G)$ is contractible as a weak Cartesian product of the contractible spaces
$C_{\Gamma_i}(G_i)$. Now suppose that some $G_i$ contains the same number of vertices as $G$. Then all other graphs $G_j,j\ne i,$ are trivial, i.e., consist of a single vertex.
Hence $G_i$ is isomorphic to $G$. By the condition of Case 2, $G$ (and thus $G_i$) contains a thin 2-interval $I(x,z)$ such that $I(x,z)\setminus \{ x,z\}$ contains at least
four pairwise adjacent vertices. By Lemma \ref{lemma:linksl1} the half-cubes cannot contain induced $K_6-e$,  thus necessarily $\Gamma_i$ is a hyperoctahedron.
As a subgraph of $\Gamma_i$, the graph $G_i$ coincides with its subgraph induced by $I(x,z)$.
Therefore $G_i$ is a thin subhyperoctahedron of $\Gamma_i$.  Consequently,  $C_{\Gamma_i}(G_i)$
(and hence $C_{\Gamma}(G)$)  is  a bipyramid consisting of two simplices (see also the remark at the end of Subsection \ref{C_Gamma}), hence $C_{\Gamma_i}(G_i)$ is contractible.
This concludes the proof of Case 2, of Proposition \ref{contractibleC_Gamma}, and of the first part  of Theorem \ref{contractible}. The second assertion of Theorem \ref{contractible}
(that $G$ coincides the union of 1--skeleta   of cells of $C(G)$) follows  from Lemma \ref{octahedron} and Proposition \ref{basis-polyhedron}.

\medskip
Notice that  in general $C_{\Gamma}(G)$ is not a cell complex: it may happen that $H',H''\in {\mathcal C}(G)$ and $H'$ is a subgraph of $H''$ but $[H']$ is not a face of $[H'']$ because
$[H']$ and  $[H'']$ are polyhedra of the same dimension. Let  ${\mathcal C}^*(G)$ denote the subset of ${\mathcal C}(G)$ consisting of (thick) convex subgraphs of $G$.

\begin{question} \label{cellulation} We conjecture that  $C^*(G)=\bigcup\{ [H]: H\in {\mathcal C}^*(G)\}$ defines a  cellulation of $C_{\Gamma}(G)$,
i.e., $C^*(G)$ is a subdivision of $C_{\Gamma}(G)$ into convex cells.
\end{question}

To prove this property, we need to show that (a) each isometric thick pre-median subgraph $H$ of $G$ is contained in a thick convex subgraph of $G$ and that (b) if two thick
convex subgraphs $H',H''$ of $G$ intersect in a (necessarily thick and convex) subgraph $H_0$, then $[H_0]$ is a face of $[H']$ and $[H'']$.

Notice that if $G$ is an isometric subgraph of a half-cube $\Gamma$, then all
2-faces of $C_{\Gamma}(G)$ are equilateral triangles or squares. Note that for
such a graph $G$, $C_{\Gamma}(G)$ is not always CAT(0): if $G$ is the 5-wheel
$W_5$, then $C_{\Gamma}(G)$ consists of five equilateral triangles sharing a
common vertex; this vertex has positive curvature.

\begin{question} \label{CAT0_premedian} We conjecture that if $G$ is an $l_1$--weakly
modular graph not containing $W_5$ (or, more particularly, an isometric
weakly modular subgraph of a half-cube), then $C_{\Gamma}(G)$ is a CAT(0) space.
\end{question}

We conclude this section with the following general open questions:

\begin{question} \label{polyhedral} (i) What thick ppm-graphs are polyhedral? In particular,
are the thick ppm-graphs without propellers polyhedral?
(ii) Define for an arbitrary pre-median graph $G$ a contractible PE cell complex $C(G)$
such that $G$ is the 1-skeleton of $C(G)$. In particular, for which pre-median graphs
the topological space $C(G)$ derived from the set ${\mathcal C}(G)$ of thick isometric
pre-median subgraphs of $G$ is contractible?
\end{question}

Notice that in the case of weakly bridged graphs, ${\mathcal C}(G)$ consists of cliques and
$C(G)$ coincides with the weakly systolic simplicial complex $X(G)$ of $G$. More generally,
for bucolic graphs $G$, ${\mathcal C}^*(G)$ consists of weak Cartesian products of cliques,
thus the cell complex $C^*(G)$ derived from  ${\mathcal C}^*(G)$ coincides with the bucolic
prism complex of $G$. One can also  show that in this case, the properties (a) and (b) easily
hold, thus  Question \ref{cellulation} has a positive answer.
Both weakly systolic and locally finite bucolic complexes are contractible \cite{BCC+}.
On the other hand, it is well known that the Schl\"afli graph
$G_{27}$ and the Gosset graph $G_{56}$ are polyhedral, therefore they will define  complexes
with a unique maximal cell, showing that $C(G)$ must have types of cells different
from hyperoctahedral and matroidal cells.

Finally notice that since the graphs $G$ from ${\mathcal M}_3$ and ${\mathcal  M}_4$ (as well as $G_{27}$ and $G_{56}$) are polyhedral,
they satisfy the fixed cell property in a trivial way: any automorphism of $G$ extends to an automorphism of $C(G)$ and
fixes the unique maximal cell of $C(G)$. For example, consider the automorphism of $J_{n,2n}$ which maps each $n$--set
to its complement. Then this automorphism  extends to an automorphism of the basis matroid
polyhedron of $J_{n,2n}$ and has a single fixed cell--the basis matroid polyhedron itself. We do not know if this fixed cell property
extends to all $l_1$--weakly modular graphs:

\begin{question} \label{fixed} Does any finite group of automorphisms of $C(G)$ of an $l_1$--weakly modular graph $G$
fixes a (maximal) cell of $C(G)$?
\end{question}

\chapter{Dual Polar Graphs}\label{s:dupol}

In this chapter, we investigate dual polar graphs, i.e., the collinearity graphs of dual polar spaces, whose definition was given in
Subsection \ref{prel:dps}. We refine the classical characterization of dual polar graphs given by P. Cameron \cite{Ca} and show that
dual polar  graphs constitute a natural subclass of weakly modular graphs.
We also present a completely different and simplified approach (based on our local-to-global characterization of weakly modular graphs)
to the characterization of locally dual polar spaces provided in a seminal paper by
A.\ Brouwer and A.\ Cohen \cite{BroCo}. Finally, these results  are used in the next chapter: in Chapter~\ref{sec:swm},
the dual polar gated subgraphs of swm-graphs define the cell structure of cell complexes of those graphs.

\section{Main results}

According to Cameron \cite{Ca}, the dual polar spaces can be characterized by the conditions
(A1)-(A5), rephrased  in \cite{BaCh} in the following (more suitable to our context) way:

\begin{theorem} \cite{Ca} \label{th:cameron_dual_polar} A graph $G$ is the collinearity
graph of a dual polar space $\Gamma$ of rank $n$  if and only if the following axioms are satisfied:
\begin{itemize}
\item[(A1)] for any point $p$ and any line $\ell$ of $\Gamma$ (i.e., maximal clique of $G$), there
is a unique point of $\ell$ nearest to $p$ in $G$;
\item[(A2)] $G$ has diameter $n$;
\item[(A3$\&$4)] the gated hull $\lgate u,v\rgate$
of two vertices $u,v$ at distance $2$ has diameter $2$;
\item[(A5)] for every pair of nonadjacent vertices $u,v$ and
every neighbor $x$ of $u$ in $I(u,v)$ there exists a neighbor $y$ of $v$ in $I(u,v)$
such that $d(u,v)=d(x,y)=d(u,y)+1=d(x,v)+1$.
\end{itemize}
\end{theorem}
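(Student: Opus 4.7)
The plan is to prove both directions of the equivalence using the dictionary between dual polar graphs and polar spaces. For the forward direction, let $\Pi$ be a polar space of rank $n$ and $\Pi^* = (P^*, L^*; R^*)$ its dual, with $G = G(\Pi^*)$. The crucial preliminary metric fact to establish is that for $U, V \in P^*$ (which are maximal subspaces of $\Pi$ of dimension $n-1$), the graph distance $d_G(U,V)$ equals $n - 1 - \dim(U\cap V)$ (with $\dim\emptyset = -1$); this is proved by induction using (P3) of Theorem~\ref{thm:Tits}, which exhibits, given $U$ and a point $p\in V\setminus U$, a neighbor $U'$ of $U$ in $G$ with $\dim(U'\cap V) = \dim(U\cap V)+1$. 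This distance formula immediately gives (A2). For (A1), a line $\ell\in L^*$ corresponds to a subspace $W$ of $\Pi$ of dimension $n-2$, and its points are all maximal subspaces of $\Pi$ containing $W$; applying (P3) to $W$ and an auxiliary point of $U\setminus W$ yields the unique maximal subspace on $\ell$ maximizing $\dim(U\cap \cdot)$, which is the unique nearest point. For (A3\&4), when $d(U,V)=2$, (P1) of Theorem~\ref{thm:Tits} shows that the maximal subspaces of $\Pi$ containing $U\cap V$ form a rank-$2$ subgeometry of diameter $2$, and one verifies this set is the gated hull $\lgate U,V\rgate$. Axiom (A5) follows similarly from (P3) by reading the required neighbor $y$ of $v$ as the maximal subspace containing $(U\cap V) \cup \{p\}$ for a suitable point $p$ reflecting the choice of $x$.

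For the reverse direction, starting from a graph $G$ satisfying (A1)--(A5), I would reconstruct $\Pi^*$ directly by declaring $P^* := V(G)$, and $L^*$ to be the set of maximal cliques of $G$. The key structural result to prove first, by induction on $k$, is that for any $u,v$ with $d(u,v) = k$, the gated hull $\lgate u,v\rgate$ is a dual polar graph of rank $k$ and diameter $k$; the base case $k = 2$ is exactly (A3\&4), and the inductive step uses (A5) to show that intervals split properly and that gated hulls of pairs at distance $k$ are generated by the obvious rank-$(k-1)$ sub-hulls through (A5). Once this is established, one constructs $\Pi$ by taking its subspaces to be, inductively, the ``corank'' structure of gated hulls in $G$ (the points of $\Pi$ being the minimal gated hulls of a certain kind), and recovers the polar space axioms (Q1)--(Q3) from Subsection~\ref{prel:dps}: axiom (A1) translates directly into (Q1) (the ``all-or-one'' condition), while (A2) gives the rank $n$, and the Veblen-Young--style incidence needed in (P1)--(P4) is forced by (A5) together with the rank-$k$ dual polar structure of gated hulls.

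The main obstacle will be the reverse direction, and specifically the inductive step establishing that every gated hull $\lgate u,v\rgate$ at distance $k$ is itself a rank-$k$ dual polar graph. Carrying this out requires showing that (A1)--(A5) are inherited by $\lgate u,v\rgate$ and that the global polar-space structure assembled from these hulls is consistent, i.e., that incidences of subspaces computed in different hulls agree. Axiom (A5) is what provides this coherence: it guarantees enough ``parallel transport'' along shortest paths to propagate incidence relations, and controls how a fixed rank-$(k-1)$ subspace sits inside various rank-$k$ hulls. After assembling $\Pi$, one finally checks that $G$ equals the collinearity graph of $\Pi^*$ by verifying that adjacency in $G$ (cliques) corresponds to codimension-$1$ intersection of maximal subspaces of $\Pi$ and that the distance formula $d(U,V) = n - 1 - \dim(U\cap V)$ holds, both of which follow from (A1), (A2) and the constructed polar-space structure.
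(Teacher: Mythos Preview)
This theorem is not proved in the paper: it is Cameron's theorem, cited from \cite{Ca}, and the paper's only contribution here is to explain why the rephrased joint axiom (A3\&4) is equivalent to Cameron's original separate axioms (A3) and (A4). Concretely, the paper argues that (A1), (A3), (A4) force weak modularity, so that by Lemma~\ref{lem-weakly-modular_gated_hull} Cameron's set $\Delta(x,y)$ (the smallest set containing $x,y$ and closed under taking common neighbors) coincides with the gated hull $\lgate x,y\rgate$; axiom (A4) is then absorbed into weak modularity plus the diameter-$2$ condition on $\lgate u,v\rgate$. There is no further proof of Theorem~\ref{th:cameron_dual_polar} in the paper against which to compare your proposal.

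That said, your forward direction is the standard argument and is fine. For the reverse direction, your inductive plan via gated hulls differs from Cameron's actual route, parts of which the paper reproduces in Subsection~\ref{subsec:proof_of_lotoglo_dp23} (Lemmata~\ref{lem:special_subgeometries}--\ref{l:gated_hull_dpg2}). Cameron does not induct on $k=d(u,v)$. Instead he fixes a vertex $x$, defines $\mathcal{S}(x,y)$ to be the set of maximal cliques through $x$ meeting $I(x,y)\setminus\{x\}$, and shows (using (A1) and (A5)) that the poset $\mathcal{L}(x)=\{\mathcal{S}(x,y):y\in V\}$ is the subspace lattice of a generalized projective space. The subspaces of the reconstructed polar space are then the ``special subgeometries'' $\{z:\mathcal{S}(x,z)\subseteq\mathcal{S}(x,y)\}$, which a posteriori coincide with the gated hulls $\lgate x,y\rgate$ (the paper proves this identification as Lemma~\ref{l:gated_hull_dpg2}). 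So Cameron builds the polar space in one stroke from the clique structure at a single basepoint, rather than assembling it inductively from rank-$k$ sub-hulls as you propose; your route would require extra consistency checks across basepoints that Cameron's formulation sidesteps.
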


The point-line geometries with collinearity graphs satisfying axiom (A1) are called {\it near polygons} \cite{ShuYa}. Lines of near polygons are exactly the
maximal cliques of their collinearity graphs. Thus, in graph-theoretical language, axiom (A1) is equivalent to asserting that each maximal clique is gated.
Notice that (A1) implies the triangle condition (TC).
In fact, the graphs of near polygons can be characterized as  the graphs in which (TC) is fulfilled and
the kite $K^-_4$ ($K_4$ minus one edge, see Figure~\ref{fig-K4-K33-}) does not occur as an induced subgraph.
The original formulation of axioms (A3) and (A4) in Cameron's paper \cite{Ca} is as follows (the remaining axioms (A1),(A2), and (A5) are the same):
\begin{itemize}
\item[(A3)] If $x$ and $y$ are points with $d(x,y)=2$ and $\Delta(x,y)$ is the smallest set of points containing $x,y,$ and any point collinear
with two of its points, then $\Delta(x,y)$ has diameter 2;
\item[(A4)] For $d\le n$, let $x,y,z$ be points with $d(x,y)=2,$ $d(x,z)=d(y,z)=d$. Then either there is a point $w$ joined to $x$ and $y$ with $d(z,w)=d-1$
or there is a point $w\in \Delta(x,y)$ with $d(z,w)=d-2$.
\end{itemize}

\begin{figure}[ht]
\begin{center}
\includegraphics[scale=0.7]{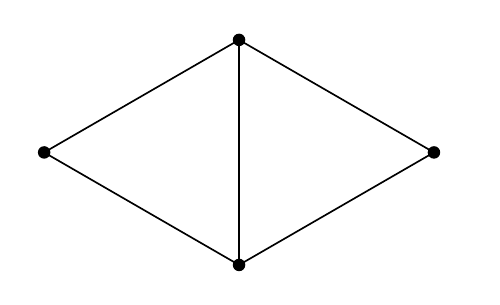} \includegraphics[scale=0.7]{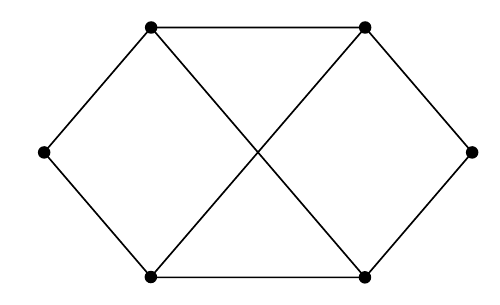}
\end{center}
\caption{A $K_4^-$ (left) and a $K_{3,3}^-$ (right)}\label{fig-K4-K33-}
\end{figure}

Axioms (A3) and (A4) imply that the collinearity graphs of dual polar spaces also satisfy the quadrangle condition (QC). Indeed, let $x,y$ be two nonadjacent
neighbors of $u$ in the interval $I(u,z)$ so that $d(u,z)=d+1$ and $d(x,z)=d(y,z)=d$. By (A4) either there exists a point $w\sim x,y$ with $d(z,w)=d-1$ and we are done or
there exists a point $w\in \Delta(x,y)$ with $d(z,w)=d-2$. In the second case, since $d(x,z)=d(y,z)=d$ and the diameter of $\Delta(x,y)$ is 2, we conclude that
$d(x,w)=d(y,w)=2$. This implies that  $w\in I(x,z)\cap I(y,z)\subset I(u,z),$ whence $d(u,w)=3$. Since $u,w\in \Delta(x,y)$, we obtain a contradiction with (A3).
Therefore the axioms (A1), (A3), and (A4) imply that dually polar graphs are weakly modular (this was first noticed in \cite{BaCh}).  The joint formulation  of Cameron's two axioms (A3) and (A4) from
Theorem \ref{th:cameron_dual_polar} rests
on Lemma \ref{lem-weakly-modular_gated_hull}  that  the gated hull of a connected subset $S$ in a weakly
modular graph $G$ is obtained as the smallest set including $S$
and containing any common neighbor of any two of its vertices. By this lemma, $\Delta(x,y)$ in axiom (A3) is the gated hull of $x$ and $y$, thus axioms in
Theorem \ref{th:cameron_dual_polar} imply (A3). To show that these axioms also imply (A4), let $w$ be the gate of $z$ in $\Delta(x,y)$. Since $\Delta(x,y)$ has diameter 2
and $d(x,z)=d(y,z)$, either $d(x,w)=d(y,w)=1$ or $d(x,w)=d(y,w)=2$ holds, establishing (A4).
Lemma \ref{lem-weakly-modular_gated_hull} also implies that the convex subspaces (i.e., subsets of points which induce convex subgraphs of the collinearity graph and are
subspaces of the incidence geometry) of a dual polar space are exactly the gated subgraphs of its collinearity graph.

\medskip
We call a (non-necessarily finite) graph $G$ a {\it dual polar graph} if it satisfies the axioms (A1),(A3),(A4), and (A5)
of Theorem \ref{th:cameron_dual_polar}, that is, we do not require finiteness of the diameter (axiom (A2)). Our first result of this chapter shows
that dual polar graphs constitute a natural subclass of weakly modular graphs and therefore can be characterized
in a local-to-global way. This also provides a simple characterization of dual polar graphs.
Recall that a graph $G$ is thick if each $2$--interval of $G$ contains an induced square.
Here are the main results of this chapter.

\begin{theorem} \label{th:dual-polar-graph}\label{TH:DUAL-POLAR-GRAPH} For a graph $G=(V,E)$ the following conditions are equivalent:
\label{t:lotoglo_dp}
\begin{itemize}
\item[(i)] $G$ is a dual polar graph;

\item[(ii)] $G$ is a thick weakly modular graph not containing induced  $K^-_4$  and isometric $K^-_{3,3}$;

\item[(iii)] $G$ is a thick locally weakly modular graph not containing induced  $K^-_4$ and isometric $K^-_{3,3}$
and the triangle-square complex $X \trsq(G)$ of $G$ is simply connected.
\end{itemize}
\end{theorem}

We call a graph \emph{locally dual polar} if it is thick, locally weakly modular, and
does not contain induced  $K^-_4$ and isometric $K^-_{3,3}$.

\begin{theorem}
\label{t:lotoglo_dp2}\label{T:LOTOGLO_DP2}
Let $G$ be a locally dual polar graph. Then the $1$--skeleton
$\tG:=\tX \trsq(G)^{(1)}$ of the universal cover $\tX \trsq(G)$ of
the triangle-square complex $X \trsq(G)$ of $G$ is a dual polar graph. If, moreover, $G$ is locally finite, then $\tG$ is a finite dual polar graph.
\end{theorem}

As an analogue of \cite{BroCo}*{Main Theorem (i)} we derive the following form of the local-to-global result.

\begin{theorem}
\label{t:lotoglo_dp3}\label{T:LOTOGLO_DP3}
Every locally finite locally dual polar graph $G$  is a quotient of a dual polar graph  by a group action with the minimal displacement at least $7$.
\end{theorem}

As we will show below, the conditions of Theorems \ref{th:dual-polar-graph} and \ref{t:lotoglo_dp3} are implied by those of
\cite{BroCo}*{Main Theorem (i)}, i.e., Theorems \ref{th:dual-polar-graph} and \ref{t:lotoglo_dp3} can be viewed as a sharpening of
Main Theorem (i) of \cite{BroCo}.

\section{Proof of Theorem \ref{th:dual-polar-graph}}

In view of the local-to-global characterization of triangle-square complexes of weakly modular graphs, conditions (ii) and (iii) are equivalent.
To show that (i)$\Rightarrow$(ii) notice that axioms (A1) and (A3$\&$4) imply that dual polar graphs are weakly modular graphs not containing isometric
 $K^-_4$  and $K^-_{3,3}$. Applying (A5) to $u,v$ at distance 2 and their common neighbor $x$,
we conclude that the induced path $(u,x,v)$ is included in a square, thus $I(u,v)$ contains a square.

Now, we prove that (ii)$\Rightarrow$(i). As noticed before, the axiom (A1) that any maximal clique of $G$ is gated is equivalent to the fact that $G$
satisfies the triangle condition (TC) and does not contain induced $K^-_4$.

\begin{lemma} \label{(A5)} $G$ satisfies the axiom (A5).
\end{lemma}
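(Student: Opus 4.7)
The plan is to prove (A5) by induction on $k := d(u,v)$, using thickness to produce the candidate $y$ and the two forbidden-subgraph conditions (induced $K_4^-$ and isometric $K_{3,3}^-$) to force $d(x,y)$ to attain its maximal value $k$. In the base case $k=2$, since $I(u,v)$ is a thick $2$-interval, it contains a square, so there exists a common neighbor $y$ of $u$ and $v$ with $y\nsim x$; then $d(u,y)=d(x,v)=1$ and $d(x,y)=d(u,v)=2$, as required.

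For the inductive step, assume $k\ge 3$. First I would ``parallel-transport'' the vertex $x$ one step toward $v$. Pick $x'\sim x$ in $I(x,v)$, so $d(x',v)=k-2$; a short distance calculation (using $d(u,v)=k$) shows $d(u,x')=2$, since otherwise $x'\sim u$ would yield $d(u,v)\le 1+(k-2)<k$. Thickness of $I(u,x')$ then provides a vertex $x''$ with $x''\sim u,x'$ and $x''\nsim x$, forming a square $uxx'x''$, and one checks $x''\in I(u,v)$. Similarly, choosing any $v'\sim v$ in $I(u,v)$ and applying the induction hypothesis to the pair $(u,v')$ at distance $k-1$ with the neighbor $x$ (which lies in $I(u,v')$ because $d(u,x)+d(x,v')=k-1=d(u,v')$), I obtain a neighbor $y'$ of $v'$ lying in $I(u,v')$ with $d(x,y')=k-1$ and $d(u,y')=k-2$.

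Now I would split on the position of $y'$ relative to $v$. If $y'\nsim v$, then $d(y',v)=2$ and the common neighbor $v'$ places $v'\in I(y',v)$; thickness of $I(y',v)$ produces a second common neighbor $y$ of $y'$ and $v$ with $y\nsim v'$. Standard distance bounds give $d(u,y)=k-1$ and hence $y\in I(u,v)$, and $d(x,y)\le k$. The remaining ingredient is to rule out $d(x,y)\le k-1$: the six vertices $x,u,v',y',v,y$ together with the adjacencies already established form, after a short case analysis, either an induced $K_4^-$ (when an unexpected edge collapses the configuration near $v$ or near $u$) or an isometric $K_{3,3}^-$ (two parallel geodesics of length $k-1$ from $\{x,u\}$ to $\{v,y\}$ joined by the bipartite pattern around $v'$ and $y'$), contradicting the hypotheses on $G$. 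The alternative case $y'\sim v$ is treated analogously: combined with $v'\sim v,y'$ and $u\in I(x,y')$, it produces the same forbidden configurations and so cannot occur, reducing us to the previous case.

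The main obstacle will be the forbidden-subgraph extraction in the final step: one has to verify that the six vertices isolated really do sit in the claimed adjacency pattern, and that no extra edges or common neighbors can exist without creating an induced $K_4^-$. This requires careful invocations of (TC) and (QC) along the two parallel geodesics produced by the induction, together with the already-proved fact that maximal cliques are gated (equivalent to the absence of induced $K_4^-$), to pin down the intermediate vertices uniquely. Once $d(x,y)=k$ is established, the equalities $d(u,y)=k-1=d(x,v)$ follow from $y\in I(u,v)$, $x\in I(u,v)$, and $x\sim u$, $y\sim v$, completing the verification of (A5).
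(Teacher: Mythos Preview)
Your inductive framework is the same as the paper's, and your base case and the production of the candidate $y$ via thickness of $I(y',v)$ are correct. A few minor points first: the ``parallel transport'' producing $x''$ is never used and can be dropped; the choice ``any $v'\sim v$ in $I(u,v)$'' does not guarantee $d(x,v')=k-2$, you should take $v'\in I(x,v)$ (then $v'\in I(u,v)$ follows automatically); and the case $y'\sim v$ is vacuous, since $y'\in I(u,v')\subseteq I(u,v)$ with $d(u,y')=k-2$ forces $d(y',v)=2$.

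The real gap is the endgame. You assert that if $d(x,y)\le k-1$ then the six vertices $x,u,v',y',v,y$ form, after a short case analysis, an induced $K_4^-$ or an isometric $K_{3,3}^-$. For $k\ge 4$ this cannot work: $d(u,v)=k$ and $d(x,v)=k-1$, so these six points are spread over distances exceeding $3$ and do not all fit inside a $K_{3,3}^-$ (diameter $3$) or a $K_4^-$ (diameter $2$). The forbidden subgraph has to be found \emph{locally near $v$}, and this requires constructing auxiliary vertices with (TC)/(QC), not just reusing $x$ and $u$. Concretely, split on $d(x,y)$. If $d(x,y)=k-1$, apply (TC) to the edge $yv$ from $x$ to get $t\sim y,v$ with $d(x,t)=k-2$; then $d(u,t)=k-1=d(u,y)$, so (TC) from $u$ gives $s\sim y,t$ with $d(u,s)=k-2$, and $\{s,t,y,v\}$ is an induced $K_4^-$. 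If $d(x,y)=k-2$, then $v',y\in I(v,x)$ with $v'\nsim y$, so (QC) from $x$ gives $z\sim v',y$ with $d(x,z)=k-3$; one checks $z\ne y'$ and $z\nsim y'$ (no $K_4^-$), then $z,y'\in I(v',u)$ and (QC) from $u$ gives $s\sim z,y'$ with $d(u,s)=k-3$, and $\{s,z,y',v',y,v\}$ is an isometric $K_{3,3}^-$. This localization is the missing idea; once you insert it, your proof coincides with the paper's.
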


\begin{proof} We proceed by induction on $k=d(u,v)$. The case $k=2$ holds since
the $2$--intervals of $G$ are thick and since $G$ does not contain any
induced $K_4^-$. So, let $k\ge 3$. Let $x$ be any neighbor of $u$ in
$I(u,v)$ and let $v'$ be a neighbor of $v$ in $I(x,v)$. By induction
hypothesis, there exists a neighbor $y'$ of $v'$ in $I(u,v')$ such
that $k-1 = d(u,v')=d(x,y')=d(u,y')+1=d(x,v')+1$. Since $v'\in I(u,v)$
and $y'\in I(v',u)$, necessarily $y'\nsim v$. By the case $k=2$, the
induced $2$--path $(y',v',v)$ is included in a square. Let $y$ denotes
the fourth corner of this square.  Clearly, $y\in I(v,y')\subseteq
I(v,u)$. Thus $d(y,u)=k-1$. To show that $y$ satisfies (A5) it
suffices to show that $v\in I(y,x)$, i.e., that $d(y,x)=k$. By
contradiction, suppose that $d(y,x)<k$. We distinguish two cases.

\begin{case-A5}
$d(y,x)=k-2$.
\end{case-A5}

Then $v',y\in I(v,x)$ and $v\sim v',y$. From the definition of $y$, we have $y\nsim v'$.
By (QC), there
exists $z\sim y,v'$ such that $d(x,z)=k-3$, hence $z \neq y'$ and
$z\nsim y'$ (because $G$ does not contain induced $K^-_4$). Since
$z,y'\in I(v',u)$, $v'\sim y',z$, and $z\nsim y'$,
by (QC) there exists a vertex $s\sim y',z$ such
that $d(u,s)=k-3$ (note that if $k = 3$ then $z = x$ and $s = u$). But
then the vertices $s,z,y',v',y,v$ induce a forbidden isometric
subgraph $K^-_{3,3}$.

\begin{case-A5}
  $d(y,x)=k-1$.
\end{case-A5}

Since $d(x,y)=d(x,v)=k-1$ and $v\sim y$, by (TC) there exists a vertex $t\sim y,v$ such that $d(x,t)=k-2$.
Since $t\in I(v,x)$ and $x\in I(v,u)$, we conclude that $t\in I(v,u)$, thus $d(u,t)=k-1=d(u,y)$. By (TC),
there exists a vertex $s\sim y,t$ such that $d(u,s)=k-2$. But then the vertices $s,t,y,v$ induce a forbidden
$K^-_{4}$. This shows that $G$ satisfies the axiom (A5).
\end{proof}

\begin{lemma} \label{(A34)} $G$ satisfies the axioms (A3$\&$4).
\end{lemma}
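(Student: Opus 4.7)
The plan is to prove this by building up the gated hull $H = \lgate u, v \rgate$ via the iterative procedure from Lemma \ref{p:gate3} and showing, by (possibly transfinite) induction on the construction stage, that every intermediate set $H_\alpha$ has diameter at most $2$. I would start with $H_0 = \{u, v\} \cup I(u, v)$, which is connected, has diameter $2$ (every vertex of $I(u,v)$ is adjacent to both $u$ and $v$), and in which $I(u,v)$ is independent, because two adjacent vertices in $I(u,v)$ together with $u, v$ would induce a forbidden $K_4^-$. Limit stages $H_\alpha = \bigcup_{\beta < \alpha} H_\beta$ preserve diameter $2$ trivially; the work is in the successor step.

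For the successor step, assume $H_\alpha$ has diameter $\le 2$ and that we must add $w$ as a common neighbor of $a, b \in H_\alpha$. For any $c \in H_\alpha$, $d(w, c) \le 1 + d(a, c) \le 3$, and I must rule out $d(w, c) = 3$; in that case $d(a, c) = d(b, c) = 2$ and $a, b \in I(w, c)$. When $a \sim b$, the argument mirrors the proof of Lemma \ref{(A5)}: apply (TC) to $(c, a, b)$ to find $z \sim a, b$ with $d(z, c) = 1$; then $z \nsim w$ would force an induced $K_4^-$ on $\{a, b, z, w\}$, so $z \sim w$ and $d(w, c) \le 2$, a contradiction.

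The main obstacle is the case $a \nsim b$. Here I would first apply (QC) at $c$ to the pair $a, b \in N(w) \cap I(w, c)$ to obtain $t \sim a, b, c$ with $w \nsim t$ (else $d(w, c) \le 2$), so that $\{a, b, w, t\}$ is an induced square. Next, apply (A5) (Lemma \ref{(A5)}) to the non-adjacent pair $(c, w)$ at distance $3$ with the neighbor $t \in N(c) \cap I(c, w)$ to produce $y \sim w$ satisfying $d(y, c) = 2$ and $d(y, t) = 3$; a short distance check forces $y \ne a, b$ and $y \nsim a, b$. Finally, apply (QC) to the non-adjacent pair $(y, t)$ (at distance $3$) with the neighbors $a, b \in N(t)$ of $t$ (having $d(a, y) = d(b, y) = 2$ and $a \nsim b$), yielding $r \sim a, b, y$.

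To complete the contradiction, the verification is that $V_6 := \{w, t, r\} \cup \{a, b, y\}$ induces an \emph{isometric} $K_{3,3}^-$, contradicting the forbidden-subgraph hypothesis. Each side is independent: $a \nsim b$ by assumption and $a, b \nsim y$ because $d(a, y) = d(b, y) = 2$; $w \nsim t$ by construction, while $w \nsim r$ and $t \nsim r$ hold because otherwise $\{a, b, r, w\}$ or $\{a, b, r, t\}$ would induce a $K_4^-$ with missing edge $ab$. The eight cross pairs are all edges except $ty$, giving $K_{3,3}^-$; moreover every same-side pair of vertices is at distance exactly $2$ in $G$ (non-adjacent with common neighbor via $w$ or via $a$), while $d_G(t, y) = 3$ by construction, so the distances in $G$ agree with those in $K_{3,3}^-$. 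The hard technical point is orchestrating the double application of (QC) through the auxiliary vertex $y$ produced by (A5), which is precisely what supplies the third left-side vertex $r$ needed to close out the isometric $K_{3,3}^-$; the $K_4^-$ exclusion then does the rest of the work automatically by ruling out the two would-be intra-side edges $wr$ and $tr$.
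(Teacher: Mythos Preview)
There is a genuine gap in the case $a \nsim b$. After producing $y$ via (A5) and then applying (QC) at $y$ to the pair $a, b \in N(t)$, the vertex $r$ you obtain is only guaranteed to satisfy $r \sim a, b, y$; but $w$ itself already has $w \sim a, b, y$, so (QC) may simply return $r = w$. In that event your six vertices collapse to five, and no $K_{3,3}^-$ is produced --- indeed, the five vertices $\{w, t, a, b, y\}$ (together with $c$) carry no forbidden induced subgraph. Nothing in your setup forces $r \neq w$.

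The paper circumvents this by choosing the obstruction vertex $c$ (called $y$ there) with \emph{minimal} index $\beta$ in the GATED-HULL construction. By minimality, the two vertices $y', y''$ that triggered the addition of $c$ at stage $\beta$ have smaller index, hence lie at distance $\le 2$ from the new vertex $w$; a short argument with (TC) and $K_4^-$--freeness then forces $y', y'' \sim a, b$, and $\{w, y', y''\} \cup \{a, b, c\}$ is directly an isometric $K_{3,3}^-$. The residual case $c \in \{u, v\}$ (where no parents $y', y''$ exist) is handled separately and is the one place where thickness of $G$ is used in earnest. Your uniform approach via (A5) is an attractive shortcut, but to complete it you would have to manufacture a second common neighbor of $a, b, y$ distinct from $w$, and this is precisely where the paper's minimality trick (plus thickness in the base case) does work that your argument omits.
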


\begin{proof} We will prove that in $G$ the gated hull $\lgate u,v\rgate$
of two vertices $u,v$ at distance $2$ has diameter $2$. We construct
$\lgate u,v\rgate$ iteratively (using the transfinite version of the procedure GATED-HULL; see Subsection~\ref{s:wmgra})
starting with the pair $\{ u,v\}$ and $H_0:=G(\{ u,v\})$.
Then for any ordinal $\alpha$ we define the subgraphs $H_{<\alpha}$ and $H_{\alpha}$
as in the transfinite version of the algorithm GATED-HULL. Let $v_{\alpha}$ be the vertex
$v$ which is added to $H_{<\alpha}$ to obtain $H_{\alpha}$.
Finally, let $K$ be the subgraph defined  in the procedure GATED-HULL.
Even if the input graph $H_0$ is not connected,
after the first iteration, the subgraph $H_1$ will be a 2-path of $G$ induced by $u$, $v$ and a common neighbor of $u$ and $v$.
Therefore by Lemma \ref{p:gate3}, $K$ is the gated hull $\lgate u,v\rgate$ of $u,v$.

Suppose  by way of contradiction  that the
diameter of $K$ is larger than 2. Let $\alpha$ be the first iteration after
which the subgraph $H_{\alpha}$ contains two vertices $x,y$ at distance 3 in
$G$. Such $\alpha$ necessarily exists because each $H_{\alpha}$ is connected: as noted above, $H_1$ is connected
and at each step $\alpha$ we add to $H_{<\alpha}$
a vertex having two neighbors in $H_{<\alpha}$. Let $d(x,y)=3$, where $x=v_{\alpha}$
and $y=v_{\beta}\in V(H_{<\alpha})$. Additionally, suppose that among all such
$y=v_{\beta}$ with $d(x,y)=3$ we selected the one with the minimal index
$\beta$. Suppose that $x=v_{\alpha}$ was added at step $\alpha$ because $x$ belongs to
a $2$--path $(a,x,b)$ with $a,b\in H_{<\alpha}, a\ne b$. Analogously, either
$y\in \{ u,v\}$ or we can suppose that $y=v_{\beta}$ was added at step $\beta$
because $y$ belongs to a $2$--path $(y',y,y'')$ with $y',y''\in H_{<\beta}$,
$y'\ne y''$.
From the choice of the pair $x,y$ we
conclude that $d(y,a)=d(y,b)=2$ and $1\le
d(a,y'),d(a,y''),d(b,y'),d(b,y'')\le 2$. If $a\sim b$, then by
(TC) there exists a vertex $c\sim y,a,b$ and we will
obtain a forbidden $K_4^-$ induced by $c,a,b,x$. Thus $a\nsim b$.

\begin{case-A34}
$y \notin \{u,v\}$, i.e., $y',y''$ are defined.
\end{case-A34}

The choice of the pair $x,y$ implies that $d(x,y')=d(x,y'')=2$. If $y'\sim y''$, then by (TC) there exists a vertex $c\sim x,y',y''$ and we obtain a forbidden $K^-_4$  induced by $c,y,y',y''$. Thus $y'\nsim y''$. Now, we assert that each of the vertices $y',y''$ is adjacent to each of the vertices $a,b$. Suppose, by way of contradiction, that $y'\nsim a$. Then $d(y',a)=2$. Since $d(y',x)=2$, by (TC) there exists $z\sim y',a,x$. Since $d(x,y)=3$ and $x\sim z$, necessarily $z\nsim y$. Thus $d(y,z)=2=d(y,a)$. By (TC) there exists $s\sim y,z,a$. Since necessarily $s\nsim x$, the vertices $s,z,a,x$ induce a forbidden $K^-_4$. This contradiction shows that $y'\sim a,b$ and $y''\sim a,b$. But then  $y,y',y'',a,b,x$ induce a forbidden isometric $K^-_{3,3}$.

\begin{case-A34}
$y\in \{ u,v\}$, say $y=v$.
\end{case-A34}

Since $d(v,x)=3$ and $d(v,a)=d(v,b)=2$, by (QC)
there exists at least one vertex $s\sim a,b,v$. Suppose there exists
another vertex $t\sim a,b,v$. If $s \sim t$, then the
vertices $v,s,t,a$ induce a $K_4^-$. If $s \nsim t$, then the vertices
$v,s,t,a,b,x$ induce an isometric $K^-_{3,3}$, a contradiction. Thus,
the vertices $v,a,b$ have a unique common neighbor $s$. By thickness
of $G$, there exist two (necessarily distinct and distinct from $s$) vertices $a',b'$ such
that $a'\sim a,v$ and $b'\sim b,v$. If $a'\sim b'$,  then by (TC)  there exists a vertex $z\sim a',b',x$ and we obtain a
forbidden $K^-_4$ induced by $v,a',b',z$. Thus $a'\nsim b'$. Notice
also that $s\nsim a',b'$, as otherwise $a,a',v,s$ (or $b,b',s,v$) would induce a forbidden $K_4^-$. We distinguish two subcases:

\begin{subcase-A34-2}
There exists $w\in I(u,v)\setminus \{ u,v,s\}$ such that $d(w,x)\le 2$.
\end{subcase-A34-2}

Clearly, $d(w,x) \leq 1$ is impossible because $d(x,v) = 3.$ Since $s,w$ are neighbors of $v$ in $I(v,x)$, necessarily $w\nsim s$,
otherwise by (TC)  there exists $t\sim w,s,x$ and we
get a $K^-_4$ induced by $v,w,s,t$. Since $s$ is the
unique common neighbor of $a,b$ and $v$, we can assume that $w \nsim
b$. By the choice of the pair $x,y=v$, we have $d(w,b)=d(v,b)=2$.  Thus, by
(TC)  there exists a vertex $z\sim v,w,b$. Since $d(v,x)=3$ and $v\sim z$, necessarily $d(z,x)=d(w,x)=2$. By
(TC)  there exists $t\sim w,z,x$ and the vertices
$t,w,z,v$ induce a forbidden $K^-_4$.

\begin{subcase-A34-2}
  For each $w\in I(u,v)\setminus \{ u,v,s\}$  we have $d(w,x)=3$.
\end{subcase-A34-2}
Let $W=I(u,v)\setminus \{ u,v,s\}$. Thickness implies that
$W\ne\emptyset$. By our assumptions, for any vertex $w\in W$ we have
$d(w,a)=d(w,b)=2$.  Since $d(a,v)=d(b,v)=2$, we can apply (TC)  to the triples $a,v,w$ and $b,v,w$ and deduce that there
exist $p(w)\sim a,v,w$ and $q(w)\sim b,v,w$. If for some $w\in W$ we
have $p(w)\ne s$ (respectively, $q(w) \neq s$), then since $p(w)=a'\in I(a,v)$
(respectively, $q(w)=b'\in I(b,v)$), by what was shown before, $p(w)$ and
$q(w)$ are distinct non-adjacent vertices. Then $p(w),v,w,q(w)$ induce
a forbidden $K^-_4$.  Thus $p(w)=q(w)=s$ for every vertex $w\in W$. If
$s\in I(u,v)$, then $s\sim u$ and the vertices $u,s,w,v$
induce a $K^-_4$. If $s\notin I(u,v)$, then $W=I(u,v)\setminus \{ u,v\}$, and by thickness, the set $W$
contains at least two different and non-adjacent vertices $w',w''$.
Since $s\sim w',w''$ and $s\sim v$, the vertices $w',s,v,w''$ induce a
$K^-_4$.
\end{proof}

From Lemmata \ref{(A5)} and \ref{(A34)} we conclude that $G$ is a dual polar graph, completing the proof of Theorem \ref{th:dual-polar-graph}. \hfill $\square$

\begin{remark} Recall that a {\it quad} is a convex subspace  (i.e., a gated subgraph) of diameter $2$. Lemma \ref{(A34)} can be compared with the so-called Yanushka's lemma (see \cite{Sh}*{Theorem 8.2.4}) from \cite{ShuYa} about the existence
of quads in near polygons with thick lines. Our result does not impose any requirement on the size of lines (maximal
cliques)  but requires the quadrangle condition. Both proofs are different but both results assert that each pair of vertices at
distance $2$ is contained in a gated subspace of diameter $2$.
\end{remark}

\section{Proof of Theorems \ref{t:lotoglo_dp2} and \ref{t:lotoglo_dp3}}\label{subsec:proof_of_lotoglo_dp23}
To prove Theorem \ref{t:lotoglo_dp3}, note that  by the second assertion of Theorem~\ref{t:lotoglo_dp2}, $\tG$ is a finite dual polar graph
and  the graph $G$ is a quotient
of $\tG$ by the action of $\pi_1(X \trsq)$. By Theorem~\ref{t:lotoglo_G}, the minimal displacement
for this action is at least $7$.

Next, we will prove Theorem \ref{t:lotoglo_dp2}. We start with the proof of the first assertion. Let $G$ be a locally dual polar graph. Let $\tG$ be
the $1$--skeleton of the universal cover $\tX \trsq(G)$ of the triangle-square complex $X \trsq(G)$ of $G$.
By Theorem~\ref{t:lotoglo2}, $\tG$ is weakly modular. By Lemma \ref{covering_map}, $\tG$ is a thick graph. By Lemma~\ref{l:noincov}, $\tG$ does
not contain induced $K_4^-$. By Lemma~\ref{l:noiscov},
it does not contain isometric $K_{3,3}^-$. Therefore, by Theorem~\ref{t:lotoglo_dp},  $\tG$ is a  dual polar graph.

To prove the second assertion of Theorem \ref{t:lotoglo_dp2}, suppose that $G$ is a locally finite locally dual polar graph. Then $\tG$ is locally finite as well.
Therefore,  to show that $\tG$ is finite it suffices to prove that any locally finite dual polar graph  $H$ is finite.

Let $\mathcal{S}$ be the set of all maximal cliques of a dual polar graph $H$. Since $H$ is $K^-_4$--free,
$|X \cap Y| \leq 1$ for all $X,Y \in \mathcal{S}$. For a vertex $x$, let $\mathcal{S}(x)$
denote the set of all maximal cliques of $H$ containing $x$.
For two vertices $x,y$ of $H$, let $\mathcal{S}(x,y)$ denote
the set  of cliques $C$ of $\mathcal{S}(x)$ meeting $I(x,y) \setminus \{x\}$. Note that $\mathcal{S}(x,x)=\emptyset$.
A {\em special subgeometry} $X$ is a set of vertices represented as
\begin{equation}
X = \{  z\in V(H): {\mathcal S}(x,z) \subseteq {\mathcal S}(x,y) \}
\end{equation}
for some vertices $x,y$ of $H$. In the case when the diameter of $H$ is finite,
the original polar space is recovered from $H$ as follows:

\begin{Lem}[{\cite{Ca}*{Lemma 3}}]\label{lem:special_subgeometries}
Suppose that the diameter of $H$ is finite.
Then the set of all special subgeometries, ordered by
reverse inclusion, forms the subspace poset of
a (generalized) polar space.
\end{Lem}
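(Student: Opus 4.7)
The plan is to identify special subgeometries with gated subgraphs of $H$, then to equip the resulting poset (under reverse inclusion) with the structure of the subspace poset of a polar space $\Pi$, using Lemma~\ref{lem:polar=>modular_semilattice} as a guide.

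\medskip

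\emph{Step 1 (special subgeometries are gated subgraphs).} The crux is to show that for every pair $x,y \in V(H)$ the special subgeometry $X_{x,y} = \{z \in V(H) : \mathcal{S}(x,z) \subseteq \mathcal{S}(x,y)\}$ coincides with the gated hull $\lgate x,y\rgate$. For $\lgate x,y\rgate \subseteq X_{x,y}$: take $z \in \lgate x,y\rgate$ and $C \in \mathcal{S}(x,z)$; any $w \in C \cap (I(x,z) \setminus \{x\})$ lies in $\lgate x,y\rgate$ by gatedness, and since the gated hull has diameter $d(x,y)$ we get $d(w,y) \le d(x,y)$, so by axiom (A1) the unique nearest point of $C$ to $y$ cannot be $x$ (else $d(w,y) = d(x,y)+1$), giving $C \cap I(x,y) \ne \emptyset$ and $C \in \mathcal{S}(x,y)$. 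For the reverse inclusion, I would induct on $d(x,z)$: for $z \in X_{x,y}$ and a neighbor $w$ of $x$ in $I(x,z)$, the unique maximal clique $C$ containing the edge $xw$ belongs to $\mathcal{S}(x,y)$ by hypothesis, so $C$ meets $I(x,y)$ at some $w'' \ne x$; axiom (A5) (Lemma~\ref{(A5)}) then provides a square inside $\lgate x,y\rgate$ through $w,w''$ that places $w$ in the hull, after which the induction hypothesis (together with the fact that $\lgate x,y\rgate$ is itself a dual polar graph by Lemma~\ref{(A34)}) yields $z \in \lgate x,y\rgate$. This step crucially uses all four dual polar axioms.

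\medskip

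\emph{Step 2 (polar-space structure).} Since gated subgraphs of $H$ are dual polar graphs of finite diameter, the formula $\mathrm{rk}(X) = n - \diam(X)$, with $n = \diam(H)$, defines a grade function on the reverse-inclusion poset $\mathcal{G}$ of gated subgraphs of $H$: the graph $V(H)$ sits at rank $0$, while singletons sit at rank $n$. I declare the points of $\Pi$ to be the rank-$1$ elements of $\mathcal{G}$ (i.e., the diameter-$(n-1)$ gated subgraphs), the lines of $\Pi$ to be the rank-$2$ elements, and incidence by $p \in \ell$ iff $X_\ell \subseteq X_p$. Axiom (Q2$'$) follows from thickness. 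The key axiom (Q1) translates, via Step 1, to the dichotomy: for a point $p$ off a line $\ell$, the set of points of $\ell$ collinear to $p$ is either a single point or all of $\ell$; this is forced by the exclusion of induced $K_4^-$ and isometric $K_{3,3}^-$ together with axiom (A5), which rigidly control the interaction between rank-$1$ and rank-$2$ gated subgraphs.

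\medskip

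\emph{Step 3 (poset isomorphism).} Every subspace of $\Pi$ is the join of its points, and translating via reverse inclusion, every gated subgraph $X$ equals the intersection in $V(H)$ of the diameter-$(n-1)$ gated subgraphs containing it (which follows by iterating Lemma~\ref{lem-weakly-modular_gated_hull} and the GATED-HULL construction). The resulting rank-preserving, inclusion-reversing bijection identifies the subspace poset of $\Pi$ with the special-subgeometry poset of $H$. The main obstacle is Step 1: bridging the purely local, $x$-centered combinatorial definition of $X_{x,y}$ with the global metric object $\lgate x,y\rgate$ requires careful use of all four dual polar axioms together with the finiteness of $\diam(H)$ (to ensure the induction terminates). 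Once Step 1 is in hand, Steps 2 and 3 are essentially a dictionary between gated subgraphs of dual polar graphs and subspaces of polar spaces, underwritten by Lemma~\ref{lem:polar=>modular_semilattice}.
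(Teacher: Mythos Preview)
The paper does not prove this lemma; it is quoted verbatim from Cameron's paper \cite{Ca} and used as a black box. So there is no ``paper's own proof'' to compare against. That said, the identification in your Step~1 --- that $X_{x,y}=\lgate x,y\rgate$ --- \emph{is} proved in the paper as Lemma~\ref{l:gated_hull_dpg2} (together with Lemma~\ref{l:gated_hull_dpg1}), and here there is a genuine gap in your argument.

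In your proof of $\lgate x,y\rgate \subseteq X_{x,y}$ you write ``since the gated hull has diameter $d(x,y)$ we get $d(w,y) \le d(x,y)$''. But the diameter bound on $\lgate x,y\rgate$ is not available at this point: Lemma~\ref{(A34)} only gives it for $d(x,y)=2$, and the general statement is part of the \emph{conclusion} of Lemma~\ref{l:gated_hull_dpg2}, deduced there from the equality $\lgate x,y\rgate = X_{x,y}$ together with Lemma~\ref{l:Cameron}. So your forward inclusion is circular. The paper avoids this by reversing the logic: it shows directly that the set $X_{x,y}$ is gated (closed under common neighbours, via Lemma~\ref{lem-weakly-modular_gated_hull}) using the complemented-modular-lattice structure of $\mathcal{L}(x)$ from Lemma~\ref{l:Cameron}; gatedness of $X_{x,y}$ immediately gives $\lgate x,y\rgate \subseteq X_{x,y}$, and the reverse inclusion follows from $\lgate x,y\rgate = \lgate \mathcal{S}(x,y)\rgate$ (Lemma~\ref{l:gated_hull_dpg1}). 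Your Steps~2--3 (building the polar space and checking (Q1)) are plausible in outline but are only sketched; since the paper simply cites Cameron for the full statement, there is nothing further to compare there.
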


Let $\mathcal{L}(x)$ be the subset of $2^{\mathcal{S}(x)}$
consisting of sets of the form  $\mathcal{S}(x,y)$ for some $y \in V(H)$.
Cameron \cite{Ca} showed that
the poset $\mathcal{L}(x)$ ordered by inclusion forms the subspace poset of a generalized projective
space of possibly infinite rank; his proof does not use (A2).  In particular, we have:

\begin{Lem}\cite{Ca} \label{l:Cameron} The principal ideal of ${\mathcal S}(x,y)$ in $\mathcal{L}(x)$
is a complemented modular lattice of rank $d(x,y)$.
\end{Lem}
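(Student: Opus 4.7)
The plan is to reduce the statement to Cameron's projective-space theorem (which was proved in \cite{Ca} without using the finite-diameter axiom (A2)) combined with Theorem~\ref{thm:Birkhoff}, so that the only genuinely new content is the identification of the rank.

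First, I would recall Cameron's description of $\mathcal{L}(x)$. For every $y$, the set $\mathcal{S}(x,y)$ consists of the maximal cliques through $x$ that ``point toward'' $y$; Cameron's argument (using only (A1), (A3), (A4), and (A5), and therefore valid in our setting) shows that the collection $\mathcal{L}(x)$, ordered by inclusion, is the subspace poset of a generalized projective space $\Pi_x$ whose points are the atoms $\{C\}$ with $C \in \mathcal{S}(x)$ and whose lines correspond to sets of the form $\mathcal{S}(x,z)$ with $d(x,z)=2$. In particular, the principal ideal $\mathcal{L}(x)_{\preceq \mathcal{S}(x,y)} := \{ U \in \mathcal{L}(x) : U \subseteq \mathcal{S}(x,y) \}$ is itself the subspace poset of the subgeometry of $\Pi_x$ induced on the point set $\mathcal{S}(x,y)$, which is again a generalized projective space (this is the general fact that subspaces of a generalized projective space are generalized projective spaces).

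Second, I would invoke Theorem~\ref{thm:Birkhoff}(1): the subspace poset of a generalized projective space of dimension $n-1$ is a complemented modular lattice of rank $n$. Applied to the subgeometry on $\mathcal{S}(x,y)$, this immediately gives that $\mathcal{L}(x)_{\preceq \mathcal{S}(x,y)}$ is a complemented modular lattice. All that remains is to compute its rank.

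The main technical step — and the one I expect to be the real obstacle — is the equality $\mathrm{rk}(\mathcal{S}(x,y)) = d(x,y)$. The upper bound $\mathrm{rk}(\mathcal{S}(x,y)) \ge d(x,y)$ is produced constructively: fix a shortest path $x=x_0, x_1, \ldots, x_d = y$, and consider the sequence
\[
\emptyset \;\subset\; \mathcal{S}(x,x_1) \;\subset\; \mathcal{S}(x,x_2) \;\subset\; \cdots \;\subset\; \mathcal{S}(x,x_d) = \mathcal{S}(x,y).
\]
The inclusions $\mathcal{S}(x,x_{i-1}) \subseteq \mathcal{S}(x,x_i)$ follow from $I(x,x_{i-1}) \subseteq I(x,x_i)$; strictness and the fact that each step is a cover in $\mathcal{L}(x)_{\preceq \mathcal{S}(x,y)}$ are handled using (A5) (which produces, along a geodesic from $x$ to $x_i$, a neighbor of $x$ in $I(x,x_i)\setminus I(x,x_{i-1})$, giving a new maximal clique through $x$) together with the $K_4^-$-freeness of $G$ (which guarantees that the new clique is a single atom in $\mathcal{L}(x)$ rather than a higher-rank subspace). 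For the upper bound $\mathrm{rk}(\mathcal{S}(x,y)) \le d(x,y)$, I would argue that any strictly ascending chain $\emptyset = \mathcal{S}(x,z_0) \subset \mathcal{S}(x,z_1) \subset \cdots \subset \mathcal{S}(x,z_r) = \mathcal{S}(x,y)$ in $\mathcal{L}(x)_{\preceq \mathcal{S}(x,y)}$ can be translated, via the projective-space structure, into a path from $x$ to $y$ of length $r$ in $G$, which by the definition of $d(x,y)$ forces $r \le d(x,y)$. Since the ranks of all maximal chains agree in a modular lattice, this completes the identification $\mathrm{rk}(\mathcal{S}(x,y)) = d(x,y)$ and finishes the proof.
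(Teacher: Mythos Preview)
The paper does not give its own proof of this lemma: it is quoted directly from Cameron~\cite{Ca}, preceded only by the sentence that Cameron showed $\mathcal{L}(x)$ is the subspace poset of a generalized projective space and that his proof does not use (A2). So there is no argument in the paper to compare yours against; your plan is effectively a reconstruction of Cameron's proof. The first two steps --- reducing to Cameron's projective-space description of $\mathcal{L}(x)$ and then invoking Theorem~\ref{thm:Birkhoff}(1) to get the complemented modular lattice structure on the principal ideal --- are exactly right and are all the paper itself relies on.

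The gap is in the rank computation, which you correctly flag as the real obstacle but do not actually close. For the lower bound, producing the chain $\emptyset\subset\mathcal{S}(x,x_1)\subset\cdots\subset\mathcal{S}(x,x_d)$ is fine, and strictness follows from (A5) as you say. What does \emph{not} follow from your justification is that each step is a \emph{cover}: ``$K_4^-$-freeness guarantees that the new clique is a single atom'' only tells you that the new clique contributes one atom, not that $\mathcal{S}(x,x_i)\setminus\mathcal{S}(x,x_{i-1})$ contains a \emph{unique} atom. A priori there could be two neighbours $z,z'$ of $x$ in $I(x,x_i)\setminus I(x,x_{i-1})$ lying in distinct maximal cliques, and then $\mathcal{S}(x,x_{i-1})\vee\{C_z\}$ could sit strictly between $\mathcal{S}(x,x_{i-1})$ and $\mathcal{S}(x,x_i)$. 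Ruling this out is precisely where Cameron uses the full force of the axioms. Your upper-bound sketch (``translate a maximal chain in $\mathcal{L}(x)$ into a path of the same length in $G$'') is the right shape, but as written it is an assertion rather than an argument: the representatives $z_i$ with $\mathcal{S}(x,z_i)=U_i$ are far from unique and need not lie on a common geodesic, so one has to either choose them carefully by induction or argue via independent atoms and repeated use of (QC)/(A5). Either route works, but neither is as short as your paragraph suggests.
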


The gated hull  $\lgate\bigcup_{C \in {\mathcal S}(x,y)} C \rgate$ will be simply denoted by $\lgate{\mathcal S}(x,y)\rgate$.

\begin{Lem} \label{l:gated_hull_dpg1} For any two vertices $x,y$ of a dual polar graph $H$,
$\lgate x,y \rgate=\lgate \mathcal{S}(x,y)\rgate$.
\end{Lem}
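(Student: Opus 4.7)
The plan is to prove the two inclusions separately. The main technical tool throughout is Lemma~\ref{lem-weakly-modular_gated_hull}(ii) combined with the constructive description of the gated hull from Lemma~\ref{p:gate3}: in the weakly modular graph $H$, the gated hull of a connected set $S$ is obtained by iteratively adjoining every common neighbor of two vertices already present.

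For the inclusion $\lgate \mathcal{S}(x,y)\rgate \subseteq \lgate x,y\rgate$, I would show that every maximal clique $C \in \mathcal{S}(x,y)$ is already contained in $\lgate x,y\rgate$. By definition, $C$ contains $x$ and meets $I(x,y)\setminus\{x\}$ at some vertex $w$. Since gated sets are convex, $I(x,y) \subseteq \lgate x,y\rgate$, so both $x$ and $w$ lie in $\lgate x,y\rgate$. Every other vertex $u \in C$ is a common neighbor of $x$ and $w$ (all vertices of a clique are pairwise adjacent), hence $u \in \lgate x,y\rgate$ by the closure property. Passing to gated hulls yields the inclusion.

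For the reverse inclusion it suffices to establish that $I(x,y) \subseteq \lgate \mathcal{S}(x,y)\rgate$, which I plan to prove by induction on $k = d(x,v)$ for $v \in I(x,y)$. The case $k=0$ is trivial, and for $k=1$ any maximal clique containing the edge $xv$ lies in $\mathcal{S}(x,y)$ and contains $v$. For $k \geq 2$, choose a shortest $(x,v)$-path and let $x'$ be its next-to-last vertex, so that $d(x',v)=2$ and $x' \in I(x,v) \subseteq I(x,y)$. Thickness of $H$ (part of Theorem~\ref{th:dual-polar-graph}) forces the $2$-interval $I(x',v)$ to contain a square $x'uvu'$ with $u \nsim u'$. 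Since both $u$ and $u'$ lie in $I(x',v) \subseteq I(x,v) \subseteq I(x,y)$ at distance $k-1$ from $x$, the induction hypothesis places $u,u' \in \lgate \mathcal{S}(x,y)\rgate$; as $v$ is their common neighbor, the closure property puts $v$ there too. Applied with $v=y$ this gives $y \in \lgate \mathcal{S}(x,y)\rgate$, hence $\lgate x,y\rgate \subseteq \lgate \mathcal{S}(x,y)\rgate$.

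The only step that uses anything specific to dual polar graphs beyond generic gated-hull calculus is the appeal to thickness producing two non-adjacent neighbors of $v$ inside $I(x,y)$; this is supplied by Theorem~\ref{th:dual-polar-graph}, so I expect no serious obstacle beyond bookkeeping the distances in order to verify that $u,u' \in I(x,y)$ and that they really have distance $k-1$ from $x$.
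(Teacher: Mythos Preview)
Your argument is correct and follows essentially the same strategy as the paper: both directions rest on the closure characterization of gated hulls (Lemma~\ref{lem-weakly-modular_gated_hull}) together with thickness to produce two non-adjacent neighbors inside the interval. The only organizational difference is that the paper inducts on $d(x,y)$ and invokes the inclusion $\mathcal{S}(x,y')\subseteq \mathcal{S}(x,y)$ for $y'\in I(x,y)$ adjacent to $y$, whereas you keep $x,y$ fixed and induct on $d(x,v)$ for $v\in I(x,y)$, which sidesteps that inclusion; neither variant buys anything substantial over the other. One terminological slip: ``next-to-last'' normally means the vertex adjacent to $v$, but you clearly intend the vertex at distance~$2$ from $v$ on the chosen geodesic---with that reading the distance bookkeeping you outline goes through verbatim.
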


\begin{proof} First observe that all cliques  of $\mathcal{S}(x,y)$ must belong to $\lgate x,y \rgate$, thus
$\lgate \mathcal{S}(x,y)\rgate\subseteq \lgate x,y\rgate$. We prove the converse inclusion by induction
on $k:=d(x,y)$. Since $H$ is a thick graph satisfying the quadrangle condition, $y$ has two non-adjacent neighbors $y',y''\in I(x,y)$
at distance $k-1$ from $x$. Since $y\in I(y',y'')$, we conclude that $y\in \lgate x,y',y''\rgate$. By the induction assumption,
$\lgate x,y' \rgate=\lgate \mathcal{S}(x,y')\rgate$ and  $\lgate x,y'' \rgate=\lgate \mathcal{S}(x,y'')\rgate$. Since
$\mathcal{S}(x,y')\cup \mathcal{S}(x,y'')\subseteq \mathcal{S}(x,y')$, we conclude that  $y\in \lgate \mathcal{S}(x,y)\rgate$
and we are done.
\end{proof}

\begin{Lem} \label{l:gated_hull_dpg2} For any two vertices $x,y$ of a dual polar graph $H$,
$$\lgate x,y \rgate = \{  z \in V(H): \mathcal{S}(x,z) \subseteq  \mathcal{S}(x,y) \}.$$
In particular,  $\lgate x,y \rgate$ induces a dual polar graph of diameter $d(x,y)$.
\end{Lem}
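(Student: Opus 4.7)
The plan is to show $\lgate x,y\rgate = T$, where $T := \{z \in V(H): \mathcal{S}(x,z) \subseteq \mathcal{S}(x,y)\}$, and then derive the ``in particular'' claim. By Lemma~\ref{l:gated_hull_dpg1} we have $\lgate x,y\rgate = \lgate \mathcal{S}(x,y)\rgate$, so it suffices to establish $\lgate \mathcal{S}(x,y)\rgate = T$. The inclusion $T \subseteq \lgate x,y\rgate$ is the easier direction: I argue by induction on $d(x,z)$. For $d(x,z) \le 1$ the unique maximal clique $C$ through the edge $xz$ (uniqueness by (A1)) lies in $\mathcal{S}(x,z) \subseteq \mathcal{S}(x,y)$, hence $C \subseteq \lgate \mathcal{S}(x,y)\rgate$. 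For $d(x,z) \geq 2$, thickness of $H$ guarantees two distinct neighbors $z_1, z_2$ of $z$ in $I(x,z)$; since $I(x,z_i) \subseteq I(x,z)$ we have $\mathcal{S}(x,z_i) \subseteq \mathcal{S}(x,z) \subseteq \mathcal{S}(x,y)$, so $z_i \in \lgate x,y\rgate$ by induction, and closure of gated sets under common neighbors (Lemma~\ref{lem-weakly-modular_gated_hull}(ii)) forces $z \in \lgate x,y\rgate$.

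For the reverse inclusion $\lgate x,y\rgate \subseteq T$, the plan is a gate argument, conditional on the diameter bound $\diam(\lgate x,y\rgate) \leq d(x,y)$. Given $z \in \lgate x,y\rgate$ and $C \in \mathcal{S}(x,z)$: any witness $x' \in C \cap I(x,z) \setminus \{x\}$ lies in $I(x,z) \subseteq \lgate x,y\rgate$ by convexity, and then $C \subseteq \lgate x,y\rgate$ by closure under common neighbors. The clique $C$ is gated by (A1); let $g$ be the gate of $y$ in $C$. If $g \ne x$, then $g \in C \cap I(x,y) \setminus \{x\}$, so $C \in \mathcal{S}(x,y)$. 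Otherwise $g = x$, in which case every $c \in C \setminus \{x\}$ satisfies $d(c,y) = d(x,y) + 1$, contradicting the diameter bound applied to $c \in \lgate x,y\rgate$.

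The main obstacle is establishing this diameter bound. I would prove it by induction on $k = d(x,y)$ (base cases $k \le 1$ being direct), together with the iterative construction of the gated hull (Lemma~\ref{p:gate3}), maintaining the invariant that each intermediate subgraph $H_\alpha$ is contained in $B_k(x) \cap B_k(y)$. The base $I(x,y)$ sits in this intersection by the triangle inequality. When a common neighbor $w$ of $u,v \in H_{<\alpha}$ is added, the only dangerous scenario is $d(x,u) = d(x,v) = k$ and $d(x,w) = k+1$; then $u,v \in I(x,w)$ and by (TC) (if $u \sim v$) or (QC) (if $u \nsim v$) there is a second common neighbor $w'$ of $u,v$ with $d(x,w') = k-1$. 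If $w = w'$ or $w \sim w'$, then $d(x,w) \le k$, a contradiction. Otherwise, for $u \sim v$ the quadruple $\{u,v,w,w'\}$ would induce a forbidden $K_4^-$; for $u \nsim v$ and $w \nsim w'$, the set $\{u,v,w,w'\}$ forms a square, and combining it with $x$ and a further vertex produced by axiom (A5) applied to the pair $(x,w)$ exhibits an isometric $K_{3,3}^-$, again a contradiction. Symmetry and Cameron's Lemma~\ref{l:Cameron} applied at an arbitrary vertex of $\lgate x,y\rgate$ (once the set equality is available) extend the bound to pairwise distances of any two vertices in the gated hull.

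The ``in particular'' claim then follows at once: $\lgate x,y\rgate$ inherits thickness from $H$ via convexity, weak modularity via Lemma~\ref{lem-weakly-modular_gated_hull}(ii), and freeness from induced $K_4^-$ and isometric $K_{3,3}^-$ since it is an isometric subgraph. By Theorem~\ref{th:dual-polar-graph} it is itself a dual polar graph, and its diameter equals $d(x,y)$ by the upper bound just established together with the fact that the pair $x,y$ realizes this distance inside $\lgate x,y\rgate$.
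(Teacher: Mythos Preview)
Your inclusion $T \subseteq \lgate x,y\rgate$ is correct (though the paper dispatches it in one line: $\mathcal{S}(x,z)\subseteq\mathcal{S}(x,y)$ gives $\lgate x,z\rgate=\lgate\mathcal{S}(x,z)\rgate\subseteq\lgate\mathcal{S}(x,y)\rgate=\lgate x,y\rgate$ by Lemma~\ref{l:gated_hull_dpg1}).

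The genuine gap is your $K_{3,3}^-$ step in the proof of the invariant $H_\alpha\subseteq B_k(x)$. The configuration you isolate---a square $uwvw'$ with $d(x,u)=d(x,v)=k$, $d(x,w)=k+1$, $d(x,w')=k-1$---is \emph{not} forbidden in dual polar graphs: in the $3$--cube take $x=000$, $u=110$, $v=101$, $w=111$, $w'=100$. For $k\ge 2$ the vertex $x$ is at distance $k-1\ge 1$ from $w'$ and at distance $\ge k$ from the other three, so it is adjacent to none of them; ``combining with $x$'' therefore cannot produce a local $K_{3,3}^-$. Axiom (A5) applied to $(x,w)$ yields a neighbor of $w$ at distance $k$ from $x$, but with no control over its adjacencies to $u,v,w'$ (indeed it may coincide with $u$ or $v$). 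Your induction on $k$ is announced but never invoked in this step. To exclude $d(x,w)=k+1$ one has to use something specific about $u,v$ belonging to the hull under construction---not merely their distance to $x$---and your argument does not.

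The paper avoids this detour entirely: instead of proving a diameter bound first, it shows directly that $T$ is gated (via Lemma~\ref{lem-weakly-modular_gated_hull}), whence $\lgate x,y\rgate\subseteq T$ is immediate. For the closure check, the hard case is a common neighbor $w$ of $u,v\in T$ with $d(x,w)=d(x,u)+1=d(x,v)+1$; here the paper invokes Cameron's Lemma~\ref{l:Cameron}: since $\mathcal{L}(x)$ has the structure of (the subspace lattice of) a projective space with rank equal to $d(x,\cdot)$, the element $\mathcal{S}(x,w)$, which covers both $\mathcal{S}(x,u)$ and $\mathcal{S}(x,v)$, must be their join, hence lies below the common upper bound $\mathcal{S}(x,y)$. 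This lattice-theoretic observation is the missing idea. The bound $d(x,z)\le d(x,y)$ for $z\in\lgate x,y\rgate$ then follows \emph{a posteriori} from the set equality plus the rank statement, and the full diameter is obtained by a separate quasi-median/(A5) argument (reducing an arbitrary diametral pair to one containing $x$), not by ``Cameron's lemma at an arbitrary vertex'' as you suggest.
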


\begin{proof} If $\mathcal{S}(x,z) \subseteq  \mathcal{S}(x,y)$, then from Lemma \ref{l:gated_hull_dpg1} we infer that
$\lgate x,z\rgate=\lgate \mathcal{S}(x,z)\rgate\subseteq \lgate \mathcal{S}(x,y)\rgate=\lgate x,y\rgate$, whence
$z\in \lgate x,y\rgate$.

To establish the converse inclusion, it suffices to show that the set $Z:=\{ z \in V(H): \mathcal{S}(x,z) \subseteq  \mathcal{S}(x,y) \}$
is gated. In view of Lemma \ref{lem-weakly-modular_gated_hull} it suffices to show that if $u,v\in Z$ and $w$ is any common neighbor of $u,v$,  then
$w$ belongs to $Z$.  This is obviously so if $w \in I(x,u)\cup I(x,v)$. Thus we can suppose that $d(x,w)\ge \max \{ d(x,u),d(x,v)\}$.

First, suppose that $d(x,w) = d(x,u)$. We assert that in this case ${\mathcal S} (x,w)\subseteq {\mathcal S}(x,u)$, yielding $w\in Z$.
Indeed, take any neighbor $a$ of $x$ in $I(x,w)$. Then either $a\in I(x,u)$ or $d(u,a)=d(u,x)$. In the first case we conclude that the maximal clique of ${\mathcal S}(x,w)$ containing $a$
also belong to ${\mathcal S}(x,u)$. In the second case, by (TC)  for $x,a,u$
there exists a common neighbor $a'$ of $x,a$ with $d(x,u)=d(a',u)-1$.  Since $H$ is $K^-_4$--free, this means that $a$ and $a'$ belong to the same maximal
clique of  $\mathcal{S}(x)$. Consequently,  $\mathcal{S}(x,w)\subseteq \mathcal{S}(x,u)$.

Now, suppose that $d(x,w)-1=d(x,u)=d(x,v)$.  This implies that
$\mathcal{S}(x,w)$ contains $\mathcal{S}(x,u)$ and $\mathcal{S}(x,v)$.
By Lemma \ref{l:Cameron}, in the lattice $\mathcal{L}(x)$ the element
$\mathcal{S}(x,w)$ covers both elements $\mathcal{S}(x,u)$ and
$\mathcal{S}(x,v)$ and the rank of $\mathcal{S}(x,w)$ is $d(x,w) =
d(x,u)+1 = d(x,v)+1$. Since the rank of $\mathcal{S}(x,u)$ and
$\mathcal{S}(x,v)$ is $d(x,u) = d(x,v)$, necessarily,
$\mathcal{S}(x,w)$ is the join of $\mathcal{S}(x,u)$ and
$\mathcal{S}(x,v)$.  Since $\mathcal{S}(x,y)$ is a common upper bound
of $\mathcal{S}(x,u)$ and $\mathcal{S}(x,v)$, $\mathcal{S}(x,y)$ is
also an upper bound of their join $\mathcal{S}(x,w)$, implying
$\mathcal{S}(x,w) \subseteq \mathcal{S}(x,y)$.  This shows that the
set $Z$ is gated, thus establishing that $\lgate x,y\rgate=Z$.

Since each $2$--interval of $\lgate x,y \rgate$ is thick, by Theorem~\ref{th:dual-polar-graph}, $\lgate x,y
\rgate$ induces a dual polar graph. By Lemma~\ref{l:Cameron}, for any
$z \in \lgate x,y \rgate$, since $\mathcal{S}(x,z) \subseteq
\mathcal{S}(x,y)$, we have $d(x,z) \leq d(x,y)$. We claim that this implies
that the diameter of $\lgate x,y \rgate$ is $d(x,y)$. Indeed, among
all diametral pairs $u,v$ of $\lgate x,y \rgate$, consider a diametral
pair $u,v$ such that $d(u,x)$ is minimum.
If $u \neq x$, consider a quasi-median $x',u',v'$ of $x,u,v$. If $u
\neq u'$, consider a neighbor $u''$ of $u$ in $I(u,u') \subseteq
I(u,x)\cap I(u,v)$. By (A5), there exists $v''\in I(u,v) \subseteq
\lgate x, y \rgate$ such that $d(u'',v'') = d(u,v)$, contradicting our
choice of $u$ since $d(u'',x) < d(u,x)$. Assume now that $u =
u'$. Note that by Lemma~\ref{lem-weakly-modular}, $d(x',v) = d(x',v')
+ d(v',v) = d(u',v')+d(v',v) = d(u',v) = d(u,v)$. Thus, by our choice
of $u$, we get that $u = x'$, i.e., $u \in I(x,v)$, contradicting our
choice of $u$. Consequently, for any diametral pair $u,v$ of $\lgate
x,y \rgate$, $d(u,v) \leq d(x,y)$ and we are done.
\end{proof}

Using  these arguments, one can also show:

\begin{Lem}\label{lem:d(x,y)=D}
If $H$ is a dual polar graph of finite diameter $D$, then
$d(x,y) = D$ if and only if $\lgate x,y \rgate = V(H)$.
\end{Lem}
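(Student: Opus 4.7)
The plan is to handle the two directions separately. The implication $\lgate x,y \rgate = V(H) \Rightarrow d(x,y)=D$ will be immediate: by Lemma~\ref{l:gated_hull_dpg2}, $\lgate x,y \rgate$ has diameter $d(x,y)$, so if it equals $V(H)$ then $d(x,y)$ equals the diameter $D$ of $H$. The substance of the proof therefore lies in the converse.

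For the converse, I would argue by contradiction: assume $d(x,y) = D$ and pick some $z \in V(H) \setminus \lgate x,y \rgate$. Let $z'$ be the gate of $z$ in $\lgate x,y \rgate$, so that $d(z,z') \geq 1$ and $d(z,u) = d(z,z') + d(z',u)$ for every $u \in \lgate x,y \rgate$. The whole strategy is to exhibit some $u \in \lgate x,y \rgate$ with $d(z',u) = D$; this will force $d(z,u) \geq 1+D > D$, contradicting the diameter bound on $H$.

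Producing such a $u$ rests on the auxiliary claim that \emph{in every dual polar graph of finite diameter $D$, every vertex admits an opposite}, i.e.\ some other vertex at distance $D$. I expect this to be the main (and essentially only) obstacle. I would prove it by considering the set $U$ of vertices admitting an opposite: it is nonempty by the diameter assumption, and it suffices to show that it is closed under adjacency, whence $U=V(H)$ by connectedness. Concretely, suppose $v \in U$ is witnessed by $w$ with $d(v,w)=D$, and let $v' \sim v$. Since $D$ is the diameter, $d(v',w) \in \{D-1,D\}$; the case $d(v',w)=D$ places $v'$ in $U$ trivially. In the case $d(v',w)=D-1$, we have $v' \in I(v,w)$, and axiom~(A5) applied to the pair $v,w$ and the geodesic neighbor $v'$ will yield some $y \sim w$ in $I(v,w)$ with $d(v',y)=d(v,w)=D$, again placing $v'$ in $U$.

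Finally, I would apply the auxiliary claim to the subgraph $\lgate x,y \rgate$, which by Lemma~\ref{l:gated_hull_dpg2} is itself a dual polar graph of diameter $d(x,y)=D$. The claim produces $u \in \lgate x,y \rgate$ with $d_{\lgate x,y \rgate}(z',u)=D$; since $\lgate x,y \rgate$ is gated, hence isometric, in $H$, we also have $d_H(z',u)=D$, closing the contradiction. No further delicate point should arise.
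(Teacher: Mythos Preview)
Your proof is correct and takes a genuinely different route from the paper's.

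The paper argues the converse by choosing $u \in V(H)\setminus \lgate x,y\rgate$ with $d(x,u)$ minimal, and then showing that $u$ has two distinct neighbors inside $\lgate x,y\rgate$ (using thickness of $2$--intervals when $d(x,u)\geq 2$, and the triangle condition when $u\sim x$); this contradicts minimality via the common-neighbor description of gated hulls in Lemma~\ref{lem-weakly-modular_gated_hull}. Your argument instead uses the gate $z'$ of an outside vertex $z$ and, as the key step, the fact that in any finite-diameter dual polar graph every vertex has an opposite. Your proof of this auxiliary fact from axiom~(A5) is clean and self-contained; the paper obtains exactly this fact later (in the proof of Proposition~\ref{prop:ideal}), but there it is derived from Cameron's result that the rank of $\mathcal{L}(x)$ is independent of $x$ together with the present lemma. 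So your approach trades the paper's direct local argument for a reusable lemma that the paper itself needs downstream, and it avoids the external citation. One small technicality: your (A5) step assumes $D\geq 2$, so the cases $D\leq 1$ should be mentioned as trivial.
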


\begin{proof}
Since the diameter of $\lgate x,y \rgate$ is $d(x,y)$, if $\lgate x,y
\rgate = V(H)$, then $d(x,y) = D$. Conversely, assume that $d(x,y) =
D$ and suppose that $\lgate x,y \rgate \neq V(H)$. Among all vertices
in $V(H) \setminus \lgate x,y\rgate$, consider a vertex $u$ such that
$d(x,u)$ is minimum. If $d(u,x) \geq 2$, since all 2-intervals are
thick, there exist two distinct vertices $u',u'' \sim u$ such that
$d(u',x) = d(u'',x) = d(u,x)-1$. By our choice of $u$, we have $u', u'' \in
\lgate x, y \rgate$ and consequently $u \in \lgate x, y
\rgate$. Assume now that $u \sim x$. Since $d(x,y) = D$, either $
d(u,y) = D-1$, or $d(u,y) = D$. In the first case, $u \in I(x,y)
\subseteq \lgate x,y \rgate$. In the second case, by TC($y$), there
exists $v \sim x,u$ such that $d(v,y) = D-1$; since $v \in I(x,y)$,
$u$ has two distinct neighbors in $I(x,y)$ and thus $u \in \lgate
x,y\rgate$.
\end{proof}

We can now complete the proof of Theorem~\ref{t:lotoglo_dp2}.  Let $H$
be a locally finite dual polar graph and consider a vertex $x$ of
$H$. Then there exists a finite number of different sets of type
${\mathcal S}(x,y)$.  By Lemma \ref{l:gated_hull_dpg1}, $\lgate
x,y\rgate=\lgate {\mathcal S} (x,y)\rgate$, thus the vertex-set of $H$
is covered by a finite collection of gated sets of the form $\lgate
{\mathcal S} (x,y)\rgate$. Since, by Lemma \ref{l:gated_hull_dpg2},
each such set has finite diameter and $H$ is locally finite, each
gated set $\lgate {\mathcal S} (x,y)\rgate$ is finite, yielding that
the graph $H$ is finite as well. This finishes the proof of Theorem
\ref{t:lotoglo_dp2}. \hfill $\square$

\medskip

Finally, we will show that the conditions of our Theorem \ref{t:lotoglo_dp3} are significantly weaker than those of \cite{BroCo}*{Main Theorem (i)}, which we formulate next:

\begin{theorem} \cite{BroCo}*{Main Theorem (i)} Let $\Gamma$ be  a point-line geometry all of whose lines are thick and let $n$ be a natural number, $n\ge 4$. Suppose that $\Gamma$
is a nearly classical $3$--weak near polygon. Suppose, moreover, that $\Gamma$ has finite local rank $n-1$ at some point. Then $\Gamma$ is isomorphic to  a quotient of a dual polar space
by a group action with the minimal displacement at least $8$.  In particular, any finite nearly classical near polygon is a dual polar space.
\end{theorem}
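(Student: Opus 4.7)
The plan is to deduce the Brouwer--Cohen theorem from Theorem~\ref{t:lotoglo_dp3} by showing that the geometric hypotheses on $\Gamma$ translate, at the level of the collinearity graph $G := G(\Gamma)$, into exactly the conditions ``locally finite'' and ``locally dual polar'' appearing there, modulo a careful upgrade of the displacement bound from $7$ to $8$.

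First, I would verify local dual polarity of $G$. Since $\Gamma$ is a near polygon, axiom (A1) (unique nearest point on a line) forbids an induced $K_4^-$, because in a $K_4^-$ the non-adjacent pair plays the role of two equidistant points of a line as viewed from the apex. Thickness of lines combined with (A1) implies that every $2$--interval of $G$ contains a square, so $G$ is thick in the sense of Subsection~\ref{s:bagraphs}. The ``$3$-weak near polygon'' hypothesis should be precisely the local weak modularity axiom of Definition~\ref{d:loc-weak-mod}, i.e.\ the triangle and quadrangle conditions restricted to balls of radius $3$; this is the natural metric reformulation of the local incidence conditions used in \cite{BroCo}. The ``nearly classical'' hypothesis, which asserts that the gated hulls of small diameter sit inside classical polar subspaces, rules out an isometric $K_{3,3}^-$, since in a classical polar space the four points in the ``short'' $2$--interval of a $K_{3,3}^-$ would have to jointly extend to a line, forcing the missing edge. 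Finite local rank $n-1$ at some point, together with the uniform propagation of the local structure in a weak near polygon, then yields local finiteness of $G$.

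With these reductions, $G$ becomes a locally finite locally dual polar graph and Theorem~\ref{t:lotoglo_dp3} provides a finite dual polar graph $\tG$, namely $\tX\trsq(G)^{(1)}$, together with a group action with quotient $G$ and minimal displacement $\geq 7$. The order complex of the resulting subspace poset (via Lemma~\ref{lem:special_subgeometries}) is then a dual polar space of rank $n$, whose collinearity graph is $\tG$, so the quotient statement takes the required form. The hypothesis $n\geq 4$ should enter here to ensure that the reconstructed polar space is nondegenerate and of genuine rank.

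To upgrade the displacement bound from $7$ to $8$ is what I expect to be the main obstacle. The bound of $7$ in Theorem~\ref{t:lotoglo_G} comes from Lemma~\ref{l:no456}, which only kills homotopically nontrivial loops of length $\leq 6$. The plan is to rule out a nontrivial $7$--loop in $G$: lifting such a loop and capping it off with a vertex supplied by (A1) and the nearly classical axiom (applied inside a gated hull of small diameter) should contradict the modular-lattice structure of intervals in a classical polar subspace, as encoded in Lemmata~\ref{l:gated_hull_dpg1}, \ref{l:gated_hull_dpg2} and Cameron's lemma (Lemma~\ref{l:Cameron}). The delicate point is that the paper's general Theorem~\ref{t:lotoglo_dp3} does not require the nearly classical hypothesis, so this extra unit in the displacement genuinely uses the stronger Brouwer--Cohen assumption; pinning down the precise combinatorial configuration of the offending $7$--loop and deriving the contradiction inside a classical dual polar subgeometry is where the work lies. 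A secondary difficulty is making the dictionary between the incidence-geometric terminology of \cite{BroCo} and the metric notions used here airtight --- in particular confirming that ``nearly classical'' is equivalent to the absence of isometric $K_{3,3}^-$ under the finite local rank hypothesis.
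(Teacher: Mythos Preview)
The theorem you are asked to prove is not proved in the paper at all: it is merely \emph{quoted} as Main Theorem~(i) of Brouwer--Cohen \cite{BroCo}. The paper's own contribution in this direction is Proposition~\ref{BroCo}, which shows that the Brouwer--Cohen hypotheses imply that the collinearity graph $G$ is locally dual polar; combined with Theorem~\ref{t:lotoglo_dp3} this yields a quotient of a dual polar graph with displacement $\ge 7$, not $\ge 8$. The paper explicitly presents this as a result with \emph{weaker} hypotheses and a slightly weaker conclusion than \cite{BroCo}, and makes no attempt to recover the displacement bound $8$.

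Your plan therefore contains a genuine gap. First, your dictionary between the incidence-geometric hypotheses and the metric conditions is inaccurate: the ``$3$-weak near polygon'' axiom is \emph{not} local weak modularity but the unique-nearest-point-on-a-line condition for pairs at distance $\le 3$, and in the paper's proof of Proposition~\ref{BroCo} it is this axiom (together with thick lines) that is used to derive LTC and the absence of isometric $K_{3,3}^-$, while the ``nearly classical'' hypothesis is what supplies the local quadrangle condition (via the projective-plane structure of lines and quads on a point in a hex). Second, and more seriously, your proposed upgrade from $7$ to $8$ is entirely speculative: you outline no concrete mechanism by which the nearly classical hypothesis kills a nontrivial $7$-cycle, and the paper contains no such argument. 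Since the paper does not prove the quoted theorem, there is no ``paper's proof'' to compare your upgrade step against --- it simply lies outside the scope of what the paper establishes.
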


According to \cite{BroCo}, a point-line geometry is a {\it 3-weak near polygon}  if for any two points $x,y$ at distance at most $3$ and any line $\ell$ passing via $y$ there is a unique point on $\ell$ nearest to $x$. A ($3$--weak) near polygon is called {\it thick} if each line has at least $3$ points and any two points at distance $2$ have at least $3$ common neighbors. A {\it hex} is a convex subspace of diameter $3$. Finally, a ($3$--weak) near polygon is called {\it nearly classical} \cite{BroCo} if it is thick and for each point $x$ and each hex $H$ containing $x$, the incidence geometry of the lines and quads on $x$ contained in $H$ is a projective plane.

\begin{proposition}  \label{BroCo} If $G$ is the collinearity graph of a nearly classical $3$--weak near polygon $\Gamma$, then $G$ is a locally dual polar graph.
\end{proposition}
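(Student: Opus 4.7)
The proof verifies the four conditions defining a locally dual polar graph: absence of induced $K_4^-$, thickness, absence of isometric $K_{3,3}^-$, and local weak modularity.

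I would first rule out induced $K_4^-$. Suppose $a,b,c,d$ induce a $K_4^-$ in $G$ with $c\not\sim d$. Since $a\sim b$, they lie on a unique line $\ell$. Applying the $3$--weak near polygon axiom to $c$ and $\ell$ (the requirement $d(c,a)\le 3$ is trivial), there is a unique point of $\ell$ at minimum distance from $c$; but $a$ and $b$ both lie on $\ell$ at distance $1$ from $c$, so uniqueness forces $c\in\ell$, and similarly $d\in\ell$. This yields the contradiction $c\sim d$. Thickness of $G$ is then immediate: any two vertices $u,v$ at distance $2$ admit at least three common neighbours by the thickness hypothesis on $\Gamma$, and no two of these can be adjacent (else a $K_4^-$ appears with $u,v$), so each pair of them together with $u,v$ induces a square.

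The remaining conditions LTC, LQC, and the absence of isometric $K_{3,3}^-$ only involve vertices at pairwise distance at most $3$, so each potentially offending configuration is contained in a convex subspace of diameter at most $3$, that is, in a quad or a hex. Here I would invoke the nearly classical hypothesis on each hex $H$: at every vertex $x$ of $H$ the incidence system of lines and quads through $x$ in $H$ forms a projective plane, which is exactly the residual structure of a point of a thick rank-$3$ dual polar space. Combined with the inherited $3$--weak near polygon axiom on $H$ and the $K_4^-$-free property established above, the classical reconstruction of dual polar spaces due to Cameron~\cite{Ca}, building on Tits~\cite{Ti}, identifies $H$ with the collinearity graph of a thick rank-$3$ dual polar space, so that $H$ itself is a dual polar graph. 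In the smaller case when the configuration lies in a quad, the same conclusion follows directly from the thick version of Yanushka's lemma and the generalized quadrangle axiom.

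Once each relevant subspace is identified as a dual polar graph, Theorem~\ref{th:dual-polar-graph} supplies weak modularity (whence LTC and LQC) together with the absence of isometric $K_{3,3}^-$ inside that subspace. Since every potentially offending configuration lies in such a subspace, these properties transfer to $G$ itself, completing the verification that $G$ is locally dual polar. The main obstacle lies in this identification of each hex with a rank-$3$ dual polar graph: one must verify within the hex that lines are gated maximal cliques (axiom~(A1)), that the gated hull of two vertices at distance $2$ is a quad of diameter $2$ (axioms~(A3)\&(A4)), and that the neighbour-shifting property~(A5) holds, at which point Theorem~\ref{th:cameron_dual_polar} applies. The projective plane of lines and quads through each point, furnished by the nearly classical hypothesis, provides exactly the incidence-geometric input needed to carry out these verifications.
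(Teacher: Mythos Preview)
Your first paragraph (ruling out $K_4^-$ and deducing thickness) is correct and matches the paper. The difficulty is in the second paragraph, where you assert that each LTC, LQC, or $K_{3,3}^-$ configuration ``is contained in a convex subspace of diameter at most~$3$, that is, in a quad or a hex,'' and then propose to identify every hex with a rank-$3$ dual polar graph via Cameron's theorem. There are two gaps here.

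First, the containment claim is not justified for LTC. An LTC configuration consists of adjacent $v,w$ and a vertex $u$ with $d(u,v)=d(u,w)=2$; there is no distance-$3$ pair to invoke the hex existence result of \cite{BroWi}, and there is no reason why a single quad should contain all three of $u,v,w$. In fact LTC has a one-line direct proof: apply the $3$--weak near polygon axiom to $u$ and the line through $v,w$; the unique nearest point must be a common neighbour. The paper does exactly this, and then \emph{uses} LTC (together with thick lines and $K_4^-$-freeness) in a short combinatorial argument to exclude isometric $K_{3,3}^-$ directly, again without any hex machinery.

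Second, even for LQC --- where the paper \emph{does} use a hex $H$ containing the distance-$3$ pair and two quads inside it --- your plan to identify $H$ as a dual polar graph by verifying Cameron's axioms (A1), (A3)\&(A4), (A5) inside $H$ is not carried out; you acknowledge this as ``the main obstacle'' and stop. The paper avoids this altogether: it never proves that a hex is a dual polar graph. Instead it uses the projective-plane hypothesis at the single vertex $x$ to find one line $\ell\subseteq Q'\cap Q''$ through $x$, takes the nearest points of $v',v''$ on $\ell$, and argues (via LTC and $K_4^-$-freeness) that these coincide in a common neighbour of $x,v',v''$. This is a much more targeted use of the nearly classical hypothesis than a full Cameron-style reconstruction.
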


\begin{proof}

By the definition of a locally dual polar graph, we have to show that the $2$--intervals of $G$ are thick, $G$ does not contain isometric $K^-_4$ and $K^-_{3,3}$, and satisfies the local triangle  and local quadrangle conditions. The fact that the $2$--intervals of $G$ are thick (i.e., $G$ is a thick graph in our sense) directly follows from the fact that $\Gamma$ is a thick $3$--weak near polygon.

Let $G$ contain a $K^-_4$ induced by vertices $x,y,z_1,z_2$ with $d(x,y)=2$. Let $\ell$ be the line of $\Gamma$ containing the points $x,z_1,z_2$. Then $y$ does not belong to $\ell$ and has $z_1$ and $z_2$ as nearest points on $\ell$, contrary to the fact that $\Gamma$ is a $3$--weak near polygon. Hence, $G$ does not contain induced $K^-_{4}$.

Now we establish the local triangle condition. Let $d(v,x)=d(v,y)=2$ and $x\sim y$. Let $\ell$ be the line of $\Gamma$ passing via $x$ and $y$. Since $\Gamma$ is a $3$--weak near polygon, $v$ has a unique nearest point $w$ on $\ell$. This implies that $w\sim x,y,v$, establishing LTC($v$).

Next we prove that $G$ does not contain isometric $K^-_{3,3}$. Suppose, by way of contradiction, that $G$ contains  an isometric $K^-_{3,3}$, induced by the vertices $x,y,u',v',u'',v''$, where $d(x,y)=3$ and $x\sim u',v'$; $y\sim u'',v''$; $u' \sim u'',v''$; $v' \sim u'',v''$. Let $\ell$ be the line of $\Gamma$ passing via $y$ and $v''$. Then $v''$ is the nearest to $x$ point of $\ell$. Since $\ell$ is thick, there exists a third point $z$ on $\ell$. Since $\Gamma$ is a 3-weak near polygon, $d(x,z)=3$, thus $z$ cannot be adjacent to $u'$. If $z$ is adjacent to $u''$, then we will obtain a forbidden $K^-_4$ induced by $y,z,u'',v'$. Hence $d(z,u')=d(z,u'')=2$. By LTC($z$), there exists a new vertex $z'\sim z,u',u''$. Since $d(x,z)=3$ and $z\sim z'$, $z'$ cannot be adjacent to $x$. If $z'$ is adjacent to $v'$, then we will obtain a forbidden $K^-_4$ induced by $u',u'',z',v'$. Hence $d(z',x)=d(z',v')=2$. By LTC($z'$) there exists a new vertex $w\sim x,v',z'$.
Since $d(z,x)=3$, necessarily $d(z,w)=2=d(z,v')$. By LTC($z$), there
exists $s\sim w,v',z$. But then the vertices $x,w,v',s$ induce a
forbidden $K^-_4$. This contradiction establishes that the subgraph of
$G$ induced by $x,y,u',v',u'',v''$ is not an isometric $K^-_{3,3}$.

Finally we prove that $G$ satisfies the local quadrangle condition. Let $d(x,y)=3$ and $v',v''\in I(x,y)$ be two non-adjacent neighbors of $y$. Then as is noticed in \cite{BroCo}, a result of \cite{BroWi} implies that $x$ and $y$ are contained in a hex $H$ and Yanushka's lemma from \cite{ShuYa} implies that the pairs $v',x$ and $v'',x$ are contained in quads $Q'$ and $Q''$. Both $Q'$ and $Q''$ belong to $H$. Since the incidence geometry of lines and quads at $x$ from $H$ is a projective plane, the intersection of (two lines) $Q'$ and $Q''$
is a point in this geometry. This point in turn corresponds to a line (maximal clique) $\ell\in Q'\cap Q''$ containing $x$.
Then each of $v'$ and $v''$ has a unique nearest point in $\ell$. Denote these points by $u'$ and $u''$, respectively. We assert that $u'\ne x$ and $u''\ne x$. Indeed, if say $u'=x$, then
$d(v',u')=d(v',x)=2$ must hold (because $d(x,y) = 3$ and $v'\sim y$), and all remaining points of $\ell\subset Q'$ must be located at distance $3$ from $v'$, contradicting the fact that $Q'$ has diameter $2$.  Thus $x\notin \{u', u''\}$, showing that $u'\sim v'$ and $u''\sim v''$.
Suppose that $u'\ne u''$. Then $u'\sim u''$ (because $u'$ and $u''$ belong to the clique $\ell$) and $d(y,u')=d(y,u'')=2$. By LTC($y$), there exists a vertex $z\sim y,u',u''$. But then the vertices $z,u',u'',x$ induce a forbidden $K^-_4$.
Thus $u'=u''$ and, since $u'\sim x$, we obtain the required common neighbor of $x,v'$, and $v''$.
\end{proof}

\begin{remark} In the proof of Proposition~\ref{BroCo} we do not use all requirements on $\Gamma$: actually, we used that all lines of $\Gamma$ are thick (contain at least three points), all $2$--intervals are thick (contain a square), and that $\Gamma$ is a $3$--weak near polygon. Instead of using the fact that $\Gamma$ is nearly classical, to establish the local quadrangle condition it suffices   to use the {\it $3$--cube condition} asserting that each isometric $6$--cycle is included in a $3$--cube. We do not know if this last condition (as well as near classicalness of $\Gamma$)  is necessary to establish that $\Gamma$  is a locally dual polar space. Namely, is it true that if $\Gamma$ is a $3$--weak near polygon with thick lines and thick $2$--intervals, then $\Gamma$ is a locally dual polar space?
\end{remark}

\chapter{Sweakly Modular Graphs}\label{sec:swm}
A {\it sweakly modular graph} (an {\it swm-graph} for short) is
a weakly modular graph having no induced $K_4^-$ and no isometric $K_{3,3}^-$.
The class of  swm-graphs contains a number of nice and important subclasses of
weakly modular graphs:
\begin{itemize}
\item median graphs ($=$ the 1-skeletons of CAT$(0)$ cube complexes),
\item frames,  semiframes\footnote{Frames and semiframes are defined in
Example \ref{ex:frames} below and are particular
strongly modular graphs} ($=$ the 1-skeletons of  CAT$(0)$ $B_2$--complexes),
\item dual polar graphs ($=$ thick swm-graphs),
\item orientable modular graphs,
\item strongly modular graphs ($=$ modular graphs without induced $K_{3,3}^-$).
\end{itemize}
The aim of this chapter is to investigate the swm-graphs
and to show that many fascinating properties,
previously known only for the above mentioned subclasses,
hold for all swm-graphs.

\section{Main results}\label{subsec:swm_main}
In Section~\ref{subsec:latt_char_swm},
we present a lattice-theoretical characterization of swm-graphs;
Theorem~\ref{thm:chara_swm} shows
that the swm-graphs are exactly the near polygons
in which all metric intervals induce  modular lattices.
This generalizes a similar characterization of strongly modular graphs
(Theorem~\ref{thm:BVV2}).
We also provide a characterization of  posets obtained
from  orientable modular graphs that extends Theorem~\ref{thm:BVV1}.

In Section~\ref{subsec:Boolean}, we introduce Boolean-gated sets of swm-graphs,
which we show to be exactly the gated sets inducing  thick subgraphs. By Theorem~\ref{th:dual-polar-graph},
each Boolean-gated set induces a dual polar graph.  In the case of median graphs, the Boolean-gated
sets are exactly  the vertex sets of cubes. Extending the fact that each median graph is a gluing of
its cubes (which are gated subgraphs), this shows that each swm-graph is a gluing of
its gated dual polar subgraphs.  We will see that  these dual polar subgraphs
play an analogous role of what cube subgraphs play in median graphs.

In Section~\ref{subsec:barycentric}, we define the barycentric
graph $G^*$ of an swm-graph $G$ as the covering graph of the poset of
all Boolean-gated sets with respect to the reverse inclusion. This construction
of $G^*$ was previously
considered for semiframes~\cite{Kar98b} and
orientable modular graphs~\cite{HH12}, and
naturally extends the construction of
the subspace poset of a dual polar graph.
We show (extending the known fact that the covering graph of the
subspace poset of a polar space is orientable modular) that the barycentric graph $G^*$ of
an swm-graph $G$ is an orientable modular graph
(Theorem~\ref{thm:G*_is_om}).  Moreover the original graph
$G$ is isometrically embeddable to $G^*$ (the edges of $G^*$ have length $1/2$).
In Chapter~\ref{sec:complex} we
show that the sequence
$G, G^*, (G^*)^*, ((G^*)^*)^*, \ldots$ of swm-graphs converges to
a certain metric simplicial complex associated with $G$.

In Section~\ref{subsec:G^Delta}, we consider the
thickening $G^\Delta$ of an swm-graph $G$, which is obtained from $G$ by making
adjacent all pairs of vertices of $G$ belonging to a common Boolean-gated set.
We show that $G^{\Delta}$ is a finitely Helly graph (Theorem~\ref{thm:Helly}) and thus its clique complex
$X(G^{\Delta})$ is contractible.

In Section~\ref{subsec:normalbgpaths}, extending similar results of
\cite{NiRe} for CAT(0) cube complexes, we prove the existence and
uniqueness of so-called normal Boolean-gated paths in thickenings
$G^{\Delta}$ of all swm-graphs $G$.

In Section~\ref{subsec:building},
we establish a link between swm-graphs and Euclidean buildings of type C.
It is well-known that polar spaces and spherical buildings of type C constitute the same objects.
As we mentioned before,
the dual polar graph $G$ of a polar space $\Pi$ is an swm-graph,
and the covering graph of the subspace poset of $\Pi$,
equals to the barycentric graph $G^*$, and thus is orientable modular.
We show that similar properties hold for Euclidean building of type C.

In Section~\ref{subsec:biauto}, we prove that the groups
acting geometrically on swm-graphs are biautomatic.  The proof is
based on the fact that the normal Boolean-gated paths defined in
Section \ref{subsec:normalbgpaths} can be locally
recognized~\cite{Swiat} and satisfy the $2$--sided fellow traveler
property~\cite{ECHLPT}. This generalizes analogous results of
\cite{NiRe} about groups acting on CAT(0) cube complexes, and of \cite{Nos} about automorphism groups of some buildings.

Finally, in Section~\ref{subsec:0ext}, we present an immediate
algorithmic consequence of previous results to swm-graphs. By using
the construction of $G^*$, we present a simple $2$--approximation
algorithm for the 0-extension problem on swm-graphs. The minimum
0-extension problem is a version of the facility location problem and
arises from clustering or pixel-labeling in computer vision and
machine learning. It was shown recently in \cite{HH12} that the
orientable modular graphs are precisely the polynomially-solvable
instances of the minimum 0-extension problem.

\section{A lattice-theoretical characterization of swm-graphs}\label{subsec:latt_char_swm}

Let $G=(V,E)$ be a graph. For a pair $p,q$ of vertices of $G$ consider the base-point order $\preceq_p$
on the interval $I(p,q)$ with minimum element $p$ and maximum element $q$
(see Subsection~\ref{classes}).
The following characterization of swm-graphs
can be viewed as an extension of Theorem~\ref{thm:BVV2}.

\begin{Thm}\label{thm:chara_swm}
Let $G$ be a graph satisfying the triangle condition (TC).
Then $G$ is an swm-graph if and only if every interval $I(p,q)$
endowed with the base-point order $\preceq_p$ is a modular lattice,
and the subgraph induced by $I(p,q)$ is isometric in $G$.
\end{Thm}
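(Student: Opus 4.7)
My plan is to prove both implications separately.

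For the forward direction, assume $G$ is an swm-graph, fix vertices $p, q$, and write $r(v) := d(p, v)$. For $x, y \in I(p,q)$ I define $x \wedge y$ to be a vertex of maximal rank in $I(p,x) \cap I(p,y)$. The main technical step is proving that $x \wedge y$ is unique; I would argue by contradiction, showing that two candidates $v, v'$ at common rank $k$ can be exploited via repeated applications of $\mr{TC}$ and $\mr{QC}$ (together with the swm exclusion of isometric $K_{3,3}^-$ and induced $K_4^-$) to produce either a strictly higher-ranked common element of $I(p,x) \cap I(p,y)$, or a forbidden isometric $K_{3,3}^-$ on $v, v', x, y$ together with two auxiliary neighbors. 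The join $x \vee y$ is handled dually, making $I(p,q)$ a graded lattice.

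For modularity I would verify the rank identity $r(x) + r(y) = r(x \wedge y) + r(x \vee y)$ by induction on $r(x \vee y) - r(x \wedge y)$; the base case, where $x$ and $y$ both cover a common element $c$, is exactly $\mr{QC}(p)$, which produces a vertex at rank $r(c)+2$ adjacent to both, necessarily equal to $x \vee y$ by uniqueness. For the isometric covering embedding, if $y$ covers $x$ in the lattice then $I(x,y) \subseteq I(p,y) \subseteq I(p,q)$, so any strictly intermediate vertex on a shortest $(x,y)$-path would contradict covering; hence $d_G(x,y) = 1$. For general $x, y \in I(p,q)$, the Hasse path through the meet has length $r(x) + r(y) - 2r(x \wedge y)$, and this should equal $d_G(x,y)$ after analyzing the quasi-median of $p, x, y$: its metric triangle is equilateral by Lemma~\ref{lem-weakly-modular}, and its size must vanish, else the quasi-median vertices together with $p$ and an auxiliary neighbor would produce a forbidden isometric $K_{3,3}^-$.

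For the backward direction, assume $\mr{TC}$ holds and every interval is a modular lattice with isometric covering graph. The derivation of $\mr{QC}$ is direct: given the configuration $(u, z, v, w)$ with $v, w$ covered by $z$ in $I(u,z)$ at rank $k$, modular equality forces $r(v \wedge w) = 2k - (k+1) = k-1$, and isometricity makes $v \wedge w$ a common $G$-neighbor of $v$ and $w$ at distance $k-1$ from $u$. An induced $K_4^-$ with nonedge $\{c,d\}$ and common neighbors $a, b$ would force $a \wedge b = c$ and $a \vee b = d$ in $I(c,d)$ with $a, b$ incomparable, so the Hasse diagram contains no $ab$-edge while $d_G(a,b) = 1$, contradicting isometricity. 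An isometric $K_{3,3}^-$ with $u_1, v_1$ at distance $3$ places $u_2, u_3, v_2, v_3$ in $I(u_1,v_1)$; the rank-$1$ elements $v_2, v_3$ admit only the incomparable upper bounds $u_2, u_3$ and the top $v_1$, so $v_2 \vee v_3$ cannot exist, contradicting the lattice hypothesis.

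The principal obstacle is the forward uniqueness of meets: the isometric-$K_{3,3}^-$ exclusion must be converted into a uniqueness statement about distance-optimal common ancestors of $x$ and $y$. I expect the case analysis to mirror the strongly-modular argument behind Theorem~\ref{thm:BVV2}, with the $K_4^-$-exclusion of the nonbipartite setting providing the additional flexibility needed at triangle-condition steps.
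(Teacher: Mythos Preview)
Your backward direction is correct and essentially matches the paper (the paper takes the join of atoms in $I(z,u)$ where you take the meet of coatoms in $I(u,z)$, which is dual).

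The forward direction has a real gap. You omit the decisive preliminary step (the paper's Claim~1): no edge of the subgraph induced by $I(p,q)$ joins two vertices at equal distance from $p$. This follows in two lines from TC and the $K_4^-$ exclusion: apply TC to such a horizontal edge toward $p$ and toward $q$ to obtain two common neighbors at ranks $k-1$ and $k+1$, hence nonadjacent, giving an induced $K_4^-$. Claim~1 is precisely what forces the quasi-median of $p,x,y$ (for $x,y\in I(p,q)$) to degenerate to a median: a nondegenerate equilateral metric triangle yields, via TC, a triangle of $G$ inside $I(p,q)$, which is impossible once every edge there changes rank by exactly~$1$. Your claim that a nondegenerate quasi-median produces an isometric $K_{3,3}^-$ does not go through; the $K_{3,3}^-$ exclusion enters only for \emph{uniqueness} of the median (which is indeed the delicate step), not for existence. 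Once you know that the maximal-rank element of $I(p,x)\cap I(p,y)$ is actually a median of $p,x,y$ (and dually for the join), both modularity and isometry are one-line consequences of $d(x,y)=r(x)+r(y)-2r(x\wedge y)=2r(x\vee y)-r(x)-r(y)$; your separate induction on $r(x\vee y)-r(x\wedge y)$ is unnecessary, and its base case is misstated anyway (you need QC toward $q$, not $p$, and you implicitly need Claim~1 to ensure $d(x,y)=2$). The correct logical order is: no horizontal edges (via $K_4^-$) $\Rightarrow$ medians exist $\Rightarrow$ medians are unique (via $K_{3,3}^-$) $\Rightarrow$ lattice, modularity, and isometry all at once.
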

In other words, swm-graphs are the
near polygons in which  every interval induces (the covering graph of) a modular lattice.

\begin{proof} We start with the ``if'' part.
Pick any $z,v,w,u$ with $d(z,u) = d(v,u) + 1 = d(w,u) + 1$ and $z\sim v,w$. The interval  $I(z,u)$  is a modular lattice
having $v,w$ as atoms. Thus the join $v \vee w$ is a common neighbor of $v,w$ with $d(v \vee w, u) = d(z,u) -2$,
implying the quadrangle condition (QC). Hence $G$ is a weakly modular graph.

If $G$ has an induced $K_4^-$ and $p,q$ are its nonadjacent vertices, then  the
subgraph of $G$ induced by $I(p,q)$ is not isomorphic to the covering graph of $(I(p,q),\preceq_p)$. If $G$ has an
isometric $K_{3,3}^-$,
then take as $p,q$ the vertices of $K_{3,3}^-$ at distance 3.  Then again the subgraph of $G$ induced by $I(p,q)$
is not a lattice. Therefore $G$ is an swm-graph.

Next we establish the ``only if'' part.  For a vertex $x \in I(p,q)$, let $d_x:=d(x, p)$.


\begin{claim-swm}\label{claim-swm-1}
  There is no edge $xy$ of $I(p,q)$  with $|d_x - d_y| \neq 1$.
\end{claim-swm}

\begin{proof}[Proof of Claim~\ref{claim-swm-1}]
Indeed, if such an edge $xy$ exists, then $d_x = d_y$,  and
applying the triangle condition (TC) to the triplets  $x,y,p$ and  $x,y,q$, we can find
two distinct nonadjacent common neighbors $p',q'$ of $x,y$. Then $x,y,p',q'$ induce a
forbidden $K_4^-$, a contradiction.
\end{proof}
%


\begin{claim-swm}\label{claim-swm-2}
  For any pair $x,y \in I(p,q)$ there exists a unique median of the triplet $x,y,q$ (respectively, of $x,y,p$). 
\end{claim-swm}

\begin{proof}[Proof of Claim~\ref{claim-swm-2}]
First we show that $x,y,q$ has a median.
Take a quasi-median $x'y'q'$ of $x,y,q$. Suppose to the contrary that $x',y', q'$ are different vertices.
Take a neighbor $w$ of $q'$ belonging to $I(q',x') \subseteq I(p,q)$.
By weak-modularity (Lemma~\ref{lem-weakly-modular}), we have $d(w,y')=d(q',y')$.
By (TC) for $w,q',y'$, there is
a common neighbor $w'$ of $w, q'$
belonging to  $I(q',y') \subseteq I(p,q)$. Then $d_w = d_{w'}$,  a contradiction to Claim~\ref{claim-swm-1}.
This implies that $x'=y'=q'$, i.e., each quasi-median of $x,y,q$ and of $x,y,p$ is a median.

Suppose now that there exist $x,y \in I(p,q)$ such that the triplet $x,y,q$ has two medians $m,m'$.
Among such pairs, take $x,y$ with $d(x,y)$ as small as possible. Then it is impossible that
$d(x,y) = 1$. Suppose that $d(x,y) = 2$; necessarily $d_x = d_y$. Then $m,m'$ are
two distinct common neighbors of $x,y$ such that
$d_m = d_{m'} = d_x +1$ and $d(m,m') = 2$.
Take a median $u$ of $x,y,p$,
which is also a common neighbor of $x,y$ with $d_u = d_x - 1$.
So there is no edge from $u$ to $m$ or $m'$.
Take a median $u'$ of $m,m',q$, which is a common neighbor of $m,m'$
with $d_{u'} = d_x + 2$. Then $d(u,u')=3$ and we conclude that the subgraph
induced by $x,y,m,m',u,u'$ is an isometric $K_{3,3}^-$, a contradiction.

Therefore we can suppose that $d(x,y) > 2$.  Take a median $l$ of
$x,y,p$ and take a neighbor $l'$ of $l$ in $I(l,y)$.  Then
$d(x,l')=d(x,l) + 1$.  By the first part of the proof, we can take a
median $z$ of $m,x,l'$ in $I(m,l')$ and a median $z'$ of $m',x,l'$ in
$I(m',l')$. Note that $x \sim z, z'$. If $z=z'$, then the triplet
$z,y,q$ has two medians $m,m'$ and $d(z,y)<d(x,y),$ contrary to the
minimality assumption. Thus $z$ and $z'$ are distinct with $d(z,z') =
2$.  By (QC) for $x, z,z',l'$, we can find a
median $w$ of $z,z',l'$.  Then $w$ and $x$ are distinct common
neighbors of $z$ and $z'$. Hence $z$ and $z'$ are distinct medians of
$x,w,q$ with $d(x,w)\le 2$, which was proved before to be impossible.
This completes the proof of Claim~\ref{claim-swm-2}.
\end{proof}

\medskip
By Claim~\ref{claim-swm-2}, $x \wedge y$ and $x \vee y$ exist and are equal to the uniquely determined medians of the triplets $x,y,p$ and  $x,y,q$, respectively.
The rank of $x$ is equal to $d_x$. By the definition of a median, we have
\[
d_x + d_y = d_{x \wedge y} + d_{x \vee y}.
\]
Hence the rank function satisfies the modular equality.
This means that $I(p,q)$ is a modular lattice. Also $I(p,q)$ induces an isometric subgraph:
for $x,y \in I(p,q)$,
a path passing through $x$, $x \wedge y$, and $y$ is a shortest path of $G$ and belongs to $I(p,q)$.
\end{proof}

We continue with a  poset characterization of orientable modular graphs.
Let $G=(V,E)$ be an orientable modular graph with an admissible orientation $o$.
It is not difficult to see that $o$ is acyclic (see \cite{HH12}),
and thus $o$ induces a partial order on $V$, where
we interpret $x \rightarrow y$ as $x \succ y$.
The resulting poset is denoted by $\mathcal{P}(G,o)$.
Then we can retrieve $G$ as the covering graph of $\mathcal{P}(G,o)$ and $o$ as
the Hasse orientation. We present a characterization of the posets obtained in this way.
A {\em crown} in a poset is a 6-tuple $(x_1,x_2,\ldots,x_6)$
such that for $i=2,4,6$, the elements
$x_{i-1}$ and $x_{i+1}$ cover $x_i$, and
$x_i$ and $x_{i+3}$ are non comparable, where the indices
are taken modulo 6.

\begin{Thm}\label{thm:chara_om}
The covering graph $G$ of a poset $\mathcal{P}$ is modular
and the Hasse orientation of $G$ is admissible
if and only if $\mathcal{P}$ satisfies the following conditions:
\begin{itemize}
\item[{\rm (a)}] $\mathcal{P}$ is graded;
\item[{\rm (b)}] every interval  is a modular lattice;
\item[{\rm (c)}] every upper-bounded pair has the join and every lower-bounded pair has the meet;
\item[{\rm (d)}] every pairwise upper-bounded triplet has the join and
every pairwise lower-bounded triplet has the meet;
\item[{\rm (e)}] the $2$--skeleton of the order complex $\Delta(\mathcal{P})$ of $\mathcal{P}$ is simply connected.
\end{itemize}
Moreover, the condition  {\rm (d)}
can be replaced by:
\begin{itemize}
\item[{\rm (d$'$)}] for every crown $(x_1,x_2,\ldots,x_6)$ of $\mathcal{P}$
there are $i,j \in \{2,4,6\}$
such that $(x_{i}, x_{i+2})$ is lower-bounded
and $(x_{j-1}, x_{j+1})$ is upper-bounded.
\end{itemize}
\end{Thm}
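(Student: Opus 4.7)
The plan is to prove the two implications separately, relying on the characterization of swm-graphs from Theorem~\ref{thm:chara_swm}, the local-to-global characterization of modularity from Corollary~\ref{t:lotoglo_mod}, and the simple-connectivity of the (triangle-)square complex for weakly modular graphs (Lemma~\ref{simplyconnected}). The equivalence of $\mathrm{(d)}$ and $\mathrm{(d')}$ is then settled by a crown-minimization argument using the other conditions.

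For the ``only if'' direction, assume $G$ is modular with admissible Hasse orientation $o$. Admissibility on each square, together with simple connectivity of $X\sq(G)$ (Lemma~\ref{simplyconnected}), allows one to define a globally consistent grade function $f\colon V(G)\to \ZZ$ that increases by $+1$ along every upward edge; the potential is well-defined because the signed counts along any cycle vanish (using that cycles decompose up to square-boundaries, each of which is balanced by admissibility). This yields $\mathrm{(a)}$. Because $G$ is triangle-free (being modular) and contains no isometric $K_{3,3}^-$ (by orientability), $G$ is an swm-graph, and Theorem~\ref{thm:chara_swm} gives that every metric interval induces a modular lattice isometrically. Monotonicity of $f$ along shortest paths, forced by admissibility, implies that the Hasse interval $[p,q]$ of $\mathcal{P}$ and the metric interval $I(p,q)$ of $G$ coincide whenever $p\preceq q$, establishing $\mathrm{(b)}$. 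Conditions $\mathrm{(c)}$ and $\mathrm{(d)}$ follow from weak modularity and modularity: by Lemma~\ref{lem-modular}, every metric triangle in $G$ has size $0$, so the quasi-median of any triplet is a genuine median; for an upper-bounded pair $x,y$ with bound $q$, this median inside $I(x\wedge y, q)$ is exactly $x\vee y$, and the triplet case reduces to the pairwise case because a pairwise-bounded triplet without a global bound would produce an isometric $K_{3,3}^-$, contradicting orientability. Finally, $\mathrm{(e)}$ is obtained by comparing the $2$-skeleton of $\Delta(\mathcal{P})$ with $X\sq(G)$: each triangle-chain $x\prec y\prec z$ fills a path of length $d(x,z)$ in $G$, and the simple connectivity of $X\sq(G)$ transfers to $\Delta(\mathcal{P})^{(2)}$ by a straightforward filling argument.

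For the ``if'' direction, let $G$ be the covering graph of $\mathcal{P}$ with Hasse orientation $o$. Condition $\mathrm{(a)}$ makes $G$ triangle-free and bipartite with respect to the parity of the grade function. In view of Corollary~\ref{t:lotoglo_mod} it suffices to show that $G$ is locally modular and that $X\sq(G)$ is simply connected. Local modularity follows from $\mathrm{(b)}$ applied to rank-$2$ Hasse intervals: any two upward covers $p\prec x_1,x_2\prec q$ (together with $p$ and $q$) form the covering graph of a rank-$2$ Boolean lattice, which gives the local quadrangle condition. Simple connectivity of $X\sq(G)$ is deduced from $\mathrm{(e)}$: every $2$-simplex of $\Delta(\mathcal{P})$ can be subdivided, via repeated use of $\mathrm{(b)}$ and $\mathrm{(c)}$, into covering-squares of $G$, so that null-homotopies in $\Delta(\mathcal{P})^{(2)}$ yield null-homotopies in $X\sq(G)$. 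Once $G$ is seen to be modular, admissibility of $o$ is immediate from $\mathrm{(b)}$: each square of $G$ arises as a rank-$2$ Hasse interval in $\mathcal{P}$, on which opposite edges point the same way.

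The implication $\mathrm{(d)}\Rightarrow\mathrm{(d')}$ is direct, since a crown $(x_1,\dots,x_6)$ exhibits $\{x_1,x_3,x_5\}$ as a pairwise upper-bounded triplet (via $x_2,x_4,x_6$) and $\{x_2,x_4,x_6\}$ as a pairwise lower-bounded triplet, so $\mathrm{(d)}$ supplies the missing global bounds. Conversely, assuming $\mathrm{(a)}$--$\mathrm{(c)}$, $\mathrm{(d')}$, $\mathrm{(e)}$, suppose some pairwise upper-bounded triplet $\{a,b,c\}$ had no common upper bound; choose such a triplet that minimizes the sum of the ranks of $a\vee b$, $a\vee c$, $b\vee c$ (which exist by $\mathrm{(c)}$), and argue that atoms covered by these joins assemble into a crown whose vertices violate $\mathrm{(d')}$, a contradiction. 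The dual argument handles lower bounds. The main obstacle is the identification of Hasse intervals with metric intervals in the ``if'' direction, since one does not yet know that $G$ is modular; this identification has to be bootstrapped using $\mathrm{(a)}$--$\mathrm{(c)}$ simultaneously, and is the most delicate step, because both the metric and the order on $G$ must be developed in parallel until modularity is finally certified by Corollary~\ref{t:lotoglo_mod}.
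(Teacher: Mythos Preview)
Your proposal has a genuine gap in the ``if'' direction, specifically in the verification of the local quadrangle condition (LQC). You write that ``local modularity follows from (b) applied to rank-$2$ Hasse intervals: any two upward covers $p\prec x_1,x_2\prec q$ form the covering graph of a rank-$2$ Boolean lattice, which gives the local quadrangle condition.'' This is not what LQC asks. In LQC$(u)$ one is given $v,w,z$ with $z\sim v,w$, $d(u,v)=d(u,w)=d(v,w)=2$, $d(u,z)=3$, and must produce a common neighbor of $u,v,w$. There is no reason for $u,v,w$ to sit inside a single Hasse interval: choosing neighbors $a\sim u,v$ and $b\sim u,w$, the $6$-tuple $(u,a,v,z,w,b)$ is a $6$-cycle in $G$ whose orientation pattern must be analyzed. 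The paper carries out exactly this case analysis. In the non-crown cases, (a), (b), (c) suffice (e.g.\ if the cycle lies in a Hasse interval, or if two purported minimal upper bounds coincide). But in the crown case, where the cycle reads $x_1\succ x_2\prec x_3\succ x_4\prec x_5\succ x_6\prec x_1$, none of $u,v,w$ are comparable and no rank-$2$ interval is available; here condition (d$'$) is \emph{essential} to manufacture a common cover of two of $x_1,x_3,x_5$, which is then shown (using (c)) to cover the third. Your argument invokes only (b) and therefore cannot handle this case. This is not a detail that can be patched by ``bootstrapping Hasse intervals with metric intervals,'' because at this point in the argument you do not yet know that metric intervals coincide with Hasse intervals---that identification is a \emph{consequence} of modularity, not an input to its proof.

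A secondary issue: your derivation of (d) in the ``only if'' direction asserts that a pairwise upper-bounded triplet without a common bound would yield an isometric $K_{3,3}^-$. This is not evident: the three pairwise joins need not have equal rank, and even when they do, the six points need not form the required configuration. The paper instead takes the median $m$ of $x\vee y$, $y\vee z$, $x\vee z$ and uses the convexity of principal filters (itself a consequence of (c) already established) to place $m$ in $(x)^{\uparrow}\cap(y)^{\uparrow}\cap(z)^{\uparrow}$. Finally, the paper does not prove (d$'$)$\Rightarrow$(d) as a standalone implication; rather, it proves ``only if'' with (d) and ``if'' with (d$'$), and since (d)$\Rightarrow$(d$'$) is trivial this closes the loop. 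Your minimization sketch for (d$'$)$\Rightarrow$(d) would need substantial work to become a proof.
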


\begin{proof} We start with the proof of ``only if'' part. Suppose that the covering graph $G$ of $\mathcal{P}$ is
an orientable modular graph.

To (a):  For a shortest path $P = (x = x_0,x_1,\ldots,x_k = y)$ of $G$,
let $d^+(P)$ denote the number of indices $i$ with $x_i \prec x_{i+1}$.
For another shortest path $P' = (x = x'_0,x'_1,\ldots,x'_k = y)$, we will show that
$d^+(P) = d^+(P')$, whence $d^+(P)$ does not depend of the choice
of the shortest path $P$  and thus it can be  denoted by $d^+(x,y)$.
We proceed by induction on $d(x,y)$. We can assume that $x_1 \neq x'_1$, otherwise
we are done by induction hypothesis.
By (QC) for $x,x_1,x'_1,y$, there is a common neighbor
$z$ of $x_1,x'_1$ with $d(z,y) = d(x,y) - 2$.
By the induction hypothesis applied to $\{x_1, y\}, \{x_1', y\}, \{ z, y\}$,
we obtain $d^+(P)=d^+(x,x_1)+d^+(x_1,z)+d^+(z,y)$ and
$d^+(P')=d^+(x,x'_1)+d^+(x'_1,z)+d^+(z,y)$. From the admissibility of the partial order on the $4$--cycle $xx_1zx'_1$ we conclude that
$d^+(x,x_1)+d^+(x_1,z)=d^+(x,x'_1)+d^+(x'_1,z),$ thus establishing the required equality $d^+(P)=d^+(P')$. Now, pick
an arbitrary vertex $x_0$. Define $r: V \to \ZZ$ by setting $r(x) := d(x,x_0) - 2d^{+}(x,x_0)$.
Then  $r$ is a grade function. Hence the poset $\mathcal{P}$ is graded.

To (b): By (a), if $p \preceq q$,
then $d(p,q) = r(p) - r(q)$, and consequently
$I(p,q) = [p,q]$ and $\preceq$ coincides with the base-point order $\preceq_{p}$ on $[p,q]$.
Therefore, by Theorem~\ref{thm:chara_swm}, the interval $[p,q]$ is a modular lattice.

To (c): Suppose by way of contradiction that $x$ and $y$ have two
minimal upper bounds $z,z'$. Take such $x,y$ with minimal $d(x,y)$.
Consider a median $m$ of $x,y,z$.
By (a), $I(x,z) = [x,z]$ and $I(y,z) = [y,z]$ and thus $m$ is a common
upper bound of $x,y$ with $m \preceq z$.  Hence $z = m \in I(x,y)$;
similarly, $z' \in I(x,y)$.
By the minimality, $x,y$ are maximal common lower bounds of $z,z'$,
and belong to $I(z,z')$ (shown as above).
Take a neighbor $x'$ of $x$ in $I(x,z')
\subseteq I(x,y)$. Then $x \prec x'$.  Take a maximal chain $(x =
x_0,x_1,x_2,\ldots,x_k = z)$ from $x$ to $z$.
We can assume that $x_1 \neq x'$ (otherwise we can replace $x$ by $x'$).
We define iteratively
$x'_0, x'_1, \ldots, x'_k$ such that $x'_{i}$ is a neighbor of $x_i$
in $I(x_i,y)$ and such that $x_i \prec x'_i$. Let $x'_0 := x'$; by
definition, $x'_0 = x'$ is a neighbor of $x_0 = x$ in $I(x,y)$ such
that $x_0 \prec x'_0$.  Assuming that $x'_i (\neq x_{i+1})$ has already been defined,
by (QC) for $x_i, x_{i+1}, x'_i, y$, there exists
$x'_{i+1} \sim x_{i+1},x'_i$ such that $x'_{i+1} \in I(x_{i+1}, y)$.
Necessarily $x'_{i+1} \neq x_{i+2}$;
otherwise $d(z,z') < d(z,x) + d(x,z')$, contradicting $x \in I(z,z')$.
Since $x_i \prec x_i'$ and since the partial order is admissible,
considering the square $x_ix'_ix'_{i+1}x_{i+1}$, we have that $x_{i+1}
\prec x_{i+1}'$. Consequently, $z = x_{k} \prec x'_{k}$ and $x'_k \in
I(z,y)$. However, this contradicts the fact that by (a) we have
$I(z,y) = [y,z]$.

To (d):  Let $x,y,z$ be a pairwise upper-bounded triplet of vertices.
By (c), $x \vee y$, $y \vee z$, and $x \vee z$ exist.
Consider a median $m$ of $x \vee y$, $y \vee z$, and $x \vee z$.
By subsequent Lemma~\ref{lem:filter_is_convex}, each principal filter (ideal) is convex.
Thus $m$ belongs to $(x)^{\uparrow}$, $(y)^{\uparrow}$, and $(z)^{\uparrow}$.
This means that $m$ is a common upper-bound of $x,y,z$.
By (c), the join of $x,y,z$ exists.

To (e): Since $G$ is a modular graph, the square complex of $G$ is simply connected.
Every $4$--cycle of $G$ must belong to an interval of rank $2$ in $\mathcal{P}$.
From this, one can see that the homotopy in the square complex induces a homotopy
in the order complex $\Delta(\mathcal{P})$.

Now we prove the ``if'' part.
Suppose that $\mathcal{P}$ satisfies the conditions (a),(b),(c),(d$'$), and (e).
Let $G$ be the covering graph of $\mathcal{P}$.
By (c), every $4$--cycle is
of the form $v \prec  x  \prec u \succ y  \succ v$.
Therefore the Hasse orientation is admissible.
Hence it suffices to show the local quadrangle condition (LQC)
and the simple connectivity of the square complex of $G$.
To establish (LQC),
take an isometric $6$--cycle $C=(x_1,x_2,\ldots,x_6)$.
We will show that $x_1, x_3, x_5$ have a common neighbor
and that $x_2, x_4, x_6$ have a common neighbor.
We can assume that the grade $r(x_1)$ of $x_1$ is maximal
in $C$. Then $x_2 \prec x_1 \succ x_6$. Consider the case
$x_2 \succ x_3$. If $x_3 \succ x_4$,
then by (a) the vertices $x_1,x_2,\ldots,x_6$ all belong to $[x_4,x_1]$,
and by (b) we can take the join $x_3 \vee x_5$ and the meet $x_2 \wedge x_6$
as the required by (LQC) common neighbors.
So, suppose that $x_3 \prec x_4$.
By (a), either $x_1 \succ x_6 \prec x_5 \succ x_4$ or $x_1 \succ x_6 \succ x_5 \prec x_4$.
For the former case, $x_3$ and $x_6$
have two minimal common upper-bounds $x_1,x_5$.
For the latter case, $x_3$ and $x_5$ have
two minimal common upper-bounds $x_1,x_4$.
Both cases contradicts (c).
Hence $(x_1, x_2, x_3, x_4, x_5, x_6)$ is a crown.
By (d$'$), (b), (c),  we can assume that
$x_{1} \vee x_{3}$ exists and covers both $x_{1}, x_{3}$.
Then $x_{1} \vee x_{3}$ also covers $x_{5}$.
Otherwise the bounded pair $x_{6}, x_4$
cannot have the join, contradicting (c).
So we obtain a common neighbor of $x_1,x_3, x_5$.
By the same argument, we obtain a common neighbor of $x_2,x_4,x_6$.

Finally, we show that the square complex of $G$ is simply connected.
For a (possibly nonsimple) cycle $C = (p_1,p_2,\ldots, p_k)$
in the $1$--skeleton of $\Delta(\mathcal{P})$,
a {\em refinement} of $C$ is a (possibly nonsimple) cycle $\widehat{C}$ of $G$ obtained from $C$ by replacing
each $(p_i, p_{i+1})$ by a shortest path between $p_i$ and $p_{i+1}$ in $G$.
Since $p_i \preceq p_{i+1}$ or $p_{i+1} \preceq p_{i}$,
this shortest path forms a maximal chain in $[p_i,p_{i+1}]$ or $[p_{i+1}, p_i]$.
It suffices to show that a refinement $\widehat{C}$ of  any cycle $C$ in the $1$--skeleton of $\Delta(\mathcal{P})$
is $0$--homotopic in the square-complex of $G$.
Suppose that $C$ is homotopic to another cycle $C'$ via an elementary homotopy in the simplicial complex $\Delta (\mathcal{P})$. 
Any triangle of $\Delta(\mathcal{P})$ must belong to an interval in ${\mathcal P}$, which is a modular lattice.
Any cycle in the covering graph of a modular lattice $H$ is $0$--homotopic in the square-complex of $H$
(since its covering graph is an orientable modular graph).
Therefore  $\widehat{C}$ is homotopic in the square-complex of $G$ to any refinement $\widehat{C}'$ of $C'$.
Since $C$ is 0-homotopic in $\Delta(\mathcal{P})$, the refinement $\widehat{C}$ of $C$ is 0-homotopic in the square-complex of $G$.
\end{proof}

\begin{Lem}[\cite{HH12}]\label{lem:filter_is_convex}
Let $G$ be an orientable modular graph with an admissible orientation $o$.
Then all principal filters and ideals of ${\mathcal P}(G,o)$ are modular
semilattices and
are convex subgraphs of $G$.
\end{Lem}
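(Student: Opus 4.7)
The plan is to prove the convexity of $(p)^{\uparrow}$ first; the modular-semilattice structure will follow via Theorem~\ref{thm:BVV1} once the meet-semilattice property is checked. The case of ideals $(p)^{\downarrow}$ is completely dual --- reversing the orientation preserves modularity and admissibility while swapping filters with ideals --- so I focus on filters. Throughout I use the grade function $r$ constructed in the proof of Theorem~\ref{thm:chara_om}(a), whose derivation uses only (QC) and admissibility and is therefore already available, together with the equivalence $p \preceq q \iff d(p,q) = r(q) - r(p)$ that it supplies.

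For convexity I apply Lemma~\ref{lem-weakly-modular_gated_hull}, which reduces the task to connectedness and local convexity of $(p)^{\uparrow}$. Connectedness is immediate: for $x \in (p)^{\uparrow}$, any shortest $(x,p)$--path is monotone decreasing in $\preceq$ and so stays in $[p,x] \subseteq (p)^{\uparrow}$. For local convexity, pick $u,v \in (p)^{\uparrow}$ with $d(u,v) = 2$ admitting some common neighbor in $(p)^{\uparrow}$, and let $w'$ be an arbitrary common neighbor of $u,v$. Bipartiteness of $G$ forces $|r(u)-r(v)| \in \{0,2\}$. If $r(u) < r(v)$, then $u$ lies on a monotone $(p,v)$--path, so $u \preceq v$; then $I(u,v) = [u,v]$ is the rank-$2$ modular lattice provided by Theorem~\ref{thm:chara_swm}, every common neighbor of $u,v$ is an atom of $[u,v]$, and $p \preceq u \preceq w'$ places $w'$ in $(p)^{\uparrow}$.

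The substantive case is $r(u) = r(v)$. Take a median $m$ of $p,u,v$ in $G$. One checks $m \in [p,u] \cap [p,v] \cap I(u,v)$ with $m \notin \{u,v\}$ by distance bookkeeping, so $m$ is a common neighbor of $u,v$ at the only available rank $r(u) - 1$, and $p \preceq m$. If $w'$ is any common neighbor of $u,v$ with $r(w') = r(u) + 1$, then $w' \succ u \succeq p$ places $w'$ in $(p)^{\uparrow}$. Otherwise $r(w') = r(u) - 1$, and I claim $w' = m$: otherwise $u\,m\,v\,w'$ is a genuine square of $G$ with all four edges Hasse-oriented downward from $\{u,v\}$ to $\{m,w'\}$, and, labelling $x_1 x_2 x_3 x_4 = u\,m\,v\,w'$, the admissibility condition $x_1 \to x_2 \iff x_4 \to x_3$ reads $u \to m \iff w' \to v$; the left side is true while the right side is false (since $r(w') < r(v)$ forces the Hasse orientation $v \to w'$), a contradiction. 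This forbidden-square argument is the heart of the proof and the main obstacle; it is precisely where admissibility, as opposed to mere modularity, is decisive.

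Once convexity is established, $(p)^{\uparrow}$ is an isometric subgraph of $G$ and so inherits modularity and the admissible orientation; the restriction of $r$ has $p$ as its minimum and makes the sub-poset graded with all ranks finite, hence discrete. Each principal ideal is a modular lattice $[p,x]$ by Theorem~\ref{thm:chara_swm}. To obtain meets, for $x,y \in (p)^{\uparrow}$ take a median $m$ of $p,x,y$ in $G$ (which lies in $(p)^{\uparrow}$ by convexity); the distance identity $d(x,y) = (r(x)-r(m)) + (r(y)-r(m))$ shows $r(m) = (r(x)+r(y)-d(x,y))/2$ maximises rank on $[p,x] \cap [p,y]$, and a further application of the forbidden-square argument to a hypothetical pair of distinct maximum-rank elements of $[p,x] \cap [p,y]$ yields uniqueness, so $m = x \wedge y$. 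Theorem~\ref{thm:BVV1} then certifies that this discrete meet-semilattice is modular.
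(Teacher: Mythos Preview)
Your convexity argument is correct and essentially parallels the paper's: both reduce via Lemma~\ref{lem-weakly-modular_gated_hull} to checking common neighbours at distance $2$. Where the paper invokes property~(c) of Theorem~\ref{thm:chara_om} (already established in its ``only if'' direction) to handle the case $u \succ w' \prec v$, you instead re-derive the needed instance directly from admissibility via the forbidden-square observation. That works and is arguably more transparent.

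The semilattice part, however, has a gap. Your candidate meet $m$ (a median of $p,x,y$) does have maximal rank among elements of $[p,x]\cap[p,y]$, but the uniqueness justification --- ``a further application of the forbidden-square argument'' --- does not go through as stated. Your square argument applies to two common \emph{neighbours} of a pair at distance~$2$; two maximum-rank elements $m_1,m_2$ of $[p,x]\cap[p,y]$ need not be at distance~$2$, nor need they sit in any square to which admissibility can be applied. The paper sidesteps this by invoking~(c) directly: since $x,y$ are lower-bounded by $p$, their meet in $\mathcal{P}(G,o)$ exists and lies above $p$. If you wish to stay self-contained, a correct fix is available once convexity (of both filters and ideals) is in hand: given distinct $m_1,m_2$ of maximal rank, take a median $m_3$ of $m_1,m_2,x$; convexity of $[p,x]$ and $[p,y]$ (each an intersection of a filter and an ideal) forces $m_3\in I(m_1,m_2)\subseteq [p,x]\cap[p,y]$, while $m_3\in I(m_1,x)=[m_1,x]$ gives $m_3\succeq m_1$ and similarly $m_3\succeq m_2$, hence $r(m_3)>r(m_1)$ --- contradicting maximality. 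Either route closes the gap, but neither is the forbidden-square argument you cite.
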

\begin{proof}
We first show the convexity.
Let $p$ be an arbitrary vertex.
By (c), the filter $(p)^{\uparrow}$ is  a meet-semilattice.
By Lemma~\ref{lem-weakly-modular_gated_hull}, pick two vertices $x,y$ at
distance 2 in $(p)^{\uparrow}$ and let $z$ be a common neighbor of $x,y$. We
assert that $z \in (p)^{\uparrow}$.
We can assume $y \not \prec x$.
Indeed, if $x \prec y$, then $z \in [x,y] \subseteq (p)^{\uparrow}$.
If $x \prec z \succ y$, then it is obvious that $z \in (p)^{\uparrow}$.
Suppose that $x \succ z \prec y$.
By (c), $z$ must coincide with $x \wedge y$, whence $z= x \wedge y \in  (p)^{\uparrow}$.
This shows that the filter $(p)^{\uparrow}$ is convex.
The fact that $(p)^{\uparrow}$ is a modular semilattice
follows from (b), (c) and (the same argument of the proof of) (d).
\end{proof}

\begin{remark} The topological condition (e) cannot be removed from Theorem \ref{thm:chara_om}:
for example, consider a $2n$--cycle $(n \geq 4)$ with the  zigzag orientation.
Note also that if $\mathcal{P}$ has no infinite chains, then
(e) can be replaced by the condition that the order complex of $\mathcal{P}$ is simply connected.
\end{remark}

\section{Boolean pairs and Boolean-gated sets}\label{subsec:Boolean}

A pair $(p,q)$ of vertices of $G$ is called {\em Boolean} if the subgraph induced by $I(p,q)$ contains
an isometrically embedded $k$--cube with $k=d(p,q)$.
\begin{Prop}\label{prop:char_Boolean}
Let $G$ be an swm-graph. For any two vertices $p,q$ of $G$, the following conditions are equivalent:
\begin{itemize}
\item[{\rm (i)}] $(p,q)$ is a Boolean pair;
\item[{\rm (ii)}] there exists an isometric $2d(p,q)$--cycle containing $p,q$;
\item[{\rm (iii)}] $I(p,q)$ is a complemented modular lattice;
\item[{\rm (iv)}] every pair of vertices of $\lgate p,q \rgate$ is Boolean;
\item[{\rm (v)}] $\lgate p,q \rgate$ induces a thick subgraph of $G$;
\item[{\rm (vi)}] $\lgate p,q \rgate$ induces a dual polar graph of diameter $d(p,q)$.
\end{itemize}
\end{Prop}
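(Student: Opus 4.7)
The plan is to prove Proposition~\ref{prop:char_Boolean} via the cycle of implications
(iv) $\Rightarrow$ (i) $\Leftrightarrow$ (ii) $\Leftrightarrow$ (iii) $\Rightarrow$ (v) $\Leftrightarrow$ (vi) $\Rightarrow$ (iv), with Theorem~\ref{thm:chara_swm} as the backbone: every interval $I(p,q)$ in an swm-graph is a modular lattice of rank $k=d(p,q)$ that induces its covering graph isometrically. I would combine this with the Birkhoff correspondence (Theorem~\ref{thm:Birkhoff}) and the dual polar characterization of Theorem~\ref{th:dual-polar-graph}.

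For (i) $\Leftrightarrow$ (iii), an isometric $k$-cube inside $I(p,q)$ realizes $q$ as the join of its $k$ atoms in the modular lattice $I(p,q)$, which by the classical fact cited in Subsection~\ref{s:balat} forces complementedness. Conversely, if $I(p,q)$ is complemented modular of rank $k$, Theorem~\ref{thm:Birkhoff} identifies it with the subspace lattice of a generalized projective space of dimension $k-1$; any $k$ independent atoms then join to $q$ and generate a Boolean sublattice of rank $k$, whose covering graph is a $k$-cube that sits isometrically inside $G$ by the isometry clause of Theorem~\ref{thm:chara_swm}. Implication (i) $\Rightarrow$ (ii) is immediate, since two $(p,q)$-paths obtained by sweeping the cube's coordinates in opposite orders form an isometric $2k$-cycle. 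For (ii) $\Rightarrow$ (iii) I would induct on $k$: from an isometric $2k$-cycle through $p,q$, the two neighbors $a_1,a_2$ of $p$ are incomparable atoms of $I(p,q)$, and peeling off $p$ via (TC), (QC), and the modular meet/join identities yields isometric $2(k-1)$-cycles through $a_i$ and $q$ to which the induction hypothesis applies; assembling the resulting $(k-1)$-independent atom families on both sides produces $k$ independent atoms of $I(p,q)$ with join $q$, hence complementedness.

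For the gated-hull equivalences, (v) $\Leftrightarrow$ (vi) is essentially Theorem~\ref{th:dual-polar-graph}: the gated subgraph $\lgate p,q \rgate$ is an isometric swm-subgraph, hence already satisfies every hypothesis of that theorem except thickness, while its diameter equals $d(p,q)$ by the very definition of gated hull. Implication (iv) $\Rightarrow$ (i) is trivial. For (vi) $\Rightarrow$ (iv), if $\lgate p,q \rgate$ is dual polar then Lemma~\ref{lem:polar=>modular_semilattice} applied to the underlying polar space makes $I(p,q)$ a complemented modular lattice, yielding (iii) for $(p,q)$; for any other pair $u,v$ in $\lgate p,q \rgate$, Lemma~\ref{l:gated_hull_dpg2} shows $\lgate u,v \rgate$ to be again dual polar of diameter $d(u,v)$, so the same argument makes $(u,v)$ Boolean.

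The main obstacle is (iii) $\Rightarrow$ (v), namely upgrading complementedness of the single lattice $I(p,q)$ to thickness of the entire gated hull. The approach is first to show that under (iii) the set $I(p,q)$ is already gated in $G$, so that $\lgate p,q \rgate = I(p,q)$; the delicate point is that any common neighbor in $G$ of two elements of $I(p,q)$ must itself lie in $I(p,q)$, which will be forced by (TC), (QC), the identities $u\wedge v,\,u\vee v \in I(p,q)$, and the absence of induced $K_4^-$ and isometric $K_{3,3}^-$ in $G$. Once gatedness is in hand, thickness reduces to a purely lattice-theoretic statement: for any $u,v \in I(p,q)$ at distance $2$, the sublattice $[u\wedge v, u\vee v]$ is complemented modular of rank $2$ by the relative-complementedness recalled in Subsection~\ref{s:balat}, hence contains at least two atoms, producing a square through $u,v$ and closing the cycle.
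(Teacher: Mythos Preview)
Your implication scheme is mostly fine, but the step you flag as the ``main obstacle,'' namely (iii) $\Rightarrow$ (v), rests on a claim that is false: it is \emph{not} true that under (iii) the interval $I(p,q)$ is gated in $G$, and hence in general $\lgate p,q\rgate \ne I(p,q)$. A small counterexample is $G=K_{3,3}$ with parts $\{p_0,p_1,p_2\}$ and $\{q_0,q_1,q_2\}$; the pair $(p_0,p_1)$ is Boolean (the square $p_0q_0p_1q_1$ witnesses it), and $I(p_0,p_1)=\{p_0,p_1,q_0,q_1,q_2\}$ is a complemented modular lattice of rank~2, yet $p_2$ is a common neighbor of $q_0,q_1\in I(p_0,p_1)$ that lies outside $I(p_0,p_1)$. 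So $\lgate p_0,p_1\rgate$ is all of $K_{3,3}$. The ``delicate point'' you identify --- that every common neighbor of two interval elements must itself lie in the interval --- simply fails, and no amount of $K_4^-$/$K_{3,3}^-$--exclusion will rescue it.

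The paper circumvents this by proving (iii) $\Rightarrow$ (iv) directly: run the procedure GATED-HULL starting from $I(p,q)$, and show that at each step the newly added common neighbor $w$ of $u,v$ forms a Boolean pair with every vertex already present. This is the content of the auxiliary Lemma~\ref{lem:Boolean123}, which says that if $(p,q)$ and $(p,u)$ are Boolean and $w$ is a common neighbor of $q,u$, then $(p,w)$ is Boolean. The lemma is proved by analyzing joins of atoms in the complemented modular lattices $I(p,q)$, $I(p,u)$, $I(p,w)$. Once (iv) is in hand, (v) is immediate, and the cycle closes via (v) $\Leftrightarrow$ (vi) $\Rightarrow$ (iii) (the latter using the rank-preserving map $x\mapsto\mathcal{S}(p,x)$ from Lemma~\ref{l:Cameron}, not Lemma~\ref{lem:polar=>modular_semilattice} alone). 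Your other implications are essentially correct, though your inductive argument for (ii) $\Rightarrow$ (iii) is more laborious than the paper's direct one: from an isometric $2k$-cycle $(p=x_0,\ldots,x_k=q=y_0,\ldots,y_k=p)$ each $x_i$ is a complement of $y_i$, and the atoms $a_i:=x_i\wedge y_{i-1}$ satisfy $q=a_1\vee\cdots\vee a_k$.
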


\begin{proof}

(i)$\Rightarrow$(ii): This immediately follows because any two vertices $p,q$ of a cube belong to
an isometric cycle of length $2d(p,q)$.

(ii) $\Rightarrow$ (iii): Let $k := d(p,q)$, and
let $(p = x_0,x_1,\ldots,x_k = q = y_0, y_1,\ldots, y_k = p)$ be
an isometric $2k$--cycle, which belongs to the modular lattice $I(p,q)$.
Then $x_i$ is a complement of $y_i$; namely $x_i \wedge y_i = p$ and $x_i \vee y_i = q$.
By the modular equality, for $i=1,2,\ldots,k$, the
meet $a_i := x_i \wedge y_{i-1}$ has rank $1$ and is an atom.
Then $x_i = x_{i-1} \vee a_i$ holds
(otherwise $a_i \preceq x_{i-1}$ giving a contradiction $a_i \preceq x_{i-1} \wedge y_{i-1} = p$).
Consequently $q = a_1 \vee a_2 \vee \cdots \vee a_k$.
Namely, $q$ is a join of atoms. Hence, $I(p,q)$ is a complemented modular lattice.

(iv)$\Rightarrow$(i) and (iv)$\Rightarrow$(v) are obvious.

(v) $\Leftrightarrow$ (vi): $\lgate p,q \rgate$ induces an swm-subgraph (thus, a weakly modular subgraph) of $G$. If $\lgate p,q \rgate$ satisfies condition (v), then
$\lgate p,q \rgate$ induces a dual polar graph by Theorem~\ref{th:dual-polar-graph}. This graph has diameter $d(p,q)$ by Lemma~\ref{l:gated_hull_dpg2}.
Conversely, if $\lgate p,q \rgate$ satisfies condition (vi),  by Theorem~\ref{th:dual-polar-graph} $\lgate p,q \rgate$ induces a thick subgraph.


(vi)$\Rightarrow$(iii):
By Lemma~\ref{l:Cameron},
the map $\rho$ defined by $x \mapsto \mathcal{S}(p,x)$ is
an order- and rank-preserving map from modular lattice $I(p,q)$ to
the lower ideal of $\mathcal{S}(p,q)$ in $\mathcal{L}(p)$,
which is a complemented modular lattice.
In particular, $\rho(x \vee y) \supseteq \rho(x) \vee \rho(y)$ holds.
Consider the set $A$ of all atoms of $I(p,q)$.
By definition, $\bigcup_{a \in A} \rho(a)$ is equal to $\mathcal{S}(p,q)$, and is the set
of all atoms of $(\mathcal{S}(p,q))^{\downarrow}$.
Thus $\rho( \bigvee A) =  \mathcal{S}(p,q)$.
Since $\rho$ is rank-preserving and the rank of $\mathcal{S}(p,q)$ is $d(p,q)$ (Lemma~\ref{l:Cameron}),
the rank of $\bigvee A$ in $I(p,q)$ is $d(p,q)$.
This means that $q$ is equal to the join $\bigvee A$ of atoms, and $I(p,q)$ is complemented.

(iii) $\Rightarrow$ (iv):
Apply the algorithm GATED-HULL (see Subsection~\ref{s:wmgra}) with the initial set $I(p,q)$.
Initially, every pair is Boolean; this follows from
the well-known property of a complemented modular lattice that
any two elements (chains) belongs to
a Boolean sublattice (Lemma~\ref{lem:Booleansublattice}) and that the
covering graph of a Boolean lattice is a cube.
Suppose that every pair of the current set $X$ is Boolean.
Suppose that a common neighbor $w$ of $u,v \in X$ is added to $X$.
We assert that $(x,w)$ is Boolean for every $x \in X$.
Since $(x,u)$ and $(x,v)$ are Boolean, $(x,w)$ is also Boolean
by the following Lemma~\ref{lem:Boolean123}.
\end{proof}

\begin{Lem}\label{lem:Boolean123}
Let $(p,q)$ be a Boolean pair of an swm-graph $G$. Then
\begin{itemize}
\item[{\rm (1)}] for  a neighbor $q'$ of $q$ with $d(p,q) = d(p,q')$,
the pair $(p,q')$ is Boolean.
\item[{\rm (2)}] for another Boolean pair $(p,u)$
and a common neighbor $w$ of $q,u$, the pair $(p,w)$ is Boolean.
\end{itemize}
\end{Lem}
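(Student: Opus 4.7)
The plan is to first reduce (1) to (2), and then establish (2) by induction. For (1): given $(p,q)$ Boolean with $q' \sim q$ and $d(p,q') = d(p,q) = k$, I will apply the triangle condition TC($p$) to the equidistant edge $qq'$ to produce a common neighbor $x \sim q, q'$ with $d(p,x) = k-1$. Since $I(p,q)$ is a complemented modular lattice (by the equivalence (i)~$\Leftrightarrow$~(iii) of Proposition~\ref{prop:char_Boolean}, obtained via the path (i)~$\Rightarrow$~(ii)~$\Rightarrow$~(iii) and (iii)~$\Rightarrow$~(i) through Lemma~\ref{lem:Booleansublattice}, none of which invoke the present lemma), its principal ideal $[p,x] = I(p,x)$ is also complemented modular, so $(p,x)$ is Boolean. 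Applying (2) to $(p,q)$, $(p,x)$, and the common neighbor $q'$ of $q, x$ then yields $(p,q')$ Boolean.

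For (2), by Theorem~\ref{thm:chara_swm}, $I(p,w)$ is a modular lattice of rank $c := d(p,w)$, so it suffices to show it is complemented. Set $a := d(p,q)$, $b := d(p,u)$; the adjacencies force $|c-a|, |c-b| \leq 1$, and I will induct on $a + b$. The principal case is $c = a+1 = b+1$: here $q$ and $u$ are distinct coatoms of $I(p,w)$, necessarily at covering-graph distance $2$ (equal-rank elements of a modular lattice cannot cover one another), and the quadrangle condition produces their meet at rank $c-2$; the modular equality then forces $q \vee u = w$. The atoms of $I(p,q)$ and of $I(p,u)$ are atoms of $I(p,w)$ (by the triangle inequality each lies at distance $c-1$ from $w$), and by Booleanness of $(p,q)$ and $(p,u)$ they join respectively to $q$ and to $u$, so their combined join is $\geq q \vee u = w$; hence $I(p,w)$ is complemented.

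The remaining cases reduce to the principal case via TC. If $w \in I(p,q) \cup I(p,u)$, then $I(p,w)$ is a principal ideal of a complemented modular lattice, hence complemented. Otherwise some side (say $q \sim w$ with $d(p,q) = d(p,w)$) is equidistant from $p$; TC then produces a common neighbor $x$ of $q$ and $w$ with $d(p,x) = c-1$, and $(p,x)$ is Boolean by the ideal argument inside $I(p,q)$. Repeating this on the $u$-side when applicable yields a Boolean pair $(p,y)$; applying the inductive hypothesis to $(p,x),(p,y),w$ (or the appropriate mixed pair when only one side is equidistant), whose parameter $d(p,x) + d(p,y)$ is strictly smaller than $a+b$, delivers $(p,w)$ Boolean.

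The main obstacle will be the degenerate subcase in which the TC-reductions from the $q$- and $u$-sides yield the same intermediate vertex, making the naive induction stall. The plan for this is to exploit the richness of the complemented modular structure of $I(p,q)$: instead of the TC-produced vertex, one chooses a complementary atom of the offending coatom inside $I(p,q)$ and verifies that it serves as an atom of $I(p,w)$ complementary to that coatom. The forbidden isometric $K_{3,3}^-$ is exactly what rules out the configuration in which such an alternative atom would fail to lie in $I(p,w)$ (i.e., would satisfy $d(\alpha,w) = c$ rather than $c-1$), and the forbidden induced $K_4^-$ controls the local triangle structure on $\{q,u,w\}$ that the argument relies on.
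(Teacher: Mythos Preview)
Your handling of the principal case of (2) (where $c=a+1=b+1$) is correct and matches the paper's. The gap is in your fix for the degenerate subcase. The complementary atom $\alpha$ of the coatom $x$ inside $I(p,q)$ does \emph{not} lie in $I(p,w)$: in fact $d(\alpha,w)=c$, not $c-1$, and this is \emph{forced} by the $K_4^-$ condition rather than ruled out by $K_{3,3}^-$. To see it (writing $w=q'$ as in (1)): if $d(\alpha,q')=k-1$, then since also $d(\alpha,q)=k-1$, TC$(\alpha)$ applied to the edge $qq'$ yields $h'\sim q,q'$ with $d(\alpha,h')=k-2$; one checks $d(p,h')=k-1$, so $h'$ is a coatom of $I(p,q)$ distinct from $x$ (different distance to $\alpha$) and nonadjacent to $x$ (equal rank in the induced covering graph), whence $\{q,q',x,h'\}$ is an induced $K_4^-$. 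So the configuration you hoped to exclude is exactly the one that occurs. The actual continuation is one more TC step: from $d(\alpha,q')=k=d(p,q')$ and $\alpha\sim p$, TC$(q')$ produces $\alpha'\sim \alpha,p$ with $d(\alpha',q')=k-1$; this $\alpha'$ is an atom of $I(p,q')$ not below $x$ (else $\alpha'\in I(p,q)$ would be an atom adjacent in $G$ to the equal-rank atom $\alpha$, contradicting Theorem~\ref{thm:chara_swm}), so $\alpha'\vee x=q'$ and $I(p,q')$ is complemented.

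But this repaired argument \emph{is} the paper's direct proof of (1). The paper then uses (1) to dispose of every non-principal case of (2) in one line (if $d(p,w)=d(p,q)$, apply (1) to the Boolean pair $(p,q)$ and its neighbor $w$), leaving only the principal case. Your reversal---prove (2) by induction on $a+b$, then derive (1)---adds the induction scaffolding without bypassing the hard step; the degenerate case forces you back to the paper's argument for (1) anyway.
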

\begin{proof}
To (1): Let $k:= d(p,q) = d(p,q')$. By (TC) for $q,q',p$,
there is a common neighbor $h$ of $q,q'$ with $d(p,h) = k - 1$.
Then $h$ is a coatom of $I(p,q)$ and of $I(p,q')$.
By complementarity, we can take an atom $a$ of $I(p,q)$
with $h \vee a = q$ and $h \wedge a = p$.
Then $d(a,q) = k-1$ and $d(a, h) = k$.
Also $d(a,q') = k$.
Otherwise $d(a,q') = k-1$. By (TC) for $q,q',a$
we obtain a common neighbor $h'$ of $q,q'$ with $d(h',a) = k-2$.
Hence $h'$ is a coatom of $I(p,q)$ different from and nonadjacent to $h$,
and thus $q,q',h,h'$ induce a $K_4^-$, a contradiction.

By (TC) for $p,a,q'$ there is a common neighbor $a'$ of $a,p$
with $d(a',q') = k-1$.
Then $a'$ does not belong to $I(p,q)$, and is an atom of $I(p,q')$.
Hence the join of $a'$ and $h$ in $I(p,q')$ is equal to $q'$ (here $h$ is a join of atoms
in $I(p,q)$ and in $I(p,q')$).
Thus $q'$ is a join of atoms in $I(p,q')$.
Thus $I(p,q')$ is complemented and $(p,q')$ is Boolean.

To (2): If $w \in I(p,q)$ or $w \in I(p,u)$, then $I(p,w)$
is also a complemented modular lattice, and the pair $(p,w)$ is Boolean.
By (1) it suffices to consider only the case $d(p,u) = d(p,q) = d(p,w) -1$.
Then $u$ and $q$ are distinct coatoms of $I(p,w)$ and their join is $w$.
Both $u$ and $q$ are joins of atoms in $I(p,w)$.
This means that $I(p,w)$ is complemented, and thus $(p,w)$ is Boolean.
\end{proof}
A set $X$ of vertices of a graph $G$ is called {\em Boolean-gated}
if $X$ induces a gated and thick subgraph of $G$,
or equivalently, if $X$ is a gated set inducing
a dual polar graph $G(X)$ (with possibly infinite diameter).
Since
a dual polar graph with finite diameter
is the gated hull of any maximum distant pair (Lemma~\ref{lem:d(x,y)=D}), we obtain the
following result:
\begin{Lem}\label{lem:X=<p,q>}
A set $X$ of vertices with finite diameter of an swm-graph $G$ is Boolean-gated
if and only if $X = \lgate p,q \rgate$ for some Boolean pair $(p,q)$.
\end{Lem}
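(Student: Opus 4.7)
The plan is to deduce both directions from Proposition~\ref{prop:char_Boolean} and Lemma~\ref{lem:d(x,y)=D}. Neither direction requires substantial new work; the main point is to interpret a Boolean-gated set internally as a dual polar graph via Theorem~\ref{th:dual-polar-graph}.

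For the ``if'' direction I would argue as follows. Suppose $X = \lgate p,q \rgate$ for some Boolean pair $(p,q)$. Then $X$ is gated by construction. Applying the equivalences (i)$\Leftrightarrow$(v)$\Leftrightarrow$(vi) of Proposition~\ref{prop:char_Boolean} to the Boolean pair $(p,q)$, the set $\lgate p,q \rgate$ induces a thick subgraph (so $X$ is Boolean-gated) which, moreover, is a dual polar graph of diameter $d(p,q)$. Hence $X$ has finite diameter, as required.

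For the ``only if'' direction I would start with an arbitrary Boolean-gated set $X$ of finite diameter $D$. Since $X$ is gated in $G$, the induced graph $G(X)$ is an isometric, weakly modular subgraph of $G$; it inherits the absence of induced $K_4^-$ and of isometric $K_{3,3}^-$ from $G$, and it is thick by the definition of ``Boolean-gated.'' Theorem~\ref{th:dual-polar-graph} then identifies $G(X)$ as a dual polar graph. Choosing any $p,q \in X$ with $d(p,q) = D$, Lemma~\ref{lem:d(x,y)=D} applied inside $G(X)$ gives $\lgate p,q \rgate_{G(X)} = X$. Since $X$ is gated in $G$, the standard fact that a gated subset of a gated set is gated implies that gated hulls of subsets of $X$ computed in $G$ agree with those computed in $G(X)$; hence $X = \lgate p,q \rgate_G$. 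Finally, because $X = \lgate p,q \rgate$ is thick, the equivalence (v)$\Rightarrow$(i) in Proposition~\ref{prop:char_Boolean} asserts that $(p,q)$ is a Boolean pair.

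The only mildly delicate points are the two appeals mentioned above: verifying that $G(X)$ satisfies the hypotheses of Theorem~\ref{th:dual-polar-graph}, and verifying that gated hulls commute with passage to a gated subset. Both are routine once noted, so there is no serious obstacle — the real content of the lemma is already packaged in Proposition~\ref{prop:char_Boolean} and Lemma~\ref{lem:d(x,y)=D}.
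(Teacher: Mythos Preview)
Your proof is correct and follows the same route as the paper, which merely states that the lemma is immediate from Lemma~\ref{lem:d(x,y)=D} and Proposition~\ref{prop:char_Boolean}. One small redundancy: in the ``only if'' direction you re-derive via Theorem~\ref{th:dual-polar-graph} that $G(X)$ is a dual polar graph, but this is already built into the paper's definition of Boolean-gated (the equivalence ``gated and thick $\Leftrightarrow$ gated and inducing a dual polar graph'' is recorded right before the lemma, relying on Proposition~\ref{prop:char_Boolean}~(v)$\Leftrightarrow$(vi)), so you can simply invoke the definition there.
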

Consider now the special case when $G$ is an orientable modular
graph with an admissible orientation $o$.
As above, $o$ induces on $V(G)$ the poset $\mathcal{P}(G,o)$.
Following \cite{HH12} a pair $(x,y)$ of vertices is called {\em $o$--Boolean}
if $x \preceq y$ and
the interval $[x,y]$ of $\mathcal{P}(G,o)$  is a complemented modular lattice,
or equivalently,  if $x$ is the sink and $y$ is the source
of a cube-subgraph oriented by $o$;
the sink and the source are uniquely determined by the admissibility of $o$.

\begin{Lem}\label{lem:o-Boolean}
Let $G$ be an orientable modular graph with an admissible orientation $o$.
A set $X$ of vertices with finite diameter is Boolean-gated if and only
if $X = [p,q]$ for some $o$--Boolean pair $(p,q)$.
\end{Lem}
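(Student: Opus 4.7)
The plan is to bridge the two lemmas \ref{lem:X=<p,q>} and \ref{lem:o-Boolean} via the observation that any orientable modular graph is a triangle-free swm-graph. All gated thick subgraphs of such a graph therefore reduce to cubes, and the admissible orientation $o$ uniquely recovers a sink and a source in each cube, producing the interval $[p,q]$.

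For the forward direction, I start with a Boolean-gated set $X$ of finite diameter. By Lemma~\ref{lem:X=<p,q>} there is a Boolean pair $(u,v)$ with $X=\lgate u,v\rgate$, and by Proposition~\ref{prop:char_Boolean}(vi) $G(X)$ is a dual polar graph of finite diameter $d(u,v)$. Since $G$ is modular, it is triangle-free; hence every maximal clique of $G(X)$ has exactly two vertices. Using the axiomatics of dual polar graphs (together with Theorem~\ref{th:dual-polar-graph}, which forces thickness of every $2$-interval), this collapses $G(X)$ onto the $1$-skeleton of a $d(u,v)$-cube. I then restrict the admissible orientation $o$ to this cube. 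By admissibility, opposite edges of every $2$-face (i.e., every square of the cube) share direction, and since any two edges of the cube in the same ``coordinate direction'' can be connected through a sequence of such squares, they must all be oriented the same way. Consequently the restricted orientation is the standard orientation of a $d$-cube with a unique sink $p$ and unique source $q$, giving $X=[p,q]$ in the poset $\mathcal{P}(G,o)$ and $(p,q)$ $o$-Boolean.

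For the reverse direction, let $(p,q)$ be an $o$-Boolean pair so that $X=[p,q]$ is a complemented modular lattice whose covering graph is a cube-subgraph of $G$. Because $\mathcal{P}(G,o)$ is graded and $p\preceq q$, the poset interval $[p,q]$ coincides as a vertex set with the graph interval $I(p,q)$. Hence, by Proposition~\ref{prop:char_Boolean}(iii)$\Rightarrow$(vi), the gated hull $\lgate p,q\rgate$ induces a dual polar graph of diameter $d(p,q)$; by triangle-freeness it is again a cube of the same dimension. Since $[p,q]$ is already an isometric cube of dimension $d(p,q)$ contained in $\lgate p,q\rgate$ (via Lemma~\ref{lem:d(x,y)=D}), the two must coincide, so $X=\lgate p,q\rgate$ is Boolean-gated by Lemma~\ref{lem:X=<p,q>}.

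The main obstacle is the identification of a triangle-free, thick dual polar graph with the $1$-skeleton of a cube, and the verification that an admissible orientation restricted to such a cube produces the required unique sink/source. The first needs the fact that thickness in a triangle-free setting forces $2$-intervals to be exactly squares, making the subspace poset Boolean; the second is a propagation argument through the squares of the cube, exploiting admissibility on each $2$-face to coordinate the direction assignment across all edges in the same cube-coordinate.
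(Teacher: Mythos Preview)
Your argument rests on the claim that a Boolean-gated set $X$ in an orientable modular graph must itself be a cube (equivalently, that a triangle-free dual polar graph of finite diameter is a hypercube, or that the covering graph of a complemented modular lattice is a cube). This is false: take $G=K_{2,3}$ with parts $\{a,b\}$ and $\{1,2,3\}$, oriented so that $a$ is the minimum and $b$ the maximum. This graph is orientable modular, the whole vertex set is Boolean-gated (it is thick and trivially gated), and $[a,b]$ is the rank-$2$ complemented modular lattice $M_3$, which is not Boolean. So $X$ is not a cube, and your sink/source extraction from ``the cube $G(X)$'' does not get off the ground. The same confusion reappears in your reverse direction, where you assert that $[p,q]$ for an $o$-Boolean pair has a cube as covering graph.

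The paper avoids this by never claiming that $X$ is a cube. For the forward direction it only uses that a Boolean pair $(u,v)$ lies in \emph{some} isometric $d(u,v)$-cube $H\subseteq X$; the admissible orientation restricted to this single cube gives a sink $p$ and source $q$, and then Lemma~\ref{lem:d(x,y)=D} yields $\lgate p,q\rgate=X$. For the reverse direction the key input you are missing is Lemma~\ref{lem:filter_is_convex}: principal filters and ideals (hence their intersection $[p,q]$) are convex in $G$, and in a modular (bipartite) graph convexity of a connected set already gives gatedness, so $[p,q]=\lgate p,q\rgate$ directly. This is what you should invoke instead of trying to match cubes of equal dimension.
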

\begin{proof} We start with the ``if'' part. Since $\mathcal{P}(G,o)$ is graded,
the poset  $I(p,q)$ for $p \preceq q$ is isomorphic to $[p,q]$.
Thus $I(p,q)$ is complemented, and hence $(p,q)$ is Boolean.
By Lemma~\ref{lem:filter_is_convex}, $[p,q] = (p)^{\uparrow} \cap (q)^{\downarrow}$
is convex, whence $[p, q] = \lgate p,q \rgate$.

We continue now with the ``only if'' part. Let $(u,v)$ be a pair  with $\lgate u,v \rgate = X$.
Take an isometric cube-subgraph $H$ containing $u,v$.
Necessarily $H$ belongs to $X$ and its diameter is the same as that of $X$.
Restrict the orientation $o$ to $H$.
By admissibility, the resulting digraph is
the same as the Hasse diagram of a Boolean lattice.
Therefore there exist the minimum element $p$ and the maximum element $q$.
Then $(p,q)$ is $o$--Boolean.
Since $d(p,q) = d(u,v)$, it holds $\lgate p,q \rgate = \lgate u,v \rgate = X$.
As above, $[p, q]$ is convex, and hence $X = [p,q]$, as required.
\end{proof}
We conclude this section with the following intersection property of Boolean-gated sets:

\begin{Lem}\label{lem:Boolean_intersection}
For a Boolean-gated set $X$ and a gated set $Y$ of an swm-graph $G$,
the nonempty intersection $X \cap Y$ is Boolean-gated.
\end{Lem}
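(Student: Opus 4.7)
The plan is to verify directly the two defining properties of a Boolean-gated set for $X \cap Y$: that it is gated, and that the subgraph it induces in $G$ is thick.

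Gatedness is immediate, since the intersection of two gated sets in any metric space is gated (recalled in Subsection~\ref{s:bagraphs}); in particular $X \cap Y$ is convex, and its $2$--intervals coincide with those of $G$ restricted to $X \cap Y$. For thickness, I pick arbitrary $p,q \in X \cap Y$ with $d(p,q)=2$ and must exhibit a square inside $I(p,q)$ lying in $X \cap Y$. Since $X$ is Boolean-gated, $G(X)$ is gated and thick, so the gated hull $\lgate p,q\rgate$ taken in $G$ is contained in $X$; being a gated subset of a thick graph, $\lgate p,q\rgate$ itself induces a thick subgraph, as its $2$--intervals agree with those of $X$ by convexity. Thus $\lgate p,q\rgate$ satisfies condition (v) of Proposition~\ref{prop:char_Boolean}, and the equivalence (v)$\Leftrightarrow$(iv) in that proposition yields that every pair of vertices of $\lgate p,q\rgate$ is Boolean. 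Specialising to the pair $(p,q)$ itself at distance $2$, Booleanness means that $I(p,q)$ contains an isometrically embedded $2$--cube, i.e., a square.

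It remains to observe that this square lies inside $X \cap Y$. Gatedness of $Y$, together with $p,q \in Y$, gives $\lgate p,q\rgate \subseteq Y$; combined with $\lgate p,q\rgate \subseteq X$, we obtain $I(p,q) \subseteq \lgate p,q\rgate \subseteq X \cap Y$, which traps the square where we need it. I do not foresee a genuine obstacle: the argument is a short bookkeeping exercise once Proposition~\ref{prop:char_Boolean} is invoked, and the only subtlety is the repeated use of convexity that allows the $2$--intervals computed in $X$, in $Y$, in $X \cap Y$, and in $G$ to be identified throughout.
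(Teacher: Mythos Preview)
Your proof is correct and follows essentially the same route as the paper, which simply records that the lemma ``immediately follows from Proposition~\ref{prop:char_Boolean}(iv)''. One small remark: the detour through (v)$\Leftrightarrow$(iv) is not strictly necessary for the case $d(p,q)=2$, since thickness of $X$ alone already furnishes a square in $I(p,q)$, and convexity of $X$ and $Y$ then traps it in $X\cap Y$; but your argument is sound and faithfully unpacks the paper's citation.
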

\begin{proof} Since $X$ and $Y$ are gated, $X\cap Y$ is also gated. Since $X$ is thick and $Y$ is convex (because $Y$ is gated),
$X\cap Y$ is thick. Consequently, $X\cap Y$ is Boolean-gated.
\end{proof}

\section{Barycentric graph $G^*$}\label{subsec:barycentric}

\begin{figure}[ht]
\begin{center}
\includegraphics[scale=0.75]{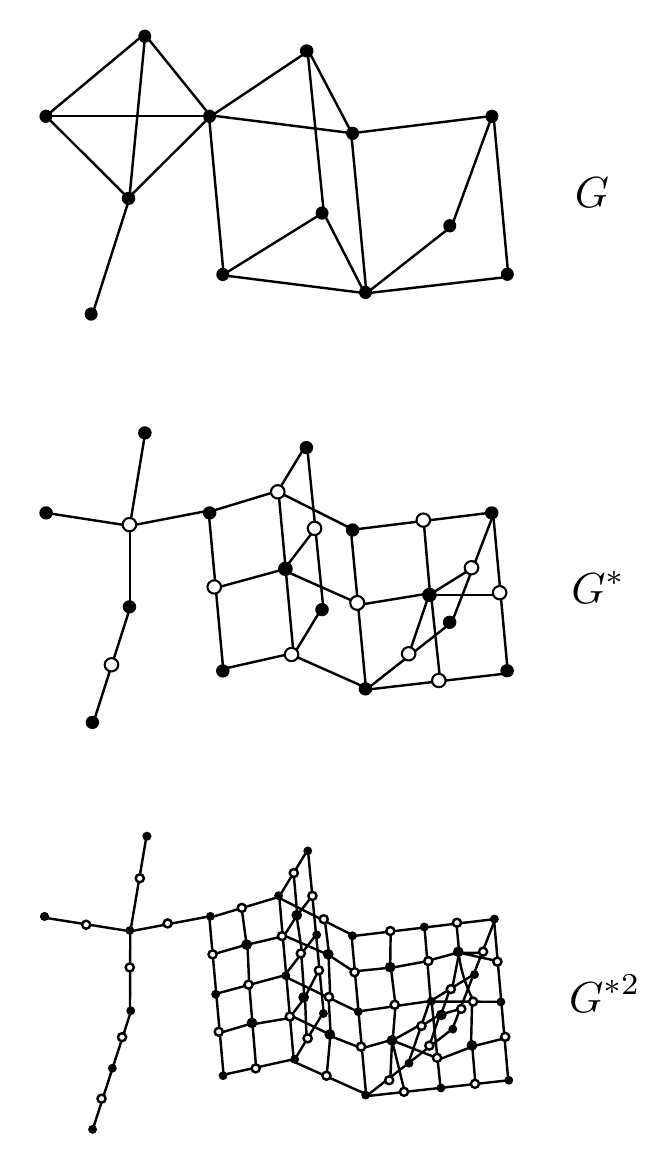}
\end{center}
\caption{An swm-graph $G$ and its barycentric graphs $G^*$ and $G^{*2}$}\label{fig:Gstar}
\end{figure}

Let $\mathcal{B}(G)$ denote the set of all Boolean-gated sets of finite diameter of an swm-graph $G$.
Regard $\mathcal{B}(G)$ as a poset with respect to the reverse inclusion.  Let $G^*$ be the covering
graph of the poset $\mathcal{B}(G)$. We call $G^*$ the {\it barycentric graph} of $G$; see Figure~\ref{fig:Gstar}.
Define the
length of edges of $G^*$ to be  one half of the length of edges of $G$. The Hasse orientation of
the edges of $G^*$
is denoted by $o^*$. If $G$ is an orientable modular graph,
then $G^*$ is equivalent to the $2$--subdivision of $G$ in the sense of~\cite{HH12} and is an orientable modular
graph. We extend this
result to barycentric graphs of all swm-graphs:

\begin{Thm}\label{thm:G*_is_om} The barycentric graph $G^*$ of any swm-graph $G$ is an orientable
modular graph and  $o^*$ is an admissible orientation of $G^*$. Moreover, the graph $G$ is isometrically
embedded  in $G^*$ via the map $p \mapsto \{p\}$.
\end{Thm}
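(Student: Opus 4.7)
The plan is to apply Theorem~\ref{thm:chara_om} to the poset $\mathcal{B}(G)$ ordered by reverse inclusion, whose covering graph is by construction $G^*$. The key structural input is that, for each $X\in\mathcal{B}(G)$, the principal filter of $X$ in $\mathcal{B}(G)$ under reverse inclusion (equivalently, the collection of Boolean-gated subsets of $X$) is order-anti-isomorphic to the subspace poset of the polar space $\Pi_X$ associated to the dual polar graph $G(X)$: $G(X)$ is a dual polar graph by Proposition~\ref{prop:char_Boolean}(vi), and its convex/gated subgraphs correspond bijectively and in an inclusion-reversing way to subspaces of $\Pi_X$, as in the $\mathcal{L}(x)$-analysis of Lemma~\ref{l:gated_hull_dpg2}. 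By Lemma~\ref{lem:polar=>modular_semilattice}, the resulting poset is a complemented modular semilattice.

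From this local picture, conditions (a) and (b) of Theorem~\ref{thm:chara_om} follow directly. Every cover in $\mathcal{B}(G)$ occurs inside such a principal filter and corresponds to a unit rank change there, which in turn corresponds to a unit change in diameter; hence $X\mapsto -\diam(X)$ is a grade function, giving (a). Any interval of $\mathcal{B}(G)$ lies inside the principal filter of its $\subseteq$-largest element and is therefore an interval in a complemented modular semilattice, hence itself a complemented modular lattice by Theorem~\ref{thm:Birkhoff}(1); this gives (b). The join half of (c) and (d) reduces to the fact that pairwise-upper-bounded families of Boolean-gated sets (those with pairwise nonempty intersection) have a common intersection which is Boolean-gated by Lemma~\ref{lem:Boolean_intersection} and guaranteed to exist by the finite Helly property for gated sets (Lemma~\ref{lem:gated_Helly}). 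The meet half of (c) follows by observing that whenever $X_1\cup X_2$ lies inside a Boolean-gated set, $\lgate X_1\cup X_2\rgate$ is itself Boolean-gated, again by Lemma~\ref{lem:Boolean_intersection}.

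The genuinely global ingredient is condition (e): simple connectedness of the $2$-skeleton of $\Delta(\mathcal{B}(G))$. I plan to derive it from the simple connectedness of the triangle-square complex $X\trsq(G)$ (Lemma~\ref{simplyconnected}, since $G$ is weakly modular) by translating null-homotopies of loops in $G$ into null-homotopies in $\Delta(\mathcal{B}(G))$: each filler triangle or square of $X\trsq(G)$ sits inside a single principal filter, whose complemented-modular-semilattice structure makes it collapsible there. This step, together with the meet half of (d) for pairwise lower-bounded triples (where the gated hull of three pairwise Boolean-coupled Boolean-gated sets must itself be shown to be Boolean-gated), is the place where I expect the main technical difficulty, since both force genuinely global arguments rather than a single-principal-filter analysis. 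Once Theorem~\ref{thm:chara_om} is invoked, $G^*$ is orientable modular and $o^*$, being the Hasse orientation of the modular poset $\mathcal{B}(G)$, is automatically admissible. Finally, $p\mapsto\{p\}$ is an isometric embedding of $G$ into $G^*$: each edge $pq$ of $G$ lifts to the length-$1$ path $\{p\}\to\lgate p,q\rgate\leftarrow\{q\}$ in $G^*$, giving $d_{G^*}(\{p\},\{q\})\le d_G(p,q)$; the reverse inequality follows from the grading, since singletons are the rank-$0$ elements of $\mathcal{B}(G)$, so any $G^*$-geodesic between them has equally many descents and ascents, and each minimal down-up pair of consecutive steps projects to an edge of $G$.
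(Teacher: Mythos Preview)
Your plan is exactly the paper's: verify conditions (a)--(e) of Theorem~\ref{thm:chara_om} for $\mathcal{B}(G)$ under reverse inclusion, then treat the isometric embedding separately. Your handling of (a), (b), the join half of (c), and the upper-bounded half of (d) matches the paper, and your argument for the meet half of (c) via Lemma~\ref{lem:Boolean_intersection} applied to $\lgate X_1\cup X_2\rgate\subseteq W$ is clean. You also correctly flag the meet half of (d) and condition (e) as the hard parts.

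However, in all three places you leave open --- (d) meet, (e), and the reverse inequality for the embedding --- you are missing the same specific tool, and your sketches as written do not go through. For the meet half of (d), the paper does \emph{not} show directly that $\lgate X\cup Y\cup Z\rgate$ is Boolean-gated. It first observes (via Helly applied to $U,V,W$) that $X\cap Y\cap Z$ contains a vertex $p$, and then invokes Proposition~\ref{prop:ideal}(3): the principal ideal at $\{p\}$ is a complemented modular join-semilattice, so pairwise-bounded triples there have meets. The proof of Proposition~\ref{prop:ideal}(3) is the real work: an induction on the number of maximal cliques through $p$, using axiom (A5) of Theorem~\ref{th:cameron_dual_polar} inside ambient dual polar subgraphs to manufacture the required Boolean pair $(p,y)$. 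For (e), translating null-homotopies \emph{from} $X\trsq(G)$ \emph{to} $\Delta(\mathcal{B}(G))$ is only half the job; you must first show that every loop in the $1$-skeleton of $\Delta(\mathcal{B}(G))$ is homotopic to a ridge cycle (alternating between singletons and diameter-$1$ sets). The paper's reduction replaces a minimum-grade vertex $X_2$ of the loop, and when $X_1\cap X_3=\emptyset$ it uses (P3) of Theorem~\ref{thm:Tits} inside the polar-space filter at $X_2$ to find adjacent $z\in X_1$, $w\in X_3$.

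Your embedding argument for $d_{G^*}(\{p\},\{q\})\ge d_G(p,q)$ is not correct as stated: a ``down-up pair'' of consecutive steps in a $G^*$-geodesic need not project to an edge of $G$, since the local minimum need not have diameter $1$ and its neighbours need not be singletons. The paper's induction again uses (P3): at the first local minimum $x_j$ of the geodesic it produces $p'\sim p$ in $G$ with $\{p'\}\succeq x_{j+1}$, then applies the inductive hypothesis to $p',q$. The common missing ingredient throughout is that you cite the polar-space structure of principal filters for (a) and (b) but do not exploit its exchange-type axioms (P3) and (A5) where they are actually needed.
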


\begin{proof} We start with some properties of the poset $\mathcal{B}(G)$.
\begin{Prop}\label{prop:ideal}
Let $G$ be an swm-graph.
\begin{itemize}
\item[(1)] The poset $\mathcal{B}(G)$ is graded.
\item[(2)] Every principal filter of $\mathcal{B}(G)$ is the subspace poset of a polar space.
\item[(3)] Every principal ideal of $\mathcal{B}(G)$ is a complemented modular join-semilattice.
\end{itemize}
\end{Prop}
\begin{proof}
To (1): For $X\in \mathcal{B}(G)$,  let
$r(X)$ be the diameter of $X$. We show that $- r$ is a grade function.
We use a general property of a dual polar graph $H$ of finite diameter $D$;
see Section~\ref{subsec:proof_of_lotoglo_dp23}.
By Lemma~\ref{l:Cameron},
the rank of $\mathcal{L}(x)$ for $x \in V(H)$
is equal to $\max_{y \in V(H)} d(x,y)$.
Cameron~\cite{Ca}*{p. 80} showed that
the rank of $\mathcal{L}(x)$ is independent of the choice of $x \in V(H)$.
Therefore, for every vertex $x$ there exists a vertex $y$
such that $d(x,y)$ equals $D$. By Lemma~\ref{lem:d(x,y)=D},
$\lgate x,y \rgate = V(H)$. 

Consider Boolean-gated sets $X, Y \in \mathcal{B}(G)$
of finite diameter such that $X$ covers $Y$ in the poset $\mathcal{B}(G)$.
By $X \subset Y$, there are adjacent vertices $x,y$
with $x \in X$ and $y \in Y \setminus X$.
Since $X$ induces a dual polar graph of finite diameter,
we can take $z \in X$ with $d(x,z) = r(X)$ and
$\lgate x,z \rgate = X$.
Then $d(z,y) \leq d(z,x)$ is impossible.
If $d(z,y)  = d(z,x) - 1$, then we have $y \in I(x,z) \subseteq X$,
contradicting $y \not \in X$.
If $d(z,y) = d(z,x)$, then (TC) for $x,y,z$ implies that
$y$ has two neighbors in $I(x,z)$, and by Lemma~\ref{lem-weakly-modular_gated_hull}
we have
$y \in \lgate x,z \rgate = X$, contradicting $y \not \in X$.
Thus $d(z,y) = d(z,x) + 1$, and
$X = \lgate z,x \rgate \subset \lgate z,y \rgate \subseteq Y$.
Since $X$ covers $Y$, we have $\lgate z,y \rgate = Y$,
and $r(Y) = r(X) + 1$, as required.

To (2): The filter of $X  \in \mathcal{B}(G)$ is
the set of all Boolean-gated sets contained in $X$,
and thus is the subspace poset of the polar space
defined by the dual polar graph $G(X)$ (Lemma~\ref{lem:special_subgeometries}).

To (3):
By (2) and Theorems~\ref{thm:Birkhoff} and \ref{thm:Tits},
every interval of $\mathcal{B}(G)$ is a complemented modular lattice.
Hence it suffices to show that for every vertex $p$
the principal ideal $(p)^{\downarrow}$ is a complemented modular join-semilattice.
Observe (from no-induced $K_4^-$ condition)
that maximal cliques are precisely Boolean-gated sets with diameter $1$.
Consider maximal cliques $S_1,S_2,\ldots, S_k$ of $G$ containing $p$ (i.e., coatoms of $(p)^{\downarrow}$) such that
for any two indices $i$ and $j$, the union $S_{i} \cup S_{j}$ is contained in a
Boolean-gated set (i.e., each pair of these coatoms has a common lower-bound).
By induction on $k$ we will show that the union  $\bigcup_{i=1}^k S_i$ is contained in a Boolean-gated set (i.e., there is a common lower-bound).
By induction hypothesis,
there is a Boolean-gated set containing $\bigcup_{i=1}^{k-1} S_i$, and hence
there is a vertex $q$ such that $(p,q)$ is Boolean
and $S_i$ contains an atom $a_i$ of $I(p,q)$ for $i=1,2,\ldots,k-1$.
We can assume that $d(p,q) = k-1$, and in particular $\{a_1,a_2,\ldots,a_{k-1}\}$ is a base of $I(p,q)$.
Also we can assume that $S_k \cap I(p,q) = \emptyset$;
otherwise $S_k \subseteq \lgate p,q \rgate$, as required.
Consider the join $z$ of $a_2,\ldots, a_{k-1}$ in $I(p,q)$.
Take any $a_k \in S_k$.
Then $d(a_k, z) = k-1$ must hold.
Indeed, $d(a_k,z) = k-3$ is impossible: otherwise, $a_k \in I(p,q)$.
If $d(a_k,z) = k-2$, then by (TC)
there exists a vertex $a\sim a_k,p$ with  $a \in I(p,q)$.
Then $a$ belongs to $S_k$ and
this contradicts $S_k \cap I(p,q) = \emptyset$.

By induction hypothesis, there is a Boolean-gated set $X$ containing $\bigcup_{i=2}^k S_i$.
By (A5) of Cameron's Theorem~\ref{th:cameron_dual_polar}, in
the dual polar graph induced by $X$
there is $q' \in X$ with $z \sim q'$ and $d(p,q') = k -1 = d(a_k,q') -1$.
Namely, $\{a_2,a_3,\ldots,a_k\}$ is a base of $I(p,q')$.
Again $S_1 \cap I(p,q') = \emptyset$.
Also by (A5) (applied to a Boolean-gated set containing $S_1 \cup S_k$) we can take a common neighbor $w$ of $a_1$ and $a_k$ different from $p$;
so $\{a_1,a_k\}$ is a base of $I(p,w)$.
We are going to show that there is $y$ with $S_i \cap I(p,y) = \{a_i\}$ for $i=1,2,\ldots,k$.
\begin{claim} The following equalities hold:

\begin{itemize}
\item[(1)] $d(a_1,q') = d(a_k, q) = k$.
\item[(2)] $d(w,q) = d(w,q') = k-1$.
\item[(3)] $d(z,w) = k$.
\end{itemize}
\end{claim}
\begin{proof}
(1): Then $d(a_1,q')\in \{ k-2,k-1,k\}$ (by $d(p,q') = k-1$).
If $d(a_1,q') = k-2$, then $a_1 \in I(p,q')$,
contradicting $S_1 \cap I(p,q') = \emptyset$.
If $d(a_1,q') = k-1$, by (TC)
there is $a \in I(p,q')$ with $a_1 \sim a$,
which yields the same contradiction.

(2): Then $d(w,q)\in \{ k-3,k-2,k-1\}$ (by $d(q,a_1) = k-2$).
Since $d(a_k,q) = k$ by (1),  $d(w,q) \leq k-2$ is impossible.

(3): Then  $d(z,w)\in \{ k-2,k-1,k\}$.
Suppose that $d(z,w) = k-2$.
By (QC)  for $a_1,p,w,z$,
there is $a \in I(p,z) \subseteq I(p,q')$ with $a\sim w,p$.
Consider the join $b$ of $a$ and $a_k$ in $I(p,q')$. Then $d(q', b) = k-3$.
Since $d(a_1,q') = k$, we have  $d(a_1,b) = 3$.
But then  $\{a_1,a_k, p, w,a,b\}$ induces an isometric $K_{3,3}^-$, a contradiction.

Suppose that $d(z,w) = k-1$.
By (TC) for $q,z,w$, there is $u\sim q,z$ and $d(u,w) = k-2$.
Since $d(q,a_k) = k$, it must hold $d(u,a_k) = k-1$.
Applying (TC) to the triplet $z,u,a_k$,
we will find $u'$ so that $\{q,z,u,u'\}$ induces a forbidden $K_4^-$.
\end{proof}

By (QC) for $z,q,q',w$, there is $y\sim q,q'$
with $d(w,y) = k-2$. We assert  that $d(p,y) = k$.
Indeed, $k-2\le d(p,y)\le k$.
Necessarily $z \neq y$ and $z \nsim y$.
First suppose that  $d(p,y) = k-2$.
By (QC) for $q,z,y,p$, there is $u \sim y,z$ with $d(u,p) = k-3$; so $q' \neq u$.
Since $d(q,a_k) =k$, we have $y,z, u, q'\in I(q,a_k)$,
where $k - 1 = d(y,a_k) = d(z,a_k)$ and $k - 2 = d(q',a_k) = d(u,a_k)$.
Thus $q, z, y, q', u$ and the join of $q',u$ in $I(q,a_k)$ induce
an isometric $K_{3,3}^-$, a contradiction.
Suppose now that $d(p,y) = k-1$.
Since $d(a_1,q') = k$ and $d(a_1,q)=k-2$, we have $d(a_1,y) = k-1$.
By (TC) for $a_1,p,y$, there is $a\sim a_1,p$ with $d(a,y) = k-2$.
Again, since $d(a_1,q') = k$, we have  $d(a,q') = k-1$.
By (TC) for $a,p,q'$,
there is an atom $a' \in I(p,q')$ with $a' \sim a$.
Since there is no induced $K_{4}^-$,
the atoms $a_1$ and $a'$ are adjacent,
contradicting that $S_1 \cap I(p,q') = \emptyset$.
Hence $d(p,y)=k$. Then $\{a_1,a_2,\ldots,a_k\} \subseteq I(p,y)$ is a base of $I(p,y)$ and $(p,y)$ is Boolean pair with
$\bigcup_{i=1}^k S_i \subseteq \lgate p,y \rgate$, as required.
\end{proof}

Now, we return to the proof of Theorem \ref{thm:G*_is_om}. To prove that $G^*$ is an orientable
modular graph, since $G^*$ is the covering graph of the poset $\mathcal{B}(G)$, it suffices to
show that $\mathcal{B}(G)$ satisfies
the conditions (a)--(e) of Theorem~\ref{thm:chara_om}.
By Proposition~\ref{prop:ideal},
$\mathcal{B}(G)$ satisfies (a) and (b).
By Lemma~\ref{lem:Boolean_intersection}, the nonempty intersection of Boolean-gated sets
is Boolean-gated.
Consequently,  $\mathcal{B}(G)$ satisfies (c).
Next consider the condition (d).
Suppose that a triplet $X,Y,Z$ of Boolean-gated sets
has a pairwise intersection, or equivalently, is pairwise upper-bounded.
By the Helly property for gated sets (Lemma~\ref{lem:gated_Helly}) and Lemma~\ref{lem:Boolean_intersection},
the intersection $X \cap Y \cap Z$ is a nonempty Boolean-gated set.
This means that the poset $\mathcal{B}(G)$ satisfies one half of (d).
To prove the second half of (d), suppose
that for a triplet $X,Y,Z \in \mathcal{B}(G)$
there exist $U,V,W \in \mathcal{B}(G)$ such that
$X \cup Y \subseteq U$, $Y \cup Z \subseteq V$, and $Z \cup X \subseteq W$.
We can assume that $X = U \cap W$, $Y = U \cap V$, and $Z = V \cap W$.
Then $U \cap V \cap W = X \cap Y \cap Z$ is a nonempty Boolean-gated set contained in $X, Y, Z$.
This shows that the triplet $X,Y,Z$ belongs to a common principal ideal and admits pairwise meets.
By Proposition~\ref{prop:ideal}, the principal ideal is a modular join-semilattice, and therefore
there exists the meet of $X,Y,Z$ in $\mathcal{B}(G)$, which is a Boolean-gated set containing $X,Y,Z$.

It remains to establish the condition (e)  that  the order complex $\Delta:=\Delta(\mathcal{B}(G))$ is simply connected.
Let $C = (X_1,X_2,\ldots,X_k)$ be a cycle in
the 1-skeleton of $\Delta$,
i.e., $X_{i} \prec X_{i+1}$ or $X_{i+1} \prec X_{i}$ (where the indices are taken modulo $k$).
We will show that $C$ can be contracted
to a single vertex by using triangles of $\Delta$.
First consider the case where
for each odd index $i$,
$X_{i}$ is a singleton $\{x_i\}$,
and for each even index $i$,
$X_{i}$ is covered by $X_{i-1}$ and $X_{i+1}$.
Such a cycle $C$ is called a {\it ridge cycle}; obviously,
any ridge cycle is identified with a cycle of $G$.
Since any cycle of $G$ is $0$--homotopic
in the triangle-square complex $X\trsq(G)$ of $G$, to show that a ridge cycle is $0$--homotopic
it suffices to show that any ridge cycle corresponding to a triangle or to a $4$--cycle of $G$ is $0$--homotopic
in the square-complex $X_{\square}(G^*)$ of $G^*$. For a triangle $xyz$ in $G$, there is a (unique)
Boolean-gated set (or a unique maximal clique) $X$
containing $x,y,z$ with diameter $1$.
The ridge cycle in $G^*$ corresponding to $(x,y,z)$ is $(x ,X, y, X, z, X)$,
which is obviously contractible in $X_{\square}(G^*)$.
Similarly, if a $4$--cycle belongs to a clique,
then the corresponding ridge cycle is contractible in $X_{\square}(G^*)$.
For a square $xyzw$ in $G$, there is a (unique) Boolean-gated set $X$
containing $x,y,z,w$ with diameter at most $2$.
Then $X$ is adjacent to four Boolean-gated sets $x \wedge y, y \wedge z, z \wedge w, w \wedge x$.
Therefore the ridge cycle $(x, x \wedge y, y, y \wedge z, z, z \wedge x)$ corresponding to cycle $(x,y,z,w)$
is contractible in $X_{\square}(G^*)$ via $X$.

Next we consider an arbitrary cycle $C$ in the $1$--skeleton of $\Delta$.
We will show that $C$ is homotopic to a ridge cycle.
First, for each $i$,
replace $X_{i}, X_{i+1}$ by a maximal chain between $X_i$ and $X_{i+1}$.
Obviously, the resulting cycle is homotopic to the original one.
Next pick an element in $C$ with a minimum grade (maximum diameter),
which we assume without loss of generality to be $X_2$.
If $X_1, X_3$ have a join (nonempty intersection) $Y$,  then
replace $X_2$ by $Y$ in $C$; the resulting cycle $C'$ is homotopic to $C$
via the two triangles $X_2X_1Y$ and $X_2X_3Y$ of
the order complex.
Suppose now that $X_1$ and $X_3$ have an empty intersection.
Take  $z \in V$ with $X_1 \preceq z$ in $\mathcal{B}(G)$.
Then $X_3 \not \preceq z$.
By Theorem~\ref{thm:Tits}~(P3),
there is a unique $w \in V$ such that $X_3 \preceq w$
and $z,w$ are adjacent in $G$.
Replace $X_2$ by $z, z \wedge w, w$ in $C$.
The resulting cycle $C'$ is homotopic to the original one.
Replace $C$ by $C'$.
Then either the minimum grade of vertices in $C$ decreases
or the number of vertices in $C$ having the minimum grade decreases.
Repeating this procedure, after a finite number of steps $C$ becomes a ridge cycle.

Finally we prove that $G$ is isometrically embeddable in $G^*$.
It is easy to see that $d_{G^*}(p,q) \leq d_{G}(p,q)$ for $p,q \in V(G)$.
We establish the converse inequality  by induction on $k := d_{G}(p,q)$.
The case $k=1$ is obvious.
Consider a shortest path $(p= x_0, x_1,x_2,\ldots, x_m = q)$ in $G^*$
and consider an index $j$ with
$x_0 \succ x_1 \succ \cdots \succ x_{j-1} \succ x_j \prec x_{j+1}$.
Then necessarily $x_{j+1} \not \preceq p$.
Since the filter of $x_j$ is the subspace poset of a polar space,
by Theorem~\ref{thm:Tits}~(P3),
there exists a vertex $p' \in V(G)$ adjacent to $p$ in $G$ with
$p' \succeq x_{j+1}$.
Since there is a path $(p' = y_1,y_2,\ldots, y_{j} = x_{j+1})$ from $p'$ to $x_{j+1}$
of length $j-1$, we have $d_{G^*}(p,q)\ge 1 + d_{G^*}(p',q)$.
Applying the induction hypothesis to $p',q$,  we obtain $d_{G^*}(p,q)\ge d_G(p,q)$ as required.
\end{proof}

Since $G^*$ is also an swm-graph, we can consider its barycentric graph $(G^{*})^{*}$, which is
again an swm-graph.
Let $G^{*i}$ denote the barycentric graph of $G^{*(i-1)}$,
where $G^{*0} := G$ and $G^{*1} := G^*$.
Each $G^{*i}$ is isometrically embedded in $G^{*(i+1)}$ (recall that the edge-lengths of $G^{*(i+1)}$ are one-half of the edge-lengths of
$G^{*i}$); denote this  by $G^{*i} \hookrightarrow G^{*(i+1)}$.
See Figure~\ref{fig:Gstar} for example.
As a result, we obtain the following sequence of swm-graphs:
\[
G \hookrightarrow G^{*} \hookrightarrow G^{*2} \hookrightarrow G^{*3} \hookrightarrow \cdots.
\]
In Chapter~\ref{sec:complex},
we will show that this graph sequence {\em converges} to
a certain metric simplicial complex associated with $G$;
see Figure~\ref{fig:KG} for the limit of the swm-graph in Figure~\ref{fig:Gstar}.

By Proposition~\ref{prop:ideal}(2)
every interval of ${\mathcal B}(G)$ is a complemented modular lattice,
and by Lemma~\ref{lem:o-Boolean} a vertex set in $G^*$ is
Boolean if and only if it is an interval in the poset $\mathcal{B}(G)$.
Hence we have:
\begin{Lem}\label{lem:G*2}
$G^{*2}$ is the covering graph of the poset of all intervals
of $\mathcal{B}(G)$ with respect to the reverse inclusion order.
\end{Lem}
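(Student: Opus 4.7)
The plan is to exhibit an order-preserving bijection between $\mathcal{B}(G^*)$ and the set of intervals of $\mathcal{B}(G)$ taken with the reverse inclusion order. Once this is done, the claim follows by definition since $G^{*2}$ is the covering graph of $\mathcal{B}(G^*)$ with respect to reverse inclusion.

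The first step is to recall from Theorem~\ref{thm:G*_is_om} that $G^*$ is orientable modular and that $o^*$ (the Hasse orientation of $\mathcal{B}(G)$) is an admissible orientation, so the induced poset $\mathcal{P}(G^*,o^*)$ coincides with $\mathcal{B}(G)$. I would then apply Lemma~\ref{lem:o-Boolean} to the pair $(G^*, o^*)$: the Boolean-gated subsets of $G^*$ of finite diameter are exactly the intervals $[p,q]$ in $\mathcal{B}(G)$ for which $(p,q)$ is an $o^*$-Boolean pair, i.e., pairs $p\preceq q$ whose interval $[p,q]$ is a complemented modular lattice.

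The crucial step will be to show that \emph{every} comparable pair $p\preceq q$ in $\mathcal{B}(G)$ is automatically $o^*$-Boolean, so that $o^*$-Boolean pairs and comparable pairs coincide, and $\mathcal{B}(G^*)$ is identified with the poset of all intervals of $\mathcal{B}(G)$. For this I would invoke Proposition~\ref{prop:ideal}(2): every principal filter of $\mathcal{B}(G)$ is the subspace poset of a polar space and so, by Theorems~\ref{thm:Birkhoff} and \ref{thm:Tits}, a complemented modular lattice of finite rank (equal to the diameter of the Boolean-gated set indexing the filter). Since $[p,q]\subseteq (q)^{\uparrow}$, the interval $[p,q]$ sits inside a complemented modular lattice of finite rank; by the fact recalled in Subsection~\ref{s:balat} that complementedness is equivalent to relative complementedness in modular lattices of finite rank, the interval $[p,q]$ is itself a complemented modular lattice. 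Hence $(p,q)$ is $o^*$-Boolean.

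Combining these two steps gives $\mathcal{B}(G^*)=\{[p,q]\ :\ p\preceq q\text{ in }\mathcal{B}(G)\}$, with the reverse inclusion order on subsets of $\mathcal{B}(G)$ matching the reverse inclusion order on $\mathcal{B}(G^*)$. Taking covering graphs yields the lemma. There is no real obstacle here: the argument is a short assembly of the local structure of $\mathcal{B}(G)$ (Proposition~\ref{prop:ideal}(2)) with the characterization of Boolean-gated sets in orientable modular graphs (Lemma~\ref{lem:o-Boolean}); the only point requiring care is to match the reverse-inclusion conventions with the Hasse orientation $o^*$ so that intervals in $\mathcal{B}(G)$ really correspond to $o^*$-Boolean intervals in $\mathcal{P}(G^*,o^*)$.
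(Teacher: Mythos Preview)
Your argument is correct and follows exactly the route the paper takes: invoke Lemma~\ref{lem:o-Boolean} for $(G^*,o^*)$, then use Proposition~\ref{prop:ideal}(2) (via Theorems~\ref{thm:Birkhoff} and \ref{thm:Tits}) to see that every interval of $\mathcal{B}(G)$ is a complemented modular lattice, so every comparable pair is $o^*$-Boolean. One small slip: the correct containment is $[p,q]\subseteq (p)^{\uparrow}$, not $(q)^{\uparrow}$, but this does not affect the argument.
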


The {\em cube-dimension} of an swm-graph $G$ is
the maximum diameter of a Boolean-gated set of $G$,
or equivalently, the maximum diameter of an isometric cube subgraph of $G$.
It is also equal to the maximum length of a chain in $\mathcal{B}(G)$,
which is equal to the cube-dimension of $G^*$ by Lemma~\ref{lem:o-Boolean}.
\begin{Lem}
The cube-dimensions of $G$ and  $G^*$ are the same.
\end{Lem}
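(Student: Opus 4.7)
The plan is to identify both $\mathrm{cube\text{-}dim}(G)$ and $\mathrm{cube\text{-}dim}(G^*)$ with the supremum of chain lengths in the poset $\mathcal{B}(G)$, and then conclude by transitivity.

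First I would handle the cube-dimension of $G$ itself. By Proposition~\ref{prop:ideal}(1), the diameter function $r(X)$ on $\mathcal{B}(G)$ serves (up to sign) as a grade function on $\mathcal{B}(G)$: if $X$ covers $Y$ in the reverse-inclusion order, then $r(Y) = r(X) + 1$. Hence in any maximal chain in $\mathcal{B}(G)$ starting at a singleton $\{p\}$ (which has $r = 0$) and descending to a Boolean-gated set $Y$, each covering step increases the diameter by exactly one, so the chain length equals $r(Y)$. Since every chain can be extended at the top to a singleton (which is a maximal element of $\mathcal{B}(G)$), it follows that $\mathrm{cube\text{-}dim}(G) = \sup_{Y \in \mathcal{B}(G)} r(Y)$ equals the supremum of chain lengths in $\mathcal{B}(G)$.

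Next I would handle the cube-dimension of $G^*$. By Theorem~\ref{thm:G*_is_om}, $G^*$ is an orientable modular graph with admissible orientation $o^*$, and the poset $\mathcal{P}(G^*, o^*)$ is precisely $\mathcal{B}(G)$. Applying Lemma~\ref{lem:o-Boolean} to the swm-graph $G^*$ with this orientation, the Boolean-gated sets of finite diameter in $G^*$ are exactly the intervals $[X,Y]$ of $\mathcal{B}(G)$ determined by $o^*$-Boolean pairs $(X,Y)$. Such an interval, realized as a subgraph of $G^*$, is an isometric cube whose dimension is the length of any maximal chain from $X$ to $Y$ in $\mathcal{B}(G)$. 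Taking the supremum as $[X,Y]$ ranges over all intervals (and noting that every chain in $\mathcal{B}(G)$ is contained in some interval, and every interval furnishes a chain of the same length) identifies $\mathrm{cube\text{-}dim}(G^*)$ with the supremum of chain lengths in $\mathcal{B}(G)$. Combined with the first step, this gives the equality $\mathrm{cube\text{-}dim}(G) = \mathrm{cube\text{-}dim}(G^*)$, including the case where both are infinite.

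The main point to be careful about is interpretational rather than substantive: the edges of $G^*$ carry length $1/2$, so one has to confirm that ``cube-dimension'' is a combinatorial invariant, namely the dimension $k$ of an isometric $k$-cube subgraph (equivalently, the number of covering steps in $\mathcal{B}(G)$), and not the metric diameter, which would rescale by $1/2$. This is consistent with the phrasing ``maximum diameter of an isometric cube subgraph,'' since a $k$-cube has dimension $k$ regardless of how its edges are rescaled. Once this convention is fixed, the argument above is immediate.
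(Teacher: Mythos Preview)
Your proof is correct and follows essentially the same route as the paper: the text immediately preceding the lemma already observes that the cube-dimension of $G$ equals the maximum chain length in $\mathcal{B}(G)$ (via the grading in Proposition~\ref{prop:ideal}(1)), which in turn equals the cube-dimension of $G^*$ by Lemma~\ref{lem:o-Boolean}. Your version simply spells out these steps more carefully, including the helpful remark that every interval of $\mathcal{B}(G)$ is complemented modular (so every comparable pair is $o^*$-Boolean) and the clarification that cube-dimension is a combinatorial invariant unaffected by the $1/2$ edge-length convention on $G^*$.
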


\begin{example}[Median graphs]
Consider the case where $G$ is a median graph.
Then every cube subgraph is an isometric (in fact, gated) subgraph of $G$. Moreover,
a vertex set is Boolean-gated if and only if
it is the vertex set of a cube subgraph.
Hence the poset $\mathcal{B}(G)$ is
isomorphic to the face poset of the cube complex (median complex) $X_{cube}(G)$ of $G$.
As a consequence, the barycentric graph $G^*$ of $G$ is obtained
by adding a vertex on the midpoint of
each face and joining them by an edge if and only if one of the faces
is a facet of the other.
The limit of the sequence  $G^{*i}$ is the median cube complex $X_{cube}(G)$ of $G$
endowed with the intrinsic $l_1$--metric;
see Chapter~\ref{sec:complex}.
\end{example}

\begin{example}[Block graphs]
Swm-graphs of cube-dimension $1$ are the $K^-_4$ and $C_4$--free graphs. Those are exactly the {\em block graphs}, i.e.,
the graphs $G$ whose 2-connected components are cliques.
In this case, the barycentric graph $G^*$ of $G$ is
a tree obtained by replacing each maximal clique of $G$ by a star.
The graphs $G^{*2}, G^{*3}, \ldots$ are subdivisions of the tree $G^*$ in the graph-theoretical sense.
The limit of the sequence  $G^{*i}$ is
the $1$--dimensional contractible complex corresponding to the tree $G^*$.
\end{example}

\begin{example}\label{ex:frames}[Frames and semiframes]
A {\em frame} is an orientable bipartite graph
without isometric $k$--cycles with $k \geq 6$.
A {\em semiframe} is a bipartite graph
without induced $K_{3,3}^-$ and isometric $k$--cycles with $k \geq 6$.
It is known that bipartite graphs without  isometric $k$--cycles, $k \geq 6$,
are exactly the {\em hereditary modular graphs}, i.e., modular graphs in which
all isometric subgraphs are modular~\cite{Ba_hereditary}.
Hence frames and semiframes
are nothing but orientable modular graphs and strongly modular graphs
with cube-dimension $\le 2$, respectively.
Frames and semiframes were introduced by Karzanov~\cites{Kar98a, Kar98b}
in the study of metric extensions and multicommodity flows.
In semiframes, Boolean-gated sets of diameter $2$ are
exactly the vertex sets of maximal complete bipartite subgraphs, which we call  {\em bicliques}.
Since frames are orientable, any biclique in a frame must be a $K_{2,n}$ for $n \geq 2$.

The barycentric graph $G^*$ of a semiframe $G$ can be
obtained by
adding a vertex to each maximal biclique, subdividing each edge,
and joining by an edge each vertex corresponding to a biclique to
the subdividing vertex of any edge of that biclique.
The resulting graph $G^*$ is a frame;
notice that the same holds for any swm-graph with cube-dimension $2$.
In \cite{Kar98b},
this procedure of deriving frames from semiframes was called an {\em  orbit-splitting}.
Hence the construction of the barycentric graphs of swm-graphs can be
viewed as a higher-dimensional generalization of the orbit-splitting.

Chepoi~\cite{Ch_CAT} considered
hereditary modular graphs without $K_{3,3}^-$ and $K_{3,3}$, and called them
{\em $F$--graphs}.
$F$--graphs generalize frames and are particular semiframes.
He showed that the B$_2$--complex obtained from an $F$--graph
by filling a {\it folder} to each biclique is CAT$(0)$,
and any CAT$(0)$ B$_2$--complex, called a {\em folder complex},
arises in this way.
We will see again the folder complexes in Chapter~\ref{sec:complex}.
\end{example}

\section{Thickening $G^{\Delta}$}\label{subsec:G^Delta}
The {\it thickening} of an swm-graph $G$ is the graph  $G^\Delta$ obtained
from $G$ by adding edges to all Boolean pairs $(x,y)$ with $d(x,y) \geq 2$.

\begin{Thm}\label{thm:Helly}
Let $G$ be an swm-graph.
Then $G^{\Delta}$ is a finitely Helly graph.
If all Boolean-gated sets of $G$ are finite (in particular $G$ is locally finite),
then $G^{\Delta}$ is a Helly graph and the clique complex
$X(G^{\Delta})$ is contractible.
\end{Thm}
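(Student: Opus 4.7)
The plan is to apply the local-to-global characterization Theorem~\ref{t:lotogloHell_bis}: to establish that $G^{\Delta}$ is finitely Helly it suffices to prove that (a) $G^{\Delta}$ is finitely clique-Helly and (b) the clique complex $X(G^{\Delta})$ is simply connected. Both statements will be derived from the poset structure of $\mathcal{B}(G)$ and the intersection lemma~\ref{lem:Boolean_intersection}.

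First I would identify the maximal cliques of $G^{\Delta}$ with the maximal Boolean-gated sets of $G$. One inclusion is immediate from the definition of the thickening. For the converse, a clique $C$ of $G^{\Delta}$ is a set whose pairs are pairwise upper-bounded in $\mathcal{B}(G)$; fixing any $v \in C$ and working inside the principal ideal of $\{v\}$, which by Proposition~\ref{prop:ideal}(3) is a complemented modular join-semilattice, I would iteratively take joins in the spirit of the ``join of atoms'' argument used in the proof of Proposition~\ref{prop:ideal}(3) to produce a single Boolean-gated set containing $C$. The finite clique-Helly property of $G^{\Delta}$ then reduces, via this identification and Lemma~\ref{lem:Boolean_intersection}, to the finite Helly property for gated sets of Lemma~\ref{lem:gated_Helly}.

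For simple connectivity of $X(G^{\Delta})$, the natural approach is a nerve argument based on the decomposition $X(G^{\Delta}) = \bigcup_{X \in \mathcal{B}(G)} \Delta(X)$, where $\Delta(X)$ is the (possibly infinite-dimensional) full simplex on the vertex set $X$. Each $\Delta(X)$ is contractible, and by Lemma~\ref{lem:Boolean_intersection} any nonempty finite intersection equals $\Delta(X_{1} \cap \cdots \cap X_{k})$, again a contractible simplex. The nerve theorem then yields a homotopy equivalence between $X(G^{\Delta})$ and the nerve $N(\mathcal{B}(G))$; because $\mathcal{B}(G)$ is closed under nonempty intersection, this nerve deformation-retracts onto the order complex $\Delta(\mathcal{B}(G))$, whose simple connectivity was established (condition (e) of Theorem~\ref{thm:chara_om}) inside the proof of Theorem~\ref{thm:G*_is_om}. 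Combining with finite clique-Helly and Theorem~\ref{t:lotogloHell_bis} gives that $G^{\Delta}$ is finitely Helly.

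For the second assertion, when all Boolean-gated sets of $G$ are finite, every clique of $G^{\Delta}$ is finite, so the compactness result Proposition~\ref{Helly_Polat} promotes finitely Helly to Helly. Contractibility of $X(G^{\Delta})$ then follows from implication (iv)$\Rightarrow$(v) of Theorem~\ref{t:lotogloHell}, since in this case $X(G^{\Delta})$ has dimension bounded by the cube-dimension of $G$. The main obstacle I anticipate is Step~3 in its full generality: verifying the nerve theorem and the deformation retraction onto the order complex when members of $\mathcal{B}(G)$ can be infinite. A safer alternative avoiding any infinite-dimensional nerve theorem is a direct disk-diagram proof of simple connectivity of $X(G^{\Delta})$, modeled on Lemma~\ref{simplyconnected}: contract every loop in the $1$-skeleton of $X(G^{\Delta})$ by repeatedly replacing subpaths through a Boolean-gated set by shortcuts inside the corresponding full simplex $\Delta(X)$, using the Helly property of gated sets to merge the local simplices.
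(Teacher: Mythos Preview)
Your overall plan coincides with the paper's: verify that $G^{\Delta}$ is finitely clique-Helly and that $X(G^{\Delta})$ is simply connected, then invoke Theorem~\ref{t:lotogloHell}. The two implementations differ, however. For the clique-Helly part, the paper does not identify cliques with Boolean-gated sets directly; instead it shows that every \emph{unit ball} $B_1(p)$ of $G^{\Delta}$ is gated in $G$ (an immediate consequence of Lemma~\ref{lem:Boolean123}(2) and Lemma~\ref{lem-weakly-modular_gated_hull}), so each maximal clique of $G^{\Delta}$, being an intersection of such unit balls, is gated in $G$, and Lemma~\ref{lem:gated_Helly} finishes. Your route via Proposition~\ref{prop:ideal}(3) also works---it is essentially the content of Lemma~\ref{lem:pairwiseBoolean}, proved later in the paper---though note that in the reverse-inclusion order on $\mathcal{B}(G)$ you need a common \emph{lower} bound (a meet), not a join, of the coatoms; the inductive argument inside the proof of Proposition~\ref{prop:ideal}(3) delivers exactly that.

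For simple connectivity the paper takes the short direct route you list as your fallback, and it is considerably simpler than the nerve argument: given a cycle $C$ in $G^{\Delta}$, repeatedly replace an edge $xy$ with $d_G(x,y)\ge 2$ by the two edges $xz,zy$ for any $z\in I(x,y)\setminus\{x,y\}$; since $(x,z)$ and $(z,y)$ are Boolean by Proposition~\ref{prop:char_Boolean}(iv), the triangle $xyz$ lies in $X(G^{\Delta})$, so the new cycle is homotopic to $C$. After finitely many steps one obtains a cycle $C_0$ in $G$. Now $C_0$ is null-homotopic in the triangle-square complex $X\trsq(G)$ by weak modularity, and each square of $G$ spans a $4$-clique in $G^{\Delta}$, so the same homotopy transports into $X(G^{\Delta})$. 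This avoids all issues with infinite covers and nerves. The second assertion is handled exactly as you say: finite Boolean-gated sets force $X(G^{\Delta})$ to be finite-dimensional, whence finite clique-Helly upgrades to clique-Helly and Theorem~\ref{t:lotogloHell}(v) gives both the Helly property and contractibility.
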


\begin{proof} To prove that  $G^{\Delta}$ is a finitely Helly graph, by Theorem \ref{t:lotogloHell} it
suffices to show that the clique complex of $G^{\Delta}$
is simply connected and that the collection of maximal cliques of $G^{\Delta}$  has finite Helly property.
To prove the simple connectivity of $X(G^{\Delta})$ first we show  that any cycle $C$ of $G^{\Delta}$ is homotopic to a cycle $C_0$ of $G$ and
then we prove that $C_0$ is 0-homotopic. For any edge $xy$ of $C$, either $x$ and $y$ are adjacent in $G$ or they constitute a Boolean pair in $G$. In the second case,
choose any vertex $z \in I(x,y) \setminus \{x,y\}$. Then $(x,z)$ and $(z,y)$ are also Boolean pairs, thus $xz$ and $zy$ are edges of $G^{\Delta}$.
Replace in $C$ the edge $xy$
by the edges $xz,zy$ and denote the resulting cycle by $C'$.
Observe that $xy,xz,zy$ form a triangle in $G^{\Delta}$.
Hence the resulting cycle $C'$ is homotopic to the original cycle $C$. Continuing this process with the new cycle, after a finite number of steps we will obtain
a cycle $C_0$ of $G$ which is homotopic to $C$. Since $G$ is weakly modular, $C_0$ is null-homotopic in the triangle-square complex $X\trsq(G)$ of $G$. Each triangle of $G$
is a triangle of $G^{\Delta}$. On the other hand, each square of $G$ gives rise to a 4-clique of $G^{\Delta}$ because the vertices of that square
are pairwise Boolean.  Thus we can construct a homotopy in $X(G^{\Delta})$ from a homotopy in $X\trsq(G)$, whence $C_0$ is also null-homotopic in $X(G^{\Delta})$.

Next we will prove that each 1-ball $B_1(p)$ of $G^{\Delta}$ defines a gated set of the initial graph $G$. By Proposition \ref{prop:char_Boolean}(iv), if $(p,q)$ is a Boolean pair
and $w\in \lgate p,q \rgate,$ then $(p,w)$ is also a Boolean pair, hence the subgraph of $G$ induced by $B_1(p)$ is connected. By Lemma~\ref{lem-weakly-modular_gated_hull} it suffices to show that
if $u,v\in B_1(p), u\ne v,$ and a vertex $w$ is adjacent to $u,v$ in $G$, then $w$ belongs to $B_1(p)$. Indeed, since $u,v\in B_1(p),$ $(p,u)$ and $(p,v)$ are Boolean pairs of $G$. By
Lemma~\ref{lem:Boolean123}(2), $(p,w)$ is also a Boolean pair of $G$, thus $p$ and $w$ are adjacent in  $G^{\Delta}$. Hence the 1-balls of  $G^{\Delta}$ are gated sets of $G$.
If $C$ is a maximal clique of $G^{\Delta}$, then $C$ is the intersection of the 1-balls $B_1(p)$ of $G^{\Delta}$ centered at the vertices $p\in C$. Since each 1-ball of $G^{\Delta}$
is gated in $G$, $C$ is the intersection of gated sets of $G$, and therefore, $C$ is a Boolean-gated subset of $G$. By Lemma~\ref{lem:gated_Helly}, the collection of maximal cliques of  $G^{\Delta}$ has the finite Helly property, whence $G^{\Delta}$ is  finitely Helly.

Now suppose that all Boolean-sets of $G$ are finite. Since any maximal clique of $G^{\Delta}$ is Boolean-gated, the clique complex of $G^{\Delta}$  is finite-dimensional. Therefore the finite clique-Helly property implies the clique-Helly property. By Theorem \ref{t:lotogloHell}, $G^{\Delta}$  is a Helly graph. Moreover, by Theorem \ref{t:lotogloHell}(v)
the clique complex of $G^{\Delta}$ is contractible. If $G$ is locally finite, then from Proposition
\ref{prop:char_Boolean}(v) and analogously to the proof of Theorem \ref{t:lotoglo_dp3} one can deduce that each Boolean-gated set of $G$ is finite.
\end{proof}


We continue with further properties of thickenings  of  swm-graphs.
The edge-length of $(G^*)^{\Delta}$ is defined
as one half of the edge-length of $G^{\Delta}$.
\begin{Prop}
$G^{\Delta}$ is isometrically embeddable in the graph $(G^*)^{\Delta}$
via the map $p \mapsto \{p\}$.
\end{Prop}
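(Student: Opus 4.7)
My plan is to prove both inequalities $d_{(G^*)^\Delta}(\{p\},\{q\}) \le d_{G^\Delta}(p,q)$ and $d_{G^\Delta}(p,q) \le d_{(G^*)^\Delta}(\{p\},\{q\})$, keeping in mind that edges of $(G^*)^\Delta$ have length $\frac{1}{2}$ whereas edges of $G^\Delta$ have length $1$. The common tool is that since $G^*$ is orientable modular (Theorem~\ref{thm:G*_is_om}), Lemma~\ref{lem:o-Boolean} identifies the edges of $(G^*)^\Delta$ with pairs $(Z,W) \in \mathcal{B}(G)^2$ that are comparable in $\mathcal{B}(G)$ and whose interval is a complemented modular lattice. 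The preliminary fact I will establish is that \emph{any} comparable pair $W \subseteq Z$ in $\mathcal{B}(G)$ has $[Z,W]$ complemented modular, so in particular $\{p\}$ is adjacent in $(G^*)^\Delta$ to every Boolean-gated set containing $p$. To see this, Proposition~\ref{prop:ideal}(2) identifies the filter $(Z)^\uparrow$ with the subspace poset of the polar space $\Pi_Z$ of the dual polar graph $G(Z)$, and every subspace of $\Pi_Z$ (in particular the one corresponding to $W$) is contained in a maximal one (corresponding to any singleton $\{z\}$ with $z \in W$); by Theorem~\ref{thm:Tits}(P1) and Theorem~\ref{thm:Birkhoff} the subspaces of this maximal subspace form a complemented modular lattice, and $[Z,W]$ is the principal ideal of $W$ in it, hence itself complemented modular.

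For the upper bound, given a shortest path $p = p_0, p_1, \ldots, p_k = q$ in $G^\Delta$ with each $(p_{i-1}, p_i)$ a Boolean pair of $G$, I set $B_i := \lgate p_{i-1},p_i\rgate$, which contains both endpoints. By the preliminary fact the zig-zag walk $\{p\}, B_1, \{p_1\}, B_2, \{p_2\}, \ldots, B_k, \{q\}$ in $(G^*)^\Delta$ consists of $2k$ edges, each of length $\frac{1}{2}$, giving $d_{(G^*)^\Delta}(\{p\},\{q\}) \le k$.

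For the lower bound, I will take a shortest walk $\{p\} = X_0, X_1, \ldots, X_{2m} = \{q\}$ of total length $m$ in $(G^*)^\Delta$. The crucial structural step is to show that in such a shortest walk the $X_i$ alternate in $\mathcal{B}(G)$, namely $X_0 \subseteq X_1 \supseteq X_2 \subseteq X_3 \supseteq \cdots \supseteq X_{2m}$. Indeed, if ever three consecutive sets form a chain $X_{i-1} \subseteq X_i \subseteq X_{i+1}$ (or the reverse), the preliminary fact provides an edge $X_{i-1} X_{i+1}$ of $(G^*)^\Delta$ that short-circuits the middle vertex, contradicting minimality; since the singletons $X_0$ and $X_{2m}$ are $\subseteq$-minimal in $\mathcal{B}(G)$, the first edge must go up and the last down in $\subseteq$, which combined with the alternation forces the displayed pattern (and in particular forces the shortest walk to have even length). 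Now pick arbitrary $v_{2k} \in X_{2k}$ for $0 < k < m$, and set $v_0 = p$, $v_{2m} = q$. Then $v_{2k-2} \in X_{2k-2} \subseteq X_{2k-1}$ and $v_{2k} \in X_{2k} \subseteq X_{2k-1}$, so both vertices lie in the Boolean-gated set $X_{2k-1}$; by Proposition~\ref{prop:char_Boolean}(iv) they are equal or form a Boolean pair of $G$, hence equal or adjacent in $G^\Delta$. Concatenation yields a walk from $p$ to $q$ in $G^\Delta$ of length at most $m$, so $d_{G^\Delta}(p,q) \le m$.

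The hardest part I anticipate is nailing down the preliminary fact cleanly, since it requires interleaving the polar-geometric structure of $\Pi_Z$ with the lattice-theoretic formulation used in Lemma~\ref{lem:o-Boolean}; once this is in place, both inequalities reduce to walk-manipulation arguments exploiting that every pair inside a Boolean-gated set is a Boolean pair of $G$.
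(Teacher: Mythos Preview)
Your upper bound is fine, but the lower bound has a genuine gap: you have mischaracterized the edges of $(G^*)^{\Delta}$. Lemma~\ref{lem:o-Boolean} describes the Boolean-gated \emph{sets} of the orientable modular graph $G^*$ as intervals $[A,B]$ of $\mathcal{B}(G)$ for $o^*$-Boolean pairs $(A,B)$; it does \emph{not} say that Boolean \emph{pairs} of $G^*$ (the edges of $(G^*)^{\Delta}$) are exactly the comparable pairs. A Boolean pair $(X,Y)$ in $G^*$ is one whose gated hull $\lgate X,Y\rgate_{G^*}$ is such an interval $[A,B]$, and $X,Y$ are merely two elements of that interval, not necessarily its endpoints. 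Concretely, if $G=C_4$, then two incident edges $e,e'$ of $G$ are incomparable in $\mathcal{B}(G)$ yet form a Boolean pair in $G^*$: the interval $I_{G^*}(e,e')$ is the diamond $\{e,\{v\},C_4,e'\}$ where $v$ is the common endpoint. The correct characterization (used in the paper's proof) is that $(X,Y)$ is a Boolean pair in $G^*$ if and only if both $X\vee Y$ (i.e., $X\cap Y\neq\emptyset$) and $X\wedge Y$ (i.e., some Boolean-gated set contains $X\cup Y$) exist in $\mathcal{B}(G)$.

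This invalidates your alternation argument: consecutive $X_{i-1},X_i$ in a shortest walk need not be comparable at all, so the short-circuit-of-chains step has no premise to work with, and the parity/alternation conclusion does not follow. Your preliminary fact (every interval in $\mathcal{B}(G)$ is complemented modular) is correct and indeed appears in the paper, and it does give you the upper bound as you wrote it, since comparable pairs \emph{are} Boolean in $G^*$. For the lower bound, the paper avoids the alternation claim entirely: given a shortest walk $X_0,\ldots,X_m$, it first rules out odd $m$ by exhibiting the shorter walk $X_0,\,X_1\wedge X_2,\,X_2\vee X_3,\,X_3\wedge X_4,\ldots,X_m$, and then for even $m$ picks $p_i\in X_{2i}\vee X_{2i+1}=X_{2i}\cap X_{2i+1}$ and observes that $p_i,p_{i+1}$ both lie in the Boolean-gated set $X_{2i+1}\wedge X_{2i+2}$, hence are adjacent in $G^{\Delta}$. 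Your vertex-selection idea at the end is essentially this last step, but you need the meet/join machinery rather than comparability to make it go through.
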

\begin{proof}
Since every interval of $\mathcal{B}(G)$ is a complemented modular lattice,
a pair $X,Y \in \mathcal{B}(G)$ is Boolean in $G^*$
if and only if $X \cap Y$ is nonempty ($X \vee Y$ exists)
and there is $Z \in \mathcal{B}(G)$ with $X \cup Y \subseteq Z$ ($X \wedge Y$ exists).
Then it is easy to see that $d_{(G^*)^{\Delta}}(p,q) \leq d_{G^{\Delta}}(p,q)$.
We show now the reverse inequality.
Take a shortest path $(\{p\} = X_0,X_1,\ldots, X_m = \{q\})$ in $(G^{*})^{\Delta}$.
Then $\{p\} = X_0 \subset X_1$,
both $X_{i} \wedge X_{i+1}$ and $X_{i} \vee X_{i+1}$ exist, and $X_{m-1} \supset X_m = \{q\}$.
If $m$ is odd, then the path $(\{p\} = X_0, X_1 \wedge X_2, X_2 \vee
X_3, X_3 \wedge X_4, \ldots,X_{m-2} \wedge X_{m-1}, X_{m} = \{q\})$
has a shorter length, a contradiction.  Thus $m$ is even.  Take a
vertex $p_i$ from $X_{2i} \cap X_{2i+1} (= X_{2i} \vee X_{2i+1})$ for
$i=0,1, 2,\ldots,(m-2)/2$.  Note that $p_0 = p$ and let $p_{m/2} := q$.
Then $p_i$ and $p_{i+1}$ are adjacent in $G^{\Delta}$ since they
belong to Boolean-gated set $X_{2i+1} \wedge X_{2i+2}$.  Hence
$d_{(G^{*})^{\Delta}}(p,q) = m/2 \geq d_{G^{\Delta}}(p,q)$.
\end{proof}

The following property extends the gatedness in $G$ of 1-balls of $G^{\Delta}$ used
in the proof of Theorem \ref{thm:Helly}.

\begin{Prop}\label{gated_ball}
Any ball of $G^{\Delta}$ is gated in $G$.
\end{Prop}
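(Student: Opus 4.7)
I will prove the proposition by induction on the radius $r$ of the ball $B := B_r^{G^{\Delta}}(p)$, viewed as a subset of $V(G)$. The base case $r = 0$ is trivial, and the case $r = 1$ was already handled inside the proof of Theorem~\ref{thm:Helly}: unit balls of $G^{\Delta}$ are precisely unions of Boolean-gated sets through a common vertex, and Lemma~\ref{lem:Boolean123}(2) combined with Lemma~\ref{lem-weakly-modular_gated_hull}(ii) shows directly that $B_1^{G^{\Delta}}(p)$ is gated in $G$. For the inductive step I set $A := B_{r-1}^{G^{\Delta}}(p)$, which is gated in $G$ by the inductive hypothesis, and I apply Lemma~\ref{lem-weakly-modular_gated_hull}(ii) to reduce the claim to verifying (a)~$B$ induces a connected subgraph of $G$, and (b)~for every pair of distinct vertices $u, v \in B$ and every common $G$-neighbor $w$ of $u, v$, one has $w \in B$.

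Connectivity (a) is immediate: any $u \in B \setminus A$ is adjacent in $G^{\Delta}$ to some $u' \in A$, so $(u', u)$ is a Boolean pair; by Proposition~\ref{prop:char_Boolean}(iv), every vertex of the connected Boolean-gated set $\lgate u', u\rgate$ forms a Boolean pair with $u'$, hence lies in $B_1^{G^{\Delta}}(u') \subseteq B$, and this connects $u$ to $A$ inside $G$ while staying in $B$.

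For (b), if either $u$ or $v$ lies in $A$, then $w$ is a $G$-neighbor of an element of $A$, so $w \in B_1^{G^{\Delta}}(A) \subseteq B$ trivially. The substantive case is $u, v \in B \setminus A$. For each such $u$, the gate $u^A$ of $u$ in $A$ exists by induction; picking any $\tilde u \in A$ with $(\tilde u, u)$ Boolean, gatedness of $A$ places $u^A$ in $I_G(u, \tilde u) \subseteq \lgate \tilde u, u\rgate$, and Proposition~\ref{prop:char_Boolean}(iv) then guarantees that $(u^A, u)$ itself is a Boolean pair; analogously for $v$. In the favorable sub-case $u^A = v^A =: z$, one applies Lemma~\ref{lem:Boolean123}(2) to the two Boolean pairs $(z, u), (z, v)$ with common $G$-neighbor $w$ to deduce that $(z, w)$ is Boolean, whence $w \in B_1^{G^{\Delta}}(z) \subseteq B$.

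The main obstacle is the remaining sub-case where $u, v \in B \setminus A$ but $u^A \neq v^A$. Here one must still produce a single vertex $z \in A$ forming Boolean pairs with both $u$ and $v$ in order to invoke Lemma~\ref{lem:Boolean123}(2). The triangle inequality together with the gate conditions forces $d_G(u^A, v^A) \leq 2$ and $|d_G(u, u^A) - d_G(v, v^A)| \leq 2 - d_G(u^A, v^A)$, so the configuration is tightly constrained. A careful case analysis invoking (TC), (QC), the absence of induced $K_4^-$, and the absence of isometric $K_{3,3}^-$ should yield a common neighbor in $A$ of $u^A$ and $v^A$---perhaps obtained from a quasi-median of $u, v, u^A$ inside $G$ together with the gatedness of $A$---that serves as the sought-after $z$; the alternative would be a forbidden isometric $K_{3,3}^-$. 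Executing this case analysis, possibly via a secondary induction on $d_G(u, u^A) + d_G(v, v^A)$, is the heart of the argument.
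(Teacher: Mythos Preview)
Your setup is sound: reducing to Lemma~\ref{lem-weakly-modular_gated_hull}(ii), handling connectedness, and recognizing that what is needed in the substantive case is a single vertex $z\in A=B_{r-1}^{G^{\Delta}}(p)$ forming Boolean pairs with both $u$ and $v$ so that Lemma~\ref{lem:Boolean123}(2) can be applied to the common $G$-neighbor $w$. However, your proposed resolution of the case $u^A\neq v^A$---a case analysis on the metric configuration of the gates, invoking (TC), (QC), and the forbidden $K_4^-$ and $K_{3,3}^-$---is left as a sketch, and it is not clear that it can be carried out cleanly. Even if you locate a common $G$-neighbor $z\in A$ of $u^A$ and $v^A$, there is no evident reason why $(z,u)$ and $(z,v)$ should be Boolean; you would need $z$ to lie in a Boolean-gated set containing $u$ (respectively $v$), and nothing in your outline produces that.

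The paper bypasses this difficulty entirely with a one-line application of the finitely Helly property of $G^{\Delta}$ (Theorem~\ref{thm:Helly}), which you have available at this point. The three $G^{\Delta}$-balls $B_1(u)$, $B_1(v)$, $B_{r-1}(p)$ pairwise intersect: $B_1(u)\cap B_1(v)$ contains $w$ (since $G$-edges are $G^{\Delta}$-edges), while $B_1(u)\cap B_{r-1}(p)$ and $B_1(v)\cap B_{r-1}(p)$ are nonempty because $u,v\in B_r(p)$. Helly gives a common point $q$, which is precisely your desired $z$: the conditions $q\in B_1(u)$ and $q\in B_1(v)$ mean $(q,u)$ and $(q,v)$ are Boolean, and $q\in A$. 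Then Lemma~\ref{lem:Boolean123}(2) yields $(q,w)$ Boolean, so $w\in B_1^{G^{\Delta}}(q)\subseteq B_r(p)$. No induction on $r$, no gate computations, no case analysis.
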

\begin{proof}
Let $B_k(p)$ be the $k$--ball of $G^{\Delta}$ centered at $p$. Again we use
Lemma~\ref{lem-weakly-modular_gated_hull}.
For $x,y \in B_k(p)$, let $u$ be a common neighbor of $x,y$ in $G$.
We assert that $d_{G^{\Delta}}(p,u)\le k$.
Let $B_1(x)$ and $B_1(y)$ denote
the $1$--balls of  $G^{\Delta}$ with centers $x$ and $y$, respectively.
Let $B_{k-1}(p)$ be the $(k-1)$--ball centered at $p$.
By their definition, the balls $B_1(x)$, $B_1(y)$, and $B_{k-1}(p)$ pairwise intersect.
By the finite Helly property, these three balls have a common vertex $q$,
which is a common neighbor of $x,y$ in $G^{\Delta}$
with $d_{G^{\Delta}}(p,q)\le k-1$. Hence $(q,x)$ and $(q,y)$ are Boolean pairs of $G$.
By Lemma~\ref{lem:Boolean123}~(2), the pair $(q,u)$ is also Boolean.
This means that $d_{G^{\Delta}}(p,u)\leq k$ and thus $u\in B_k(p)$.
\end{proof}

By definition, a Boolean-gated set forms a clique in $G^{\Delta}$.
A partial converse also holds:
\begin{Lem}\label{lem:pairwiseBoolean}
Every finite clique of $G^{\Delta}$
belongs to a Boolean-gated set of $G$.
If $G$ has  finite cube-dimension,
then  every clique of $G^{\Delta}$ belongs
to a Boolean-gated set.
\end{Lem}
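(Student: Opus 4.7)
The plan is to prove the first statement by induction on the clique size $k$, reducing the inductive step to the case $k=3$ via the modular-semilattice structure of principal ideals of $\mathcal{B}(G)$ (Proposition~\ref{prop:ideal}(3)), and to derive the second statement from the first by a Zorn-style argument exploiting the fact that finite cube-dimension uniformly bounds the diameters of Boolean-gated sets.

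For the inductive reduction, fix a finite clique $\{v_1,\ldots,v_k\}$ of $G^{\Delta}$, set $p:=v_1$, and put $X_i:=\lgate p,v_i\rgate\in\mathcal{B}(G)$ for $i=2,\ldots,k$; each $X_i$ is Boolean-gated by hypothesis. The assertion that $v_1,\ldots,v_k$ all lie in a common Boolean-gated set is equivalent to $\lgate X_2\cup\cdots\cup X_k\rgate$ being Boolean-gated, which in the principal ideal $(\{p\})^{\downarrow}\subseteq \mathcal{B}(G)$ (with the inherited reverse-inclusion order) is precisely the existence of the meet of $X_2,\ldots,X_k$. By Proposition~\ref{prop:ideal}(3) the ideal $(\{p\})^{\downarrow}$ is a complemented modular join-semilattice, so the pairwise existence of meets forces the triple existence; a straightforward induction on $k$, applying the modular axiom to $(X_1\wedge\cdots\wedge X_{k-2})$, $X_{k-1}$, $X_k$, reduces everything to the case $k=3$: any three pairwise Boolean vertices $p,u,q$ lie in a common Boolean-gated set.

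The case $k=3$ is the technical heart of the proof. The plan is to show directly that $\lgate p,u,q\rgate$ is itself Boolean-gated. Starting from the connected set $I(p,u)\cup I(u,q)\cup I(p,q)$, I would run the transfinite GATED-HULL procedure (Subsection~\ref{s:wmgra}) while maintaining the invariant that at every stage every pair of vertices in the current set is Boolean in $G$. When a new common neighbor $w$ of two current vertices $x,y$ is added, Lemma~\ref{lem:Boolean123}(2) is the vehicle for propagating Booleanness: for each previously present vertex $z$, the Booleanness of $(z,x)$ and $(z,y)$ yields Booleanness of $(z,w)$. The main obstacle is verifying that this invariant survives every case of the procedure; one must handle the various distance configurations using the triangle and quadrangle conditions together with the forbidden-subgraph constraints (no induced $K_4^-$, no isometric $K_{3,3}^-$), in the same spirit as the distance calculations driving the proof of Proposition~\ref{prop:ideal}(3). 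Once the invariant is maintained to termination, thickness of $\lgate p,u,q\rgate$ follows from Proposition~\ref{prop:char_Boolean} applied to each internal pair, and $\lgate p,u,q\rgate$ is Boolean-gated.

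For the second assertion, let $d$ be the finite cube-dimension of $G$ and let $C$ be any clique of $G^{\Delta}$. Pick $v_1\in C$ and consider the family
\[
\mathcal{F}:=\{\lgate F\rgate : F\subseteq C\text{ finite},\ v_1\in F\},
\]
ordered by inclusion. By the first part each $\lgate F\rgate$ is Boolean-gated, with diameter at most $d$ by Proposition~\ref{prop:char_Boolean}(i); the family is directed since $\lgate F\rgate\cup \lgate F'\rgate\subseteq \lgate F\cup F'\rgate$. The key observation is that the grade function $-\mathrm{diam}$ on $\mathcal{B}(G)$ (Proposition~\ref{prop:ideal}(1)) is strictly monotone along strict inclusions of Boolean-gated sets, so every chain in $\mathcal{F}$ has length at most $d+1$. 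Zorn's lemma then produces a maximal element $Y\in\mathcal{F}$, and directedness makes $Y$ the maximum of $\mathcal{F}$; for each $v\in C$, taking $F=\{v_1,v\}$ gives $v\in \lgate F\rgate\subseteq Y$, so $C\subseteq Y$ as required.
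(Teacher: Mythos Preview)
Your treatment of the second assertion (finite cube-dimension) is correct and is essentially the paper's argument: the paper picks a finite $X'\subseteq X$ whose meet in $\mathcal{B}(G)$ has minimal grade, which is the same object as your maximal element of $\mathcal{F}$ (grade $=-\mathrm{diam}$). The appeal to Zorn is harmless overkill, since the bound on chain length already furnishes a maximal element directly.

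For the first assertion, your reduction to $k=3$ via Proposition~\ref{prop:ideal}(3) is valid, but your attack on the $k=3$ case has a gap, and you have mis-located the difficulty. You propose to run GATED-HULL from $I(p,u)\cup I(u,q)\cup I(p,q)$ while maintaining the invariant that every pair is Boolean, and you flag the inductive step as ``the main obstacle''. In fact the inductive step is automatic from Lemma~\ref{lem:Boolean123}(2), exactly as you note: if $(z,x)$ and $(z,y)$ are Boolean and $w\sim x,y$, then $(z,w)$ is Boolean. The genuine obstacle is the \emph{initial} check: why is every pair $x\in I(p,u)$, $y\in I(u,q)$ (or $y\in I(p,q)$) already Boolean? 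You do not address this, and there is no evident argument short of the very statement you are proving; Lemma~\ref{lem:Boolean123} gives you nothing here since $x$ and $y$ share no common anchor with the right adjacency pattern.

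The paper sidesteps this entirely by never isolating $k=3$. For $X=\{x_1,\ldots,x_k\}$, induction supplies Boolean-gated $Y\supseteq\{x_1,\ldots,x_{k-1}\}$ and $Z\supseteq\{x_2,\ldots,x_k\}$; then $A=Y\cap\lgate x_1,x_k\rgate$, $B=Z\cap\lgate x_1,x_k\rgate$, $C=Y\cap Z$ are Boolean-gated by Lemma~\ref{lem:Boolean_intersection} and are \emph{pairwise} lower-bounded in $\mathcal{B}(G)$ (by $\lgate x_1,x_k\rgate$, $Z$, $Y$ respectively). Property (d) of $\mathcal{B}(G)$ --- established in the proof of Theorem~\ref{thm:G*_is_om} from Proposition~\ref{prop:ideal}(3) --- then yields a Boolean-gated set containing $A\cup B\cup C\supseteq X$. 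Since you already invoke Proposition~\ref{prop:ideal}(3), you may as well invoke its consequence (d); specializing the paper's argument to $k=3$ (with $Y=\lgate p,u\rgate$, $Z=\lgate u,q\rgate$) dispatches your missing case in one line.
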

\begin{proof}
We first prove the claim for finite clique $X$ by induction on $k := |X|$.
Suppose that  $X = \{x_1,x_2,\ldots,x_k\}$.
By induction, there are Boolean-gated sets
$Y$ and $Z$ such that $Y$ contains $x_1,x_2,\ldots,x_{k-1}$
and $Z$ contains $x_2,\ldots,x_k$.
Now $\lgate x_1,x_k \rgate$ is also Boolean-gated.
Let $A := Y \cap \lgate x_1,x_k \rgate$,
$B := Z \cap \lgate x_1,x_k \rgate$, and $C:= Y \cap Z$.
Then $x_1 \in A$, $x_k \in B$, and $x_2,\ldots,x_{k-1} \in C$, and
$A, B, C$ are Boolean-gated by Lemma~\ref{lem:Boolean_intersection}.
Also $(A,B,C)$ is a pairwise lower-bounded triplet in $\mathcal{B}(G)$
by $A \cup B \subseteq \lgate x_1,x_k \rgate$,
$B \cup C \subseteq Z$, and $C \cup A \subseteq Y$.
By Theorem~\ref{thm:G*_is_om} and Theorem~\ref{thm:chara_om}~ (d),
there is a Boolean-gated set containing $A \cup B \cup C \supseteq X$, as required.

Next suppose that $G$ has finite cube-dimension.
Let $X$ be an arbitrary (infinite) clique of $G^{\Delta}$.
By the above finite case,
every finite subset $X'$ of $X$ has meet in $\mathcal{B}(G)$,
which is a unique minimal Boolean-gated set containing $X'$.
Since the cube-dimension of $G$ is finite,
there is no infinite chain in ${\mathcal B}(G)$.
Thus we can take a finite subset $X'$ of $X$
such that the meet $Z$ over $X'$ has minimal grade.
Then $Z$ contains $X$.
Otherwise, take $y \in Z \setminus X$.
Since $\{y\} \cup X'$ is finite,
we can consider the meet $Z'$ over $\{y\} \cup X'$.
Then $Z'$ contains $Z$ properly.
Namely the grade of $Z'$ is less of that of $Z$, a contradiction.
\end{proof}

\section{$\Delta$--gates and geodesic extension property}
\label{subsub:delta-gate}

For two vertices $p,q$, the {\em $\Delta$--gate} of $q$ at $p$ is the
gate of $p$ in $G$ relative to the $(d_{G^{\Delta}}(p,q)-1)$--ball of
$G^\Delta$ centered at $q$ (which is a gated set of $G$ by Proposition
\ref{gated_ball}).
\begin{Lem}\label{lem:delta-gate}
The $\Delta$--gate of $q$ at $p$ is a unique vertex $u$ having the following two properties:
\begin{itemize}
\item[{\rm (1)}] $d_{G^{\Delta}}(p,q) = d_{G^{\Delta}}(u,q) + 1$.
\item[{\rm (2)}] $\lgate p,u \rgate \subseteq \lgate p,v \rgate$ for
every vertex $v$ with $d_{G^{\Delta}}(p,q) = d_{G^{\Delta}}(v,q) + 1$.
\end{itemize}
\end{Lem}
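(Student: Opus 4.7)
My plan is to set $k := d_{G^{\Delta}}(p,q)$ and work with the ball $B := B_{k-1}(q)$ in $G^{\Delta}$, which by Proposition~\ref{gated_ball} is a gated set of $G$; thus the $\Delta$--gate $u$ of $q$ at $p$ is well-defined as the $G$--gate of $p$ in $B$. The proof splits naturally into three steps: verifying (1), verifying (2), and establishing uniqueness. Each step exploits the interplay between Boolean pairs in $G$ (which are exactly the edges of $G^{\Delta}$ adjoined to $G$) and gated hulls, together with the fact that each gated hull $\lgate p,u\rgate$ is a dual polar subgraph of $G$ (Proposition~\ref{prop:char_Boolean}~(vi)).

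For (1), I would pick a neighbor $p_1$ of $p$ on a shortest $(p,q)$--path in $G^{\Delta}$. Then $p_1 \in B$ and $(p,p_1)$ is a Boolean pair of $G$, so $I_G(p,p_1) \subseteq \lgate p,p_1\rgate$. The gate property forces $u \in I_G(p,p_1)$, hence $u \in \lgate p,p_1\rgate$, and Proposition~\ref{prop:char_Boolean}~(iv) then implies that $(p,u)$ itself is Boolean. Since $p \notin B$ we have $u \neq p$, so $u$ is a $G^{\Delta}$--neighbor of $p$. Combined with $u \in B$ and the triangle inequality in $G^{\Delta}$, this yields $d_{G^{\Delta}}(u,q) = k-1$. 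For (2), given any $v$ with $d_{G^{\Delta}}(v,q) = k-1$ we have $v \in B$, so the gate property gives $u \in I_G(p,v) \subseteq \lgate p,v\rgate$; the set $\lgate p,v\rgate$ is gated in $G$ and contains both $p$ and $u$, hence contains $\lgate p,u\rgate$.

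For uniqueness, suppose $u'$ also satisfies (1) and (2). By (1) we have $u' \in B$, so the gate property of $u$ gives $u \in I_G(p,u')$, i.e., $d_G(p,u') = d_G(p,u) + d_G(u,u')$. Applying (2) with $v = u$ yields $u' \in \lgate p,u'\rgate \subseteq \lgate p,u\rgate$, and since $\lgate p,u\rgate$ is a dual polar graph of diameter $d_G(p,u)$ (Proposition~\ref{prop:char_Boolean}~(vi)), this forces $d_G(p,u') \leq d_G(p,u)$. Combining the two equalities gives $d_G(u,u') = 0$, so $u = u'$.

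The main obstacle will be step (1): translating from the gate $u$, which is defined purely in $G$--metric terms, to a statement about $G^{\Delta}$--distances. The bridge is the observation that $p$ has a $G^{\Delta}$--neighbor $p_1$ inside $B$, whose $G$--gated hull with $p$ contains $u$ and is Boolean; once this is in place the remaining verifications are routine applications of the gate property and of Proposition~\ref{prop:char_Boolean}.
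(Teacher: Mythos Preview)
Your proof is correct and follows essentially the same approach as the paper's. Both arguments hinge on the same two observations: the gate property of $u$ in the $(k-1)$--ball gives $u \in I_G(p,v)$ for every $v$ in that ball (yielding (2) and, by specializing to a $G^{\Delta}$--neighbor $p_1$ of $p$, the Booleanness of $(p,u)$ and hence (1)); uniqueness then comes from the diameter bound $\mathrm{diam}\,\lgate p,u\rgate = d_G(p,u)$ supplied by Proposition~\ref{prop:char_Boolean}(vi). The only cosmetic difference is that the paper proves (2) first and reads (1) off as a special case, while you isolate (1) explicitly via the vertex $p_1$; your uniqueness argument is phrased as a direct equality $d_G(u,u')=0$, whereas the paper phrases it as a strict inclusion $\lgate p,u\rgate \subsetneq \lgate p,v\rgate$ when $u\neq v$, but these are the same inequality read in two directions.
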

\begin{proof}
Take any vertex $v$ with $d_{G^{\Delta}}(p,q) = d_{G^{\Delta}}(v,q) + 1$.
Then $v$ belongs to the $(d_{G^{\Delta}}(p,q)-1)$--ball centered at $q$.
Therefore $u \in I(p,v)$ (since $u$ is the gate of $p$ in this ball),
implying $I(p,u) \subseteq I(p,v)$ and $\lgate p,u \rgate \subseteq \lgate p,v \rgate$.
In particular, $(p,u)$ is Boolean, and we have (1) and (2).
Suppose that $u \neq v$.
Then $d(p,u) < d(p,v)$,
and hence $\lgate p,u \rgate \subset \lgate p,v \rgate$ (by Lemma~\ref{l:gated_hull_dpg2}). This concludes the uniqueness.
\end{proof}

\begin{Prop}[{\em Geodesic extension property}]\label{prop:gep2}
For any three vertices $x,y,p$, the following two conditions are equivalent:
\begin{itemize}
\item[{\rm (i)}] $d_{G^{\Delta}}(x,y) = d_{G^{\Delta}}(x,p) + d_{G^{\Delta}}(p,y)$;
\item[{\rm (ii)}] the $\Delta$--gates of $x$ and $y$ at $p$ are distinct and not adjacent in $G^{\Delta}$.
\end{itemize}
\end{Prop}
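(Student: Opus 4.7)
The direction (i)$\Rightarrow$(ii) follows from a short triangle-inequality argument using only property~(1) of Lemma~\ref{lem:delta-gate}. Setting $k:=d_{G^{\Delta}}(p,x)$ and $l:=d_{G^{\Delta}}(p,y)$, we have $d_{G^{\Delta}}(u,x)=k-1$ and $d_{G^{\Delta}}(v,y)=l-1$. If $u=v$ then $d_{G^{\Delta}}(x,y)\le d_{G^{\Delta}}(x,u)+d_{G^{\Delta}}(u,y)=(k-1)+(l-1)$, while if $u\sim_{G^{\Delta}}v$ then $d_{G^{\Delta}}(x,y)\le (k-1)+1+(l-1)=k+l-1$; in either case (i) is violated, forcing (ii) to hold.

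For the converse (ii)$\Rightarrow$(i), I would proceed by contradiction. Assume (ii) holds but $d_{G^{\Delta}}(x,y)\le k+l-1$; the aim is to exhibit a single Boolean-gated set of $G$ containing both $u$ and $v$, which by Proposition~\ref{prop:char_Boolean}(iv) would force $(u,v)$ to be Boolean and hence $u\sim_{G^{\Delta}}v$ or $u=v$, contradicting (ii). The central tools are the finite Helly property of balls in the Helly graph $G^{\Delta}$ (Theorem~\ref{thm:Helly}), the gatedness of $G^{\Delta}$-balls in $G$ (Proposition~\ref{gated_ball}), and property~(2) of Lemma~\ref{lem:delta-gate}: these together will let me transfer the hypothetical shortcut from $x$ to $y$ into a containment of $\lgate p,u\rgate\cup\lgate p,v\rgate$ inside a single Boolean-gated set of the form $\lgate p,w\rgate$.

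Concretely, in the generic case $d_{G^{\Delta}}(x,y)\le k+l-2$, the three balls $B_{k-1}(x,G^{\Delta})$, $B_{l-1}(y,G^{\Delta})$, $B_1(p,G^{\Delta})$ pairwise intersect: the first two meet thanks to the distance hypothesis, while the vertices $u$ and $v$ themselves witness the remaining two intersections with $B_1(p,G^{\Delta})$. Helly then produces a common vertex $w$; the triangle inequality forces $d_{G^{\Delta}}(w,x)=k-1$, $d_{G^{\Delta}}(w,y)=l-1$, and $w\sim_{G^{\Delta}}p$, whence by Lemma~\ref{lem:delta-gate}(2) both $\lgate p,u\rgate$ and $\lgate p,v\rgate$ are contained in the Boolean-gated set $\lgate p,w\rgate$, giving the required contradiction.

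The main obstacle is the boundary case $d_{G^{\Delta}}(x,y)=k+l-1$, in which $B_{k-1}(x,G^{\Delta})$ and $B_{l-1}(y,G^{\Delta})$ become disjoint and the three-ball Helly argument breaks down. My plan here is to apply Helly separately to the two enlarged triples $\{B_k(x,G^{\Delta}),B_{l-1}(y,G^{\Delta}),B_1(p,G^{\Delta})\}$ and $\{B_{k-1}(x,G^{\Delta}),B_l(y,G^{\Delta}),B_1(p,G^{\Delta})\}$, producing $G^{\Delta}$-neighbors $w,w'$ of $p$ with $v\in\lgate p,w\rgate$ and $u\in\lgate p,w'\rgate$ respectively. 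The crux is then to verify that $w\sim_{G^{\Delta}}w'$, so that $\{p,w,w'\}$ is a $G^{\Delta}$-clique and Lemma~\ref{lem:pairwiseBoolean} embeds it (hence both $u$ and $v$) into a common Boolean-gated set. I expect this adjacency check to be the subtlest step: I would extract it from the quadrangle condition available in $G^{\Delta}$ (which is weakly modular as a consequence of being Helly), by analysing a $G^{\Delta}$-geodesic from $x$ to $y$ and locating two consecutive vertices $z_{k-1},z_k$ on it at equal $G^{\Delta}$-distance $1$ from $p$, thereby collapsing $w$ and $w'$ into a single common $G^{\Delta}$-neighbor of $p$ that simultaneously controls $u$ and $v$.
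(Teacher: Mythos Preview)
Your direction (i)$\Rightarrow$(ii) and the generic case of (ii)$\Rightarrow$(i) are both correct. In fact your case split coincides with the paper's: the paper takes a quasi-median $x'p'y'$ of the triplet $x,p,y$ in the weakly modular (Helly) graph $G^{\Delta}$, and one checks that $d_{G^{\Delta}}(x,y)\le k+l-2$ holds exactly when $p'\neq p$, in which case any neighbour of $p$ on the way to $p'$ plays the role of your Helly-produced $w$.

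The gap is in your boundary case $d_{G^{\Delta}}(x,y)=k+l-1$. Running Helly twice produces $w,w'$ with no a priori relation to each other, and your plan to establish $w\sim_{G^{\Delta}}w'$ by locating consecutive vertices $z_{k-1},z_k$ on an arbitrary $G^{\Delta}$-geodesic from $x$ to $y$ at distance $1$ from $p$ does not work as stated: nothing forces those vertices to lie that close to $p$, and ``collapsing $w$ and $w'$ into a single common neighbour'' is not what you actually need. The fix, and the paper's approach, is to use the quasi-median directly. In your boundary case one has $2d_{G^{\Delta}}(p,p')+d_{G^{\Delta}}(x',y')=k+l-d_{G^{\Delta}}(x,y)=1$, so $p'=p$ and $d_{G^{\Delta}}(x',y')=1$; thus $x'py'$ is a genuine $G^{\Delta}$-triangle. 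Now $x'$ and $y'$ are precisely neighbours of $p$ with $d_{G^{\Delta}}(x',x)=k-1$ and $d_{G^{\Delta}}(y',y)=l-1$, and they come \emph{pre-adjacent}. Hence $\{p,x',y'\}$ is a $G^{\Delta}$-clique, Lemma~\ref{lem:pairwiseBoolean} places it inside a single Boolean-gated set $X$, and Lemma~\ref{lem:delta-gate}(2) gives $u\in\lgate p,x'\rgate\subseteq X$ and $v\in\lgate p,y'\rgate\subseteq X$, the contradiction you were after. (The paper actually argues the whole case $p'=p$ uniformly, allowing $d_{G^{\Delta}}(x',y')\ge1$: it first picks a neighbour of $p$ towards $x'$ and then applies the triangle condition in $G^{\Delta}$ to get an adjacent neighbour towards $y'$. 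Since your Helly argument already covers $d_{G^{\Delta}}(x',y')\ge2$, the simpler observation above suffices for your decomposition.)
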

\begin{proof} The implication (i) $\Rightarrow$ (ii) is obvious.
Next we show that (ii) $\Rightarrow$ (i). Let $g$ and $h$ be the
$\Delta$--gates of $x$ and $y$ at $p$, respectively.  Suppose that (i)
fails, i.e., $d_{G^{\Delta}}(x,y) < d_{G^{\Delta}}(x,p) +
d_{G^{\Delta}}(p,y)$.  Take a quasi-median $x'p'y'$ of $x,p,y$ in
$G^{\Delta}$, which by Lemma~\ref{lem-weakly-modular} is an
equilateral metric triangle, since as a Helly graph, $G^{\Delta}$ is weakly
modular.  First suppose that $p' \neq p$.  Then there is a neighbor
$z$ of $p$ with $d_{G^{\Delta}}(z,x) = d_{G^{\Delta}}(p,x) - 1$ and
$d_{G^{\Delta}}(z,y) = d_{G^{\Delta}}(p,y) - 1$.  By
Lemma~\ref{lem:delta-gate}, both $\lgate p, g \rgate$ and $\lgate p, h
\rgate$ belong to $\lgate p,z \rgate$.  Hence, $(g,h)$ is Boolean and
$g,h$ are adjacent in $G^{\Delta}$.

Now suppose that $p' = p$.
Take a neighbor $u$ of $p$ with $d_{G^{\Delta}}(u,x') = d_{G^{\Delta}}(p,x') - 1$.
Since the metric triangles are equilateral,
by (TC) we can take a common neighbor $v$ of $p,u$ with
$d_{G^{\Delta}}(v,y') = d_{G^{\Delta}}(p,y') - 1$.
By Lemma~\ref{lem:pairwiseBoolean},
there is a Boolean-gated set $X$ containing $p,u,v$.
By Lemma~\ref{lem:delta-gate}, $\lgate p,g \rgate \subseteq \lgate p,u \rgate$
and $\lgate p,h \rgate \subseteq \lgate p,v \rgate$.
Hence both $g$ and $h$ belong to $X$, and therefore they are adjacent
in $G^{\Delta}$.
\end{proof}

\begin{lemma}\label{lem-BP-prop}
Let $p,x$ be two arbitrary vertices of an swm-graph $G$ and let $x'$
be the $\Delta$-gate of $p$ at $x$. If $d_{G^\Delta}(p,x) = k$, then
$x' \in B_1(y,G^\Delta)$ for any $y \in B_{k}(p,G^\Delta) \cap
B_1(x,G^\Delta)$.
\end{lemma}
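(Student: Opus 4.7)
The plan is to use the gate property that $x'$ lies on every $G$-geodesic from $x$ to any vertex of $B_{k-1}(p,G^\Delta)$, and to combine this with the fact (Proposition~\ref{prop:char_Boolean}(iv)) that inside a Boolean-gated set every pair of vertices is itself Boolean. First I would observe that $x'$ is a $G^\Delta$-neighbor of $x$: pick any $G^\Delta$-neighbor $v$ of $x$ with $d_{G^\Delta}(v,p)=k-1$ along a shortest $(x,p)$-path; then $(x,v)$ is Boolean, so $\lgate x,v\rgate$ is Boolean-gated, and since $x'\in I_G(x,v)\subseteq\lgate x,v\rgate$, Proposition~\ref{prop:char_Boolean}(iv) makes $(x,x')$ Boolean, i.e., $x'\sim x$ in $G^\Delta$ (and $x'\ne x$ because their $G^\Delta$-distances to $p$ differ).

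With $y\in B_k(p,G^\Delta)\cap B_1(x,G^\Delta)$ fixed, I split on $d_{G^\Delta}(y,p)$. If $d_{G^\Delta}(y,p)\le k-1$, then $y\in B_{k-1}(p,G^\Delta)$, so by the gate property $x'\in I_G(x,y)\subseteq\lgate x,y\rgate$; since $(x,y)$ is Boolean, $\lgate x,y\rgate$ is Boolean-gated, and Proposition~\ref{prop:char_Boolean}(iv) again gives that $(x',y)$ is Boolean (or $x'=y$), i.e., $y\in B_1(x',G^\Delta)$.

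The main case, and the core of the argument, is $d_{G^\Delta}(y,p)=k$. Here I would introduce the $\Delta$-gate $y'$ of $p$ at $y$; applying the first paragraph with $y$ in place of $x$ gives $y'\in B_1(y,G^\Delta)\cap B_{k-1}(p,G^\Delta)$. Together with $x\in B_1(x,G^\Delta)\cap B_1(y,G^\Delta)$ and $x'\in B_1(x,G^\Delta)\cap B_{k-1}(p,G^\Delta)$, this shows that the three balls $B_1(x,G^\Delta)$, $B_1(y,G^\Delta)$, $B_{k-1}(p,G^\Delta)$ pairwise intersect. Since $G^\Delta$ is finitely Helly (Theorem~\ref{thm:Helly}), the three balls share a common vertex $z$. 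Then $\{x,y,z\}$ is a finite clique of $G^\Delta$, and by Lemma~\ref{lem:pairwiseBoolean} it lies in some Boolean-gated set $S$ of $G$. Since $S$ is convex in $G$ and $z\in B_{k-1}(p,G^\Delta)$, the gate property places $x'\in I_G(x,z)\subseteq S$. Because $S$ induces a dual polar graph, $S$ is a clique in $G^\Delta$, and so $x'$ and $y$ are $G^\Delta$-adjacent (they are distinct, their $G^\Delta$-distances to $p$ being $k-1$ and $k$), giving $y\in B_1(x',G^\Delta)$.

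The delicate point is the Helly step in the last case: I expect the main obstacle is just assembling the right three balls so that they verifiably pairwise intersect, which is exactly what the preliminary fact ``$x'\sim x$ in $G^\Delta$'' was designed to provide. Once that is in place, the finish is a routine combination of Lemma~\ref{lem:pairwiseBoolean}, the dual-polar-to-$G^\Delta$-clique translation, and the gate property of $x'$.
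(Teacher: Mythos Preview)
Your proof is correct and follows essentially the same route as the paper's. The paper handles the case $d_{G^\Delta}(y,p)=k$ by invoking the triangle condition in $G^\Delta$ (which is weakly modular, being finitely Helly) to produce a common $G^\Delta$-neighbor $z$ of $x$ and $y$ at $G^\Delta$-distance $k-1$ from $p$, and then finishes exactly as you do via Lemma~\ref{lem:pairwiseBoolean} and the gate property; your Helly step with the three balls is just a direct way to obtain the same $z$, and your preliminary paragraph showing $x'\sim x$ in $G^\Delta$ is already contained in Lemma~\ref{lem:delta-gate}.
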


\begin{proof} We distinguish two cases, depending on the value of
$d_{G^\Delta}(p,y)$.  If $d_{G^\Delta}(p,y)=d_{G^\Delta}(p,x')=d_{G^\Delta}(p,x)-1=k-1$,
then $x'\in I(x,y)$ in $G$ since $x'$ is the $\Delta$-gate of $p$ at $x$. Since $(x,y)$
is a Boolean pair, by Proposition~\ref{prop:char_Boolean}, $(x',y)$ is also a Boolean pair
and thus $x' \sim y$ in $G^\Delta$.

If $d_{G^\Delta}(p,y)=d_{G^\Delta}(p,x)=k$, by (TC) applied to the triplet $x,y,p$ in $G^\Delta$,
there exists a vertex $z \in B_{k-1}(p,G^\Delta)$ such that $z \sim x,y$ in $G^\Delta$. Applying the previous
case with $y = z$, we have that $z \sim x'$ in $G^\Delta$. By Lemma~\ref{lem:pairwiseBoolean}, there exists a
Boolean-gated set $B$ containing $x,y,z$. Since $x' \in I(x,z)$, necessarily $x' \in B$. Consequently,
$(x',y)$ is a Boolean pair, i.e., $x' \sim y$ in $G^\Delta$.
\end{proof}

\section{Normal Boolean-gated paths}\label{subsec:normalbgpaths}

Let $G$ be an swm-graph. A path $\gamma=(p = x_0,x_1,\ldots,x_k = q)$ of $G^{\Delta}$
is called a {\em normal Boolean-gated path} (a {\em normal bg-path} for short) if
for any $i=1,2,\ldots,k-1$ and any Boolean-gated set
$B$ containing $\lgate x_{i-1}, x_{i} \rgate$, the equality
$B \cap \lgate x_{i}, x_{i+1} \rgate = \{x_i\}$ holds. If $G$ is
a median graph, then any  Boolean-gated set is a cube and the notion
of normal bg-path coincides with the notion of normal cube-path
introduced by \cite{NiRe}. The following theorem generalizes a similar result
of \cite{NiRe}  for normal cube-paths in median graphs (see also \cite{Ch_CAT}
for the case of folder complexes):

\begin{Thm}[{Normal bg-paths}]\label{th-nbgp}
For any pair $p,q$ of vertices of an swm-graph $G$, there is a unique
normal bg-path
$\gamma_{pq} = (p = x_0,x_1,x_2,\ldots,x_k = q)$
between $p$ and $q$, whose vertices are  given by
\begin{equation}\label{eqn:given}
 x_i := \mbox{the $\Delta$--gate of $p$ at $x_{i+1}$} \quad (i =k-1,k-2,\ldots,1,0).
\end{equation}
In particular, any normal bg-path is a shortest $(p,q)$--path in $G^{\Delta}$.
\end{Thm}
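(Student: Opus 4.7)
The plan is to proceed in three stages: construct the path via~\eqref{eqn:given}, verify its normality, and prove that any normal bg-path coincides with the constructed one. For the construction, setting $x_k := q$ and defining $x_i$ by backward induction as the $\Delta$--gate of $p$ at $x_{i+1}$, Lemma~\ref{lem:delta-gate}(1) guarantees that each $x_i$ is well defined, $G^{\Delta}$--adjacent to $x_{i+1}$, and satisfies $d_{G^{\Delta}}(p, x_i) = d_{G^{\Delta}}(p, x_{i+1}) - 1$; hence the recursion terminates at $x_0 = p$ after exactly $k = d_{G^{\Delta}}(p, q)$ steps and yields a shortest $(p,q)$--path in $G^{\Delta}$.

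For normality I would argue by contradiction: suppose some Boolean-gated $B \supseteq \lgate x_{i-1}, x_i \rgate$ contains a vertex $y \in \lgate x_i, x_{i+1} \rgate$ with $y \ne x_i$. First, $y \notin \{x_{i-1}, x_{i+1}\}$: indeed, if $y = x_{i+1}$ (respectively, $y = x_{i-1}$) then both $x_{i-1}$ and $x_{i+1}$ lie in $B$ (respectively, in $\lgate x_i, x_{i+1} \rgate$), so $(x_{i-1}, x_{i+1})$ would be a Boolean pair and $x_{i-1} \sim x_{i+1}$ in $G^{\Delta}$, contradicting $d_{G^{\Delta}}(x_{i-1}, x_{i+1}) \ge 2$ (a consequence of $d_{G^{\Delta}}(p, x_{i-1}) = i-1$ and $d_{G^{\Delta}}(p, x_{i+1}) = i+1$). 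Consequently, $\{y, x_{i+1}\} \subseteq \lgate x_i, x_{i+1} \rgate$ and $\{y, x_{i-1}\} \subseteq B$ are pairs of distinct vertices in Boolean-gated sets, each therefore Boolean, so $y$ is $G^{\Delta}$--adjacent to both $x_{i-1}$ and $x_{i+1}$. Sandwiching the two resulting distance bounds gives $d_{G^{\Delta}}(p, y) = i$, so $y$ satisfies property~(1) of Lemma~\ref{lem:delta-gate} as a candidate $\Delta$--gate of $p$ at $x_{i+1}$. Applying property~(2) for the actual gate $x_i$ with $v = y$ gives $\lgate x_{i+1}, x_i \rgate \subseteq \lgate x_{i+1}, y \rgate$; conversely, since $y \in \lgate x_i, x_{i+1} \rgate$ and this gated hull is gated in $G$, $\lgate x_{i+1}, y \rgate \subseteq \lgate x_i, x_{i+1} \rgate$. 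The resulting equality $\lgate x_{i+1}, y \rgate = \lgate x_{i+1}, x_i \rgate$ shows that $y$ itself satisfies~(2), and the uniqueness clause of Lemma~\ref{lem:delta-gate} forces $y = x_i$, the desired contradiction.

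For uniqueness I would proceed by induction on the length $m$ of an arbitrary normal bg-path $(p = x'_0, x'_1, \ldots, x'_m = q)$, the inductive step reducing to the claim that $x'_{m-1}$ equals the $\Delta$--gate $u$ of $p$ at $q$. Once this is established, the truncation $(x'_0, \ldots, x'_{m-1})$ is a normal bg-path from $p$ to $x'_{m-1}$ of length $m - 1$, and the induction hypothesis identifies it with $\gamma_{p, x'_{m-1}}$; composing this with the last edge then yields~\eqref{eqn:given} together with $m = k$. To prove the claim, assuming $u \ne x'_{m-1}$, the minimality property of $u$ (Lemma~\ref{lem:delta-gate}(2)) combined with the complemented modular join-semilattice structure of the principal ideal of $x'_{m-1}$ in $\mathcal{B}(G)$ (Proposition~\ref{prop:ideal}(3)) produces a Boolean-gated set containing $x'_{m-2}$, $x'_{m-1}$, and $u$, whose intersection with $\lgate x'_{m-1}, q \rgate$ then contains both $x'_{m-1}$ and $u$ and violates normality at the last step. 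The principal obstacle is precisely this uniqueness step: normality by itself does not a priori force the path to be geodesic in $G^{\Delta}$ nor to realize the minimal $\Delta$--gate at each step, so extracting the required rigidity relies on combining Lemma~\ref{lem:delta-gate}(2) with the lattice-theoretic properties of $\mathcal{B}(G)$ to construct an explicit witness Boolean-gated set.
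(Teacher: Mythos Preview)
Your construction step and normality argument are correct and track the paper's proof closely (your explicit exclusion of $y\in\{x_{i-1},x_{i+1}\}$ is a harmless elaboration of what the paper leaves implicit).

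The uniqueness argument, however, has two genuine gaps.

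\textbf{First}, you never establish that an arbitrary normal bg-path is geodesic in $G^{\Delta}$. Your induction on $m$ applied to the truncation $(x'_0,\ldots,x'_{m-1})$ does give $d_{G^{\Delta}}(p,x'_{m-1})=m-1$, but this alone allows $d_{G^{\Delta}}(p,q)\in\{m-2,m-1,m\}$. To invoke Lemma~\ref{lem:delta-gate}(2) with $v=x'_{m-1}$ you need $d_{G^{\Delta}}(p,x'_{m-1})=d_{G^{\Delta}}(p,q)-1$, i.e.\ $d_{G^{\Delta}}(p,q)=m$. The paper handles this by a separate argument (using the $\Delta$--gate of $q$, not of $p$, together with Lemma~\ref{lem-BP-prop} and Lemma~\ref{lem:pairwiseBoolean}) to show every normal bg-path is shortest before attempting to identify vertices. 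Within your induction the same can be done: if $d_{G^{\Delta}}(p,q)\le m-1$ then $q\in B_{m-1}(p,G^{\Delta})\cap B_1(x'_{m-1},G^{\Delta})$, so Lemma~\ref{lem-BP-prop} makes $x'_{m-2}$ adjacent to $q$; then Lemma~\ref{lem:pairwiseBoolean} supplies a Boolean-gated $B\ni x'_{m-2},x'_{m-1},q$ that violates normality at the last step. You acknowledge this obstacle but do not resolve it.

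\textbf{Second}, even granting geodesicity, your appeal to Proposition~\ref{prop:ideal}(3) does not produce the Boolean-gated set you claim. The principal ideal of $\{x'_{m-1}\}$ in $\mathcal{B}(G)$ is a \emph{join}-semilattice for the reverse inclusion: joins are intersections. What you need is a \emph{meet} of $\lgate x'_{m-2},x'_{m-1}\rgate$ and $\lgate u,x'_{m-1}\rgate$, i.e.\ a Boolean-gated set containing both; this exists only if the pair is lower-bounded, which amounts to $(x'_{m-2},u)$ being Boolean --- and that is precisely what is not known. The paper avoids this by not trying to put $x'_{m-2}$ itself into $B$: using (TC) in the weakly modular graph $G^{\Delta}$ one finds $w$ adjacent to both $x'_{m-1}$ and $u$ with $d_{G^{\Delta}}(p,w)=m-2$; then Lemma~\ref{lem:delta-gate}(2) (applied at $x'_{m-1}$) gives $\lgate x'_{m-2},x'_{m-1}\rgate\subseteq\lgate w,x'_{m-1}\rgate$, and Lemma~\ref{lem:pairwiseBoolean} applied to the $G^{\Delta}$--clique $\{x'_{m-1},w,u\}$ yields a Boolean-gated $B$ containing $\lgate x'_{m-2},x'_{m-1}\rgate$ and $u$, contradicting normality. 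The intermediate vertex $w$ is the missing idea.
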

\begin{proof}
Observe that $\gamma_{pq}$ (defined by (\ref{eqn:given})) is a shortest path
(by Lemma~\ref{lem:delta-gate}).
First we verify that $\gamma_{pq}$ is a normal bg-path.
Pick a vertex $u$ from $B \cap \lgate x_i, x_{i+1} \rgate$
for a Boolean-gated set $B$ including $\lgate x_{i-1}, x_{i} \rgate$.
Then the pair $(u, x_{i-1})$ is Boolean, and
thus $u$ is a common neighbor of $x_{i-1}$ and $x_{i+1}$ in $G^{\Delta}$.
Since $x_{i}$ is the $\Delta$--gate of $p$ at $x_{i+1}$,
we have $\lgate x_i,x_{i+1} \rgate \subseteq \lgate u, x_{i+1} \rgate$.
Since $u$ belongs to $\lgate x_i,x_{i+1} \rgate$,
we have $\lgate x_i,x_{i+1} \rgate = \lgate u, x_{i+1} \rgate$.
By Lemma~\ref{lem:delta-gate} we conclude that $x_i = u$.
Hence $\gamma_{pq}$ is normal Boolean-gated.

Let $\gamma'_{pq} = (p = y_0,y_1,y_2,\ldots,y_l = q)$ be an arbitrary
normal bg-path.  We show that $\gamma'_{pq}$ is a
shortest path in $G^{\Delta}$. Suppose not: then there is an index $i$
such that $(y_i,y_{i+1},\ldots,y_l = q)$ is shortest but $(y_{i-1},
y_i, y_{i+1},\ldots,y_l = q)$ is no longer shortest, i.e.,
$d_{G^\Delta}(q,y_{i-1}) \leq d_{G^\Delta}(q,y_i)$. Let $u$ be the
$\Delta$--gate of $q$ at $y_i$.  By Lemma~\ref{lem-BP-prop}, either
$y_{i-1} = u$ or $y_{i-1}$ is adjacent to $u$.  By
Lemma~\ref{lem:pairwiseBoolean} there is a Boolean-gated set $B$
containing the vertices $y_{i-1}, u, y_{i}$.  Also $u$ is contained in
$\lgate y_i,y_{i+1} \rgate$ by Lemma~\ref{lem:delta-gate}.  This means
that $B \cap \lgate y_i,y_{i+1} \rgate \supseteq \{u,y_i \}$,
contradicting the normality of $\gamma'_{pq}$.

Next we show that $y_i = x_i$ by induction on $k$ and reverse induction on $i$.
Suppose that $y_{i} = x_{i}$ and $x_{i-1} \neq y_{i-1}$.
By Lemma~\ref{lem:delta-gate},
$\lgate y_{i-1},x_{i} \rgate \supset \lgate x_{i-1},x_{i} \rgate$,
and in particular $y_{i-1}$ and $x_{i-1}$ are adjacent.
By (TC) in $G^{\Delta}$  there is a common neighbor $w$ of $y_{i-1}$ and $x_{i-1}$
with $d_{G^{\Delta}}(p,w) = i-1$.
By induction hypothesis on $k$, $y_{i-2}$ must be the $\Delta$--gate of $p$ at $y_{i-1}$.
This means that $\lgate y_{i-1},y_{i-2} \rgate \subseteq \lgate y_{i-1}, w \rgate$.
By Lemma~\ref{lem:pairwiseBoolean},
there is a Boolean-gated set $B$ containing $y_{i-1},w,x_{i-1}$.
Then $B$ contains $\lgate y_{i-1}, y_{i-2} \rgate$.
The intersection $B \cap \lgate y_{i-1}, x_i \rgate$ contains $y_{i-1}$ and $x_{i-1}$,
a contradiction. Thus $x_{i-1} = y_{i-1}$.
\end{proof}

The following hereditary property of normal bg-paths will be used in
Section~\ref{subsec:biauto}.

\begin{lemma}\label{lem-magic}
Let   $\gamma_{p,q}=(p =x_0,x_1,\ldots,x_{k-1},x_k = q)$ be the normal bg-path between  $p$ and $q$  in  an swm-graph $G$.
Then for any vertex $z \in \lgate x_{k-1},x_{k} \rgate$, if $d_{G^\Delta}(p,z) =
d_{G^\Delta}(p,q) = k$, then the path $\gamma'=(p =x_0,x_1,x_2,\ldots,x_{k-1},x_k' = z)$ is the normal
bg-path $\gamma_{pz}$.
\end{lemma}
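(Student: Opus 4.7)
The plan is to show that $\gamma'$ is itself a normal bg-path between $p$ and $z$, and then appeal to the uniqueness part of Theorem~\ref{th-nbgp} to conclude $\gamma' = \gamma_{pz}$. So no direct computation of $\Delta$-gates is needed; the uniqueness statement does all the work, provided we can verify that $\gamma'$ is a genuine normal bg-path.

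The proof will proceed in three short steps. First, I would check that $\gamma'$ is a path in $G^\Delta$: every edge $x_{i-1}x_i$ with $i \le k-1$ is already an edge of $G^\Delta$ since it occurs in $\gamma_{pq}$, and for the new final edge $x_{k-1}z$, the hypothesis $z \in \lgate x_{k-1}, x_k \rgate$ together with Proposition~\ref{prop:char_Boolean}(iv) ensures that $(x_{k-1}, z)$ is a Boolean pair in $G$, so by the definition of $G^\Delta$ it is an edge of $G^\Delta$. Note that $z \neq x_{k-1}$ since $d_{G^\Delta}(p,z) = k > k-1 = d_{G^\Delta}(p,x_{k-1})$.

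Second, I would verify that $\gamma'$ satisfies the normality condition at each index $i = 1, \ldots, k - 1$. For $i \le k - 2$, the required condition coincides verbatim with the condition for $\gamma_{pq}$ and hence holds since $\gamma_{pq}$ is normal. The essential new case is $i = k - 1$. Here the key observation is the inclusion $\lgate x_{k-1}, z \rgate \subseteq \lgate x_{k-1}, x_k \rgate$, which holds because $\lgate x_{k-1}, x_k \rgate$ is gated and contains both $x_{k-1}$ and $z$, and the gated hull is by definition the smallest gated set containing its generators. Hence for any Boolean-gated set $B \supseteq \lgate x_{k-2}, x_{k-1} \rgate$,
\[
B \cap \lgate x_{k-1}, z \rgate \subseteq B \cap \lgate x_{k-1}, x_k \rgate = \{x_{k-1}\},
\]
where the last equality is the normality of $\gamma_{pq}$ at index $k-1$. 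Since $x_{k-1}$ clearly lies in both sets on the left, equality holds.

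Finally, by the uniqueness clause of Theorem~\ref{th-nbgp} applied to the pair $(p,z)$, we obtain $\gamma' = \gamma_{pz}$. I do not anticipate any genuine obstacle in this plan: the argument rides entirely on the inclusion $\lgate x_{k-1}, z \rgate \subseteq \lgate x_{k-1}, x_k \rgate$, which is immediate from gatedness. The hypothesis $d_{G^\Delta}(p,z) = k$ is used only to rule out $z = x_{k-1}$ and to guarantee that $\gamma'$ has the correct length (which is automatically consistent once $\gamma'$ is known to be a normal bg-path, since normal bg-paths are shortest in $G^\Delta$ by Theorem~\ref{th-nbgp}).
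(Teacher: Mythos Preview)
Your proposal is correct and follows essentially the same approach as the paper: verify that $\gamma'$ is a normal bg-path by observing that the normality condition at indices $i\le k-2$ is inherited verbatim from $\gamma_{pq}$, while at $i=k-1$ the key inclusion $\lgate x_{k-1},z\rgate\subseteq\lgate x_{k-1},x_k\rgate$ reduces the condition to that of $\gamma_{pq}$, and then conclude by the uniqueness in Theorem~\ref{th-nbgp}. Your write-up is slightly more explicit (checking that $x_{k-1}z$ is an edge of $G^\Delta$ and that $z\neq x_{k-1}$), but the argument is the same.
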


\begin{proof} By Theorem~\ref{th-nbgp}, it is enough to show that $\gamma'$ is a normal
bg-path. Note that if $k = 1$, then $\gamma'=(p,z)$ is trivially a normal
bg-path, and we are done. Assume now that $k \geq 2$.
Since $\gamma_{pq}$ is the normal bg-path between $p$ and $q$, for any $i \leq
k-2$ and for any Boolean-gated set $B$ containing $\lgate
x_{i-1},x_i\rgate$, we have $B \cap \lgate x_i, x_{i+1} \rgate = \{x_i\}$.
Consider now a Boolean-gated set $B$ containing $\lgate
x_{k-2},x_{k-1} \rgate$. Note that $\lgate x_{k-1},z \rgate \subseteq
\lgate x_{k-1},x_k \rgate$, and consequently, $\lgate x_{k-1},z \rgate
\cap B \subseteq \lgate x_{k-1},x_k \rgate \cap B= \{x_{k-1}\}$ because
$\gamma_{pq}$ is a normal bg-path. This concludes  the proof of
the lemma.
\end{proof}

\section{Euclidean buildings of type C$_n$}\label{subsec:building}
In this section,
we explain how particular swm-graphs arise from buildings of type C$_n$.
Our references on buildings are the books \cites{Ti, BuildingBook}.
Let us briefly review the basic notions of building theory.

\subsection*{Chamber complexes}
A {\em chamber complex} $\Sigma$ is an abstract simplicial complex such that
any maximal simplex, called a {\em chamber}, has the same number $n$ of vertices
and for any two chambers $C,D$
there is a sequence of chambers $C = C_0,C_1,\ldots,C_k = D$, called a {\em gallery},
with $|C_i \cap C_{i+1}| = n-1$. The number $n$ is called the {\em rank} of
$\Sigma$.

A {\em coloring} of a chamber complex $\Sigma$ of rank $n$
is a map $\varphi$
from the vertex set of $\Sigma$ to an $n$--element set $I$
such that $\varphi$ is bijective on the vertex set of each chamber.
A coloring, if it exists, is essentially unique,
and is uniquely determined by a coloring of an arbitrary chamber of $\Sigma$.
For a vertex $v$,  $\varphi(v)$ is called the {\em type of $v$.}


\subsection*{Coxeter complexes of type C$_n$}
Instead of defining general  Coxeter complexes,
we directly introduce spherical and Euclidean Coxeter complexes of type C$_n$.
For $\alpha \in \RR^n \setminus \{0\}$ and $\beta \in \RR$,
let $H_{\alpha,\beta}$ denote the hyperplane
$\{ x \in \RR^n:   \langle \alpha, x \rangle = \beta \}$.
Consider the set $\mathcal{H}$ of all hyperplanes of types
$H_{e_i + e_j, b}$ $(1 \leq i \leq j \leq n, b \in \ZZ)$
and $H_{e_{i} - e_{j}, b}$ $(1 \leq i < j \leq n, b \in \ZZ)$.
The closure of each connected component of
$\RR^n \setminus \bigcup_{H \in \mathcal{H}} H$
is an $n$--dimensional simplex which is the convex hull of the $n+1$ points
\[
 x + \frac{1}{2}(1,1,\ldots,1) + \frac{1}{2} \left( \sigma(i_1) e_{i_1} + \sigma(i_2) e_{i_2} 
+ \cdots + \sigma(i_k) e_{i_k} \right) \quad (k=0,1, 2,\ldots, n)
\]
for an integer vector $x$,
a permutation $(i_1,i_2,\ldots,i_n)$ on $\{1,2,\ldots,n\}$,
and a sign map $\sigma:\{1,2,\ldots,n\} \to \{-1,1\}$.
The set $\Delta$ of all such simplices and their faces is a simplicial complex
on the set $(\ZZ/2)^n$ of all half-integral vectors.
A {\em Euclidean Coxeter complex of type C$_n$}
is an abstract simplicial complex isomorphic to $\Delta$.
A {\em spherical Coxeter complex of type C$_n$} is
an abstract simplicial complex isomorphic to
the subcomplex of $\Delta$ consisting of
simplices lying on the $l_{\infty}$--sphere
$\{ x \in \RR^n: \|x - (1/2,1/2, \ldots,1/2)\|_{\infty} = 1/2\}$ (which is the boundary of $[0,1]^n$).


\subsection*{Buildings of type C$_n$}
A {\em spherical/Euclidean building of type C$_n$} is a simplicial complex $\Delta$
that is the union of subcomplexes, called {\em apartements}, satisfying the following axioms:
\begin{itemize}
\item[B0:] Each apartment is a spherical/Euclidean Coxeter complex of type C$_n$.
\item[B1:] For any two simplices $A,B \in \Delta$,
there is an apartment $\Sigma$ containing them.
\item[B2:] If $\Sigma$ and $\Sigma'$ are two apartments containing $A$ and $B$,
then there is an isomorphism $\Sigma \to \Sigma'$ fixing $A$ and $B$ pointwise.
 \end{itemize}
It is known \cite{BuildingBook}*{Proposition 4.6} that a building is a colorable chamber complex, and
the isomorphism in B2 can be taken to
be type-preserving. For an apartment $\Sigma$ and a chamber $C$,
the {\em canonical retraction} $\rho_{\Sigma, C}: {\Delta} \to \Sigma$ is defined as:
for a simplex $A$ in $\Delta$, take
an apartment $\Sigma'$ containing $A$ and $C$, and define
$\rho_{\Sigma, C}(A)$  to be the image of $A$ by the isomorphism
$\Sigma' \to \Sigma$ ensured by B2.
In fact, $\rho_{\Sigma, C}(A)$ is independent of
the choice of $\Sigma'$, hence the map $\rho_{\Sigma, C}$ is well-defined
and is also a retraction (take $\Sigma'$ as $\Sigma$);
see \cite{BuildingBook}*{Section 4.4} for details.

The vertices in an apartment of a spherical or Euclidean building of type C$_n$
are identified with the half-integral vectors of $(\ZZ/2)^n$ as defined above.
For a half-integral vector $x$, consider the map
\[
x = (x_1,x_2,\ldots,x_n) \mapsto  \mbox{the number of $i\in \{0,1,2,\ldots,n\}$
with $x_i \not \in \ZZ$}.
\]
It is easy to see that this map is a coloring of the apartment and that
this coloring can be uniquely extended to a coloring of the building. In the spherical case,
this numbering of vertices is the same as
the {\em natural ordering} in the sense of Tits~\cite{Ti}*{7.4}.
Define the partial order on the vertices by setting $x \preceq y$
iff $x$ and $y$ belong to a common simplex
and the type of $y$ is less than or equal to the type of $x$;
so the vertices corresponding to integral vectors are maximal.
The resulting poset is denoted by $\mathcal{L}(\Delta)$.
The fundamental theorem on spherical buildings of type C$_n$ is the following:

\begin{Thm}[\cite{Ti}]\label{thm:Tits_polar}
For a spherical building $\Delta$ of type C$_n$,
$\mathcal{L}(\Delta)$ is the subspace poset of a polar space minus $0$,
and  $\Delta$ is the order complex of $\mathcal{L}(\Delta)$.
Conversely,  for a polar space $\Pi$ of rank $n$,
the reduced order complex of the subspace poset of $\Pi$
is a spherical building of type C$_n$.
\end{Thm}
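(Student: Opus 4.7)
The plan is to prove the two directions of this classical theorem of Tits separately, using the coloring and apartment structure for the forward direction, and the frame decomposition of a polar space for the converse.

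For the forward direction, assume $\Delta$ is a spherical building of type $C_n$. First I would use the coloring described in the excerpt to define the poset $\mathcal{L}(\Delta)$, noting that the maximal elements of $\mathcal{L}(\Delta)$ (the integral vectors in any apartment) will play the role of points of a polar space, while rank-$k$ elements will correspond to the $(k-1)$-dimensional singular subspaces. The partial order is compatible with this correspondence on each apartment by direct inspection of the Coxeter complex $\Delta_{C_n}$: in an apartment, the chain structure recovers the subspace lattice of a finite polar space of rank $n$. To verify the polar space axioms (Q1), (Q2$'$), and (Q3) for $\mathcal{L}(\Delta)$, I would fix a chamber $C$, an apartment $\Sigma$ containing it, and apply the canonical retraction $\rho_{\Sigma,C}\colon \Delta\to \Sigma$: for any point $p$ and line $\ell$ not containing $p$, pick an apartment $\Sigma'$ containing both the simplex $\{p\}\cup \ell$ and $C$, and transfer the verification to $\Sigma\cong \Sigma'$, where everything becomes an elementary computation inside the spherical Coxeter complex of type $C_n$. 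The equality $\Delta=\Delta(\mathcal{L}(\Delta))$ is then immediate from the fact that simplices in a building are in bijection with chains of incident residues, which under the coloring-induced order are precisely the chains in $\mathcal{L}(\Delta)$.

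For the converse direction, let $\Pi=(P,L;R)$ be a polar space of rank $n$ and let $\mathcal M=\mathcal S(\Pi)\setminus\{0\}$ be the reduced subspace poset. I would define $\Delta$ as the order complex $\Delta(\mathcal M)$ and equip it with a coloring by $k\mapsto n-\dim$. The main task is then to define apartments and verify B0, B1, B2. The natural candidate for apartments is the following: a \emph{frame} $\Phi$ of $\Pi$ is a set of $2n$ points $p_1,p_1^*,p_2,p_2^*,\ldots,p_n,p_n^*$ such that each $p_i$ is collinear with every $p_j,p_j^*$ with $j\neq i$ but not with $p_i^*$. To each frame I would associate an apartment $\Sigma_\Phi$ as the subcomplex of $\Delta(\mathcal M)$ spanned by all singular subspaces of $\Pi$ generated by sub-selections of $\{p_1$ or $p_1^*\}, \{p_2$ or $p_2^*\},\ldots$. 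A direct verification, using the structure theorem for projective spaces (Theorem \ref{thm:Birkhoff}) applied to the maximal subspaces given by part (P1) of Theorem \ref{thm:Tits}, shows that $\Sigma_\Phi$ is isomorphic to the spherical Coxeter complex of type $C_n$, giving B0. For B1, given any two simplices $A,B\in\Delta(\mathcal M)$, the union $A\cup B$ generates a finite set of singular subspaces; iteratively applying part (P3) of Theorem \ref{thm:Tits} (which allows one to complete pairs of non-collinear points to frame-like configurations), together with (P4), I would construct a frame $\Phi$ such that $A,B\subseteq \Sigma_\Phi$. For B2, given two apartments $\Sigma_\Phi,\Sigma_{\Phi'}$ both containing $A$ and $B$, the fact that subspaces generated in $\Pi$ respect polar-space isomorphisms lets me send each $p_i\mapsto p_i'$ in a way that fixes $A,B$ pointwise and preserves the generating relations, yielding the required type-preserving simplicial isomorphism.

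The hard part will be the construction of apartments via frames in the converse direction, and in particular establishing B1: finding a single frame whose associated apartment simultaneously contains two arbitrary simplices requires a careful inductive argument using the exchange property encoded in (P3), together with the fact (Lemma \ref{lem:polar=>modular_semilattice}) that $\mathcal S(\Pi)$ is a complemented modular semilattice, so that any finite configuration of subspaces can be refined to a compatible system of complements. A clean way to execute this is to prove the following auxiliary statement first: for every finite set of singular subspaces $S_1,\ldots,S_k$ of $\Pi$, there exists a frame $\Phi$ such that each $S_i$ is spanned by a subset of $\Phi$. Once this is in place, B1 follows immediately by taking $\{S_1,\ldots,S_k\}$ to be the union of the vertex sets of $A$ and $B$, and B2 reduces to an elementary uniqueness argument inside a single Coxeter complex of type $C_n$. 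The remaining bookkeeping, namely checking that the coloring induced by $k\mapsto n-\dim$ on $\Delta(\mathcal M)$ agrees with the type function defined via half-integer coordinates in each apartment, is a direct matching of conventions and poses no substantive difficulty.
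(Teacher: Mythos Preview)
The paper does not prove Theorem~\ref{thm:Tits_polar}; it is stated with attribution to Tits~\cite{Ti} and used as a black box. There is therefore no ``paper's own proof'' to compare against.

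Your outline follows the standard route (polar frames for apartments, canonical retractions for transferring verifications to a single apartment), which is essentially how Tits proceeds. One correction: you write that the maximal elements of $\mathcal{L}(\Delta)$ (the type-$0$ vertices, i.e., integral vectors) ``will play the role of points of a polar space''. This is backwards. In the subspace poset of a polar space, points are the rank-$1$ (minimal nonzero) elements, while maximal singular subspaces are the rank-$n$ (maximal) elements. The paper's convention makes type-$0$ vertices maximal in $\mathcal{L}(\Delta)$, so they correspond to maximal singular subspaces of $\Pi$, not to points; the points are the type-$(n-1)$ vertices. This does not affect the structure of your argument, but the identification of which residues carry which geometric role needs to be stated correctly before you verify (Q1)--(Q3).
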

The subspace poset of a polar space
is a (complemented) modular semilattice and its covering graph is orientable modular.
These structures are completely determined by the corresponding dual polar graph, which is a particular swm-graph.
The  goal of this section is to show
that analogous connections with swm-graphs also hold for Euclidean buildings of type C$_n$.
With each Euclidean building $\Delta$ of type C$_n$ we associate two graphs $G(\Delta)$ and $H(\Delta)$.
The graph $G(\Delta)$ is the subgraph of the $1$--skeleton of $\Delta$ such that its
edges are the pairs $xy$ such that the difference of types of $x$ and $y$ is $1$.
Define the orientation $o$ of edges of $G(\Delta)$ so that $x \rightarrow y$ if
the type of $x$ is less than that of $y$.
Namely this orientation is equal to the Hasse orientation
of the partial order $\preceq$.
The resulting poset on $V(G(\Delta))$ is denoted by $\mathcal{P}(G(\Delta),o)$.
The graph  $H(\Delta)$ has the vertices of type $0$ of $\Delta$  as the vertex set
and the pairs $xy$ such that $x$ and $y$ have a common neighbor
(of type 1) as edges.  The edge-lengths of $G(\Delta)$ and $H(\Delta)$ are $1/2$ and $1$, respectively.
\begin{Thm}\label{thm:EuclideanBuilding}
Let $\Delta$ be a Euclidean building of type C$_n$.
Then the following hold:
\begin{itemize}
\item[(1)] $G(\Delta)$ is an orientable modular graph and $o$ is an admissible orientation.
\item[(2)] $\Delta$ is the order complex of $\mathcal{P}(G(\Delta),o)$.
\item[(3)] $H(\Delta)$ is an swm-graph.
\item[(4)] The barycentric graph of $H(\Delta)$ is equal to $G(\Delta)$.
\end{itemize}
\end{Thm}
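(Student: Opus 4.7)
My plan is to bootstrap the spherical case (Theorem~\ref{thm:Tits_polar} together with Lemma~\ref{lem:polar=>modular_semilattice}) to the Euclidean setting, using apartments as the ``coordinate charts'' and the canonical retractions $\rho_{\Sigma,C}$ to glue local information together. The four assertions are tightly interlocked: (1) and (2) are really a single statement about the poset $\mathcal{P}(G(\Delta),o)$, and once that is in place, (3) and (4) will follow from the lattice-theoretical characterizations Theorem~\ref{thm:chara_swm} and Theorem~\ref{thm:G*_is_om}, applied to the poset of Boolean-gated sets.

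First I would carry out the analysis inside a single apartment $\Sigma$. Using the explicit description of the Euclidean Coxeter complex of type C$_n$ as the simplicial complex on $(\mathbb{Z}/2)^n$ with cells the signed-permutation orthoschemes, one verifies directly that the restricted graph $G(\Sigma)$ is the covering graph of the poset $\preceq$ defined by containment-of-simplices combined with the type function; the modular equality on ranks can be read off from the half-integer coordinates, which shows that every interval in $\mathcal{P}(G(\Sigma),o)$ is a distributive (hence modular) lattice, and that $G(\Sigma)$ is orientable modular with admissible orientation $o$. One then checks that $\Sigma$ is the order complex of $\mathcal{P}(G(\Sigma),o)$ by a direct comparison of simplices with chains. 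For the global statements (1) and (2), pick any apartment $\Sigma$ and chamber $C \subseteq \Sigma$, and use $\rho_{\Sigma,C}$ to transport galleries and squares of $G(\Delta)$ into $\Sigma$: axioms B1 and B2 guarantee that any finite configuration of vertices lies in a common apartment where the type-preserving isomorphism to $\Sigma$ is an isometry on the graph structure, so local modularity, the admissibility of $o$, the covering property and the grading all propagate. The simple connectivity required by the condition (e) of Theorem~\ref{thm:chara_om} comes from the contractibility of Euclidean buildings.

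Next I would verify (3) by applying Theorem~\ref{thm:chara_swm} to $H(\Delta)$. The triangle condition is seen in an apartment: a triangle of $H(\Delta)$ corresponds to three type-$0$ vertices pairwise sharing a type-$1$ neighbor, and once such a configuration is placed into a common apartment via B1, the required common neighbor closer to the base vertex is supplied by the apartment's modular-lattice structure established in Part~1. To identify intervals $I_{H(\Delta)}(p,q)$ with modular lattices, I would show that for any two type-$0$ vertices $p,q$ the set $I_{H(\Delta)}(p,q)$ is in bijection with the order-theoretic interval between the corresponding elements of $\mathcal{P}(G(\Delta),o)$ at double distance, and that the induced subgraph is its covering graph; concretely, each edge of $H(\Delta)$ lifts to a length-$2$ zigzag in $G(\Delta)$, so a geodesic between $p$ and $q$ in $H(\Delta)$ is precisely a maximal chain of even-step lifts, and the lattice operations on $I(p,q)$ are inherited from the modular structure on the interval $[p \wedge q, p \vee q]$ in $\mathcal{P}(G(\Delta),o)$ (here I use Lemma~\ref{lem:filter_is_convex} and Proposition~\ref{prop:ideal}). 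This is the step I expect to be technically the most delicate, because the double-cover passage from $G(\Delta)$ to $H(\Delta)$ must be shown to preserve the modular-lattice structure on intervals, and to exclude the forbidden $K_4^-$ (from the no-multi-edge condition on type-$0$/type-$1$ incidences in an apartment) and isometric $K_{3,3}^-$ (from the geometric realizability in $\mathbb R^n$ of any supposed such configuration).

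Finally I would prove (4) by identifying the Boolean-gated sets of $H(\Delta)$ with the vertices of $G(\Delta)$ via the assignment $v \mapsto X_v$, where $X_v$ is the set of type-$0$ vertices lying on a common type-$1$ edge-path through $v$ inside a common apartment, equivalently the image under $v \mapsto v^{\downarrow}$ of the principal filter in $\mathcal{P}(G(\Delta),o)$ intersected with the type-$0$ stratum. One checks that $X_v$ is gated and thick in $H(\Delta)$ (it is the dual polar graph of the spherical residue at $v$, by Theorem~\ref{thm:Tits_polar} applied to the link of $v$), and conversely every Boolean-gated set of $H(\Delta)$ arises this way by taking the unique minimal $v \in V(G(\Delta))$ whose residue realizes it, using the uniqueness part of Lemma~\ref{lem:X=<p,q>}. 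The reverse-inclusion order on Boolean-gated sets then matches the Hasse order on $\mathcal{P}(G(\Delta),o)$, and the covering relations agree, so $G(\Delta)$ is isomorphic (as an oriented graph) to the barycentric graph of $H(\Delta)$, as required.
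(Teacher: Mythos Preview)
Your outline for (1), (2), and (4) matches the paper's proof closely: the paper also verifies the conditions (a)--(e) of Theorem~\ref{thm:chara_om} apartment-by-apartment, using the canonical retractions $\rho_{\Sigma,C}$ (and their nonexpansiveness) to propagate local information, derives simple connectivity from contractibility of Euclidean buildings, and identifies Boolean-gated sets of $H(\Delta)$ with intersections $V(H)\cap(p)^{\uparrow}$ of principal filters with the type-$0$ stratum (showing each such intersection induces a dual polar graph via the spherical residue).

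The gap is in your approach to (3). You propose to invoke Theorem~\ref{thm:chara_swm} by showing that each interval $I_{H(\Delta)}(p,q)$ inherits a modular-lattice structure from ``the interval $[p\wedge q,\,p\vee q]$ in $\mathcal{P}(G(\Delta),o)$''. But in the poset $\mathcal{P}(G(\Delta),o)$ the type-$0$ vertices are \emph{maximal} (the orientation sends arrows toward higher type, interpreted as $\succ$), so $p\vee q$ never exists for distinct type-$0$ vertices $p,q$; moreover $p\wedge q$ exists only when $p$ and $q$ lie in a common simplex of $\Delta$, i.e., are at $H$-distance $1$. Thus there is no order-theoretic interval $[p\wedge q,p\vee q]$ from which to pull back a lattice structure on $I_{H(\Delta)}(p,q)$, and your ``double-cover'' transfer cannot be carried out as written. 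The paper avoids this entirely: it first proves the isometric embedding $d_H=d_G$ on type-$0$ vertices, and then verifies the swm-axioms for $H(\Delta)$ \emph{directly}---checking (TC), (QC), absence of induced $K_4^-$, and absence of isometric $K_{3,3}^-$---each time by working in $G(\Delta)$ and repeatedly applying (QC) in $G$ together with the admissibility of $o$ and retractions to an apartment. If you want to salvage the Theorem~\ref{thm:chara_swm} route, you would need instead to relate $I_{H(\Delta)}(p,q)$ to the \emph{metric} interval $I_{G(\Delta)}(p,q)$ with its base-point order $\preceq_p$ (which \emph{is} a modular lattice, since $G(\Delta)$ is orientable modular), and then argue that the type-$0$ vertices of $I_{G(\Delta)}(p,q)$ form a modular sublattice whose covering graph is induced isometrically; this is plausible but is a separate argument you have not supplied.
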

In particular, $\Delta$ is completely recovered from the swm-graph $H(\Delta)$.

\begin{question} It would be interesting to find a characterization (analogous to Cameron's characterization of dual polar graphs)
of all swm-graphs of the form $H(\Delta)$ for some Euclidean building $\Delta$ of type C.
\end{question}

The remainder of this section is devoted to proving
Theorem~\ref{thm:EuclideanBuilding}.
Any apartment $\Sigma$ of $\Delta$ itself is a building, thus
$G(\Sigma)$ is the subgraph of $G(\Delta)$ induced by the vertices of $\Sigma$.
The graph $G(\Sigma)$ is isomorphic to the grid graph on $(\ZZ/2)^n$
obtained by making adjacent all vertices $x,y$ with $\|x-y\|_1 =1/2$.
The isomorphism in B2 induces the isomorphism
between two grid graphs $G(\Sigma)$ and $G(\Sigma')$.
The canonical retraction $\rho_{\Sigma, C}$
induces the retraction from $G(\Delta)$ to $G(\Sigma)$,
which will be also denoted $\rho_{\Sigma, C}$.
\begin{Lem}\label{lem:canonical_retraction}
For vertices $x,y$ of  $\Sigma$ and the  canonical retraction $\rho = \rho_{\Sigma, C}$,
we have
$d_{G}(\rho(x), \rho(y)) \leq d_G(x,y)$ and
the equality holds if $x$ is a vertex of $C$.
In particular,  $G(\Sigma)$ is an isometric subgraph of $G(\Delta)$.
\end{Lem}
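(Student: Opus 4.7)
The plan is to establish the inequality $d_G(\rho(x),\rho(y))\le d_G(x,y)$ by showing that $\rho=\rho_{\Sigma,C}$ induces a graph homomorphism $G(\Delta)\to G(\Sigma)$, and then to upgrade it to equality when $x$ is a vertex of $C$ by transporting a geodesic into $\Sigma$ via axioms B1 and B2. The ``in particular'' assertion then follows by applying the equality case with a chamber $C\subseteq\Sigma$ containing $x$.

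First, I would note that since buildings are colorable chamber complexes and the isomorphism furnished by B2 can be chosen type-preserving, the canonical retraction $\rho$ is type-preserving. Consequently, for any edge $uv$ of $G(\Delta)$---that is, $u,v$ lie in a common simplex and their types differ by $1$---the image $\rho(\{u,v\})$ is a simplex of $\Sigma$ in which $\rho(u),\rho(v)$ still have types differing by $1$; in particular $\rho(u)\neq\rho(v)$. Hence $\rho$ maps each edge of $G(\Delta)$ to an edge of $G(\Sigma)$, so $\rho$ is a graph homomorphism, and the first inequality $d_G(\rho(x),\rho(y))\le d_G(x,y)$ is immediate.

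Next, suppose $x\in C$, and let $y$ be an arbitrary vertex of $\Delta$, contained in some chamber $D$. By B1 applied to the pair $C,D$, there is an apartment $\Sigma'$ containing both. Applying B2 to $\Sigma'$ and $\Sigma$ (with $A=B=C$) yields a type-preserving isomorphism $f\colon \Sigma'\to\Sigma$ fixing $C$ pointwise, and by definition of the canonical retraction we have $\rho(y)=f(y)$; moreover $f(x)=x$ since $x$ is a vertex of $C$. Because $\Sigma'$ and $\Sigma$ are Euclidean Coxeter complexes of type $C_n$, $f$ restricts to a graph isomorphism $G(\Sigma')\to G(\Sigma)$. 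Since $G(\Sigma')$ embeds as a subgraph of $G(\Delta)$, every path from $x$ to $y$ in $G(\Sigma')$ is also a path in $G(\Delta)$, so
\[
 d_G(x,y)\;\le\;d_{G(\Sigma')}(x,y)\;=\;d_{G(\Sigma)}(f(x),f(y))\;=\;d_{G(\Sigma)}(x,\rho(y))\;=\;d_G(\rho(x),\rho(y)).
\]
Combined with the reverse inequality from the previous paragraph, this forces equality. Finally, for the ``in particular'' part, given $x,y\in\Sigma$ I would choose any chamber $C$ of $\Sigma$ containing $x$; then $\rho_{\Sigma,C}$ fixes all of $\Sigma$ pointwise, so $\rho(x)=x$ and $\rho(y)=y$, and the equality case just established gives $d_{G(\Delta)}(x,y)=d_{G(\Sigma)}(x,y)$.

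The only delicate point is the verification that $\rho$ is type-preserving on vertices, so that it does not collapse edges of $G(\Delta)$ (recall that edges of $G(\Delta)$ connect consecutive types, and only these edges are kept when passing from the $1$-skeleton of $\Delta$ to $G(\Delta)$). Once this is in hand, the rest is a direct application of the building axioms together with the elementary observation that grid graphs underlying apartments are isometric subgraphs of $G(\Delta)$.
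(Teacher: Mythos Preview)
Your overall strategy matches the paper's: show that $\rho$ is a graph homomorphism $G(\Delta)\to G(\Sigma)$ (your first paragraph is in fact more explicit than the paper's one-line proof, which just says ``the image of $P$ by the canonical retraction is a path in $G(\Sigma)$ of the same length''), and then use the building axioms for the equality case. However, there is a logical circularity in how you order the last two steps.

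In your equality-case chain you end with $d_{G(\Sigma)}(x,\rho(y)) = d_G(\rho(x),\rho(y))$. Since $\rho(x)=x$ and $\rho(y)\in\Sigma$, this last equality is precisely the statement that $G(\Sigma)$ is isometric in $G(\Delta)$, i.e.\ the ``in particular'' part. But in your third paragraph you then try to derive the ``in particular'' part \emph{from} the equality case: you take $x,y\in\Sigma$ and a chamber $C\subseteq\Sigma$ with $x\in C$, note that $\rho$ fixes $\Sigma$ pointwise, and invoke the equality case. This is vacuous: when $\rho(x)=x$ and $\rho(y)=y$, the equality $d_G(\rho(x),\rho(y))=d_G(x,y)$ says only $d_G(x,y)=d_G(x,y)$ and gives no relation between $d_{G(\Delta)}$ and $d_{G(\Sigma)}$.

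The fix is easy and is already implicit in your first paragraph. Since $\rho$ is a graph homomorphism $G(\Delta)\to G(\Sigma)$ and a retraction onto $\Sigma$, for any $x,y\in\Sigma$ a shortest $(x,y)$--path in $G(\Delta)$ is sent to an $(x,y)$--walk in $G(\Sigma)$ of the same length; hence $d_{G(\Sigma)}(x,y)\le d_{G(\Delta)}(x,y)$, and the reverse inequality is trivial. This proves the ``in particular'' part directly (and is exactly what the paper does). Once $G(\Sigma)$---and by the same argument every apartment $G(\Sigma')$---is known to be isometric in $G(\Delta)$, your equality-case chain is valid and the proof is complete.
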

\begin{proof}
For vertices $x,y$ in $\Sigma$,
pick a path $P$ connecting $x,y$ in $G(\Delta)$.
Then the image of $P$ by the canonical retraction is a path in $G(\Sigma)$
of the same length.
\end{proof}
First we  establish assertion (2) of Theorem~\ref{thm:EuclideanBuilding}.
\begin{Lem}\label{lem:Delta=ordercomplex_of_P}
$\Delta$ is the order complex of $\mathcal{P}(G(\Delta),o)$.
\end{Lem}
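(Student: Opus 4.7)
The plan is to prove both inclusions of simplices: every simplex of $\Delta$ is a chain in $\mathcal{P}(G(\Delta),o)$, and conversely every finite chain in $\mathcal{P}(G(\Delta),o)$ is a simplex of $\Delta$. Since both complexes have the same vertex set $V(\Delta)=V(G(\Delta))$, this will prove the lemma.

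For the first direction, let $\sigma$ be a simplex of $\Delta$. By axiom B1, pick an apartment $\Sigma$ containing $\sigma$. By the explicit description of simplices in a Euclidean Coxeter complex of type C$_n$ recalled earlier, the vertices of $\sigma$ can be listed as
$$v_j \;=\; x + \tfrac{1}{2}(1,\ldots,1) + \tfrac{1}{2}\bigl(\sigma(i_1)e_{i_1}+\cdots+\sigma(i_j)e_{i_j}\bigr),\qquad j=0,1,\ldots,k,$$
for some integer vector $x$, distinct indices $i_1,\ldots,i_k$, and signs $\sigma(i_j)\in\{\pm 1\}$. Consecutive vertices $v_{j-1}$ and $v_j$ differ in exactly one coordinate by $1/2$, so they are adjacent in $G(\Sigma)\subseteq G(\Delta)$, and their types differ by $1$ (the type is the number of half-integral coordinates). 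Thus $v_0,v_1,\ldots,v_k$ is a chain in $\mathcal{P}(G(\Delta),o)$.

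For the converse, I argue by induction on the length $k$ of a chain $w_0\prec w_1\prec\cdots\prec w_k$ in $\mathcal{P}(G(\Delta),o)$. The case $k=1$ is immediate: $w_0w_1$ is an edge of $G(\Delta)$, and by definition such an edge means that $\{w_0,w_1\}$ is the vertex set of a $1$--simplex of $\Delta$. For the inductive step, the sets $A=\{w_0,\ldots,w_{k-1}\}$ and $B=\{w_{k-1},w_k\}$ are simplices of $\Delta$ by the induction hypothesis; by axiom B1 there is an apartment $\Sigma$ containing both $A$ and $B$, hence containing all of $w_0,\ldots,w_k$, and the poset induced on $V(\Sigma)$ by the Hasse orientation of $G(\Sigma)$ coincides with the restriction of $\mathcal{P}(G(\Delta),o)$ because the coloring (type) is canonical on $\Delta$. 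It therefore suffices to handle the apartment case.

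The main combinatorial observation for the apartment case is the following: if $w_0\prec w_1\prec\cdots\prec w_k$ is a chain in the grid poset on $(\ZZ/2)^n$, then each step $w_{j-1}\prec w_j$ adds $\tfrac{1}{2}\varepsilon_j e_{i_j}$ to $w_{j-1}$, where $\varepsilon_j\in\{\pm 1\}$ and the $i_j$-th coordinate of $w_{j-1}$ is an integer (so that the type increases by $1$). The indices $i_1,\ldots,i_k$ must be pairwise distinct, because once the $i_j$-th coordinate is made half-integral at step $j$, any further change of $\pm 1/2$ to that same coordinate would revert it to an integer and decrease the type, contradicting $\prec$. Writing $w_0=x'+\tfrac{1}{2}\sum_{i\in S}e_i$ with $S$ the set of indices where $w_0$ is half-integral, one can then extend $(i_1,\ldots,i_k,S)$ to a permutation of $\{1,\ldots,n\}$ and extend the signs to match the simplex formula displayed above, witnessing $\{w_0,\ldots,w_k\}$ as the vertex set of a simplex of $\Sigma$, and hence of $\Delta$.

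The hard part of the argument is the apartment case: the combinatorial bookkeeping for matching an arbitrary chain (starting at a vertex of arbitrary type, not necessarily $0$ or $n$) to the explicit simplex formula in a C$_n$ Coxeter complex, together with the verification that the distinct-indices property indeed holds in both ``directions'' of the chain. Once the apartment case is established, axiom B1 reduces the building statement to it essentially for free, and the remaining step is to observe that the poset $\mathcal{P}(G(\Delta),o)$ restricted to an apartment is precisely the grid poset on $(\ZZ/2)^n$, so that chains in the big poset that lie inside an apartment really are chains in the apartment poset.
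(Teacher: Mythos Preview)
Your proposal is correct and follows essentially the same strategy as the paper: both directions are checked, and for the nontrivial direction (chain $\Rightarrow$ simplex) you argue by induction on the chain length, use axiom B1 to place the two inductively obtained simplices $\{w_0,\ldots,w_{k-1}\}$ and $\{w_{k-1},w_k\}$ inside a common apartment, and then finish with the apartment case. The paper's proof is terser---it dismisses the apartment case in one sentence---while you spell out the grid combinatorics explicitly, but the logical skeleton is identical.

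One minor slip to fix: in the paper's convention $x\preceq y$ means the type of $y$ is at most the type of $x$ (integral vectors are \emph{maximal}), so along a chain $w_0\prec w_1\prec\cdots\prec w_k$ the types \emph{decrease} and each step turns a half-integral coordinate into an integral one, not the other way around. Your distinctness-of-indices argument goes through verbatim with this correction (the reasoning is symmetric), and the extension to a chamber then proceeds by first continuing from $w_k$ down to a type-$0$ vertex and from $w_0$ up to the type-$n$ vertex.
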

\begin{proof}
This property easily holds when $\Delta$  is a single apartment.
Consider now the general case.
Pick a chain $p_1 \prec p_2 \prec \cdots \prec p_k$ with $p_i p_{i+1} \in E(G(\Delta))$. By induction
on $k$ we show that there is
a simplex in $\Delta$ containing $p_1,p_2,\ldots,p_k$.
By inductive assumption, there are simplices $C,C'$ such that $C$ contains $p_1,p_2,\ldots,p_{k-1}$ and $C'$ contains $p_{k-1},p_{k}$.
By B1 there is an apartment $\Sigma$ containing $C,C'$. Then necessarily $p_1,p_2,\ldots,p_k$ form a simplex in
$\Sigma$ (because $p_1 \prec p_2 \prec \cdots \prec p_k$ also holds in $\Sigma$) and thus in $\Delta$, and we are done.
\end{proof}
We will prove assertion (1) of Theorem~\ref{thm:EuclideanBuilding} by verifying that $\mathcal{P}(G(\Delta),o)$
satisfies the conditions (a), (b), (c), (d$'$), and (e) of Theorem~\ref{thm:chara_om}.
By definition,
the coloring is a grade function of  $\mathcal{P}(G(\Delta),o)$, and hence we have (a).
The next property establishes (b).
\begin{Lem}\label{lem:filter=polar}
For a vertex $p$,
the principal filter (ideal) of $p$ is the subspace poset of a polar space.
\end{Lem}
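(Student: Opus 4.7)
The plan is to identify the principal filter $(p)^{\uparrow}$ with the subspace poset of a polar space by exploiting the spherical building structure on the link of $p$ in $\Delta$. Set $k := \mathrm{type}(p)$. Using Lemma~\ref{lem:Delta=ordercomplex_of_P}, every chain in $\mathcal{P}(G(\Delta),o)$ is realized by a simplex of $\Delta$; since consecutive elements of a chain have strictly decreasing type, I will first observe that $(p)^{\uparrow} = \{p\} \cup U_p$, where $U_p$ denotes the set of vertices of the link $\mathrm{lk}(p,\Delta)$ of type strictly less than $k$, endowed with the partial order induced from $\mathcal{P}$.

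The next step is to show that $U_p$ is naturally a spherical building of type $C_k$. The link $\mathrm{lk}(p,\Delta)$ is a spherical building whose Coxeter diagram is obtained from $\tilde{C}_n$ by deleting the type-$k$ node. Since $\tilde{C}_n$ carries double bonds at its two terminal nodes, this deletion produces a disjoint union $C_k \sqcup C_{n-k}$ when $0 < k < n$ (and a single $C_n$ diagram when $k \in \{0,n\}$); the vertices of $U_p$ are precisely those in the $C_k$ factor. Declaring the apartments of $U_p$ to be the restrictions to $U_p$ of the apartments of $\mathrm{lk}(p,\Delta)$, I plan to transfer the building axioms (B1) and (B2) for $U_p$ from those of $\Delta$, with the help of the canonical retractions of Lemma~\ref{lem:canonical_retraction}.

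Once $U_p$ is known to be a spherical $C_k$ building, Theorem~\ref{thm:Tits_polar} identifies its vertex poset (under the natural ordering) with the subspace poset of a rank-$k$ polar space minus the zero subspace. Both that natural ordering and the restriction of $\succeq$ to $U_p$ are graded by $k - \mathrm{type}(\cdot)$ with covering relations given by the type-difference-one edges of $G(\Delta)$ restricted to $U_p$, so the two orderings coincide; adjoining $p$ at the bottom then restores the zero subspace and yields the full subspace poset of a rank-$k$ polar space. The argument for the principal ideal $(p)^{\downarrow}$ is entirely symmetric, using the $C_{n-k}$ factor of the link in place of the $C_k$ factor.

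The main obstacle will be the careful verification of the building axioms for $U_p$, in particular axiom (B2): producing a type-preserving isomorphism between two apartments of $U_p$ fixing a prescribed pair of simplices pointwise. My intention is to obtain such an isomorphism by restricting the corresponding isomorphism between the ambient apartments of $\mathrm{lk}(p,\Delta)$, exploiting the fact that any type-preserving isomorphism respects the $C_k \sqcup C_{n-k}$ decomposition of the type set and hence each factor separately. The boundary cases $k=0$ (where $(p)^{\uparrow}=\{p\}$ is the trivial rank-$0$ subspace poset) and $k=n$ (where $(p)^{\downarrow}$ is likewise trivial) will require only a brief direct comment.
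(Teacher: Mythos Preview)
Your proposal is correct and follows essentially the same route as the paper. Both arguments show that the reduced order complex of $(p)^{\uparrow}$ is a spherical building of type $C_k$ (where $k$ is the type of $p$) and then invoke Theorem~\ref{thm:Tits_polar}. The only difference is in how the building structure is obtained: the paper directly declares the apartments of $\Delta_p$ to be the intersections $\Sigma_p$ of apartments $\Sigma \ni p$ with the reduced order complex of $(p)^{\uparrow}$, observes that each $\Sigma_p$ is a spherical $C_k$ Coxeter complex, and notes that the building axioms for $\Delta_p$ are inherited from those of $\Delta$. You take the slightly longer detour through the full link $\mathrm{lk}(p,\Delta)$, its residual type $C_k \sqcup C_{n-k}$, and the projection to the $C_k$ factor; this buys you nothing extra here, but it is a perfectly valid and standard way to reach the same conclusion. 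Your planned verification of (B2) via restriction of type-preserving isomorphisms is exactly what underlies the paper's one-line ``satisfies the building's axioms.''
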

\begin{proof}
The intersection $\Sigma_p$ of an apartment $\Sigma$ and
the reduced order complex of
$(p)^{\uparrow}$ is isomorphic to a spherical Coxeter complex
of type C$_k$, where $k$ is the type of $p$.
From this, we can see that $\Delta_p$ is
the union of $\Sigma_p$
over all apartments $\Sigma$ containing $p$, and satisfies the building's axioms.
Hence $\Delta_p$ is a building of type C$_k$.
By Theorem~\ref{thm:Tits_polar}, $\Delta_p$ is
the reduced order complex of the subspace poset of a polar space, which must be isomorphic to
$(p)^{\uparrow}$.
\end{proof}
Next we verify (c) that every upper-bounded pair has the join.
Notice that this property holds for
the poset restricted to any apartment $\Sigma$,
since the grid graph $G(\Sigma)$ is orientable modular,
and $o$ is admissible on $G(\Sigma)$.
Let $(x,y)$ be an upper-bounded pair.
Let $z$ and $w$ be minimal common upper bounds.
Take an apartment $\Sigma$ containing the simplices $\{ z,x\}$ and $\{ z,y\}$.
Consider the image $w' = \rho_{\Sigma,C}(w)$ of
the canonical retraction $\rho_{\Sigma,C}$ for a chamber $C$ containing $z$.
Then $w'$ is a  common upper-bound of  $x,y$ in $\Sigma$,
and hence $z \preceq w'$.
So the type of $z$ is at least the type of $w'$ and of $w$.
By interchanging the roles of $z$ and $w$, the types of $z$ and $w$ are the same.
Consequently $z = w'$ holds.
By Lemma~\ref{lem:canonical_retraction},
we have $d(z,w) = d(z,w') = 0$, and hence $z = w$. Thus the join indeed exists.

We verify now (d$'$).
Take a crown $(x, u, y, v, z, w)$, which forms an isometric $6$--cycle.
Take an apartment $\Sigma$ containing the simplices $\{ x, u\}$ and $\{ x, w\}$.
Regard $\Sigma$ as the grid graph on $(\ZZ/2)^n$.
Then $u - x \neq x - w$ or $u - x = x - w \in \{e_i/2, - e_i/2\}$ for some $i$.
The latter case is impossible.
Indeed, take a chamber $C$ containing $x$, and consider
the image $v' = \rho_{\Sigma, C}(v)$.
By Lemma~\ref{lem:canonical_retraction}, $\|x - v'\|_1 = d(x,v') = d(x,v) = 3/2$,
and necessarily $\|x + e_i/2 - v'\|_1 = d(u,v') = d(u,v) = 1 = d(w,v) = d(w,v') = \|x - e_i/2 - v'\|_1$.
By comparing the $i$--th coordinates of $x$ and of $v'$ we conclude that this is impossible.
Thus the former case occurs, and
$u+ w - 2x$ is the join of $u,w$.
By the same argument,
$x$ and $y$ have the meet. Thus we obtain (d$'$).

Finally, we will verify condition (e). In fact,  the simple connectivity of the order complex of $\mathcal{P}(G(\Delta),o)$ follows
from assertion (2) that this complex coincides with $\Delta$ and the known fact that all Euclidean buildings
are contractible and thus  simply connected. Hence, the graph $G(\Delta)$ is modular and the orientation
$o$ is admissible.

Next we are going to prove the assertions (3) and (4) of Theorem~\ref{thm:EuclideanBuilding}.
Denote the graphs $G(\Delta)$ and $H(\Delta)$ by $G$ and $H$, respectively.
By the admissibility of $o$,
for an edge $xy$ of $H$,
there exists a unique vertex of type 1 adjacent to both $x$ and $y$ in $G$.
We next prove that $G$ is isometrically embeddable in $H$:
\begin{Lem}\label{isometric_G_H}
$d_H(x,y) = d_G(x,y)$.
\end{Lem}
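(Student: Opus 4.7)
The plan is to establish the two inequalities $d_G(x,y) \leq d_H(x,y)$ and $d_H(x,y) \leq d_G(x,y)$ separately. The first inequality is immediate from the definition of $H$: any edge $uv$ of $H$ is witnessed by a common type-$1$ neighbor $w$ in $G$, and the two-edge $G$-path $(u,w,v)$ has total length $2 \cdot (1/2) = 1$, equal to the length of the $H$-edge. Hence an $H$-path of length $k$ lifts to a $G$-path of the same length, giving $d_G(x,y) \leq d_H(x,y)$.

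For the reverse inequality, I would use the apartment axiom B1 to choose an apartment $\Sigma$ that contains both $x$ and $y$ (viewing each as a simplex). By Lemma~\ref{lem:canonical_retraction}, $G(\Sigma)$ embeds isometrically in $G$, so $d_G(x,y) = d_{G(\Sigma)}(x,y)$. On the apartment side, after identifying $\Sigma$ with the standard Coxeter complex of type C$_n$ on $(\ZZ/2)^n$, the type-$0$ vertices are exactly the integer vectors, and unwinding the explicit chamber description shows that every type-$1$ vertex has the form $z + e_i/2$ for some $z \in \ZZ^n$ and some $i$, with its two type-$0$ neighbors in $G(\Sigma)$ being $z$ and $z + e_i$. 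Hence the $H$-edges inside $\Sigma$ are exactly the $l_1$-unit-distance pairs among integer vectors, and in the grid $(\ZZ/2)^n$ one checks directly that $d_{H(\Sigma)}(x,y) = \|x-y\|_1 = d_{G(\Sigma)}(x,y)$. Since every edge of $H(\Sigma)$ is also an edge of $H(\Delta)$ (the same type-$1$ witness in $G(\Sigma) \subseteq G(\Delta)$ certifies adjacency in $H(\Delta)$), we conclude $d_H(x,y) \leq d_{H(\Sigma)}(x,y) = d_G(x,y)$.

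The only delicate step is the identification of $H(\Sigma)$ with the $l_1$-unit-distance graph on $\ZZ^n$, which requires a careful reading of the chamber parametrization: given the type-$1$ vertex $z+e_i/2$, the condition of sharing a chamber with a type-$0$ vertex forces the latter to be obtained by rounding the unique non-integer coordinate, ruling out spurious neighbors. Beyond this computation, the argument is purely formal and uses only B1, Lemma~\ref{lem:canonical_retraction}, and the trivial inclusion $H(\Sigma) \hookrightarrow H(\Delta)$. No shorter $H$-paths in $\Delta$ that leave $\Sigma$ can cause trouble since we only need an upper bound on $d_H(x,y)$, not an equality with $d_{H(\Sigma)}(x,y)$.
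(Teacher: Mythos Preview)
Your proposal is correct and follows essentially the same approach as the paper: the easy inequality $d_G\le d_H$ comes from replacing each $H$-edge by a two-step $G$-path through the witnessing type-$1$ vertex, and the reverse inequality is obtained by passing to an apartment $\Sigma\ni x,y$, identifying it with $(\ZZ/2)^n$, and observing that the integer lattice path $x=x_0,\ldots,x_k=y$ with $x_{i+1}-x_i=\pm e_j$ is an $H$-path of length $\|x-y\|_1=d_{G(\Sigma)}(x,y)=d_G(x,y)$. Your explicit invocation of Lemma~\ref{lem:canonical_retraction} for the equality $d_G(x,y)=d_{G(\Sigma)}(x,y)$ is a helpful clarification the paper leaves implicit.
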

\begin{proof}
It is easy to see  that $d_H(x,y) \geq d_G(x,y)$.
Pick two  vertices $x,y$ in $H$ and consider
an apartment $\Sigma$ containing $x$ and $y$.
Regard $\Sigma$ as the grid  $(\ZZ/2)^n$.
Then $x,y$ are integer vectors.
Obviously there is a sequence $(x= x_0, x_1,x_2,\ldots,x_k = y)$
of integer vectors such that $k = ||x - y||_1 = d_G(x,y)$ and
$x_i - x_{i+1}$ is $e_j$ or $- e_j$ for some unit vector $e_j$.
This gives a path in $H$ of length $k$.
\end{proof}

\begin{Lem}
$H:=H(\Delta)$ is an swm-graph.
\end{Lem}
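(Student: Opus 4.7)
The plan is to verify the three defining conditions for an swm-graph: the absence of induced $K_4^-$, the absence of isometric $K_{3,3}^-$, and the weak modularity axioms (TC) and (QC). The proof will combine two complementary tools: local apartment arguments via canonical retractions, and poset-theoretic arguments using the polar-space structure of filters in $\mathcal{P}(G,o)$ established in Lemma~\ref{lem:filter=polar}. Throughout, I will freely use that $G = G(\Delta)$ is orientable modular (assertion~(1) of Theorem~\ref{thm:EuclideanBuilding}) and that $d_H = d_G$ on pairs of type-$0$ vertices (Lemma~\ref{isometric_G_H}).

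First I would analyze the restriction of $H$ to a single apartment $\Sigma$. The type-$0$ vertices of $\Sigma$ are exactly the integer vectors $\ZZ^n$, and a direct inspection of the chamber structure of the Coxeter complex of type $C_n$ shows that two integer vectors are adjacent in $H(\Sigma)$ iff they differ by $\pm e_i$ for some $i$. Hence $H(\Sigma)$ is the Cartesian product of $n$ infinite paths, which is a median graph---in particular an swm-graph without triangles (hence without induced $K_4^-$) and without isometric $K_{3,3}^-$. Moreover, the canonical retraction $\rho_{\Sigma,C}$ is type-preserving and so restricts to a retraction of $H$ onto $H(\Sigma)$; by Lemma~\ref{lem:canonical_retraction} combined with Lemma~\ref{isometric_G_H}, this restricted retraction preserves $H$-distances from any type-$0$ vertex lying in $C$.

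Second, I would verify TC, QC, no-induced-$K_4^-$, and no-isometric-$K_{3,3}^-$ for $H$ by reducing to $H(\Sigma)$ via carefully chosen apartments. Given such a configuration in $H$ (consisting of a ``base vertex'' $u$ together with several type-$0$ vertices $v,w,\ldots$ and their witnessing type-$1$ vertices $z,\ldots$), I would apply B1 to embed the relevant $1$-simplices of $G$ into a common apartment $\Sigma$, and choose a chamber $C$ of $\Sigma$ so that $\rho_{\Sigma,C}$ preserves the $G$-distances from $u$ to the configuration. Since $H(\Sigma)$ is a grid, the required swm condition holds inside $\Sigma$: one obtains the common neighbor demanded by TC/QC, or else a contradiction to the assumed forbidden $K_4^-$ or isometric $K_{3,3}^-$. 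The retraction argument then transports this outcome back to $H$.

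The main obstacle is the compatibility of the retraction: $\rho_{\Sigma,C}$ preserves $G$-distances only from vertices of $C$, so $C$ must simultaneously lie near a geodesic from $u$ to the configuration and contain a face incident to the type-$1$ witnesses $z$ that encode the $H$-adjacencies. Reconciling these two constraints is the crux of the argument; I expect that the cleanest route is to avoid the choice-of-$C$ issue for TC and QC by working directly inside the filter $(z)^{\uparrow}$, which by Lemma~\ref{lem:filter=polar} is the subspace poset of a polar space. In this polar space, the type-$0$ atoms covering $z$ are exactly the points, and the $H$-neighbors of $v,w$ through $z$ are precisely these points; the polar-space axioms~(P1)--(P3) of Theorem~\ref{thm:Tits} together with the orientable-modularity of $G$ (via the meet/join structure in $\mathcal{P}(G,o)$ given by Theorem~\ref{thm:chara_om}) will be used to pick out a point of the polar space at exactly the right $G$-distance from $u$. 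Making this ``distance-controlled point selection'' precise is the technical heart of the proof.
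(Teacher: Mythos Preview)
Your overall plan---verify (TC), (QC), no induced $K_4^-$, no isometric $K_{3,3}^-$---is right, and your use of $d_H=d_G$ and the orientable-modular structure of $G$ is appropriate. But the central move you propose, reducing each configuration to a single apartment via a canonical retraction, is exactly where the paper and your proposal diverge, and it is also where your proposal has a genuine gap. Axiom~B1 only gives an apartment containing \emph{two} simplices; for (QC) you need four type-$0$ vertices plus their type-$1$ witnesses, and for $K_{3,3}^-$ you need six, so there is no apartment guaranteed to contain the whole configuration. You acknowledge this as the ``main obstacle'' and propose to fix it via the polar-space structure of a filter $(z)^\uparrow$, but that filter only sees the local combinatorics above $z$; it carries no information about $d_G(\cdot,u)$ for a distant $u$, so the promised ``distance-controlled point selection'' inside the polar space is not supplied by (P1)--(P3) alone.

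The paper sidesteps this entirely: except for a single, short apartment step in (TC) (choosing an apartment containing $\{v,x\}$ and $\{u\}$ to locate the final common neighbor), all of (QC), no-$K_4^-$, and no-$K_{3,3}^-$ are proved by working \emph{inside the orientable modular graph $G$} and repeatedly applying the quadrangle condition of $G$ together with the admissibility of the orientation. For (QC), one descends from the type-$0$/type-$1$ vertices by two rounds of (QC) in $G$ to a type-$2$ vertex and then climbs back; for $K_{3,3}^-$, one shows via (QC) in $G$ that certain type-$2$ medians are forced to coincide, yielding a contradiction. No apartment containing more than two simplices is ever needed. Your apartment analysis of $H(\Sigma)$ is correct and useful intuition, but the actual argument lives in $G$, not in $\Sigma$.
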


\begin{proof}
To triangle condition (TC):
Consider $v,w,u$ such that $d_H(v,w) = 1$ and $k := d_H(v,u) = d_H(w,u)$. By
Lemma \ref{isometric_G_H},  $d_G(v,u) = d_G(w,u) = k$.
By definition, $d_{H}(v,w) = 1$ implies the existence of a common neighbor $x$ of $v,w$ in $G$.
Then $x$ has type $1$. Consider $d_G(x,u)$, which is $k +1/2$ or $k - 1/2$.
If $d_G(x,u) = k+1/2$,
then by (QC) for $G$ there is a common neighbor $y$ of $v,w$
with $d_G(y,u) = k - 1/2$. Then  $y$ is of type $1$ and
the  $4$--cycle  of $x,v,y,w$ violates admissibility of $o$.
Thus $d_G(x,u) = k - 1/2$.
Take an apartment $\Sigma$ containing $v,x,u$,
and identify $G(\Sigma)$ with the grid graph on $(\ZZ/2)^n$.
Then we can assume that $x = v + e_i/2$ and
$d_G(v,u) = \|v - u\|_1 = 1/2 + \|x - u\|_1$.
Since $v$ and $u$ are integer vectors,
$y := x + e_i/2 = v + e_i$ is an integer vector (i.e., $y \in V(H)$),
and $\| y - u \|_1 = d_G(v,u) - 1 = d_H(y,u) - 1$.
Notice that $y$ must be adjacent to $w$ in $H$, and hence $y$ is a desired vertex.

To $K_4^-$: By the previous argument, any triangle in $H$ must have
a common neighbor (of type 1) in $G$.
From this and (c) proven above, we see that
it is impossible for $H$ to contain an induced $K_4^-$.

To quadrangle condition (QC):
Consider $z,v,w,u$ with $d_H(z,v) = d_H(z,w) = 1 = d_H(v,w) - 1$
and $d_H(z,u) - 1= d_H(v,u) = d_H(w,u) =: k$.
Take vertices $v',w'$ of type 1 such that
$v'$ is a common neighbor of $z,v$ and
$w'$ is a common neighbor of $z,w$. The vertices
$v'$ and $w'$ are different and $d_G(v',u) = d_G(w',u) = k + 1/2$.
Apply (QC) for $z,v',w',u$ in $G$, to  obtain a common neighbor $x$ of $v',w'$
of type 2 such that $d_H(x,u) = k$.
Then $x,v,w$ are all different (since $d(v,w) = 2$).
By (QC), we obtain a common neighbor
$a$ of $x,v$ and a common neighbor $b$ of $x,w$
such that $d_G(a,u) = d_G(b,u) = k - 1/2$.
Again, $a$ and $b$ are different, and of type 1.
By (QC) for $x,a,b,u$,
we obtain a vertex $y$ of type 0 such that
$d_H(u,y) = k-1$, which is a common neighbor of $v,w$ in $H$.

To $K_{3,3}^-:$ Next consider a $K_{3,3}^-$ subgraph.
Let $x,y,z,w,u,v$ be six vertices inducing $K_{3,3}^-$ in $H$.
The color classes are $\{x,y,z\}$ and $\{w,u,v\}$,
and $x$ and $v$ are not adjacent.
Suppose to the contrary that $d_H(x,v) = 3$.
In $G$ pick  a common neighbor $a$ of $w,x$ and a common neighbor $b$ of $u,x$.
By (QC) for $x,a,b,y$ and for $x,a,b,z$,
we obtain a common neighbor $c$ of $a,b$ belonging to $I(x,y)$
and a common neighbor $c'$ of $a,b$ belonging to $I(x,z)$.
Then $c = c'$ must hold. Otherwise,
by (QC) for $a,c,c',v$
there is a common neighbor $c''$ of $c,c'$
belonging to $I(c'',v)$, and $x, a,b,c,c',c''$ induces a $K_{3,3}^-$ in $G$, which is impossible.
By using (QC), take a common neighbor $f$ of $w,c$ belonging $I(a,y)$
and take a common neighbor $f'$ of $w,c$ belonging $I(a,z)$.
As above $f = f'$ holds. Then $f$ is adjacent to each of $z$ and $y$.
This means that $z$ and $y$ are adjacent in $H$, contradicting the first assumption.
\end{proof}

Finally we show that the barycentric graph
of $H$ is isomorphic to $G$:

\begin{Lem}
The intersection of $V(H)$ and any principal filter of $\mathcal{P}(G,o)$
is Boolean-gated in $H$.
Conversely, every Boolean-gated set of $H$ is obtained  in this way.
Namely  $\mathcal{P}(G,o)$ is isomorphic to  $\mathcal{B}(H)$
by map $p \mapsto (p)^{\uparrow} \cap V(H)$.
\end{Lem}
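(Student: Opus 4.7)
The plan is to verify directly that the map $p\mapsto B_p:=(p)^{\uparrow}\cap V(H)$ is an order-reversing bijection from $\mathcal{P}(G,o)$ onto $\mathcal{B}(H)$, using Lemma~\ref{lem:filter=polar} to identify each filter $(p)^{\uparrow}$ with the subspace poset of a polar space $\Pi_p$ and Lemma~\ref{lem:filter_is_convex} for convexity of filters in $G$. First I would verify that $H[B_p]$ coincides with the collinearity graph of the dual polar space $\Pi_p^*$. The vertices of $B_p$ are exactly the type-$0$ elements $\succeq p$, which under the identification correspond to the maximal subspaces of $\Pi_p$. For $x,y\in B_p$ distinct, an $H$-edge between $x$ and $y$ is witnessed by a common type-$1$ $G$-neighbor $q$ with $q\prec x,y$. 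A rank count in the modular semilattice $\mathcal{P}(G,o)$ forces $q = x\wedge y$ (as $q$ is a common lower bound whose grade is maximal below $x$ and $y$), and since $p$ is also a common lower bound of $x,y$ we get $p\preceq q$, so $q\in(p)^{\uparrow}$ realizes a codimension-$1$ subspace of both $x$ and $y$ inside $\Pi_p$. Conversely, every codimension-$1$ subspace of $\Pi_p$ common to $x,y$ is a type-$1$ element of $(p)^{\uparrow}$ below both $x$ and $y$, hence an edge of $H$. Thus $H[B_p]$ is the dual polar graph of $\Pi_p^*$, in particular a thick swm-graph by Theorem~\ref{th:dual-polar-graph}.

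For gatedness of $B_p$ in $H$, I would combine the convexity of $(p)^{\uparrow}$ in $G$ (Lemma~\ref{lem:filter_is_convex}) with the fact that $d_G = d_H$ on $V(H)$ (Lemma~\ref{isometric_G_H}). Convexity implies that $G$-geodesics between type-$0$ vertices of $B_p$ stay inside $(p)^{\uparrow}$, and reading off their type-$0$ vertices produces $H$-geodesics inside $B_p$; hence $H[B_p]$ is already isometric in $H$. For an arbitrary $z\in V(H)$, the $G$-gate $z'$ of $z$ in the gated set $(p)^{\uparrow}$ exists; picking any apartment $\Sigma$ containing a chamber $C$ through $p$ and invoking the canonical retraction $\rho_{\Sigma,C}$ of Lemma~\ref{lem:canonical_retraction} to reduce to the grid-graph case, I would show that $z'$ must itself be of type $0$, and therefore serves as an $H$-gate of $z$ in $B_p$. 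Equivalently, using Lemma~\ref{lem-weakly-modular_gated_hull} (since $H$ is swm and $H[B_p]$ is connected), the local step to check is that every common $H$-neighbor of two vertices of $B_p$ lies in $B_p$, which again reduces via the apartment retraction to a straightforward verification in an apartment grid.

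For the converse, let $B\in\mathcal{B}(H)$ and write $B = \lgate u,v\rgate_H$ for a Boolean pair $(u,v)$ with $d_H(u,v) = \mathrm{diam}(B) =: k$ (Lemma~\ref{lem:X=<p,q>}). Because $d_G = d_H$ on type-$0$ vertices, a $G$-geodesic from $u$ to $v$ has metric length $k$ (i.e., $2k$ edges), and its midpoint is a vertex of type $k$ that is a common lower bound of $u$ and $v$; taking $p := u\wedge v$ in the modular semilattice $\mathcal{P}(G,o)$, the dual polar graph $H[B_p]$ has diameter equal to the type of $p$, namely $k$, and contains both $u$ and $v$. By Lemma~\ref{lem:d(x,y)=D} applied to the dual polar graph $H[B_p]$, we conclude $B_p = \lgate u,v\rgate_H = B$. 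The map is order-reversing: $p\preceq q \Leftrightarrow (q)^{\uparrow}\subseteq(p)^{\uparrow} \Leftrightarrow B_q\subseteq B_p$, which corresponds to $B_p\le B_q$ in $\mathcal{B}(H)$ (ordered by reverse inclusion); injectivity follows from the recovery $p = \bigwedge B_p$ in $\mathcal{P}(G,o)$, since the intersection of all maximal subspaces of $\Pi_p$ is the zero-element of its subspace poset. The principal technical obstacle is the gatedness step in the middle paragraph: whereas the polar-space interpretation of $(p)^{\uparrow}$ makes the dual-polar structure of $H[B_p]$ transparent, ruling out ``escapes'' via $H$-neighbors in other apartments of the thick building demands that the apartment/retraction argument interacts cleanly with the type-$0$-projection, and this is where the global building axioms (rather than just the local polar-space data) are genuinely needed.
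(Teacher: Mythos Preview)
Your overall architecture matches the paper's: first show $B_p=(p)^{\uparrow}\cap V(H)$ is Boolean-gated via the dual-polar identification of Lemma~\ref{lem:filter=polar} together with the local gatedness criterion of Lemma~\ref{lem-weakly-modular_gated_hull}, then exhibit every Boolean-gated $B$ as some $B_p$. The identification of $H[B_p]$ with the collinearity graph of $\Pi_p^*$ is fine, and your observation that if the $G$-gate of $z$ in $(p)^{\uparrow}$ happens to be type~$0$ then it is automatically the $H$-gate is correct (using $d_G=d_H$ on $V(H)$). Where your gatedness argument stays vague, the paper supplies the missing case analysis: for $x,y\in B_p$ adjacent in $H$ one uses that any $H$-triangle has a common type-$1$ $G$-neighbour to force $z\succ x\wedge y\succeq p$; for $d_H(x,y)=2$ the paper takes an apartment containing the simplices $\{x,p\}$ and $\{y,p\}$, retracts $z$ to a common neighbour $w\in B_p$, and if $w\ne z$ runs a short (QC)-computation in $G$ to exhibit a type-$2$ element $c=x\wedge y$ with $p\preceq c\preceq z$. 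Your two suggested routes (``show the $G$-gate is type~$0$'' or ``reduce to a grid via retraction'') are plausible heuristics but do not replace this computation.

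The genuine gap is in the converse. You assert that a $G$-geodesic from $u$ to $v$ of length $2k$ edges has a midpoint of type~$k$ which is a common lower bound, and then set $p:=u\wedge v$. But a $G$-geodesic between two type-$0$ vertices need not descend for $k$ steps and then ascend for $k$ steps; its midpoint can have any type between $0$ and $k$. Equivalently, in an apartment with $u,v$ integer vectors, a type-$k$ common lower bound exists iff $|u_i-v_i|\le 1$ for all $i$, and nothing you have written rules out $|u_i-v_i|\ge 2$. This is precisely the step the paper works for: it takes an apartment $\Sigma$ containing $u,v$, shows that $X\cap V(\Sigma)$ is convex in the grid and hence a box $\prod_i[0,\lambda_i]$, and then uses the \emph{thickness} of $X$ together with the canonical retraction (Lemma~\ref{lem:canonical_retraction}) to derive a contradiction from any $\lambda_i\ge 2$. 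Only then is $p=(u+v)/2$ available as a type-$k$ vertex with $u,v\in(p)^{\uparrow}$, after which Lemma~\ref{lem:d(x,y)=D} finishes as you indicate. Without the $\lambda_i\le 1$ argument, neither the existence of $u\wedge v$ in $\mathcal{P}(G,o)$ nor its having type exactly $k$ is established, so your surjectivity step does not go through.
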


\begin{proof}
Consider a vertex $p$ of $V(G)$, and the filter $(p)^{\uparrow}$ of $p$.
Since $V(H) \cap (p)^{\uparrow}$ induces a dual polar graph
(Lemma~\ref{lem:filter=polar}),
it suffices to
show that $V(H) \cap (p)^{\uparrow}$ is gated in $H$.
To apply Lemma~\ref{lem-weakly-modular_gated_hull},
pick $x,y \in V(H) \cap (p)^{\uparrow}$
and a common neighbor $z$ of $x,y$ in $H$.

First suppose that $x,y$ are adjacent.
Take a unique common neighbor $u = x \wedge y$ of $x,y$ in $G$.
Then $u$ must be adjacent to $z$.
Necessarily $u$ belongs to $(p)^{\uparrow}$ and has type 1,
hence $z \succ u \succeq p$.

Suppose now that $x,y$ are not adjacent.
By $x,y \in (p)^{\uparrow}$ and Lemma~\ref{lem:Delta=ordercomplex_of_P},
we can take an apartment
containing $\{x, p\}$ and $\{y,p\}$, and
consider the image $w$ of $z$ by a canonical retraction.
Then $w$ is a common neighbor of $x,y$ in $V(H) \cap (p)^{\uparrow}$.
We can assume that $w \neq z$.
Then $x,y,z,w$ form an isometric $4$--cycle in $H$.
Take a common neighbor $a$ of $z,x$
and a common neighbor $b$ of $z,y$.
Applying (QC) to $z,a,b,w$,
we get a common neighbor $c$ of $a,b$ with $d_G(c,w) = 1$.
Here $c$ is equal to $x \wedge y$ and has type 2.
Then $p \preceq x \wedge y \preceq a \preceq z$.
This means that $z$ belongs to $(p)^{\uparrow}$.

Consider an arbitrary Boolean-gated set $X$ of finite diameter in $H$.
Since $X$ induces a dual polar graph,
we can take $x,y \in X$ so that $k:= d_H(x,y)$ is equal to
the diameter of $X = \lgate x,y \rgate$
(Lemmata~\ref{lem:d(x,y)=D} and \ref{lem:X=<p,q>}).
Take an apartment $\Sigma$ containing $x,y$.
Then $X \cap V(\Sigma)$ induces an isometric cube of diameter $k$ (in $H$).

Indeed, we can regard $X \cap V(\Sigma) \subseteq \ZZ^n$.
So we may assume that $x = 0$ and $y = \sum_{i} \lambda_i e_i$ for nonnegative integers $\lambda_i$ with $\sum_{i} \lambda_i = \|x - y\|_1 = d_H (x,y) = k$.
Since $\Sigma$ is isometric and
$X = \lgate x,y \rgate$ is gated, $X \cap V(\Sigma)$ is
a convex set of diameter $k$ in the grid graph, and hence is equal to
$\{ z \in \ZZ^n: 0 \leq z_i \leq \lambda_i \ (i=1,2,\ldots,n)\}$.
Then it holds that $\lambda_i \leq 1$ for each $i$.
Suppose $\lambda_i \geq 2$. Then $x = 0, x' = e_i, x'' = 2 e_i$ are contained in
$X \cap V(\Sigma)$.
Since $X$ is thick, there is $z \in I(x,x'')$ with different from
(and nonadjacent with) $x'$.
Consider the image $z'$ of $z$ by the canonical retraction $\rho_{\Sigma,C}$
for a simplex $C$ in $\Sigma$ containing $x'$.
By the nonexpansive property of $\rho_{\Sigma,C}$
(between $z$ and $x,x''$), we must have $x' = z'$.
However this contradicts $d_H(x',z) = d_H(x',z') = 2$
(Lemma~\ref{lem:canonical_retraction}).

Thus we can take a vertex $p (= (x+y)/2) \in V(\Sigma)$ of type $k$
so that $(p)^{\uparrow}$ contains $X \cap V(\Sigma)$.
As seen above, the intersection $X'$ of $V(H)$ and $(p)^{\uparrow}$ is gated,
and hence $(p)^{\uparrow}$ contains the whole set $X$.
Since the diameter of the subgraph induced by $(p)^{\uparrow}$
is (at most) equal to $k$ in $G$,
the diameter of the Boolean-gated set $X'$ is also $k$, and thus this subgraph is equal to $X$.
\end{proof}

\section{Application I: Biautomaticity of swm-groups}\label{subsec:biauto}

Biautomaticity is a strong property implying numerous algorithmic and
geometric features of a group \cites{ECHLPT,BrHa}.  Sometimes the fact
that a group acting on a space is biautomatic may be established from
geometric and combinatorial properties of the space. For example,
one of the important and nice results about CAT(0) cube complexes is a
theorem by Niblo and Reeves \cite{NiRe} that the groups acting
geometrically, that is properly and discontinuously on CAT(0) cube
complexes are biautomatic. Januszkiewicz and {\'S}wi{\c{a}}tkowski
\cite{JS} established a similar result for groups acting on systolic
complexes. It is also well-known that hyperbolic groups are
biautomatic \cite{ECHLPT}.  {\'S}wi{\c{a}}tkowski \cite{Swiat}
presented a general framework of locally recognized path systems in a
graph $G$ under which proving biautomaticity of a group acting on $G$
is reduced to proving local recognizability and the $2$--sided fellow
traveler property for some paths.

Analogously to cubical and systolic groups, we will call groups acting
geometrically on swm-graphs {\it swm-groups}.

\begin{theorem} \label{th-biautomatic-swm}
The set of normal Boolean-gated paths of an swm-graph $G$ defines a
regular geodesic bicombing of $G^{\Delta}$. Consequently, swm-groups
are biautomatic.
\end{theorem}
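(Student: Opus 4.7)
The plan is to apply the framework of {\'S}wi{\c{a}}tkowski \cite{Swiat}, which reduces biautomaticity of a group acting geometrically on a graph to two combinatorial properties of a chosen path system: \emph{local recognition} and the \emph{two-sided fellow traveler property}. I will work throughout in the graph $G^{\Delta}$ (on which an swm-group $\Gamma$ acts geometrically, since $G^{\Delta}$ is quasi-isometric to $G$), and take as the path system the set $\mathcal{N}$ of all normal Boolean-gated paths. By Theorem~\ref{th-nbgp}, the elements of $\mathcal{N}$ are geodesics of $G^{\Delta}$ and there is exactly one connecting each ordered pair of vertices, so $\mathcal{N}$ is a candidate for a geodesic bicombing; cocompactness of the action together with Lemma~\ref{lem:pairwiseBoolean} ensures that Boolean-gated sets have uniformly bounded diameter, hence the alphabet of oriented edges of $G^{\Delta}$ needed to describe $\mathcal{N}$ lives in finitely many $\Gamma$-orbits. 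This gives the finite-alphabet setup required by \cite{Swiat}.

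First I would verify \emph{local recognition}. The normality condition at an interior vertex $x_i$ of $\gamma = (x_0, x_1, \ldots, x_k)$ reads ``$B \cap \lgate x_i, x_{i+1} \rgate = \{x_i\}$ for every Boolean-gated set $B \supseteq \lgate x_{i-1}, x_i \rgate$'', and by the $\Delta$-gate characterization (Lemma~\ref{lem:delta-gate}) together with Lemma~\ref{lem:Boolean_intersection}, this is equivalent to the purely local statement that $x_{i-1}$ is \emph{not} contained in any Boolean-gated set including $\lgate x_i, x_{i+1} \rgate$, i.e., that no common neighbor of $x_{i-1}$ and $x_{i+1}$ in $G^{\Delta}$ lies in $\lgate x_i, x_{i+1} \rgate$. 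Because the diameters of Boolean-gated sets are uniformly bounded (say by $D$), this condition and the condition of being an edge of $G^{\Delta}$ both depend only on the $\Gamma$-orbit of a window of radius $O(D)$ around $x_i$. Hence membership in $\mathcal{N}$ is recognized by a finite automaton whose states record such windows up to the $\Gamma$-action, yielding regularity of the language.

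The main work, and the main obstacle, is the \emph{two-sided fellow traveler property}: for any pair of vertices $p \sim_{\Delta} p'$ and $q \sim_{\Delta} q'$ in $G^{\Delta}$, the normal bg-paths $\gamma_{pq}$ and $\gamma_{p'q'}$ must stay at uniformly bounded $G^{\Delta}$-distance at every time. Here I would argue in two steps, mirroring the Niblo--Reeves argument for CAT(0) cube complexes but replacing cube-gates with $\Delta$-gates. The key one-sided statement to prove is: if $p \sim_{\Delta} p'$ (or $p = p'$) and $q \sim_{\Delta} q'$, then the normal bg-paths $\gamma_{pq} = (p, x_1, \ldots, x_k = q)$ and $\gamma_{p'q'} = (p', x'_1, \ldots, x'_{k'} = q')$ satisfy $d_{G^\Delta}(x_i, x'_i) \le 2$ for all $i$. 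For this one combines the inductive definition (\ref{eqn:given}) of the vertices of a normal bg-path as successive $\Delta$-gates, the hereditary Lemma~\ref{lem-magic}, and Lemma~\ref{lem-BP-prop} which controls how $\Delta$-gates move under unit $G^{\Delta}$-perturbations of their reference point. Iterating this gate-tracking estimate from both ends gives the desired two-sided fellow traveler constant (depending only on $D$).

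Assuming these three ingredients---geodesicity and uniqueness (already in Theorem~\ref{th-nbgp}), local recognition, and the two-sided fellow traveler property---the framework of \cite{Swiat} produces a regular geodesic bicombing of $G^{\Delta}$ by $\mathcal{N}$, and pulling this back via an orbit map $\Gamma \to V(G^{\Delta})$ yields a biautomatic structure on $\Gamma$, exactly as in \cite{NiRe}. I expect the delicate point to be the fellow traveler estimate when the two endpoints are simultaneously perturbed through different Boolean-gated sets, since the $\Delta$-gates of $p$ and $p'$ at a common far vertex may land in different, though adjacent, Boolean-gated sets; controlling this requires the gatedness of all balls of $G^{\Delta}$ in $G$ (Proposition~\ref{gated_ball}) and the geodesic extension property (Proposition~\ref{prop:gep2}) to ensure that the divergence between the two bg-paths is absorbed into a single Boolean-gated set at each step.
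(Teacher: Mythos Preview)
Your proposal is correct and follows essentially the same route as the paper: apply {\'S}wi{\c{a}}tkowski's criterion by establishing $2$-local recognition (Proposition~\ref{2recog_swm}, via the uniform bound on diameters of Boolean-gated sets under a cocompact action) and the two-sided fellow traveler property (Proposition~\ref{bicombing_swm}, via induction on path length using Lemma~\ref{lem-BP-prop} and Lemma~\ref{lem-magic}). The paper in fact obtains fellow-traveler constant~$1$ rather than~$2$, and does not need your reformulation of the normality condition (which is not quite the equivalence you state)---it simply observes that the defining condition on the triple $(x_{i-1},x_i,x_{i+1})$ involves only Boolean-gated sets through $x_i$, all contained in a ball of uniformly bounded radius around $x_i$.
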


The rest of this section is organized as follows: we first recall
the necessary definitions and results about biautomaticity. Then we
show that normal Boolean-gated paths can be locally recognized and
satisfy the 2-sided fellow traveler property.

\subsection{Bicombings and biautomaticity}

We continue by recalling the definitions of (geodesic) bicombing and
biautomatic group \cites{ECHLPT,BrHa}.  Let $G=(V,E)$ be a graph and
suppose that $\Gamma$ is a group acting geometrically by automorphisms
on $G$.  These assumptions imply that the graph $G$ is locally finite,
moreover, the degrees of vertices of $G$ are uniformly bounded.
Denote by ${\mathcal P}(G)$ the set of all paths of $G$.  A {\it path
  system} $\mathcal P$ \cite{Swiat} is any subset of ${\mathcal
  P}(G)$.  The action of $\Gamma$ on $G$ induces the action of
$\Gamma$ on the set ${\mathcal P}(G)$ of all paths of $G$.  A path
system ${\mathcal P}\subseteq {\mathcal P}(G)$ is called
$\Gamma$--{\it invariant} if $g\cdot \gamma \in \mathcal P$, for all
$g\in \Gamma$ and $\gamma \in \mathcal P$.

Let $[0,n]^*$ denote the set of integer points from the segment
$[0,n].$ Given a path $\gamma$ of length $n=|\gamma|$ in $G$, we can
parametrize it and denote by $\gamma:[0,n]^*\rightarrow V(G)$. It will
be convenient to extend $\gamma$ over $[0,\infty]$ by setting
$\gamma(i)=\gamma(n)$ for any $i>n$.  A path system $\mathcal P$ of a
graph $G$ is said to satisfy the {\it 2-sided fellow traveler
  property} if there are constants $C>0$ and $D\ge 0$ such that for
any two paths $\gamma_1,\gamma_2\in \mathcal P$, the following
inequality holds for all natural $i$:
$$d_G(\gamma_1(i),\gamma_2(i))\le C\cdot \max\{
d_G(\gamma_1(0),\gamma_2(0)),d_G(\gamma_1(\infty),\gamma_2(\infty))\}+D.$$
A path system $\mathcal P$ is \emph{complete} if any two vertices are
endpoints of some path in $\mathcal P$.  A {\it bicombing} of a graph
$G$ is a complete path system $\mathcal P$ satisfying the $2$--sided
fellow traveler property.  If all paths in the bicombing ${\mathcal
  P}$ are shortest paths of $G$, then ${\mathcal P}$ is called a {\it
  geodesic bicombing}.

We recall here quickly the definition of a biautomatic structure for a group. Details can be found in  \cites{ECHLPT,BrHa,Swiat}.
Let $\Gamma$ be a group generated by a finite set $S$. A \emph{language} over $S$ is some set of words in $S\cup S^{-1}$ (in the
free monoid $(S\cup S^{-1})^{\ast}$).
A language over $S$ defines a $\Gamma$--invariant path system in the Cayley graph Cay$(\Gamma,S)$.
A language is \emph{regular} if it is accepted by some finite state automaton.
A \emph{biautomatic structure} is a pair $(S,\mathcal L)$, where $S$ is as above, $\mathcal L$ is a regular language over $S$, and
the associated path system in Cay$(\Gamma,S)$ is a bicombing. A group is
\emph{biautomatic} if it admits a biautomatic structure.
In what follows we use specific conditions implying biautomaticity for groups acting geometrically on graphs. The method, relying on the notion of locally recognized path system, was developed by {\'S}wi{\c{a}}tkowski \cite{Swiat}.

Let $G$ be a graph and let $\Gamma$ be a group acting geometrically on $G$.
Two paths $\gamma_1$ and $\gamma_2$ of $G$ are $\Gamma$-{\it congruent} if there is $g\in \Gamma$ such that $g\cdot\gamma_1=\gamma_2$. Denote by ${\mathcal S}_k$ the set of $\Gamma$-congruence classes
of paths of length $k$ of $G$.  Since $\Gamma$ acts cocompactly on $G$, the sets ${\mathcal S}_k$ are finite for any natural $k$. For any path $\gamma$ of $G$, denote by $[\gamma]$ its
$\Gamma$-congruence class.

For a subset $R\subset {\mathcal S}_k$, let ${\mathcal P}_R$ be the path system in $G$ consisting of all paths $\gamma$ satisfying the following two conditions:
\begin{itemize}
\item[(1)] if $|\gamma|\ge k$, then $[\eta]\in R$ for any subpath $\eta$ of length $k$ of $\gamma$;
\item[(2)] if $|\gamma|<k$, then $\gamma$ is a prefix of some path $\eta$ such that $[\eta]\in R$.
\end{itemize}

A path system $\mathcal P$ in $G$ is $k$--{\it locally recognized} if for some $R\subset {\mathcal S}_k$, we have ${\mathcal P}={\mathcal P}_R$, and $\mathcal P$ is {\it locally recognized}
if it is $k$--locally recognized for some $k$. The following result of  {\'S}wi{\c{a}}tkowski  \cite{Swiat} provides sufficient conditions for biautomaticity in terms of local recognition
and bicombing.

\begin{theorem} \label{swiat} \cite{Swiat}*{Corollary 7.2} \label{th:swiat} Let $\Gamma$ be group acting geometrically on a graph $G$ and let $\mathcal P$ be a path system in
$G$ satisfying the following conditions:
\begin{itemize}
\item[(1)] $\mathcal P$ is locally recognized;
\item[(2)] there exists $v_0\in V(G)$ such that any two vertices from the orbit $\Gamma \cdot v_0$ are connected by a path from $\mathcal P$;
\item[(3)] $\mathcal P$ satisfies the $2$--sided fellow traveler property.
\end{itemize}
Then $\Gamma$ is biautomatic.
\end{theorem}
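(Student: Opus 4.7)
The plan is to construct an explicit biautomatic structure $(S,\mathcal{L})$ for $\Gamma$ directly from the given path system $\mathcal{P}$. First I would fix the base vertex $v_0\in V(G)$ supplied by hypothesis~(2); since $\Gamma$ acts properly and cocompactly on the locally finite graph $G$, the classical Milnor--\v{S}varc argument furnishes a finite generating set, and a convenient choice is $S:=\{g\in\Gamma\setminus\{1\}:d_G(v_0,gv_0)\le R\}$ for $R$ equal to the diameter of a fundamental domain plus~$1$. The key idea is to translate each path $\gamma\in\mathcal{P}$ joining $v_0$ to $gv_0$ into a word $w(\gamma)$ over $S\cup S^{-1}$ representing $g$, take $\mathcal{L}$ to be the set of all such words, and then verify regularity of $\mathcal{L}$ and the 2-sided fellow traveler property of the induced path system in $\mathrm{Cay}(\Gamma,S)$.

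For the translation I would fix a section $\sigma\colon V(G)\to\Gamma$ that is $\Gamma$--equivariant in the sense $\sigma(h\cdot v)=h\sigma(v)$ and satisfies $d_G(v,\sigma(v)\cdot v_0)\le R$; such a $\sigma$ exists by choosing representatives on a single fundamental domain $F\subseteq V(G)$ (breaking ties arbitrarily) and then extending equivariantly. For $\gamma=(x_0,x_1,\dots,x_n)\in\mathcal{P}$ with $x_0=v_0$ and $x_n\in\Gamma\cdot v_0$ set $w(\gamma):=s_1s_2\cdots s_n$, where $s_i:=\sigma(x_{i-1})^{-1}\sigma(x_i)$. Since consecutive $x_{i-1},x_i$ are adjacent in $G$ and each is at $G$--distance $\le R$ from its orbit representative, each $s_i$ lies in $S\cup\{1\}$ provided we have chosen $R$ large enough; moreover the equivariance of $\sigma$ makes $s_i$ depend only on the $\Gamma$--congruence class of the pair $(x_{i-1},x_i)$, and $w(\gamma)$ evaluates in $\Gamma$ to $g$ as required.

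The main step is the regularity of $\mathcal{L}$. By hypothesis~(1), $\mathcal{P}=\mathcal{P}_R$ is $k$--locally recognized for some $k\ge1$, so membership of a long path in $\mathcal{P}$ is determined by the $\Gamma$--congruence classes of its length-$k$ subpaths, and the set $\mathcal{S}_k$ of these classes is finite because $\Gamma$ acts cocompactly on a locally finite graph. I would build a finite-state automaton $\mathcal{A}$ whose states are the classes in $\mathcal{S}_k$ together with finitely many ``prefix'' states indexing paths of length $<k$, plus initial and accept states. The transition from state $[\eta]$ on reading $s\in S\cup S^{-1}$ is defined precisely when $s=\sigma(x_{k-1})^{-1}\sigma(x_k)$ for some extension $\eta'=(x_1,\dots,x_{k-1},x_k)$ of a representative $(x_0,x_1,\dots,x_{k-1})$ of $[\eta]$ with $[\eta']\in R\subseteq\mathcal{S}_k$; the new state is $[\eta']$. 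This is well-defined on congruence classes by the equivariance of $\sigma$, and verifying that $\mathcal{A}$ accepts exactly $\mathcal{L}$ reduces to comparing the local-recognition condition on paths with the automaton's definition --- a routine induction on path length.

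For the bicombing property in $\mathrm{Cay}(\Gamma,S)$, I would transfer the 2-sided fellow traveler inequality from $G$. The orbit map $\Gamma\to V(G),\ g\mapsto gv_0$, is a quasi-isometry (again by the Milnor--\v{S}varc argument), so there exist constants $A\ge1,B\ge0$ with $A^{-1}d_{\mathrm{Cay}}(g,h)-B\le d_G(gv_0,hv_0)\le A\,d_{\mathrm{Cay}}(g,h)+B$, and the section $\sigma$ is Lipschitz when both sides carry their natural word/graph metrics. For two words $w_1,w_2\in\mathcal{L}$ coming from $\gamma_1,\gamma_2\in\mathcal{P}$ the $i$-th prefix of $w_k$ evaluates to $\sigma(\gamma_k(i))$, so $d_{\mathrm{Cay}}(w_1(i),w_2(i))$ and $d_G(\gamma_1(i),\gamma_2(i))$ agree up to the quasi-isometry constants $A,B$; the assumption~(3) on $\mathcal{P}$ therefore yields the analogous inequality for $\mathcal{L}$ with adjusted constants $C',D'$. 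Completeness follows from hypothesis~(2): each element $g\in\Gamma$ is represented by some $\gamma\in\mathcal{P}$ from $v_0$ to $gv_0$, so $w(\gamma)\in\mathcal{L}$ evaluates to $g$.

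The principal obstacle will be the first step of packaging the local data correctly: defining $\sigma$ equivariantly, choosing $S$ and $k$ compatibly, and ensuring the automaton's transition function depends only on $\Gamma$--congruence classes of length-$k$ subpaths. The delicate point is that two different continuations of a length-$k$ subpath may produce the same generator $s$, so the state must genuinely encode the last $k$ vertices rather than merely the last letter read; similarly, one may need to enlarge $k$ so that the accept condition (the path ends in $\Gamma\cdot v_0$) can be detected from the current state. Once this combinatorial bookkeeping is arranged, regularity of $\mathcal{L}$ follows automatically from the finiteness of $\mathcal{S}_k$, and the remaining verifications are mechanical.
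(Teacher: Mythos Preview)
This theorem is not proved in the paper; it is quoted verbatim from \cite[Corollary 7.2]{Swiat} and used as a black box to derive biautomaticity of swm-groups (Theorem~\ref{th-biautomatic-swm}). So there is no ``paper's own proof'' to compare your proposal against.

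That said, your sketch is a reasonable outline of how the result is established in \cite{Swiat}: one fixes a base vertex, uses cocompactness to get a finite generating set, translates $\mathcal{P}$-paths into words via an equivariant section, builds a finite-state automaton whose states encode $\Gamma$-congruence classes of length-$k$ subpaths (this is exactly where local recognition is used), and transfers the fellow-traveler estimate through the Milnor--\v{S}varc quasi-isometry. Your identification of the ``delicate point'' --- that the automaton state must remember the full congruence class of the last $k$ vertices, not just the last letter --- is on target. If you want to flesh this out into a complete proof you should consult \cite{Swiat} directly, since several technicalities (handling short prefixes, the precise accept condition, and the possibility that distinct subpath classes yield the same generator) require more care than a sketch can convey; but for the purposes of the present paper none of this is needed, as the result is simply invoked.
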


\subsection{Normal Boolean-gated paths can be $2$-locally recognized}

We continue by showing that normal Boolean-gated paths of swm-graph
can be $2$-locally recognized.

\begin{proposition}\label{2recog_swm} Let $G$ be an swm-graph and let $\Gamma$ be a group acting geometrically on $G$.  Then $\Gamma$ acts geometrically on $G^{\Delta}$ and  the set of normal bg-paths of $G^{\Delta}$ is
2-locally recognized.
\end{proposition}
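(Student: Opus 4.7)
\emph{Plan.} I would establish the two assertions in order, relying on Świątkowski's criterion recalled in Theorem~\ref{th:swiat}.

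\emph{Geometric action on $G^{\Delta}$.} The first observation I would make is that the construction $G \mapsto G^{\Delta}$ is $\Gamma$-equivariant. Indeed, the edges of $G^{\Delta}$ are the Boolean pairs, which are defined in terms of the metric $d_G$ and gated hulls; both of these are preserved by any graph automorphism of $G$. Consequently every $g \in \Gamma$ acts on $V(G) = V(G^{\Delta})$ as a graph automorphism of $G^{\Delta}$, with the same vertex orbits and vertex stabilizers as for the action on $G$. Properness and cocompactness of the action on vertices are therefore inherited. The only non-trivial verification needed is the local finiteness of $G^{\Delta}$: cocompactness of $\Gamma \curvearrowright G$ yields a uniform bound $k$ on $\deg_G(v)$, and since any isometric $d$-cube subgraph forces each of its vertices to have $d$ distinct $G$-neighbors inside it, Proposition~\ref{prop:char_Boolean} implies the cube-dimension bound $\operatorname{cube-dim}(G) \le k$. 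Thus every Boolean partner of $v$ lies in the finite ball $B_k(v, G)$, so $\deg_{G^{\Delta}}(v)$ is finite; combined with the finite number of vertex orbits, this gives cocompactness of $\Gamma \curvearrowright G^{\Delta}$.

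\emph{Two-local recognition of normal bg-paths.} Let $R \subset \mathcal{S}_2$ be the set of $\Gamma$-congruence classes of $G^{\Delta}$-paths $(x,y,z)$ of length $2$ satisfying
\[
B \cap \lgate y, z \rgate = \{y\} \quad \text{for every Boolean-gated } B \supseteq \lgate x, y \rgate.
\]
Since gated hulls and Boolean-gated sets are invariant under graph automorphisms of $G$ (hence of $G^{\Delta}$), this defining property is $\Gamma$-invariant, so $R$ is well defined. The identification $\mathcal{P}_R = \mathcal{P}$, where $\mathcal{P}$ is the set of normal bg-paths, is then immediate from inspection: the normality of a path $(x_0, x_1, \ldots, x_n)$ of length $n \ge 2$ is precisely the conjunction, over $i = 1, \ldots, n-1$, of the condition on the triple $(x_{i-1}, x_i, x_{i+1})$ defining membership in $R$. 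This settles Świątkowski's condition~(1). For the extension condition~(2), any length-$0$ prefix $(p)$ is extended by $\gamma_{pq}$ for some $q$ at $G^{\Delta}$-distance $2$ from $p$ (the case $\operatorname{diam}(G^{\Delta}) \le 1$ being uninteresting, as $\Gamma$ would then act on a bounded set). For a length-$1$ prefix $(p, w)$, I would produce the extension by applying Theorem~\ref{th-nbgp} to a well-chosen endpoint $q$ with $d_{G^{\Delta}}(p,q) = 2$ whose $\Delta$-gate at $p$ coincides with $w$; Lemma~\ref{lem-magic} ensures that any such normal bg-path genuinely starts with the edge $(p,w)$.

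\emph{Main obstacle.} The only genuinely technical step is the local finiteness of $G^{\Delta}$, which hinges on the cube-dimension bound $\operatorname{cube-dim}(G) \le \max_v \deg_G(v)$. Once this is secured, the proof amounts to little more than matching the combinatorial definition of normality with Świątkowski's local recognition framework; the verification of the prefix-extension clause for length-$1$ paths is the subtlest point and the place where Theorem~\ref{th-nbgp} and Lemma~\ref{lem-magic} do most of the work.
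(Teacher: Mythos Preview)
Your proposal follows essentially the same route as the paper for the main content: the $\Gamma$-equivariance of $G \mapsto G^{\Delta}$ (with the key bound $\operatorname{cube-dim}(G) \le \max_v \deg_G(v)$ giving local finiteness of $G^{\Delta}$) and the tautological identification of normality for paths of length $\ge 2$ with the length-$2$ condition defining $R$. The paper's own proof in fact stops here; it never discusses the prefix-extension clause~(2) in the definition of $\mathcal{P}_R$, only verifying clause~(1).

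You go further by attempting to verify that clause, but your argument for length-$1$ prefixes has a gap. You claim that for any $G^{\Delta}$-edge $(p,w)$ one can find $q$ with $d_{G^{\Delta}}(p,q)=2$ so that $\gamma_{pq}$ begins with $(p,w)$; but such $q$ need not exist. Take $G$ to be the tree on $\ZZ \cup \{n':n\in\ZZ\}$ with edges $\{n,n{+}1\}$ and $\{n,n'\}$, on which $\ZZ$ acts geometrically by shift; here $G^{\Delta}=G$, the vertex $0'$ is a leaf, and the only length-$2$ extension of $(0,0')$ is $(0,0',0)$, which fails normality at $0'$. Thus $(0,0')\in\mathcal{P}\setminus\mathcal{P}_R$. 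Your appeal to Lemma~\ref{lem-magic} is also misplaced: that lemma replaces the \emph{terminal} vertex of a normal bg-path within a Boolean-gated set, and says nothing about forcing a prescribed initial edge.

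This example shows that the proposition, read literally, is slightly too strong; it is however a harmless technicality for the intended application, since one may work directly with the system $\mathcal{P}_R$ (which agrees with the normal bg-paths on all paths of length $\ge 2$) and verify conditions~(2) and~(3) of Theorem~\ref{th:swiat} for that system instead.
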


\begin{proof} Since the degrees of vertices of $G$
	are uniformly bounded, from Lemma \ref{l:gated_hull_dpg2} it follows that for any vertex $u$ of $G$ the number of Boolean-gated sets of the form  $\lgate u,v \rgate$
	is uniformly bounded. It follows that the group $\Gamma$ acts properly discontinuously and cocompactly on $G^{\Delta}$.
	
	By definition,  a
	path $(p = x_0,x_1,\ldots,x_k = q)$ of $G^{\Delta}$ is a normal bg-path if and only if  for any $i=1,2,\ldots,k-1$,
	the subpath $(x_{i-1},x_i,x_{i+1})$ is a normal bg-path from $x_{i-1}$ to $x_{i+1}$.
	This proves that the set of normal bg-paths of $G^{\Delta}$ is
	2-locally recognized.
	%
\end{proof}

\subsection{Normal Boolean-gated paths are fellow travelers}

The following result together with Proposition \ref{2recog_swm} establishes Theorem \ref{th-biautomatic-swm}. Notice that in order to
prove the fellow traveler property, it suffices to establish it for the normal bg-paths $\gamma_{px}$ and $\gamma_{qy}$
for four vertices $p,q,x,y$ such that $d_{G^\Delta}(p,q) \leq 1$, $d_{G^\Delta}(x,y) \leq 1$. Our proof of bicombing is different from that of \cite{NiRe}, in particular
because in general swm-graphs there is no notion of hyperplane.

\begin{proposition} \label{bicombing_swm} Let  $G$ be an swm-graph.  Consider four vertices $p,q,x,y$ and two integers $k'\ge k$
such that $d_{G^\Delta}(p,q) \leq 1$ and $d_{G^\Delta}(x,y) \leq 1$. If $\gamma_{px}=(p=x_0,x_1,
  \ldots,x_{k'-1},x_{k'}=x)$ and $\gamma_{qy}=(q=y_0,y_1, \ldots, y_{k-1},y_k = y)$
  are the normal bg-paths from $p$ to $x$ and from $q$
  to $y$, then $\gamma_{px}$ and $\gamma_{qy}$ are $1$--fellow travelers, namely,
  $d_{G^\Delta}(x_i,y_i)\leq 1$ for any $0 \leq i \leq k$ and $d_{G^\Delta}(x_i,y_k)\le 1$ for any $k\le i\le k'$.
\end{proposition}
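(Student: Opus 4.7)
The proof naturally splits along the two assertions, and I would establish both by reverse induction on $i$, with Lemma~\ref{lem-BP-prop} playing the central technical role. As preliminary observations, since normal bg-paths are shortest paths in $G^{\Delta}$ by Theorem~\ref{th-nbgp}, we have $k = d_{G^{\Delta}}(q,y)$ and $k' = d_{G^{\Delta}}(p,x)$, and the triangle inequality forces $k' - k \in \{0,1,2\}$.

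For the second assertion $d_{G^{\Delta}}(x_i, y_k) \le 1$ on the range $k \le i \le k'$, I would start with the base case $i = k'$, which is exactly the hypothesis $d_{G^{\Delta}}(x,y) \le 1$. For the inductive step from $i+1$ to $i$ (with $i \ge k$), assume $d_{G^{\Delta}}(x_{i+1}, y_k) \le 1$. The estimate $d_{G^{\Delta}}(p, y_k) \le d_{G^{\Delta}}(p,q) + d_{G^{\Delta}}(q, y_k) \le 1 + k \le i+1$ places $y_k$ inside $B_{i+1}(p, G^{\Delta}) \cap B_1(x_{i+1}, G^{\Delta})$, so Lemma~\ref{lem-BP-prop} applied to $x_i$, viewed as the $\Delta$-gate of $p$ at $x_{i+1}$ (with $k_{\rm Lem} = i+1$), yields $d_{G^{\Delta}}(x_i, y_k) \le 1$.

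For the first assertion $d_{G^{\Delta}}(x_i, y_i) \le 1$ on $0 \le i \le k$, the base $i = k$ is the preceding claim at $i = k$. For the inductive step, assuming $d_{G^{\Delta}}(x_{i+1}, y_{i+1}) \le 1$, I would apply Lemma~\ref{lem-BP-prop} twice. First, provided $d_{G^{\Delta}}(p, y_{i+1}) \le i+1$, invoking the lemma on $x_i$ with test vertex $y_{i+1}$ gives $d_{G^{\Delta}}(x_i, y_{i+1}) \le 1$; a second application, now to $y_i = $ $\Delta$-gate of $q$ at $y_{i+1}$, with test vertex $x_i$ (using $d_{G^{\Delta}}(q, x_i) \le d(q,p) + d(p, x_i) \le 1 + i$ and the just-established $d_{G^{\Delta}}(x_i, y_{i+1}) \le 1$), yields $d_{G^{\Delta}}(x_i, y_i) \le 1$. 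A symmetric argument routing through $x_{i+1}$ (first showing $d_{G^{\Delta}}(y_i, x_{i+1}) \le 1$) handles the case $d_{G^{\Delta}}(q, x_{i+1}) \le i+1$.

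The main obstacle is the residual case where simultaneously $d_{G^{\Delta}}(p, y_{i+1}) = d_{G^{\Delta}}(q, x_{i+1}) = i+2$. Equality in the triangle inequality then forces $q \in I_{G^{\Delta}}(p, y_{i+1})$ and $p \in I_{G^{\Delta}}(q, x_{i+1})$, producing a ``parallelogram'' arrangement of $\{p,q,x_{i+1},y_{i+1}\}$ in $G^{\Delta}$ with two opposite sides of length $1$, two of length $i+1$, and both diagonals of length $i+2$. To resolve this case I would combine the geodesic extension property (Proposition~\ref{prop:gep2}), which forces the $\Delta$-gates of $p$ and $y_{i+1}$ at $q$ (respectively of $p$ and $y_{i+1}$ at $x_{i+1}$) to be distinct and $G^{\Delta}$-non-adjacent, with the Helly property of $G^{\Delta}$ and the Boolean-gated structure (Lemmas~\ref{lem:pairwiseBoolean} and~\ref{lem:Boolean_intersection}); the aim is either to exhibit a common Boolean-gated set housing both $x_i$ and $y_i$ (whence $x_i \sim_{G^{\Delta}} y_i$) or to derive a contradiction with the absence of an isometric $K_{3,3}^-$ in the swm-graph~$G$, by identifying six vertices among $\{p,q,x_i,y_i,x_{i+1},y_{i+1}\}$ (or auxiliary vertices produced by the quadrangle condition applied to the parallelogram) that would form such a forbidden subgraph in $G$.
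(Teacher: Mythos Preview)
Your reverse induction on $i$ via Lemma~\ref{lem-BP-prop} handles exactly the same easy cases that the paper handles, and your identification of the residual ``parallelogram'' configuration (both $d_{G^\Delta}(p,y_{i+1})=d_{G^\Delta}(q,x_{i+1})=i+2$) is correct. The gap is your treatment of that residual case: the plan to invoke Proposition~\ref{prop:gep2}, Helly, and a hypothetical $K_{3,3}^-$ contradiction is speculative and does not lead anywhere concrete. In particular, the six vertices $p,q,x_i,y_i,x_{i+1},y_{i+1}$ live in $G^\Delta$, not isometrically in $G$, so there is no reason the $K_{3,3}^-$ obstruction in $G$ should bite; and there is no Boolean-gated set that manifestly contains both $x_i$ and $y_i$ in this configuration.

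The paper's resolution of the hard case is different and uses an ingredient you did not mention: Lemma~\ref{lem-magic}. The paper runs a forward induction on $k'$ (not reverse induction on $i$), and in the hard case $k=k'$ with $d_{G^\Delta}(p,y)=d_{G^\Delta}(q,x)=k+1$ it sets $z$ to be the $\Delta$-gate of $p$ at $y$. Then $z\in\lgate y_{k-1},y\rgate$ and $d_{G^\Delta}(q,z)=k$, so Lemma~\ref{lem-magic} guarantees that $(q=y_0,y_1,\ldots,y_{k-1},z)$ is precisely $\gamma_{qz}$. Since also $d_{G^\Delta}(p,z)=k$, the quadruple $p,q,x,z$ falls into the previously handled case, yielding $d_{G^\Delta}(x_i,y_i)\le 1$ for $i\le k-1$, which is all that remains. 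Your reverse-induction framework does not easily accommodate this trick, because Lemma~\ref{lem-magic} only lets you modify the terminal vertex of a normal bg-path, not an interior one; the paper's outer induction on $k'$ is set up exactly so that the swap $y\leadsto z$ happens at the endpoint.
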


\begin{proof}
  We prove the result by induction on $k'$.  If $ k' \leq 1$, there is
  nothing to prove. Assume now that $k' \geq 2$ and that the lemma
  holds for any vertices $p,q,x,y$ with $d_{G^\Delta}(p,q) \leq 1$,
  $d_{G^\Delta}(x,y) \leq 1$, and $d_{G^\Delta}(q,y) \leq
  d_{G^\Delta}(p,x) \leq k'-1$.

  Suppose first that $k < k'$, i.e., $k+1 \leq k' \leq k+2$. Since
  $d_{G^\Delta}(y,p)\le d_{G^\Delta}(x,p),$ by Lemma \ref{lem-BP-prop}
  we deduce that $d_{G^\Delta}(x_{k'-1},y) \leq 1$. By induction
  hypothesis applied to $p, q, x_{k'-1}$, and $y$, we have that
  $d_{G^\Delta}(x_i,y_i)\leq 1$ for any $0 \leq i \leq k$ and
  $d_{G^\Delta}(x_i,y_k)\le 1$ for any $k\le i\le k'-1$. Since by our
  assumptions, we have $d_{G^\Delta}(x_{k'},y_k)\le 1$, we are done.

  Assume now that $k = k'$. First, suppose that $d_{G^\Delta}(y,p)
  \leq k$.  Since $d_{G^\Delta}(y,p) \leq d_{G^\Delta}(x,p)$, by
  Lemma~\ref{lem-BP-prop}, we have $x_{k-1} \in
  B_1(y,G^\Delta)$. Since $d_{G^\Delta}(x_{k-1},q) \leq
  d_{G^\Delta}(x_{k-1},p) + 1 = k = d_{G^\Delta}(y,q)$, by
  Lemma~\ref{lem-BP-prop}, we have $y_{k-1} \in
  B_1(x_{k-1},G^\Delta)$. By induction hypothesis applied to
  $p,q,x_{k-1}$, and $y_{k-1}$, we conclude that the lemma
  holds. Similarly, the lemma holds when $d_{G^\Delta}(x,q) \leq k$.

  Suppose now that $d_{G^\Delta}(x,q) = d_{G^\Delta}(y,p) = k+1$. Let
  $z$ be the $\Delta$-gate of $p$ at $y$.  Since
  $d_{G^\Delta}(p,y_{k-1})= d_{G^\Delta}(p,x) = d_{G^\Delta}(p,z) =
  k$, we have that $z \in I(x,y)$ and $z \in
  I(y_{k-1},y)$. Consequently, we have $z\in B_1(x,G^\Delta) \cap
  B_1(y_{k-1},G^\Delta)$ and $z \in \lgate y_{k-1}, y\rgate$.  Since
  $d_{G^\Delta}(x,q) = k+1$ and $d_{G^\Delta}(y_{k-1},q) = k-1$,
  necessarily $d_{G^\Delta}(z,q) = k$.  Therefore, by
  Lemma~\ref{lem-magic}, $(q=y_0,y_1, \ldots, y_{k-1},y_k' = z)$ is
  the normal bg-path $\gamma_{qz}$. Since
  $d_{G^\Delta}(q,z)=d_{G^\Delta}(p,z)=k$, we can apply the previous
  case to $p, q, x, z$ and consequently conclude that $d_{G^\Delta}(x_i,y_i) \leq 1$
  for any $0 \leq i \leq k-1$.
\end{proof}

\section[Application II: $2$--approximation to 0-ext. problem on swm-graphs]{Application II: $2$--approximation to 0-extension problem on swm-graphs}\label{subsec:0ext}

The original motivation of studying orientable modular graphs comes
from the tractability classification of a combinatorial optimization
problem called the minimum 0-extension problem.  All graphs in this
section are assumed to be finite.  For a (finite) graph $G =
(V,E)$, the {\em minimum 0-extension problem} on $G$, denoted by {\bf
  0-EXT}$[G]$, is formulated as follows:
\begin{eqnarray*}
\mbox{INPUT:} && n \geq 0, b_{iv} \geq 0\ (1 \leq i \leq n, v \in V), c_{ij} \geq 0\
(1 \leq i < j \leq n) \\[0.2cm]
\mbox{Minimize} && \sum_{v \in V} \sum_{1 \leq i \leq n}
b_{iv}d_{G}(v, x_i) + \sum_{1 \leq i < j \leq n} c_{ij}d_{G}(x_i, x_j) \\
\mbox{subject to} && x = (x_1,x_2,\ldots,x_n) \in V \times V \times \cdots \times V.
\end{eqnarray*}
The present formulation is  equivalent to
the original formulation due to Karzanov~\cite{Kar98a}.
This problem can be interpreted as follows:
we are going to locate $n$ new facilities $1,2,\ldots,n$
to some vertices $x_1,x_2,\ldots, x_n$ of the graph $G$,
where the new facilities can communicate to each other as well as to
each other vertex $v$ of $G$, with the communication cost
proportional to the distances between the respective pairs.
The goal is to find a location $(x_1,x_2,\ldots,x_n)$
with minimum total communication cost.
This is a version of the classical multifacility location problem.
Its recent applications include image segmentation in computer vision and
 clustering related tasks in machine learning; see~\cite{KT02}.

The computational complexity of {\bf 0-EXT}$[G]$ depends on the input graph $G$.
For example,  {\bf 0-EXT}$[K_2]$ is the minimum cut problem,
which can be solved in polynomial time, while {\bf 0-EXT}$[K_n]$ is the multiway cut problem,
which is NP-hard for $n\ge 3$~\cite{DJPSY94}.
Concerning the polynomial time solvability,
the close relations to median and modular graphs were explored by~\cites{Ch_facility, Kar98a, Kar04}.
Finally, the following ultimate dichotomy was established:
\begin{Thm}[\cite{Kar98a}]
If $G$ is not an orientable modular graph, then  {\bf 0-EXT}$[G]$ is NP-hard.
\end{Thm}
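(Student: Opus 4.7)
The plan is to combine a retract-based reduction with a structural dichotomy for non-orientable modular graphs.

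First, I would prove the following retract-reduction lemma: if $H$ is a retract of $G$, then \textbf{0-EXT}$[H]$ polynomially reduces to \textbf{0-EXT}$[G]$. Given an instance on $H$, lift it to $G$ by extending the weights $b_{iv}$ to be zero on $V(G)\setminus V(H)$ and by leaving the $c_{ij}$ untouched. Because a retract is isometric and admits a non-expansive retraction $\rho\colon V(G)\to V(H)$, composing any optimal solution $x^{\ast}$ on $G$ with $\rho$ yields a feasible solution on $H$ whose cost is no larger; the reduction is therefore polynomial.

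Second, I would invoke the classical result \cite{DJPSY94} that \textbf{0-EXT}$[K_n]$ is NP-hard for every $n\ge 3$ (this is the multiway-cut problem), together with ad hoc gadget reductions from MAX-CUT, to obtain a short finite list $\mathcal{O}$ of \emph{minimal obstructions}: graphs that are \emph{not} orientable modular and for which the $0$-extension problem has already been shown NP-hard. The list $\mathcal{O}$ must contain representatives of both failures of modularity --- e.g.\ $K_4^-$, $K_{2,3}$, odd cycles, and certain isometric cycles of even length ruled out by Lemma~\ref{lem-modular} --- and failures of orientability, namely certain ``twisted'' configurations of squares sharing opposite edges incompatibly, the smallest among them being $K_{3,3}$ and short M\"obius strips of squares.

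Third, and this is the main structural step, I would show that every non-orientable-modular graph $G$ retracts onto some graph in $\mathcal{O}$. When $G$ is not modular, there is a triplet $u,v,w$ whose quasi-median is an equilateral metric triangle of positive size (Lemma~\ref{lem-weakly-modular}), and such a triangle, together with a distance-minimizing ``closest point'' map, supports a retraction of $G$ onto one of the non-modular entries of $\mathcal{O}$. When $G$ is modular but not orientable, the parallelism relation on edges (two edges are parallel iff they are opposite edges of a common square, transitively closed) admits a class along which the two possible edge directions cannot be chosen consistently (see Figure~\ref{fig:adm_orientation}); a minimal such inconsistent cycle of squares carries, via a parallel-class projection, a retraction of $G$ onto a minimal non-orientable modular graph in $\mathcal{O}$.

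The main obstacle is this final structural step: verifying that the proposed projections really are retractions, i.e.\ that they restrict to the identity on the target and are non-expansive. The argument rests on the convexity of parallelism classes in modular graphs and on the quadrangle condition itself, which allows one to ``slide'' shortest paths across squares while preserving length, thereby certifying that non-expansiveness is not destroyed by the projection. Once this is in place, the theorem follows from the retract-reduction lemma and the NP-hardness of each graph in $\mathcal{O}$.
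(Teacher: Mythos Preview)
The paper does not prove this theorem; it is quoted from \cite{Kar98a} without proof, so there is no argument here to compare your proposal against. I can only comment on the proposal itself.

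Your retract-reduction lemma is correct and standard, and the overall strategy of reducing to a collection of hard base cases is in the spirit of Karzanov's original argument. However, the proposal has a genuine gap at step~3, and the difficulty is not merely technical. The claim that every non-orientable-modular graph retracts onto a member of a \emph{finite} list $\mathcal{O}$ is false as stated. Consider the odd cycles $C_{2k+1}$: they are not modular (Lemma~\ref{lem-modular}), yet $C_{2k+1}$ admits no proper retract whatsoever, since any retraction onto an induced proper subgraph would be a homomorphism from $C_{2k+1}$ to a path and hence to $K_2$, contradicting non-bipartiteness. Thus each $C_{2k+1}$ would have to lie in $\mathcal{O}$, and the list cannot be finite. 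The same phenomenon recurs in the other cases: long isometric even cycles in bipartite non-modular graphs, and large ``M\"obius'' configurations in modular non-orientable graphs, need not retract onto small witnesses. Karzanov's actual proof does not proceed via retracts onto a finite obstruction set; it constructs, for each structural failure, a direct polynomial reduction (ultimately from multiterminal cut or MAX-CUT) that exploits the specific offending substructure in $G$, rather than factoring through a retraction. Separately, note that $K_{2,3}$ is orientable modular (orient all edges away from one vertex of the $2$-side and toward the other), so it does not belong in your list.
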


\begin{Thm}[\cite{HH12}]\label{thm:HH12}
If $G$ is an orientable modular graph, then  {\bf 0-EXT}$[G]$ is solvable in polynomial time.
\end{Thm}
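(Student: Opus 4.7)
The plan is to follow Hirai's approach \cite{HH12}: associate with $G$ a metric polyhedral complex whose geometry encodes the continuous relaxation of {\bf 0-EXT}$[G]$, show that the problem reduces to convex optimization on this complex, and exploit the admissible orientation of $G$ to prove that an integer optimum can be extracted in polynomial time. Throughout I would fix an admissible orientation $o$ of $G$ and work with the graded modular (semi)lattice $\mathcal{P}(G,o)$ from Theorem~\ref{thm:BVV1}, whose intervals $[p,q]$ are complemented modular lattices.

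First I would set up the metric LP relaxation. The objective of {\bf 0-EXT}$[G]$ is a positive linear combination of terms $d_G(x_i,v)$ and $d_G(x_i,x_j)$. I would relax $x_i\in V$ to $x_i$ ranging over the orthoscheme complex $K(G)$ of $\mathcal{B}(G)$ (equivalently, the order complex of $\mathcal{P}(G,o)$ suitably metrized), with $d_G$ replaced by its canonical extension $d_K$ to $K(G)$. The graph $G$ embeds isometrically into $K(G)$ (a discrete analogue of the isometric embedding $G\hookrightarrow G^*$ of Theorem~\ref{thm:G*_is_om}), so this is a bona fide relaxation. The key structural input is that $K(G)$ admits local affine coordinates coming from the complemented modular lattice structure of each interval $[p,q]$ of $\mathcal{P}(G,o)$, in which $d_K$ is piecewise linear and the objective becomes separately convex in each $x_i$.

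Next I would prove half-integrality and rounding. Using the admissibility of $o$ and Lemma~\ref{lem:filter_is_convex}, the relaxation can be reformulated as a minimum cost problem on the barycentric graph $G^*$, whose vertices are the Boolean-gated sets of $G$ and whose intervals are complemented modular lattices. On each complemented modular sublattice the relaxation reduces, via Theorem~\ref{thm:Birkhoff}, to a minimum-cut problem on the associated projective geometry; by the theory of submodular function minimization on distributive/modular lattices, the optimum is attained at a vertex, and optima can be glued across Boolean-gated pieces using the gated amalgam structure from Proposition~\ref{prod-retracts}. One then shows that an optimal solution of the relaxation on $K(G)$ can be projected, piece by piece along each Boolean-gated cell, to a vertex of $G$ without increasing the objective; this is where orientability is essential, as the admissible orientation dictates a consistent direction of rounding.

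The main obstacle is precisely this rounding/integrality step: one must prove that the convex extension of $d_G$ to $K(G)$ behaves like a submodular function on $\mathcal{P}(G,o)$, so that its continuous minimum coincides with its minimum over vertices of $G$. This is where the forbidden isometric subgraphs ($K_{3,3}$ and similar obstructions to orientability) become relevant: without an admissible orientation one cannot align the local modular-lattice structures into a global combinatorial optimization, and in fact Karzanov's NP-hardness theorem shows the rounding must fail there. Once half-integrality and the rounding lemma are established, the polynomial-time algorithm is obtained by combining ellipsoid or combinatorial submodular minimization on each interval with a polynomial enumeration of the relevant Boolean-gated cells, each of which can be accessed efficiently because $G$ is finite and the poset $\mathcal{B}(G)$ has polynomial size.
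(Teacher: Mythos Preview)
The paper does not prove this theorem at all: it is stated purely as a citation from \cite{HH12} and then used as a black box in the $2$--approximation algorithm of Subsection~\ref{subsec:0ext}. So there is no ``paper's own proof'' to compare against beyond the one-line attribution to Hirai.

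Your outline is in the right spirit --- the introduction of the paper does say that Hirai reformulated {\bf 0-EXT}$[G]$ as optimization over a simplicial complex glued from orthoscheme complexes of complemented modular lattices --- but what you have written is a plan, not a proof, and the plan has a real gap. The crucial step in \cite{HH12} is the development of a discrete convexity theory (an analogue of $L^\natural$--convexity) adapted to orientable modular graphs: one must show that the objective function extends to a function on the complex that is ``discretely convex'' in a precise sense, and that this convexity guarantees a descent-type algorithm terminates at an integer optimum. You gesture at this by invoking ``submodular function minimization on distributive/modular lattices,'' but the objective of {\bf 0-EXT}$[G]$ is not a submodular set function in any standard sense, and complemented modular lattices are not distributive, so classical submodular minimization does not apply directly. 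The actual machinery --- defining the right notion of discrete convexity, proving it holds for the relaxed objective, and designing an algorithm that exploits it --- is substantial and is the content of \cite{HH12}; your sketch identifies where the difficulty lies but does not resolve it. In particular, the sentence ``an optimal solution of the relaxation on $K(G)$ can be projected, piece by piece along each Boolean-gated cell, to a vertex of $G$ without increasing the objective'' is exactly the theorem, not a step toward it.
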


Concerning the approximability of {\bf 0-EXT}$[G]$,
recent studies~\cites{KKMR09, MNRS08} show
that it seems difficult to approximate the 0-extension problem
within a constant factor.
So, it is interesting to
find classes of graphs $G$ for which {\bf 0-EXT}$[G]$
admits a constant-factor approximation algorithm.
By an {\em  $\alpha$--approximation algorithm} we mean a polynomial time algorithm
to find a solution of {\bf 0-EXT}$[G]$ having the cost
at most $\alpha$ times the optimal cost; see \cite{Vazirani}.
For example, if $G$ is a complete graph,
then {\bf 0-EXT}$[G]$ is a multiway cut problem, and
there is a $1.5$--approximation algorithm; see \cite{Vazirani}*{Section 4}.

The main result in this section (which can be viewed as a direct
consequence of previous  results of this chapter) is that
the 0-extension problem on swm-graphs admits a $2$--approximation:
\begin{Thm}\label{thm:2approx}
For an swm-graph $G$,
there exists a $2$--approximation algorithm for {\bf 0-EXT}$[G]$.
\end{Thm}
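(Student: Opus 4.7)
The plan is to reduce the problem to the 0-extension problem on the barycentric graph $G^*$ of $G$. By Theorem~\ref{thm:G*_is_om}, $G^*$ is an orientable modular graph (with edge-lengths $1/2$) that isometrically contains $G$ via the map $p \mapsto \{p\}$. Since Theorem~\ref{thm:HH12} provides a polynomial-time exact algorithm for {\bf 0-EXT} on any orientable modular graph, this makes $G^*$ an attractive host on which to solve the problem.

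Given an instance $(b_{iv}, c_{ij})$ of {\bf 0-EXT}$[G]$, I would first lift it to an instance on $G^*$: keep the weights $b_{iv}$ for $v \in V(G) \subseteq V(G^*)$ and the weights $c_{ij}$ unchanged. Since $d_{G^*}|_{V(G)} = d_G$, any feasible solution on $G$ is feasible on $G^*$ with the same cost, hence $\mathrm{opt}(G^*) \le \mathrm{opt}(G)$. I would then invoke Theorem~\ref{thm:HH12} to compute in polynomial time an optimal placement $(x_1^*, \ldots, x_n^*) \in V(G^*)^n$ of cost $\mathrm{opt}(G^*)$, where each $x_i^*$ corresponds to a Boolean-gated set $X_i \in \mathcal{B}(G)$. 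Finally, I would round by choosing any $\hat{x}_i \in X_i$, producing a feasible solution of {\bf 0-EXT}$[G]$.

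The cost analysis rests on two key distance inequalities. For any vertex $v$ of $G$, by Proposition~\ref{prop:ideal} the rank/grade function in $\mathcal{B}(G)$ coincides (up to sign) with the diameter $r(X)$, so every edge of $G^*$ changes the grade by $\pm 1$; it follows that $d_{G^*}(\{v\}, X_i) \ge r(X_i)/2$ while $d_{G^*}(\{y\}, X_i) = r(X_i)/2$ for every $y \in X_i$. Hence, by the triangle inequality in $G^*$,
\[
d_G(v, \hat{x}_i) \;=\; d_{G^*}(\{v\}, \{\hat{x}_i\}) \;\le\; d_{G^*}(\{v\}, X_i) + \tfrac{r(X_i)}{2} \;\le\; 2 \, d_{G^*}(\{v\}, X_i),
\]
which handles the $b_{iv}$--terms. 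The pairwise term requires bounding $d_G(\hat{x}_i, \hat{x}_j) \le 2\, d_{G^*}(X_i, X_j)$. The same triangle argument gives the weaker bound $d_G(\hat{x}_i, \hat{x}_j) \le r(X_i)/2 + d_{G^*}(X_i, X_j) + r(X_j)/2$, and combining these with the grading-based lower bounds on $d_{G^*}$ along every shortest $(X_i,X_j)$--path in $G^*$ yields the desired factor $2$.

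The main obstacle is the second inequality: unlike the $b$--term case, the bound $r(X_i)/2 + r(X_j)/2 \le d_{G^*}(X_i,X_j)$ can fail for pairs $X_i, X_j$ lying on a common chain in $\mathcal{B}(G)$. I would circumvent this by choosing the representatives $\hat{x}_i$ consistently: whenever $X_j \subseteq X_i$ (or vice versa), take $\hat{x}_j$ to be the gate of $\hat{x}_i$ in $X_j$ (which exists since Boolean-gated sets are gated in $G$), so that the short side of the triangle collapses and the naive bound becomes sharp enough. For general pairs $X_i, X_j$, any shortest $(X_i, X_j)$--path in $G^*$ passes through their meet and join in the lattice-theoretic structure of $\mathcal{B}(G)$ exposed in Proposition~\ref{prop:ideal}, and tracing a $G$--path through the representatives along this lattice path gives $d_G(\hat{x}_i, \hat{x}_j) \le 2\, d_{G^*}(X_i, X_j)$. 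Summing over all demands then yields the cost of the rounded solution as at most $2 \cdot \mathrm{opt}(G^*) \le 2 \cdot \mathrm{opt}(G)$, completing the $2$--approximation.
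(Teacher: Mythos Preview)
Your high-level strategy matches the paper's: lift to $G^*$, solve \textbf{0-EXT}$[G^*]$ exactly using Theorem~\ref{thm:HH12}, then round back to $G$. Your analysis of the $b$-terms via the grade inequality $d_{G^*}(\{v\},X_i)\ge r(X_i)/2$ is correct. The difficulty is entirely in the rounding and the $c$-terms, and there your proposal has a genuine gap.

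Choosing ``any $\hat x_i\in X_i$'' does not work, as you note (take $X_i=X_j$ of positive diameter). Your fix---choose representatives ``consistently'' by taking gates whenever $X_j\subseteq X_i$---is not a well-defined global rule: if $X_1\subseteq X_2$ and $X_1\subseteq X_3$ with $X_2,X_3$ incomparable, the gates of $\hat x_2$ and of $\hat x_3$ in $X_1$ need not agree, so there is no single $\hat x_1$ compatible with both. Your further claim that every shortest $(X_i,X_j)$--path in $G^*$ passes through ``their meet and join'' is also unjustified: $\mathcal{B}(G)$ is not a lattice, and even where meets or joins exist, geodesics in the covering graph need not pass through them.

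What the paper does instead is short and decisive: fix one arbitrary vertex $u\in V(G)$ and round \emph{every} $X_i$ to $\phi_u(X_i)$, the gate of $u$ in the gated set $X_i$. This is a single globally defined map $\phi_u:\mathcal{B}(G)\to V(G)$ with $\phi_u(\{v\})=v$, and the whole cost analysis reduces to one lemma: $d_G(\phi_u(X),\phi_u(Y))\le 2\,d_{G^*}(X,Y)$. This is proved edge-by-edge in $G^*$: if $X$ covers $Y$ in $\mathcal{B}(G)$ (so $d_{G^*}(X,Y)=1/2$), then either the gate of $u$ in $Y$ already lies in $X$, or one invokes the polar-space axiom (P3) of Theorem~\ref{thm:Tits} on the dual polar graph induced by $Y$ to produce the unique vertex of $X$ adjacent to $\phi_u(Y)$; in both cases $d_G(\phi_u(X),\phi_u(Y))\le 1$. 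Summing along a shortest path gives the Lipschitz bound, and the $2$-approximation follows immediately. The missing idea in your proposal is precisely this uniform basepoint-gate rounding, which replaces your ad hoc ``consistent choice'' by a map whose $2$-Lipschitz property has a clean one-line proof.
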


The proof idea is to consider {\bf 0-EXT}$[G^*]$ on the barycentric graph $G^*$ of $G$
as a {\em half-integral relaxation} of {\bf 0-EXT}$[G]$.
Our algorithm may be regarded as a far-reaching generalization
of the classical 2-approximation algorithm for the multiway cut problem~\cite{DJPSY94}.

We first verify that the barycentric graph $G^*$
can be constructed in polynomial time.
\begin{Lem}\label{lem:construct_G*}
An swm-graph $G = (V,E)$ has $O(|V|^2)$ Boolean-gated sets
and its barycentric graph $G^*$ can be constructed in polynomial time.
\end{Lem}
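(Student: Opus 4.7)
The plan is to combine the counting bound with an explicit polynomial-time enumeration procedure, both of which rest on the characterization of Boolean-gated sets via Boolean pairs.

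For the counting bound, I would invoke Lemma~\ref{lem:X=<p,q>}: since $G$ is finite, every Boolean-gated set $X \in \mathcal{B}(G)$ has finite diameter and hence is of the form $X = \lgate p,q \rgate$ for some Boolean pair $(p,q)$ of vertices of $G$. The map $(p,q) \mapsto \lgate p,q \rgate$ sending an unordered Boolean pair to the associated Boolean-gated set is surjective onto $\mathcal{B}(G)$, so $|\mathcal{B}(G)| \leq \binom{|V|}{2} = O(|V|^2)$.

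For the algorithmic construction of $G^*$, I would proceed in four steps. First, precompute the distance matrix of $G$ (e.g.\ by breadth-first search from each vertex). Second, for each pair $(p,q) \in V \times V$, compute the gated hull $\lgate p,q \rgate$ using the GATED-HULL procedure from Subsection~\ref{s:wmgra}, whose correctness is ensured by Lemma~\ref{p:gate3}; this takes polynomial time per pair, since each iteration adds a new vertex and the total number of vertices is $|V|$. Third, decide which pairs are Boolean by exploiting the equivalence (i)$\Leftrightarrow$(v) of Proposition~\ref{prop:char_Boolean}: the pair $(p,q)$ is Boolean if and only if the subgraph of $G$ induced by $\lgate p,q \rgate$ is thick, i.e.\ every $2$--interval of this subgraph contains an induced square. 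Thickness can be tested in polynomial time by enumerating all pairs of vertices at distance $2$ inside $\lgate p,q \rgate$ and checking that each has at least two common neighbors spanning a square. Collect the distinct sets $\lgate p,q \rgate$ over all Boolean pairs; by the counting argument, this yields a list of size $O(|V|^2)$.

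Fourth, to build $G^*$, form the poset $\mathcal{B}(G)$ under reverse inclusion and extract its covering relation: for each ordered pair $X \supsetneq Y$ in this list, declare $XY$ an edge of $G^*$ precisely when no other $Z \in \mathcal{B}(G)$ satisfies $X \supsetneq Z \supsetneq Y$. Since $|\mathcal{B}(G)| = O(|V|^2)$, this last check involves $O(|V|^6)$ triples, which is polynomial. The resulting graph is $G^*$ by definition, and the whole construction runs in polynomial time.

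The one step that requires a moment's care is the thickness test in step three; however, it is elementary once one observes that the GATED-HULL procedure returns an explicit vertex set on which every relevant $2$--interval can be enumerated directly, and Proposition~\ref{prop:char_Boolean} reduces the geometric condition ``$(p,q)$ is Boolean'' to this purely combinatorial local check. No deeper obstacle arises, so both assertions of the lemma follow.
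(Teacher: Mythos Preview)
Your argument is correct. The counting bound via Lemma~\ref{lem:X=<p,q>} is exactly the paper's argument, and your construction of $G^*$ is a valid polynomial-time procedure.

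The one genuine difference lies in how you detect Boolean pairs. You compute $\lgate p,q\rgate$ first (via GATED-HULL) and then test the induced subgraph for thickness, invoking the equivalence (i)$\Leftrightarrow$(v) of Proposition~\ref{prop:char_Boolean}. The paper instead uses the equivalence (i)$\Leftrightarrow$(iii): it builds the modular lattice $I(p,q)$, picks a maximal chain $(p=x_0\prec x_1\prec\cdots\prec x_k=q)$, and for each $i$ checks whether some atom $z_i$ of $I(p,q)$ satisfies $x_{i+1}=x_i\vee z_i$; this succeeds for all $i$ precisely when $q$ is a join of atoms, i.e.\ when $I(p,q)$ is complemented. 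Only after this test succeeds does the paper run GATED-HULL (starting from $I(p,q)$). The paper's route is somewhat cheaper in practice---it avoids computing the full gated hull for non-Boolean pairs and replaces a quadratic thickness scan by a linear chain check---but both are polynomial and both rest on Proposition~\ref{prop:char_Boolean}, so the distinction is one of efficiency rather than substance.

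Two cosmetic points: your bound $\binom{|V|}{2}$ omits the $|V|$ singleton Boolean-gated sets $\{p\}=\lgate p,p\rgate$, though of course this does not affect the $O(|V|^2)$ conclusion; and GATED-HULL requires a connected initial set, so you should start it from a shortest $(p,q)$-path or from $I(p,q)$ rather than from $\{p,q\}$ directly.
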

\begin{proof}
Any Boolean-gated set $X$ of $G$ has the form $\lgate x, y \rgate$
for some vertices $x,y$. Hence the number of Boolean-gated sets
is bounded by $|V|^2$.
Next we show that
there is a polynomial-time algorithm to decide
if a given pair $(x,y)$ is Boolean.
We can suppose that the distance matrix of $G$ has been computed by  Dijkstra's algorithm.
Using this, we can construct the modular lattice $I(x,y)$ in polynomial time.
Take a maximal chain $(x = x_0, x_1,\ldots, x_k = y)$ in this lattice.
For $i=1,2,\ldots,k-1$, check the existence of
a neighbor $z_i$ of $x$
such that $x_{i+1} = x_{i} \vee z_i$.
If $z_i$ exists for all $i$, then $I(x,y)$ is complemented, and the pair $(x,y)$ is Boolean.
Otherwise, the pair $(x,y)$ is not Boolean.

If the pair $(x,y)$ is Boolean, then by applying the procedure GATED-HULL (see Subsection~\ref{s:wmgra}) to $I(x,y)$,
we can obtain $\lgate x, y \rgate$ in polynomial time.
Therefore we can enumerate all Boolean-gated sets of $G$ and
construct the barycentric graph $G^*$ in polynomial time.
\end{proof}

Next we define a rounding map from $G^*$ to $G$.
For a vertex $u$ in $G$,
let $\phi_u: \mathcal{B}(G) \to V$ be the map defined by
\begin{equation}
 \phi_u(X) := \mbox{the gate of $u$ in $X$} \quad (X \in \mathcal{B}(G)).
\end{equation}
Notice that $\phi_u(X)$ is the unique vertex in $X$ nearest to $u$
and can be determined  in polynomial time.
The map $\phi_u$ expands distances within a factor of $2$:
\begin{Lem}\label{lem:round}
$d_{G}(\phi_u(X), \phi_u(Y)) \leq 2 d_{G^{*}}(X,Y)$ for $X,Y \in \mathcal{B}(G)$.
\end{Lem}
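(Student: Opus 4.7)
The plan is to reduce the claim to a single edge of $G^*$ and then to a simple diameter computation in a dual polar graph. By the triangle inequality in $G$ applied along a shortest $(X,Y)$--path $X = X_0, X_1, \ldots, X_k = Y$ in $G^*$ (of metric length $k/2$, since edges of $G^*$ have length $1/2$), it suffices to prove that $d_G(\phi_u(X'),\phi_u(Y')) \leq 1$ for every edge $\{X',Y'\}$ of $G^*$. So I would first pass to an arbitrary such edge.

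By the definition of $G^*$ as the covering graph of $\mathcal{B}(G)$ ordered by reverse inclusion, and by the grading established in Proposition~\ref{prop:ideal}(1), I may assume $X' \subsetneq Y'$ with $r(Y') = r(X') + 1$; set $d := r(Y')$, $a := \phi_u(Y')$, and $b := \phi_u(X')$. The key reduction step is gate transitivity: since $X' \subseteq Y'$ are both gated in $G$, for every $v \in X'$ we have $d_G(u,v) = d_G(u,a) + d_G(a,v)$, so minimizing $d_G(u,v)$ over $v \in X'$ is the same as minimizing $d_G(a,v)$. Thus $b$ is the gate of $a$ in $X'$ and
\[
d_G(a,b) \;=\; d_{Y'}(a, X')
\]
(using that $Y'$ is isometric in $G$). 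It is therefore enough to prove the following geometric claim: \emph{every vertex of $Y'$ lies at distance at most $1$ from $X'$ in $Y'$.}

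To prove the claim, I would use that $Y'$ induces a finite dual polar graph of diameter $d$ (Proposition~\ref{prop:char_Boolean}(vi) together with Lemma~\ref{lem:X=<p,q>}), and that $X' \subsetneq Y'$ is a gated subspace of diameter $d-1$. Fix $y \in Y' \setminus X'$, let $x_0 := \phi_{X'}(y)$, and set $k := d_{Y'}(y, x_0)$. The fact recalled in the proof of Proposition~\ref{prop:ideal}(1) (and due to Cameron) asserts that in a finite dual polar graph of diameter $D$ every vertex admits an antipode at distance $D$; applied inside $X'$, this yields $y_0 \in X'$ with $d_{X'}(x_0, y_0) = d-1$, which equals $d_{Y'}(x_0, y_0)$ by isometry of $X'$ in $Y'$. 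Since $x_0$ is the gate of $y$ in $X'$ and $y_0 \in X'$, the vertex $x_0$ lies on a shortest $(y, y_0)$--path in $Y'$, so
\[
d_{Y'}(y, y_0) \;=\; k + (d-1).
\]
But the diameter of $Y'$ is $d$, forcing $k \leq 1$, which is exactly the claim.

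I do not expect a genuine obstacle here: the statement is essentially a combinatorial lemma whose content is the ``codimension-one neighborhood" property of dual polar graphs, and all needed tools (gatedness, grading of $\mathcal{B}(G)$, the antipodal property from Cameron's work, and the characterization of Boolean-gated sets as dual polar subgraphs) are already established earlier in the paper; the only subtle point is the correct bookkeeping of gate transitivity when translating the distance $d_G(\phi_u(X),\phi_u(Y))$ into a distance inside~$Y'$.
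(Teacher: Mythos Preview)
Your argument is correct. Both you and the paper reduce to a single edge of $G^*$ and assume $X'\subsetneq Y'$ with $r(Y')=r(X')+1$; from there the two proofs diverge.

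The paper works on the polar-space side: setting $x=\phi_u(X')$ and $y=\phi_u(Y')$, it invokes property (P3) of Theorem~\ref{thm:Tits} (applied to the polar space underlying the dual polar graph $Y'$) to produce a vertex $x'\in X'$ adjacent to $y$, and then a one-line gate computation
\[
d(u,y)+1\ \ge\ d(u,x')=d(u,x)+d(x,x')=d(u,y)+d(y,x)+d(x,x')
\]
forces $x=x'$ and $d(x,y)=1$. Your route stays on the dual-polar-graph side: after using gate transitivity to identify $b=\phi_u(X')$ with the gate of $a=\phi_u(Y')$ in $X'$, you prove the ``codimension-one neighborhood'' statement directly, by choosing an antipode of $x_0$ inside $X'$ and bounding against the diameter of $Y'$. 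Both approaches rely only on facts already in the paper (gatedness, the grading of $\mathcal{B}(G)$, and Cameron's result recalled in the proof of Proposition~\ref{prop:ideal}(1)); yours is a purely metric argument that avoids the incidence-geometric citation of (P3), while the paper's is a line shorter once (P3) is granted. The only point to make explicit in your write-up is the trivial case $a\in X'$, where $b=a$.
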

\begin{proof}
It suffices to prove the inequality in the case where $X$ and $Y$
are adjacent in $G^*$, i.e., $d_{G^{*}}(X,Y) = 1/2$.
Indeed, take a shortest path $(X = X_0,X_1,\ldots,X_k = Y)$ in $G^*$.
Then $d_{G}(\phi_u(X),\phi_u(Y)) \leq \sum_{i} d_{G}(\phi_u(X_i),\phi_u(X_{i+1}))
\leq 2 \sum_{i} d_{G^*}(X_i, X_{i+1}) = 2  d_{G^{*}}(X,Y)$.

We may assume that $X$ covers $Y$ in the poset $\mathcal{B}(G)$.
Let $x := \phi_u(X)$ and $y := \phi_u(Y)$.
If $y$ belongs to $X$, then $y$ must be the gate of $u$ in $X$
and $x = y$. Suppose not; then $x \neq y$.
By (P3) in Theorem~\ref{thm:Tits},
there exists (a uniquely determined) $x' \in X$
such that $x'$ and $y$ are adjacent in $G$.
Then $d(u,y) + 1 \geq d(u,x') = d(u,x) + d(x,x') = d(u,y) + d(y,x) + d(x,x')$.
This means that $d(x,x') = 0$ and $d(x,y) = 1$, as required.
\end{proof}
We are now ready to describe the  2-approximation algorithm for {\bf 0-EXT}$[G]$.
Let $\nu$ and $\nu^*$ denote the optimal values of {\bf 0-EXT}$[G]$ and
{\bf 0-EXT}$[G^*]$ (for the same input), respectively.
Then  $\nu^* \leq \nu$
since $G$ is isometrically embeddable in $G^*$.
Let $X = (X_1,X_2,\ldots, X_n)$
be an optimal solution of the relaxation {\bf 0-EXT}$[G^*]$,
which can be obtained in polynomial time by Theorem~\ref{thm:HH12}
and Lemma~\ref{lem:construct_G*}.
Pick an arbitrary vertex $u$ in $G$.
Let $x_i := \phi_u(X_i)$ for $i=1,2,\ldots,n$.
By Lemma~\ref{lem:round} and since $\phi_u(\{v\}) = v$,
we have
\begin{align*}
\sum b_{iv} d_G(v,x_i) +  \sum c_{ij} d_G(x_i,x_j) & \leq  2 \sum b_{iv} d_{G^*}(\{v\},X_i) +  2 \sum c_{ij} d_{G^*}(X_i, X_j)\\
& =  2 \nu^* \leq 2 \nu.
\end{align*}
This means that $x := (x_1,x_2,\ldots,x_n)$ is a $2$--approximate solution of {\bf 0-EXT}$[G]$, thus
proving Theorem~\ref{thm:2approx}.

\subsection*{Relation to the classical 2-approximation algorithm for multiway cuts.}
    It is instructive to compare our 2-approximation algorithm for
    swm-graphs with the classical 2-approximation algorithm for
    multiway cuts. Namely, if $G$ is a complete graph on the vertex
    set $v_1,v_2,\ldots,v_k$, the minimum 0-extension problem on $G$
    is nothing but the multiway cut problem.  Indeed, construct the
    network $N$ on the vertex set $\{x_1,x_2,\ldots,x_n,v_1,v_2,
    \ldots,v_k\}$ by adding an edge between $x_i$ and $x_j$ of
    capacity $c_{ij}$, and an edge between $v_l$ and $v_i$ of capacity
    $b_{il}$.  Then {\bf 0-EXT}$[G]$ is the multiway cut problem on
    $N$ with terminal set $T:= \{v_1,v_2, \ldots,v_k\}$, i.e., the
    problem of finding a partition $\{U_1,U_2,\ldots,U_k\}$ of
    vertices with $v_i \in U_i$ so that the sum of capacities of edges
    joining different parts is minimum.

Consider the barycentric graph $G^*$, which is a star with $k$ leaves $v_1,v_2,\ldots,v_k$.
Let $v_0$ denote the center vertex
(that corresponds to the Boolean-gated set $\{v_1,v_2,\ldots, v_k\}$).
The minimum 0-extension problem on $G^*$ can be easily solved;
see \cite{Kar98a}*{Section 5} for example.
For each $i$, take a $(v_i, T \setminus \{v_i\})$--minimum cut $X_i$
in the above network $N$.
By the standard uncrossing argument, we can assume that
$X_1,X_2,\ldots, X_k$ are disjoint.
Define $x_i := v_l$ if $x_i$ belongs to $X_l$ and $x_i := v_0$ otherwise.
Then the resulting $x$ is an optimal solution of  {\bf 0-EXT}$[G^*]$.
Fix an arbitrary terminal, say $v_1$.
Our algorithm rounds $x_i$ to $v_1$ if $x_i := v_0$.
Then the resulting solution $x$
is a 2-approximation solution of {\bf 0-EXT}$[G]$, and
the corresponding multiway cut $\{X_2, X_3,\ldots, X_k, V(N) \setminus \bigcup_{j=2}^k X_j\}$
is a 2-approximation solution of the multiway cut problem on $N$.
This algorithm is essentially the same as the classical 2-approximation
algorithm for multiway cut~\cite{DJPSY94}; see also~\cite{Vazirani}*{Algorithm 4.3, Section 3}.

\chapter{Orthoscheme Complexes of Modular Lattices and Semilattices}\label{s:ortho}

An $n$--dimensional {\em orthoscheme}
is a simplex $\sigma$ of $\RR^n$ such that
for some ordering $v_0,v_1,\ldots,v_n$ of its vertices,
the vectors $v_{i} - v_{i-1}$ $(i=1,2,\ldots,n)$ form an orthogonal basis of $\RR^n$.
This ordering of vertices is called a {\em regular order}.
If the vectors $v_i- v_{i-1}$ form an orthonormal basis, then the orthoscheme
$\sigma$ is said to be {\em standard}.
If $\| v_{i} - v_{i-1}\|_2$ is a constant  $s > 0$, then
$\sigma$ is said to have {\em uniform size $s$}.
A standard orthoscheme is congruent to the simplex on vertices
\[
0, e_1, e_1+e_2, e_1 + e_2 + e_3, \ldots, e_1+e_2 + \cdots + e_n,
\]
where $e_i$ is the $i$--th unit vector in $\RR^n$.
Figure~\ref{fig:ortho} illustrates a $3$--dimensional standard orthoscheme.
\begin{figure}[t]
\begin{center}
\includegraphics[scale=0.5]{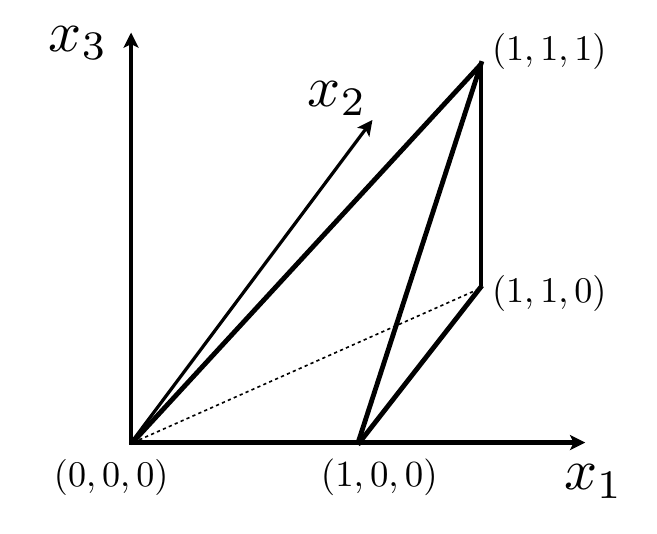}
\end{center}
\caption{A $3$--dimensional standard orthoscheme}
\label{fig:ortho}
\end{figure}
An {\em orthoscheme complex} is a Euclidean simplicial complex
obtained by gluing orthoschemes along common isometric faces.
This notion is due to Brady and McCammond~\cite{BradyMcCammond}.
Moreover, as done in \cite{BradyMcCammond},
the order complex of any graded poset becomes an orthoscheme
complex in a natural way; see below. In this chapter, we investigate the
orthoscheme complexes of modular lattices and semilattices.

Let us now formally define the orthoscheme complex $K({\mathcal P})$
of a graded poset $\mathcal P$. Let $r$ be the grade function of ${\mathcal P}$.
Consider the order complex $\Delta({\mathcal P})$
of  $\mathcal{P}$ (see Subsection~\ref{s:balat}). With each abstract simplex
$\{p_0,p_1,\ldots,p_k\}$ of  $\Delta({\mathcal P})$ (i.e., a chain of $\mathcal P$),
associate a (geometric) simplex $\sigma$,
which is the set of all formal convex combinations $\sum_{i=0}^k \lambda_i p_i$
on $\{p_0,p_1,\ldots,p_k\}$.
The resulting cell complex is denoted by $K(\mathcal{P})$.
The metric on $K(\mathcal{P})$ is defined as follows.
Let $\sigma$ be the simplex corresponding to the
chain $(p_0 \prec p_1 \prec \cdots \prec p_k)$.
Then $\sigma$ is mapped to the
$r[p_0, p_k]$--dimensional orthoscheme of uniform size $s > 0$ via the
map $\varphi_{\sigma}$ defined by
\begin{equation}
\varphi_{\sigma} (x) :=
s \sum_{i=1}^{k} \lambda_i (e_1 + e_2 + \cdots + e_{r[p_0, p_i]})
\quad \mbox{ for }  x = \sum_{i=0}^k \lambda_i p_i \in \sigma.
\end{equation}
Recall that for
an interval $[x,y]$ of $\mathcal P$, $r[x,y]=r(y)-r(x)$ is the length of  a maximal chain
from $x$ to $y$.
Using this map, the $l_p$--metric $D_{p}$ on $\sigma$ is defined by
\begin{equation}
D_p(x,y) := \|  \varphi_{\sigma}(x) - \varphi_{\sigma}(y) \|_{p} \quad \mbox{ for } x,y \in \sigma,
\end{equation}
where $\| \cdot \|_p$ is the $l_p$--metric of $\RR^n$.
This metric is actually well-defined;
for a face $\sigma'$ of $\sigma$
corresponding to a subchain $p_{i_0} \prec p_{i_1} \prec \cdots \prec p_{i_l}$,
if $x = \sum_{i=0}^k \lambda_i p_i$ belongs to $\sigma'$, then it holds
\[
\varphi_{\sigma}(x) = s(e_1 + e_2 + \cdots + e_{r[p_0,p_{i_0}]}) +
s \sum_{j=1}^{l} \lambda_{i_j} (e_{r[p_0,p_{i_0}] +1} + \cdots + e_{r[p_0, p_{i_j}]}).
\]
From this, we see that
$\|  \varphi_{\sigma}(x) - \varphi_{\sigma}(y) \|_{p} =  \|  \varphi_{\sigma'}(x) - \varphi_{\sigma'}(y) \|_{p}$ for $x,y \in \sigma'$.
Thus $D_{p}$ can be extended to the length $D_p$--metric on $K({\mathcal P})$.
Namely, $D_p(x,y)$ is the infimum of the lengths  of all finite paths of $K({\mathcal P})$
connecting $x$ and $y$.
The resulting metric simplicial complex $K({\mathcal P})$
is called the {\em orthoscheme complex} of ${\mathcal P}$
with respect to the $l_p$--metrization $D_p$.
Figure~\ref{fig:folder} illustrates the orthoscheme complex
of a modular lattice with rank $2$, which is isometric to a {\em folder}.
\begin{figure}[ht]
\begin{center}
\includegraphics[scale=0.4]{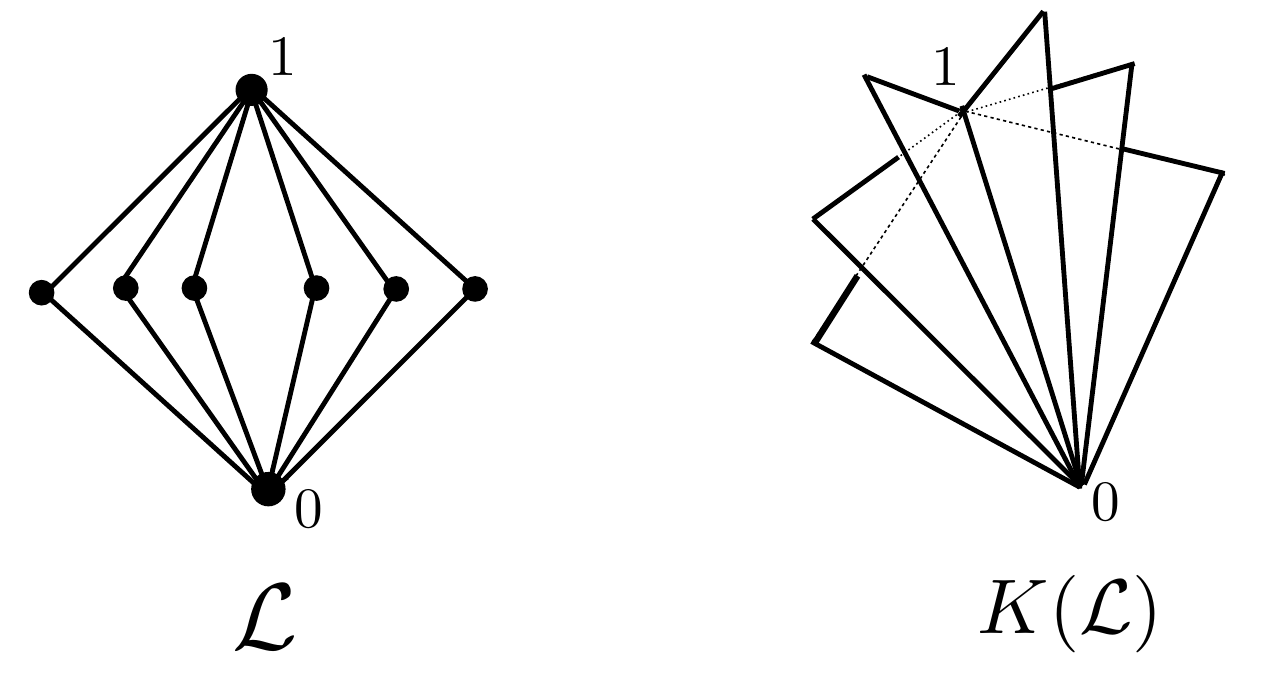}
\end{center}
\caption{The orthoscheme complex of a modular lattice with rank $2$}
\label{fig:folder}
\end{figure}
Throughout this chapter, the size $s$ is supposed to be $1$; namely each orthoscheme
is supposed to be standard.

\section{Main results}\label{subsec:lattice_main}
Brady and McCammond  \cite{BradyMcCammond}*{Conjecture 6.10} conjectured that the orthoscheme complex
of a modular lattice is CAT$(0)$.
Recently, Haettel, Kielak, and Schwer~\cite{HKS}
proved this conjecture for complemented modular lattices,
based on the fact that the diagonal link has the structure of a spherical building,
which is known to be CAT$(1)$; see \cite{BrHa}*{II.10, Appendix}:
\begin{Thm}[\cite{HKS}]\label{thm:HKS}
Let ${\mathcal L}$ be a complemented modular lattice of  finite rank.
Then $(K({\mathcal L}), D_2)$ is {\rm CAT}$(0)$.
\end{Thm}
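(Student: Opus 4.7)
The plan is to apply the piecewise-Euclidean Cartan--Hadamard theorem: if $K(\mathcal{L})$ is simply connected and locally CAT$(0)$, then it is CAT$(0)$. Simple connectivity is immediate since $0 \in \mathcal{L}$ is a combinatorial cone point of $K(\mathcal{L})$, making the complex contractible. By the Gromov link condition, local CAT$(0)$ reduces to showing that $\mathrm{Lk}(p, K(\mathcal{L}))$ is CAT$(1)$ at every vertex $p \in \mathcal{L}$, since links at interior points of positive-dimensional simplices are spherical suspensions of vertex links.

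Next I would decompose the vertex link. Every chain through $p$ splits uniquely as a chain in the ideal $[0,p]$ followed by a chain in the filter $[p,1]$, and in any orthoscheme carrying such a chain the down-directions at $p$ span a subspace orthogonal to the one spanned by the up-directions (they occupy disjoint blocks of the orthoscheme's orthonormal frame). This identification is metric, not just combinatorial, and produces a spherical join decomposition
\[
\mathrm{Lk}(p, K(\mathcal{L})) \;\cong\; \mathrm{Lk}(p, K([0,p])) \,\ast\, \mathrm{Lk}(p, K([p,1])).
\]
Since a spherical join of CAT$(1)$ spaces is CAT$(1)$ (Berestovskii), it suffices to show each factor is CAT$(1)$. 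In a complemented modular lattice of finite rank every interval is again complemented modular (Subsection~\ref{s:balat}), so $[0,p]$ and $[p,1]$ are complemented modular lattices of strictly smaller rank. The first factor is the link of the top of $K([0,p])$ and the second is the link of the bottom of $K([p,1])$; order-reversal (which preserves complementation) interchanges these cases, so induction on rank reduces the whole question to one base case: for a complemented modular lattice of rank $n$, prove that $L := \mathrm{Lk}(0, K(\mathcal{L}))$ is CAT$(1)$.

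For the base case I would identify $L$ with a spherical cone on a standard spherical building. Combinatorially, $L$ is the order complex of $\mathcal{L} \setminus \{0\}$; splitting chains according to whether they reach $1$ realizes $L$ as the spherical join of the single direction toward $1$ with the reduced order complex $\Delta(\mathcal{L} \setminus \{0,1\})$. By Theorem~\ref{thm:Birkhoff}, the latter is the flag complex of the generalized projective space associated with $\mathcal{L}$, hence is a spherical building of type $A_{n-1}$. The main technical obstacle, which I expect to absorb most of the work, is a metric comparison: one must verify that the piecewise spherical metric that $L$ inherits from the orthoscheme geometry coincides with the Tits angular metric on this building. The key computation is that in the standard $n$-orthoscheme with vertices $v_i = e_1+\cdots+e_i$, the cosine of the angle at $0$ between the rays to $v_i$ and $v_j$ (for $i \le j$) equals $\sqrt{i/j}$, and these values are exactly the dihedral-angle data defining a Weyl chamber of type $A_{n-1}$ (with the extra cone direction corresponding to the join with the point representing $1$). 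Once this identification is in place, the classical theorem that spherical buildings are CAT$(1)$ in their Tits metric (see \cite[Ch.~II.10A]{BrHa}) completes the proof.
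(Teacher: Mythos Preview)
Your overall framework—Cartan--Hadamard plus the link condition, with the vertex link at $p$ splitting as a spherical join of the downward and upward links—is correct and is indeed the route taken in \cite{HKS}. The paper, by contrast, does not argue via links at all: it observes (after Lemma~\ref{lem:nonincrease1}) that Theorem~\ref{thm:HKS} follows from Lemmata~\ref{lem:cube}, \ref{lem:Booleansublattice}, and \ref{lem:nonincrease1} by the standard apartment-and-retraction argument for Euclidean buildings. That is, Boolean sublattices play the role of apartments, each isometric to a cube $[0,1]^n$, any two points lie in a common apartment, and the canonical retraction is nonexpansive; CAT$(0)$ then follows directly from the comparison inequality in one apartment, exactly as in the proof of Theorem~\ref{thm:modularlatticeCAT(0)}. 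This approach bypasses any analysis of links and is what the paper actually develops.

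Your base case, however, contains a genuine error. The link of $0$ in $K(\mathcal{L})$ is \emph{not} the spherical join of the direction toward $1$ with the reduced order complex carrying the $A_{n-1}$ building metric. In a single orthoscheme the link of $0$ is the spherical simplex $\{x_1\ge x_2\ge\cdots\ge x_n\ge 0\}\cap S^{n-1}$, which is a Coxeter simplex of type $B_n$: the dihedral angle between the walls $x_{n-1}=x_n$ and $x_n=0$ is $\pi/4$, not $\pi/2$. Equivalently, the direction $u_n$ toward $1$ is at distance $\arccos\sqrt{i/n}<\pi/2$ from each $u_i$, so no spherical-join decomposition splitting off $u_n$ exists. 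Moreover, the face $\{u_1,\dots,u_{n-1}\}$ opposite $u_n$ is a $B_{n-1}$ simplex (e.g.\ for $n=3$ it has edge length $\pi/4$), not an $A_{n-1}$ Weyl chamber (edge length $\pi/3$). The $A_{n-1}$ building structure that \cite{HKS} exploits sits in the link of the \emph{edge} $\{0,1\}$ (the ``diagonal link'' the paper refers to), obtained by projecting orthogonally to the diagonal direction $(1,\dots,1)$; this is genuinely different from the vertex link, and passing from ``diagonal link is CAT$(1)$'' to ``vertex links are CAT$(1)$'' is precisely the nontrivial content of the HKS argument that your sketch does not supply.
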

The main result of this chapter
is to prove Brady-McCammond's conjecture
in the  general case:
\begin{Thm}\label{thm:modularlatticeCAT(0)}
Let ${\mathcal M}$ be a modular lattice of finite rank.
Then $(K({\mathcal M}), D_2)$ is {\rm CAT}$(0)$.
\end{Thm}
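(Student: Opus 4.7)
\medskip

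\noindent\textbf{Proof proposal.} The plan is to induct on the rank $n$ of $\mathcal{M}$, using Bridson's link criterion \cite[II.5.24]{BrHa}: a piecewise Euclidean simplicial complex with finitely many shapes is CAT$(0)$ if and only if it is simply connected and the link of every vertex is CAT$(1)$. The base case $n\le 1$ is trivial. The complex $K(\mathcal{M})$ is contractible because the order complex of any lattice is contractible (coning every chain to the minimum $0\in\mathcal{M}$ gives a deformation retraction onto a point). Hence the whole content of the theorem is to verify the link condition at every vertex of $K(\mathcal{M})$.

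Fix $p\in\mathcal{M}$. In each orthoscheme corresponding to a chain
$p_0\prec\cdots\prec p_i=p\prec p_{i+1}\prec\cdots\prec p_k$
of $\mathcal{M}$, the tangent directions at $p$ coming from $p_0,\dots,p_{i-1}$ and those coming from $p_{i+1},\dots,p_k$ span complementary orthogonal subspaces (this is exactly the orthogonality built into a regular ordering of a standard orthoscheme). Assembling these local splittings over all chains through $p$ yields the isometric spherical-join decomposition
\[
\mathrm{Lk}\bigl(p,K(\mathcal{M})\bigr)\;\cong\;\mathrm{Lk}^{\uparrow}\bigl(p,K([0,p])\bigr)\,\ast\,\mathrm{Lk}^{\downarrow}\bigl(p,K([p,1])\bigr),
\]
where the first factor is the top link of $K([0,p])$ and the second is the bottom link of $K([p,1])$. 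When $0\prec p\prec 1$, both $[0,p]$ and $[p,1]$ are modular lattices of rank strictly smaller than $n$, so by the inductive hypothesis $K([0,p])$ and $K([p,1])$ are CAT$(0)$. Applying Bridson's link criterion in the reverse direction to these two CAT$(0)$ complexes shows that both join factors are CAT$(1)$, and the spherical join of two CAT$(1)$ spaces is CAT$(1)$ by the standard join theorem \cite[II.10.24]{BrHa}.

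This reduces the theorem to the two extremal cases $p=0$ and $p=1$. Lattice duality exchanges these (the dual of a modular lattice is modular, and $K$ is compatible with duality), so it is enough to prove that the top link $\mathrm{Lk}(1,K(\mathcal{M}))$ is CAT$(1)$. This top link is the spherical orthoscheme complex of the graded poset $\mathcal{M}\setminus\{1\}$. In the complemented case, Theorem~\ref{thm:HKS} identifies this spherical complex with a spherical building of type given by the coatom geometry of $\mathcal{M}$, and such buildings are CAT$(1)$; the general modular case has no building at its disposal, and this is where the main difficulty lies.

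My plan for the remaining step is to reduce to the complemented case by means of a controlled enlargement. I would construct a complemented modular lattice $\widehat{\mathcal{M}}$ of the same rank $n$ containing $\mathcal{M}$ as a rank- and $\{0,1\}$-preserving sublattice, obtained by adjoining, for each element $x\in\mathcal{M}$ that fails to be a join of atoms, a relative complementing chain below $x$ (a Dilworth-type construction using the modular law to verify that modularity is preserved). The inclusion $\mathcal{M}\hookrightarrow\widehat{\mathcal{M}}$ induces a simplicial inclusion of order complexes, hence an isometric inclusion of top links on each simplex. The key point to check is that $\mathrm{Lk}(1,K(\mathcal{M}))$ sits inside $\mathrm{Lk}(1,K(\widehat{\mathcal{M}}))$ as a $\pi$--convex subcomplex, i.e.\ every locally geodesic loop of length $<2\pi$ in the subcomplex is still locally geodesic in the ambient complex. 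Granting this convexity, the CAT$(1)$ property of $\mathrm{Lk}(1,K(\widehat{\mathcal{M}}))$ given by Theorem~\ref{thm:HKS} descends to $\mathrm{Lk}(1,K(\mathcal{M}))$ by a standard short-loop argument, completing the induction. The main obstacle I anticipate is verifying this $\pi$--convexity: it amounts to showing that any chain in $\widehat{\mathcal{M}}$ running through an adjoined complement can be straightened, using the modular law, into a chain contained in $\mathcal{M}$ without increasing spherical length, and this combinatorial straightening is the heart of the argument.
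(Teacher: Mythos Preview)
Your inductive setup via the link criterion is sound, and the spherical-join decomposition at interior vertices $0\prec p\prec 1$ is correct. The fatal gap is in your final step: the complemented modular lattice $\widehat{\mathcal{M}}$ you want to construct need not exist. There are modular lattices that \emph{cannot} be embedded as sublattices of any complemented modular lattice; see \cite[Corollary~443]{Gratzer}, which the paper itself cites in the remark immediately following its proof. No ``Dilworth-type'' adjunction of complements can repair this, because the obstruction is intrinsic to the lattice, not an artifact of a particular construction. So the reduction of the extremal links to Theorem~\ref{thm:HKS} breaks down precisely where you anticipated difficulty, but for a harder reason than the $\pi$--convexity check you flagged.

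The paper takes a completely different route, bypassing the link criterion entirely. It verifies the CAT$(0)$ inequality directly, in the spirit of the apartment-and-retraction argument for Euclidean buildings. The role of apartments is played by \emph{distributive} sublattices: by the Dedekind--Birkhoff theorem (Theorem~\ref{thm:Dedekind-Birkoff}), any two chains of $\mathcal{M}$ generate a distributive sublattice $\mathcal{D}$, and the paper shows (Proposition~\ref{thm:poly}) that $K(\mathcal{D})$ is isometric to a convex polytope in $\RR^n$ (the order polytope). The role of canonical retractions is played by an explicit map $\rho_{\mathcal{D},C}$ (Lemma~\ref{lem:nonincrease}) which is nonexpansive globally and isometric from any point of $K(C)$. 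With these ingredients, given $x,y\in K(\mathcal{M})$ one finds a distributive $\mathcal{D}$ containing both, takes the geodesic inside the convex polytope $K(\mathcal{D})$, and for the comparison point $z$ retracts to reduce the CAT$(0)$ inequality to the Euclidean identity in $\RR^n$. The key conceptual shift is that distributive sublattices, not complemented ones, are the right ``flat pieces'' of a general modular lattice.
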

We further study the orthoscheme complexes of modular semilattices.
We suggest the following conjecture, generalizing Theorem~\ref{thm:modularlatticeCAT(0)}:
\begin{Conj}\label{conj:modular_CAT(0)}
Let $\mathcal{M}$ be a modular semilattice of  finite rank.
Then $(K(\mathcal{M}), D_2)$ is {\rm CAT}$(0)$.
\end{Conj}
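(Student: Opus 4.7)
The plan is to apply Gromov's link condition together with the Cartan--Hadamard theorem for piecewise Euclidean simplicial complexes with finitely many shapes (Bridson--Haefliger, II.5.4 and II.5.24): the finite rank hypothesis bounds the dimension of $K(\mathcal{M})$ and only finitely many standard orthoschemes appear as cells, so it suffices to show that $K(\mathcal{M})$ is simply connected and that every vertex link is CAT$(1)$ in the induced spherical metric. Simple connectedness, in fact contractibility, is immediate: since $\mathcal{M}$ has a minimum element $0$, every chain of $\mathcal{M}$ either begins at $0$ or may be extended by prepending $0$, so a piecewise-linear straight-line retraction of $K(\mathcal{M})$ onto the vertex $0$ is well defined.

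For the link at a vertex $p\in\mathcal{M}$, the orthogonality built into the orthoscheme realization makes the ``down'' directions in $(p)^{\downarrow}$ orthogonal to the ``up'' directions in $(p)^{\uparrow}$, yielding a spherical join decomposition
\[
\mathrm{lk}(p,K(\mathcal{M})) \;=\; L_{\downarrow}(p)\,*\, L_{\uparrow}(p),
\]
where $L_{\downarrow}(p)$ is the link of $p$ in $K((p)^{\downarrow})$ and $L_{\uparrow}(p)$ is the link of $p$ in $K((p)^{\uparrow})$. Since the spherical join of CAT$(1)$ spaces is CAT$(1)$, I would verify the two factors separately. The lower factor $L_{\downarrow}(p)$ is CAT$(1)$ by Theorem~\ref{thm:modularlatticeCAT(0)}: by the definition of a modular semilattice, $(p)^{\downarrow}$ is a modular lattice, so $K((p)^{\downarrow})$ is CAT$(0)$ and hence each of its vertex links is CAT$(1)$. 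The filter $(p)^{\uparrow}$ is itself a modular semilattice of rank $r(\mathcal{M})-r(p)$ with minimum $p$ (closure under meets, modularity of intervals $[p,q]\subseteq[0,q]$, and the triple-join axiom all transfer from $\mathcal{M}$), so inducting on rank reduces the upper link at any $p\neq 0$ to the case already handled.

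The whole conjecture is thereby reduced to the single question: is $L_{\uparrow}(0)$ CAT$(1)$? My approach would be to exploit the cover
\[
L_{\uparrow}(0)\;=\;\bigcup_{m\in\mathcal{M}}L_{0}\bigl(K((m)^{\downarrow})\bigr),
\]
in which each piece is the link at the minimum of the orthoscheme complex of a modular lattice and hence CAT$(1)$ by Theorem~\ref{thm:modularlatticeCAT(0)}. Because $\mathcal{M}$ is a meet-semilattice, pairwise intersections equal $L_{0}(K((m\wedge m')^{\downarrow}))$, which are CAT$(1)$ pieces of the same form; the triple-join axiom of modular semilattices is precisely what is needed to make triple intersections behave compatibly. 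A preparatory lemma, in the spirit of the gated-set theory of Section~\ref{sec:swm}, would assert that $K((m)^{\downarrow})$ sits as a convex subcomplex of $K(\mathcal{M})$ with respect to $D_2$; this ``gated ideal'' property should follow from $(p)^{\downarrow}\cap(q)^{\downarrow}=(p\wedge q)^{\downarrow}$ combined with the orthoscheme geometry.

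The main obstacle will be exactly this spherical gluing at $0$. Unlike Reshetnyak's CAT$(0)$ gluing theorem, there is no off-the-shelf statement for assembling arbitrarily many CAT$(1)$ pieces of diameter possibly reaching $\pi$, so one is essentially forced to verify Gromov's $2\pi$-cycle condition in $L_{\uparrow}(0)$ by hand: every locally geodesic loop of length less than $2\pi$ must be confined to some principal-ideal piece $L_{0}(K((m)^{\downarrow}))$, where it bounds a disc by the CAT$(1)$ property of that piece. I expect this to reduce to a combinatorial statement about fillings of short cycles in the Hasse diagram of $\mathcal{M}\setminus\{0\}$ by principal ideals, in which the triple-join axiom of modular semilattices is the decisive ingredient distinguishing modular semilattices from arbitrary meet-semilattices whose principal ideals happen to be modular lattices.
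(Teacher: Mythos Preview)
This statement is a \emph{conjecture} in the paper, not a theorem; the paper does not prove it, and only establishes the special cases listed in Proposition~\ref{prop:specialcases} (median semilattices via gated amalgams, rank~$2$ by a direct girth count, and subspace posets of polar spaces via the building structure). So there is no ``paper's own proof'' to compare against.

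Your reduction is sound as far as it goes: the finite-shapes argument, contractibility via the cone at $0$, the spherical join decomposition of the link at $p$ into a down-factor and an up-factor, the treatment of the down-factor via Theorem~\ref{thm:modularlatticeCAT(0)}, and the observation that $(p)^{\uparrow}$ is again a modular semilattice of strictly smaller rank when $p\neq 0$ are all correct. This reduction is in fact exactly the mechanism behind the paper's Proposition~\ref{prop:localsemilattice} (stated there for swm-graphs), and your inductive scheme correctly isolates the single remaining case: the link $L_{\uparrow}(0)$ of the minimum must be shown to be CAT$(1)$.

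The problem is that this remaining case is the entire content of the conjecture, and you do not resolve it. Your proposed attack---covering $L_{\uparrow}(0)$ by the CAT$(1)$ pieces $L_0(K((m)^{\downarrow}))$ and invoking the triple-join axiom to control intersections---runs into precisely the difficulty you name: there is no Reshetnyak-type gluing theorem for CAT$(1)$ spaces along pieces whose diameters may reach $\pi$, and verifying the $2\pi$ systole condition by hand is exactly the open problem. The sentences ``I expect this to reduce to\ldots'' and ``should follow from\ldots'' are where a proof would have to begin, not end. In short, you have given a correct and standard reformulation of the conjecture, not a proof of it.
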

We present several special cases for which this conjecture is true.
A semilattice is called {\em median} if
every principal ideal is a distributive lattice and
every pairwise bounded triple has the join.
It is known that median semilattices are in one-to-one correspondence with pointed median graphs;
see \cite{vdV}*{6.26}.
Namely, for a median graph $G$ and a vertex $b$,
the poset $(V(G), \preceq_b)$ with respect to the base point order $\preceq_b$
is a median semilattice.
Conversely, all median semilattices are obtained in this way.
Also recall that the subspace poset
of a polar space is a canonical example of
a modular semilattice (Lemma~\ref{lem:polar=>modular_semilattice}).

\begin{Prop}\label{prop:specialcases}\label{PROP:SPECIALCASES}
Conjecture~\ref{conj:modular_CAT(0)} is true for the following classes of modular semilattices:
\begin{itemize}
\item[(i)] median semilattices of finite rank.
\item[(ii)] modular semilattices of rank $2$.
\item[(iii)] the subspace posets  of polar spaces.
\end{itemize}
\end{Prop}
To prove (i), we will use an inductive construction of median semilattices
from smaller median semilattices. In general,
if a graded poset $\mathcal{P}$ is the direct product of two graded posets $\mathcal{Q}, \mathcal{Q}'$,
then $(K(\mathcal{P}), D_2)$ is isometric to the product $(K(\mathcal{Q}) \times K(\mathcal{Q}'), D_2)$,
where the metric $D_2$ on $K(\mathcal{Q}) \times K(\mathcal{Q}')$ is
given by $D_2((p,p'),(q,q')) := (D_2(p,q)^2 + D_2(p',q')^2)^{1/2}$; see~\cite{BradyMcCammond}*{Remark 5.3}.
Therefore, by \cite{BrHa}*{1.15}, if both $(K(\mathcal{Q}), D_2)$ and $(K(\mathcal{Q}'), D_2)$ are CAT$(0)$,
then so is $(K(\mathcal{P}), D_2)$.

We will show that a similar property holds for gated amalgams, which can be viewed as
an analogue of Reshetnyak's gluing theorem~\cite{BrHa}*{Theorem 11.1}.
A subsemilattice $\mathcal{M'}$ of a modular semilattice $\mathcal{M}$ is called {\em gated}
if (1) for $p,q,r \in \mathcal{M}$ with $p \preceq r \preceq q$,
$p,q \in \mathcal{M}'$ implies $r \in \mathcal{M'}$, and
(2) for $p,q \in \mathcal{M}$ with $p \vee q \in \mathcal{M}$
$p,q \in \mathcal{M}'$ implies $p \vee q \in \mathcal{M'}$.
From Lemma~\ref{lem-weakly-modular_gated_hull}, this condition rephrases
that the covering graph of $\mathcal{M'}$
is a gated subgraph of the covering graph of $\mathcal{M}$.
By Theorem~\ref{thm:BVV1},  $\mathcal{M'}$ is a modular semilattice.
So the class of modular semilattices is closed under gated amalgams.

\begin{Prop}\label{prop:gluing_semilattices}\label{PROP:GLUING_SEMILATTICES}
Let $\mathcal{L}$, $ \mathcal{M}$, and $\mathcal{M'}$ be modular semilattices
such that $\mathcal{L}$ is a gated amalgam of $\mathcal{M}$ and $\mathcal{M'}$.
If $(K(\mathcal{M}), D_2)$ and $(K(\mathcal{M'}), D_2)$ are {\rm CAT$(0)$},
then $(K(\mathcal{L}), D_2)$ is {\rm CAT}$(0)$.
\end{Prop}

Actually, this result will be derived by a combination of Reshetnyak's gluing theorem
and the following useful fact:

\begin{Prop}\label{prop:gatedsemilattice=>iso}\label{PROP:GATEDSEMILATTICE=>ISO}
Let ${\mathcal{L}}$ be a modular semilattice and $\mathcal{M}$ be a gated semilattice of ${\mathcal{L}}$.
Then for any $p > 0$, $(K(\mathcal{M}), D_p)$ is an isometric subspace of $(K(\mathcal{L}), D_p)$.
\end{Prop}
The remainder of this chapter is organized as follows.
In Section~\ref{subsec:ortho_Boolean}
we summarize basic properties of orthoscheme complexes of Boolean and complemented modular lattices,
which will be also used in Chapter~\ref{sec:complex}.
In Section~\ref{subsec:ortho_distributive}.
we generalize these properties to orthoscheme complexes of distributive and modular lattices.
Finally, in Sections~\ref{subsec:ortho_proof1} and \ref{subsec:ortho_proof2},
we prove Theorem~\ref{thm:modularlatticeCAT(0)} and
Propositions~\ref{prop:specialcases}, \ref{prop:gluing_semilattices}, and \ref{prop:gatedsemilattice=>iso}.

\section{Boolean and complemented modular lattices}\label{subsec:ortho_Boolean}
We continue with the basic properties of orthoscheme complexes of Boolean
and complemented modular lattices. Most of them are well-known and can be found in \cites{BradyMcCammond, HKS, Ti, BuildingBook}.

Let ${\mathcal L}$ be a Boolean lattice of rank $n$. Then ${\mathcal L}$
has exactly $n$ atoms $a_1,a_2,\ldots,a_n$,
and every its element is uniquely represented as the join of atoms. Thus
${\mathcal L}$ is isomorphic to the power set
of $\{a_1,a_2,\ldots,a_n\}$ ordered by inclusion.
In particular, $K({\mathcal L})$ consists of points
$\sum_{k=0}^n \lambda_i (a_{i_1} \vee a_{i_2} \vee \cdots \vee a_{i_k})$
for some permutation  $(i_1,i_2,\ldots,i_n)$ of $\{1,2,\ldots,n\}$
and nonnegative coefficients $\lambda_0,\lambda_1,\ldots, \lambda_n$ with
$\sum_{k=0}^n \lambda_i = 1$, where
the term for $k= 0$ is defined to be $\lambda_0 \cdot 0$.

\begin{Lem}[{see \cite{BradyMcCammond}*{Example 5.2}}]\label{lem:cube}
Let ${\mathcal L}$ be a Boolean lattice of rank $n$
and let $a_1,a_2,\ldots,a_n$ be the set of its atoms.
The orthoscheme complex $K({\mathcal L})$
is isometric to the $n$--dimensional unit cube $[0,1]^n$ of $\RR^n$,
where the isometry is given by
\begin{equation}
K({\mathcal L}) \ni \sum_{k=0}^n \lambda_i (a_{i_1} \vee a_{i_2} \vee \cdots \vee a_{i_k})
 \mapsto \sum_{k=1}^n \lambda_i (e_{i_1} + e_{i_2} + \cdots + e_{i_k}) \in [0,1]^n.
\end{equation}
\end{Lem}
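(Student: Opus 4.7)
The plan is to identify $\Phi$ with the classical Coxeter triangulation of the unit cube $[0,1]^n$ into $n!$ standard orthoschemes, one for each permutation $(i_1,\ldots,i_n)$ of $\{1,\ldots,n\}$. A maximal chain of $\mathcal L$ is uniquely of the form $0 \prec a_{i_1} \prec a_{i_1}\vee a_{i_2} \prec \cdots \prec a_{i_1}\vee \cdots \vee a_{i_n}=1$, so maximal simplices $\sigma_\pi$ of $K(\mathcal L)$ correspond bijectively to permutations $\pi=(i_1,\ldots,i_n)$. First I would check that on each such $\sigma_\pi$ the formula in the statement sends the ordered vertices to $0, e_{i_1}, e_{i_1}+e_{i_2}, \ldots, e_{i_1}+\cdots+e_{i_n}$; these form a regular vertex-ordering of the standard $n$-orthoscheme
$$ \Delta_\pi:=\{x\in[0,1]^n : x_{i_1}\ge x_{i_2}\ge \cdots \ge x_{i_n}\}. $$
Hence $\Phi|_{\sigma_\pi}$ coincides with the defining chart $\varphi_{\sigma_\pi}$ of $K(\mathcal L)$ and is by construction an affine $D_2$-isometry onto $\Delta_\pi$.

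Next I would verify that these simplexwise isometries glue consistently so that $\Phi$ is well defined. Two maximal simplices $\sigma_\pi$ and $\sigma_{\pi'}$ meet in the geometric realization of the intersection of the associated maximal chains; each missing atom in this common subchain forces an equality of two consecutive coordinates in $\Delta_\pi \cap \Delta_{\pi'}$. A direct comparison shows that both $\varphi_{\sigma_\pi}$ and $\varphi_{\sigma_{\pi'}}$ restrict to the same affine map on this common face, so $\Phi$ is a well-defined simplicial map. Since the orthoschemes $\{\Delta_\pi\}_\pi$ form the Coxeter triangulation of $[0,1]^n$, tiling it face-to-face with union equal to the whole cube, $\Phi$ is a simplexwise-affine bijection from $K(\mathcal L)$ onto $[0,1]^n$.

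Finally I would promote this to an isometry of length metrics. Any rectifiable path in $K(\mathcal L)$ crosses finitely many closed simplices, and on each of them $\Phi$ is a Euclidean isometry, so its image in $[0,1]^n$ has the same Euclidean length; this yields $\|\Phi(x)-\Phi(y)\|_2 \le D_2(x,y)$. Conversely, since $[0,1]^n$ is convex, the straight segment from $\Phi(x)$ to $\Phi(y)$ lies in $[0,1]^n$, meets the orthoschemes $\Delta_\pi$ in a finite concatenation of Euclidean segments, and pulls back via $\Phi^{-1}$ to a path in $K(\mathcal L)$ of length $\|\Phi(x)-\Phi(y)\|_2$; this gives the reverse inequality.

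The only real obstacle is the bookkeeping in the gluing step: one must observe that the face poset of the Coxeter triangulation of $[0,1]^n$ coincides with the order complex of the Boolean lattice $\mathcal L$, so that subchains of maximal chains of $\mathcal L$ match up correctly with faces of the orthoschemes $\Delta_\pi$. Once this combinatorial identification is made, the isometry statement reduces to the fact that each $\varphi_{\sigma_\pi}$ is by definition an affine isometry and that $[0,1]^n$ is a convex Euclidean subset tiled face-to-face by the $\Delta_\pi$.
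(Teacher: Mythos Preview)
Your argument is correct. One small slip: $\Phi|_{\sigma_\pi}$ does not literally coincide with the defining chart $\varphi_{\sigma_\pi}$, since by definition $\varphi_{\sigma_\pi}$ sends the $k$th vertex to $e_1+\cdots+e_k$ (the \emph{standard} orthoscheme), whereas $\Phi$ sends it to $e_{i_1}+\cdots+e_{i_k}$. The two differ by the coordinate permutation $\tau_\pi: e_j\mapsto e_{i_j}$, i.e.\ $\Phi|_{\sigma_\pi}=\tau_\pi\circ\varphi_{\sigma_\pi}$. Since $\tau_\pi$ is an $l_p$--isometry of $\RR^n$, your conclusion that $\Phi|_{\sigma_\pi}$ is an isometry onto $\Delta_\pi$ is unaffected.

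As for comparison: the paper does not prove this lemma, citing it to Brady--McCammond. However, the paper does prove the generalization to distributive lattices (Proposition~\ref{thm:poly}), and the method there is exactly yours: show that $\varrho$ is a bijection onto the order polytope $P(\mathcal D)$ (which specializes to $[0,1]^n$ in the Boolean case), then check that on each maximal simplex $\varrho=\tau\circ\varphi_\sigma$ for a coordinate permutation $\tau$, and finally invoke convexity of the image to pass from simplexwise isometries to a global isometry of length metrics. Your treatment of the length-metric step is somewhat more explicit than the paper's one-line appeal to convexity, but the strategy is identical.
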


Next we consider the case when
$\mathcal{L}$ is a complemented modular lattice of rank $n$.
A {\em base} of $\mathcal{L}$ is
a set of $n$ atoms $a_1,a_2,\ldots,a_n$ with
$a_1 \vee a_2 \vee \cdots \vee a_n = 1$.
Let $\langle a_1,a_2,\ldots,a_n \rangle$ denote
the sublattice generated by $a_1,a_2,\ldots,a_n$,
which is isomorphic to a Boolean lattice of rank $n$.
It is well-known that
the reduced order complex of $\mathcal{L}$
forms a spherical building of type A,
where apartments
are the subcomplexes of Boolean sublattices generated
by bases; see \cites{Ti, BuildingBook}.
In particular, the following property of bases is nothing but
the axiom (B1) of buildings (see Section~\ref{subsec:building}):

\begin{Lem}\label{lem:Booleansublattice}
For two chains $C,C'$ of $\mathcal L$, there is a base $\{a_1,a_2,\ldots,a_n\}$
such that  the Boolean sublattice $\langle a_1,a_2,\ldots,a_n \rangle$
contains $C$ and $C'$.
\end{Lem}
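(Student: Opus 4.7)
The lemma is equivalent to the building axiom (B1) for the spherical building of type $A_{n-1}$ whose reduced order complex is $\mathcal L$: chambers correspond to maximal chains of $\mathcal L\setminus\{0,1\}$ and apartments correspond to Boolean sublattices generated by bases (see e.g.\ \cite{Ti,BuildingBook}). The plan is to give a self-contained inductive argument in lattice language, using only relative complementarity and the modular equality.

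First I reduce to the case where $C$ and $C'$ are maximal chains. Between any two consecutive elements $x\prec y$ of a given chain, the interval $[x,y]$ is itself a complemented modular lattice of rank $r[x,y]$ (since finite-rank complemented modular lattices are relatively complemented, as recalled in Subsection~\ref{s:balat}) and hence admits a maximal chain; concatenating these subdivisions refines $C$ and $C'$ to maximal chains of length $n$. Since any base adapted to the refined pair is also adapted to the original pair, I may assume $C:0=x_0\prec x_1\prec\cdots\prec x_n=1$ and $C':0=y_0\prec y_1\prec\cdots\prec y_n=1$.

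Next I would induct on $n$, distinguishing cases by whether $x_1=y_1$. If $x_1=y_1$, applying the inductive hypothesis inside the complemented modular lattice $[x_1,1]$ of rank $n-1$ to $(x_1\prec\cdots\prec x_n)$ and $(y_1\prec\cdots\prec y_n)$ produces atoms $b_2,\ldots,b_n$ of $[x_1,1]$ forming a base there whose Boolean sublattice contains both chains. For each $i\ge 2$, choose a complement $a_i$ of $x_1$ inside the rank-$2$ complemented interval $[0,b_i]$; this $a_i$ is an atom of $\mathcal L$ with $x_1\vee a_i=b_i$. Setting $a_1:=x_1$, the set $\{a_1,\ldots,a_n\}$ is a base of $\mathcal L$, and every element of $C$ or $C'$, being of the form $x_1\vee(b_{i_1}\vee\cdots\vee b_{i_k})$ in $[x_1,1]$, equals $a_1\vee a_{i_1}\vee\cdots\vee a_{i_k}$ in $\mathcal L$. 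When $x_1\ne y_1$, the plan is to perform an ``apartment swap'': replace $C$ by a maximal chain $\widetilde C$ beginning $0\prec y_1\prec\cdots$ so that any base adapted to the pair $(\widetilde C,C')$ generates a Boolean sublattice containing $C$ as well, reducing to the previous case.

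The apartment swap is the main obstacle. I plan to construct $\widetilde x_i$ inductively inside the rank-$(i+1)$ interval $[0,x_i\vee y_1]$ as a rank-$i$ element covering $\widetilde x_{i-1}$ such that, via the modular perspectivity $[\widetilde x_{i-1},\widetilde x_{i-1}\vee x_i]\cong[\widetilde x_{i-1}\wedge x_i,x_i]$, the atoms needed to express $x_i$ as a join in the eventual Boolean sublattice coincide with those expressing $\widetilde x_i$. The required verifications are all rank computations via $r(u)+r(v)=r(u\wedge v)+r(u\vee v)$, together with repeated use of complementarity to lift complements between perspectively isomorphic intervals. This is a standard induction parallel to the buildings-theoretic proof that any two chambers lie in a common apartment, and no genuinely new phenomenon is expected beyond the rank-$2$ case, which is immediate from the existence of complements for atoms.
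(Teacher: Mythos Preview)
The paper does not prove this lemma at all: it simply observes that the reduced order complex of a complemented modular lattice is a spherical building of type $A_{n-1}$ and that the statement is exactly axiom (B1), citing \cite{Ti,BuildingBook}. Your attempt at a self-contained lattice-theoretic proof is therefore going beyond what the paper does. The reduction to maximal chains and the case $x_1=y_1$ are correct.

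There is, however, a genuine gap in the case $x_1\ne y_1$. Your plan asserts that one can choose $\widetilde C$ (with $\widetilde x_1=y_1$) so that \emph{any} base adapted to $(\widetilde C,C')$ generates a Boolean sublattice containing $C$. This is false. Take $\mathcal L$ the subspace lattice of a projective plane, $C:0\prec p\prec\ell\prec 1$ and $C':0\prec q\prec m\prec 1$ with $p\ne q$. For any maximal chain $\widetilde C:0\prec q\prec\ell'\prec 1$, a base adapted to $(\widetilde C,C')$ has the form $\{q,a_2,a_3\}$ with $a_2\in\ell'\setminus\{q\}$ and $a_3\in m\setminus\{q\}$ arbitrary; for most such choices $p\notin\{q,a_2,a_3\}$, so $C$ is not in the Boolean sublattice. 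Thus you cannot treat the $x_1=y_1$ case as a black box. The perspectivity sketch you give afterwards does not resolve this, since ``the atoms expressing $x_i$ coincide with those expressing $\widetilde x_i$'' would force $x_i=\widetilde x_i$, contradicting $\widetilde x_1=y_1\ne x_1$.

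The fix is to open up the lifting step and choose complements carefully. Put $z_j:=x_j\vee y_1$; if $k$ is least with $y_1\preceq x_k$ then $\widetilde C:0\prec y_1\prec z_1\prec\cdots\prec z_{k-1}=x_k\prec x_{k+1}\prec\cdots\prec 1$ is maximal. Apply the inductive hypothesis in $[y_1,1]$ to $(\widetilde C\cap[y_1,1],\,C'\cap[y_1,1])$, obtaining atoms $b_2,\dots,b_n$ of $[y_1,1]$, relabelled so that $z_j=b_2\vee\cdots\vee b_{j+1}$. Now set $a_1:=y_1$ and, crucially, $a_{j+1}:=b_{j+1}\wedge x_j$ for $1\le j\le k-1$ (a rank computation shows this is an atom with $a_{j+1}\vee y_1=b_{j+1}$ and $x_{j-1}\vee a_{j+1}=x_j$); for $j\ge k$ choose $a_{j+1}$ to be any complement of $y_1$ in $[0,b_{j+1}]$. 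Then $\{a_1,\dots,a_n\}$ is a base whose Boolean sublattice contains $C'$ and $\widetilde C$ (hence $x_k,\dots,x_n$) as in your $x_1=y_1$ argument, and also $x_j=a_2\vee\cdots\vee a_{j+1}$ for $j<k$. The point is that the freedom in choosing complements, which is harmless when both chains pass through $y_1$, must be spent on tracking $C$ when only one of them does.
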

For a Boolean sublattice $\mathcal{F}$ generated by a base $\{a_1,a_2,\ldots,a_n\}$  and
a maximal chain
$C = (0 = p_0 \prec p_1 \prec p_2 \prec \cdots \prec p_n = 1)$
with $p_i := a_1 \vee a_2 \vee \cdots \vee a_i$,
define a map
$\rho_{\mathcal{F},C}: \mathcal{L} \to \mathcal{F}$
by setting
\begin{eqnarray*}
\rho_{\mathcal{F},C}(p) :=
\bigvee \{  a_i :  p_i \wedge p \succ p_{i-1} \wedge p  \} \quad (p \in \mathcal{L}).
\end{eqnarray*}
The map $\rho_{\mathcal{F},C}$ is order-preserving, and extended to a map from
$K(\mathcal{L})$ to $K(\mathcal{F})$ by
\[
\sum_{i} \lambda_i p_i \mapsto \sum_{i} \lambda_i \rho_{\mathcal{F},C}(p_i).
\]
The map $\rho_{\mathcal{F},C}$
is essentially a canonical retraction; see \cite{BuildingBook}*{Definition 4.38}.
So, the following property of $\rho_{\mathcal{F},C}$ can also be derived
from a standard argument in building theory.
\begin{Lem}\label{lem:nonincrease1}
The map $\rho = \rho_{\mathcal{F}, C}: K({\mathcal L}) \to {K(\mathcal{F})}$ has the following
properties:
\begin{itemize}
\item[(1)] $\rho$ is a retraction from $K({\mathcal L})$ to $K(\mathcal{F})$.
\item[(2)] $\rho$ maps each simplex isometrically onto its image.
\item[(3)]
$D_p(\rho(x), \rho(y)) \leq D_p(x,y)$ for any $x,y \in K(\mathcal L)$, and
the equality holds if $x \in {K({C})}$.
\item[(4)] $(K(\mathcal{F}),D_p)$
is an isometric subspace of $(K(\mathcal{L}),D_p)$ and is
isometric to the cube~$[0,1]^n$ endowed with the $l_p$--metric.
\end{itemize}
\end{Lem}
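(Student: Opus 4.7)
The plan is to establish all four properties by first analyzing the combinatorial action of $\rho$ on chains, and then lifting the conclusions to the metric level.

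First, using the modular rank equality, I would show the key rank-preservation property: for every $q \in \mathcal{L}$, the set $I(q) := \{i : p_i \wedge q \succ p_{i-1} \wedge q\}$ has cardinality exactly $r(q)$. Indeed, modularity applied to $p_i$ and $q$ (and to $p_{i-1}$ and $q$) and subtraction give
\[
r(p_i \wedge q) - r(p_{i-1} \wedge q) = 1 - \bigl( r(p_i \vee q) - r(p_{i-1} \vee q) \bigr),
\]
so consecutive terms in $r(p_0 \wedge q), r(p_1 \wedge q), \ldots, r(p_n \wedge q)$ differ by $0$ or $1$. Telescoping from $r(p_0 \wedge q) = 0$ to $r(p_n \wedge q) = r(q)$ gives $|I(q)| = r(q)$. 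Since $\mathcal{F}$ is Boolean, $\rho(q) = \bigvee_{i \in I(q)} a_i$ has rank $|I(q)| = r(q)$ in $\mathcal{F}$. Moreover, $\rho$ is order-preserving, and for $q \prec q'$ with $r(q') = r(q)+1$ we get $I(q) \subsetneq I(q')$ with $|I(q') \setminus I(q)| = 1$, so $\rho(q) \prec \rho(q')$ with the same rank difference. Hence $\rho$ sends every chain of $\mathcal{L}$ to a chain of $\mathcal{F}$ with matching consecutive rank differences. Since the affine embedding $\varphi_\sigma$ of a simplex into $\mathbb{R}^n$ depends only on those rank differences, $\rho$ restricts to an isometry on every simplex $\sigma$, proving (2). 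Property (1) then follows by a short computation: for $q = \bigvee_{j \in J} a_j \in \mathcal{F}$, the Boolean structure gives $p_i \wedge q = \bigvee_{j \in J,\ j \leq i} a_j$, so $p_i \wedge q \succ p_{i-1} \wedge q$ iff $i \in J$; hence $I(q) = J$ and $\rho(q) = q$.

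For the non-increase bound in (3), since $\rho$ is continuous and isometric on every simplex, the image of any rectifiable path has length at most that of the path itself; passing to the infimum over connecting paths yields $D_p(\rho(x), \rho(y)) \leq D_p(x,y)$ for all $x,y$. Specializing to $x, y \in K(\mathcal{F})$ and using $\rho|_{K(\mathcal{F})} = \mathrm{id}$ gives $D_p^{\mathcal{F}}(x,y) \leq D_p^{\mathcal{L}}(x,y)$; the reverse inequality is trivial since any path in $K(\mathcal{F})$ also lies in $K(\mathcal{L})$. This shows that $(K(\mathcal{F}), D_p)$ is an isometric subspace of $(K(\mathcal{L}), D_p)$, and Lemma~\ref{lem:cube} applied to $\mathcal{F}$ completes (4). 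The same argument, applied verbatim to any Boolean sublattice $\mathcal{F}' \subseteq \mathcal{L}$ with any maximal chain of $\mathcal{F}'$ in place of $C$, shows that $K(\mathcal{F}')$ embeds isometrically in $K(\mathcal{L})$.

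The main obstacle is the equality case in (3). Here I would use Lemma~\ref{lem:Booleansublattice} to choose a base $\{a'_1, \ldots, a'_n\}$ of $\mathcal{L}$ whose generated Boolean sublattice $\mathcal{F}'$ contains both the chain $C$ and the chain supporting $y$; reindexing the new atoms ensures $p_i = a'_1 \vee \cdots \vee a'_i$. A computation identical to the one proving (1) then shows that for every $q = \bigvee_{j \in J} a'_j \in \mathcal{F}'$ we have $\rho(q) = \bigvee_{j \in J} a_j$, so $\rho|_{K(\mathcal{F}')}$ is precisely the simplicial isomorphism $a'_i \mapsto a_i$ extended affinely, hence an isometry of $K(\mathcal{F}')$ onto $K(\mathcal{F})$. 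Since $x, y \in K(\mathcal{F}')$ and both $K(\mathcal{F}')$ and $K(\mathcal{F})$ embed isometrically in $K(\mathcal{L})$ by the previous paragraph, we conclude
\[
D_p^{\mathcal{L}}(x,y) = D_p^{\mathcal{F}'}(x,y) = D_p^{\mathcal{F}}(\rho(x), \rho(y)) = D_p^{\mathcal{L}}(\rho(x), \rho(y)),
\]
which gives the equality in (3) and closes the proof.
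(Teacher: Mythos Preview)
Your proof is correct. The paper does not actually prove this lemma directly---it simply remarks that $\rho_{\mathcal{F},C}$ is ``essentially a canonical retraction'' and that the properties follow from ``a standard argument in building theory''---but it does give a detailed proof of the generalization Lemma~\ref{lem:nonincrease} (for modular lattices and distributive sublattices in place of complemented modular lattices and Boolean sublattices), and your argument matches that proof almost step for step: rank preservation via the chain $p_i \wedge q$, isometry on simplices from the matching rank differences, the path argument for non-expansion, and the equality case handled by passing to an auxiliary sublattice $\mathcal{F}'$ containing $C$ and the chain of $y$ together with a relabeling isometry (your $a'_i \mapsto a_i$ is the paper's map $\vartheta$).
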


As in \cite{BrHa}*{II.10, Appendix} and \cite{BuildingBook}*{Section 11.2},
one can prove Theorem~\ref{thm:HKS} as
a simple consequence of
Lemmata~\ref{lem:cube}, \ref{lem:Booleansublattice}, and \ref{lem:nonincrease1}.
To prove Theorem~\ref{thm:modularlatticeCAT(0)}, in the next section
we will establish generalizations of the above three lemmata for modular lattices.

\section{Distributive  and modular lattices}\label{subsec:ortho_distributive}
Let ${\mathcal D}$ be a distributive lattice of rank $n$, i.e.,
a lattice satisfying the distributive law
$u \wedge (v \vee w) = (u \wedge v) \vee (u \wedge w)$.
An element $p$ is said to be {\em join-irreducible} if $p$ cannot be represented
as $q \vee q'$ for some $q,q'$ different from $p$.
Let ${\mathcal D}^{\rm ir}$ denote the set of nonzero join-irreducible elements
of ${\mathcal D}$. Regard ${\mathcal D}^{\rm ir}$ as
a poset by restricting the partial order of ${\mathcal D}$.
A {\em down set} $U$
is a subset of  ${\mathcal D}^{\rm ir}$
such that $y \preceq x \in U$ implies $y \in U$.
Let ${\mathcal I}({\mathcal D}^{\rm ir})$ be
the set of all down sets ordered by inclusion.
Then ${\mathcal I}({\mathcal D}^{\rm ir})$ is a subposet of
the power set $2^{{\mathcal D}^{\rm ir}}$ of ${\mathcal D}^{\rm ir}$.
It is well-known that the poset of down sets of a poset is a distributive lattice,
and that the converse is also true:
\begin{Thm}[{{Birkhoff representation theorem} \cite{Gratzer}*{Theorem 107}}] \label{thm:Birkhoff-representation}
A distributive lattice ${\mathcal D}$ is isomorphic to ${\mathcal I}({\mathcal D}^{\rm ir})$
by the correspondence:
\begin{eqnarray*}\label{eqn:map}
{\mathcal I}({\mathcal D}^{\rm ir}) \ni X & \mapsto & \bigvee_{x \in X} x \in {\mathcal D}, \\
 {\mathcal D} \ni p &\mapsto &\{ r \in {\mathcal D}^{\rm ir}: r \preceq p \} \in {\mathcal I}({\mathcal D}^{\rm ir}).
\end{eqnarray*}
In particular, the cardinality of ${\mathcal D}^{\rm ir}$ is equal to the rank of ${\mathcal D}$.
\end{Thm}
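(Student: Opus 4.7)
The plan is to verify directly that the two stated maps $\phi\colon \mathcal{I}(\mathcal{D}^{\rm ir}) \to \mathcal{D}$, $X \mapsto \bigvee_{x \in X} x$, and $\psi\colon \mathcal{D} \to \mathcal{I}(\mathcal{D}^{\rm ir})$, $p \mapsto \{r \in \mathcal{D}^{\rm ir} : r \preceq p\}$, are well-defined, order-preserving, and mutually inverse. Finite rank ensures $\mathcal{D}$ is finite (any chain in $\mathcal{D}$ has length at most $\mathrm{rank}(\mathcal{D})$), so all joins appearing in $\phi(X)$ exist and $\psi(p)$ is automatically a down set. Both maps are monotone by construction. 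Hence everything reduces to checking the two composition identities.

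For $\psi \circ \phi = \mathrm{id}$: given a down set $X$, the inclusion $X \subseteq \psi(\phi(X))$ is trivial. For the reverse inclusion, pick $r \in \mathcal{D}^{\rm ir}$ with $r \preceq \bigvee_{x \in X} x$. Distributivity yields
\[
r \;=\; r \wedge \bigvee_{x \in X} x \;=\; \bigvee_{x \in X} (r \wedge x).
\]
Since $r$ is nonzero and join-irreducible, one of the meets $r \wedge x$ must equal $r$, i.e.\ $r \preceq x$ for some $x \in X$, and then the down-set property forces $r \in X$. This is the step where the distributive hypothesis enters essentially.

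For $\phi \circ \psi = \mathrm{id}$: I would show by induction on the rank $r(p)$ that every element of $\mathcal{D}$ equals the join of the join-irreducibles it dominates. The base case $p = 0$ is the empty join. For $p \succ 0$, either $p$ itself is join-irreducible (and we are done), or $p = q \vee q'$ for some $q,q' \prec p$; apply induction to $q$ and $q'$ and take the join to obtain $p = \bigvee \psi(p)$.

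Finally, for the cardinality assertion, note that any linear extension of the poset $\mathcal{D}^{\rm ir}$ yields a maximal chain in $\mathcal{I}(\mathcal{D}^{\rm ir})$ by adding one element at a time, so $\mathrm{rank}(\mathcal{I}(\mathcal{D}^{\rm ir})) = |\mathcal{D}^{\rm ir}|$; transporting through the isomorphism just established gives the claim. The only genuinely content-bearing step is the distributivity argument in $\psi \circ \phi = \mathrm{id}$ (together with verifying that the induction step in $\phi \circ \psi = \mathrm{id}$ respects the rank function); the rest is poset bookkeeping.
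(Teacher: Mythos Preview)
The paper does not prove this theorem; it is quoted from Gr\"atzer as a classical result. Your argument is the standard proof and is essentially correct, but one step deserves more care: bounded chain length alone does not force a lattice to be finite (the subspace lattice of a $2$--dimensional vector space over an infinite field has rank $2$ but is infinite---it is modular, just not distributive), so your parenthetical is not a justification. What you actually need is that $\mathcal{D}^{\rm ir}$ is finite, so that every down-set $X$ is finite and the identity $r \wedge \bigvee_{x\in X} x = \bigvee_{x\in X}(r\wedge x)$ you invoke is the iterated binary distributive law rather than an infinite distributive law (which need not hold). This follows directly from distributivity: fix a maximal chain $0=p_0\prec\cdots\prec p_n=1$ and send each join-irreducible $r$ to the least index $i$ with $r\preceq p_i$; if $r,s$ share index $i$ then $s\vee p_{i-1}=p_i$ and hence $r=r\wedge p_i=r\wedge(s\vee p_{i-1})=(r\wedge s)\vee(r\wedge p_{i-1})$, so join-irreducibility of $r$ together with $r\not\preceq p_{i-1}$ forces $r=r\wedge s$, i.e.\ $r\preceq s$, and by symmetry $r=s$. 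Thus $|\mathcal{D}^{\rm ir}|\le n$, and with this in hand the rest of your outline goes through unchanged.
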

If  ${\mathcal D}$ is a Boolean lattice, then
${\mathcal D}^{\rm ir}$ is exactly the set of its atoms. An extension of Lemma~\ref{lem:cube} is the
following result, which asserts that the orthoscheme complex of a distributive lattice can be realized
as a convex polytope in $\RR^n$:
\begin{Prop}\label{thm:poly}
Let ${\mathcal D}$ be a distributive lattice with ${\mathcal D}^{\rm ir} = \{ r_1,r_2,\ldots, r_n\}$.
Then the orthoscheme complex $K({\mathcal D})$ is isometric to
the following convex polytope in $\RR^n$:
\begin{equation}\label{eqn:poly}
P({\mathcal D}) := \{ y \in [0,1]^n: y_i \geq y_j \ \mbox{ for all } i,j \in \{1,2,\ldots,n\} \mbox{ with } r_i \preceq r_j \},
\end{equation}
where the isometry is given by
\begin{equation}\label{eqn:isometry}
K({\mathcal D}) \ni p = \sum_{i} \lambda_i p_i  \mapsto
\sum_{i} \lambda_i  \sum_{j: r_j \preceq p_i} e_j.
\end{equation}
\end{Prop}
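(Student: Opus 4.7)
The plan is to identify the polytope $P(\mathcal{D})$ with Stanley's order polytope of the poset $\mathcal{D}^{\mathrm{ir}}$, and to recognize the canonical triangulation of that polytope indexed by linear extensions of $\mathcal{D}^{\mathrm{ir}}$ with the simplicial decomposition of $K(\mathcal{D})$ coming from maximal chains. First I would use the Birkhoff representation theorem (Theorem~\ref{thm:Birkhoff-representation}) to transfer from $\mathcal{D}$ to down-sets of $\mathcal{D}^{\mathrm{ir}}$. Under this correspondence, an element $p \in \mathcal{D}$ corresponds to the down-set $D(p) := \{r_j : r_j \preceq p\}$, whose indicator vector $\chi(p) := \sum_{j : r_j \preceq p} e_j$ manifestly lies in $P(\mathcal{D})$: the constraint $\chi(p)_i \geq \chi(p)_j$ whenever $r_i \preceq r_j$ is exactly the down-set condition. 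The map $\Phi$ in (\ref{eqn:isometry}) is the affine extension, simplex-by-simplex, of $p \mapsto \chi(p)$.

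Next I would analyze $\Phi$ on a single top-dimensional simplex. Let $\sigma$ be the simplex of $K(\mathcal{D})$ associated with a maximal chain $0 = p_0 \prec p_1 \prec \cdots \prec p_n$. The associated down-sets satisfy $D(p_0) \subsetneq D(p_1) \subsetneq \cdots \subsetneq D(p_n) = \mathcal{D}^{\mathrm{ir}}$, with $|D(p_i)| = i$, so there is a unique linear extension $(r_{\pi(1)}, r_{\pi(2)}, \ldots, r_{\pi(n)})$ of the poset $\mathcal{D}^{\mathrm{ir}}$ with $D(p_i) = \{r_{\pi(1)}, \ldots, r_{\pi(i)}\}$. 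Consequently $\Phi$ sends $p_i$ to $e_{\pi(1)} + e_{\pi(2)} + \cdots + e_{\pi(i)}$, and the vectors $\Phi(p_i) - \Phi(p_{i-1}) = e_{\pi(i)}$ form an orthonormal basis. Hence $\Phi|_\sigma$ is an affine isometry of $\sigma$ (with its intrinsic metric from the defining standard orthoscheme) onto the Euclidean simplex $\Phi(\sigma) \subset \mathbb{R}^n$. Moreover, two such simplices $\sigma, \sigma'$ arising from chains that share a subchain send that subchain to the same Euclidean simplex, since the value $\Phi(p_i) = \chi(p_i)$ depends only on $p_i$; so $\Phi$ is a well-defined simplicial map on all of $K(\mathcal{D})$, isometric on each simplex.

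The third step is to verify that $\Phi$ is a global bijection of $K(\mathcal{D})$ onto $P(\mathcal{D})$. Given $y \in P(\mathcal{D})$, sort its coordinates as $1 \geq y_{\pi(1)} \geq y_{\pi(2)} \geq \cdots \geq y_{\pi(n)} \geq 0$, breaking ties so that the permutation $\pi$ is a linear extension of $\mathcal{D}^{\mathrm{ir}}$ (this is possible precisely because of the defining inequalities of $P(\mathcal{D})$). Setting $y_{\pi(0)} := 1$ and $y_{\pi(n+1)} := 0$ and $\lambda_i := y_{\pi(i)} - y_{\pi(i+1)} \geq 0$ with $\sum_{i=0}^n \lambda_i = 1$, one writes $y = \sum_{i=0}^n \lambda_i (e_{\pi(1)} + \cdots + e_{\pi(i)})$. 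This exhibits $y$ as the image of a point in the simplex of $K(\mathcal{D})$ determined by the maximal chain $p_i := r_{\pi(1)} \vee \cdots \vee r_{\pi(i)}$, and checking the cases where ties force a choice of $\pi$ shows that the resulting point in $K(\mathcal{D})$ is independent of that choice (the alternatives lie in a common face). Thus the images $\Phi(\sigma)$ over all maximal chains of $\mathcal{D}$ form a triangulation of $P(\mathcal{D})$, coinciding with Stanley's canonical triangulation of the order polytope.

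Finally I would conclude that $\Phi$ is a global isometry. Since $\Phi$ is a bijection which is isometric on each orthoscheme simplex, and these simplices triangulate $P(\mathcal{D})$, the length metric on $K(\mathcal{D})$ pulls back from the length metric on $P(\mathcal{D})$ induced by the same triangulation; but $P(\mathcal{D}) \subset \mathbb{R}^n$ is a convex polytope, so its intrinsic length metric coincides with the restriction of the Euclidean $l_2$--metric, giving $D_2(x,y) = \|\Phi(x) - \Phi(y)\|_2$ on all of $K(\mathcal{D})$. The main obstacle in this plan is the third step: producing Stanley's triangulation cleanly and verifying that the ``tie-breaking'' ambiguity in the sorting produces a well-defined preimage in $K(\mathcal{D})$, which is what ensures $\Phi$ is both surjective and injective across the gluing.
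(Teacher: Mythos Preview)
Your approach is correct and essentially identical to the paper's: both sort the coordinates of a point $y\in P(\mathcal D)$ to obtain a linear extension $\pi$ of $\mathcal D^{\mathrm{ir}}$, reconstruct the chain $p_i=r_{\pi(1)}\vee\cdots\vee r_{\pi(i)}$ and coefficients $\lambda_i=y_{\pi(i)}-y_{\pi(i+1)}$ to prove surjectivity, observe that on a maximal simplex the map differs from $\varphi_\sigma$ by the coordinate permutation $\tau$ induced by $\pi$, and then invoke convexity of $P(\mathcal D)$ to pass from simplex-wise isometry to global isometry. The only minor point is that you state the conclusion for $D_2$ while the paper works with arbitrary $D_p$; your argument goes through verbatim since coordinate permutations are $l_p$--isometries and convex subsets of $(\RR^n,l_p)$ carry the restricted metric as their length metric.
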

The polytope $P(\mathcal{D})$ is known as the {\em order polytope},  and
this triangulation of the order polytope by orthoschemes
also appears in several contexts in discrete mathematics;
see \cite{Matousek}*{Section 12.3} for example.
Even if Proposition~\ref{thm:poly} can be considered as  a folklore,
we provide a self-contained proof for completeness.
\begin{proof}
Denote the map in (\ref{eqn:isometry}) by $\varrho$.
It is not difficult to see that $\varrho$ is injective.
We show that $\varrho$ is surjective.
For an arbitrary point $x \in P({\mathcal D})$  we will construct
a point $p \in K({\mathcal D})$ such that $\varrho(p) = x$.
Arrange the coordinates of $x = (x_1,x_2,\ldots,x_n)$
so that $x_{i_1} \geq x_{i_2} \geq \cdots \geq x_{i_n}$.
Then $r_{i_{k}} \prec r_{i_{k'}}$ implies $k < k'$.
Define $p_{k} := r_{i_1} \vee r_{i_2} \vee \cdots \vee r_{i_k}$
for $k=1,2,\ldots,n$.
Then $(0 = p_0 \prec p_1 \prec \cdots \prec p_n = 1)$ is a maximal chain of ${\mathcal D}$.
Let $\lambda_k := x_{i_k} - x_{i_{k+1}}$ for $k=0,1,2,\ldots,n$,
and let $p:= \sum_{k=0}^n \lambda_k p_k$,
where we let $x_{i_{0}} := 1$ and $x_{i_{n+1}} := 0$.
Then $\varrho(p) = x$ holds. Indeed,
\begin{eqnarray*}
\varrho(p) & = & \varrho \left( \sum_{k=0}^n \lambda_k p_k \right)  =
\varrho \left(\sum_{k=0}^n \lambda_k (r_{i_1} \vee r_{i_2} \vee \cdots \vee r_{i_k}) \right) \\
& = & \sum_{k=1}^n (x_{i_k} - x_{i_{k+1}}) ( e_{i_1} + e_{i_2} + \cdots + e_{i_k}) = x.
\end{eqnarray*}
Thus $\varrho$ is a bijection.

Next we show that $\varrho$ is an isometry.
Since $P(\mathcal{D})$ is convex and
the length metric on $P(\mathcal{D})$ coincides with the induced metric of $\RR^n$, it suffices to show that $\varrho$
is an isometry on each maximal simplex $\sigma$. Suppose that $\sigma$ corresponds
to a maximal chain
$(0 = p_0 \prec p_1 \prec p_2 \prec \cdots \prec p_n = 1)$.
There exists a unique permutation $(i_1,i_2,\ldots,i_n)$ on $\{1,2,\ldots,n\}$ such that
$p_k = r_{i_1} \vee r_{i_2} \vee \cdots \vee r_{i_k}$ holds for $k=1,2,\ldots,n$.
This permutation determines an isometry $\tau:\RR^n \to \RR^n$ (with respect to any $l_p$--metric) defined
by $x = (x_1,x_2,\ldots,x_n) \mapsto
(x_{i_1},x_{i_2},\ldots,x_{i_n})$.
Then $\varrho = \tau \circ \varphi_{\sigma}$ holds on $\sigma$.
Indeed, for $p = \sum_k \lambda_k p_k \in \sigma$, we have
\begin{equation*}
\tau \circ \varphi_{\sigma} (p) =
\tau \left(\sum_{k=1}^n \lambda_k (e_1+e_2 + \cdots + e_k) \right)
=  \sum_{k=1}^n \lambda_k (e_{i_1} + e_{i_2} + \cdots + e_{i_k}) = \varrho(p).
\end{equation*}
Hence $\| \varrho(x) - \varrho(y) \|_{p} = \| \tau \circ \varphi_{\sigma}(x) - \tau \circ \varphi_{\sigma} (y) \|_{p}
= \| \varphi_{\sigma}(x) - \varphi_{\sigma} (y) \|_{p} = D_{p}(x,y)$ for $x,y \in \sigma$.
\end{proof}

We consider now the case of modular lattices.
An analogue of Lemma~\ref{lem:Booleansublattice} is the following:

\begin{Thm}[{Dedekind-Birkhoff,  \cite{Gratzer}*{Theorem 363}}]\label{thm:Dedekind-Birkoff}
For two chains $C,C'$ of a modular lattice ${\mathcal M}$ of rank $n$,
the sublattice generated by $C \cup C'$ is distributive.
In particular, there is a distributive sublattice containing $C$ and $C'$.
\end{Thm}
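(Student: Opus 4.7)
The plan is to establish the classical Dedekind--Birkhoff statement by exhibiting a normal form for elements of the sublattice $\mathcal{L}(C,C')$ generated by $C \cup C'$, and then identifying $\mathcal{L}(C,C')$ with a sublattice of a (manifestly distributive) finite distributive lattice. First I would reduce to the case where $C$ and $C'$ are finite: distributivity of a lattice is equivalent to containing no $M_3$ and no $N_5$ sublattice, and a modular lattice automatically excludes $N_5$, so it suffices to rule out $M_3$-sublattices; each such configuration involves only finitely many generators, which may be enclosed in finite subchains of $C$ and $C'$. So from now on write $C = (a_0 \prec a_1 \prec \cdots \prec a_m)$ and $C' = (b_0 \prec b_1 \prec \cdots \prec b_n)$.

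Next I would prove, by induction on the complexity of lattice terms, that every element $x \in \mathcal{L}(C,C')$ admits a canonical representation
\[
x = \bigvee_{(i,j) \in S(x)} (a_i \wedge b_j),
\]
for some down-closed set $S(x) \subseteq \{0,\ldots,m\} \times \{0,\ldots,n\}$. The join case is immediate by taking unions of indexing sets. The meet case is the technical heart: using the modular identity $u \wedge (v \vee w) = v \vee (u \wedge w)$ valid whenever $v \preceq u$, together with the chain identities $(a_i \wedge b_j) \wedge (a_k \wedge b_l) = a_{\min(i,k)} \wedge b_{\min(j,l)}$, one verifies
\[
\biggl(\bigvee_{(i,j) \in S}(a_i \wedge b_j)\biggr) \wedge \biggl(\bigvee_{(k,l) \in T}(a_k \wedge b_l)\biggr) \;=\; \bigvee_{(i,j) \in S,\,(k,l)\in T}(a_{\min(i,k)} \wedge b_{\min(j,l)}).
\]
Once this holds, the indexing set on the right is again down-closed in $\{0,\ldots,m\} \times \{0,\ldots,n\}$, so the normal form is preserved under both operations.

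Then I would define a map $\varphi\colon \mathcal{L}(C,C') \to \mathcal{I}(\{0,\ldots,m\}\times\{0,\ldots,n\})$ into the distributive lattice of down-sets of the product poset by $\varphi(x) := S(x)$. The identity above shows that $\varphi$ is a lattice homomorphism. Injectivity follows from the recovery formula $x \wedge a_i \wedge b_j = a_i \wedge b_j$ iff $(i,j) \in S(x)$, which lets one read $S(x)$ off $x$ intrinsically. Hence $\mathcal{L}(C,C')$ embeds into a distributive lattice and is itself distributive.

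The principal obstacle is establishing the meet identity in full generality: the modular law has to be invoked many times, and one must check that terms absorbed along the way preserve the down-closed structure of the indexing set. If this bookkeeping proved too unwieldy for an entangled pair of antichains $S,T$, the fallback would be the $M_3$-free characterisation of distributive modular lattices: assume three pairwise-incomparable elements $x,y,z \in \mathcal{L}(C,C')$ with $x \vee y = y \vee z = x \vee z$ and $x \wedge y = y \wedge z = x \wedge z$, expand them via the normal form, and derive a contradiction from the linear ordering of $C$ and of $C'$.
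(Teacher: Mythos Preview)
The paper does not prove this statement; it is quoted as the classical Dedekind--Birkhoff theorem with a citation to Gr\"atzer's book \cite[Theorem 363]{Gratzer} and used as a black box in Section~\ref{s:ortho}. There is therefore no proof in the paper to compare your proposal against.

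Your outline is essentially the standard argument and is sound. The normal form $x=\bigvee_{(i,j)\in S(x)}(a_i\wedge b_j)$ with $S(x)$ a down-set of $\{0,\dots,m\}\times\{0,\dots,n\}$ is exactly how the result is usually organised, and you have correctly isolated the only nontrivial step: the meet identity. Two minor remarks. First, you do not actually need injectivity: once the meet identity holds, the map $\mathcal{I}(\{0,\dots,m\}\times\{0,\dots,n\})\to\mathcal{L}(C,C')$, $S\mapsto\bigvee_{(i,j)\in S}(a_i\wedge b_j)$, is a \emph{surjective} lattice homomorphism, and a homomorphic image of a distributive lattice is distributive. This saves you the recovery argument. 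Second, the meet identity itself is not a one-line consequence of a single application of the modular law; it needs an honest induction. A clean way is to first prove the case $|S|=1$ by induction on $|T|$: writing $T=T'\cup\{(k_0,l_0)\}$ with $(k_0,l_0)$ chosen so that, say, $l_0$ is maximal, one has $a_{k_0}\wedge b_{\min(j,l_0)}\le a_i\wedge b_j$ or a symmetric comparability that lets the modular law peel off one term; the general case then follows by a second induction on $|S|$. Your $M_3$-free fallback is legitimate but will unwind to the same computation.
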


Let $C = (0 = p_0 \prec p_1 \prec \cdots \prec p_n = 1)$ be a maximal chain of $\mathcal M$
and let ${\mathcal D}$ be a distributive sublattice containing $C$.
We can suppose that ${\mathcal D}^{\rm ir} = \{r_1,r_2,\ldots,r_n\}$
and $p_{i} = r_1 \vee r_2 \vee \cdots \vee r_{i}$ for $i=1,2,\ldots,n$.
The distributive sublattice ${\mathcal D}$ can be naturally regarded
as a sublattice of the Boolean lattice $2^{{\mathcal D}^{\rm ir}}$:
\[
{\mathcal M} \supseteq {\mathcal D} \simeq {\mathcal I}({\mathcal D}^{\rm ir}) \subseteq 2^{{\mathcal D}^{\rm ir}}.
\]
This implies:
\[
K({\mathcal M}) \supseteq K({\mathcal D}) \simeq P({\mathcal D}) \subseteq [0,1]^n  \simeq K(2^{{\mathcal D}^{\rm ir}}).
\]

We can define a map $\rho_{{\mathcal D}, C}: {\mathcal M} \to 2^{{\mathcal D}^{\rm ir}}$ by
\begin{equation}
\rho_{{\mathcal D}, C}(p) := \{ r_j:  p_{j} \wedge p \succ p_{j-1} \wedge p  \} \quad \mbox{ for any } p \in {\mathcal M}.
\end{equation}
We see in the proof of the next lemma
that $\rho$ is order-preserving, and hence $\rho$ maps a chain in ${\mathcal M}$
to a chain in $2^{{\mathcal D}^{\rm ir}}$.
So we can extend this map $\rho_{{\mathcal D}, C}$ to $K({\mathcal M}) \to K(2^{{\mathcal D}^{\rm ir}})$ by
\begin{equation}
\sum_{i} \lambda_i q_i \mapsto \sum_{i} \lambda_i \rho_{{\mathcal D}, C} (q_i).
\end{equation}
Then a generalization of Lemma~\ref{lem:nonincrease1} is the following:
\begin{Lem}\label{lem:nonincrease}
The map $\rho = \rho_{{\mathcal D}, C}: K({\mathcal M}) \to K(2^{{\mathcal D}^{\rm ir}})$ has the following properties:
\begin{itemize}
\item[$(1)$] $\rho$ is the identity on $K({\mathcal D})$
under the identification ${\mathcal D} \simeq \mathcal{I}(\mathcal{D}^{\rm ir})$.
\item[$(2)$] $\rho$ maps each simplex isometrically onto its image.
\item[$(3)$]
$D_p( \rho(x),  \rho(y)) \leq D_p(x,y)$ for any $x,y \in K({\mathcal M})$ and
the equality holds if $x \in K(C)$.
\item[$(4)$] $K(\mathcal{D})$ is an isometric subspace of $K(\mathcal{M})$ isometric to the convex polytope $P(\mathcal{D})$.
\end{itemize}
\end{Lem}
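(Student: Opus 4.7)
The plan is to first establish two preparatory combinatorial facts about $\rho$ at the poset level, then upgrade them to the orthoscheme complexes, and finally tackle the distance-preservation statements in (3) and (4) using a distributive subcomplex obtained via Theorem \ref{thm:Dedekind-Birkoff}.

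First I would check two properties of $\rho : \mathcal{M} \to 2^{\mathcal{D}^{\rm ir}}$. Property (a): $\rho$ is order-preserving; indeed, if $p \preceq q$ and $j \in \rho(p)$, then $p_j \wedge q = p_{j-1} \wedge q$ would force $p_j \wedge p \preceq p_{j-1}$, contradicting the cover $p_j \wedge p \succ p_{j-1} \wedge p$. Property (b): $|\rho(p)| = r(p)$ for every $p$; this follows by inspecting the chain $0 = p_0 \wedge p \preceq p_1 \wedge p \preceq \cdots \preceq p_n \wedge p = p$, whose consecutive steps are either equalities or covers (by modularity, since $p_i = p_{i-1} \vee r_i$ covers $p_{i-1}$), and $|\rho(p)|$ counts the covers.

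Assertion (1) then follows from distributivity in $\mathcal{D}$: for $p \in \mathcal{D}$,
\[
p_j \wedge p = (p_{j-1} \vee r_j) \wedge p = (p_{j-1} \wedge p) \vee (r_j \wedge p),
\]
a strict cover exactly when $r_j \preceq p$ (using that $r_j$ is join-irreducible in $\mathcal{D}$). Hence $\rho(p) = \{r_j : r_j \preceq p\}$, which matches the Birkhoff identification $\mathcal{D} \simeq \mathcal{I}(\mathcal{D}^{\rm ir})$, so the simplicial extension of $\rho$ restricts to the identity on $K(\mathcal{D})$. For (2), a simplex $\sigma \subseteq K(\mathcal{M})$ coming from a chain $q_0 \prec \cdots \prec q_k$ of $\mathcal{M}$ is sent by $\rho$ to the simplex corresponding to $\rho(q_0) \subsetneq \cdots \subsetneq \rho(q_k)$ (strict by (b)), and (b) further gives $|\rho(q_i)| - |\rho(q_{i-1})| = r(q_i) - r(q_{i-1})$, so $\sigma$ and $\rho(\sigma)$ are standard orthoschemes with identical sequences of edge-lengths; the affine extension is therefore an isometry between them.

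For (3), the non-expansive inequality is a subdivision argument: any rectifiable path in $K(\mathcal{M})$ can be cut into pieces each lying in a single simplex, and (2) implies that $\rho$ preserves length on each piece, so $\rho$ sends any path from $x$ to $y$ to a path of the same length from $\rho(x)$ to $\rho(y)$, giving $D_p(\rho(x), \rho(y)) \leq D_p(x,y)$. For the equality when $x \in K(C)$, I would fix $y$ in a simplex $\tau$ corresponding to a chain $(q_0, \ldots, q_k)$ and invoke Theorem \ref{thm:Dedekind-Birkoff} to produce a distributive sublattice $\mathcal{D}'' \subseteq \mathcal{M}$ containing both $C$ and $(q_0, \ldots, q_k)$. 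Both $x$ and $y$ then lie in $K(\mathcal{D}'')$, and the key subclaim is that $\rho|_{K(\mathcal{D}'')}$ is an isometric embedding of $K(\mathcal{D}'')$ into $K(2^{\mathcal{D}^{\rm ir}})$. Granting this subclaim,
\[
D_p(\rho(x), \rho(y)) \leq D_p^{K(\mathcal{M})}(x,y) \leq D_p^{K(\mathcal{D}'')}(x,y) = D_p(\rho(x), \rho(y))
\]
forces equality throughout. Assertion (4) is then a direct corollary by taking $\mathcal{D}'' = \mathcal{D}$ in the subclaim: $K(\mathcal{D})$ embeds isometrically into $K(\mathcal{M})$, and Proposition \ref{thm:poly} identifies it with the order polytope $P(\mathcal{D})$.

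The main obstacle is proving the subclaim that $\rho|_{K(\mathcal{D}'')}$ is an isometric embedding. The difficulty is that $K(\mathcal{D}'')$ and $K(2^{\mathcal{D}^{\rm ir}})$ are naturally realized as convex polytopes in different ambient Euclidean spaces, the order polytope $P(\mathcal{D}'') \subseteq [0,1]^{(\mathcal{D}'')^{\rm ir}}$ and the cube $[0,1]^{\mathcal{D}^{\rm ir}}$, and the join-irreducibles of $\mathcal{D}''$ are generally not those of $\mathcal{D}$. The proposed strategy is to realize $\rho|_{K(\mathcal{D}'')}$ in these polytope pictures as a specific affine coordinate map $P(\mathcal{D}'') \to [0,1]^{\mathcal{D}^{\rm ir}}$ induced by the restrictions of $\rho$ to join-irreducibles of $\mathcal{D}''$, verify via (2) that this map is piecewise isometric on each orthoscheme of $P(\mathcal{D}'')$, and then upgrade piecewise to global isometry by combining these simplex-wise isometries with the convexity of both polytope realizations so that geodesics in one polytope pull back to geodesics in the other.
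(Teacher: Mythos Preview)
Your approach to (1), (2), and the non-expansive half of (3) matches the paper's proof, and your reduction of the equality case of (3), and of (4), to the subclaim that $\rho|_{K(\mathcal{D}'')}$ is an isometric embedding of $K(\mathcal{D}'')$ into the cube $K(2^{\mathcal{D}^{\rm ir}})$ is exactly the structure the paper uses as well.

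The gap is in your proposed proof of the subclaim. The ``affine coordinate map induced by the restrictions of $\rho$ to join-irreducibles of $\mathcal{D}''$'' does \emph{not} coincide with $\rho$ on $K(\mathcal{D}'')$: if $r'_k$ is a join-irreducible of $\mathcal{D}''$ then $|\rho(r'_k)| = r(r'_k)$ may exceed $1$, and the characteristic vectors $\chi_{\rho(r'_k)}$ overlap (e.g.\ for $\mathcal{D}'' = C$ one has $\rho(p_k) = \{r_1,\ldots,r_k\}$), so the linear extension of $e_{r'_k} \mapsto \chi_{\rho(r'_k)}$ does not send the $P(\mathcal{D}'')$--vertex for $q$ to $\chi_{\rho(q)}$. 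Separately, ``piecewise isometric on each orthoscheme plus convex domain and codomain'' does not by itself force a global isometry: a piecewise isometry of a convex polytope into $\RR^n$ can fold. You need an independent reason why the simplex-wise affine maps glue to a single affine map.

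The paper supplies that reason by a one-line computation. Label $(\mathcal{D}'')^{\rm ir}$ as $r'_1,\ldots,r'_n$ along the chain $C$, so that $p_i = r'_1 \vee \cdots \vee r'_i$. Then your proof of (1), applied inside $\mathcal{D}''$, gives that the Birkhoff coordinate of $q \in \mathcal{D}''$ is
\[
\{\,j : r'_j \preceq q\,\} \;=\; \{\,j : p_j \wedge q \succ p_{j-1} \wedge q\,\},
\]
which is the \emph{same} index set as $\rho(q)$, just with $r_j$'s replacing $r'_j$'s. Hence the coordinate-relabeling bijection $\vartheta : r'_j \mapsto r_j$ between the two cubes satisfies $\vartheta(q) = \rho(q)$ for every $q \in \mathcal{D}''$; in these coordinates $\rho|_{K(\mathcal{D}'')}$ is literally the inclusion $P(\mathcal{D}'') \hookrightarrow [0,1]^n$, a global affine isometric embedding. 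That identification is the missing ingredient in your subclaim.
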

\begin{proof}
To (1):
In view of  Theorem~\ref{thm:Birkhoff-representation},
for $q \in {\mathcal D}$, we have $q = \bigvee_{r_j \preceq q} r_j$ and
\[
p_i \wedge q = \bigvee_{r_j \preceq p_i \wedge q} r_j = \bigvee_{r_j \preceq q, r_j \preceq p_i} r_j,
\]
and hence $p_i \wedge q \succ p_{i-1} \wedge q$
if and only if $r_i \preceq q$; recall that $p_k = r_1 \vee r_2 \vee \cdots \vee r_k$.
This means
\begin{equation}\label{eqn:rho(q)}
\rho(q)
= \{ r_j : p_{j} \wedge q \succ p_{j-1} \wedge q \} =
\{ r_j : r_j \preceq q\}
\simeq \bigvee_{r_j \preceq q} r_j = q.
\end{equation}

To (2):
We first show that the map
$\rho$ actually preserves the partial order, i.e.,
\begin{equation*}
\rho(p) \subseteq \rho(q)\ {\rm if}\ p \preceq q.
\end{equation*}
Indeed, $p_{j} \wedge p \succ p_{j-1} \wedge p$ implies $p_{j} \wedge q \succ p_{j-1} \wedge q$;
otherwise $p_{j} \wedge q = p_{j-1} \wedge q$ (by $p_j \succ p_{j-1}$), and
$p_j \wedge p = p_{j} \wedge q \wedge p = p_{j-1} \wedge q \wedge p = p_{j-1} \wedge p$, a contradiction. Hence, by definition, we have $\rho(p) \subseteq \rho(q)$.
Therefore $\rho$ maps a chain to a chain;
hence $\rho: K({\mathcal M}) \to K(2^{{\mathcal D}^{\rm ir}})$ is well-defined.

Moreover $\rho$ preserves the rank:
\[
r(p) = r(  \rho(p)) \quad  \mbox{ for } p \in {\mathcal M}.
\]
Indeed, consider a (possibly repeated)
chain $(0 = p_0 \wedge p \preceq  p_1 \wedge p \preceq p_2 \wedge p
\preceq \cdots \preceq p_n \wedge p =  p)$.
Then the rank of $p$ is equal to the number of indices $j$ with $p_{j} \wedge p \succ p_{j-1} \wedge p$, which is equal to the rank
$r(\rho(p)) = |\{ r_j:  p_{j} \wedge p \succ p_{j-1} \wedge p  \}|$ of $\rho(p)$.

We next show that $\rho$ maps each simplex $\sigma$ isometrically.
Suppose that $\sigma$ corresponds to a chain $(q_0 \prec q_1 \prec \cdots \prec q_k)$.
The simplex $\sigma$ is bijectively mapped to a simplex
$\rho(\sigma)$ corresponding to a chain $(\rho(q_0) \prec \rho(q_1) \prec \cdots \prec \rho(q_k))$.
For a point $x = \sum_{i} \lambda_i q_i \in \sigma$, it holds
\begin{align*}
\varphi_{\sigma}(x) &= \sum_{i=1}^k \lambda_i (e_1 + e_2 + \ldots + e_{r[p_0, p_i]})
 \\ &=\sum_{i=1}^k \lambda_i (e_1 + e_2 + \ldots + e_{r[\rho(p_0), \rho(p_i)]}) = \varphi_{\rho(\sigma)}(\rho(x)),
\end{align*}
where we use the rank-preserving property of $\rho$.
Thus we have (2): 
\begin{align*}
D_p(x,y) &= \|\varphi_{\sigma}(x) - \varphi_{\sigma}(y) \|_p  \\ &
=\|\varphi_{\rho(\sigma)}(\rho(x)) - \varphi_{\rho(\sigma)}(\rho(y)) \|_p = D_p(\rho(x), \rho(y)) \quad (x,y \in \sigma).
\end{align*}

To (3)$\&$(4): The first part of (3) follows from (2).
Indeed,
take two points $x,y \in K({\mathcal M})$ and a geodesic $P$
connecting $x$ and $y$. Let  $x = x_0,x_1,\ldots, x_m = y$ be points of $P$
such that for any $i=0,\ldots,m-1$, the points $x_i, x_{i+1}$ belong to a common simplex.
Hence the line-segment between $x_i$ and $x_{i+1}$ in $P$
is isometrically mapped to $K(2^{{\mathcal D}^{\rm ir}})$.
This means that the image $\rho(P)$ of $P$
is a path between $\rho(x)$ and $\rho(y)$ of the same length as $P$.
Hence $D_p( \rho(x),   \rho(y)) \leq D_p( \rho(P))\le  D_p(P) = D_p(x,y)$.

In particular, if $x,y$ belong to $K(\mathcal{D})$,
then $x = \rho(x)$, $y = \rho(y)$, and $\rho(P)$ is
also a geodesic between $x$ and $y$ belonging to $K(2^{\mathcal{D}^{\rm ir}})$.
Identify $K(\mathcal{D})$ with $P(\mathcal{D})$
and $K(2^{\mathcal{D}^{\rm ir}})$ with $[0,1]^n$.
Since $P(\mathcal{D})$  is isometric to a convex polytope in $[0,1]^n$,
the segment between $x$ and $y$ is a geodesic belonging to $P(\mathcal{D})$.
This implies (4): $K(\mathcal{D})$
is an isometric subspace of $K(\mathcal{M})$ (and of $K(2^{{\mathcal{D}}^{\mathrm{ir}}})$).

Finally we show the second part of (3).
Take a maximal chain $C'$ such that $K(C')$ contains $y$.
Consider a distributive sublattice $\mathcal{E}$ containing $C$ and $C'$.
We can suppose that $\mathcal{E}^{\mathrm{ir}} = \{r'_1,r'_2,\ldots, r'_n\}$
and $p_{i} = r'_1 \vee r'_2 \vee \cdots \vee r'_{i}$ for $i=1,2,\ldots,n$.
Define map $\vartheta: 2^{\mathcal{E}^{\mathrm{ir}}} \to 2^{\mathcal{D}^{\mathrm{ir}}}$
by $\{r'_{i_1}, r'_{i_2}, \ldots, r'_{i_k}\} \mapsto
\{ r_{i_1}, r_{i_2}, \ldots, r_{i_k}\}$, and extend it to
$K(2^{\mathcal{E}^{\mathrm{ir}}}) \to K(2^{\mathcal{D}^{\mathrm{ir}}})$ as above.
Then it is easy to see
\begin{eqnarray*}
&& D_p (\vartheta(u), \vartheta(v)) = D_p(u,v) \quad (u,v \in K(2^{\mathcal{E}^{\mathrm{ir}}})),  \\
&& \vartheta(x) = x = \rho(x) \quad (x = \sum_i \lambda_i p_i \in K(C)),
\end{eqnarray*}
where the latter equalities follow from (1) and
$\vartheta (x) = \sum_i \lambda_i \vartheta (\{ r'_1,r'_2,\ldots,r'_i \})
= \sum_i \lambda_i \{ r_1,r_2,\ldots,r_i \} \simeq x$.
We show $\vartheta(y) = \rho(y)$ for $y \in K(\mathcal{E})$,
implying $D_p(x,y) = D_p(\vartheta(x), \vartheta(y)) = D_p(\rho(x), \rho(y))$
for $x \in K(C)$.
It suffices to show that $\vartheta(q) = \rho(q)$ for $q \in \mathcal{E}$.
Since $q = \bigvee_{r'_j \preceq q} r'_j =  \bigvee_{j: p_j \wedge q \succ p_{j-1} \wedge q} r'_j \simeq \{r'_j :  p_j \wedge q \succ p_{j-1} \wedge q \}$
(by (\ref{eqn:rho(q)})),
we have $\vartheta(q) = \{r_j :  p_j \wedge q \succ p_{j-1} \wedge q \}
= \rho(q)
$, as required.
\end{proof}

\section{Proof of Theorem~\ref{thm:modularlatticeCAT(0)}}\label{subsec:ortho_proof1}

Let $D := D_2$.
Pick any  $x,y \in K({\mathcal M})$.
By Theorem~\ref{thm:Dedekind-Birkoff},
there exists a distributive lattice ${\mathcal D}$ (of rank $n$) such that $x, y \in K({\mathcal D})$.
By Lemma~\ref{lem:nonincrease}~(4), we can also take a geodesic $\gamma(x,y) \subseteq K({\mathcal D})$
connecting $x,y$.
By \cite{BuildingBook}*{Proposition 11.4},
it suffices to show that for any $z \in K({\mathcal M})$, $t \in [0,1]$ and $p \in \gamma(x,y)$
with $D(x,p) = t D(x,y)$,  it holds that
\begin{equation}\label{eqn:D^2(z,p)}
D^2(z,p) \leq (1 - t)D^2(z,x) + t D^2(z,y) - t(1- t)D^2(x,y).
\end{equation}
Pick  a maximal chain $C$ in ${\mathcal D}$ with $p \in K(C)$.
Consider the image $z' := \rho_{{\mathcal D}, C}(z)$ of $z$.
Then $x, y, p, z'$ can be viewed as points of $[0,1]^n \simeq K(2^{\mathcal{D}^{\rm ir}})$
so that $p = (1-t) x + t y$.
Hence we have
\[
D^2(z',p) = (1 - t)D^2(z',x) + t D^2(z',y) - t(1- t)D^2(x,y).
\]
See \cite{BuildingBook}*{p.\ 551}.
By Lemma~\ref{lem:nonincrease}~(3),
we have $D^2 (z',p) = D^2(z,p)$, $D^2(z',x) \leq D^2(z,x)$,
and $D^2(z',y) \leq D^2(z,y)$, to obtain (\ref{eqn:D^2(z,p)}).

\begin{remark}
If  the modular lattice $\mathcal{M}$ in question is embedded
into a complemented modular lattice $\mathcal{L}$, then one can show that
$K(\mathcal{M})$ is an isometric subspace of $K(\mathcal{L})$.
Hence the CAT$(0)$--property of $K(\mathcal{M})$ follows from the CAT$(0)$--property of
$K(\mathcal{L})$ (Theorem~\ref{thm:HKS}).
However, there exist modular lattices that cannot be embedded into
complemented modular lattices~\cite{Gratzer}*{Corollary 443}.
\end{remark}

\section{Proof of Propositions~\ref{prop:gluing_semilattices} and \ref{prop:gatedsemilattice=>iso}}
Let $G$ be an orientable modular graph and $o$ be an admissible orientation of $G$.
Consider the poset $\mathcal{P}(G,o)$ on $V(G)$ induced by $o$.
Then $\mathcal{P}(G,o)$ is graded.
Hence we can consider the orthoscheme complex $K(\mathcal{P}(G,o))$. Then
Proposition~\ref{prop:gatedsemilattice=>iso} is a special case of the following:
\begin{Prop}\label{prop:gatesemilattice=>iso'}
Let $G$ be an orientable modular graph with an admissible orientation $o$
and let $H$ be a gated subgraph of $G$.
For any $p > 0$,
there exists a nonexpansive retraction from $(K({\mathcal P}(G,o)), D_p)$ to
$(K(\mathcal{P}(H,o)), D_p)$.
In particular, $(K(\mathcal{P}(H,o)), D_p)$ is an isometric subspace of $(K({\mathcal P}(G,o)), D_p)$.
\end{Prop}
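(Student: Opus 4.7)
The plan is to define the \emph{gate projection} $\pi\colon V(G) \to V(H)$ assigning to each vertex of $G$ its gate in $H$, extend $\pi$ linearly on each simplex of the orthoscheme complex to a map $\pi\colon K(\mathcal{P}(G,o)) \to K(\mathcal{P}(H,o))$, and verify that this extension is a nonexpansive retraction. The ``in particular'' statement is then automatic: for $x,y \in K(\mathcal{P}(H,o))$, any geodesic $\gamma$ between them in the ambient complex has image $\pi(\gamma)$ of length at most $\ell(\gamma)$ contained in $K(\mathcal{P}(H,o))$, matching the upper bound coming from viewing $K(\mathcal{P}(H,o))$ as a subcomplex.

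First I would check that $\pi$ respects the admissible orientation, in the sense that if $p \precdot q$ in $\mathcal{P}(G,o)$, then either $\pi(p)=\pi(q)$ or $\pi(p) \precdot \pi(q)$ in $\mathcal{P}(H,o)$. Standard properties of gated subgraphs of the bipartite modular graph $G$ give that, for any edge $xy$ of $G$ with $\pi(x)\neq\pi(y)$, one has $d(x,\pi(x))=d(y,\pi(y))$, the gates $\pi(x),\pi(y)$ are adjacent, and the four vertices $\{x,y,\pi(y),\pi(x)\}$ span a geodesic rectangle tiled by $d(x,\pi(x))$ parallel squares. Admissibility of $o$ forces opposite edges in each such square to be co-oriented, so propagating across the tiling gives $x\to y$ iff $\pi(x) \to \pi(y)$. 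Consequently $\pi$ sends covers to covers or equalities and, a fortiori, chains to chains.

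Next I would extend $\pi$ simplex-wise: for the orthoscheme simplex $\sigma$ of a maximal chain $(p_0 \precdot p_1 \precdot \cdots \precdot p_k)$ and $x=\sum_{i=0}^k \lambda_i p_i \in \sigma$, set $\pi(x):=\sum_{i=0}^k \lambda_i \pi(p_i)$. With $S=\{i : \pi(p_{i-1})\neq \pi(p_i)\}=\{i_1<i_2<\cdots<i_l\}$ and $q_0 \precdot q_1 \precdot \cdots \precdot q_l$ the distinct values in $(\pi(p_0),\ldots,\pi(p_k))$, the point $\pi(x)$ lies in the simplex $\sigma'$ corresponding to $(q_0,\ldots,q_l)$ in $K(\mathcal{P}(H,o))$, and consistency across overlapping simplices is immediate. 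The key orthoscheme identity, obtained by a direct calculation using $\mu_m:=\sum_{\pi(p_i)=q_m}\lambda_i$, reads
\[
\varphi_{\sigma'}(\pi(x))_n \;=\; \sum_{m=n}^l \mu_m \;=\; \sum_{i=i_n}^k \lambda_i \;=\; \varphi_\sigma(x)_{i_n} \qquad (n=1,\ldots,l).
\]
Hence $\pi|_\sigma$ is realised as the coordinate projection $P\colon \RR^k \to \RR^l$, $v \mapsto (v_{i_1},\ldots,v_{i_l})$, which is $l_p$-nonexpansive for every $p>0$. Passing to the length metric by decomposing paths into simplex segments yields $D_p(\pi(x),\pi(y)) \leq D_p(x,y)$ globally; since $\pi$ restricts to the identity on $V(H)$ and hence on $K(\mathcal{P}(H,o))$, the map $\pi$ is a nonexpansive retraction, as required.

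The main obstacle is the first step: the proof that gate projection respects the admissible orientation. While it is standard that $\pi$ is a morphism of the covering graph sending edges to edges or single vertices, justifying that the rectangle $\{x,y,\pi(y),\pi(x)\}$ actually tiles by squares in $G$ and that admissibility propagates uniformly across this tiling requires carefully invoking the parallelism (``$\Theta$-class'') structure of edges in bipartite modular graphs and the compatibility of gate maps with this parallelism. Once this is in place, the remainder reduces to the routine orthoscheme computation above.
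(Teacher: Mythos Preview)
Your proposal is correct and follows essentially the same approach as the paper: gate projection, order-preservation of the projection with respect to the admissible orientation, linear extension to simplices, and the orthoscheme coordinate computation showing that the map is a coordinate projection and hence $l_p$-nonexpansive on each simplex. Your identity $\varphi_{\sigma'}(\pi(x))_n = \varphi_\sigma(x)_{i_n}$ is exactly the paper's computation.

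One small point worth noting: the order-preservation step (your ``main obstacle'') is handled in the paper more directly than you anticipate. Rather than invoking $\Theta$-class parallelism machinery, the paper simply picks a shortest path $(p=p_0,p_1,\ldots,p_m=\phi(p))$ and uses repeated applications of the quadrangle condition (QC) to build a parallel shortest path $(q=q_0,q_1,\ldots,q_m=\phi(q))$ with $p_i\sim q_i$ for all $i$; admissibility on each square $p_{i-1}p_iq_iq_{i-1}$ then propagates the orientation of $pq$ to $\phi(p)\phi(q)$. So the ``tiling by squares'' you mention is manufactured on the spot from (QC) rather than cited from a structural theory.
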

\begin{proof}
Let $X$ be the vertex set of  $H$.
Let $\phi: V \to X$ denote the projection to the gated set $X$, where
$\phi(v)$ is the gate in $X$ of $v\in V$.

\begin{claim}
For $p,q \in V(G)$, if $p \preceq q$, then $\phi(p) \preceq \phi(q)$
\end{claim}
\begin{proof}
It suffices to consider the case where $q$ covers $p$.
Since $\phi$ is nonexpansive, $\phi(p) = \phi(q)$ or
$\phi(p)$ and $\phi(q)$ are adjacent.
Suppose that $\phi(p)$ and $\phi(q)$ are adjacent.
Consequently,
$d(p, \phi(q)) = d(p, \phi(p)) + 1 = d(q, \phi(p)) + 1 = d(q, \phi(q))$ must hold.
Pick  a shortest path $(p = p_0,p_1,\ldots,p_m = \phi(p))$.
By successive application of (QC), there is a shortest path $(q = q_0,q_1,\ldots,q_m = \phi(q))$
such that $p_i$ and $q_i$ are adjacent.
Hence $p \preceq q$ and $p_{i} \preceq q_{i}$, which implies $\phi(p) \preceq \phi(q)$.
\end{proof}

In particular, $\phi$ is order-preserving on ${\mathcal{P}}(G,o)$  and can be extended
to the map $K({\mathcal{P}}(G,o)) \to K({\mathcal{P}}(H,o))$ by setting
\[
\phi(x) := \sum_i \lambda_i \phi(x_i) \quad \mbox{ for } x = \sum_{i} \lambda_i x_i \in K({\mathcal{P}}(G,o)).
\]

We are going to show that $\phi$ is a desired nonexpansive retraction.
Consider a maximal simplex $\sigma$ corresponding to a maximal chain
$(y = y_0 \prec y_1 \prec \cdots \prec y_n = z)$ (with $r[y_{i}, y_{i+1}]=1$).
For a point $x = \sum_{i=0}^n \lambda_i y_i \in \sigma$, we have
\begin{equation}\label{eqn:k=i_to_n}
\varphi_\sigma (x) = \sum_{i=1}^n \lambda_i (e_1+e_2+ \cdots + e_i)
= \sum_{i=1}^n \left( \sum_{k=i}^n \lambda_k \right) e_i.
\end{equation}
Next consider the image $\sigma'$ of $\sigma$ by the map $\phi$.
Then simplex $\sigma'$ corresponds to
a chain
$(y' = y'_0 \prec y'_1 \prec \cdots \prec y'_{m} = z')$,
where $y'_0, y'_1, \ldots,y'_m$ are the images of $y_1,y_2,\ldots, y_n$,
$r[y'_{j},y'_{j+1}] = 1$, and $m \leq n$.
Let $f:\{0, 1,2,\ldots,n\} \to \{0, 1,2,\ldots, m\}$
be the map defined by $\phi(y_i) = y'_{f(i)}$ for $i=0,1,2,\ldots,n$.
Then the inverse images $f^{-1}(j)$ for $j=0,1,2,\ldots,m$
are nonempty disjoint intervals of $\{0, 1,2,\ldots,n\}$.
Then we have
\[
\phi(x) = \sum_{i=0}^n \lambda_i \phi(y_i)
= \sum_{j=0}^m \left( \sum_{i \in f^{-1}(j)} \lambda_i \right) y'_j.
\]
For $j=1,\ldots, m$,
let $k_j$ denote the minimum index in $f^{-1}(j)$.
Then $1 \leq k_1 < k_2 < \cdots < k_m \leq n$, and it holds
\begin{equation}\label{eqn:k=k_j_to_n}
\varphi_{\sigma'} (\phi (x))  =
\sum_{j=1}^{m} \left( \sum_{i \in f^{-1}(j)} \lambda_{i} \right) (e_1 + e_2 + \cdots + e_j)
 =  \sum_{j=1}^m \left( \sum_{i = k_j}^n \lambda_{i} \right) e_j.
 \end{equation}
Pick any two points $x = \sum_i \lambda_i y_i$ and $y = \sum_i \mu_i y_i$ in $\sigma$.
By (\ref{eqn:k=i_to_n}) and (\ref{eqn:k=k_j_to_n}), we have
\begin{align*}
D_p(x,y) &= \left\| \sum_{i=1}^n \left( \sum_{k=i}^n \lambda_k - \mu_k \right) e_i \right\|_p
 \\&\geq\left\| \sum_{j =1}^m \left( \sum_{k=k_j}^n \lambda_k - \mu_k \right) e_j \right\|_p
= D_p(\phi(x),\phi(y)).
\end{align*}
Therefore $\phi$ is nonexpansive on each simplex,
and, consequently,  $\phi$ is a nonexpansive map from
$K({\mathcal{P}}(G,o))$ to $K({\mathcal{P}}(H,o))$.
\end{proof}
To prove Proposition \ref{prop:gluing_semilattices}, suppose that a modular semilattice $\mathcal{L}$ is
the gated amalgam of two modular semilattices $\mathcal{M}$ and $\mathcal{M}'$
with respect to a common gated subsemilattice $\mathcal{L}_0$ of $\mathcal{M}$ and $\mathcal{M}'$.
By Proposition~\ref{prop:gatedsemilattice=>iso},
$(K(\mathcal{L}_0),D_2)$ is an isometric subspace of
both $(K(\mathcal{M}),D_2)$ and $(K(\mathcal{M}'),D_2)$.
Since $K(\mathcal{M})$ and $K(\mathcal{M}')$ are CAT$(0)$,
$K(\mathcal{L}_0)$ is a common convex subspace.
Hence $K(\mathcal{L})$ is obtained by gluing $K(\mathcal{M})$ and $K(\mathcal{M}')$
along a convex subspace $K(\mathcal{L}_0)$.
By Reshetnyak's gluing theorem~\cite{BrHa}*{Theorem 11.1},
$(K(\mathcal{L}),D_2)$ is CAT(0). 

\section{Proof of Proposition~\ref{prop:specialcases}}\label{subsec:ortho_proof2}

To (ii):
The following argument is essentially the same as the one
in \cite{Ch_CAT}*{p.\ 131}. It suffices to show that the link in $K(\mathcal{M})$
of every vertex has no isometric cycles of length less than $2\pi$.
It suffices to consider the link of $0$
(since the links of other vertices are stars with center $0$, and cannot have cycles).
The respective link is the graph obtained from
the covering graph of $\mathcal{M}$ by deleting $0$ and
with edge-lengths equal to $\pi/4$.
By the second condition of the definition,
this graph cannot have a cycle with less than $8$ edges.
This implies that the link has no isometric cycles of length less than $2\pi$.

To (iii):
Suppose that the rank of ${\mathcal L}$ is $n$.
A {\em polar frame} $F$ is a set of $2n$ points (atoms)
such that every point in $F$ is collinear with all others points except one.
Namely, $F$ is partitioned into $n$ pairs $\{a_i,  \bar a_i \}$ $(i=1,2,\ldots,n)$
so that  $a_i$ and $\bar a_i$ are not collinear.
Let $\langle F \rangle$ be the subsemilattice
of ${\mathcal L}$ generated by $F$.
Every element of $\langle F \rangle$ is uniquely represented as $\bigvee_{b \in X} b$
for some $X \subseteq F$ with $|X \cap \{a_i, \bar a_i\}| \leq 1$ for $i=1,2,\ldots,n$.
As in Lemma~\ref{lem:cube}, one can see that
$K(\langle F \rangle)$ is isometric to the $n$--cube $[-1,1]^n$ in $\RR^n$
by the map
\[
\sum_{k=0}^n \lambda_k (b_{i_1} \vee b_{i_2} \vee \cdots \vee b_{i_k})
\mapsto \sum_{k=1}^n \lambda_k ( s(b_{i_1}) e_{i_1} +  s(b_{i_2}) e_{i_2} + \cdots + s(b_{i_k}) e_{i_k}).
\]
where $b_i \in \{a_i, \bar a_i\}$ and $s(b_{i}) := + 1$ if $b_i = a_i$, and $s(b_{i}) := - 1$ if $b_i = \bar a_i$.

The reduced order complex of $\mathcal{L}$ is
a spherical building of type C, and
the system of apartments is
the set of subcomplexes
corresponding to all polar frames~\cite{Ti}.
Consider a canonical retraction
$\rho = \rho_{{\Sigma}, C}$ to an apartment $\Sigma$.
Extend $\rho$ to the order complex of $\mathcal{L}$ by defining identity on ${\bf 0}$,
and extend $\rho$ to $K({\mathcal L})$.
The resulting map $\rho$ satisfies the properties of
Lemma~\ref{lem:nonincrease1}.
Also the analogue of Lemma~\ref{lem:Booleansublattice} holds
for polar frames (which is nothing but the building's axiom B1).
Therefore precisely the same argument of
the proof of Theorem~\ref{thm:modularlatticeCAT(0)}
works to conclude that $K(\mathcal{L})$ is CAT$(0)$.

To (i): We start with the following basic lemma:

\begin{Lem}\label{gated_semi}
Let $\mathcal{M}$ be a median semilattice and let $\mathcal{D}$
be a median subsemilattice of $\mathcal{M}$.
For an atom $a$ of $\mathcal{M}$,
define $\mathcal{D}^a$ and $\mathcal{D}_a$ by setting
\[
\mathcal{D}^a  :=  \{ p \in \mathcal{D}:  \mbox{$p \vee a$ exists in $\mathcal{M}$}  \}, \quad
\mathcal{D}_a  :=  \{ p \in \mathcal{D}:  p \wedge a = 0 \}.
\]
Then both $\mathcal{D}^a$ and $\mathcal{D}_a$ are gated subsemilattices of $\mathcal{D}$,
and $\mathcal{D}$ is the gated amalgam of $\mathcal{D}^a$ and $\mathcal{D}_a$.
\end{Lem}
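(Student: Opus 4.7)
The plan is to verify the two defining properties of a gated subsemilattice---convexity and closure under existing joins---for each of $\mathcal{D}^a$ and $\mathcal{D}_a$ separately, and then check the union $\mathcal{D}^a\cup\mathcal{D}_a=\mathcal{D}$. Throughout, the two structural features of the median semilattice $\mathcal{M}$ that I will repeatedly exploit are that every principal ideal is a distributive lattice, and that a triple $x,y,z$ admits the join $x\vee y\vee z$ whenever all three pairwise joins exist.

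The first, easier step will be to check that both $\mathcal{D}^a$ and $\mathcal{D}_a$ are closed under meets and under taking intermediate elements. For $\mathcal{D}_a$ this reduces to the identity $(p\wedge q)\wedge a = p\wedge(q\wedge a) = 0$ and the monotonicity $r\preceq q\in\mathcal{D}_a \Rightarrow r\wedge a\preceq q\wedge a=0$; for $\mathcal{D}^a$, both $(p\wedge q, a)$ and $(r,a)$ with $r\preceq q\in\mathcal{D}^a$ are bounded above by $p\vee a$, respectively $q\vee a$, so the required joins with $a$ exist. Closure of $\mathcal{D}^a$ under existing joins will then follow immediately from the triple-join property: if $p\vee q$ exists for $p,q\in\mathcal{D}^a$, all three pairwise joins among $p,q,a$ are defined, so $p\vee q\vee a$ exists and hence $p\vee q\in\mathcal{D}^a$.

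The main obstacle will be closure of $\mathcal{D}_a$ under existing joins, and this is precisely where the median (rather than merely modular) hypothesis on $\mathcal{M}$ becomes essential. Given $p,q\in\mathcal{D}_a$ whose join $p\vee q$ exists, I must show $(p\vee q)\wedge a=0$. Since $a$ is an atom, this meet is either $0$ or $a$; if it were $a$, then $a$ would lie in the principal ideal $[0, p\vee q]$, and distributivity of that ideal would yield
\[
a \;=\; a\wedge(p\vee q) \;=\; (a\wedge p)\vee(a\wedge q) \;=\; 0,
\]
contradicting the atom property. Finally, to obtain the amalgam identity $\mathcal{D}=\mathcal{D}^a\cup\mathcal{D}_a$, I will invoke the atom property once more: for any $p\in\mathcal{D}$ the meet $p\wedge a$ is either $0$, placing $p\in\mathcal{D}_a$, or equals $a$, in which case $a\preceq p$ and $p\vee a = p$ trivially exists, so $p\in\mathcal{D}^a$. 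The intersection $\mathcal{D}^a\cap\mathcal{D}_a$ is then automatically gated as the intersection of two gated subsemilattices, completing the description of $\mathcal{D}$ as a gated amalgam.
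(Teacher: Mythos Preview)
Your proposal is correct and follows essentially the same approach as the paper: both verify that $\mathcal{D}^a$ and $\mathcal{D}_a$ are subsemilattices closed under intervals and under existing joins, using the triple-join property of median semilattices for $\mathcal{D}^a$ and distributivity of principal ideals for $\mathcal{D}_a$, and then use the atom property of $a$ to obtain $\mathcal{D}=\mathcal{D}^a\cup\mathcal{D}_a$.
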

\begin{proof}
Pick $p,q \in \mathcal{D}^a$; then $p \vee a$ and $q \vee a$ exist in $\mathcal{M}$.
It is obvious that $p \wedge q$ and $a$ have the join, implying $p \wedge q \in \mathcal{D}^a$.
If $p \preceq q$, then obviously $[p,q] \subseteq \mathcal{D}^a$.
If the join of $p, q$ exists in $\mathcal{D}$, then it must be equal to the join of $p,q$ in $\mathcal{M}$
and $p \vee a, q \vee a$, and $p \vee q$ exist, implying that $p \vee q \vee a$ also exists in $\mathcal{M}$.
Thus $p \vee q \in \mathcal{D}^a$, and $\mathcal{D}^a$ is a gated subsemilattice of $\mathcal{D}$.

Similarly,  pick  $p,q \in \mathcal{D}_a$; then $p \wedge a = q \wedge a = 0$.
It is obvious that $[p,q] \subseteq \mathcal{D}_a$ if $p \preceq q$.
Since $p \wedge q \wedge a = 0$,
we have $p \wedge q \in \mathcal{D}_a$.
If  $p \vee q$ exists, then $(p \vee q) \wedge a = (p \wedge a) \vee (q \wedge a)  =0$,
implying $p \vee q \in \mathcal{D}_a$. Hence $\mathcal{D}^a$ is a gated subsemilattice.
Since $p \wedge a \succ 0$ implies $p \succeq a$ and $p \vee a = p$,
we have $\mathcal{D} = \mathcal{D}^a \cup \mathcal{D}_a$, whence $\mathcal{D}$
is a gated amalgam of $\mathcal{D}^a$ and $\mathcal{D}_a$.
\end{proof}

Let $\mathcal{D}$ be a median semilattice.
Let $\mathcal{D}^{\rm ir}$ be the set of irreducible elements of $\mathcal{D}$.
Since every principal ideal is distributive,
any element $p$ in $\mathcal{D}$ is uniquely represented as
\[
 p =  \bigvee \{  r \in \mathcal{D}^{\rm ir}:  r \preceq p\}.
\]
Let ${\mathcal B}({\mathcal D})$ be the set of subsets $X$
of $\mathcal{D}^{\rm ir}$
such that the join over $X$ exists in $\mathcal{D}$.
Regard ${\mathcal B}({\mathcal D})$ as a poset with respect to the inclusion order.
\begin{Lem}\label{lem:B(D)}
${\mathcal B}({\mathcal D})$ is a median semilattice such that each principal ideal is a Boolean lattice.
\end{Lem}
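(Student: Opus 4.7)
The plan is to verify directly, in terms of the inclusion order on $\mathcal{B}(\mathcal{D})$, the two defining properties of a median semilattice together with the Boolean structure of the principal ideals. The whole argument will pivot on one technical observation: for every $X \in \mathcal{B}(\mathcal{D})$ and every subset $Y \subseteq X$, one also has $Y \in \mathcal{B}(\mathcal{D})$.

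To see this, set $p := \bigvee X \in \mathcal{D}$. Every element of $X$, and hence of $Y$, lies in the principal ideal $(p)^{\downarrow}$ of $\mathcal{D}$, which by hypothesis is a distributive lattice; in the finite-rank setting underlying Proposition~\ref{prop:specialcases}\,(i) each such ideal is finite, so $\bigvee Y$ exists in $(p)^{\downarrow}$ and hence in $\mathcal{D}$. This observation yields the Boolean structure of the principal ideals at one stroke: the principal ideal of $X$ in $\mathcal{B}(\mathcal{D})$ consists of all $Y \in \mathcal{B}(\mathcal{D})$ with $Y \subseteq X$, which is then simply $2^{X}$ under inclusion. The same observation shows that $\mathcal{B}(\mathcal{D})$ is a meet-semilattice with $Y_{1} \wedge Y_{2} = Y_{1} \cap Y_{2}$, since $Y_{1} \cap Y_{2} \subseteq Y_{1}$ automatically belongs to $\mathcal{B}(\mathcal{D})$.

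The last condition to check is the triple-join property. If $X_{1}, X_{2}, X_{3} \in \mathcal{B}(\mathcal{D})$ are pairwise upper-bounded in $\mathcal{B}(\mathcal{D})$, then for each pair $\{i,j\}$ there is some $Z_{ij} \in \mathcal{B}(\mathcal{D})$ with $X_{i} \cup X_{j} \subseteq Z_{ij}$. Setting $p_{i} := \bigvee X_{i}$, the inclusion $X_{i} \subseteq Z_{ij}$ forces $p_{i} \preceq \bigvee Z_{ij}$, so $p_{i}$ and $p_{j}$ both lie in the distributive principal ideal $(\bigvee Z_{ij})^{\downarrow}$ of $\mathcal{D}$, and thus $p_{i} \vee p_{j}$ exists in $\mathcal{D}$. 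Invoking the median-semilattice axiom for $\mathcal{D}$, the triple join $q := p_{1} \vee p_{2} \vee p_{3}$ exists in $\mathcal{D}$; then the full set of irreducibles of $\mathcal{D}$ at or below $q$ belongs to $\mathcal{B}(\mathcal{D})$ (with join $q$, by Birkhoff) and contains $X_{1} \cup X_{2} \cup X_{3}$, so by the earlier observation $X_{1} \cup X_{2} \cup X_{3}$ itself belongs to $\mathcal{B}(\mathcal{D})$, and one checks routinely that it is the join.

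The point requiring care, which I expect to be the main subtlety, is precisely this bridging between order in $\mathcal{B}(\mathcal{D})$ and order in $\mathcal{D}$: the hypothesis of pairwise upper-boundedness is stated at the level of subsets of $\mathcal{D}^{\mathrm{ir}}$, while the machinery (distributivity of principal ideals, existence of triple joins) must be drawn from $\mathcal{D}$. The clean passage from subsets of $\mathcal{D}^{\mathrm{ir}}$ to their joins in $\mathcal{D}$, and back via the Birkhoff-type correspondence between elements of a distributive lattice and their sets of irreducibles, is what makes the argument go through; once this is in place, the remaining verifications are immediate from the definitions.
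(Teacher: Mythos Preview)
The proposal is correct and takes essentially the same approach as the paper. Your argument is a more explicit version of the paper's: both pass from a pairwise-bounded triple $X_1,X_2,X_3$ in $\mathcal{B}(\mathcal{D})$ to the pairwise-bounded triple $\bigvee X_i$ in $\mathcal{D}$, invoke the median-semilattice axiom there, and conclude that $X_1\cup X_2\cup X_3\in\mathcal{B}(\mathcal{D})$; the paper simply declares the Boolean structure of principal ideals ``obvious'' where you spell out the downward-closure observation that justifies it.
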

\begin{proof}
Obviously each principle ideal of ${\mathcal B}({\mathcal D})$ is a Boolean lattice.
Take a pairwise bounded triple $X,Y,Z$ of ${\mathcal B}({\mathcal D})$.
By definition, we can choose elements $s,t,u \in \mathcal{D}$ such that
$s = \bigvee X$, $t = \bigvee Y$, and $u = \bigvee Z$.
Then $s,t,u$ is a pairwise bounded triplet in $\mathcal{D}$.
Hence $s \vee t \vee u$ exists, which implies that $X \cup Y \cup Z$ is a member of ${\mathcal B}({\mathcal D})$.
\end{proof}

Notice that $\mathcal{D}$ is embedded in ${\mathcal B}({\mathcal D})$
by the  map $\tau: \mathcal{D} \to {\mathcal B}({\mathcal D})$ defined by
\[
{\mathcal D} \ni p \mapsto \{  r \in \mathcal{D}^{\rm ir}: r \preceq p  \} \in {\mathcal B}({\mathcal D}).
\]
This map is obviously injective, and preserves the partial order and the meet:
\begin{equation*}
\tau (p \wedge q) =  \{ r \in {\mathcal D}^{\rm ir}: r \preceq p \wedge q  \} = \{ r \in {\mathcal D}^{\rm ir}:  r \preceq p, r \preceq q  \}  =  \tau (p) \cap \tau (q).
\end{equation*}
Therefore we can regard $\mathcal{D}$ as a median subsemilattice
of ${\mathcal B}({\mathcal D})$.

\begin{Lem}\label{lem:ideal_distributive}
The intersection of $\mathcal{D}$ and any principal ideal of ${\mathcal B}({\mathcal D})$
is a distributive sublattice of $\mathcal{D}$.
\end{Lem}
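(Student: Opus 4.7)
The plan is to exploit the order-embedding $\tau : \mathcal{D} \to \mathcal{B}(\mathcal{D})$, $p \mapsto \{r \in \mathcal{D}^{\mathrm{ir}} : r \preceq p\}$, in order to realize $\mathcal{D} \cap (X)^{\downarrow}$ (for an arbitrary $X \in \mathcal{B}(\mathcal{D})$) as a sublattice of the Boolean lattice $(X)^{\downarrow}$ of $\mathcal{B}(\mathcal{D})$; distributivity will then be automatic, since any sublattice of a distributive lattice is distributive. Under the identification afforded by $\tau$, $\mathcal{D} \cap (X)^{\downarrow}$ is precisely the set of $p \in \mathcal{D}$ with $\tau(p) \subseteq X$.

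The key technical step is to verify that $\tau$ preserves meets and existing joins. Meet-preservation $\tau(p \wedge q) = \tau(p) \cap \tau(q)$ was already observed before the statement of the lemma. For joins, I would assume that $s := p \vee q$ exists in $\mathcal{D}$ and appeal to the fact that the principal ideal $(s)^{\downarrow}$ is a distributive lattice, by definition of a median semilattice. Join-irreducibility is an intrinsic property, since any nontrivial factorisation $r = u \vee v$ in $\mathcal{D}$ with $r \preceq s$ automatically involves elements $u,v \preceq s$ and hence takes place inside $(s)^{\downarrow}$. In the distributive lattice $(s)^{\downarrow}$ the standard calculation $r = r \wedge (p \vee q) = (r \wedge p) \vee (r \wedge q)$ forces every join-irreducible $r \preceq s$ to satisfy $r \preceq p$ or $r \preceq q$, giving $\tau(s) \subseteq \tau(p) \cup \tau(q)$; the reverse inclusion is obvious, and so $\tau(p \vee q) = \tau(p) \cup \tau(q)$.

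Having both operations preserved by $\tau$, I would conclude as follows. For $p, q \in \mathcal{D} \cap (X)^{\downarrow}$, the union $\tau(p) \cup \tau(q)$ is a subset of $X$, hence lies in the Boolean ideal $(X)^{\downarrow}$ of $\mathcal{B}(\mathcal{D})$ and in particular in $\mathcal{B}(\mathcal{D})$ itself, so $\bigvee\bigl(\tau(p) \cup \tau(q)\bigr)$ exists in $\mathcal{D}$ and a direct verification identifies it with the least upper bound of $p$ and $q$. Thus $p \vee q$ exists in $\mathcal{D}$ with $\tau(p \vee q) = \tau(p) \cup \tau(q) \subseteq X$, while meet closure is immediate from $\tau(p \wedge q) = \tau(p) \cap \tau(q) \subseteq X$. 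Therefore $\mathcal{D} \cap (X)^{\downarrow}$ is a sublattice of $\mathcal{D}$ which $\tau$ embeds as a sublattice of the Boolean lattice $(X)^{\downarrow}$, and is consequently distributive. The only delicate point I anticipate is the join-preservation of $\tau$: the natural argument demands distributivity of the ambient lattice, which is not available globally in $\mathcal{D}$, but the observation that one only needs distributivity inside the principal ideal $(p \vee q)^{\downarrow}$ makes the median semilattice hypothesis exactly the right one.
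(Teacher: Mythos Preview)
Your argument is correct. The paper takes a shorter route: it asserts that the intersection $\{p \in \mathcal{D} : \tau(p) \subseteq X\}$ coincides with the principal ideal $(\bigvee X)^{\downarrow}$ in $\mathcal{D}$, which is distributive by the very definition of a median semilattice. As written, that identification is too strong---for instance, if $a \prec b$ are irreducibles and $X = \{b\}$, then $a \preceq \bigvee X = b$ while $\tau(a) = \{a\} \not\subseteq X$, so $a$ lies in $(\bigvee X)^{\downarrow}$ but not in the intersection. What \emph{is} true (and what the paper's argument really needs) is that the intersection is contained in $(\bigvee X)^{\downarrow}$ and is closed under meets and joins; your join-preservation step $\tau(p \vee q) = \tau(p) \cup \tau(q)$, deduced from distributivity of the ideal $(p\vee q)^{\downarrow}$, is precisely what supplies this closure. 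Your choice to embed into the Boolean lattice $(X)^{\downarrow} \subseteq \mathcal{B}(\mathcal{D})$ rather than into the distributive lattice $(\bigvee X)^{\downarrow} \subseteq \mathcal{D}$ is then a matter of taste---either ambient lattice certifies distributivity once closure under joins is established---and the point you single out as delicate is exactly the substantive one.
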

\begin{proof}
Pick $X \in {\mathcal B}({\mathcal D})$.
Then an element $p$ of $\mathcal{D}$ belongs to the principal ideal of $X$
if and only if $p \preceq \bigvee X$ (or equivalently
if every irreducible element $r$ with $r \preceq p$ belongs to $X$).
This means that the intersection of $\mathcal{D}$ and the principal ideal of $X$
is equal to the principal ideal of $\bigvee X$ in $\mathcal{D}$.
Thus this intersection  is a distributive lattice.
\end{proof}

Let us start the proof of (i).
We first consider the case when  ${\mathcal D}$ is a finite median semilattice.
We use the induction on the number of elements of ${\mathcal{D}}$.
Consider the semilattice $\mathcal{B}({\mathcal{D}})$.
Suppose that there are atoms $A =\{a\}, A' = \{a'\}$
in $\mathcal{B}({\mathcal{D}})$ $(a,a' \in \mathcal{D}^{\rm ir})$
such that $A$ and $A'$ do not have the join, i.e.,
$\{a,a'\} \not \in \mathcal{B}({\mathcal{D}})$.
By Lemma \ref{gated_semi}, ${\mathcal{D}}$
is the gated amalgam of ${\mathcal{D}}^A$ and ${\mathcal{D}}_{A}$.
Then $\{  r \in \mathcal{D}^{\rm ir}:  r \preceq a' \}$, which is an element of $\mathcal{D}$, does not have
the join with $A = \{a\}$ in $\mathcal{B}(\mathcal{D})$
(otherwise $\{a,a'\} \in \mathcal{B}({\mathcal{D}})$, a contradiction).
Also $\{r \in \mathcal{D}^{\rm ir}:  r \preceq a \}$ (that is in $\mathcal{D}$)
does not belong to $\mathcal{D}_A$.
Therefore both ${\mathcal D}^A \setminus {\mathcal{D}}_A$ and
${\mathcal D}_A \setminus {\mathcal{D}}^A$ are nonempty.
By induction, both $K({\mathcal D}^A)$ and $K({\mathcal D}_A)$ are CAT$(0)$.
By Proposition~\ref{prop:gluing_semilattices},
$K({\mathcal D})$ is CAT$(0)$.

Suppose now that any pair of atoms in $\mathcal{B}({\mathcal{D}})$ has the join.
By Lemma~\ref{lem:B(D)}, 
$\mathcal{B}({\mathcal{D}})$ is a complemented median semilattice.
Thus the join of all atoms exists, and $\mathcal{B}({\mathcal{D}})$ is a Boolean lattice.
By Lemma~\ref{lem:ideal_distributive},
${\mathcal{D}}$ is a distributive lattice. By Proposition~\ref{thm:poly},
$K({\mathcal{D}})$ is a convex polytope, and thus is CAT$(0)$.

Suppose that $\mathcal{D}$ is a (possibly infinite) median semilattice of finite rank $n$.
Then $K(\mathcal{D})$ is a $M^n_0$--polyhedral complex in the sense of \cite{BrHa}.
Notice that $K(\mathcal{D})$ has a finite number of isometry types of cells.
By Bridson's theorem~\cite{BrHa}*{7.19},
$K(\mathcal{D})$ is a (complete) geodesic space. Suppose by way of contradiction
that $(K(\mathcal{D}), D_2)$ is not CAT$(0)$,
and thus it is not uniquely geodesic~\cite{BrHa}*{5.4}. Then two points $x$ and $y$ of $K(\mathcal{D})$
can be joined by two distinct geodesics.
Let $K_0$ denote the set of all simplices of $K(\mathcal{D})$
traversed by those two geodesics. Since any geodesic meets a finite number of simplices~\cite{BrHa}*{7.29},
$K_0$ consists of a finite number of simplices, and hence contains a finite set of vertices.
Let $S_0 \subseteq \mathcal{D}$ denote the set of all such vertices.
Add $0$ to $S_0$.
Consider the convex (gated) hull $\mathcal{D}'$ of $S_0$ in the covering graph of $\mathcal{D}$,
which is a median graph. Then $\mathcal{D}'$ is a gated median subsemilattice of $\mathcal{D}$.
Since $S_0$ is finite, $\mathcal{D}'$ is also finite by Lemma~\ref{lemma:L1_finete_hull}.
Also the rank of any element in $\mathcal{D}'$ is equal to its rank in $\mathcal{D}$.
Therefore, for any chain  of $\mathcal{D}'$,
the corresponding orthoschemes in $K(\mathcal{D}')$ and in $K(\mathcal{D})$ are the same.
So $K(\mathcal{D}')$ can be regarded as a subcomplex of  $K(\mathcal{D})$.
Since $\mathcal{D}'$ is a finite median semilattice,
$(K(\mathcal{D}'), D_2)$ is a CAT$(0)$ space by the first part of the proof.
By construction, the two geodesics between $x$ and $y$
are also geodesics in $K(\mathcal{D}')$. This contradicts
the fact that a CAT$(0)$ space is uniquely geodesic.

\chapter{Orthoscheme Complexes of Swm-Graphs}\label{sec:complex}

Let $G$ be an swm-graph (with uniform edge-length).
Let $K(G)$ denote the orthoscheme complex of
the graded poset ${\mathcal B}(G)$ of
all Boolean-gated sets of $G$ ordered by reverse inclusion,
where each orthoscheme is an orthoscheme of uniform size
and this size is one half of the edge-length of $G$.
The dimension of $K(G)$ is equal to the cube-dimension of $G.$
Orthoscheme complexes of swm-graphs generalize several known CAT$(0)$ complexes, including
median complexes, folder complexes, and Euclidean buildings of type $C_n$.
The objective of this chapter is to investigate the metric properties
of $K(G)$ under $l_1$, $l_2$, and $l_{\infty}$--metrizations.

Let us recall some metric terminologies.
A metric space $(X,d)$ is called {\em modular} if every triplet $x_1,x_2, x_3$ of points in $X$ has a median, i.e., a point $w$ satisfying
$d(x_i, w) + d(w,x_j) = d(x_i, x_j)$ for $1 \leq i < j \leq 3$, and
is called {\it strongly modular} if $(X,d)$  is  a modular space not containing
scale-embedded copies of $K_{3,3}^{-}$.
A metric space $(X,d)$ is said to be {\em (finitely) hyperconvex}
if for every (finite) collection $\mathcal{R}$ of pairs $(x,r)$ of $x \in X$
and $r \geq 0$ satisfying $d(x,y) \leq r + s$ for all $(x,r), (y,s) \in \mathcal{R}$,
there exists $w \in X$ satisfying $d(x,w) \leq r$ for all $(x,r) \in \mathcal{R}$.
\section{Main results}
The most part of this chapter is devoted to the proof of the following theorem:
\begin{Thm}\label{thm:ortho_main}
Let $G$ be an swm-graph of  finite cube-dimension.
Then the orthoscheme complex $K(G)$ of $G$
has the following properties:
\begin{itemize}
\item [(1)] $(K(G), D_1)$ is a strongly modular space;
\item[(2)] $(K(G),D_\infty)$ is finitely hyperconvex, moreover,
if $G$ is locally finite, then $(K(G),D_\infty)$ is hyperconvex;
\item[(3)] $K(G)$ is contractible.
\end{itemize}
\end{Thm}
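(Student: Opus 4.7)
The plan is to treat each of the three assertions via the same local-to-global scheme: first identify the relevant metric property inside a single ``cell'' of $K(G)$ --- an orthoscheme complex of a principal filter or ideal of $\mathcal{B}(G)$, which by Proposition~\ref{prop:ideal} is either the subspace poset of a polar space or a complemented modular join-semilattice --- and then glue across cells using the Helly property of gated sets (Lemma~\ref{lem:gated_Helly}), its sharpening for Boolean-gated sets (Lemma~\ref{lem:Boolean_intersection}, Lemma~\ref{lem:pairwiseBoolean}), and the isometric-embedding result Proposition~\ref{prop:gatedsemilattice=>iso}.

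For~(1), I would first show that the ascending sequence $G \hookrightarrow G^{*} \hookrightarrow G^{*2} \hookrightarrow \cdots$ isometrically embeds into $(K(G), D_1)$ and is dense in it. Each principal ideal $(X)^{\downarrow}$ of $\mathcal{B}(G)$ is a complemented modular lattice (Proposition~\ref{prop:ideal}), so by Lemma~\ref{lem:cube} its orthoscheme complex under $D_1$ is a cube equipped with the $l_1$-metric, i.e.\ a median metric subspace of some $\mathbb{R}^n$. Given three points $x, y, z \in K(G)$, I would pass to a finite subcomplex containing them (using finite cube-dimension so that each point lies in a cell of bounded size) and use that $G^{*i}$ is orientable modular (Theorem~\ref{thm:G*_is_om}) to produce a median at the discrete level, then push the median into the relevant $l_1$-cube containing the three points. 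The absence of scale-embedded $K_{3,3}^{-}$ follows cell by cell, since each $l_1$-cube cannot scale-embed $K_{3,3}^{-}$ and because Proposition~\ref{prop:gatedsemilattice=>iso} lets us rule out cross-cell embeddings.

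For~(2), the same cubes, but now with the $D_\infty$-metric, are (finitely) hyperconvex; by Proposition~\ref{prop:gatedsemilattice=>iso} they sit isometrically in $(K(G), D_\infty)$. Given a finite collection of pairs $(x_i, r_i)$ satisfying the compatibility inequality, I would round each center $x_i$ to a vertex of $G^\Delta$ using $\Delta$-gates (Subsection~\ref{subsub:delta-gate}) and round each radius up to an integer, invoke the finite-Helly property of $G^\Delta$ (Theorem~\ref{thm:Helly}) to obtain a common vertex, and then refine this vertex within the $l_\infty$-cube provided by Lemma~\ref{lem:pairwiseBoolean} to hit the original pseudo-balls. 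In the locally-finite case I would promote finite to arbitrary Helly either by a transfinite induction along a well-ordering of the pseudo-balls (as in the proof of Proposition~\ref{prop:1-Helly-Helly}) or by a compactness argument on the finite-dimensional cells that meet the given family. This gluing step is the main obstacle: the discrete Helly property only directly yields common integer-rounded gates, and controlling the real-valued residuals across several cells requires care.

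For~(3), I would use a Nerve-Lemma argument. For each vertex $v \in V(G)$, let $K_v$ denote the orthoscheme complex of the principal filter $(\{v\})^{\uparrow}$ of $\{v\}$ in $\mathcal{B}(G)$; this filter has minimum element $\{v\}$, so $K_v$ is a cone with apex $\{v\}$ and is contractible. The $K_v$ cover $K(G)$. For any finite set $\{v_1, \dots, v_k\} \subseteq V(G)$, Lemma~\ref{lem:pairwiseBoolean} shows that $K_{v_1} \cap \cdots \cap K_{v_k}$ is nonempty precisely when $\{v_1, \dots, v_k\}$ is a clique in $G^\Delta$, and in that case the intersection is the orthoscheme complex of the principal filter of $\lgate v_1, \dots, v_k \rgate$, again a cone, again contractible. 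By the Nerve Lemma, $K(G)$ is homotopy equivalent to the clique complex $X(G^\Delta)$. Since $G$ has finite cube-dimension, $G^\Delta$ has finite-dimensional clique complex, and Theorem~\ref{thm:Helly} together with Theorem~\ref{t:lotogloHell}(v) gives that $X(G^\Delta)$ is contractible, so $K(G)$ is contractible as well.
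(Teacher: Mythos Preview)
Your argument for (3) is essentially the Nerve Lemma approach that the paper explicitly mentions and rejects for the general case: it writes that ``by Borsuk's Nerve Lemma, $K(G)$ is homotopy equivalent to the clique complex of $G^{\Delta}$\ldots Both approaches do not directly extend to the general non-locally finite case.'' The concrete gap is your claim that finite cube-dimension forces $X(G^\Delta)$ to be finite-dimensional. Cube-dimension bounds the \emph{diameter} of Boolean-gated sets, not their cardinality; an infinite clique in $G$ is Boolean-gated of diameter~$1$ and already gives an infinite clique in $G^\Delta$. So Theorem~\ref{t:lotogloHell}(v) does not apply, and Theorem~\ref{thm:Helly} only yields that $G^\Delta$ is \emph{finitely} Helly, which is not enough to conclude contractibility of $X(G^\Delta)$. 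The paper instead builds an explicit deformation retraction: it shows each $l_\infty$-ball $B_r(p)$ is gated in $(K(G),D_1)$ (by approximating via gates in the discrete balls of $G^{*i}$), and then retracts along $\phi(x,t):=$ the gate of $x$ in $B_{t\cdot D_\infty(p,x)}(p)$.

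For (1) and (2) your outline has the right ingredients but misses the mechanism that makes them work. The sentence ``push the median into the relevant $l_1$-cube containing the three points'' cannot succeed: three generic points of $K(G)$ do not share a common cube, and incidentally the orthoscheme complex of a principal ideal of $\mathcal{B}(G)$ is \emph{not} a cube (Lemma~\ref{lem:cube} applies only to Boolean lattices, while Proposition~\ref{prop:ideal}(3) gives complemented modular join-semilattices). The paper's device is the \emph{digging sequence} (Lemma~\ref{lem:R/2^i}): approximate each point $x$ by vertices $x^i\in V(G^{*i})$ with $D_1(x^i,x^{i+1})\le R/2^{i+1}$. For (1), take a median $m^i$ of $x^i,y^i,z^i$ in the modular graph $G^{*i}$, control $D_1(m^{i-1},m^i)$ by a quantitative lemma of the form $d(m,w)\le M(w;x,y,z)$, and pass to the limit; absence of $K_{3,3}^-$ is then deduced from uniqueness of medians inside intervals of strongly modular graphs. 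For (2), do the analogous thing: round radii to $\ZZ/2^i$, find a common point in the finitely Helly graph $(G^{*i})^\Delta$, and use $\Delta$-gates to bound $D_\infty(z^{i-1},z^i)$ by a geometric series. The ``refinement within a single cube'' you propose is replaced by this iterated refinement across all subdivisions $K(G^{*i})$ of $K(G)$.
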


These three properties are well-known for median complexes (i.e., CAT(0) cube complexes) and
folder complexes~\cites{BaCh,Ch_CAT,vdV}.  In the locally finite case,
the contractibility of $K(G)$ follows from hyperconvexity  and classical results about
hyperconvex spaces.
Indeed, it is known in~\cite{AP56} that hyperconvex metric spaces and injective metric spaces are the same.
Isbell~\cite{Isbell64}*{1.2} showed that any injective space is contractible.
An alternative approach to contractibility of $K(G)$ in the locally finite case is to use the
contractibility of the clique complex of a Helly graph.
Indeed, by Borsuk's Nerve Lemma, $K(G)$ is homotopy equivalent to the clique complex of $G^{\Delta}$,
which by Theorem \ref{thm:Helly} is a Helly graph  and thus  its clique complex is contractible.
Both approaches do not directly extend  to the general non-locally finite case; in the general case,
we will prove contractibility of $K(G)$ by directly constructing a deformation retraction  to a single point.

As we noticed above and also will be shown below,  various CAT$(0)$--complexes can be viewed as orthoscheme complexes of particular swm-graphs.
Therefore, for the $l_2$--metrization of $K(G)$, it seems reasonable to make the following conjecture.
\begin{Conj}\label{conj:CAT(0)}
Let $G$ be an swm-graph of  finite cube-dimension.
Then $(K(G),D_2)$ is a {\rm CAT}$(0)$ space.
\end{Conj}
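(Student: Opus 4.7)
The plan is to reduce Conjecture \ref{conj:CAT(0)} to the semilattice version, Conjecture \ref{conj:modular_CAT(0)}, by means of a standard local-to-global strategy. First, I would verify that $(K(G), D_2)$ is a complete geodesic space: since $G$ has finite cube-dimension, $K(G)$ is a piecewise Euclidean complex with finitely many isometry types of standard orthoschemes, hence complete by Bridson's theorem \cite[I.7.19]{BrHa}. By Theorem \ref{thm:ortho_main}(3), $K(G)$ is contractible and in particular simply connected. By the Cartan-Hadamard theorem for $M_0$--polyhedral complexes \cite[II.4.1]{BrHa}, it then suffices to show that $K(G)$ is locally CAT$(0)$, which by Gromov's link condition \cite[II.5.2]{BrHa} amounts to checking that the link $\mathrm{Lk}(X, K(G))$ is CAT$(1)$ for every vertex $X \in \mathcal{B}(G)$.

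Second, I would exploit the orthogonality built into standard orthoschemes to decompose the link at $X$ as a spherical join. A maximal simplex through $X$ corresponds to a maximal chain through $X$ in $\mathcal{B}(G)$, which splits canonically into a descending chain in $(X)^{\downarrow}$ and an ascending chain in $(X)^{\uparrow}$; the orthoscheme structure places these two pieces in mutually orthogonal coordinate directions, so the tangent cone at $X$ is the Euclidean product of the tangent cones of $K((X)^{\downarrow})$ and $K((X)^{\uparrow})$ at $X$. Hence
\[
\mathrm{Lk}(X, K(G)) = \mathrm{Lk}(X, K((X)^{\downarrow})) * \mathrm{Lk}(X, K((X)^{\uparrow})),
\]
and by the spherical join theorem \cite[II.3.14]{BrHa}, this is CAT$(1)$ if and only if both factors are CAT$(1)$.

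Third, I would handle each factor separately. The up-factor is controlled by Proposition \ref{prop:ideal}(2): the filter $(X)^{\uparrow}$ in $\mathcal{B}(G)$ is the subspace poset of a polar space, so by Proposition \ref{prop:specialcases}(iii) the orthoscheme complex $(K((X)^{\uparrow}), D_2)$ is CAT$(0)$; since $X$ is the unique minimum element of $(X)^{\uparrow}$, it is a cone point of the Euclidean cone $K((X)^{\uparrow})$, and the link of a cone point in a CAT$(0)$ cone is automatically CAT$(1)$ \cite[II.3.14]{BrHa}. For the down-factor, Proposition \ref{prop:ideal}(3) tells us that the ideal $(X)^{\downarrow}$ is a (complemented) modular join-semilattice, hence a modular semilattice in the sense of Bandelt--van de Vel--Verheul; $X$ is its maximum element, and the same cone-point argument reduces CAT$(1)$ of the down-link to CAT$(0)$ of the orthoscheme complex of a modular semilattice, that is, to Conjecture \ref{conj:modular_CAT(0)}.

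The main obstacle, therefore, is Conjecture \ref{conj:modular_CAT(0)} itself. Proposition \ref{prop:specialcases} already settles it for rank-$2$ semilattices, for median semilattices, and for subspace posets of polar spaces, and Proposition \ref{prop:gluing_semilattices} allows the gluing of such building blocks along gated modular subsemilattices via Reshetnyak's theorem. A natural attempted attack would be to imitate the Dedekind--Birkhoff/canonical-retraction scheme used in the proof of Theorem \ref{thm:modularlatticeCAT(0)}: for any two points $x,y \in K(\mathcal{M})$ one would like to find a ``distributive'' subsemilattice $\mathcal{D}$ containing both, realize $K(\mathcal{D})$ as an isometrically embedded convex polytope, and retract the rest of $K(\mathcal{M})$ onto a Boolean envelope in a distance-nonincreasing way so that the CN-inequality can be verified. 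The obstruction is that modular semilattices need not contain enough complemented modular lattice sublattices (since the maximum element may be missing), so a direct analogue of the Dedekind--Birkhoff theorem may fail; handling this missing-top phenomenon, perhaps via a controlled embedding of $\mathcal{M}$ into a larger complemented modular semilattice in a way that induces an isometric embedding of orthoscheme complexes (in the spirit of Proposition \ref{prop:gatedsemilattice=>iso}), is where I expect the real difficulty to lie.
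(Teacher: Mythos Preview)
Your reduction is correct and coincides with what the paper actually establishes: this is precisely Proposition~\ref{prop:localsemilattice}, which the paper states and proves as a conditional result reducing Conjecture~\ref{conj:CAT(0)} to Conjecture~\ref{conj:modular_CAT(0)}. The paper does not prove the conjecture outright, and neither do you; both correctly isolate Conjecture~\ref{conj:modular_CAT(0)} as the remaining obstacle.

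The technical route differs slightly. You work directly with links and spherical joins: decompose $\mathrm{Lk}(X,K(G))$ as the join of the up- and down-links and invoke the cone/CAT$(1)$ correspondence for each factor. The paper instead passes to the barycentric subdivision $K(G^*)$ and considers the star $\mathrm{St}(X,K(G^*))$, which it identifies with the orthoscheme complex of the poset of intervals of $\mathcal{B}(G)$ containing $X$; this poset is the product $(X)^{\downarrow}\times(X)^{\uparrow}$, so the star is the Euclidean product $K((X)^{\downarrow})\times K((X)^{\uparrow})$, and the paper shows it is an isometric subspace of $K(G)$ via the gated-ball machinery (Propositions~\ref{gated_ball} and~\ref{prop:gset=>isosubsp}). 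The paper also sharpens the down-factor step: rather than invoking Conjecture~\ref{conj:modular_CAT(0)} for $(X)^{\downarrow}$ directly, it observes that $(X)^{\downarrow}$ embeds as a gated subsemilattice of $\mathcal{L}_y$ for any $y\in X$ (via Lemma~\ref{lem:filter_is_convex} and Proposition~\ref{prop:gatedsemilattice=>iso}), so one only needs CAT$(0)$ of $K(\mathcal{L}_x)$ for vertices $x\in V(G)$. The two routes are equivalent in content; the paper's star/product formulation has the advantage of reusing the isometric-subspace lemmas already developed, while your link/join formulation is the more standard CAT$(0)$ idiom.
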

We present two sufficient criteria for an swm-graph $G$
to have a CAT$(0)$ orthoscheme complex $K(G)$.
Similarly to the local-to-global characterization of CAT(0)
cell complexes via their links, the first one is a local condition.
It relates the CAT$(0)$--property of $K(G)$ to the CAT(0) property
of the orthoscheme complexes of the link posets of its vertices,
which are all complemented modular  semilattices studied in the previous chapter.
For a vertex $x$ of $G$,
the {\em link poset} $\mathcal{L}_x$ at $x$ is the poset of
all Boolean-gated sets containing $x$ and ordered by inclusion.
\begin{Prop}\label{prop:localsemilattice}
Let $G$ be an swm-graph $G$ of finite cube-dimension.
Then $(K(G), D_2)$ is {\rm CAT}$(0)$ if and only if for every vertex $x$ in $G$,
$(K(\mathcal{L}_x), D_2)$ is {\rm CAT}$(0)$.
\end{Prop}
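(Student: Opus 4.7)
The plan is to invoke Bridson's theorem characterizing CAT$(0)$ piecewise Euclidean complexes with finitely many isometry types of cells: such a complex is CAT$(0)$ if and only if it is simply connected and every vertex link is CAT$(1)$. Since $G$ has finite cube-dimension, both $K(G)$ and every $K(\mathcal{L}_x)$ have only finitely many shapes of orthoschemes; $K(G)$ is simply connected by Theorem~\ref{thm:ortho_main}(3), and each $K(\mathcal{L}_x)$ is isometric (as an orthoscheme complex) to the closed star of $\{x\}$ in $K(G)$, hence contractible. The key tool will be the spherical join decomposition of vertex links in orthoscheme complexes of graded posets: for any graded poset $\mathcal{P}$ and any $Y\in\mathcal{P}$, each standard orthoscheme through $Y$ splits orthogonally at $Y$ into the chains strictly below and strictly above $Y$, yielding an isometric decomposition
\[
Lk(Y, K(\mathcal{P})) \;=\; Lk^-(Y) \ast Lk^+(Y),
\]
where $Lk^-(Y)$ is the link of $Y$ as the maximum in $K((Y)^{\downarrow})$ and $Lk^+(Y)$ is the link of $Y$ as the minimum in $K((Y)^{\uparrow})$. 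By the spherical join theorem, this join is CAT$(1)$ iff both factors are CAT$(1)$.

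For the direction $(\Leftarrow)$, I fix $Y \in \mathcal{B}(G)$. By Proposition~\ref{prop:ideal}(2) the filter $(Y)^{\uparrow_{\mathcal{B}(G)}}$ is the subspace poset of a polar space, so by Proposition~\ref{prop:specialcases}(iii) the orthoscheme complex $K((Y)^{\uparrow_{\mathcal{B}(G)}})$ is CAT$(0)$, hence $Lk^+(Y)$ is CAT$(1)$. For $Lk^-(Y)$, I choose any $y \in Y$; the ideal $(Y)^{\downarrow_{\mathcal{B}(G)}}$ of super-Boolean-gated sets of $Y$ coincides as a poset with the filter $(Y)^{\uparrow_{\mathcal{L}_y}}$ of $Y$ in $\mathcal{L}_y$, so $Lk^-(Y)$ equals the upper-join factor of $Lk(Y,K(\mathcal{L}_y))$; by hypothesis $K(\mathcal{L}_y)$ is locally CAT$(0)$ at $Y$, so this factor is CAT$(1)$. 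Hence every vertex link in $K(G)$ is CAT$(1)$ and $K(G)$ is CAT$(0)$.

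For the direction $(\Rightarrow)$, I fix $x\in V(G)$ and $Y \in \mathcal{L}_x$. The upper-join factor of $Lk(Y,K(\mathcal{L}_x))$ is built from $(Y)^{\uparrow_{\mathcal{L}_x}}=(Y)^{\downarrow_{\mathcal{B}(G)}}$ and coincides with the lower-join factor of $Lk(Y,K(G))$, which is CAT$(1)$ because $K(G)$ is locally CAT$(0)$ at $Y$. The lower-join factor corresponds to chains in the interval $[\{x\},Y]$ of $\mathcal{L}_x$; by Proposition~\ref{prop:ideal}(3), $\mathcal{L}_x$ is a complemented modular semilattice, so this interval is a complemented modular lattice, and Theorem~\ref{thm:HKS} gives $K([\{x\},Y])$ CAT$(0)$, hence the lower-join factor is CAT$(1)$. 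Thus every vertex link in $K(\mathcal{L}_x)$ is CAT$(1)$, and $K(\mathcal{L}_x)$ is CAT$(0)$.

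The main technical step will be to justify the spherical join decomposition of links in orthoscheme complexes of graded posets together with the identification of its two factors with the links of $Y$ as the maximum (respectively, minimum) vertex in the orthoscheme complexes of the principal ideal (respectively, filter) of $Y$. Once those geometric identifications are in place, the two directions follow uniformly from Bridson's link criterion together with the two input CAT$(0)$ results (Theorem~\ref{thm:HKS} and Proposition~\ref{prop:specialcases}(iii)).
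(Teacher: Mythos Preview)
Your argument is correct and is a legitimate alternative to the paper's proof. The two approaches are closely related---essentially Euclidean/spherical duals of one another---but differ in execution.

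The paper does not work with vertex links directly. Instead it passes to the barycentric subdivision $K(G^*)$, takes the closed star $K=\mr{St}(X,K(G^*))$ of a vertex $X\in\mathcal{B}(G)$, and shows (via Propositions~\ref{gated_ball} and~\ref{prop:gset=>isosubsp}) that this star is an \emph{isometric} subspace of $K(G)$; it then identifies $K$ with the orthoscheme complex of the product poset $(X)^{\downarrow}\times(X)^{\uparrow}$, so that $K\cong K((X)^{\downarrow})\times K((X)^{\uparrow})$, and concludes by showing each factor is CAT$(0)$ (the filter factor by Proposition~\ref{prop:specialcases}(iii), the ideal factor as an isometric subspace of $K(\mathcal{L}_y)$ via Proposition~\ref{prop:gatedsemilattice=>iso}). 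Cartan--Hadamard then gives the global statement.

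You instead invoke Bridson's link criterion and the Brady--McCammond spherical join decomposition $Lk(Y)=Lk^-(Y)\ast Lk^+(Y)$ of links in orthoscheme complexes of graded posets, then identify the two join factors with links in $K((Y)^{\uparrow})$ and in $K(\mathcal{L}_y)$ respectively. This is the spherical counterpart of the paper's product-of-stars picture: a product of cones is CAT$(0)$ precisely when the corresponding spherical join of links is CAT$(1)$. Your route is more direct in that it avoids the passage through $G^*$, $G^{*2}$, and the isometric-subspace machinery; the paper's route, on the other hand, packages the link geometry into results (Propositions~\ref{prop:gset=>isosubsp} and~\ref{prop:gatedsemilattice=>iso}) already established for other purposes in the paper.

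One further point: the paper's written proof addresses only the ``if'' direction explicitly (the converse follows from the same star/isometric-subspace setup applied at $X=\{x\}$, but this is not spelled out). Your proof handles both directions symmetrically, using Theorem~\ref{thm:HKS} for the interval $[\{x\},Y]$ in the $(\Rightarrow)$ direction, which is a clean way to close the loop.
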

In particular, this shows that
Conjecture~\ref{conj:modular_CAT(0)}
implies Conjecture~\ref{conj:CAT(0)}.

The second criterion is a global condition analogous to Reshetnyak's gluing theorem for CAT(0) spaces \cite{BrHa}; compare to Proposition~\ref{prop:gluing_semilattices}.
Recall that by Proposition \ref{prod-retracts} the class of swm-graphs is closed under Cartesian products
and gated amalgams. We show that for swm-graphs, the CAT$(0)$--property is also closed
under these two operations:
\begin{Prop}\label{prop:g-amalgam}
Let $H$ and $H'$ be two swm-graphs and let $G$ be the Cartesian product or a gated amalgam of $H$ and $H'$.
If $(K(H), D_2)$ and $(K(H'), D_2)$ are  {\rm CAT}$(0)$ spaces,
then $(K(G), D_2)$ is also a {\rm CAT}$(0)$ space.
\end{Prop}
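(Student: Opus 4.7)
For the Cartesian product $G = H \times H'$: I would first verify that $\mathcal{B}(G) \cong \mathcal{B}(H) \times \mathcal{B}(H')$ as graded posets. This rests on the facts that a subset of $V(G) = V(H) \times V(H')$ is gated in $G$ if and only if it is a Cartesian product of gated subsets of the factors, that such a product is thick if and only if each factor is thick (trivial factors being single vertices), and that diameters add under Cartesian products. The general observation cited just before Proposition~\ref{prop:gluing_semilattices} then yields $(K(\mathcal{B}(G)), D_2) \cong (K(\mathcal{B}(H)) \times K(\mathcal{B}(H')), D_2)$, and the product of two $\mathrm{CAT}(0)$ spaces is $\mathrm{CAT}(0)$ by~\cite[1.15]{BrHa}.

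For the gated amalgam case, suppose $G$ is the gated amalgam of $H$ and $H'$ along a common gated subgraph $H_0$; by Proposition~\ref{prod-retracts}, $H_0$ is itself an swm-graph. The first key step is to show that every Boolean-gated set $X$ of $G$ is contained entirely in $V(H)$ or in $V(H')$. If not, $X$ would contain $p \in V(H) \setminus V(H_0)$ and $q \in V(H') \setminus V(H_0)$, and by Proposition~\ref{prop:char_Boolean}(iv) the pair $(p,q)$ would be Boolean in $G$. Since no edge of $G$ joins $V(H)\setminus V(H_0)$ to $V(H')\setminus V(H_0)$, one has $d_G(p,q) \geq 2$; every shortest $(p,q)$-path passes through the gate $v \in V(H_0)$ of $q$ in $H$, and one checks $I_G(p,q) = I_H(p,v) \cup I_{H'}(v,q)$. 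In this interval all atoms above $p$ lie in $V(H)$ and no join of atoms can reach $q \in V(H')\setminus V(H_0)$, so $I_G(p,q)$ is not complemented, contradicting that $(p,q)$ is Boolean. Conversely, Boolean-gated sets of $H$ (or $H'$) remain Boolean-gated in $G$ because gates in $G$ factor through $H_0$, and no chain in $\mathcal{B}(G)$ can mix $\mathcal{B}(H) \setminus \mathcal{B}(H_0)$ with $\mathcal{B}(H') \setminus \mathcal{B}(H_0)$ (the smaller member of such a chain would lie in $V(H) \cap V(H') = V(H_0)$). Consequently $K(G) = K(H) \cup K(H')$ as metric simplicial complexes, with $K(H) \cap K(H') = K(H_0)$.

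To conclude via Reshetnyak's gluing theorem~\cite[Theorem 11.1]{BrHa}, it remains to show that $K(H_0)$ is a convex isometric subspace of each of $K(H)$ and $K(H')$. Using the identification $\mathcal{B}(H) = \mathcal{P}(H^*, o^*)$ from Theorem~\ref{thm:G*_is_om}, I would verify that $H_0^*$ is a gated subgraph of the orientable modular graph $H^*$ and apply Proposition~\ref{prop:gatesemilattice=>iso'} with $p = 2$, which furnishes both the isometric embedding and a nonexpansive retraction $K(H) \to K(H_0)$. Combined with uniqueness of geodesics in the $\mathrm{CAT}(0)$ space $K(H)$, such a retraction forces $K(H_0)$ to be convex in $K(H)$, and the same argument works for $K(H')$; Reshetnyak's theorem then yields that $(K(G), D_2)$ is $\mathrm{CAT}(0)$.

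The main obstacle will be establishing gatedness of $H_0^*$ in $H^*$. The rank-preserving order-convex embedding $\mathcal{B}(H_0) \hookrightarrow \mathcal{B}(H)$ is straightforward from gatedness of $H_0$ in $H$, but producing the unique nearest element of $\mathcal{B}(H_0)$ for an arbitrary $Z \in \mathcal{B}(H)$ requires tracking how the gate map $\pi \colon V(H) \to V(H_0)$ interacts with the GATED-HULL procedure and the covering relation in $\mathcal{B}(\cdot)$: naive candidates such as $\lgate \pi(Z) \rgate$ need not produce a Boolean-gated set, so a careful argument using the $\Delta$-gates of Lemma~\ref{lem:delta-gate} in the thickening $H^\Delta$ will be needed to identify the correct gate.
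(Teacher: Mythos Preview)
Your approach is correct and matches the paper's: the Cartesian product case via $\mathcal{B}(G)\cong\mathcal{B}(H)\times\mathcal{B}(H')$ and \cite[1.15]{BrHa}, and the gated amalgam case via Reshetnyak's gluing theorem once $K(H_0)$ is known to be an isometric (hence convex, by uniqueness of geodesics in CAT$(0)$) subspace of $K(H)$ and $K(H')$.

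However, you significantly overestimate your ``main obstacle.'' The gatedness of $H_0^*$ in $H^*$ is precisely the content of Proposition~\ref{prop:gset=>isosubsp'} (the strengthening of Proposition~\ref{prop:gset=>isosubsp}), and its proof is short: by Lemma~\ref{lem-weakly-modular_gated_hull}(ii) one only needs that for $Y,Z\in\mathcal{B}(H_0)$, any common neighbor $W$ in $G^*$ also lies in $\mathcal{B}(H_0)$. If $W\subseteq Y$ or $W\subseteq Z$ this is automatic; if $Y\cup Z\subseteq W$, then by Lemma~\ref{lem:Boolean_intersection} the intersection $W\cap V(H_0)$ is Boolean-gated and still contains $Y\cup Z$, so by minimality $W=W\cap V(H_0)\subseteq V(H_0)$. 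No $\Delta$-gate machinery is needed. The paper simply invokes Proposition~\ref{prop:gset=>isosubsp} directly.

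Your verification that $K(G)=K(H)\cup K(H')$ with intersection $K(H_0)$ is a genuine ingredient that the paper leaves implicit, so it is good that you flag it. Your interval-based argument that a Boolean-gated set cannot meet both $V(H)\setminus V(H_0)$ and $V(H')\setminus V(H_0)$ works, but a cleaner route is to observe that such an $X$ would itself be a gated amalgam (of $X\cap V(H)$ and $X\cap V(H')$ along $X\cap V(H_0)$), and the Claim inside the proof of Lemma~\ref{thick-pm} shows that a nontrivial gated amalgam is never thick.
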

Again the proof
is a combination of Reshetnyak's gluing theorem
and of the following useful property of gated sets:
\begin{Prop}\label{prop:gset=>isosubsp}
Let $G$ be an swm-graph and $H$ a gated subgraph of $G$. Then
for any $p > 0$, $(K(H), D_p)$ is an isometric subspace of $(K(G), D_p)$.
\end{Prop}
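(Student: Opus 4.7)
The plan is to deduce the statement from Proposition~\ref{prop:gatesemilattice=>iso'} by passing to the barycentric graph of $G$. By Theorem~\ref{thm:G*_is_om}, $G^*$ is an orientable modular graph whose admissible orientation $o^*$ induces the poset $\mathcal{P}(G^*,o^*)=\mathcal{B}(G)$ (with reverse-inclusion order), so $(K(G),D_p)=(K(\mathcal{P}(G^*,o^*)),D_p)$, and analogously $(K(H),D_p)=(K(\mathcal{P}(H^*,o^*)),D_p)$. Since $H$ is gated in $G$, every Boolean-gated subset of $H$ is gated in $G$ and remains thick, so $\mathcal{B}(H)\subseteq\mathcal{B}(G)$ and $H^*$ is an induced subgraph of $G^*$ on which the admissible orientation agrees with $o^*$.

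Thus the proof reduces to verifying that $H^*$ is a gated subgraph of $G^*$; granting this, Proposition~\ref{prop:gatesemilattice=>iso'} (with orthoscheme size $1/2$) immediately yields $(K(H),D_p)$ as an isometric subspace of $(K(G),D_p)$. Since $G^*$ is weakly modular and $H^*$ is connected, by Lemma~\ref{lem-weakly-modular_gated_hull} gatedness of $H^*$ in $G^*$ reduces to local convexity: for any distinct $X_1,X_2\in\mathcal{B}(H)$ and any common neighbor $Z\in\mathcal{B}(G)$ of $X_1,X_2$ in $G^*$, one must show $Z\in\mathcal{B}(H)$. Using the convention that ``$A$ covers $B$'' in $\mathcal{B}(G)$ means $A\subsetneq B$ with diameter increment one, the possible cover configurations are, up to exchanging $X_1$ and $X_2$: (a) $Z$ covers both $X_i$, so $Z\subseteq X_1\cap X_2\subseteq V(H)$; (b) both $X_i$ cover $Z$, so $Z\supseteq X_1\cup X_2$ with $\mathrm{diam}(Z)=d+1$ where $d=\mathrm{diam}(X_1)=\mathrm{diam}(X_2)$; (c) $Z$ covers $X_1$ while $X_2$ covers $Z$, giving $X_2\subsetneq Z\subsetneq X_1\subseteq V(H)$. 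In cases (a) and (c), $Z\subseteq V(H)$ is immediate, and since $Z$ is Boolean-gated in $G$ with $H$ gated in $G$, $Z$ is Boolean-gated in $H$.

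The crux is case (b). Set $W:=Z\cap V(H)$; by Lemma~\ref{lem:Boolean_intersection}, $W$ is Boolean-gated in $G$, and since $W\subseteq V(H)$ it lies in $\mathcal{B}(H)$. Moreover $X_1\cup X_2\subseteq W\subseteq Z$, so $\mathrm{diam}(W)\in\{d,d+1\}$. If $\mathrm{diam}(W)=d$, pick $v_1,v_2\in X_1$ with $d(v_1,v_2)=d$; applying Lemma~\ref{lem:d(x,y)=D} inside the dual polar graphs $X_1$ and $W$ (both of diameter $d$, both containing the diametral pair $v_1,v_2$) gives $X_1=\lgate v_1,v_2\rgate_G=W$, and the symmetric argument with $X_2$ forces $X_1=W=X_2$, contradicting $X_1\neq X_2$. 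Hence $\mathrm{diam}(W)=d+1=\mathrm{diam}(Z)$, and one further application of Lemma~\ref{lem:d(x,y)=D} inside $Z$ yields $W=Z$, i.e., $Z\subseteq V(H)$. This completes the verification of local convexity. The main structural obstacle is case (b), where the uniqueness statement of Lemma~\ref{lem:d(x,y)=D} for dual polar graphs is invoked twice to confine $Z$ inside $V(H)$; the remaining cases, together with the entire reduction through the barycentric graph, are essentially bookkeeping.
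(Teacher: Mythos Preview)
Your proof is correct and follows essentially the same approach as the paper: reduce to showing that $H^*$ is gated in $G^*$, then invoke Proposition~\ref{prop:gatesemilattice=>iso'}. The only difference is in case~(b): the paper observes directly that $W:=Z\cap V(H)$ is Boolean-gated and sandwiched as $X_1\subseteq W\subseteq Z$ with $Z$ covering $X_1$ in the poset $\mathcal{B}(G)$, forcing $W\in\{X_1,Z\}$ (and similarly $W\in\{X_2,Z\}$), whence $W=Z$ since $X_1\neq X_2$; you instead argue via diameters and two applications of Lemma~\ref{lem:d(x,y)=D}, which is a slightly longer but equally valid route.
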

This property is  known for median graphs~\cite{ChMa}*{Proposition 1}.

The rest of this chapter is organized as follows.
After discussing  in Section~\ref{subsec:ortho_examples} examples of
orthoscheme complexes of swm-graphs, in Section~\ref{subsec:ortho_pre}
we prove some preliminary results, in particular, Proposition~\ref{prop:gset=>isosubsp}.
In Sections~\ref{subsec:l_1} and  \ref{subsec:l_inf}
we prove the assertions (1) and (2) of Theorem~\ref{thm:ortho_main}.
Propositions~\ref{prop:localsemilattice} and \ref{prop:g-amalgam} are proved in Section \ref{subsec:l_2}.
In the final Section ~\ref{subsec:contractible},
we establish the contractibility of $K(G)$ (Theorem~\ref{thm:ortho_main}~(3)).

\section{Examples}\label{subsec:ortho_examples}
We show here that our construction of the complex $K(G)$ for swm-graphs $G$ encompasses
various important classes of CAT($0$) complexes, thus providing evidences to
Conjecture~\ref{conj:CAT(0)}.
\subsection{Median complexes}
The {\em median complex} $X_{cube}(G)$ of a median graph $G$ is a cube complex obtained
by replacing each cube in $G$ by a unit cube cell of the same dimension.
CAT$(0)$ cube complexes are exactly the median complexes of their $1$--skeleton graphs,
which are known to be  median graphs~\cite{Ch_CAT}.
The cell structure of the orthoscheme complex $K(G)$ is different from that of $X_{cube}(G)$
but $K(G)$ can be regarded as a simplicial subdivision of $X_{cube}(G)$; so $K(G)$ is isometric to $X_{cube}(G)$.
To see this fact,
for each cube $\kappa$ in $X_{cube}(G)$, add a new vertex $v_{\kappa}$
in the midpoint of $\kappa$, and
consider simplices of pairwise incident vertices, where
two vertices $v_{\kappa}$ and $v_{\kappa'}$ are  {\em incident}
iff $\kappa \subseteq \kappa'$ or $\kappa' \subseteq \kappa$.
Then each cube of $X_{cube}(G)$ is triangulated by those simplices.
Notice that each such simplex is an orthoscheme.
As mentioned in Section~\ref{subsec:barycentric}, $\mathcal{B}(G)$ is the poset
of cube subgraphs of $G$ with respect to reverse inclusion.
Therefore $K(G)$ is a simplicial subdivision of $X_{cube}(G)$ and is isometric to $X_{cube}(G)$.
Since a median complex is CAT$(0)$,  so is $(K(G), D_2)$.
It is known that a finite median graph is constructed
by gated amalgams from products of $K_2$.
Hence the CAT$(0)$ property of a finite median complex
follows from Proposition~\ref{prop:g-amalgam}.

\subsection{Folder complexes}
A {\em folder} is a Euclidean cell complex obtained by gluing right isosceles triangles
along the common hypotenuse (longer side); see Figure~\ref{fig:folder}.
A {\em folder complex} is a CAT$(0)$--complex consisting of folders (and edges).
For an swm-graph $G$ of cube-dimension at most $2$,
the orthoscheme complex $K(G)$ is actually a folder complex.
Indeed, each interval of ${\mathcal B}(G)$
is a complemented modular lattice of rank at most $2$.
Each chain of length $2$ corresponds to an incident triplet
of a vertex, an edge, and a quad.
The corresponding orthoscheme is a right isosceles triangle so that
the vertex and the quad are joined by the hypotenuse.
Thus the orthoscheme complex of the interval is a folder and $K(G)$ is a gluing of those folders.
Notice that an interval of rank $2$
induces a maximal biclique (which must be of the form $K_{2,m}$ $(m \geq 2)$) in the frame $G^*$,
and every maximal biclique is obtained in this way.
So $K(G)$ is obtained by replacing each maximal biclique $K_{2,m}$ of $G$
by a folder of $m$ triangles.
It is known \cite{Ch_CAT} that this complex   is a folder complex.
Every link poset $\mathcal{L}_x$ is a modular semilattice of rank $2$
and $K(\mathcal{L}_x)$ is CAT$(0)$ (Proposition~\ref{prop:specialcases}).
Hence the CAT(0)-property of folder complexes
also follows from Proposition~\ref{prop:localsemilattice}.
Figure~\ref{fig:KG} illustrates the orthoscheme complex $K(G)$
of the graph $G$ in Figure~\ref{fig:Gstar}, which is a folder complex.
\begin{figure}[ht]
\begin{center}
\includegraphics[scale=0.3]{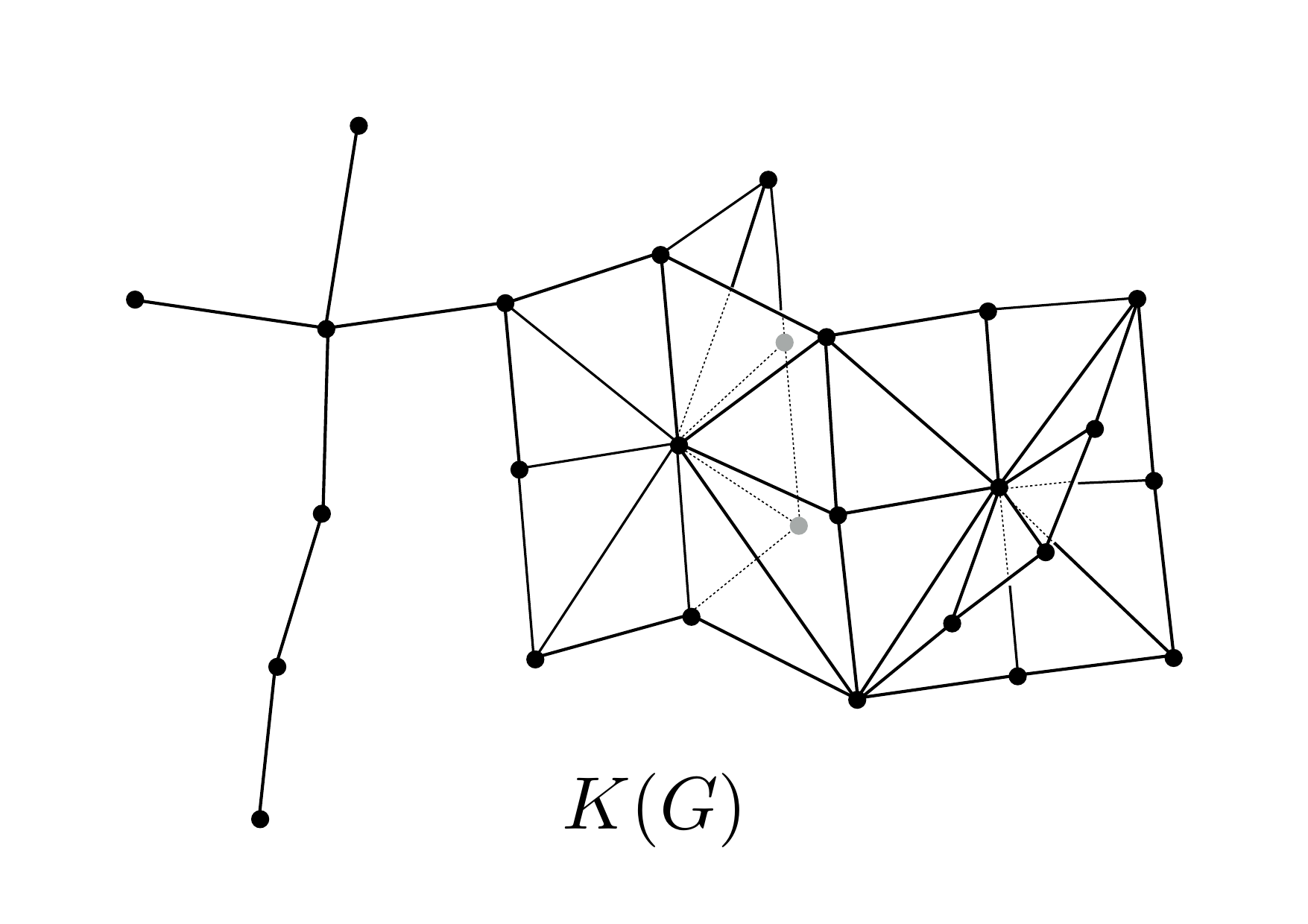}
\end{center}
\caption{The orthoscheme complex of the swm-graph in Figure~\ref{fig:Gstar}}
\label{fig:KG}
\end{figure}

\subsection{Dual polar graphs}
Suppose that $G$ is a dual polar graph.
Then $\mathcal{B}(G)$ is isomorphic to the subspace poset $\mathcal{L}$ of
the polar space corresponding to $G$, hence $K(G)$ is equal to
the orthoscheme complex $K(\mathcal{L})$
of $\mathcal{L}$. We have seen in
Proposition~\ref{prop:specialcases} that this complex is CAT$(0)$.

\subsection{Complemented modular lattices}
Let $\mathcal{L}$ be a complemented modular lattice (of finite rank).
Its covering graph $G$ is an orientable modular graph.
As a corollary of Proposition~\ref{prop:K(G)=K(G,o)},
the orthoscheme complex $K(G)$ of $G$ is a simplicial subdivision of $K({\mathcal{L}})$,
and $K(G)$ and $K(\mathcal{L})$ are isometric.
By Theorem~\ref{thm:HKS}, $K(G)$ is CAT$(0)$.

\subsection{Euclidean buildings of type C$_n$}
Let $\Delta$ be a Euclidean building of type C$_n$; see Section~\ref{subsec:building}.
Then the graph $H(\Delta)$ on the vertices of type 0 is an swm-graph
and $\Delta$ is the order complex
of ${\mathcal B}(H(\Delta))$ (Theorem~\ref{thm:EuclideanBuilding}).
In particular, $K(H(\Delta))$ is a geometric realization of $\Delta$.
For each apartment $\Sigma$,
the subcomplex $K(\Sigma)$ of $K(H(\Delta))$ is isometric
to $(\RR^n, l_{p})$.
Therefore the $l_2$--metrization of $K(H(\Delta))$ is equal to
the standard metrization of the geometric realization of $\Delta$ \cite{BuildingBook}*{Section 11.2},
which is known to be CAT$(0)$~\cite{BuildingBook}*{Theorem 11.16}.
This also follows from
Propositions~\ref{prop:specialcases} and~\ref{prop:localsemilattice} and
the fact that every link poset of the graph $H(\Delta)$ is
the subspace poset of a polar space (Lemma~\ref{lem:filter=polar}).

\section{Preliminary results}
\label{subsec:ortho_pre}
We first consider the case when $G$ is an orientable modular graph.
Let $o$ be an admissible orientation of $G$.
Let $\mathcal{P}(G,o)$ denote the poset on $V(G)$ induced by $o$.
Recall from Proposition~\ref{prop:ideal} that $\mathcal{P}(G,o)$ is graded.
Hence we can consider the orthoscheme complex
$K(\mathcal{P}(G,o))$ of the poset $\mathcal{P}(G,o)$,
where its orthoschemes are standard.
Our interest lies in a  special subcomplex of $K(\mathcal{P}(G,o))$ defined
in the following way. Recall that a pair $(p,q)$ of vertices of $G$ is  $o$--Boolean if
$p \preceq q$ and
$[p,q]$ is a complemented modular lattice (see Lemma~\ref{lem:o-Boolean}).
A chain $p = p_0 \prec p_1 \prec \cdots \prec p_k = q$
is called {\em Boolean} if $(p,q)$ is $o$--Boolean.
Notice that any subchain of a Boolean chain is also Boolean.
Let $K^o(G)$ denote the subcomplex of $K(\mathcal{P}(G,o))$
consisting of all simplices corresponding to Boolean chains.
This complex was already considered in \cite{HH12}.
For an swm-graph $G$, every interval of $\mathcal{B}(G)$ is complemented modular,
and every chain is Boolean. Hence we have:
\begin{Lem}\label{lem:K(G)=K^o*(G^*)}
For an swm-graph $G$, the equality $K(G) = K^{o^{*}}(G^*)$ holds.
\end{Lem}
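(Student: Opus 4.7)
The plan is to verify that $K(G)$ and $K^{o^*}(G^*)$ agree both as abstract simplicial complexes and as orthoscheme complexes with matching cell sizes. By construction, the vertex set of $G^*$ is $\mathcal{B}(G)$, and the admissible orientation $o^*$ is the Hasse orientation of the poset $(\mathcal{B}(G),\preceq)$ ordered by reverse inclusion (Theorem~\ref{thm:G*_is_om}). Consequently, as posets we have the identification $\mathcal{P}(G^*,o^*) = (\mathcal{B}(G),\preceq)$, and therefore the simplices of the full orthoscheme complex $K(\mathcal{P}(G^*,o^*))$ are in bijection with the chains of $\mathcal{B}(G)$, i.e., with the simplices of $K(G)$.

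Given this identification, the only non-trivial point is to show that every chain $X_0 \prec X_1 \prec \cdots \prec X_k$ in $\mathcal{B}(G)$ is a Boolean chain in $\mathcal{P}(G^*,o^*)$, that is, $(X_0,X_k)$ is an $o^*$-Boolean pair. By definition, this reduces to showing that the interval $[X_0,X_k]$ in $(\mathcal{B}(G),\preceq)$ is a complemented modular lattice. I would apply Proposition~\ref{prop:ideal}(2), which asserts that the principal filter of $X_0$ in $\mathcal{B}(G)$ is the subspace poset of a polar space; the interval $[X_0,X_k]$ sits inside this filter. Combining Theorem~\ref{thm:Tits} with Theorem~\ref{thm:Birkhoff}, every interval in the subspace poset of a polar space is a complemented modular lattice, so $[X_0,X_k]$ has the required structure and the chain is Boolean. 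Thus every simplex of $K(G)$ belongs to $K^{o^*}(G^*)$, and the reverse containment is immediate since Boolean chains are in particular chains of $\mathcal{B}(G)$.

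Finally, to conclude the identification as metric orthoscheme complexes, I would check that the sizes of standard orthoschemes agree: by the definition at the start of this section the orthoschemes of $K(G)$ have uniform size equal to one half of the edge-length of $G$, which is exactly the edge-length of $G^*$ and hence coincides with the orthoscheme size used in $K^{o^*}(G^*)$.

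I do not expect any serious obstacle. The essence of the argument is the fact that the rank-2 intervals in the filter-wise polar structure of $\mathcal{B}(G)$ force \emph{every} chain to be Boolean, so the distinction between $K(\mathcal{P}(G^*,o^*))$ and its Boolean subcomplex $K^{o^*}(G^*)$ disappears when the underlying modular graph is of the form $G^*$ for an swm-graph $G$. The only step requiring minor care is the poset-theoretic bookkeeping, making sure that reverse inclusion on $\mathcal{B}(G)$ and the Hasse orientation of $G^*$ really do produce the same order on $\mathcal{P}(G^*,o^*)$.
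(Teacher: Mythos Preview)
Your proposal is correct and follows essentially the same approach as the paper: the paper's one-line justification is that every interval of $\mathcal{B}(G)$ is a complemented modular lattice (established in the proof of Proposition~\ref{prop:ideal}(3) via Proposition~\ref{prop:ideal}(2) and Theorems~\ref{thm:Birkhoff} and~\ref{thm:Tits}), whence every chain of $\mathcal{P}(G^*,o^*)=\mathcal{B}(G)$ is Boolean. Your write-up simply spells out this argument in more detail and adds the bookkeeping on poset identification and orthoscheme sizes.
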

As expected,
$K(G^*)$ is a subdivision of $K(G)$.
We prove this fact in a slightly more general form
for an orientable modular graph $G$ with admissible orientation $o$.
Recall Lemma~\ref{lem:o-Boolean} that $\mathcal{B}(G)$
consists of intervals $[p,q]$ for all $o$--Boolean pairs $(p,q)$.
Hence $K(G)$ consists of $\sum_{i} \lambda_i [q_i, q'_i]$
for $[q_k, q'_k] \subset  [q_{k-1}, q'_{k-1}] \subset \cdots \subset [q_2,q'_2] \subset [q_1,q'_1]$
with $(q_1, q'_1)$ $o$--Boolean.
\begin{Prop}\label{prop:K(G)=K(G,o)}
Let $G$ be an orientable modular graph with an admissible orientation $o$.
Then $K(G)$ is isometric to $K^o(G)$, where the isometry $\varrho$ is given by
\begin{equation*}
\sum_{i} \lambda_i [q_i, q_i'] \mapsto \sum_{i} \lambda_i (q_i + q'_i)/2.
\end{equation*}
In particular, for an swm-graph $H$,
$K(H^{*i})$ is a simplicial subdivision of $K(H^{*(i-1)})$, and is isometric to $K(H)$.
\end{Prop}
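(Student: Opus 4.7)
The plan is to show that $\varrho$ is a well-defined bijection $K(G) \to K^o(G)$ which is an $\ell_p$-isometry on every simplex of $K(G)$; since both $D_p$-metrics are intrinsic length metrics assembled from Euclidean simplices glued along common faces, such a simplex-wise isometric bijection extends uniquely to a global isometry.

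First I will analyze a single simplex $\sigma$ of $K(G)$ corresponding to a chain $[q_1, q_1'] \subsetneq \cdots \subsetneq [q_k, q_k']$ in $\mathcal{B}(G)$, with $n_i := d(q_i, q_i')$. Nestedness of the intervals immediately yields the chain
\[
q_k \preceq q_{k-1} \preceq \cdots \preceq q_1 \preceq q_1' \preceq \cdots \preceq q_k'
\]
inside $[q_k, q_k']$, which is a complemented modular lattice of rank $n_k$. Extending to a maximal chain $q_k = s_0 \prec \cdots \prec s_{n_k} = q_k'$ and invoking the cube isometry $K([q_k, q_k']) \cong [0,1]^{n_k}$ of Lemma \ref{lem:cube}, I identify $q_i$ with $e_1 + \cdots + e_{a_i}$ and $q_i'$ with $e_1 + \cdots + e_{b_i}$ for integers $0 = a_k \leq \cdots \leq a_1 \leq b_1 \leq \cdots \leq b_k = n_k$ with $b_i - a_i = n_i$. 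The midpoint $m_i := (q_i + q_i')/2$ then takes the values $1$, $1/2$, $0$ on the three consecutive coordinate blocks $\{1, \ldots, a_i\}$, $\{a_i+1, \ldots, b_i\}$, $\{b_i+1, \ldots, n_k\}$; its weakly-decreasing coordinate profile places all the $m_i$ inside the single standard orthoscheme of $K^o(G)$ attached to the maximal chain $(s_0, \ldots, s_{n_k})$.

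The key technical point is the pattern of consecutive differences: $m_i - m_{i+1}$ is supported on $\{a_{i+1}+1, \ldots, a_i\} \cup \{b_i+1, \ldots, b_{i+1}\}$ with entries $\pm 1/2$, and these supports are pairwise disjoint for $i = 1, \ldots, k-1$, each of cardinality $n_{i+1} - n_i$. On the $K(G)$ side, $\varphi_\sigma([q_i, q_i']) - \varphi_\sigma([q_{i+1}, q_{i+1}']) = \tfrac{1}{2}(e_{n_k - n_{i+1}+1} + \cdots + e_{n_k - n_i})$ under the size-$\tfrac{1}{2}$ orthoscheme map with the regular order induced by the grading $f = -r$ on $\mathcal{B}(G)$. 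The linear part of the affine map $\varrho|_\sigma$ is therefore a signed permutation of coordinate axes, hence an $\ell_p$-isometry for every $p > 0$, so $\varrho|_\sigma$ is a $D_p$-isometry onto the simplex spanned by $m_1, \ldots, m_k$. For global bijectivity I will invert the construction: any point $(x_1 \geq \cdots \geq x_{n_k})$ of the standard orthoscheme of $[0,1]^{n_k}$ is uniquely determined by its distinct level values and the associated coordinate level sets, and these level sets recover a unique nested chain of $o$-Boolean subintervals together with barycentric weights $\lambda_i$ realizing the point as $\sum_i \lambda_i m_i$. Consistency across simplices of $K^o(G)$ sharing a sub-cube is automatic because $m_i$ depends only on the pair $(q_i, q_i')$, not on the ambient maximal chain chosen to realize it.

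The global isometry then follows by assembly, and the ``in particular'' clause reduces to iterating the proposition on the orientable modular graph $H^{*(i-1)}$ with admissible orientation $o^{*(i-1)}$ (Theorem \ref{thm:G*_is_om}) and using Lemma \ref{lem:K(G)=K^o*(G^*)} to identify $K(H^{*i})$ with $K^{o^{*(i-1)}}(H^{*(i-1)}) = K(H^{*(i-1)})$. The main technical obstacle I foresee is bookkeeping the two different size conventions---orthoschemes of size $1/2$ in $K(G)$ versus standard size $1$ in $K^o(G)$---together with the grading $f = -r$, so that the factor $1/2$ produced by averaging $q_i$ with $q_i'$ matches the size discrepancy exactly; once the passage to cube coordinates via Lemma \ref{lem:cube} is in place, the remaining work is elementary coordinate arithmetic on half-integer vectors in $\{0, 1/2, 1\}^{n_k}$.
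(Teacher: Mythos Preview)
Your approach is essentially the paper's: both proofs show that $\varrho$ carries each simplex of $K(G)$ onto an orthoscheme of size $1/2$ inside a single maximal simplex of $K^o(G)$, via a map whose linear part is a signed coordinate permutation (the paper phrases this as ``a composition of a translation, transpositions, and sign-changes''). Your computation of $m_i - m_{i+1}$ having disjoint supports of $\pm 1/2$--entries of total size $n_{i+1}-n_i$ is exactly the content of the paper's formula $\varphi_{\sigma'}(\varrho([q_i,q_i'])) - \varphi_{\sigma'}(\varrho([q_{i-1},q_{i-1}'])) = \pm e_{f(i)}/2$, specialized there to a \emph{maximal} chain in $\mathcal{B}(G)$ so that $n_{i+1}-n_i=1$.

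Two points deserve tightening. First, the appeal to Lemma~\ref{lem:cube} is imprecise: the interval $[q_k,q_k']$ is only complemented modular, not Boolean, so $K([q_k,q_k'])$ is in general \emph{not} isometric to a cube. What you actually use---and what suffices---is the orthoscheme map $\varphi_{\sigma'}$ for the single maximal chain $(s_0,\ldots,s_{n_k})$, which embeds that one simplex of $K^o(G)$ as the standard orthoscheme in $[0,1]^{n_k}$; no global cube isometry is needed. Second, your surjectivity sketch via ``level sets'' is vaguer than the paper's treatment: the paper writes down an explicit greedy algorithm (peel off $\alpha=\min(\mu_a,\mu_b)$ at the two extremal nonzero coefficients, record the interval $[p_a,p_b]$, iterate) and checks that it really inverts $\varrho$. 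Your level-set idea is morally the same peeling, but as written it does not make clear why the recovered data are a \emph{nested} chain of $o$--Boolean intervals with weights summing to $1$; spelling out the algorithm as the paper does would close this.
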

\begin{proof}
Note that the first statement implies the second statement
since $K(H^{*i})$ subdivides $K^{o^*}(H^{*i})$, which is equal to
$K(H^{*(i-1)})$ by the previous lemma.

First we verify that the map $\varrho$ is well-defined.
Indeed, since
$[q_k, q'_k] \subset  [q_{k-1}, q'_{k-1}] \subset \cdots \subset [q_1,q'_1]$,
and $(q_1,q'_1)$ is $o$--Boolean,
the chain $q_1 \preceq q_2 \preceq \cdots \preceq q_k \preceq q'_k
\preceq q'_{k-1} \preceq \cdots \preceq q'_1$ is a Boolean chain.
Now we show that $\varrho$ is bijective.
Pick a point $x = \sum_{i} \mu_i p_i \in K^o(G)$
with Boolean chain $p_1 \prec p_2 \prec \cdots \prec p_m$.
We construct a point $\sum_j \lambda_j [ q_j, q'_j] \in K(G)$
by the following algorithm, where we let $j:=1$ initially.

\medskip
\begin{itemize}
\item[Step 1:]
Let $a := \min \{i: \mu_i  > 0\}$, $b := \max \{i: \mu_i  > 0\}$,
$\alpha := \min (\mu_a, \mu_b)$,  $(q_j, q'_j)  := (p_a, p_b)$, and
\[
\lambda_j :=  \left\{
\begin{array}{ll}
2 \alpha & {\rm if}\ a \neq b, \\
\alpha & {\rm if}\ a = b,
\end{array} \right.  \quad
\mu_i \leftarrow  \left\{
\begin{array}{ll}
\mu_i - \alpha  & {\rm if}\ i = a\ {\rm or}\ b, \\
\mu_i & {\rm otherwise,}
\end{array} \right.  \ (1 \leq i \leq m).
\]
\item[Step 2:]
If $\mu_i = 0$ for all $i$, then stop.
Otherwise $j \leftarrow j+1$,  and go to step 1.
\end{itemize}
\medskip
By construction, it holds that $[q_{k},  q'_{k}] \subset [q_{k-1},q'_{k-1}] \subset  \cdots \subset [q_{2}, q'_{2}] \subset [q_{1}, q'_{1}]$ with $(q_1, q'_1)$ $o$--Boolean
and $\sum_{j=1}^k \lambda_j = \sum_{i=1}^m \mu_i = 1$,
where $k (\leq m)$ is the number of the iterations.
Thus the resulting point $\sum_j \lambda_j [q_j, q'_j]$ belongs to $K(G)$.
Moreover, $\mu_i = \sum_{j:  q_j =  p_i} \lambda_{j}/2 +  \sum_{j:  q'_j =  p_i} \lambda_{j}/2$, and thus
\[
\sum_{j} \lambda_j( q_{j} +  q'_{j})/2 = \sum_{i} \mu_i p_i.
\]
This implies that $\varrho$ is surjective.
Observe that for $x \in K(G)$,
the above algorithm applied to $\varrho(x)$ outputs the original $x$.
This means that the above algorithm
gives the inverse map $\varrho^{-1}$,
i.e., $\varrho^{-1}(\varrho(x)) = x$ for $x \in K(G)$.
Necessarily $\varrho$ is injective, and hence bijective.

To complete the proof, it suffices to show that each maximal simplex $\sigma$ in $K(G)$ is
isometrically embedded in  $K^o(G)$ by $\varrho$.
Suppose that $\sigma$ corresponds to a maximal chain
$[q_0, q'_0] \subset  [q_1, q'_1] \subset \cdots \subset [q_n, q'_n]$ and
the image $\varrho(\sigma)$ belongs to
a maximal simplex $\sigma'$ of $K^o(G)$, where $\sigma'$
corresponds to a maximal Boolean chain $p_0 \prec p_1 \prec \cdots \prec p_n$.
Then $\{q_n,q_{n-1},\ldots,q_0,q'_0,q'_1,\ldots, q'_n\} = \{p_0,p_1,\ldots,p_n\}$ holds.
By maximality, it holds that
 $q_0 = q'_0$,  $(q_n, q'_n) = (p_0,p_n)$,
and either $q_{i-1} = q_i$ and $q'_{i}$ covers $q'_{i-1}$ or
$q'_{i-1} = q'_i$ and $q_{i-1}$ covers $q_{i}$.
Then we can define a map $f: \{1,2,\ldots,n\} \to \{1,2,\ldots,n\}$
so that $q'_i = p_{f(i)}$
if $q_i = q_{i-1}$, and
$q_{i-1} = p_{f(i)}$ if $q'_i = q'_{i-1}$.
Here $f$ is a bijection; $f^{-1}(j)$ is
the minimum index $i$ such that $p_j = q_i'$ if $p_j \succ q_0 = q'_0$
and $p_j \succ q_i$ if $p_j \preceq q_0 = q'_0$.
Then, for each $i=1,2,\ldots,n$ we have
\begin{align*}
& \varphi_{\sigma'} (\varrho([q_i, q'_i])) - \varphi_{\sigma'} (\varrho([q_{i-1}, q'_{i-1}])) = \varphi_{\sigma'} ( (q_i + q'_i)/2 ) - \varphi_{\sigma'} ((q_{i-1} + q'_{i-1})/2) \\
& = \varphi_{\sigma'}(q_i)/2 + \varphi_{\sigma'}(q'_i)/2
- \varphi_{\sigma'} (q_{i-1})/2 - \varphi_{\sigma'} (q'_{i-1})/2 \\
& = \left\{
\begin{array}{ll}
(e_1+ \cdots + e_{f(i)})/2 - (e_1+ \cdots + e_{f(i)-1})/2  =  e_{f(i)}/2  & {\rm if}\  q_i = q_{i-1}, \\
(e_1+ \cdots + e_{f(i)-1})/2 - (e_1+ \cdots + e_{f(i)})/2   = - e_{f(i)}/2 & {\rm if}\ q'_i = q'_{i-1}.
\end{array} \right.
\end{align*}
This means that the image $\varphi_{\sigma'} (\varrho (\sigma))$ is
an orthoscheme of uniform size $1/2$ and
the regularity of the ordering of vertices is preserved.
Therefore the orthoscheme $\varphi_{\sigma'} (\varrho (\sigma))$ is obtained from
$\varphi_{\sigma}(\sigma)$ by the composition $\tau$ of
a translation, transpositions, and sign-changes of coordinates.
Namely it holds
\[
\tau (\varphi_{\sigma} (x)) = \varphi_{\sigma'} (\varrho(x)) \quad (x \in \sigma).
\]
Since $\tau$ is an isometry with respect to any $l_p$--metric, we have
\begin{eqnarray*}
D_p (x,y) &:= & \|  \varphi_{\sigma}(x) - \varphi_{\sigma}(y) \|_p
= \|  \tau (\varphi_{\sigma}(x)) - \tau (\varphi_{\sigma}(y)) \|_p =\\
& = &\|   \varphi_{\sigma'} (\varrho(x)) -   \varphi_{\sigma'} (\varrho(y)) \|_p
 =  D_p(\varrho(x), \varrho(y)) \quad (x,y\in \sigma).
\end{eqnarray*}
\end{proof}

For a gated subgraph $H$ of an swm-graph $G$,
the orthoscheme complex $K(H)$ is a subcomplex of $K(G)$.
Proposition~\ref{prop:gset=>isosubsp} immediately follows
from:
\begin{Prop}\label{prop:gset=>isosubsp'}
Let $G$ be an swm-graph and let $H$ be a gated subgraph of $G$.
For any $p > 0$,
there exists a nonexpansive retraction from $(K(G), D_p)$ to $(K(H), D_p)$.
\end{Prop}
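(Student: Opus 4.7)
The strategy is to reduce the claim to Proposition~\ref{prop:gatesemilattice=>iso'} via the barycentric graph $G^*$. The key observation is that for an swm-graph $G$, every interval of $\mathcal{B}(G)$ is a complemented modular lattice (being the subspace poset of a polar space, by Proposition~\ref{prop:ideal}(2)), so every chain in $\mathcal{P}(G^*, o^*) = \mathcal{B}(G)$ is $o^*$--Boolean. Combined with Lemma~\ref{lem:K(G)=K^o*(G^*)}, this yields the identification $K(G) = K(\mathcal{P}(G^*, o^*))$, and likewise $K(H) = K(\mathcal{P}(H^*, o^*))$. It therefore suffices to construct a nonexpansive retraction between these two complexes of orientable modular graphs.

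I would first show that $\mathcal{B}(H)$ embeds as a graded subposet of $\mathcal{B}(G)$. A subset $Y \subseteq V(H)$ is Boolean-gated in $H$ if and only if it is Boolean-gated in $G$ and contained in $V(H)$: since $H$ is gated, hence isometric, in $G$, thickness and diameter are preserved, and gatedness transfers in both directions because gates in $G$ of vertices of $H$ relative to subsets of $V(H)$ automatically lie in $V(H)$. Consequently $H^*$ is a subgraph of $G^*$ whose induced Hasse orientation coincides with $o^*$.

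The central technical step is to prove that $H^*$ is a \emph{gated} subgraph of $G^*$. The candidate gate map $\Phi\colon \mathcal{B}(G) \to \mathcal{B}(H)$ is lifted from the vertex gate map $\phi_H\colon V(G) \to V(H)$ by $\Phi(X) := \lgate \phi_H(X)\rgate_H$. Using the standard closest-pair structure for two gated subsets, for each $X \in \mathcal{B}(G)$ the restriction $\phi_H|_X$ is a graph isomorphism from the gated subgraph $W := \{x \in X : d(x, H) = d(X, H)\}$ of $X$ onto $\phi_H(X)$; since $W$ is a gated subgraph of the dual polar graph $X$ (Proposition~\ref{prop:char_Boolean}), it is itself dual polar, and therefore $\phi_H(X)$ is Boolean-gated in $H$, so $\Phi(X) = \phi_H(X) \in \mathcal{B}(H)$. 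The gate equation $d_{G^*}(X, Y) = d_{G^*}(X, \Phi(X)) + d_{G^*}(\Phi(X), Y)$ for every $Y \in \mathcal{B}(H)$ then follows from the finite Helly property for gated sets of $G$ (Lemma~\ref{lem:gated_Helly}) applied to $X$, $Y$, and an appropriate ball around $\Phi(X)$, combined with the minimality of the vertex gates $\phi_H$.

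Once $H^*$ has been shown to be a gated subgraph of $G^*$, Proposition~\ref{prop:gatesemilattice=>iso'} directly supplies a nonexpansive retraction $(K(\mathcal{P}(G^*, o^*)), D_p) \to (K(\mathcal{P}(H^*, o^*)), D_p)$, which under the identifications $K(G) = K(\mathcal{P}(G^*, o^*))$ and $K(H) = K(\mathcal{P}(H^*, o^*))$ is exactly the desired retraction $K(G) \to K(H)$. The main obstacle I expect is the gated-subgraph step: verifying that $\Phi(X) \in \mathcal{B}(H)$ and satisfies the gate equation in $G^*$ requires carefully controlling how the vertex gate projection $\phi_H$ interacts with the dual polar structure of Boolean-gated sets of $G$ and with the covering relation in $\mathcal{B}(G)$.
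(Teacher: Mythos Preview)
Your overall strategy coincides with the paper's: identify $K(G) = K(\mathcal{P}(G^*, o^*))$ and $K(H) = K(\mathcal{P}(H^*, o^*))$, show that $H^*$ is a gated subgraph of $G^*$, and then invoke Proposition~\ref{prop:gatesemilattice=>iso'}. The divergence lies entirely in how you prove gatedness of $H^*$ in $G^*$.

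You try to build an explicit gate map $\Phi(X) = \phi_H(X)$ and verify the gate equation directly. The paper instead exploits that $G^*$ is weakly modular (Theorem~\ref{thm:G*_is_om}) and applies the local criterion of Lemma~\ref{lem-weakly-modular_gated_hull}(ii): it suffices to check that for any $Y, Z \in \mathcal{B}(H)$, every common neighbor $W$ of $Y$ and $Z$ in $G^*$ lies again in $\mathcal{B}(H)$. This is immediate. If $W \subseteq Y$ or $W \subseteq Z$, then $W \subseteq V(H)$ trivially. If $Y \cup Z \subseteq W$, then $W$ is the minimal Boolean-gated set containing $Y \cup Z$; by Lemma~\ref{lem:Boolean_intersection} the intersection $W \cap V(H)$ is Boolean-gated and contains $Y \cup Z$, forcing $W = W \cap V(H) \subseteq V(H)$. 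That is the whole argument.

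Your route, by contrast, has a genuine gap at the step you yourself flag. The sentence ``the gate equation \ldots\ follows from the finite Helly property for gated sets of $G$ applied to $X$, $Y$, and an appropriate ball around $\Phi(X)$'' is not an argument: it does not specify which ball, in which graph, nor how a Helly-type intersection statement yields an \emph{additive} distance identity in $G^*$. Controlling $d_{G^*}(X, Y)$ directly requires understanding shortest paths in the covering graph of $\mathcal{B}(G)$, which is precisely what the local common-neighbor criterion lets you avoid. Your claim that $\phi_H(X)$ is Boolean-gated in $H$ is correct and nicely argued, but it is not needed once you use Lemma~\ref{lem-weakly-modular_gated_hull}(ii); the paper never computes the gate explicitly.
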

\begin{proof}
Let $X$ be the vertex set of  $H$.
A subset $Y \subseteq X$ is Boolean-gated in $H$ if and only if it is Boolean-gated in $G$.
Let $\mathcal{X}$ denote the set of all Boolean-gated sets contained in $X$.
Then $H^*$ is equal to the subgraph of $G^*$ induced by $\mathcal{X}$.
By Proposition~\ref{prop:gatedsemilattice=>iso} applied to
$K(G) = K(\mathcal{P}(G^*,o^*))$ and $K(H) = K(\mathcal{P}(H^*,o^*))$,
it suffices to show that $\mathcal{X}$ is gated in $G^*$.
We verify the conditions in Lemma~\ref{lem-weakly-modular_gated_hull}.
Take $Y,Z \in \mathcal{X}$.
Let $W$ be a Boolean-gated set adjacent to both $Y$ and $Z$; we assert that  $W \subseteq X$.
This  is obviously so when $W \subseteq Y$ or $W \subseteq Z$.
Suppose that $Y \cup Z \subseteq W$, i.e.,
$W$ is a minimum Boolean-gated set containing $Y \cup Z$.
By Lemma~\ref{lem:Boolean_intersection},
$W \cap X$ is Boolean-gated, and hence $W = W \cap X$ must hold.
\end{proof}

We continue with some  metric properties of $K(G)$.
If $G$ has finite cube-dimension,
then the length of a chain in $\mathcal{B}(G)$ is bounded by
the cube-dimension. Consequently, $K(G)$ has finitely many
isometry types of simplices. Thus, by Bridson's theorem~\cite{BrHa}*{Theorem 7.19}
on metric-polyhedral complexes,
we have:
\begin{Lem}\label{lem:complete}
Let $G$ be an swm-graph of finite cube-dimension.
Then the metric space $(K(G), D_2)$ is a complete geodesic space.
\end{Lem}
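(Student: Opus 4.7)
The plan is to verify the hypotheses of Bridson's theorem \cite[Theorem 7.19]{BrHa} on $M_\kappa$--polyhedral complexes, which states that any $M_\kappa$--polyhedral complex with only finitely many isometry types of cells is a complete geodesic metric space. Here we take $\kappa = 0$, so the relevant category is that of piecewise Euclidean polyhedral complexes.

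First I would observe that $K(G)$ is built by gluing standard Euclidean orthoschemes along common faces, and that the $D_2$--metric on each simplex is precisely the Euclidean metric inherited from the orthoscheme's embedding in $\mathbb{R}^n$. By construction of the length metric $D_2$ on $K(G)$, this exhibits $K(G)$ as a piecewise Euclidean $M_0$--polyhedral complex in the sense of \cite{BrHa}.

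Next I would bound the isometry types of cells. The simplices of $K(G)$ correspond to finite chains in the graded poset $\mathcal{B}(G)$. Since $G$ has finite cube-dimension equal to some integer $n$, the rank of $\mathcal{B}(G)$ (equivalently, the maximum length of a chain) is $n$, so every simplex of $K(G)$ has dimension at most $n$. Moreover, for each integer $k \in \{0,1,\ldots,n\}$, every $k$--dimensional simplex of $K(G)$ corresponds to a chain of length $k$ and is isometric to the standard $k$--dimensional orthoscheme of the fixed uniform size. Hence there are at most $n+1$ isometry types of cells in $K(G)$, which is finite.

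Applying Bridson's theorem then gives that $(K(G), D_2)$ is a complete geodesic metric space, as required. There is essentially no obstacle here: the only subtlety is confirming that the uniform-size convention for orthoschemes genuinely produces only finitely many isometry types (one per dimension up to $n$), which is immediate from the definition of $K(G)$ given at the start of this section.
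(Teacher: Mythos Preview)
Your approach is exactly the paper's: bound the chain length in $\mathcal{B}(G)$ by the cube-dimension, conclude there are only finitely many isometry types of cells, and invoke Bridson's theorem \cite[Theorem 7.19]{BrHa}. One small correction: it is not true that every $k$--simplex is the \emph{standard} $k$--orthoscheme, since a non-saturated chain $p_0 \prec \cdots \prec p_k$ produces a simplex whose isometry type depends on the rank jumps $r[p_{i-1},p_i]$ (an edge with $r[p_0,p_1]=2$ has length $s\sqrt{2}$, not $s$); nevertheless the isometry type is determined by the increasing sequence $(r[p_0,p_1],\ldots,r[p_0,p_k])$ of integers in $\{1,\ldots,n\}$, so there are still only finitely many types and your conclusion stands.
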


Note that the topology of $K(G)$ induced by $D_p$ is independent of $p > 0$.
In particular, $(K(G),D_p)$ is a complete length space.
Since $G^*$ is a subgraph of the $1$--skeleton of $K(G)$,
$G^*$ is embedded into $K(G)$. In particular, the set of $0$--cells of $K(G)$
is equal to $V^* :=V(G^*)$.
More generally, the vertex set $V^{*i}$ of
the $i$--iterated barycentric graph $G^{*i}$ is equal to
the set of $0$--cells of $K(G^{*(i-1)})$, which
can be regarded as a subset of $K(G)$ by Proposition~\ref{prop:K(G)=K(G,o)}.
We next show that $\bigcup_{i\ge 1} V^{*i}$ is a dense subset of $K(G)$.
A sequence $(x^1,x^2, x^3, \ldots)$ of points in $K(G)$ is said to be
{\em digging} if $x^i$ belongs to $V^{*i}$ for all $i$,
and $x^i, x^{i+1}$ belong to a common simplex of $K(G^{*i})$.
\begin{Lem}\label{lem:R/2^i}
Let $G$ be an swm-graph of finite cube-dimension $R$. Then
any digging sequence  $(x^1,x^2, x^3,\ldots )$ is a Cauchy sequence with
\begin{equation}\label{eqn:R/2^i}
D_1(x^{i+1}, x^i) \leq R/2^{i+1} \quad (i=1,2,\ldots),
\end{equation}
and is convergent to some point $x \in K(G)$.
For any point $x \in K(G)$,
there is a digging sequence convergent to $x$.
\end{Lem}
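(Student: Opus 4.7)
\textbf{Setup and the bound (\ref{eqn:R/2^i}).} Normalize the edge-length of $G$ to $1$, so that the orthoschemes of $K(G^{*i})$ have uniform size $1/2^{i+1}$. By Proposition~\ref{prop:K(G)=K(G,o)}, each $K(G^{*i})$ is both isometric to $K(G)$ and a simplicial subdivision of $K(G^{*(i-1)})$, so the vertex sets $V^{*i}$ embed coherently as a nested family of $0$--cells inside $K(G)$; moreover, since the cube-dimensions of $G$ and $G^*$ coincide (noted after Lemma~\ref{lem:G*2}), every simplex of $K(G^{*i})$ has dimension at most $R$. For an orthoscheme of uniform size $s$ in dimension $n$ with regular ordering $v_0,v_1,\ldots,v_n$, one has $\|v_j-v_i\|_1 = s(j-i) \le sn$, and since the $l_1$--diameter of a convex polytope is attained at a pair of vertices, every simplex of $K(G^{*i})$ has $D_1$--diameter at most $R/2^{i+1}$. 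Applying this to the common simplex of $K(G^{*i})$ containing $x^i$ and $x^{i+1}$ proves (\ref{eqn:R/2^i}).

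\textbf{Cauchy property and convergence.} Summing the geometric bound yields $D_1(x^j, x^i) \le R/2^{i}$ for $j \ge i$, so $(x^i)$ is Cauchy in $(K(G), D_1)$. Lemma~\ref{lem:complete} provides the completeness of $(K(G), D_2)$, and on any single orthoscheme the standard inequalities $\|\cdot\|_2 \le \|\cdot\|_1 \le \sqrt{R}\,\|\cdot\|_2$ hold; since $K(G)$ has only finitely many isometry types of cells (orthoschemes of uniform size $1/2$ and dimension at most $R$), the length metrics $D_1$ and $D_2$ are globally bi-Lipschitz equivalent. Therefore $(K(G), D_1)$ is also complete, and the Cauchy sequence $(x^i)$ converges to a point $x \in K(G)$.

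\textbf{Approximating a prescribed point.} Given $x \in K(G)$, for each $i$ let $\sigma_i$ denote the unique simplex of $K(G^{*i})$ whose relative interior contains $x$. Because $K(G^{*(i+1)})$ simplicially subdivides $K(G^{*i})$, the carrier simplices nest: $\sigma_{i+1} \subseteq \sigma_i$. Pick any vertex $x^i$ of $\sigma_i$; then $x^i \in V^{*i}$, and $x^{i+1} \in \sigma_{i+1} \subseteq \sigma_i$, so $x^i$ and $x^{i+1}$ both lie in the common simplex $\sigma_i$ of $K(G^{*i})$, witnessing that $(x^i)$ is a digging sequence. Finally $x \in \sigma_i$ as well, so the $D_1$--diameter bound gives $D_1(x, x^i) \le R/2^{i+1} \to 0$. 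The only nontrivial ingredient anywhere in the argument is the transfer of completeness from $D_2$ to $D_1$, which is handled by the bi-Lipschitz equivalence established above; every other step is a direct consequence of the nesting structure of barycentric subdivisions provided by Proposition~\ref{prop:K(G)=K(G,o)}.
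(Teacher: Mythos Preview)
Your argument follows the paper's closely, and the first two parts are correct. Your explicit bi-Lipschitz transfer of completeness from $D_2$ to $D_1$ is a welcome clarification; the paper handles this more tersely via the remark following Lemma~\ref{lem:complete} that the topology induced by $D_p$ is independent of $p$, so that $(K(G),D_p)$ is a complete length space for every $p$.

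There is, however, an off-by-one indexing error in your third part. The $0$--cells of $K(G^{*i})$ are exactly $V^{*(i+1)}$, not $V^{*i}$: as recorded in the paragraph preceding the definition of a digging sequence, $V^{*i}$ is the set of $0$--cells of $K(G^{*(i-1)})$. Hence a vertex $x^i$ of your carrier $\sigma_i\subset K(G^{*i})$ lies in $V^{*(i+1)}$, and the sequence you build is not digging as defined. The paper instead takes the carrier of $x$ in $K(G^{*(i-1)})$, whose vertices do lie in $V^{*i}$; with that single index shift your nesting argument $\sigma_{i+1}\subseteq\sigma_i$ and the diameter estimate $D_1(x,x^i)\le R/2^i$ go through unchanged.
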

\begin{proof}
Any simplex of $K(G^{*i})$ is an orthoscheme of dimension $\le R$ and size $1/2^{i+1}$.
Hence the $l_1$--diameter of any simplex of $K(G^{*i})$ is bounded by $R/2^{i+1}$,
implying (\ref{eqn:R/2^i}).
Then a digging sequence is Cauchy, and it converges to a point of $K(G)$
by completeness (Lemma~\ref{lem:complete}).

For $x \in K(G)$ and $i=1,2,\ldots$,
define $x^i \in V^{*i} \subseteq K(G)$ in the following way.
Since $K(G^{*(i-1)})$ is a subdivision of $K(G)$ by Proposition~\ref{prop:K(G)=K(G,o)},
we can take a simplex $\sigma$ of $K(G^{*(i-1)})$ containing $x$ in its relative interior.
Define $x^i$ as any vertex of the simplex $\sigma$.
Again, by Proposition~\ref{prop:K(G)=K(G,o)},
the simplex of $K(G^{*i})$ containing $x$ in its relative interior is contained in $\sigma$.
Hence the sequence $(x^1,x^2, \ldots)$ is digging.
Also $D_1(x,x^i) \leq R/2^{i}$ holds, and this sequence converges to $x$.
\end{proof}

Furthermore, the next result shows that the graphs $G$ and $G^{\Delta}$ are isometrically embedded in $(K(G), l_1)$ and  $(K(G), l_{\infty})$, respectively.
\begin{Prop}\label{prop:l_1&1_infty}
Let $G$ be an swm-graph. Then for any vertices $x,y \in V$, we have:
\begin{itemize}
\item[(1)] $d_{G}(x,y) = D_1(x,y)$.
\item[(2)] $d_{G^{\Delta}}(x,y) = D_{\infty}(x,y)$.
\end{itemize}
\end{Prop}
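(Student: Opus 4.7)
The plan is to prove both equalities by showing the upper bound with an explicit path in $K(G)$ and the lower bound via a metric-geometric argument. For the upper bound in (1), given a geodesic $x = x_0, x_1, \ldots, x_k = y$ in $G$, each edge $x_i x_{i+1}$ defines a Boolean-gated set $\{x_i, x_{i+1}\}$ of diameter one, and concatenating the two $1$--simplices joining $\{x_i\}$ and $\{x_{i+1}\}$ through $\{x_i,x_{i+1}\}$ yields a path of $D_1$--length $k = d_G(x,y)$.  For the upper bound in (2), given a geodesic $x = x_0, \ldots, x_k = y$ in $G^{\Delta}$, set $B_i := \lgate x_i, x_{i+1} \rgate$ and use the $1$--simplices $[\{x_i\},B_i]$ and $[B_i,\{x_{i+1}\}]$; although each is an orthoscheme of size $s = 1/2$ whose $l_1$--length $r(B_i)/2$ grows with $\diam B_i$, its $l_{\infty}$--length is exactly $s = 1/2$ regardless of $r(B_i)$, because in any orthoscheme of size $s$ the $l_{\infty}$--distance between the two extremal regular-order vertices equals $s$.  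The concatenated path therefore has $D_{\infty}$--length $k = d_{G^{\Delta}}(x,y)$.

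For the lower bound in (1), the crucial observation is that within any orthoscheme of size $s$ with regular-order vertices $v_0, v_1, \ldots, v_n$, the $l_1$--distance $\|v_i - v_j\|_1 = s|j-i|$ coincides with the sum of $l_1$--lengths along the canonical subchain $v_i,v_{i+1},\ldots,v_j$.  This permits any rectifiable path in $K(G)$ joining two $0$--cells to be ``snapped'', simplex by simplex, to a path in the $1$--skeleton of $K(G)$ of no greater $D_1$--length.  Hence $D_1 |_{V^{*}}$ coincides with the graph metric of the $1$--skeleton of $K(G)$ weighted by $l_1$--edge-lengths; an elementary calculation then identifies this metric with $d_{G^{*}}$ (each long edge $\{x\} \to B$ has the same $l_1$--length as any maximal chain in $\mathcal{B}(G)$ joining $\{x\}$ and $B$), and the isometric embedding $G \hookrightarrow G^{*}$ furnished by Theorem~\ref{thm:G*_is_om} completes the argument.

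The lower bound in (2) is more delicate, since straight line segments across an orthoscheme can be shorter in $l_{\infty}$ than any $1$--skeleton detour.  The plan is to construct, for each $u \in V(G)$, a $1$--Lipschitz function $h_u \colon (K(G),D_{\infty}) \to \RR$ satisfying $h_u(\{v\}) = d_{G^{\Delta}}(u,v)$ for every $v \in V(G)$; the lower bound will then follow from
\[
d_{G^{\Delta}}(x,y) \;=\; |h_x(\{y\}) - h_x(\{x\})| \;\le\; D_{\infty}(x,y).
\]
The value $h_u(X)$ at a general $0$--cell $X \in V^{*}$ will be built from the $\Delta$--gate $u_X$ of $u$ in $X$ (Subsection~\ref{subsub:delta-gate}), whose variation along any chain in $\mathcal{B}(G)$ is controlled by Lemma~\ref{lem-BP-prop} together with the normal-Boolean-gated-path machinery of Theorem~\ref{th-nbgp}.

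The hard part will be calibrating $h_u$ so that its linear extension on each simplex is genuinely $1$--Lipschitz rather than only Lipschitz with a worse constant: a naive choice $h_u(X) := d_{G^{\Delta}}(u,u_X)$ can jump by $1$ along a single $1$--simplex of $l_{\infty}$--length $1/2$ and must therefore be corrected, perhaps by adding a compensating term depending on $r(X) = \diam X$, chosen so that along every chain $X_0 \supsetneq X_1 \supsetneq \cdots \supsetneq X_n$ in $\mathcal{B}(G)$ the telescoping sum $\sum_i |h_u(X_i) - h_u(X_{i-1})|$ equals exactly $s = 1/2$.  Equivalently, one may attempt to extend the Kuratowski-type isometric embedding $v \mapsto d_{G^{\Delta}}(v,\cdot)$ of $V(G^{\Delta})$ into $l_{\infty}(V)$ to a nonexpansive map $K(G) \to l_{\infty}(V)$ by a careful simplex-by-simplex interpolation.
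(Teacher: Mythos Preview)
Your upper bounds are fine (modulo the small slip that $\{x_i,x_{i+1}\}$ itself is not Boolean-gated; you mean the maximal clique $\lgate x_i,x_{i+1}\rgate$).  The lower bounds, however, have genuine problems.

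\textbf{Lower bound in (1).}  The ``crucial observation'' $\|v_i - v_j\|_1 = s|i-j|$ is correct, but it does \emph{not} justify simplex-by-simplex snapping.  A path $P$ exits a simplex through a face, not through a vertex, and re-routing through a vertex of that face can strictly increase $l_1$--length inside a single orthoscheme.  Concretely, in the standard $2$--orthoscheme with vertices $(0,0),(1,0),(1,1)$, the point $p=(1/2,1/2)$ lies on the face $\{(0,0),(1,1)\}$; the $l_1$--distance from $(1,0)$ to $p$ is $1$, but routing through either vertex of that face costs $2$.  So the inference ``vertex distances are additive along chains $\Rightarrow$ any path can be snapped to the $1$--skeleton'' fails.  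The paper avoids this by never working inside a bare orthoscheme: it locates the exit point $x'$ of $P$ from the star $K_x$, finds an $o$--Boolean pair $(p,q)$ with $x,x'\in K([p,q])$, and then invokes Lemmata~\ref{lem:Booleansublattice} and \ref{lem:nonincrease1} to pass to an isometric \emph{cube} $B\simeq [0,1]^n$ containing $x$ and $x'$.  Inside a cube the snapping does work: any face $F$ not containing the vertex $x$ has a vertex $z$ with $D_1(x,x') = D_1(x,z)+D_1(z,x')$.  Iterating with $z$ in place of $x$ produces a path of edges of $G$.  Your overall plan (reduce to $G^*$ and invoke Theorem~\ref{thm:G*_is_om}) is sound, but the snapping step needs the cube, not the orthoscheme.

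\textbf{Lower bound in (2).}  You explicitly leave this as a plan: the naive $h_u(X)=d_{G^{\Delta}}(u,u_X)$ fails to be $1$--Lipschitz for $D_\infty$, and you do not specify a working correction.  This is the heart of the matter, and the paper does not pursue a potential-function approach at all.  Instead it again modifies the path $P$ directly.  The idea is to extend the star $K_x$ by the star $K_\sigma$ of the minimal simplex $\sigma$ containing the exit point $x'$, locate the next exit point $x''$, and place everything in an isometric cube $B\simeq([0,1]^n,l_\infty)$.  A coordinate argument in $B$ shows that either one can slide $x'$ along a face (free in $l_\infty$) so that $x'$ and $x''$ share the coordinates on which $x'$ is interior, or some coordinate of $x''$ equals $1$, forcing $D_\infty(x',x'')=1$; in the latter case the subpath from $x$ to $x''$ can be replaced by a single edge of $G^{\Delta}$ through the minimum vertex $u$ of $\sigma$.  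Iterating yields a $G^{\Delta}$--path of no greater $l_\infty$--length.  If you want to salvage your Lipschitz-function idea, you would need to produce a concrete $h_u$ whose oscillation along every maximal chain in $\mathcal{B}(G)$ is at most $1/2$ per covering step \emph{and} which hits $d_{G^{\Delta}}(u,\cdot)$ at singletons; the normal bg-path machinery by itself does not obviously provide this.
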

\begin{proof} In view of Lemma~\ref{lem:K(G)=K^o*(G^*)} and taking $G^{*}$ as $G$,
it suffices to prove the statement for $K^o(G)$ of an orientable modular graph $G$
with admissible orientation $o$. An edge of $G$ is an edge of $K^o(G)$ of unit length.
Therefore a path in $G$ can be  regarded as a path in the 1-skeleton of $K^o(G)$ with the same length,
and thus we have $d_{G}(x,y) \geq D_1 (x,y)$.
For a Boolean pair $(x,y)$, we can take an
$o$--Boolean pair $(p,q)$ with $x,y \in [p,q]$ (see Lemma~\ref{lem:o-Boolean}).
Notice that $[p,q]$ is convex (gated) in $G$ (Lemma~\ref{lem:filter_is_convex}), and
$K([p,q])$ is an isometric subcomplex of $K^o(G)$ (Proposition~\ref{prop:gset=>isosubsp}).
Moreover, $[p,q]$ is a complemented modular lattice.
By Lemmata~\ref{lem:Booleansublattice} and \ref{lem:nonincrease1},
we can find an isometric subcomplex $B$ of $K([p,q])$ such that
$B$ is isometric to the cube $([0,1]^n, l_{\infty})$ and contains $x,y$ as its vertices.
Hence, $D_{\infty}(x,y) = 1 = d_{G^{\Delta}}(x,y)$ and consequently, $d_{G^{\Delta}}(x,y) \geq D_{\infty}(x,y)$
holds for arbitrary $x,y \in V$.

Next we will establish the converse inequalities.
Pick a path $P$ in $K^o(G)$ connecting $x$ and $y$.
Consider the subcomplex $K_{x}$ consisting of all $K([p,q])$
over the $o$--Boolean pairs $(p,q)$ with $x \in [p,q]$.
Consider $P \cap K_x$ and let $P'$ be
its connected component  containing $x$.
Since $K_x$ contains all simplices of $K^o(G)$ containing $x$,
$P'$ is a curve of positive length with one end in $x$.
Let $x'$ be the other end of $P'$.
Then $x'$ must belong to the relative boundary of $K_x$.
Take an $o$--Boolean pair $(p,q)$ with $x,x' \in [p,q]$.
Since $[p,q]$ is convex in $G$ (Lemma~\ref{lem:filter_is_convex}),
$K([p,q])$ is an isometric subspace of $K^o(G)$.
We can make $P'$ lie in $K([p,q])$ without increasing its length.
Let $\sigma \subset K([p,q])$ be the minimal simplex containing $x'$.
Suppose that $\sigma$ corresponds to
a Boolean chain from $u$ to $v$.
Then $x \not \in [u,v] \subseteq [p,q]$ must hold.
If $x \in [u,v]$, then
any simplex containing $\sigma$ is also a simplex of $K_x$,
and $x'$ is not located in the relative boundary of $K_x$, a contradiction.

To (1): 
It suffices to show that $P$ can be modified to a path connecting $x,y$ in $G$ without increasing its $l_1$--length.
As above, we can take
an isometric subcomplex $B$ of $K^o(G)$ containing $x,x'$ so that  $B$ is isometric to an $l_1$--cube $([0,1]^n, l_1)$.
We can assume that the subpath $P'$ of $P$ belongs to $B$.
In the cube $B$,
$x$ is a vertex and $x'$ belongs to a face not containing $x$ (because $x \not \in [u,v]$).
Therefore there is a vertex $z$ in that face
such that $D_1(x,x') = D_1(x,z) + D_1(z,x')$.
In the cube $B$, $x$ and $z$ can be joined by an $l_1$--shortest path $Q$ consisting only of edges of $B$.
Replace the subpath $P'$ (in $P$) by the union of $Q$
and any shortest path between $z$ and $x'$. Then the length of $P$ does not increase.
Consider the subpath of $P$ from $z$ to $y$, and apply the same procedure.
Eventually we will obtain a path between
$x$ and $y$ consisting only of edges of $G$.

To (2):
We show that $P$ can be modified to a path connecting $x,y$ in $G^{\Delta}$ without increasing its $l_\infty$-length.
Recall the simplex $\sigma$ above.
%
Consider the subcomplex $K_{\sigma}$
consisting of all $K([p',q'])$ over the $o$--Boolean pairs $(p',q')$ with $u,v \in [p',q']$.
Let $P''$ be the connected component of $(K_{x} \cup K_{\sigma}) \cap P$ containing $x$.
Then $P''$ is a path  containing $P'$ and thus having one end at $x$.
Let $x''$ denote the other end of $P''$.
Take a minimal interval $[s,t]$
such that $K([s,t])$ contains $x'$ and $x''$.
As above we can find an isometric subcomplex $B$ of $K([s,t])$
containing $x', x''$ (and $s,t,u,v$) and isometric to a cube $([0,1]^n, l_{\infty})$.
Then $x',x'',s,t, u,v$ can be regarded as points in $[0,1]^n$, moreover, $u,v,s,t$ are vertices of this cube.
So $D_{\infty}(x',x'') = \max_{1 \leq i \leq n} |x'_i - x''_i|$.
We can assume without loss of generality that $u$ is the zero vector $(0,0,\ldots,0)$
and $v$ is $1$ in the first $k\ge 0$ coordinates,
i.e., $v = (1,1,\ldots,1,0,\ldots,0)$.
Then $0 < x'_i < 1$ if $1 \leq i \leq k$ and $x'_i = 0$ otherwise.
Let $F$ be the face of cube $B$
consisting of the points $w$ with $w_i = 0$ for $i > k$.
Then $F$ belongs to $K_x$  and every point in $F$ and $x$ are joined by
a path of unit length.
Thus we can modify $P$ so that $x'_i = x''_i$ for $1 \leq i \leq k$ without increasing its $l_{\infty}$-length.
We can further assume that $0 < x'_i = x''_i < 1$ for $1 \leq i \leq k$;
otherwise the dimension of $\sigma$ decreases.
Since $x''$ belongs to the boundary of $B$,
either $x''_j = 0$ or $x''_j=1$ for some $j > k$.
Suppose $x''_j = 0$. Both $x'$ and $x''$ are contained in a proper face $B'$ of $B$.
Then the minimum vertex $s'$ and the maximum vertex $t'$ of $B'$ with respect to $o$
are uniquely determined. Then $[s',t']$ is a proper subinterval of $[s,t]$,
and $K([s',t'])$ contains $B \ni x',x''$. This is a contradiction to the minimality of $[s,t]$.
Thus $x''_j = 1$. Then $D_{\infty}(x',x'') = 1 = D_{\infty}(u, x'')$.
Let $Q$ be the unit length path between $x$ and $u$,
and let $Q'$ be the unit length path between $u$ and $x''$.
Replace the subpath $P''$
in $P$ between $x$ and $x''$ by the union of $Q$ and $Q'$.
Then the $l_{\infty}$--length of $P$ does not increase.
Now $Q$ is an edge of $G^{\Delta}$.
Repeat the same procedure for the subpath of $P$ between $u$ and $y$,
and finally obtain a desired path between $x$ and $y$ consisting only of edges of $G^{\Delta}$.
\end{proof}
Together with Lemma~\ref{lem:R/2^i}, previous result shows that
the metric spaces $(K(G), D_1)$ and $(K(G), D_\infty)$
can be arbitrarily well  approximated by iterated barycentric graphs $G^{*i}$ and
by their thickenings $(G^{*i})^{\Delta}$, respectively.
\section{$l_1$--metrization}\label{subsec:l_1}
We present here the proof of Theorem~\ref{thm:ortho_main}~(1).
By Proposition~\ref{prop:K(G)=K(G,o)}, we can assume that
$G$ is an orientable modular graph.
Let $o$ denote an admissible orientation of $G$. We will show that $(K^o(G), l_1)$ is
a strongly modular space.
Since a median of a triplet of points in general is not unique,
we need a preliminary argument showing how to construct
a sequence convergent to a median of a given triplet of points of $K(G)$.
For a quadruplet $x_1,x_2, x_3, w$ of vertices of $G$, let $M(w; x_1,x_2,x_3)$
be a nonnegative number defined by setting
\[
M(w; x_1,x_2, x_3)  :=  \sum_{1 \leq i < j \leq 3} (d(x_i,w) + d(x_j,w) - d(x_i,x_j))/2.
\]
In particular, $M(w; x_1,x_2,x_3)=0$  if and only if $w$ is a median of $x_1,x_2,x_3$.
\begin{Lem}\label{M(x,y,z)}
For a modular graph $G$ and vertices $x_1,x_2, x_3, w$ of $G$,
there exists a median $m$ of $x_1,x_2,x_3$ with $d(m,w) \leq M(w; x_1,x_2, x_3)$.
In addition, if $w$ is a median of $y_1,y_2,y_3$, then
\[
d(w,m) \leq 2 (d(x_1,y_1) + d(x_2,y_2) + d(x_3,y_3)).
\]
\end{Lem}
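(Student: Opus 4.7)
The plan is to derive the first statement from the identity $M(w;x_1,x_2,x_3)=\delta(w)-\delta(m)$, where $\delta(v):=d(v,x_1)+d(v,x_2)+d(v,x_3)$ and $m$ is any median of $x_1,x_2,x_3$; this identity is immediate from the fact that a median lies on a shortest path between each pair $x_i,x_j$, so $2\delta(m)=\sum_{i<j}d(x_i,x_j)$. Note that in a modular graph $G$ is bipartite, so each term $(d(x_i,w)+d(x_j,w)-d(x_i,x_j))/2$ is a nonnegative integer and $M(w;x_1,x_2,x_3)$ is a nonnegative integer as well. The first statement thus reduces to producing a median $m$ with $d(w,m)\le \delta(w)-\delta(m)$, and I would prove this by induction on $M(w;x_1,x_2,x_3)$. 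The base case $M=0$ means that $w$ is itself a median, and I take $m:=w$.

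The inductive step rests on a local descent lemma that I would establish as follows: if $w$ is not a median of $x_1,x_2,x_3$, then some neighbor $v$ of $w$ satisfies $\delta(v)\le\delta(w)-1$. In a modular graph, $w$ is a median precisely when $w\in\bigcap_{i<j}I(x_i,x_j)$, so if $w$ is not a median then some pair, say $x_1,x_2$, has $w\notin I(x_1,x_2)$. The modularity of $G$ then provides a median $u$ of the triplet $w,x_1,x_2$, and necessarily $u\ne w$; choosing $v$ adjacent to $w$ on a shortest $(w,u)$-path, the inclusions $v\in I(w,u)$ and $u\in I(w,x_1)\cap I(w,x_2)$ force $v\in I(w,x_1)\cap I(w,x_2)$, so $d(v,x_i)=d(w,x_i)-1$ for $i=1,2$, while $d(v,x_3)\le d(w,x_3)+1$ by the triangle inequality, giving $\delta(v)\le\delta(w)-1$. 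The induction hypothesis applied at $v$ then yields a median $m$ of $x_1,x_2,x_3$ with $d(v,m)\le M(v;x_1,x_2,x_3)=\delta(v)-\delta(m)\le M(w;x_1,x_2,x_3)-1$, and the triangle inequality closes the argument: $d(w,m)\le 1+d(v,m)\le M(w;x_1,x_2,x_3)$.

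The second statement will follow from the first by a stability estimate for $M$ in its subscripts. Set $D:=d(x_1,y_1)+d(x_2,y_2)+d(x_3,y_3)$. The triangle inequality gives $|d(w,x_i)-d(w,y_i)|\le d(x_i,y_i)$ and $|d(x_i,x_j)-d(y_i,y_j)|\le d(x_i,y_i)+d(x_j,y_j)$; summing and using that each $d(x_i,y_i)$ appears in exactly two of the three index pairs, one obtains $M(w;x_1,x_2,x_3)\le M(w;y_1,y_2,y_3)+D+\tfrac{1}{2}(2D)=2D$, the last equality because $w$ is a median of $y_1,y_2,y_3$ and hence $M(w;y_1,y_2,y_3)=0$. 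Combining with the first statement gives $d(w,m)\le 2D$. The main obstacle in this plan is the local descent lemma, which is where modularity is used essentially: in a merely weakly modular graph the auxiliary triplet $w,x_1,x_2$ need not admit a (global) median, so one cannot produce the vertex $u$ by the same route. Once descent is in hand, the rest is bookkeeping with the distance function $\delta$.
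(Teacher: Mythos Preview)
Your proof is correct and follows essentially the same approach as the paper: the local descent step---taking a median $u$ of $w,x_i,x_j$ for a pair with $w\notin I(x_i,x_j)$ and stepping from $w$ toward $u$---is exactly what the paper does (it phrases this as ``by modularity of $G$ we can find a neighbor $w'$ of $w$ with $d(x_i,w')=d(x_i,w)-1$ and $d(x_j,w')=d(x_j,w)-1$''), and the second part is the same triangle-inequality bookkeeping. Your repackaging via $\delta(v)=\sum_i d(v,x_i)$ and the identity $M(w)=\delta(w)-\delta(m)$ is a clean way to present it, but the underlying argument is identical.
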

\begin{proof} If $w$ is not a median of $x_1,x_2,x_3$, then $d(x_i,w) + d(w,x_j) > d(x_i, x_j)$ for some $x_i,x_j$, $i\ne j$. By
modularity of $G$ we can find a neighbor $w'$ of $w$ with
$d(x_i,w') = d(x_i,w) - 1$ and $d(x_j,w') = d(x_j,w) - 1$. Then $M(w'; x_1,x_2,x_3) \leq M(w; x_1, x_2, x_3) - 1$
and repeating this process for $w',x_1,x_2,x_3$, we will obtain a required median $m$.

Suppose that $w$ is a median of $y_1,y_2,y_3$.
For $i < j$ we have
\begin{eqnarray*}
&& d(x_i, w) + d(w, x_j) - d(x_i,x_j)  \\
&& \leq d(x_i,y_i) + d(y_i,w) + d(w, y_j) + d(y_j, x_j) - d(y_i,y_j) + d(y_i,y_j) - d(x_i,x_j) \\
&& =  d(x_i,y_i)  + d(x_j,y_j) + (d(y_i, y_j) - d(x_i,y_j)) + (d (x_i,y_j) - d(x_i,x_j)) \\
&&  \leq 2 d(x_i,y_i) + 2 d(x_j,y_j),
\end{eqnarray*}
where we use the triangle inequalities and
$d(y_i,w) + d(w,y_j) - d(y_i,y_j) = 0$.
From this we obtain $M(w; x_1,x_2,x_3) \leq 2 (d(x_1,y_1) + d(x_2,y_2) + d(x_3,y_3))$,
yielding the required inequality.
\end{proof}

To show the modularity of $(K(G),D_1)$,
pick an arbitrary triplet $x,y,z$ of points of $K(G)$.
By Lemma~\ref{lem:R/2^i},
we can find three digging sequences $\{x^{i}\}, \{y^i\}, \{z^i\}$ convergent to
$x,y,z$, respectively.
We recursively construct a sequence $m^1,m^2,\ldots,$
that converges to a median of $x,y,z$.
Let $m^1$ be an arbitrary median of $x^1, y^1,z^1$.
Suppose that $m^{i-1}$ has been defined and is a median of $x^{i-1}, y^{i-1},z^{i-1}$ in $G^{*(i-1)}$.
By Lemma \ref{M(x,y,z)} applied to the graph $G^{*i}$,
we can define $m^{i}$ as a median of  $x^{i}, y^{i}, z^{i}$
so that
\[
D_1(m^{i}, m^{i-1}) \leq 2 (D_1(x^{i},x^{i-1}) + D_1(y^{i},y^{i-1}) + D_1(z^{i},z^{i-1}) ) \leq 3 R/{2^{i-1}}.
\]
Then $m^1,m^2,\ldots$ is a Cauchy sequence, and it converges
to a point $m$ of $K(G)$ by completeness (Lemma~\ref{lem:complete}).
Since $M(m^{i}; x^i,y^i,z^i) = 0$ for all $i$ and the distance function $D_1$ is continuous, $M(m;x,y,z) = 0$ must hold.
Hence $m$ is a median of $x,y,z$.

Next we show that $K(G)$ does not contain $K_{3,3}^-$--subspaces.
Suppose by way of contradiction that
there exist a positive $\alpha$ and a six-point subset $U =\{x,y,z,u,v,w\}$ of $K(G)$
such that $(U, D_1)$ is isometric to $(U, \alpha d_{H})$
for a graph $H$ on $U$ isomorphic to $K_{3,3}^-$. Let $\{x,y,z\}$ and $\{u,v,w\}$ be the two color classes of $H$ and
suppose that $z$ and $w$ are not adjacent in $H$.
Take the digging sequences $\{x^i\},\{y^i\},\{z^i\},\{u^i\},\{v^i\},\{w^i\}$ convergent to
$x,y,z,u,v,w$, respectively.
Define $\bar x^i$ as a median of $x^i$, $w^i$, and $z^i$, and
define $\bar y^i$ as a median of $y^i$, $w^i$, and $z^i$.
Then $\bar x^i$ and $\bar y^i$ also
converge to $x$ and $y$, respectively.
Since $G^{*i}$ is strongly modular,
$I(w^{i},z^{i})$ is a modular lattice, and a median of $\bar x^i, \bar y^i, z^i \in I(w^i,z^i)$
is uniquely determined; denote it by $m^i$.
By Lemma \ref{M(x,y,z)},
we have $D_1(m^i,u^i) \leq M(u^i; \bar x^i, \bar y^i, z^i)$ and
$D_1(m^i,v^i) \leq M(v^i; \bar x^i, \bar y^i, z^i)$.
Here $M(u^i; \bar x^i, \bar y^i, z^i)$ converges to $M(u;x,y,z) = 0$;
recall that $u$ is a median of $x,y,z$.
Similarly $M(v^i; \bar x^i, \bar y^i, z^i)$ converges to $M(v;x,y,z) = 0$.
This means that $m^i$ converges to distinct $u$ and $v$, a contradiction.

\section{$l_{\infty}$--metrization}\label{subsec:l_inf}
Here we prove Theorem~\ref{thm:ortho_main}~(2).
Recall that a metric space $(X,d)$ is said to be (finitely) hyperconvex
if for every (finite) set $\mathcal{R} \subseteq X \times \RR_+$ satisfying
\begin{equation}\label{eqn:r+s}
d(x,y) \leq r + s \quad \mbox{ for all } (x,r), (y,s) \in \mathcal{R},
\end{equation}
there exists a point $z$, called a {\em common point},
satisfying $d(x,z) \leq r$ for all $(x,r) \in \mathcal{R}$.
The proof goes along the same lines as in the $l_1$--case.
By taking $G^*$ as $G$, we can assume that $G= (V,E)$ is
an orientable modular graph
with an admissible orientation $o$. We will prove the hyperconvexity of $K^o(G)$.
Let $\mathcal{R}$ be a finite subset of $V \times \ZZ_+$ satisfying (\ref{eqn:r+s}).
Since $G^{\Delta}$ is a finitely Helly graph,
a common point for $\mathcal{R}$ always exists in $V$.
For $z \in V$,
define $M(z; \mathcal{R})$ by setting
\[
M(z; \mathcal{R}) := \sum_{(x,r) \in \mathcal{R}} \max \{ 0, d_{G^{\Delta}}(z,x) - r \}.
\]
Then $z$ is a common point of $\mathcal{R}$
if and only if $M(z; \mathcal{R}) = 0$.
\begin{Lem}
For any $z \in V$,
there is a common point $w$ of $\mathcal{R}$
such that $d_{G^\Delta}(z, w) \leq M(z; \mathcal{R})$.
\end{Lem}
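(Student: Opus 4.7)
The plan is to proceed by induction on the nonnegative integer $M(z;\mathcal{R})$. The base case $M(z;\mathcal{R})=0$ is trivial: then $z$ itself is a common point and we take $w:=z$. For the inductive step, assuming $M(z;\mathcal{R})\geq 1$, I will construct a vertex $z'\in V$ adjacent or equal to $z$ in $G^{\Delta}$ such that $M(z';\mathcal{R})\leq M(z;\mathcal{R})-1$. Applying the induction hypothesis to $z'$ yields a common point $w$ with $d_{G^\Delta}(z',w)\leq M(z';\mathcal{R})$, and then $d_{G^\Delta}(z,w)\leq 1+(M(z;\mathcal{R})-1)=M(z;\mathcal{R})$.

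To produce such a $z'$, split $\mathcal{R}$ into the ``far'' pairs $\mathcal{R}_{\mathrm{far}}=\{(x,r)\in\mathcal{R}:d_{G^\Delta}(z,x)>r\}$ and the ``close'' pairs $\mathcal{R}_{\mathrm{close}}=\mathcal{R}\setminus\mathcal{R}_{\mathrm{far}}$; since $M(z;\mathcal{R})\geq 1$, the set $\mathcal{R}_{\mathrm{far}}$ is nonempty. In $G^{\Delta}$ consider the finite collection of balls
\begin{equation*}
\mathcal{F}=\{B_1(z)\}\cup\{B_{d_{G^\Delta}(z,x)-1}(x):(x,r)\in\mathcal{R}_{\mathrm{far}}\}\cup\{B_r(x):(x,r)\in\mathcal{R}_{\mathrm{close}}\}.
\end{equation*}
I will verify that the balls in $\mathcal{F}$ pairwise intersect: $B_1(z)$ meets every $B_{d_{G^\Delta}(z,x)-1}(x)$ via any neighbor of $z$ in a shortest $(z,x)$-path in $G^{\Delta}$, and contains $z$ itself for the close balls; for two far balls $B_{d_{G^\Delta}(z,x)-1}(x)$ and $B_{d_{G^\Delta}(z,y)-1}(y)$ one uses $d_{G^\Delta}(z,x)\geq r+1$, $d_{G^\Delta}(z,y)\geq s+1$ together with the assumption $d_{G^\Delta}(x,y)\leq r+s$; mixed and close-close intersections are checked analogously using (\ref{eqn:r+s}). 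Because $\mathcal{F}$ is finite and $G^{\Delta}$ is finitely Helly by Theorem~\ref{thm:Helly}, there exists a vertex $z'$ common to all balls in $\mathcal{F}$.

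By construction $d_{G^\Delta}(z,z')\leq 1$, while $d_{G^\Delta}(z',x)\leq d_{G^\Delta}(z,x)-1$ for every $(x,r)\in\mathcal{R}_{\mathrm{far}}$ and $d_{G^\Delta}(z',x)\leq r$ for every $(x,r)\in\mathcal{R}_{\mathrm{close}}$. Summing these contributions gives
\begin{equation*}
M(z';\mathcal{R})\leq M(z;\mathcal{R})-|\mathcal{R}_{\mathrm{far}}|\leq M(z;\mathcal{R})-1,
\end{equation*}
which completes the inductive step. The only nontrivial point, and therefore the main obstacle to watch, is establishing pairwise intersection of the far balls; this relies crucially on the fact that both $d_{G^\Delta}(z,x)$ and $d_{G^\Delta}(z,y)$ strictly exceed $r$ and $s$ respectively, so that subtracting one from each still leaves enough budget to cover $d_{G^\Delta}(x,y)\leq r+s$. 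Once that is in hand, the Helly property of $G^{\Delta}$ does all the remaining work.
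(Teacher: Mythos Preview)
Your proof is correct, but it follows a genuinely different route from the paper's.

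The paper picks a single far pair $(x,r)$, replaces $z$ by the $\Delta$--gate $g$ of $x$ at $z$, and then uses the geodesic extension property (Proposition~\ref{prop:gep2}) to argue that for every other pair $(y,s)$ with $d_{G^\Delta}(y,z)\geq s$ the $\Delta$--gate of $y$ at $z$ is adjacent to $g$, so the term $\max\{0,d_{G^\Delta}(\cdot,y)-s\}$ does not increase; this yields $M(g;\mathcal{R})\leq M(z;\mathcal{R})-1$, and one iterates. By contrast, you bypass both $\Delta$--gates and the geodesic extension property entirely: you assemble a single finite family of balls in $G^{\Delta}$ tailored to handle \emph{all} far pairs simultaneously, verify their pairwise intersection directly from the hypothesis $d_{G^\Delta}(x,y)\leq r+s$, and invoke the finitely Helly property of $G^{\Delta}$ (Theorem~\ref{thm:Helly}) to produce $z'$. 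Your approach is more self-contained---it uses only Theorem~\ref{thm:Helly}, which the surrounding text already cites---and as a bonus it decreases $M$ by $|\mathcal{R}_{\mathrm{far}}|$ rather than just by $1$ at each step. The paper's route, on the other hand, is more constructive: it tells you exactly which neighbor of $z$ to move to (the $\Delta$--gate), whereas your $z'$ is obtained only existentially from the Helly property.
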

\begin{proof}
Suppose that $d_{G^{\Delta}}(x,z) > r$ for some $(x,r) \in \mathcal{R}$.
Take the $\Delta$--gate $g$ of $x$ at $z$ (see Section~\ref{subsub:delta-gate}).
Then $M(g; \mathcal{R}) \leq M(z; \mathcal{R}) - 1$ holds.
Indeed, for any $(y,s) \in \mathcal{R}$
with $d_{G^{\Delta}}(y,z) \geq s$,
we have $d_{G^{\Delta}}(x,y) \leq r + s < d_{G^{\Delta}}(x,z) + d_{G^{\Delta}}(y,z)$.
Therefore, by the geodesic extension property (Proposition~\ref{prop:gep2}),
the $\Delta$--gate of $y$ at $z$ is adjacent to $g$.
This means that $d_{G^{\Delta}}(y,g) \leq d_{G^{\Delta}}(y,z)$.
Therefore $\max\{0, d_{G^{\Delta}}(z,y) - s\}$ does not increase
for any $(y,s) \in \mathcal{R} \setminus \{(x,r)\}$,
and decreases by one for $(y,s) = (x,r)$.

Replace $z$ by $g$ and repeat this process.
After at most $M(z; \mathcal{R})$ steps,
we obtain a common point $w$, and it  necessarily holds $d_{G^\Delta}(z, w) \leq M(z; \mathcal{R})$.
\end{proof}
Let ${\mathcal R}$ be a finite set of pairs $(x,r) \in K(G) \times \RR_+$
such that $D_{\infty}(x,y) \leq r + s$ for every $(x,r),(y,s) \in \mathcal{R}$.
We are going to construct $z \in K(G)$
satisfying $D_{\infty}(z,x) \leq r$ for $(x,r) \in \mathcal{R}$.
For each $(x,r) \in \mathcal{R}$,
we can take a digging sequence $\{x^{i}\}$ convergent to $x$
with $D_{\infty}(x^i, x) \leq 1/2^{i}$.
Also we define $r^{i}$ as the minimum of $z \in \ZZ/2^{i}$ with $z \geq r+ 1/2^{i}$.
Then $\{r^i\}$ converges to $r$.
For $(x,r), (y,s) \in \mathcal{R}$ we have
\begin{eqnarray*}
d_{(G^{*i})^{\Delta}}(x^{i}, y^{i}) &= & D_{\infty}(x^i,y^i) \leq
D_{\infty}(x^i,x) + D_{\infty}(x,y) + D_{\infty}(y,y^i) \\
& \leq & r + s + 1/2^i + 1/2^{i}  \leq  r^i + s^i.
\end{eqnarray*}
Define $\mathcal{R}^i := \{(x^i,r^i):  (x,r) \in \mathcal{R}\}$.
Since $(G^{*i})^{\Delta}$ is a finitely Helly graph (Theorem~\ref{thm:Helly}),
there is a common point of $\mathcal{R}^i$.
Construct a sequence
$z^1,z^2,\ldots$ as follows.
Define $z^1$ as an arbitrary common point of ${\mathcal R}^1$ in $(G^{*})^\Delta$.
Suppose now that a common point $z^{i-1}$ of ${\mathcal R}^{i-1}$ was already defined.
According to the above lemma,
we can choose a common point $z^{i}$ of ${\mathcal R}^i$ so that
\begin{align*}
D_{\infty} (z^{i-1}, z^{i}) & \leq  M(z^{i-1}; {\mathcal R}^{i})
= \sum_{(x,r) \in {\mathcal R}} \max \{ 0, D_{\infty}(z^{i-1}, x^{i}) - r^{i} \} \\
& \leq  \sum_{(x,r) \in {\mathcal R}} \max \{ 0, D_{\infty}(z^{i-1}, x^{i-1}) - r^{i-1} + D_{\infty}(x^{i-1},x^i) - r^{i} + r^{i-1} \} \\
& \leq  M(z^{i-1}; {\mathcal R}^{i-1}) +
\sum_{(x,r) \in \mathcal{R}} D_{\infty}(x^{i},  x^{i-1}) + r^{i-1} - r^{i} \\
& \leq  |\mathcal{R}| (1/2^{i} + 1/2^{i-1}) \leq 3|\mathcal{R}|/2^{i},
\end{align*}
where $M(z^{i-1}; {\mathcal R}^{i-1}) = 0$,
$r^{i-1} - r^i \leq 1/2^{i-1}$, and
$D_{\infty}(x^{i},  x^{i-1}) \leq 1/2^i$ (since $x^i,x^{i-1}$ are adjacent in $(G^{*i})^{\Delta}$).
Therefore $z^1,z^2,\ldots$ is a Cauchy sequence, and it converges to $z$ by completeness.
For every $(x,r) \in {\mathcal R}$,
$D_{\infty}(z^i,x^i) - r^i$ is a nonpositive number,
and converges to $D_{\infty}(z,x) - r$, which must be nonpositive.
This means that $z$ is a common point of ${\mathcal R}$.

Suppose now that $G$ is locally finite.
Then every closed ball in $K(G)$ is compact.
For a (possibly infinite)  family $\mathcal{F}$ of
pairwise intersecting balls,
pick any ball $B$ from $\mathcal{F}$.
Then every finite subset of $\mathcal{F}$ containing $B$
has a common point in $B$.
Therefore, by compactness of $B$, $\mathcal{F}$ has a common point,
implying that $K(G)$ is hyperconvex.

\section{$l_2$--metrization}\label{subsec:l_2}
Here we prove Propositions~\ref{prop:localsemilattice} and \ref{prop:g-amalgam}.
First we establish Proposition~\ref{prop:g-amalgam}.
Suppose that an swm-graph $G$ is the Cartesian product of swm-graphs $H$ and  $H'$.
For any Boolean-gated sets $Y$ in $H$ and $Y'$ in $H'$,
the vertex set $Y \times Y' = \{ (x,x') \mid x \in Y, x' \in Y' \}$ in $G$ is Boolean-gated.
Conversely, every Boolean-gated set in $G$ can be represented in this way.
Therefore, the poset $\mathcal{B}(G)$ is isomorphic to the product of $\mathcal{B}(H)$ and $\mathcal{B}(H')$.
As mentioned in Section~\ref{subsec:lattice_main}, $K(G) (= K(\mathcal{B}(G)))$
is isometric to the product $K(H) \times K(H')$ with the product metric.
Hence, by~\cite{BrHa}*{1.15},  $(K(G), D_2)$ is CAT$(0)$. $\Box$

Next suppose that $G$ is
the gated amalgam of swm-graphs $H$ and $H'$
with respect to a common gated subgraph $H_0$ of  $H$ and $H'$.
By Proposition~\ref{prop:gset=>isosubsp},
$(K(H_0),D_2)$ is an isometric subspace of both $(K(H),D_2)$ and $(K(H'),D_2)$.
Since $K(H)$ and $K(H')$ are CAT$(0)$, $K(H_0)$ is a common convex subspace.
Hence $K(G)$ is obtained by gluing $K(H)$ and $K(H')$
along a convex subspace $K(H_0)$.
By Reshetnyak's gluing theorem~\cite{BrHa}*{Theorem 11.1},
$(K(G), D_2)$ is CAT(0).  $\Box$

Next we prove Proposition~\ref{prop:localsemilattice}.
Since $K(G)$ contains the square-complex of the orientable modular graph $G^*$,
$K(G)$ is simply connected.
By Cartan-Hadamard theorem, it suffices to show that $K(G)$
is nonpositively curved around every vertex.
Take  a vertex $X$ of $K(G) = K(\mathcal{B}(G))$,
which corresponds to a Boolean-gated set in $G$.
Here $X$ is regarded as a vertex of $K(G^*)$ since
$K(G^*)$ is a simplicial subdivision of $K(G)$ (Proposition~\ref{prop:K(G)=K(G,o)}).
Consider the star $K:=\mr{St}(X,K(G^*))$ of $X$,
which contains a neighborhood around $X$ in $K(G)$.
$K$ is induced by the ball around $X$ in $(G^{*2})^{\Delta}$ with radius $1/4$.
By Propositions~\ref{gated_ball} and \ref{prop:gset=>isosubsp},
$K$ is an isometric subspace of $K(G)$.
Hence it suffices to show that $K$ is CAT(0).
Since $G^{*2}$ is the covering graph of
the poset of all intervals of $(\mathcal{B}(G), \supseteq)$ ordered by inclusion (Lemma~\ref{lem:G*2}),
$K$ is the orthoscheme complex $K({\mathcal P})$ of
the poset ${\mathcal P}$ of all intervals containing $X$.
So $\mathcal{P}$ is the product of the ideal $(X)^{\downarrow}$
and the filter $(X)^{\uparrow}$.
By Proposition~\ref{prop:gluing_semilattices},
it suffices to verify that both $K((X)^{\downarrow})$ and $K((X)^{\uparrow})$ are CAT$(0)$.
Since every principal filter is isomorphic
to the subspace poset of a polar space (Proposition~\ref{prop:ideal}),
$K((X)^{\uparrow})$ is CAT(0) by Proposition~\ref{prop:specialcases}.
Here $X$ belongs to the link poset ${\mathcal{L}}_y = (\{y\})^{\downarrow}$ for a vertex $y \in V(G)$.
By the assumption, the orthoscheme complex $K(\mathcal{L}_y)$
is CAT$(0)$. Also $(X)^{\downarrow}$ is convex in $\mathcal{L}_y$
(Lemma~\ref{lem:filter_is_convex}).
By Proposition~\ref{prop:gatedsemilattice=>iso},
$K((X)^{\downarrow})$ is an isometric subspace of $K(\mathcal{L}_y)$ and
is CAT$(0)$.  $\Box$

\section{$K(G)$ is contractible}\label{subsec:contractible}
Finally,  we prove Theorem~\ref{thm:ortho_main}~(3).
Let $R$ denote the cube-dimension of $G$.
For $p \in K(G)$ and $r \in \RR_+$,
let $B_r(p)$ denote the $l_{\infty}$--ball with center $p$ and radius $r$:
\begin{equation*}
B_r(p) := \{q \in K(G):  D_{\infty}(p,q) \leq r\}.
\end{equation*}
Recall Proposition~\ref{gated_ball} that $B_r(p)$ induces a gated subgraph in $G$.
\begin{Lem}\label{r-r'}
For $x, p \in V(G)$ and $r,r' \in \ZZ_+$,
let $g$ and $g'$ be the gates of $x$ in $B_r(p)$ and the balls $B_{r'}(p)$ of $G$, respectively.
Then $d(g, g') \leq R |r- r'|$.
\end{Lem}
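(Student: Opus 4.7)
\smallskip
\noindent
\emph{Proof proposal.}
We may assume $r' < r$. The plan is to chain through intermediate gates and reduce the statement to the unit case. For each integer $i$ with $r' \le i \le r$, let $g_i$ denote the gate of $x$ in the gated subgraph $B_i(p) \cap V(G)$ (well-defined by Proposition~\ref{gated_ball}, and coinciding with the ball of radius $i$ around $p$ in $G^{\Delta}$ by Proposition~\ref{prop:l_1&1_infty}~(2)). Then $g_r = g$ and $g_{r'} = g'$, so by the triangle inequality it suffices to show $d_G(g_i, g_{i-1}) \le R$ for each $r' < i \le r$. Fix such an $i$ and write $a := g_i$, $b := g_{i-1}$. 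Since $B_{i-1}(p) \subseteq B_i(p)$, the definition of the gate gives $a \in I_G(x,b)$. If $a \in B_{i-1}(p)$, then $a = b$ and we are done, so we may assume $d_{G^{\Delta}}(p,a) = i$.

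The key step is to identify $b$ as the gate of $a$ (not of $x$) in $B_{i-1}(p) \cap V(G)$. For any $y \in B_{i-1}(p) \cap V(G)$, gatedness gives $d_G(x,y) = d_G(x,b) + d_G(b,y)$, while $a \in I_G(x,b)$ yields $d_G(x,b) = d_G(x,a) + d_G(a,b)$. Combining,
\[
d_G(a,y) \le d_G(x,y) - d_G(x,a) = d_G(a,b) + d_G(b,y),
\]
so $b \in I_G(a,y)$. Since $b \in B_{i-1}(p) \cap V(G)$, this says $b$ is the gate of $a$ in $B_{i-1}(p) \cap V(G)$; given that $d_{G^{\Delta}}(p,a) = i$, this is precisely the $\Delta$--gate of $p$ at $a$ from Subsection~\ref{subsub:delta-gate}.

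Now apply Lemma~\ref{lem:delta-gate}: $d_{G^{\Delta}}(p,b) = i-1$, and $\lgate a,b \rgate \subseteq \lgate a,v \rgate$ for every vertex $v$ with $d_{G^{\Delta}}(p,v) = i-1$. Choose such a $v$ to be any $G^{\Delta}$--neighbor of $a$ one step closer to $p$ in $G^{\Delta}$ (e.g.\ the vertex preceding $a$ on a shortest $G^{\Delta}$--path from $a$ to $p$). Then $(a,v)$ is a Boolean pair, so by Proposition~\ref{prop:char_Boolean} the set $\lgate a,v \rgate$ induces a dual polar subgraph whose $G$--diameter equals $d_G(a,v)$, which is at most the cube-dimension $R$. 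Since $b \in \lgate a,v \rgate$, we conclude $d_G(a,b) \le R$, finishing the induction and yielding $d_G(g,g') \le R(r-r') = R|r-r'|$.

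The main obstacle is the middle step: proving that $b$, which was defined only as a gate relative to $x$, is also the gate of $a$ in the smaller ball. This identification is what allows invoking the $\Delta$--gate machinery, and in particular property~(2) of Lemma~\ref{lem:delta-gate}, which is the only tool here that produces a Boolean-gated set controlling $d_G(a,b)$.
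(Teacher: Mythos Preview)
Your proof is correct and follows essentially the same route as the paper's. Both reduce to the unit step $|r-r'|=1$, identify the gate in the smaller ball as the $\Delta$--gate of $p$ at the gate in the larger ball, and then use that any Boolean pair has $G$--distance at most the cube-dimension $R$. The paper is terser: once $b$ is recognized as the $\Delta$--gate of $p$ at $a$, it cites directly that $(a,b)$ is Boolean (this is stated in the proof of Lemma~\ref{lem:delta-gate}), so $d_G(a,b)\le R$; your detour through an auxiliary $G^\Delta$--neighbor $v$ is unnecessary but harmless.

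One small slip: in your displayed inequality you write $d_G(a,y)\le d_G(x,y)-d_G(x,a)$, but the triangle inequality only gives $\ge$ here. What you actually need is equality, and it follows because $a$, being the gate of $x$ in $B_i(p)$, lies in $I_G(x,y)$ for every $y\in B_{i-1}(p)\subseteq B_i(p)$. With that, $d_G(a,y)=d_G(x,y)-d_G(x,a)=d_G(a,b)+d_G(b,y)$ and the conclusion $b\in I_G(a,y)$ follows cleanly.
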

\begin{proof}
Suppose that $r' = r+1$.
Then $g$ is also the gate of $g'$ at $B_r(p)$,
and hence is the $\Delta$--gate of $p$ at $g'$
(see Section~\ref{subsub:delta-gate}).
In particular, $(g,g')$ is Boolean (by Lemma~\ref{lem:delta-gate}~(1)),
and hence $d(g,g') \leq R$.
Consequently,  for arbitrary $r,r'$
we obtain $d(g,g') \leq R|r - r'|$.
\end{proof}

\begin{Lem}\label{lem:gated_in_K(G)}
For $p \in V(G)$ and $r \in \RR_+$,
the ball $B_r(p)$ is gated in $(K(G), D_1)$.
\end{Lem}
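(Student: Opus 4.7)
Let $R$ denote the cube-dimension of $G$. For each $N \ge 1$, set $r_N := \lfloor r\cdot 2^N\rfloor$; since edges of $G^{*N}$ have length $1/2^N$ and $D_\infty$ restricted to $V(G^{*N})$ equals the thickened graph distance (Proposition~\ref{prop:l_1&1_infty}(2)), we have
\[
H_N := B_{r_N}\bigl(p,(G^{*N})^{\Delta}\bigr) = B_r(p) \cap V(G^{*N}).
\]
By Proposition~\ref{gated_ball} applied to the swm-graph $G^{*N}$, each $H_N$ is gated in $G^{*N}$. My strategy is to construct the $D_1$-gate of an arbitrary point $q\in K(G)$ as a limit of graph-theoretic gates in the refining sequence $G^{*1}\subseteq G^{*2}\subseteq\cdots$.

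\emph{Step 1 (candidate gate).} Pick a digging sequence $\{q^N\}$ convergent to $q$ (Lemma~\ref{lem:R/2^i}), so $q^N\in V(G^{*N})$ and $D_1(q^N,q)\le R/2^N$. Let $g^N\in H_N$ be the gate of $q^N$ in $H_N$ computed inside $G^{*N}$.

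\emph{Step 2 (Cauchy property).} I will show $D_1(g^N,g^{N+1})=O(R/2^N)$, whence $\{g^N\}$ is Cauchy and hence convergent to some $g\in K(G)$ by completeness (Lemma~\ref{lem:complete}); moreover $g\in B_r(p)$ since $B_r(p)$ is closed and contains every $g^N$. To compare levels, I work inside the finer graph $G^{*(N+1)}$. On $V(G^{*N})\subseteq V(G^{*(N+1)})$ the two metrics agree with $D_1$, and the scaling identity $d^{\mathrm{int}}_{(G^{*(N+1)})^{\Delta}}(p,v)=2\,d^{\mathrm{int}}_{(G^{*N})^{\Delta}}(p,v)$ (valid on $V(G^{*N})$) gives
\[
H_N = B_{2r_N}\bigl(p,(G^{*(N+1)})^{\Delta}\bigr) \cap V(G^{*N}).
\]
Using this identification and Proposition~\ref{gated_ball} applied to $G^{*(N+1)}$, one checks that $g^N$ also serves as the gate of $q^N$ in the ball $\tilde H_N := B_{2r_N}(p,(G^{*(N+1)})^{\Delta})$ of $G^{*(N+1)}$. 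Now $|2r_N-r_{N+1}|\le 1$, so Lemma~\ref{r-r'} in $G^{*(N+1)}$ yields $D_1(g^N,\text{gate of }q^N\text{ in }H_{N+1})\le R/2^{N+1}$. Non-expansivity of the gate map onto $H_{N+1}$ combined with $D_1(q^N,q^{N+1})\le R/2^{N+1}$ then gives $D_1(g^N,g^{N+1})\le 2R/2^{N+1}$.

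\emph{Step 3 (gate property).} For an arbitrary $y\in B_r(p)$ I will prove $D_1(q,y)=D_1(q,g)+D_1(g,y)$, which pins down $g$ as the $D_1$-gate of $q$ in $B_r(p)$. Take a digging sequence $\{y^N\}$ for $y$; for a prescribed sequence $\varepsilon_N\downarrow 0$ compose with the gate map onto the slightly enlarged ball $B_{\lceil (r+\varepsilon_N)\cdot 2^N\rceil}(p,(G^{*N})^{\Delta})$ to produce $\bar y^N\in H_N$ with $D_1(y^N,\bar y^N)\to 0$ (Lemma~\ref{r-r'} bounds the distortion by $R$ times the radius change). The graph-theoretic gate property in $G^{*N}$ reads
\[
d_{G^{*N}}(q^N,\bar y^N)=d_{G^{*N}}(q^N,g^N)+d_{G^{*N}}(g^N,\bar y^N),
\]
and, since $(V(G^{*N}),d_{G^{*N}})\hookrightarrow (K(G),D_1)$ isometrically by Proposition~\ref{prop:l_1&1_infty}(1), this transcribes to the same identity in $D_1$. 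Passing to the limit $N\to\infty$ and using continuity of $D_1$ yields the desired decomposition.

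\emph{Main obstacle.} The delicate point is the identification in Step 2 of $g^N$ with the gate of $q^N$ in $\tilde H_N$ inside the finer graph $G^{*(N+1)}$: a priori the gate of $q^N$ might escape from $V(G^{*N})$, and one must verify that the extra vertices of $\tilde H_N\setminus H_N$ do not destroy the gate identity. The scaling relation above and the fact that $\tilde H_N$ is itself a gated ball in $G^{*(N+1)}$ are exactly what make this work. A secondary technicality is the boundary case in Step 3 when $D_\infty(p,y)=r$, handled by the enlarged-ball trick with Lemma~\ref{r-r'}.
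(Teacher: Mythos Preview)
Your proposal is correct and follows essentially the same approach as the paper's proof: approximate the $D_1$--gate by the graph-theoretic gates $g^N$ in $G^{*N}$, show this sequence is Cauchy via Lemma~\ref{r-r'} together with nonexpansiveness of the gate map, and then pass to the limit using continuity of $D_1$. The only cosmetic differences are that the paper rounds the radius up rather than down (so that the approximants $y^i$ for $y\in B_r(p)$ can be taken directly in $B_{r^i}(p)$, bypassing your enlarged-ball detour) and that the paper absorbs your ``main obstacle'' into the single estimate $D_1(g^i,g')\le D_1(x^i,x^{i+1})$ rather than isolating it---your claim that $g^N$ is also the gate in $G^{*(N+1)}$ is in fact true, but neither you nor the paper spells out the short argument.
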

\begin{proof}
Take any $x \in K(G)$.
Take a digging sequence $\{x^i\}$ convergent to $x$.
Define a sequence $\{r^i\}$, where
$r^i$ is defined to be the minimum of $z \in \ZZ_+/2^i$
with $z \geq r$. Then $\{r^i\}$ converges to $r$.
The ball $B_{r^i}(p)$ is gated in $G^{*i}$.
Let $g^i$ be the gate of $x^i$ in $B_{r^i}(p)$.
By Lemma \ref{r-r'} and the fact that
the projection map is nonexpansive, we have
\begin{equation*}\label{eqn:g^ig^j}
D_1 (g^{i}, g^{i+1} ) \leq D_{1} (g^{i}, g') + D_{1} (g', g^{i+1}) \leq D_1(x^{i},x^{i+1}) + R |r^{i} - r^{i+1}| \leq R/2^{i},
\end{equation*}
where $g'$ is the gate of $x^{i+1}$ in $B_{r^i}(p)$.
Therefore $\{g^{i}\}$ is a Cauchy sequence, and thus it converges to a point $g \in K(G)$.
Then $g$ must belong to $B_r(p)$.

We next verify that $g$ is the gate of $x$ in $B_r(p)$.
Pick any $y \in B_r(p)$ and consider a sequence $\{y^i\}$ convergent to $y$
with $y^{i} \in V^{*i} \cap B_{r^i}(p)$. Then we have
\[
D_1(x^i, y^i) = D_1(x^i,g^i) + D_1(g^i, y^i) \quad (i=1,2,\ldots).
\]
Since the distance function $D_1$ is continuous on $K(G) \times K(G)$,
it holds that
\[
D_1(x, y) = D_1(x, g) + D_1(g, y).
\]
This holds for every $y \in B_r(p)$, i.e., $g$ is the gate of $x$,
whence $B_r(p)$ is gated.
\end{proof}
Fix an arbitrary vertex $p \in V(G)$. For $x\in K(G)$, set $r(x):=D_{\infty}(p,x)$.
Let $\phi:K(G) \times [0,1] \to K(G)$ be defined by
\begin{equation}
\phi(x, t) := \mbox{the gate of $x$ in $B_{t  r(x)}(p)$} \quad \mbox{ for } x \in K(G), t \in [0,1].
\end{equation}
Clearly $\phi(x,1) = x$ and $\phi(x,0) = p$ for $x \in K(G)$.
Also we have
\[
 D_1 ( \phi(x, t) ,  \phi(x, t')  ) \leq R |t - t' | \quad \mbox{ for } x \in K(G), t,t' \in [0,1]
\]
by Lemma~\ref{lem:gated_in_K(G)} with
convergent sequences $\{x^i\},\{t^i\},\{(t')^i\}$.
Moreover we have
\begin{eqnarray*}
D_1 ( \phi(x, t) ,  \phi(x', t')  )  & \leq & D_1 ( \phi(x, t) ,  \phi(x', t)  ) + D_1 ( \phi(x', t) ,  \phi(x', t')  )  \\
& \leq & D_1(x,x') + R |t - t'| \quad   \mbox{ for } x,x' \in K(G), t,t' \in [0,1].
\end{eqnarray*}
Thus $\phi$ is continuous, and
is a homotopy map between $K(G)$ and a single point $p$. 

\chapter{Metric Properties of Weakly Modular Graphs}\label{s:metric}

In this chapter, we investigate some further metric properties of weakly modular
graphs. First, we show that meshed graphs (thus, in particular, weakly modular graphs) satisfy a quadratic
isoperimetric inequality. Then, we characterize weakly modular graphs
that are $\delta$--hyperbolic by forbidding large metric triangles and
large isometric square grids. We show that any Breadth-First-Search
traversal (BFS) of a weakly modular graph provide isometric subgraphs.
Finally, we propose a notion of ``weakly modular" complex, and we provide several particular examples.

\section{Quadratic isoperimetric inequality}
\label{s:Dehn}

Analogously to weakly modular graphs it can be shown that triangle-square complexes of meshed graphs are simply connected (see also Proposition \ref{p:meshsc}).
Thus each cycle of a meshed graph $G$ has a filling disk diagram in which all faces are triangles or squares. For a cycle $C$ of $G$,
its \emph{minimal area} $\ar (C)$ is the minimum number of triangles and squares
in a filling disk diagram for $C$. As usually, the {\it length} $\ell (C)$ of a cycle $C$ of $G$ is the number of edges of $C$.
In this section, we prove that any meshed graph satisfies a
quadratic isoperimetric inequality or, in other words, has the quadratic Dehn function. (Note however that meshed
graphs fail to have an important nonpositive-curvature-like feature, as explained in Section~\ref{s:locmesh}.)

\begin{theorem}
\label{t:ripe}
In a meshed graph $G$, for any cycle $C$, we have $\ar (C) \leq 2\ell(C)^2$.
\end{theorem}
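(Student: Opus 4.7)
The plan is to build a filling disc diagram of $C$ algorithmically, by repeatedly attaching triangles or squares at a vertex of maximum distance from a fixed basepoint, and to amortize the face count against a quadratic potential.

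I begin by choosing any basepoint $u\in V(C)$, parametrizing the cycle as $C=(u=v_0,v_1,\dots,v_{n-1},v_0)$ with $n=\ell(C)$, and setting $\ell_i:=d_G(u,v_i)$. Since $\ell_0=0$ and $|\ell_{i+1}-\ell_i|\le 1$, we have $k:=\max_i\ell_i\le n/2$, so $\mu(C):=\sum_i\ell_i\le n^2/2$ and the potential $\Phi(C):=\mu(C)+\ell(C)$ satisfies $\Phi(C)\le n^2/2+n\le 2n^2$.

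The inductive step selects an index $i$ with $\ell_i=k$ and splits into two cases. If $v_i$ is a \emph{singleton maximum}, meaning $\ell_{i-1}=\ell_{i+1}=k-1$, then $d(v_{i-1},v_{i+1})\le 2$, and meshedness applied to the triple $u,v_{i-1},v_{i+1}$ yields a common neighbor $x$ of $v_{i-1},v_{i+1}$ with $2d(u,x)\le 2(k-1)$, hence $x\ne v_i$. Depending on whether $v_{i-1}=v_{i+1}$, or $v_{i-1}\sim v_{i+1}$, or $d(v_{i-1},v_{i+1})=2$, one attaches respectively no face (and removes $v_i$ together with the duplicated vertex), the triangle $v_{i-1}v_iv_{i+1}$ (removing $v_i$), or the square $v_{i-1}v_iv_{i+1}x$ (replacing $v_i$ by $x$). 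In each case $\Phi(C)$ strictly decreases. If no singleton maximum exists, the maximal vertices form ``plateaus'' of size $\ge 2$, and one applies the triangle condition (Lemma~\ref{TC-meshed}) to two consecutive plateau vertices $v_i,v_{i+1}$ to produce a common neighbor $c$ with $d(u,c)=k-1$, attaching the triangle $v_iv_{i+1}c$ and inserting $c$ into the cycle between $v_i$ and $v_{i+1}$. This breaks the plateau, and although $\Phi(C)$ momentarily increases, the newly isolated vertex $v_i$ becomes a singleton maximum reducible in the next step.

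Iterating until the cycle is trivial produces a filling of $C$; the key estimate is that the total number of attached faces is at most $2n^2$. This requires an amortized charging argument: every plateau-breaking triangle is paired with, and charged to, the ensuing singleton-maximum reduction it enables, so that in aggregate each pair of attached faces corresponds to a net decrease of $\Phi(C)$ by at least one. Since $\Phi(C)\le 2n^2$ initially and $\Phi(C)\ge 0$ throughout, this gives $\ar(C)\le 2\ell(C)^2$.

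The main obstacle is making this amortized count rigorous: each plateau-breaking move temporarily increases both $\mu(C)$ and $\ell(C)$, so any simple potential argument must be combined with a careful charging scheme ensuring that subsequent singleton-maximum reductions absorb the increase without double-counting. An alternative route via a BFS tree of spokes from $u$ (filling each spoke-bounded bigon separately) encounters the same difficulty at plateau edges, because naively splitting such a bigon at an intermediate common neighbor produces two sub-bigons of comparable height, causing exponential blow-up. Either strategy therefore requires amortized bookkeeping whose precision — together with the verification that the procedure terminates — is the technical heart of the proof.
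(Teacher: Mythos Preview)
Your proposal is incomplete, and you correctly identify the gap yourself. The concrete failure is in the pairing you describe: after a plateau-break (which increases $\Phi$ by $k$: length $+1$, $\mu$ $+(k-1)$), the ensuing singleton-maximum reduction in the square sub-case (where $d(v_{i-1},c)=2$) replaces $v_i$ by some $x$ with $d(u,x)\le k-1$, attaching one face while $\Phi$ drops by only $k-d(u,x)\ge 1$. The pair therefore attaches two faces while $\Phi$ may \emph{increase} by as much as $k-1$, so the claimed ``net decrease of $\Phi$ by at least one per pair of faces'' is false as stated. A working potential would need extra terms (e.g.\ counting plateau lengths), and you have not supplied one.

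More importantly, your dismissal of the spoke approach is unfounded: that is exactly what the paper does, and it works cleanly. The key lemma is: given a shortest $(u,v)$--path $P$ and a neighbor $w$ of $v$, there is a shortest $(u,w)$--path $Q$ with $\ar(P\cup Q\cup\{vw\})\le 4\,d(u,v)$. This is proved by induction on $k=d(u,v)$, and the ``plateau'' case $d(u,w)=k$ is the \emph{easiest}: with $v'$ the predecessor of $v$ on $P$, meshedness applied to $u,v',w$ (note $d(v',w)=2$, $d(u,v')=k-1$, $d(u,w)=k$) yields $w'\sim v',w$ with $d(u,w')=k-1$ exactly; one inductive call on the pair $(v',w')$ plus the single square $v\,v'\,w'\,w$ fills the bigon with area $\le 2(k-1)+2=2k$. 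There is no exponential blow-up because each inductive step lowers $k$ by one and produces a single sub-bigon, not two. The case $d(u,w)=k-1$ is the delicate one (it may invoke the plateau case twice, giving the constant $4$). Summing the $n$ spoke-bigons, each of area $\le 4\cdot n/2=2n$, yields $\ar(C)\le 2n^2$.
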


\begin{proof}
We begin the proof with a lemma.

\begin{lemma}\label{lem-construction-geodesic-meshed} For any triplet of vertices
$u,v,w$  of a meshed graph $G$ such that $v \sim w$, for  any shortest $(u,v)$--path $P$
there exists a shortest $(u,w)$--path $Q$ such
that  $\ar (C) \leq 4k$, where $C$ is the cycle formed by the  paths $P, Q$, and the
edge $vw$, and $k=d(u,v)$.
\end{lemma}

\begin{proof} Let $d(u,w) = \ell$. Let $v'$ be the neighbor of $v$ in $P$ and $P'$ denote the subpath of $P$ between $u$ and $v'$. Since $v$ and
$w$ are adjacent, $|k-\ell|\le 1$. We distinguish three cases depending on the value of $k - \ell$.

  \begin{case-ar}
    $d(u,w)> d(u,v)$, i.e., $\ell = k+1$.
  \end{case-ar}

Then, as $Q$ we can take $P$ followed by the edge $vw$.  Then
$\ar(C) = 0 \leq 4k$ and we are done.


\begin{case-ar}
  $d(u,w)= d(u,v)$, i.e., $\ell = k$.
\end{case-ar}

In this case, we prove that $\ar(C) \leq 2k$.  We proceed by induction
on $k$. If $w$ is adjacent to $v'$ (in particular, if $k=1$), then as
$Q$ we can take the path $P'$ followed by the edge $v'v$; then $\ar(C)
= 1 \leq k$.

Assume now that $v' \nsim w$.  Since $d(u,w) = d(u,v')+1$ and
$d(v',w) = 2$, by meshedness of $G$, there exists a vertex $w'\sim v', w$ such that
$d(u,w') = d(u,v') = k-1$. By induction hypothesis applied to $P'$, there exists a
shortest $(u,w')$--path $Q'$ such that
$\ar(C') \leq 2(k-1)$, where $C'$ is the cycle formed by the paths $P',Q'$ and the edge $v'w'$.
Let $Q$ be the shortest $(u,w)$--path consisting of $Q'$ followed by the edge $w'w$.
Since $\ar(w,v,v',w') \leq 2$, we have
$\ar(C) \leq \ar(C') +2 \leq 2k$.


\begin{case-ar}
 $d(u,w)< d(u,v)$, i.e., $\ell = k-1$.
\end{case-ar}

Again we proceed by induction on $k$ and prove the existence of a  shortest $(u,w)$--path $Q$ such
that $\ar(C) \leq 4k$. If $k = 1$, or, more
generally, if $w = v'$, then we simply pick $P'$ as $Q$; then $\ar(C) = 0 \leq 4k$.
If $v' \sim w$,  then by  Case 2 applied to $v'$ and $w$, there
exists a shortest $(u,w)$--path $Q'$
such that $\ar(C') \leq
2\ell = 2(k-1)$, where $C'$ is the cycle formed by the paths $P',Q'$ and the edge $v'w$. Let $Q'$ be $Q$.
Since $\ar(w,v,v') = 1$, we have
$\ar(C) \leq \ar(C') +1 < 2k$.

Assume now that $w \nsim v'$ and $w \neq v'$. Since $d(w,v') = 2$
and $d(u,v') = d(u,w) = k-1$, by meshedness, there exists $z'
\sim v',w$ such that $k-2 \leq d(u,z') \leq k-1$.

Suppose first that $d(u,z') = k-2$. By induction hypothesis for $P'$, there
exists a shortest $(u,z')$--path $Q'$ such
that $\ar(C') \leq
4(k-1)$, where $C'$ is the cycle formed by the paths $P',Q'$ and the edge $v'z'$.
Then let $Q$ be the shortest $(u,w)$--path obtained from $Q'$ by adding the edge $z'w$.
Since $\ar(w,v,v_1,z_1) =1$, we have $\ar(C) \leq
\ar(C') +1 < 4k$.

Suppose now that $d(u,z') = k-1$. Note that $z' \sim v'$ and that
$d(u,z') = d(u,v') = k-1$. By Case 2, there exists a shortest $(u,z')$--path
$Z$ such that $\ar(C') \leq 2(k-1)$, where $C'$ is the cycle formed by the paths $P',Z$ and the edge $v'z'$.  Since $z' \sim w$ and
$d(u,z') = d(u,w) = k-1$, by Case 2 there exists a shortest $(u,w)$--path
$Q'$ such that $\ar(C'') \leq 2(k-1)$, where $C''$ is the cycle formed by the paths $Z,Q'$ and the edge $z'w$. Letting $Q$ be the shortest $(u,w)$--path
$Q'$, since $\ar(v,v_1,z_1,w_1) \leq 2$, we conclude
$\ar(C) \leq\ar(C')+\ar(C'') + 2 \leq 4(k-1) +2<4k$.
\end{proof}
\medskip

Consider a cycle $C =(v_0,v_1,v_2,\ldots,v_{n-1},v_0)$ of length $n$ of a meshed graph $G$. For each $0
\leq i \leq n-1$, we will define a shortest path $P_i$ from $v_0$
to $v_i$ such that $\ar(C_i)\leq
4d(v_0,v_i)$, where the cycle $C_i$ is the concatenation of the paths  $P_i, P_{i+1}$ and the edge $v_iv_{i+1}$.
Let $P_0$ be the one-vertex path $(v_0)$.  Assume we have
constructed $P_{i-1}$. By
Lemma~\ref{lem-construction-geodesic-meshed}, there exists a shortest
path $P_{i+1}$ from $v_0$ to $v_i$ such that
$\ar(C_i) \leq 4d(v_0,v_i)$. Since  $d(v_0,v_i)\le \frac{n}{2}$ for each $i$, we obtain that $\ar(C_i)\le 2n$.
Since one can fill $C$ using the collection of $n$ cycles $C_0,\ldots,C_{n-1}$, each satisfying the inequality $\ar(C_i)\le 2n$, we conclude that
$\ar(C)\le 2n^2=2\ell(C)$.  This finishes the proof of the theorem.
\end{proof}

The proof of the previous theorem also shows  the following:

\begin{proposition}
\label{p:meshsc}
The triangle-square complex of a meshed graph is simply connected.
\end{proposition}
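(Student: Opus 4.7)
The plan is to observe that Proposition \ref{p:meshsc} is an immediate byproduct of the constructive proof of Theorem \ref{t:ripe} rather than a result requiring a separate argument. Since the $1$--skeleton of $X\trsq(G)$ is the (connected) graph $G$, by the standard criterion (see e.g.\ Subsection~\ref{s:fucom}) simple connectivity of $X\trsq(G)$ is equivalent to showing that every cycle $C$ in $G$ bounds a singular disk diagram whose inner $2$--cells are triangles or squares of $G$, i.e.\ $2$--cells of $X\trsq(G)$.

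First I would note that Lemma~\ref{lem-construction-geodesic-meshed} does more than bound areas: for any triplet $u,v,w$ with $v\sim w$ and any shortest $(u,v)$--path $P$, its proof explicitly constructs a shortest $(u,w)$--path $Q$ together with a filling of the cycle $P \cup \{vw\} \cup Q$ by triangles and squares of $G$. Each step of the induction either reuses a previously built filling, or glues along it a single triangle or a single square, both of which are genuine $2$--cells of $X\trsq(G)$. Hence the disk diagrams produced in Cases~1--3 of that lemma are disk diagrams in the cell complex $X\trsq(G)$.

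Next I would apply this to an arbitrary cycle $C=(v_0,v_1,\ldots,v_{n-1},v_0)$ in $G$ exactly as in the proof of Theorem~\ref{t:ripe}: iteratively construct shortest $(v_0,v_i)$--paths $P_i$ so that each ``sector'' $C_i$ formed by $P_i$, $P_{i+1}$ and the edge $v_iv_{i+1}$ bounds a singular disk diagram in $X\trsq(G)$. Concatenating these $n$ sector diagrams yields a singular disk diagram for $C$ all of whose faces are triangles or squares, and whose boundary is $C$. This shows that $C$ is null-homotopic in $X\trsq(G)$.

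Since $C$ was arbitrary and $G$ is connected, we conclude that $X\trsq(G)$ is simply connected. There is no real obstacle: the only thing to verify is that the filling constructed in the proof of Theorem~\ref{t:ripe} is actually a singular disk diagram in the sense of Subsection~\ref{s:bacom}, which is immediate from the fact that it is assembled step by step by attaching triangular and square $2$--cells of $X\trsq(G)$.
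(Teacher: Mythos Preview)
Your proposal is correct and is exactly the paper's approach: the authors state Proposition~\ref{p:meshsc} immediately after Theorem~\ref{t:ripe} with the remark that the proof of the previous theorem also shows it, i.e.\ the explicit triangle--square fillings constructed in Lemma~\ref{lem-construction-geodesic-meshed} and assembled in the proof of Theorem~\ref{t:ripe} already produce a singular disk diagram in $X\trsq(G)$ for every cycle. No separate argument is given (or needed).
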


\section{Hyperbolicity}
Gromov  showed that if the balls of an appropriate fixed radius of a geodesic metric space
are hyperbolic for a given $\delta$, then the whole space is $\delta '$--hyperbolic for some
$\delta '>\delta $ (see \cite{Gr}, 6.6.F). More precisely, we have the following local-to-global
characterization of hyperbolicity:

\begin{theorem}\cite{Gr}\label{Gromov}  Given $\delta >0$, let  $(X,d)$ be a simply connected geodesic metric space in which each loop of length
$<100\delta$ is null-homotopic inside a ball of diameter $<200\delta$. If every ball $B_{10^5\delta}(x_0)$ of $X$
is $\delta$--hyperbolic, then $X$ is $200\delta$--hyperbolic.
\end{theorem}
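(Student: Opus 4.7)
The plan is to exploit the well-known equivalence between $\delta$--hyperbolicity of a geodesic space and the linear isoperimetric inequality (see \cite{BrHa}). Recall that a simply connected geodesic space is $\delta'$--hyperbolic (for some $\delta'$ depending only on $\delta$ and the isoperimetric constants) if and only if every loop $\gamma$ bounds a singular disc of combinatorial area at most $K\ell(\gamma)+L$ for fixed constants $K,L$. My strategy is thus to translate the local hyperbolicity hypothesis into a \emph{local} linear isoperimetric inequality inside each ball $B_{10^5\delta}(x_0)$, and then globalize it by a careful subdivision argument, finally reading off the constant $200\delta$ from the resulting global bound.

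First I would fix a small scale $r$, discretize $X$ by an $r$--net $N$, and build an associated ``Rips-like'' $2$--complex $P_r(X)$ whose $1$--skeleton joins points of $N$ at distance $\le r$ and whose $2$--cells are attached along all triangles and short closed paths of length $<100\delta$. The assumption that loops of length $<100\delta$ are null-homotopic inside balls of diameter $<200\delta$ ensures that this complex is simply connected and, crucially, that every such short loop admits a disc diagram whose image sits inside a ball in which hyperbolicity applies. Using the classical equivalence ``$\delta$--hyperbolic geodesic space $\Longleftrightarrow$ linear isoperimetric inequality with constants depending only on $\delta$,'' each ball $B_{10^5\delta}(x_0)$ admits a linear filling function, so every loop of length $\le 100\delta$ sitting inside such a ball bounds a disc of area at most $C\cdot \ell(\gamma)$ for a universal constant $C$.

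Next I would globalize. Given an arbitrary loop $\gamma$ of length $\ell$, I partition $\gamma$ into arcs of length roughly $50\delta$, choose geodesic chords between consecutive subdivision points, and obtain a collection of sub-loops each of length less than $100\delta$, each lying in a ball of diameter $<200\delta$. Each sub-loop is filled by the local inequality. The cost of inserting chords has to be controlled: one iterates the subdivision in a ``divide-and-fill'' scheme, where at each level one halves $\gamma$ by a geodesic between far points and invokes a shelling-type lemma to ensure that the total length of chords added is $O(\ell)$. This is essentially Gromov's ``thin disc'' construction from \cite{Gr}, 6.6.F, and it gives a global linear isoperimetric inequality. The quantitative choice $10^5\delta$ leaves enough slack for the recursion constants to close up, yielding a hyperbolicity constant bounded by $200\delta$.

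The main obstacle will be the quantitative bookkeeping in the recursive decomposition: one must verify that at each subdivision step, the new chords stay inside balls of radius $10^5\delta$ (so that local hyperbolicity is applicable), and that the accumulated geodesic length stays linear in $\ell(\gamma)$ rather than growing logarithmically into a superlinear function. This is where the precise constants $100\delta$, $200\delta$, $10^5\delta$, $200\delta$ are calibrated against one another; the factor $10^5$ is in effect a safety margin allowing several nested applications of the $\delta$--thin triangle condition inside a single ball. Since this calibration is carried out explicitly in \cite{Gr}, I would follow Gromov's estimates verbatim rather than reprove them, emphasizing only the translation between hyperbolicity and the linear isoperimetric inequality, which is the conceptual heart of the argument.
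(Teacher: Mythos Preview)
The paper does not prove this theorem at all: it is simply quoted from Gromov \cite{Gr}, 6.6.F, as a known background result, with no argument given. It serves only as motivation before the paper states the sharper (also cited) Proposition~\ref{prop-CCPP-hyp-wm} and then proves its own Theorem~\ref{t:hyp01}, whose proof proceeds by entirely different, graph-theoretic means (bounding interval thinness via forbidden isometric grids and then invoking Proposition~\ref{mu-nu}).

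Your sketch is a reasonable high-level outline of the standard route to Gromov's local-to-global theorem --- the equivalence with a linear isoperimetric inequality, a subdivision/bigon-type argument to reduce long loops to short ones, and local filling inside balls where hyperbolicity is assumed. That said, as written it is only a plan, not a proof: the real content lies precisely in the ``quantitative bookkeeping'' you defer back to \cite{Gr}, and your final step (reading off the specific constant $200\delta$ from the global linear isoperimetric constant) is not something the isoperimetric characterization gives directly --- the passage from a linear Dehn function back to a hyperbolicity constant loses track of the explicit bound. Since the paper itself makes no claim to prove the statement, there is nothing further to compare.
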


For weakly modular graphs, this result have been significantly sharpened as follows:
\begin{proposition}\cite{CCPP-hyperbolic}\label{prop-CCPP-hyp-wm}
If $G$ is a weakly-modular graph such that every ball
$B_{10\delta+5}(v,G)$ of $G$ is $\delta$--hyperbolic, then $G$ is
$(736\delta+368)$--hyperbolic.
\end{proposition}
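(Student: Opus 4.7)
The plan is to follow the spirit of Gromov's local-to-global hyperbolicity argument (Theorem~\ref{Gromov}), but to exploit two features of weakly modular graphs to sharpen the constants: first, by Lemma~\ref{simplyconnected} the triangle-square complex $X\trsq(G)$ is automatically simply connected, so the ``small-loops-null-homotopic'' hypothesis in Gromov's theorem comes for free; second, by Theorem~\ref{t:ripe} every cycle admits a filling triangle-square diagram of area at most $2\ell(C)^2$, which provides a starting combinatorial control. I would use the well-known equivalence (up to constants) between $\delta$-hyperbolicity, $\delta$-slimness of geodesic triangles, and the linear isoperimetric inequality, and try to upgrade the quadratic bound of Theorem~\ref{t:ripe} to a linear bound by inserting local shortcuts provided by the hypothesis.

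First, I would fix $r := 10\delta+5$ and work with a geodesic triangle $T=\Delta(x,y,z)$, aiming to show that $T$ is $K$-slim for $K$ of order a few hundred $\delta$. Consider any point $p$ on the side $[x,y]$; I want to produce a point on $[x,z]\cup[z,y]$ at distance at most $K$ from $p$. The strategy is to walk along $[x,y]$ and, at every scale comparable to $r$, cut out a sub-loop of $T$ of length bounded by a fixed multiple of $r$ that lies inside a ball $B_r(v,G)$, then use the $\delta$-hyperbolicity of that ball to bound the displacement between consecutive ``shortcuts'' on $[x,z]\cup[z,y]$ by $O(\delta)$. Concretely, one chooses base points $v_0,v_1,\ldots$ along the three sides at spacing $\asymp r$, and for each consecutive triple forms a geodesic triangle that fits in a single $r$-ball; applying local $\delta$-hyperbolicity to each of these $O(|T|/r)$ small triangles and patching the slimness estimates gives $K$-slimness of $T$ with $K$ an explicit linear function of $\delta$.

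The necessary combinatorial ingredient, and this is where the structure of weakly modular graphs is crucial, is a ``projection'' or ``reparameterization'' lemma: if $u$ and $u'$ are two points on $[x,y]$ with $d(u,u')\leq r$ and $u^*,u'^*$ are nearest points to them on $[x,z]\cup[z,y]$, then $d(u^*,u'^*) = O(r+\delta)$. This is proved by picking a disc diagram (triangle-square) for the small quadrilateral $u,u',u'^*,u^*$ furnished by Proposition~\ref{p:meshsc}, observing that it has diameter $O(r)$ so fits in a ball $B_r(v,G)$, and then invoking local $\delta$-hyperbolicity inside that ball. Iterating this along the side $[x,y]$ and summing the $O(\delta)$ increments yields the desired global slimness constant, which a careful bookkeeping turns into $736\delta+368$.

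The main obstacle will be the constant tracking in the patching step: one must ensure that the shortcut triangles genuinely lie in balls of radius exactly $10\delta+5$ and not a slightly larger radius (so no $r$ can be ``lost'' by a constant factor), and that the accumulated slimness error across the $O(\ell(T)/r)$ shortcuts telescopes rather than compounds. This is where weak modularity enters essentially: the triangle and quadrangle conditions (together with Lemma~\ref{lem-weakly-modular}, which forces metric triangles to be equilateral) let us replace arbitrary geodesic quadrilaterals by combinatorially controlled ones in which the four vertices span a subgraph of diameter bounded by the size of its longest side, so the local ball of radius $10\delta+5$ is just large enough to contain the relevant auxiliary triangles produced by TC and QC. Once this containment is secured, the rest is a finite bookkeeping computation yielding the stated constants.
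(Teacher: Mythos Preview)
The paper does not prove this proposition at all: it is stated as a quotation from \cite{CCPP-hyperbolic} and serves only as background motivation for Theorem~\ref{t:hyp01}, whose proof proceeds by an entirely different route (bounding interval thinness via forbidden square grids, Proposition~\ref{p:hyp06}, and then invoking Proposition~\ref{mu-nu} from \cite{ChDrEsHaVa}). So there is no ``paper's own proof'' to compare against; the constants $736\delta+368$ come from the cited article, not from anything established here.

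As for your sketch on its own merits, there is a genuine circularity in the key step. Your ``projection lemma'' assumes that the quadrilateral $u,u',u'^*,u^*$ has diameter $O(r)$ so that it fits inside some ball $B_r(v,G)$ where local $\delta$-hyperbolicity applies; but the diameter of that quadrilateral is governed by $d(u,u^*)$ and $d(u',u'^*)$, which are exactly the distances from points on $[x,y]$ to the opposite sides, i.e., exactly the slimness quantity you are trying to bound. Without an a priori bound on these projection distances, you cannot confine the quadrilateral to a single $r$-ball, and the local hypothesis gives you nothing. The standard way to break this circularity (as in Gromov's proof, or in Papasoglu-type arguments) is via an isoperimetric or bigon-thinness argument that does not presuppose slimness; your appeal to Theorem~\ref{t:ripe} is in the right spirit, but a quadratic isoperimetric inequality is strictly weaker than hyperbolicity and cannot by itself be upgraded to linear without an additional ingredient. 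Finally, your phrase ``summing the $O(\delta)$ increments'' over $O(\ell(T)/r)$ steps would yield a bound depending on $\ell(T)$, not a uniform slimness constant; slimness is a supremum over points, not a sum, and any argument that accumulates errors along the perimeter must explain why they do not grow with the triangle.
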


In this section, we refine this result in the following way: we show
that a weakly modular graph $G$ is hyperbolic if and only if $G$ does
not contain arbitrarily large metric triangles or arbitrarily large
isometric square grids:

\begin{theorem}
\label{t:hyp01}
For a weakly modular graph $G$ the following are equivalent:
\begin{itemize}
\item[(i)] there exists $\delta$ such that $G$ is $\delta$--hyperbolic;
\item[(ii)]  there exist $\mu, \kappa$ such that the metric triangles of $G$
  have sides of length at most $\mu$ and $G$ does not contain
  isometric square grids of side $\kappa$.
\end{itemize}
\end{theorem}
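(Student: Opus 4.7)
The direction (i)$\Rightarrow$(ii) is essentially immediate. If $G$ is $\delta$--hyperbolic and $v_1v_2v_3$ is a metric triangle of size $k$, then in a $\delta$--slim geodesic triangle with sides chosen in $I(v_1,v_2),\,I(v_2,v_3),\,I(v_1,v_3)$, the midpoint $m$ of the $(v_1,v_2)$--side satisfies $d(m,v_1)=d(m,v_2)=k/2$ and lies within $\delta$ of one of the other two sides; combined with $I(v_i,v_j)\cap I(v_i,v_k)=\{v_i\}$ this forces $k\le 4\delta+O(1)$. Similarly, the four corners of an isometrically embedded $\kappa\times\kappa$ square grid realize a 4--point defect of order $\kappa$, so no isometric grid of side larger than $2\delta+O(1)$ can sit inside a $\delta$--hyperbolic graph.

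For (ii)$\Rightarrow$(i), my plan is to apply the local-to-global refinement of Proposition~\ref{prop-CCPP-hyp-wm}: it suffices to produce a constant $\delta_0=\delta_0(\mu,\kappa)$ such that every ball $B_{10\delta_0+5}(v,G)$ is $\delta_0$--hyperbolic. In fact I will prove the stronger global statement that under (ii) there exists $\delta_0=\delta_0(\mu,\kappa)$ such that every $4$--tuple of vertices of $G$ satisfies the $2\delta_0$--four-point condition; this passes trivially to all balls and so Proposition~\ref{prop-CCPP-hyp-wm} converts it into a global hyperbolicity bound depending only on $\mu$ and $\kappa$. The argument for the key statement is by contradiction: assume that for every $n$ there are vertices $u,v,x,y$ of $G$ whose 4--point defect exceeds $n$. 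Pick a quasi-median $m_1m_2m_3$ of $u,x,y$; by Lemma~\ref{lem-weakly-modular} this is an equilateral metric triangle, of size at most $\mu$ by hypothesis. The defect then transfers, up to an additive $O(\mu)$, to a 4-tuple $(m_i,m_j,m_k,v')$ where $v'$ is obtained from $v$ by moving along a quasi-median of $\{v,x,y\}$, $\{v,u,y\}$, $\{v,u,x\}$. Iterating this reduction, I expect to produce a configuration of several small (size $\le\mu$) metric triangles hanging off a long, flat strip between two geodesics of length $\Omega(n)$ in $G$.

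The heart of the proof will be to thicken this flat strip into a genuine isometric $\Omega(n/\mu)\times\Omega(n/\mu)$ square grid by repeated application of the triangle and quadrangle conditions to fill in the intermediate squares, using the bound $\mu$ to certify that each square assembled is indeed an induced square rather than collapsing via a metric-triangle shortcut. Choosing $n$ large relative to $\mu\kappa$ then contradicts the second hypothesis of (ii). The main obstacle I anticipate is exactly this final step: a naive iteration of (QC) produces only a \emph{meshed} grid, in which \emph{a priori} diagonal shortcuts could shorten distances and spoil isometry. Ruling out such shortcuts is precisely where the bound $\mu$ on metric triangles is used in an essential way, via a careful induction on the side length showing that any such diagonal shortcut would build an equilateral metric triangle of size growing with the dimension of the partial grid, contradicting the first hypothesis of (ii). This is the genuine interaction point between the two hypotheses, and the place where most of the technical work will be concentrated.
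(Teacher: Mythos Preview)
Your direction (i)$\Rightarrow$(ii) is fine and agrees with the paper (Lemmata~\ref{l:hyp02} and~\ref{l:hyp04}).

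For (ii)$\Rightarrow$(i) you take a genuinely different route from the paper, and there is a real gap in it. The paper does not use Proposition~\ref{prop-CCPP-hyp-wm} at all, nor does it argue by contradiction on the four-point defect. Instead it goes through \emph{interval thinness}: it quotes the external result (Proposition~\ref{mu-nu}, from \cite{ChDrEsHaVa}) that $\nu$--thin intervals together with $\mu$--bounded metric triangles imply $(16\nu+4\mu)$--hyperbolicity, and then proves directly (Proposition~\ref{p:hyp06}) that under (ii) every interval is $(2\kappa+\mu)$--thin. The grid is built not from a generic bad $4$--tuple but from two points $x,y\in I(u,v)$ with $d(u,x)=d(u,y)$ and $d(v,x)=d(v,y)$: one takes quasi-medians $u'x'y'$ of $u,x,y$ and $v''x''y''$ of $v,x,y$, so that the sizes $k',l'$ of these metric triangles are at most $\mu$, and then fills in vertices $x_{i,j}$ using (QC) from the two shortest paths $x\to x'$ and $x\to x''$.

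The point where your plan does not match reality is the isometry of the grid. In the paper, isometry of $\{x_{i,j}\}$ is certified not by $\mu$ but by the four \emph{anchor vertices} $u,v,x,y$: each $x_{i,j}$ is built to satisfy $d(x_{i,j},u)=k-i+j$, $d(x_{i,j},v)=l-j+i$, $d(x_{i,j},x)=i+j$, $d(x_{i,j},y)=d(x,y)-i-j$, and any putative shortcut between $x_{i,j}$ and $x_{i',j'}$ violates one of these via the triangle inequality. The bound $\mu$ is used only to control $k',l'$, i.e.\ to convert $d(x,y)$ into the side length $a$ of the grid. Your proposed mechanism (``a diagonal shortcut would build a large equilateral metric triangle'') does not hold: a single short diagonal in a grid-like patch gives no control on the size of any metric triangle, because quasi-medians can collapse arbitrarily. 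If you try to push your contradiction argument through, you will find that after the quasi-median reductions you have exactly the interval-thinness situation the paper analyzes, and the correct witnesses of isometry are the interval endpoints, not $\mu$. So the cleanest fix is to switch to the paper's route: prove intervals are thin using the anchored grid construction, and conclude via Proposition~\ref{mu-nu}.
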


\begin{proof}
We first show that if $G$ is $\delta$--hyperbolic, then one can bound
the size of the isometric square grids appearing in $G$.

\begin{lemma}
\label{l:hyp02}
If $G$ is a $\delta$--hyperbolic graph $G$, then every isometric square
grid of $G$ is of side at most $\delta$.
\end{lemma}

\begin{proof}
Consider an isometric square grid $H$ of $G$ of side $k$ and let $u,v,x$,
and $y$ be the corners of $H$ such that $d(u,v) = d(v,x) = d(x,y) =
d(y,u) = k$.  Observe that $d(u,v)+d(x,y) = d(u,y)+d(v,x) = 2k$ and
that $d(u,x)+d(v,y) = 4k$. Since $G$ is $\delta$--hyperbolic, $2k =
d(u,x)+d(v,y) - \max(d(u,v)+d(x,y),d(u,y)+d(v,x)) \leq 2\delta$ and
thus $k \leq \delta$.
\end{proof}

Using Lemma~\ref{lem-weakly-modular}, we now show that if $G$ is
$\delta$--hyperbolic, then one can bound the size of the sides of its metric
triangles. While the previous result holds for every
$\delta$--hyperbolic graph, the following lemma needs
weak modularity.

\begin{lemma}
\label{l:hyp04}
If $G$ is a  $\delta$--hyperbolic weakly modular graph, then
every metric triangle of $G$ is of side at most $4\delta$.
\end{lemma}

\begin{proof}
Consider a metric triangle $uvw$ of $G$ of side $k$. Recall that by
Lemma~\ref{lem-weakly-modular}, we have $d(u,v) = d(u,w) = d(v,w)=k$. Let $x
\in I(u,v)$ such that $d(x,u) = \lfloor\frac{k}{2}\rfloor$ and $d(x,v)
= \lceil\frac{k}{2}\rceil$.  Since $x \in I(u,v)$, by
Lemma~\ref{lem-weakly-modular}, $d(w,x) = k$. Consequently, if we
consider the four points $u,v,w,x$, we get $d(u,v)+d(w,x) = 2k$,
$d(u,w) + d(v,x) = \lceil\frac{3k}{2}\rceil$ and $d(v,w) + d(u,x) =
\lfloor\frac{3k}{2}\rfloor$. Since $G$ is $\delta$--hyperbolic,
$2k-\lceil\frac{3k}{2}\rceil\le 2k - \lfloor\frac{3k}{2}\rfloor \leq
2\delta$ and consequently, $k \leq 4\delta$.
\end{proof}

Lemma~\ref{l:hyp02} and Lemma~\ref{l:hyp04} establish the implication $(1) \Rightarrow (2)$ in
Theorem~\ref{t:hyp01}. The reverse implication follows from Proposition~\ref{p:hyp06} below.
In order to prove it, we use the following result. (Recall that the intervals of a graph $G$ are $\nu$--{\it thin} \cite{Papa}
if for any two vertices $u,v$ of $G$ and any two vertices $x,y\in I(u,v)$ such that $d(u,x)=d(u,y)$
and $d(v,x)=d(v,y)$, we have  $d(x,y)\le \nu$.)

\begin{proposition} \cite{ChDrEsHaVa} \label{mu-nu} If $G$ is a graph
  in which all intervals are $\nu$--thin and the metric triangles of
  $G$ have sides of length at most $\mu$, then $G$ is
  $(16\nu+4\mu)$--hyperbolic.
\end{proposition}

\begin{proposition} \label{prop-no-big-interval}
\label{p:hyp06} Let $G$ be  a weakly modular graph  in which any metric triangle is of side
at most $\mu$ and such that any isometric
 square grid contained in $G$ is of side at most
 $\kappa$. Then all intervals of $G$ are $(2\kappa+\mu)$--thin, and $G$ is
 $(32\kappa+20\mu)$--hyperbolic.
\end{proposition}

\begin{proof}
 Let $I(u,v)$ be an interval of $G$ and $x,y\in I(u,v)$ such that
 $d(u,x)=d(u,y)=k, d(v,x)=d(v,y)=l$, that is, $l+k=d(u,v).$ Let $u'x'y'$ be
 a quasi-median of the triplet $u,x,y$ and let $v''x''y''$ be a
 quasi-median of the triplet $v,x,y$ (see
 Figure~\ref{fig-grid-interval}). Since $G$ is weakly modular, from
 Lemma~\ref{lem-weakly-modular}, $d(u',x') = d(u',y')=d(x',y')$. Let
 $k':=d(u',x')$ and let $a: = k-k'-d(u,u') = d(x',x) = d(y',y)$.
 Analogously, let $l':=d(v'',x'')=d(v'',y'')=d(x'',y'')$ and
 $b:=l-l'-d(v,v'') = d(x,x'') = d(y,y'')$. Since the metric triangles of
 $G$ are $\mu$--bounded, $k' \leq \mu$ and $l' \leq \mu$.

 Without loss of generality, assume that $a \leq b$. If $a \leq
 \kappa$, then $d(x,y) = 2a+k' \leq 2\kappa+\mu$ and we are
 done. Assume now that $b \geq a > \kappa$. Let $P'=(x = x_{0,0},
 x_{1,0}, \ldots, x_{a,0} = x')$ be a shortest path from $x$ to $x'$,
 and let $P''=(x = x_{0,0}, x_{0,1}, \ldots, x_{0,b} = x'')$ be a shortest
 path from $x$ to $x''$. We want to show that there exists a set of
 vertices $S=\{ x_{i,j}: 0\leq i,j \leq a\}$ such that $S$ induces an
 isometric subgrid of $G$ of size $a >
 \kappa$, contradicting the hypothesis of the proposition.

 \begin{figure}[ht]
\begin{center}
\includegraphics[scale=0.6]{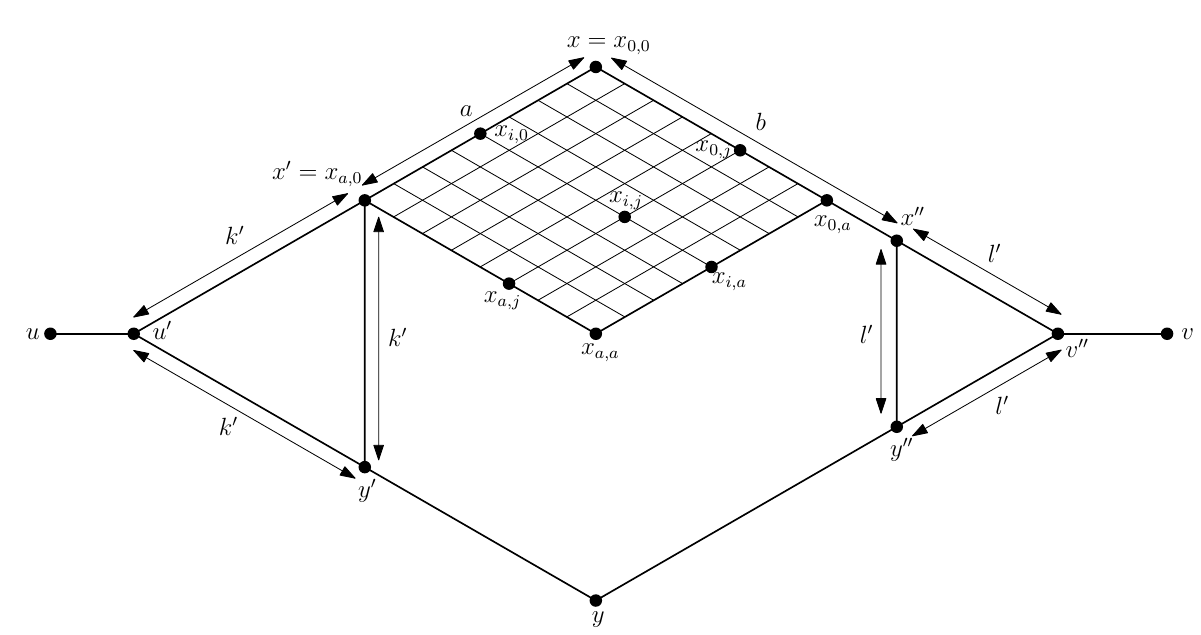}
\end{center}
\caption{The construction from the proof of
  Proposition~\ref{prop-no-big-interval}.}\label{fig-grid-interval}
\end{figure}

\begin{claim}
There exists a set of vertices $S=\{ x_{i,j}: 0\leq i,j \leq a\}$ such
that for every $i,j$, $x_{i,j} \sim x_{i-1,j}$ (if $i \geq 1$),
$x_{i,j} \sim x_{i,j-1}$ (if $j \geq 1$), $d(x_{i,j},u) = k - i +j$,
$d(x_{i,j},v) = l - j +i$, $d(x_{i,j},x) = i+j$ and $d(x_{i,j},y) =
d(x,y) - i-j$.
\end{claim}

\begin{proof}[Proof of the Claim]
In order to prove the claim, we iteratively define the vertices
$x_{i,j}$ $1 \leq
i,j\leq a$, in such a way that, when the vertex $x_{i,j}$ is defined, all vertices
$x_{i',j'}$ with $i'+j' < i+j$ have been already defined.  Note that
for $i = 0$ or $j=0$, we already
defined vertices $x_{i,j}$ that satisfy the conditions (these are the vertices
on the shortest paths $P'$ and $P''$, respectively).
Suppose now
that $x_{i-1,j-1}, x_{i,j-1}$ and $x_{i-1,j}$ have already been
defined and satisfy the claim. Note that $d(x,y)-i-j+1 =
d(x_{i,j-1},y) = d(x_{i-1,j},y) = d(x_{i-1,j-1},y) -1$ and that
$x_{i-1,j-1} \sim x_{i,j-1}, x_{i-1,j}$. By QC($y$) applied to
$x_{i-1,j-1}, x_{i,j-1}, x_{i-1,j}$, there exists $x_{i,j} \sim
x_{i,j-1}, x_{i-1,j}$ such that $d(x_{i,j},y) = d(x_{i-1,j},y) -1 =
d(x,y) -i -j$. Since $x_{i,j}\sim x_{i-1,j}$, $d(x_{i,j},x) \leq i+j$,
and since $d(x,y) \leq d(x,x_{i,j}) +d(x_{i,j},y) = d(x_{i,j},x) +
d(x,y) -i -j$, necessarily $d(x_{i,j},x) = i+j$. Since $d(x_{i-1,j},u)
= k-i+1+j$, $d(x_{i,j-1},u) = k-i+j-1$, and since $x_{i,j} \sim
x_{i-1,j}, x_{i,j-1}$, necessarily $d(x_{i,j},u) = k-i+j$. Using the
same arguments, one can show that $d(x_{i,j},v) = k-j+i$ .
\end{proof}

We now show that $S$ induces an
isometric subgrid of $G$.  Consider two arbitrary vertices $x_{i,j},
x_{i',j'}$ of $S$. By construction, $d(x_{i,j}, x_{i',j'}) \leq
|i-i'|+|j-j'|$. We will show that the equality holds. Suppose
that $d(x_{i,j}, x_{i',j'}) <
|i-i'|+|j-j'|$. Without loss of generality, let $i \leq
i'$. If $j \leq j'$, then $d(x,y) - i - j = d(x_{i,j},y) \leq
d(x_{i,j},x_{i',j'}) + d(x_{i',j'},y) < i'-i +j'-j+ d(x,y) - i' - j' =
d(x,y) - i - j$, a contradiction. If $j > j'$, then $k - i + j =
d(x_{i,j},u) \leq d(x_{i,j},x_{i',j'}) + d(x_{i',j'},u) < i'-i +j-j'+
k - i' +j' = k -i +j$, a contradiction. Consequently, $S$ induces
an isometric square grid of $G$ of size $a > \kappa$. This ends the proof
of the first assertion of the proposition. The second assertion
is an immediate consequence of
Proposition~\ref{mu-nu}.
\end{proof}

This finishes the proof of Theorem \ref{t:hyp01}.
\end{proof}



\section{BFS gives a distance-preserving ordering}

For several subclasses of weakly modular graphs, BFS
(Breadth-First-Search) and its specifications turn out to provide
orderings with interesting and strong properties, which can be used, for instance,
to prove contractibility of associated clique complexes. First, it was shown
in~\cite{Ch_bridged} that for locally finite  bridged graphs, any BFS ordering is a
dismantling ordering, showing in particular that the clique complexes of bridged
graphs are contractible.  Polat \cite{Po-infbrid} proved that arbitrary
connected graphs (even if they are not locally finite) admit a BFS ordering and,
extending the result of~\cite{Ch_bridged}, he showed that  BFS provides a dismantling
order for non-locally-finite bridged
graphs.  For weakly bridged graphs the same kind of results has been obtained
for specific BFS orderings. Namely, any LexBFS ordering of a locally
finite weakly bridged graph provides a dismantling
ordering~\cite{ChOs}. In the case of non-locally-finite graphs, it is
not always possible to define a LexBFS ordering. However, for graphs
without infinite cliques, it was shown in~\cite{BCC+} that it is always
possible to define an ordering, intermediate between BFS and LexBFS, and called SimpLexBFS,
and it was shown that for weakly bridged graphs, any SimpLexBFS ordering is a
dismantling ordering. Notice also that the contractibility of Kakimizu complexes was
established by defining a BFS-like orderings of their vertices; for details,
see Section 5 of \cite{PrzSch}.

Weakly modular graphs are in general not dismantlable. However, we will show that BFS orderings $\prec$  of arbitrary
weakly modular graphs $G$ are distance-preserving. Recall that a well-ordering $\prec$ of the vertices of a
graph $G$ is \emph{distance-preserving} \cite{Ch_dism} if for each
$v$, the subgraph $G_v$ of $G$ induced by the level set  $L_v:=\{u: u \preceq v\}$ is an isometric subgraph of $G$.
The following result was conjectured in \cite{Ch_dism} (and was proven in
that paper for house-free weakly modular graphs):

\begin{theorem}
\label{t:dpBFS}
For a  weakly modular graph $G$, any BFS ordering of
its vertices  is a distance-preserving ordering.
\end{theorem}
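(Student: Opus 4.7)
The plan is to prove the theorem by induction on the position of $v$ in the BFS ordering, rooted at some $r$; the base case $v = r$ is trivial. For the inductive step, let $u$ be the immediate BFS-predecessor of $v$, so that $L_v = L_u \cup \{v\}$ and $G_u$ is isometric in $G$ by the inductive hypothesis. It suffices to exhibit, for each $x \in L_u$, a neighbor $w$ of $v$ with $w \in L_u \cap I(v, x)$: then the inductive hypothesis applied to $w, x \in L_u$ gives $d_{G_v}(v, x) \le 1 + d_{G_u}(w, x) = 1 + d_G(w, x) = d_G(v, x)$, with the reverse inequality automatic.

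Setting $k = d(r, v)$, $l = d(r, x) \le k$ (using the level-respecting property of BFS) and $m = d(v, x)$, I would consider the quasi-median $v_1 v_2 v_3$ of the triplet $\{v, x, r\}$ with $v_1 \in I(v, r) \cap I(v, x)$ chosen at maximal distance from $v$. By Lemma~\ref{lem-weakly-modular} this triangle is equilateral of some size $s$, and solving the defining metric equations yields $d(v, v_1) = (m + k - l - s)/2$. Whenever $d(v, v_1) \ge 1$---in particular whenever $l < k$, since then the constraint $(m + l - k - s)/2 \ge 0$ forces $s < m$---any neighbor $w$ of $v$ on a shortest $(v, v_1)$-path lies in $I(v, r) \cap I(v, x)$ with $d(r, w) = k - 1$, and hence $w \in L_u$, as all vertices of smaller level precede $v$ in BFS. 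The only remaining ``hard case'' is $d(v, v_1) = 0$, which forces $l = k$, $s = m$, and an equilateral metric triangle on $\{v, x, v_3\}$ of size $m$ with $d(r, v_3) = k - m$; here $I(v, r) \cap I(v, x) = \{v\}$ and symmetrically $I(x, r) \cap I(x, v) = \{x\}$, so no level-$(k-1)$ neighbor of $v$ or of $x$ lies in $I(v, x)$, and the desired $w$ must be a level-$k$ neighbor of $v$ in $I(v, x)$ that precedes $v$ in the BFS order.

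To produce such a $w$ in the hard case I would exploit that a level-$k$ vertex is enqueued precisely when the first of its level-$(k-1)$ neighbors is processed, so the BFS-order on level $k$ is inherited from the BFS-order on level $k-1$. Given a shortest $(v,x)$-path, applying the triangle condition to each consecutive pair of level-$k$ vertices along it produces a family of level-$(k-1)$ ``bridges'' already lying in $L_u$; the hypothesis $x \prec v$ then propagates through these bridges to force at least one level-$k$ vertex $z$ on the path adjacent to $v$ to have its BFS-parent processed no later than that of $v$, giving $z \prec v$ and $z \in L_u \cap I(v, x)$. For $m = 2$ this is already a single application of TC: the common neighbor $z$ of $v, x$ at level $k$ has level-$(k-1)$ bridges $y_1 \sim v, z$ and $y_2 \sim x, z$, and the queue-order inequalities $\pi(z) \preceq y_2$ and $\pi(x) \preceq y_2 \preceq \pi(v)$ close the argument. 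The main obstacle is carrying out this BFS-propagation step for $m \ge 3$: one must iterate TC (and QC) along a chain of $m - 1$ intermediate level-$k$ vertices, build a coherent family of level-$(k-1)$ bridges in $L_u$, and rule out adversarial BFS tie-breakings that would push every viable level-$k$ candidate neighbor of $v$ to succeed $v$. This step is expected to close by an inner induction on $m$ using the rigid equilateral-triangle structure on $v, x, v_3$ forced by weak modularity, but executing it carefully is the delicate part of the proof.
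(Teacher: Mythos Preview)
Your quasi-median reduction to the ``hard case'' ($l=k$, equilateral metric triangle $v\,x\,v_3$ of size $m$) is correct and matches the paper's setup, but two key ideas are missing and your $m=2$ sketch contains a genuine error.

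First, you are working harder than necessary: you do not need the statement for all $m$. The paper observes (Lemma~\ref{local-distance-pres}) that a well-order $\prec$ is distance-preserving as soon as every distance-$2$ pair $(u,w)$ has a common neighbor preceding $\max\{u,w\}$. The proof is elementary and works in any graph: among all shortest $(u,w)$-paths pick one minimizing its $\prec$-maximum vertex $x$, and if $x\neq\max\{u,w\}$ replace $x$ by a predecessor common neighbor of its two path-neighbors, contradicting minimality. This collapses your entire hard case to $m=2$.

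Second, your $m=2$ argument does not close. With $z\sim v,x$ at level $k$ and bridges $y_1\sim v,z$ and $y_2\sim x,z$ at level $k-1$, you assert $y_2\preceq\pi(v)$; but $y_2$ is not a neighbor of $v$, so nothing forces it to precede $f(v)$. All you actually know is $f(v)\preceq y_1$, $f(z)\preceq y_1$, $f(z)\preceq y_2$, $f(x)\preceq y_2$, and $f(x)\preceq f(v)$, which is perfectly consistent with $f(v)\prec f(z)$ and hence $v\prec z$. A single TC application at one level cannot propagate the BFS inequality across.

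The paper's actual device for the $m=2$ hard case (Lemma~\ref{iteration}) runs the induction \emph{vertically up the BFS tree} rather than horizontally along $I(v,x)$: writing $u=x$ and $w=v$, one shows by induction on $i$ that $d(f^i(u),w)=i+2$ and that no neighbor of $f^i(u)$ at distance $i+1$ from $w$ precedes $f^i(w)$. The inductive step uses TC/QC together with the BFS inheritance $u\prec w\Rightarrow f^i(u)\prec f^i(w)$ (whenever they differ). At $i=k$ this gives $d(v_0,w)=k+2$, contradicting $d(v_0,w)=k$. This upward-parent induction is the missing engine in your proposal; the horizontal ``propagate through bridges along a geodesic'' scheme you outline does not carry the needed order inequality.
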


\begin{proof}  Following Polat \cite{Po-infbrid}, a well-order $\preceq$ on the vertex set $V(G)$ of a graph $G$ is called a {\it BFS order} if there exists a family
$\{ A_x: x\in V(G)\}$ of subsets of $V(G)$ such that, for
every vertex $x\in V(G)$,
\begin{enumerate}[(S1)]
\item $x\in A_x$;
\item if $x\preceq y$, then $A_x$ is an initial segment of $A_y$;
\item $A_x=A_{(x)}\cup N(x)$, where $A_{(x)}:=\{ x\}$ if $x$ is the least element of $(V(G),\preceq)$ and $A_{(x)}:=\bigcup_{y\prec x} A_y$ otherwise.
\end{enumerate}

\begin{lemma} \label{l:Polat_BFS} \cite{Po-infbrid}*{Lemma 3.6} There exists a BFS order on the vertex set of any connected graph.
\end{lemma}

The vertex $x$ will be called the {\it parent} of each vertex of $A_x\setminus A_{(x)}$. We will denote by $f$ the map from $V(G)$ to $V(G)$
such that $f(v)$ is the parent of $v$, for every $v\in V(G)$. The least element of $(V(G),\preceq)$ will be called the {\it base-point}
and will be denoted by $v_0$ (by convention, we set $f(v_0)=v_0)$). Notice that like in case of finite graphs, for every vertices $x$ and $y$ of $G$,
$x\preceq y$ implies $d(v_0,x)\le d(v_0,y)$, and $d(v_0,x)<d(x_0,y)$ implies $x\prec y$. In particular, $d(v_0,x)=d(v_0,f(x))+1$.
For two distinct vertices $x$ and $y$ of $G$, we set $\max\{ x,y\}=x$ if $y\prec x$ and $\max\{ x,y\}=y$ if $x\prec y$.

\begin{lemma} \label{local-distance-pres} Let $\preceq$ be a well-order on the vertex set  of a graph $G$ such that
for any two vertices $u,w$ with $d(u,w)=2$ there exists a vertex $v \in I(u,w)\setminus \{ u,w\}$
such that $v\prec \max\{ u,w\}$. Then $\prec$ is a distance-preserving ordering of the vertices of $G$.
\end{lemma}

\begin{proof} To prove that $\prec$ is  a distance-preserving ordering,
pick any two vertices $u, w$ with $u \prec w$ and let $k:=d(u,w)$. It suffices to show
that the vertices $u$ and $w$ can be connected in the subgraph  $G_w$ by a
shortest $(u,w)$--path. For  a shortest $(u,w)$--path $P$, denote by $m(P)$ the
maximum vertex of $P$ in the well-order $\prec$.  Let $P^*$ be a shortest $(u,w)$--path such that
 $m(P^*)$ is minimum, i.e., for any other shortest $(u,w)$--path
 $P$, we have $m(P^*) \preceq m(P)$. Such  $P^*$ exists because $\prec$ is a
 well-order. Note that $u \prec w \prec
 m(P^*)$, otherwise $P^*$ is the required shortest path in $G_w$. Set $x:=m(P^*)$ and
 denote by $y$ and $z$ the neighbors of $x$ in $P^*$. Suppose without loss of generality that $y\prec z$.
 Since $d(y,z)=2$ and $y,z\prec x$,   by our assumption there exists $x'\sim y,z$, $x'\ne x$,  with $x'\prec z\prec x$.
 Consider the path $P'$ obtained by replacing in $P^*$ the vertex $x$ by $x'$; $P'$ is a shortest $(u,w)$--path and
 $m(P')\prec m(P^*)$, contradicting our choice of $P^*$. This shows that $P^*$ is indeed a path of $G_w$.
\end{proof}

Assume that a BFS ordering $\prec$ with base-point $v_0$ of a weakly modular graph $G$
is not a distance-preserving ordering. By Lemma \ref{local-distance-pres} we can find two vertices
$u,w$ with $d(u,w)=2$ such that for any $v\sim u,w$ we have $\max\{ u, w\} \prec v$.  Suppose
without loss of generality
that $u \prec w$.

First suppose that $d(u,v_0) = d(w,v_0)-1 = k-1$. Let $v \sim
u,w$; due to our choice of $u,w$, we have $d(v,v_0) = k$. By TC($v_0$),
there exists $s \sim v,w$ such that $d(s,v_0) = k-1$; consequently, $s
\prec w$ and thus $s \nsim u$. By QC($v_0$), there exists $t \sim u,s$
such that $d(t,v_0) = k-2$. Observe that $d(u,w) = d(t,w) = 2$. By
TC($w$), there exists $x \sim t,u,w$. Since $d(x,v_0) = k-1$, $x \prec
w$ and we get a contradiction with our choice of the pair $u,w$.

Assume now that $d(u,v_0) = d(w,v_0) = k$. For any vertex $v$, we
denote by $f^i(v)$ the vertex obtained by applying iteratively $i$
times the function $f$ on $v$ (in particular, $f^1(v)=f(v)$ is the parent of $v$).
The proof of this case is based on the following
lemma.

\begin{lemma} \label{iteration}
For every $0 \leq i \leq k$, the following properties hold:
\begin{itemize}
\item[(i)] $d(f^i(u),w) = i+2$;
\item[(ii)] there is no  $x \sim f^i(u)$ such that $d(x,w)=i+1$ and $x \prec f^i(w)$.
\end{itemize}
\end{lemma}

\begin{proof}
Since $u \prec w$, by the rules of BFS, for every $i$, if $f^i(u) \neq
f^i(w)$, then $f^i(u) \prec f^i(w)$.  We establish the assertions of the lemma
by induction on $i$. For $i = 0$, we have $f^0(u) = u, f^0(w) = w$ and the lemma states
that $d(u,w) = 2$ and that there is no vertex $x \sim u,w$ such that
$x \prec w$, which is true by our assumptions.

Suppose the property holds for $i<k$ and consider $f^{i+1}(u)$. Since
$d(f^i(u),w) = i+2$, we have $i+1 \leq d(f^{i+1}(u),w) \leq i+3$.  If
$d(f^{i+1}(u),w) = i+1$, then $x=f^{i+1}(u)$ satisfies
$d(x,w) = i+1$ and $x \prec f^i(u) \prec f^i(w)$, contradicting the
induction hypothesis. Consequently, $d(f^{i+1}(u),w) > i+1 =
d(f^{i+1}(w),w)$, and thus $f^{i+1}(u)\ne f^{i+1}(w)$. Therefore,
$f^{i+1}(u) \prec f^{i+1}(w)$.  If $d(f^{i+1}(u),w) = i+2$ , then by TC($w$), there exists $x \sim
f^i(u),f^{i+1}(u)$ such that $d(x,w) = i+1$. Since $f(x) \preceq
f^{i+1}(u) \prec f^{i+1}(w)$, we obtain $x \prec f^i(w)$, contradicting the
induction hypothesis.  Consequently, $d(f^{i+1}(u),w)=i+3$, as required.

Suppose now that there exists $x_{i+1} \sim f^{i+1}(u)$ such that
$d(x_{i+1},w)=i+2$ and $x_{i+1} \prec f^{i+1}(w)$. Then $f^i(u),x_{i+1}\in I(f^{i+1}(u),w)$. By QC($w$) if $f^i(u)\nsim x_{i+1}$ or by TC($w$) if
$f^i(u)\sim x_{i+1}$, there exists a vertex $x_{i} \sim f^i(u), x_{i+1}$ such that $d(x_{i},w)=i+1$.
Since $f(x_i) \preceq x_{i+1} \prec f^{i+1}(w)$, we obtain
$x_i \prec f^i(w)$, contradicting the induction hypothesis.
\end{proof}

Applying 
Lemma \ref{iteration} with $i=k$, we obtain
that $k+2 = d(f^k(u),w) = d(v_0,w) = k$, a contradiction. This ends
the proof of Theorem \ref{t:dpBFS}.
\end{proof}


\section{Weakly modular complex}
\label{s:WMcpl}
Although in several previous chapters we studied complexes associated with various classes of weakly modular graphs, there is no  general notion of
a ``weakly modular complex''. One would like this complex to possess some natural features, analogous to the ones of e.g.\ CAT(0) cubical complexes, systolic complexes, bucolic complexes, etc.
In particular, we would like this complex to be locally finite (as a complex) when the
underlying graph is so, invariant under automorphisms of the underlying graph (functoriality), and contractible. In this section we propose
a very particular construction, that might lead to a complex with those properties --- we call such an object ``WM--complex'' (from ``weakly modular''). In several  particular
cases this complex behaves as required. However, at the moment we have no proof that it behaves well in general.

\begin{definition}[Reduced diagonal extension]
The \emph{reduced $k$--diagonal extension} of a graph $G$, denoted $D^{(k)}(G)$, is defined as follows: $D^{(0)}(G)=G$, and
$D^{(k+1)}(G)$ is obtained from $D^{(k)}(G)$ by adding the diagonals of all squares of $D^{(k)}(G)$ containing at least
one edge of the initial graph $G$.
\end{definition}

\begin{definition}[WM--complex]
For a graph $G$, its \emph{diagonal rank}, ${\rm rk}_D(G)$ is a minimal number $k$ (or $\infty$) such that $D^{(k+1)}(G)=D^{(k)}(G)$. If ${\rm rk}_D(G)=m<\infty$, then set
$D^*(G):=D^{(m)}(G)$.  For a weakly modular graph $G$ of finite diagonal rank, by a \emph{WM--complex} (``weakly modular complex") associated with $G$,
denoted $X_{\bowtie}(G)$, we mean the flag simplicial complex with $1$--skeleton $D^*(G)$ (i.e., $X_{\bowtie}(G)$ is the clique complex
of $D^*(G)$).
\end{definition}

\begin{example}
\begin{enumerate}
\item For a bridged graph $G$, the WM--complex $X_{\bowtie}(G)$ is  the corresponding systolic complex (${\rm rk}_D=0$, i.e.\ no edges are added);
\item
For a median graph $G$, its WM--complex $X_{\bowtie}(G)$ is the thickening  of the CAT(0) cubical complex with $1$--skeleton $G$ (cf.\ \cite{O-cnpc});
\item
For a bucolic graph $G$, its WM--complex $X_{\bowtie}(G)$ is the thickening of the corresponding bucolic complex;
\item
The WM--complex $X_{\bowtie}(G)$ of an octahedron is a simplex spanned by all vertices of the octahedron.
\end{enumerate}
\end{example}

We show here that
the diagonal extension of an swm-graph is equal to its thickening.
For an swm-graph $G$,
let $G^{\Delta,k}$ denote the graph obtained
by joining all Boolean pairs $(x,y)$ with $d(x,y) \leq k$.
By definition, $G^{\Delta} = \lim_{k \rightarrow \infty} G^{\Delta,k}$.
\begin{Prop}
For an swm-graph $G$,
its reduced $k$--diagonal extension $D^{(k)}(G)$ is equal to
$G^{\Delta,k+1}$. In particular, $X_{\bowtie}(G)$ is equal to the clique complex of $G^{\Delta} = D^{*}(G)$, and
the diagonal rank ${\rm rk}_D(G)$ is equal to
the cube-dimension of $G$ minus one.
\end{Prop}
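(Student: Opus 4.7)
The plan is to prove the statement by induction on $k$, establishing $D^{(k)}(G) = G^{\Delta,k+1}$ for all $k \ge 0$. The base case $k=0$ is immediate: $D^{(0)}(G) = G$ by definition, and since any edge of $G$ is a Boolean pair at distance $1$ (the interval $I(x,y)=\{x,y\}$ is trivially a rank-$1$ complemented modular lattice) while adjacent vertices at Boolean distance $\le 1$ are precisely edges of $G$, we have $G^{\Delta,1} = G$.

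For the inductive step, assuming $D^{(k)}(G)=G^{\Delta,k+1}$, the easy half is $G^{\Delta,k+2}\subseteq D^{(k+1)}(G)$. A Boolean pair $(a,c)$ with $d_G(a,c)\le k+1$ lies in $D^{(k)}(G)\subseteq D^{(k+1)}(G)$ by inductive hypothesis. If $d_G(a,c)=k+2$, then by Proposition~\ref{prop:char_Boolean} the interval $I(a,c)$ is a complemented modular lattice of rank $k+2$ in the base-point order $\preceq_a$. I would pick an atom $b$ (so $a\sim_G b$) and a complement $d$ of $b$ in $I(a,c)$, noting that $d$ is a coatom and so $c\sim_G d$; since complementarity is relative in complemented modular lattices of finite rank, $[a,d]$ and $[b,c]$ are themselves complemented, so $(a,d)$ and $(b,c)$ are Boolean pairs at distance $k+1$, hence edges of $D^{(k)}(G)$ by IH. The modular rank formula yields $d_G(b,d)=1+(k+1)-0=k+2>k+1$, so neither diagonal lies in $G^{\Delta,k+1}$. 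Thus $abcd$ is an induced square in $D^{(k)}(G)$ containing the edge $ab\in E(G)$, and its diagonal $ac$ is added to form $D^{(k+1)}(G)$.

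The harder half is $D^{(k+1)}(G)\subseteq G^{\Delta,k+2}$. For a newly-added diagonal $ac$ of a square $abcd$ in $D^{(k)}(G)$ with $ab\in E(G)$, the triangle inequality immediately gives $d_G(a,c)\le 1+d_G(b,c)\le k+2$, and similarly for $bd$. The substantive task is to show that $(a,c)$ (and symmetrically $(b,d)$) is actually a Boolean pair. I would show that $\{a,b,c,d\}$ forms a clique in $G^{\Delta}$ by using that $G^{\Delta}$ is finitely Helly (Theorem~\ref{thm:Helly}): the four unit balls $B_1(a),B_1(b),B_1(c),B_1(d)$ pairwise intersect in $G^{\Delta}$ (adjacent pairs share themselves, while opposite pairs like $\{a,c\}$ share $b$ and $d$), so by Helly they admit a common vertex $v$, which then allows the four triangles $\{a,b,v\},\{b,c,v\},\{c,d,v\},\{a,d,v\}$ to be placed in Boolean-gated sets via Lemma~\ref{lem:pairwiseBoolean}. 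The distinguished role of the edge $ab\in E(G)$ and the structure of pairwise lower-bounded triples in the semilattice $\mathcal{B}(G)$ (Proposition~\ref{prop:ideal}(3) and Theorem~\ref{thm:chara_om}(d)) are then invoked to glue these Boolean-gated sets into one containing all of $\{a,b,c,d\}$, from which $(a,c),(b,d)\in E(G^{\Delta})$ follows. Once $\{a,b,c,d\}$ is a clique in $G^{\Delta}$, Lemma~\ref{lem:pairwiseBoolean} applies directly.

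The consequences are then routine: the cube-dimension $n$ of $G$ is the maximum diameter of a Boolean-gated set, so $G^{\Delta,n}=G^{\Delta}$ while $G^{\Delta,n-1}\subsetneq G^{\Delta}$, which by the established equality translates to $D^{(n-1)}(G)=D^{(n)}(G)=G^{\Delta}$ and $D^{(n-2)}(G)\ne D^{(n-1)}(G)$; hence $\mathrm{rk}_D(G)=n-1$ and $D^{*}(G)=G^{\Delta}$, so $X_{\bowtie}(G)$ is by definition the clique complex of $G^{\Delta}$. The main obstacle is the third paragraph's gluing argument: the backward direction is straightforward because the lattice provides a ready-made square, whereas in the forward direction the combinatorial square in $D^{(k)}(G)$ could a priori arise without any Boolean-gated set witnessing $(a,c)$, and the distinguished edge $ab\in E(G)$ (necessary, as the induced $4$--cycles in $G^{\Delta,k+1}$ without a $G$--edge genuinely need not have Boolean diagonals) must be used to force the Helly-produced common vertex $v$ to bind the four triangles into a single Boolean-gated set.
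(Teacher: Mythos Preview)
Your base case and the inclusion $G^{\Delta,k+2}\subseteq D^{(k+1)}(G)$ are correct and match the paper.

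The backward inclusion has a genuine gap, one you yourself flag but do not resolve. Finding the Helly vertex $v$ is valid, but the gluing step is circular. To invoke Theorem~\ref{thm:chara_om}(d) on any collection of Boolean-gated sets whose union contains both $a$ and $c$, you must first check pairwise lower-boundedness in $\mathcal{B}(G)$; a lower bound for such a pair is by definition a Boolean-gated set containing $\{a,c\}$, which already forces $(a,c)$ Boolean---the very conclusion you seek. Proposition~\ref{prop:ideal}(3) only gives a \emph{join}-semilattice structure on the ideal at $\{v\}$ (joins being intersections), so it supplies no meets either. The four triangles $\{a,b,v\},\{b,c,v\},\{c,d,v\},\{a,d,v\}$ each lie in some Boolean-gated set through $v$, but nothing you have written rules out these four sets being pairwise unbounded below. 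The edge $ab\in E(G)$ is invoked only as a slogan; you give no concrete mechanism by which it breaks the circularity, and Lemma~\ref{lem:Boolean123} cannot be applied because the common vertex $v$ is a $G^{\Delta}$-neighbor, not a $G$-neighbor.

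The paper's proof works metrically in $G$ rather than through Helly or $\mathcal{B}(G)$. Writing the square as $xyzw$ with $yz\in E(G)$, it first uses Lemma~\ref{lem:Boolean123} to establish $d(x,z)=d(x,y)+1$ and $d(y,w)=d(z,w)+1$, and observes that it suffices to prove one of $(x,z),(y,w)$ Boolean (the other then follows via Lemma~\ref{lem:pairwiseBoolean}). It then splits on whether $d(x,w)<k+1$ or $d(x,w)=k+1$; in each case weak modularity (quasi-medians, TC, QC) produces auxiliary $G$-neighbors of $z$ or $w$ and an auxiliary $4$-cycle of strictly smaller parameter, to which the induction hypothesis $D^{(k-1)}(G)=G^{\Delta,k}$ applies. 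Lemma~\ref{lem:Boolean123} then propagates Booleanness along these $G$-edges back to $(x,z)$. The $G$-edge $yz$ enters through the distance identity and the auxiliary constructions, not via any poset argument.
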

\begin{proof}
We show $D^{(k)}(G) = G^{\Delta, k+1}$
by the induction on $k$; obviously $D^{(0)}(G) = G = G^{\Delta,1}$.
Suppose $D^{(k)}(G) = G^{\Delta, k+1}$.
First, we show that any edge $xy$ of $G^{\Delta, k+2}$
is an edge of $D^{(k+1)}(G)$.
Then $(x,y)$ is a Boolean pair with $d(x,y) \leq k+2$.
If $d(x,y) \leq k+1$, then $xy$ belongs to
$G^{\Delta,k+1} = D^{(k)}(G)$, and hence is an edge of  $D^{(k+1)}(G)$.
Suppose that $d(x,y) = k+2$.
Recall that $I(x,y)$ is a complemented modular lattice
(Section~\ref{subsec:latt_char_swm}).
Pick an atom $a$ in $I(x,y)$ and let $b$ be a complement  of $a$ in $I(x,y)$.
Then $(a,y)$ and $(x,b)$ are Boolean pairs of distance $k+1$, while  $(x,y)$ and $(a,b)$ are Boolean pairs of distance $k+2$.
Thus $aybx$ is a square in $D^{(k)}(G) = G^{\Delta,k+1}$
containing the edges $xa$ and $yb$ of $G$.
By definition, $xy$ and $ab$ are edges of  $D^{(k+1)}(G)$.

Next, we show the converse: any edge of $D^{(k+1)}(G)$ is an edge of $G^{\Delta,k+2}$.
We will use the following lemma for squares and 4-cycles.
\begin{lemma} \label{squaresD}
  \

\begin{enumerate}[(1)]
\item For  a square $xyzw$ in $D^{(k)}(G) = G^{\Delta, k+1}$ with $yz
  \in E(G)$, we have $d(x,z) = d(x,y) + 1$ and $d(w,y) = d(w,z) + 1$.
\item For any 4-cycle $(x,y,z,w)$ in $G^\Delta$ such that  $yz \in E(G)$,
$d(x,z) = d(x,y) + 1$ and $d(w,y) = d(w,z) + 1$, $(x,z)$ is
Boolean if and only if $(y,w)$ is Boolean.
\end{enumerate}
\end{lemma}
\begin{proof} To (1): Suppose this is not true, and assume that
  $d(x,z) \leq d(x,y)$.  If $d(x,z) = d(x,y)-1$, i.e., $z \in I(x,y)
  (\subseteq \lgate x,y \rgate)$, then $(x,z)$ is a Boolean pair with
  $d(x,z) \leq k$.  If $d(x,z) = d(x,y)$, then by
  Lemma~\ref{lem:Boolean123}(1), $(x,z)$ is a Boolean pair with
  $d(x,z) \leq k+1$.  In both cases, $x$ and $z$ define an edge in
  $G^{\Delta, k+1}$, contradicting the fact that $xyzw$ is a square in
  $G^{\Delta, k+1}$. Hence, $d(x,z) = d(x,y) + 1$ and for similar
  reasons, we have $d(w,y) = d(w,z) + 1$.
		
To (2): If $(x,z)$ is Boolean, then by
Lemma~\ref{lem:pairwiseBoolean}, there exists a Boolean gated set $B$
containing $x,z,w$.  Then $B$ contains $I(x,z)$, and $I(x,z)$ contains
$y$.  Thus the pair $(y,w)$ is also Boolean.
\end{proof}
	
Consider a square $xyzw$ in $D^{(k)}(G) = G^{\Delta, k+1}$ with $yz
\in E(G)$.  Then $(x,y)$, $(z,w)$, and $(w,x)$ are Boolean pairs of
distances at most $k+1$. Since $xz,yw \notin E(D^{(k)}(G))$, we know
that by induction hypothesis, at least one of the distances is
$k+1$. In the following, we show that $(x,z)$ and $(y,w)$ are Boolean
pairs. Note that this implies that $d(x,z) = d(y,w) = k+2$ as $xz,yw
\notin E(G^{\Delta, k+1})$ and that both $xz$ and $yw$ are edges of
$G^{\Delta,k+2}$.

Note that by Lemma~\ref{squaresD}(1), we have $d(x,z) = d(x,y)+1$ and
$d(w,y) = d(w,z) +1$.  By Lemma \ref{squaresD}(2), it is thus
sufficient to show that one of the pairs $(x,z)$ and $(y,w)$ is
Boolean.  We prove this by induction on $\min \{d(x,y),d(w,z)\}$ and
without loss of generality, we assume that $d(x,y) \leq d(w,z)$.


If $z\in I(x,w)$, then $(x,z)$ is Boolean since $(x,w)$ is
Boolean. Assume now that $z \notin I(x,w)$. We assert that there
exists a vertex $z'\in I(x,z)$ such that $z'\sim z$, $d(z',w)\le
d(z,w)$, and $(z',w)$ is Boolean.  If $I(z,x)\cap I(z,w)\ne \{z\}$,
let $z'$ be a neighbor of $z$ in $I(z,x)\cap I(z,w)$. Since $z'\in
I(z,w)$ and $(z,w)$ is Boolean, $(z',w)$ is also Boolean and
$d(z',w)<d(z,w)$. Now suppose that $I(z,x)\cap I(z,w)=\{z\}$, and
consider a quasi-median $zx'w'$ of $z,x,w$ with $x' \in I(x,z)$ and
$w' \in I(w,z)$. Let $z'$ be a neighbor of $z$ in $I(z,x')$, and note
that by Lemma~\ref{lem-weakly-modular}, we have $d(z',w') =
d(z,w')$. Consequently, $d(z',w) = d(z,w)$  and $(z',w)$ is Boolean
by Lemma~\ref{lem:Boolean123}(1).

Note that since $d(w,y) = d(w,z)+1$ and since $d(w,z') \leq d(w,z)$,
we have $z' \neq y$.  Since $d(x,y)=d(x,z')$ and $z\sim y,z'$, the
vertices $y$ and $z'$ are not adjacent: otherwise, by (TC) we can find
$y'\sim y,z'$ with $d(x,y')=d(x,y)-1$ and the vertices $y,z,z',y'$
induce a forbidden $K_4^-$. Hence $y \nsim z'$. By (QC), we can find
$y' \sim y,z'$ with $d(x,y') = d(x,y) - 1$. If $y' = x$, i.e., if
$d(x,y) = 1$, then $x,z$ belongs to a square $xyzz'$ and $(x,z)$ is
Boolean. Suppose now that $y' \neq x$ and note that $d(z',w) \geq
d(z,w) - 1 \geq d(y',x) \geq 1$.  Since $y'\in I(x,y)$ and $(x,y)$ is
Boolean, $(x,y')$ is also Boolean.

Note that $d(x,z') = d(x,y') + 1$ and that $d(w,z) - 1 \leq d(w,z') \leq
d(w,z) = d(w,y) - 1 \leq d(w,y') \leq d(w,z')+1
\leq d(w,z)+1$, implying that $d(w,y') = d(w,z')$ or $d(w,y') = d(w,z') +1$.
If $d(w,y') = d(w,z')$, then by Lemma~\ref{lem:Boolean123}(1),
$(w,y')$ is a Boolean pair since $(w,z')$ is a Boolean pair.
If $d(w,y') = d(w,z') + 1$, the 4-cycle $(x,y',z',w)$ satisfies the
conditions of Lemma~\ref{squaresD}(2). Therefore, if this 4-cycle is
not induced in $G^{\Delta,k+1}$, then by Lemma \ref{squaresD}(2) both
pairs $(x,z')$ and $(y',w)$ are Boolean.  On the other hand, if the
4-cycle $(x,y',z',w)$ is a square in $G^{\Delta,k+1}$, since $d(x,y')<
d(x,y)$ we can apply the induction hypothesis and deduce that the
pairs $(x,z')$ and $(y',w)$ are Boolean.

Consequently, in both cases $(w,y')$ is Boolean and by
Lemma~\ref{lem:Boolean123}(2) applied to the Boolean pairs
$(w,z),(w,y')$ and a common neighbor $y$ of $y',z$, we conclude that
$(w,y)$ is Boolean.
\end{proof}

Clearly, the definition of $X_{\bowtie}(G)$ is functorial, and the WM--complex in the above examples satisfies all the required properties.
However, in general the following questions are open.
\medskip

\begin{question}
Is the diagonal rank ${\rm rk}_D(G)$ of a uniformly locally finite weakly modular graph $G$ finite? Is the resulting WM--complex $X_{\bowtie}(G)$ contractible?
\end{question}


\backmatter


\begin{bibdiv}
\begin{biblist}

\bib{BuildingBook}{book}{
   author={Abramenko, Peter},
   author={Brown, Kenneth S.},
   title={Buildings},
   series={Graduate Texts in Mathematics},
   volume={248},
   note={Theory and applications},
   publisher={Springer},
   place={New York},
   date={2008},
   pages={xxii+747},
   isbn={978-0-387-78834-0},
   review={\MR{2439729 (2009g:20055)}},
   doi={10.1007/978-0-387-78835-7},
}

\bib{Agol}{article}{
   author={Agol, Ian},
   title={The virtual Haken conjecture},
   note={With an appendix by Agol, Daniel Groves, and Jason Manning},
   journal={Doc. Math.},
   volume={18},
   date={2013},
   pages={1045--1087},
   issn={1431-0635},
   review={\MR{3104553}},
}

\bib{ArOwSu}{article}{
   author={Ardila, Federico},
   author={Owen, Megan},
   author={Sullivant, Seth},
   title={Geodesics in $\rm CAT(0)$ cubical complexes},
   journal={Adv. in Appl. Math.},
   volume={48},
   date={2012},
   number={1},
   pages={142--163},
   issn={0196-8858},
   review={\MR{2845512}},
   doi={10.1016/j.aam.2011.06.004},
}

\bib{AP56}{article}{
   author={Aronszajn, Nachman},
   author={Panitchpakdi, P.},
   title={Extension of uniformly continuous transformations and hyperconvex
   metric spaces},
   journal={Pacific J. Math.},
   volume={6},
   date={1956},
   pages={405--439},
   issn={0030-8730},
   review={\MR{0084762 (18,917c)}},
}

\bib{Ba_retracts}{article}{
   author={Bandelt, Hans-J\"urgen},
   title={Retracts of hypercubes},
   journal={J. Graph Theory},
   volume={8},
   date={1984},
   number={4},
   pages={501--510},
   issn={0364-9024},
   review={\MR{766499 (86c:05104)}},
   doi={10.1002/jgt.3190080407},
}

\bib{Ba_hereditary}{article}{
   author={Bandelt, Hans-J\"urgen},
   title={Hereditary modular graphs},
   journal={Combinatorica},
   volume={8},
   date={1988},
   number={2},
   pages={149--157},
   issn={0209-9683},
   review={\MR{963122 (90d:05175)}},
   doi={10.1007/BF02122796},
}

\bib{BaCh_helly}{article}{
   author={Bandelt, Hans-J\"urgen},
   author={Chepoi, Victor},
   title={A Helly theorem in weakly modular space},
   journal={Discrete Math.},
   volume={160},
   date={1996},
   number={1-3},
   pages={25--39},
   issn={0012-365X},
   review={\MR{1417558 (97h:52006)}},
}

\bib{BaCh_weak}{article}{
   author={Bandelt, Hans-J\"urgen},
   author={Chepoi, Victor},
   title={Decomposition and $l_1$-embedding of weakly median graphs},
   note={Discrete metric spaces (Marseille, 1998)},
   journal={European J. Combin.},
   volume={21},
   date={2000},
   number={6},
   pages={701--714},
   issn={0195-6698},
   review={\MR{1791200 (2002i:05091)}},
   doi={10.1006/eujc.1999.0377},
}

\bib{BaCh}{article}{
   author={Bandelt, Hans-J\"urgen},
   author={Chepoi, Victor},
   title={Metric graph theory and geometry: a survey},
   conference={
      title={Surveys on discrete and computational geometry},
   },
   book={
      series={Contemp. Math.},
      volume={453},
      publisher={Amer. Math. Soc., Providence, RI},
   },
   date={2008},
   pages={49--86},
   review={\MR{2405677 (2009h:05068)}},
   doi={10.1090/conm/453/08795},
}

\bib{BaMu_pmg}{article}{
   author={Bandelt, Hans-J{\"u}rgen},
   author={Mulder, Henry Martyn},
   title={Pseudomodular graphs},
   journal={Discrete Math.},
   volume={62},
   date={1986},
   number={3},
   pages={245--260},
   issn={0012-365X},
   review={\MR{866940 (88b:05106)}},
   doi={10.1016/0012-365X(86)90212-8},
}

\bib{BaMuWi}{article}{
   author={Bandelt, Hans-J{\"u}rgen},
   author={Mulder, Henry Martyn},
   author={Wilkeit, Elke},
   title={Quasi-median graphs and algebras},
   journal={J. Graph Theory},
   volume={18},
   date={1994},
   number={7},
   pages={681--703},
   issn={0364-9024},
   review={\MR{1297190 (95h:05059)}},
   doi={10.1002/jgt.3190180705},
}

\bib{BP-absolute}{article}{
   author={Bandelt, Hans-J{\"u}rgen},
   author={Pesch, Erwin},
   title={Dismantling absolute retracts of reflexive graphs},
   journal={European J. Combin.},
   volume={10},
   date={1989},
   number={3},
   pages={211--220},
   issn={0195-6698},
   review={\MR{1029165 (91b:05070)}},
}

\bib{BaPr}{article}{
   author={Bandelt, Hans-J{\"u}rgen},
   author={Prisner, Erich},
   title={Clique graphs and Helly graphs},
   journal={J. Combin. Theory Ser. B},
   volume={51},
   date={1991},
   number={1},
   pages={34--45},
   issn={0095-8956},
   review={\MR{1088625 (92g:05151)}},
   doi={10.1016/0095-8956(91)90004-4},
}

\bib{BavdV3}{article}{
   author={Bandelt, Hans-J\"urgen},
   author={van de Vel, Marcel},
   title={Superextensions and the depth of median graphs},
   journal={J. Combin. Theory Ser. A},
   volume={57},
   date={1991},
   number={2},
   pages={187--202},
   issn={0097-3165},
   review={\MR{1111556 (92m:05190)}},
   doi={10.1016/0097-3165(91)90044-H},
}

\bib{BVV}{article}{
   author={Bandelt, Hans-J\"urgen},
   author={van de Vel, Marcel},
   author={Verheul, E.},
   title={Modular interval spaces},
   journal={Math. Nachr.},
   volume={163},
   date={1993},
   pages={177--201},
   issn={0025-584X},
   review={\MR{1235066 (95e:06027)}},
   doi={10.1002/mana.19931630117},
}

\bib{BaCo}{article}{
   author={Barth{\'e}l{\'e}my, Jean-Pierre},
   author={Constantin, Julien},
   title={Median graphs, parallelism and posets},
   note={Graph theory and combinatorics (Marseille-Luminy, 1990)},
   journal={Discrete Math.},
   volume={111},
   date={1993},
   number={1-3},
   pages={49--63},
   issn={0012-365X},
   review={\MR{1210081 (93j:05123)}},
   doi={10.1016/0012-365X(93)90140-O},
}

\bib{Birkhoff}{book}{
   author={Birkhoff, Garrett},
   title={Lattice theory},
   series={Third edition. American Mathematical Society Colloquium
   Publications, Vol. XXV},
   publisher={American Mathematical Society},
   place={Providence, R.I.},
   date={1967},
   pages={vi+418},
   review={\MR{0227053 (37 \#2638)}},
}

\bib{Bj}{article}{
   author={Bj\"orner, Anders},
   title={Topological methods},
   conference={
      title={Handbook of combinatorics, Vol.\ 1,\ 2},
   },
   book={
      publisher={Elsevier},
      place={Amsterdam},
   },
   date={1995},
   pages={1819--1872},
   review={\MR{1373690 (96m:52012)}},
}

\bib{BradyMcCammond}{article}{
   author={Brady, Tom},
   author={McCammond, Jon},
   title={Braids, posets and orthoschemes},
   journal={Algebr. Geom. Topol.},
   volume={10},
   date={2010},
   number={4},
   pages={2277--2314},
   issn={1472-2747},
   review={\MR{2745672 (2012e:06003)}},
   doi={10.2140/agt.2010.s10.2277},
}

\bib{BCC+}{article}{
    author={Bre{\v{s}}ar, Bo{\v{s}}tjan},
    author={Chalopin, J\'er\'emie},
    author={Chepoi, Victor},
    author={Gologranc, Tanja},
    author={Osajda, Damian},
   title={Bucolic complexes},
   journal={Adv. Math.},
   volume={243},
   date={2013},
   pages={127--167},
   issn={0001-8708},
   review={\MR{3062742}},
   doi={10.1016/j.aim.2013.04.009},
}

\bib{BrHa}{book}{
   author={Bridson, Martin R.},
   author={Haefliger, Andr{\'e}},
   title={Metric spaces of non-positive curvature},
   series={Grundlehren der Mathematischen Wissenschaften [Fundamental
   Principles of Mathematical Sciences]},
   volume={319},
   publisher={Springer-Verlag},
   place={Berlin},
   date={1999},
   pages={xxii+643},
   isbn={3-540-64324-9},
   review={\MR{1744486 (2000k:53038)}},
}

\bib{BroCo}{article}{
   author={Brouwer, Andries E.},
   author={Cohen, Arjeh M.},
   title={Local recognition of Tits geometries of classical type},
   journal={Geom. Dedicata},
   volume={20},
   date={1986},
   number={2},
   pages={181--199},
   issn={0046-5755},
   review={\MR{833846 (87m:51006)}},
   doi={10.1007/BF00164399},
}

\bib{BrCoNeu}{book}{
   author={Brouwer, Andries E.},
   author={Cohen, Arjeh M.},
   author={Neumaier, A.},
   title={Distance-regular graphs},
   series={Ergebnisse der Mathematik und ihrer Grenzgebiete (3) [Results in
   Mathematics and Related Areas (3)]},
   volume={18},
   publisher={Springer-Verlag},
   place={Berlin},
   date={1989},
   pages={xviii+495},
   isbn={3-540-50619-5},
   review={\MR{1002568 (90e:05001)}},
}

\bib{BroWi}{article}{
   author={Brouwer, Andries E.},
   author={Wilbrink, Henny A.},
   title={The structure of near polygons with quads},
   journal={Geom. Dedicata},
   volume={14},
   date={1983},
   number={2},
   pages={145--176},
   issn={0046-5755},
   review={\MR{708631 (85b:05045)}},
   doi={10.1007/BF00181622},
}

\bib{Ca}{article}{
   author={Cameron, Peter J.},
   title={Dual polar spaces},
   journal={Geom. Dedicata},
   volume={12},
   date={1982},
   number={1},
   pages={75--85},
   issn={0046-5755},
   review={\MR{645040 (83g:51014)}},
   doi={10.1007/BF00147332},
}

\bib{ChChOs_matroid}{article}{
   author={Chalopin, J{\'e}r{\'e}mie},
   author={Chepoi, Victor},
   author={Osajda, Damian},
   title={On two conjectures of Maurer concerning basis graphs of matroids},
   journal={J. Combin. Theory Ser. B},
   volume={114},
   date={2015},
   pages={1--32},
   issn={0095-8956},
   review={\MR{3354288}},
   doi={10.1016/j.jctb.2015.03.004},
}

\bib{CCPP-hyperbolic}{article}{
author={Chalopin, J{\'e}r{\'e}mie},
   author={Chepoi, Victor},
   author={Papasoglu, Panos},
   author={Pecatte, Timoth{\'e}e},
   title={Cop and robber game and hyperbolicity},
   journal={SIAM J. Discrete Math.},
   volume={28},
   date={2014},
   number={4},
   pages={1987--2007},
   issn={0895-4801},
   review={\MR{3284564}},
   doi={10.1137/130941328},
}

\bib{Cha1}{article}{
   author={Chastand, Marc},
   title={Fiber-complemented graphs. I. Structure and invariant subgraphs},
   journal={Discrete Math.},
   volume={226},
   date={2001},
   number={1-3},
   pages={107--141},
   issn={0012-365X},
   review={\MR{1801065 (2002i:05095)}},
   doi={10.1016/S0012-365X(00)00183-7},
}

\bib{Cha2}{article}{
   author={Chastand, Marc},
   title={Fiber-complemented graphs. II. Retractions and endomorphisms},
   journal={Discrete Math.},
   volume={268},
   date={2003},
   number={1-3},
   pages={81--101},
   issn={0012-365X},
   review={\MR{1982390 (2004d:05169)}},
   doi={10.1016/S0012-365X(02)00682-9},
}

\bib{ChDrHag}{article}{
   author={Chatterji, Indira},
   author={Dru{\c{t}}u, Cornelia},
   author={Haglund, Fr{\'e}d{\'e}ric},
   title={Kazhdan and Haagerup properties from the median viewpoint},
   journal={Adv. Math.},
   volume={225},
   date={2010},
   number={2},
   pages={882--921},
   issn={0001-8708},
   review={\MR{2671183 (2011g:20059)}},
   doi={10.1016/j.aim.2010.03.012},
}

\bib{Ch_metric}{article}{
   author={Chepo{\u\i}, Victor D.},
   title={Classification of graphs by means of metric triangles},
   language={Russian},
   journal={Metody Diskret. Analiz.},
   number={49},
   date={1989},
   pages={75--93, 96},
   issn={0136-1228},
   review={\MR{1114014 (92e:05041)}},
}

\bib{Ch_facility}{article}{
   author={Chepoi, Victor},
   title={A multifacility location problem on median spaces},
   journal={Discrete Appl. Math.},
   volume={64},
   date={1996},
   number={1},
   pages={1--29},
   issn={0166-218X},
   review={\MR{1367177 (97c:90058)}},
   doi={10.1016/0166-218X(95)00115-8},
}

\bib{Ch_bridged}{article}{
   author={Chepoi, Victor},
   title={Bridged graphs are cop-win graphs: an algorithmic proof},
   journal={J. Combin. Theory Ser. B},
   volume={69},
   date={1997},
   number={1},
   pages={97--100},
   issn={0095-8956},
   review={\MR{1426753 (97g:05150)}},
}

\bib{Ch_dism}{article}{
   author={Chepoi, Victor},
   title={On distance-preserving and domination elimination orderings},
   journal={SIAM J. Discrete Math.},
   volume={11},
   date={1998},
   number={3},
   pages={414--436},
   issn={0895-4801},
   review={\MR{1628110 (99h:05032)}},
   doi={10.1137/S0895480195291230},
}

\bib{Ch_CAT}{article}{
   author={Chepoi, Victor},
   title={Graphs of some ${\rm CAT}(0)$ complexes},
   journal={Adv. in Appl. Math.},
   volume={24},
   date={2000},
   number={2},
   pages={125--179},
   issn={0196-8858},
   review={\MR{1748966 (2001a:57004)}},
}

\bib{Che_bas}{article}{
   author={Chepoi, Victor},
   title={Basis graphs of even delta-matroids},
   journal={J. Combin. Theory Ser. B},
   volume={97},
   date={2007},
   number={2},
   pages={175--192},
   issn={0095-8956},
   review={\MR{2290319 (2007k:05041)}},
   doi={10.1016/j.jctb.2006.05.003},
}

\bib{Chepev}{article}{
   author={Chepoi, Victor},
   title={Nice labeling problem for event structures: a counterexample},
   journal={SIAM J. Comput.},
   volume={41},
   date={2012},
   number={4},
   pages={715--727},
   issn={0097-5397},
   review={\MR{2974750}},
   doi={10.1137/110837760},
}

\bib{ChDrEsHaVa}{article}{
   author={Chepoi, Victor},
   author={Dragan, Feodor F.},
   author={Estellon, Bertrand},
   author={Habib, Michel},
   author={Vax{\`e}s, Yann},
   title={Diameters, centers, and approximating trees of $\delta$-hyperbolic
   geodesic spaces and graphs},
   conference={
      title={Computational Geometry (SoCG'08)},
   },
   book={
      publisher={ACM, New York},
   },
   date={2008},
   pages={59--68},
   review={\MR{2504271}},
   doi={10.1145/1377676.1377687},
}

\bib{ChHa}{article}{
   author={Chepoi, Victor},
   author={Hagen, Mark F.},
   title={On embeddings of CAT(0) cube complexes into products of trees via
   colouring their hyperplanes},
   journal={J. Combin. Theory Ser. B},
   volume={103},
   date={2013},
   number={4},
   pages={428--467},
   issn={0095-8956},
   review={\MR{3071375}},
   doi={10.1016/j.jctb.2013.04.003},
}

\bib{ChMa}{article}{
   author={Chepoi, Victor},
   author={Maftuleac, Daniela},
   title={Shortest path problem in rectangular complexes of global
   nonpositive curvature},
   journal={Comput. Geom.},
   volume={46},
   date={2013},
   number={1},
   pages={51--64},
   issn={0925-7721},
   review={\MR{2949610}},
   }

\bib{ChOs}{article}{
   author={Chepoi, Victor},
   author={Osajda, Damian},
   title={Dismantlability of weakly systolic complexes and applications},
   journal={Trans. Amer. Math. Soc.},
   volume={367},
   date={2015},
   number={2},
   pages={1247--1272},
   issn={0002-9947},
   review={\MR{3280043}},
   doi={10.1090/S0002-9947-2014-06137-0},
}

\bib{DJPSY94}{article}{
   author={Dahlhaus, Elias},
   author={Johnson, David S.},
   author={Papadimitriou, Christos H.},
   author={Seymour, Paul D.},
   author={Yannakakis, Mihalis},
   title={The complexity of multiterminal cuts},
   journal={SIAM J. Comput.},
   volume={23},
   date={1994},
   number={4},
   pages={864--894},
   issn={0097-5397},
   review={\MR{1283579 (95h:90039)}},
   doi={10.1137/S0097539792225297},
}

\bib{Davis}{book}{
   author={Davis, Michael W.},
   title={The geometry and topology of Coxeter groups},
   series={London Mathematical Society Monographs Series},
   volume={32},
   publisher={Princeton University Press},
   place={Princeton, NJ},
   date={2008},
   pages={xvi+584},
   isbn={978-0-691-13138-2},
   isbn={0-691-13138-4},
   review={\MR{2360474 (2008k:20091)}},
}

\bib{Dragan}{thesis}{
    author={Dragan, Feodor F.},
    title={Centers of graphs and the Helly property (in Russian)},
    series={Moldova State University},
    place={Chi\c{s}in\v{a}u, Republica Moldova},
    date={1988},
}

\bib{DeLa}{book}{
   author={Deza, Michel Marie},
   author={Laurent, Monique},
   title={Geometry of cuts and metrics},
   series={Algorithms and Combinatorics},
   volume={15},
   publisher={Springer-Verlag},
   place={Berlin},
   date={1997},
   pages={xii+587},
   isbn={3-540-61611-X},
   review={\MR{1460488 (98g:52001)}},
}

\bib{DrSch}{article}{
   author={Dress, Andreas W. M.},
   author={Scharlau, Rudolf},
   title={Gated sets in metric spaces},
   journal={Aequationes Math.},
   volume={34},
   date={1987},
   number={1},
   pages={112--120},
   issn={0001-9054},
   review={\MR{915878 (89c:54057)}},
   doi={10.1007/BF01840131},
}

\bib{Ed}{article}{
   author={Edmonds, Jack},
   title={Submodular functions, matroids, and certain polyhedra},
   conference={
      title={Combinatorial Structures and their Applications},
      address={Proc. Calgary Internat. Conf., Calgary, Alta.},
      date={1969},
   },
   book={
      publisher={Gordon and Breach, New York},
   },
   date={1970},
   pages={69--87},
   review={\MR{0270945 (42 \#5828)}},
}

\bib{ECHLPT}{book}{
   author={Epstein, David B. A.},
   author={Cannon, James W.},
   author={Holt, Derek F.},
   author={Levy, Silvio V. F.},
   author={Paterson, Michael S.},
   author={Thurston, William P.},
   title={Word processing in groups},
   publisher={Jones and Bartlett Publishers, Boston, MA},
   date={1992},
   pages={xii+330},
   isbn={0-86720-244-0},
   review={\MR{1161694 (93i:20036)}},
}

\bib{FaJa}{article}{
   author={Farber, Martin},
   author={Jamison, Robert E.},
   title={On local convexity in graphs},
   journal={Discrete Math.},
   volume={66},
   date={1987},
   number={3},
   pages={231--247},
   issn={0012-365X},
   review={\MR{900046 (89e:05167)}},
}

\bib{GeGoMcPhSe}{article}{
   author={Gelfand, Israel M.},
   author={Goresky, R. Mark},
   author={Macpherson, Robert D.},
   author={Serganova, Vera V.},
   title={Combinatorial geometries, convex polyhedra, and Schubert cells},
   journal={Adv. Math.},
   volume={63},
   date={1987},
   number={3},
   pages={301--316},
   issn={0001-8708},
   review={\MR{877789 (88f:14045)}},
   doi={10.1016/0001-8708(87)90059-4},
}

\bib{GrWi}{article}{
   author={Graham, Ronald L. },
   author={Winkler, Peter M.},
   title={On isometric embeddings of graphs},
   journal={Trans. Amer. Math. Soc.},
   volume={288},
   date={1985},
   number={2},
   pages={527--536},
   issn={0002-9947},
   review={\MR{776391 (86f:05055b)}},
   doi={10.2307/1999951},
}

\bib{Gratzer}{book}{
   author={Gr{\"a}tzer, George},
   title={Lattice theory: foundation},
   publisher={Birkh\"auser/Springer Basel AG, Basel},
   date={2011},
   pages={xxx+613},
   isbn={978-3-0348-0017-4},
   review={\MR{2768581 (2012f:06001)}},
   doi={10.1007/978-3-0348-0018-1},
}

\bib{Gr}{article}{
   author={Gromov, Mikhail},
   title={Hyperbolic groups},
   conference={
      title={Essays in group theory},
   },
   book={
      series={Math. Sci. Res. Inst. Publ.},
      volume={8},
      publisher={Springer},
      place={New York},
   },
   date={1987},
   pages={75--263},
   review={\MR{919829 (89e:20070)}},
}

\bib{HKS}{article}{
author={Haettel, Thomas},
author={Kielak, Dawid},
author={Schwer, Petra},
title={The 6-strand braid group is ${\rm CAT}(0)$},
journal={Geom. Dedicata},
volume={182},
date={2016},
pages={263--286},
issn={0046-5755},
review={\MR{3500387}},
doi={10.1007/s10711-015-0138-9},
}

\bib{Hag}{article}{
    title     ={Complexes simpliciaux hyperboliques
                de grande dimension},
    author    ={Haglund, Fr\' ed\' eric},
    status    ={preprint},
    journal   ={Prepublication Orsay},
    volume    ={71},
    date      ={2003},
    eprint    ={http://www.math.u-psud.fr/~biblio/ppo/2003/fic/ppo_2003_71.pdf}
}

\bib{HaPau}{article}{
   author={Haglund, Fr{\'e}d{\'e}ric},
   author={Paulin, Fr{\'e}d{\'e}ric},
   title={Simplicit\'e de groupes d'automorphismes d'espaces \`a courbure
   n\'egative},
   language={French, with English and French summaries},
   conference={
      title={The Epstein birthday schrift},
   },
   book={
      series={Geom. Topol. Monogr.},
      volume={1},
      publisher={Geom. Topol. Publ., Coventry},
   },
   date={1998},
   pages={181--248 (electronic)},
   review={\MR{1668359 (2000b:20034)}},
   doi={10.2140/gtm.1998.1.181},
}

\bib{HagWise}{article}{
   author={Haglund, Fr{\'e}d{\'e}ric},
   author={Wise, Daniel T.},
   title={Special cube complexes},
   journal={Geom. Funct. Anal.},
   volume={17},
   date={2008},
   number={5},
   pages={1551--1620},
   issn={1016-443X},
   review={\MR{2377497 (2009a:20061)}},
   doi={10.1007/s00039-007-0629-4},
}

\bib{Hak}{article}{
   author={Haken, Wolfgang},
   title={Connections between topological and group theoretical decision
   problems},
   conference={
      title={Word problems: decision problems and the Burnside problem in
      group theory (Conf., Univ. California, Irvine, Calif., 1969; dedicated
      to Hanna Neumann)},
   },
   book={
      publisher={North-Holland, Amsterdam},
   },
   date={1973},
   pages={427--441. Studies in Logic and the Foundations of Math., Vol. 71},
   review={\MR{0397736 (53 \#1594)}},
}

\bib{Hat}{book}{
   author={Hatcher, Allen},
   title={Algebraic topology},
   publisher={Cambridge University Press},
   place={Cambridge},
   date={2002},
   pages={xii+544},
   isbn={0-521-79160-X},
   isbn={0-521-79540-0},
   review={\MR{1867354 (2002k:55001)}},
}

\bib{HeRi}{article}{
   author={Hell, Pavol},
   author={Rival, Ivan},
   title={Absolute retracts and varieties of reflexive graphs},
   journal={Canad. J. Math.},
   volume={39},
   date={1987},
   number={3},
   pages={544--567},
   issn={0008-414X},
   review={\MR{905743 (88i:05157)}},
   doi={10.4153/CJM-1987-025-1},
}

\bib{HH_folder}{article}{
   author={Hirai, Hiroshi},
   title={Folder complexes and multiflow combinatorial dualities},
   journal={SIAM J. Discrete Math.},
   volume={25},
   date={2011},
   number={3},
   pages={1119--1143},
   issn={0895-4801},
   review={\MR{2825329 (2012g:90169)}},
   doi={10.1137/090767054},
}

\bib{HH12}{article}{
  author={Hirai, Hiroshi},
  title={Discrete convexity and polynomial solvability in minimum
  	0-extension problems},
  journal={Math. Program.},
  volume={155},
  date={2016},
  number={1-2, Ser. A},
  pages={1--55},
  issn={0025-5610},
  review={\MR{3439796}},
  doi={10.1007/s10107-014-0824-7},
}

\bib{Isbell64}{article}{
   author={Isbell, John R.},
   title={Six theorems about injective metric spaces},
   journal={Comment. Math. Helv.},
   volume={39},
   date={1964},
   pages={65--76},
   issn={0010-2571},
   review={\MR{0182949 (32 \#431)}},
}

\bib{Is}{article}{
   author={Isbell, John R.},
   title={Median algebra},
   journal={Trans. Amer. Math. Soc.},
   volume={260},
   date={1980},
   number={2},
   pages={319--362},
   issn={0002-9947},
   review={\MR{574784 (81i:06006)}},
   doi={10.2307/1998007},
}

\bib{JS}{article}{
   author={Januszkiewicz, Tadeusz},
   author={{\'S}wi{\c{a}}tkowski, Jacek},
   title={Simplicial nonpositive curvature},
   journal={Publ. Math. Inst. Hautes \'Etudes Sci.},
   number={104},
   date={2006},
   pages={1--85},
   issn={0073-8301},
   review={\MR{2264834 (2007j:53044)}},
   doi={10.1007/s10240-006-0038-5},
}

\bib{KKMR09}{article}{
   author={Karloff, Howard},
   author={Khot, Subhash},
   author={Mehta, Aranyak},
   author={Rabani, Yuval},
   title={On earthmover distance, metric labeling, and 0-extension},
   journal={SIAM J. Comput.},
   volume={39},
   date={2009},
   number={2},
   pages={371--387},
   issn={0097-5397},
   review={\MR{2520313 (2011a:68039)}},
   doi={10.1137/070685671},
}

\bib{Kar98a}{article}{
author={Karzanov, Alexander V.},
   title={Minimum $0$-extensions of graph metrics},
   journal={European J. Combin.},
   volume={19},
   date={1998},
   number={1},
   pages={71--101},
   issn={0195-6698},
   review={\MR{1600283 (99c:05057)}},
   doi={10.1006/eujc.1997.0154},
}

\bib{Kar98b}{article}{
   author={Karzanov, Alexander V.},
   title={Metrics with finite sets of primitive extensions},
   journal={Ann. Comb.},
   volume={2},
   date={1998},
   number={3},
   pages={211--241},
   issn={0218-0006},
   review={\MR{1681515 (2000c:90067)}},
   doi={10.1007/BF01608533},
}

\bib{Kar04}{article}{
   author={Karzanov, Alexander V.},
   title={One more well-solved case of the multifacility location problem},
   journal={Discrete Optim.},
   volume={1},
   date={2004},
   number={1},
   pages={51--66},
   issn={1572-5286},
   review={\MR{2098080 (2006a:90050)}},
   doi={10.1016/j.disopt.2004.04.001},
}

\bib{KT02}{article}{
   author={Kleinberg, Jon},
   author={Tardos, {\'E}va},
   title={Approximation algorithms for classification problems with pairwise
   relationships: metric labeling and Markov random fields},
   journal={J. ACM},
   volume={49},
   date={2002},
   number={5},
   pages={616--639 (electronic)},
   issn={0004-5411},
   review={\MR{2145145 (2005m:68254)}},
   doi={10.1145/585265.585268},
}

\bib{KooMoSte}{article}{
   author={Koolen, Jack H.},
   author={Moulton, Vincent L. },
   author={Stevanovi\'c, Dragan},
   title={The structure of spherical graphs},
   journal={European J. Combin.},
   volume={25},
   date={2004},
   number={2},
   pages={299--310},
   issn={0195-6698},
   review={\MR{2070550 (2005c:05070)}},
   doi={10.1016/S0195-6698(03)00116-1},
}


\bib{LaPiVi}{article}{
   author={Larri{\'o}n, Francisco},
   author={Piza{\~n}a, Miguel A.},
   author={Villarroel-Flores, Rafael},
   title={The fundamental group of clique-Helly graphs},
   journal={Mat. Contempor{\^ a}nea},
   volume={39},
   date={2010},
   pages={5-7},
}

\bib{LySch}{book}{
   author={Lyndon, Roger C.},
   author={Schupp, Paul E.},
   title={Combinatorial group theory},
   series={Classics in Mathematics},
   note={Reprint of the 1977 edition},
   publisher={Springer-Verlag},
   place={Berlin},
   date={2001},
   pages={xiv+339},
   isbn={3-540-41158-5},
   review={\MR{1812024 (2001i:20064)}},
}

\bib{MaTa}{article}{
   author={Mai, Jie Hua},
   author={Tang, Yun},
   title={An injective metrization for collapsible polyhedra},
   journal={Proc. Amer. Math. Soc.},
   volume={88},
   date={1983},
   number={2},
   pages={333--337},
   issn={0002-9939},
   review={\MR{695270 (84g:54036)}},
   doi={10.2307/2044728},
}

\bib{MNRS08}{article}{
   author={Manokaran, Rajsekar},
   author={Naor, Joseph},
   author={Raghavendra, Prasad},
   author={Schwartz, Roy},
   title={SDP gaps and UGC hardness for multiway cut, 0-extension, and
   metric labeling [extended abstract]},
   conference={
      title={STOC'08},
   },
   book={
      publisher={ACM},
      place={New York},
   },
   date={2008},
   pages={11--20},
   review={\MR{2582655}},
   doi={10.1145/1374376.1374379},
}

\bib{Matousek}{book}{
   author={Matou{\v{s}}ek, Ji{\v{r}}{\'{\i}}},
   title={Lectures on discrete geometry},
   series={Graduate Texts in Mathematics},
   volume={212},
   publisher={Springer-Verlag},
   place={New York},
   date={2002},
   pages={xvi+481},
   isbn={0-387-95373-6},
   review={\MR{1899299 (2003f:52011)}},
   doi={10.1007/978-1-4613-0039-7},
}

\bib{Mau}{article}{
   author={Maurer, Stephen B.},
   title={Matroid basis graphs. I},
   journal={J. Combin. Theory Ser. B},
   volume={14},
   date={1973},
   pages={216--240},
   review={\MR{0317971 (47 \#6520)}},
}

\bib{Mau2}{article}{
   author={Maurer, Stephen B.},
   title={Matroid basis graphs. II},
   journal={J. Combinatorial Theory Ser. B},
   volume={15},
   date={1973},
   pages={121--145},
   review={\MR{0369112 (51 \#5348)}},
}

\bib{Mozes}{article}{
   author={Mozes, Shahar},
   title={Products of trees, lattices and simple groups},
   booktitle={Proceedings of the International Congress of Mathematicians,
   Vol. II (Berlin, 1998)},
   journal={Doc. Math.},
   date={1998},
   number={Extra Vol. II},
   pages={571--582 (electronic)},
   issn={1431-0635},
   review={\MR{1648106 (2000a:20056)}},
}

\bib{NiRe}{article}{
   author={Niblo, Graham A.},
   author={Reeves, Lawrence D.},
   title={The geometry of cube complexes and the complexity of their
   fundamental groups},
   journal={Topology},
   volume={37},
   date={1998},
   number={3},
   pages={621--633},
   issn={0040-9383},
   review={\MR{1604899 (99a:20037)}},
   doi={10.1016/S0040-9383(97)00018-9},
}

\bib{Nos}{article}{
   author={Noskov, Gennady A.},
   title={Combing Euclidean buildings},
   journal={Geom. Topol.},
   volume={4},
   date={2000},
   pages={85--116 (electronic)},
   issn={1465-3060},
   review={\MR{1735633 (2001d:20040)}},
   doi={10.2140/gt.2000.4.85},
}

\bib{O-cnpc}{article}{
    title     ={A combinatorial non-positive
                curvature I: weak systolicity},
    author    ={Osajda, Damian},
    status    ={preprint},
    eprint    ={arXiv:1305.4661},
    date      ={2013}
}

\bib{O-chg}{article}{
   author={Osajda, Damian},
   title={A construction of hyperbolic Coxeter groups},
   journal={Comment. Math. Helv.},
   volume={88},
   date={2013},
   number={2},
   pages={353--367},
   issn={0010-2571},
   review={\MR{3048190}},
   doi={10.4171/CMH/288},
}

\bib{O-cnct}{article}{
   author={Osajda, Damian},
   title={Combinatorial negative curvature and triangulations of
   three-manifolds},
   journal={Indiana Univ. Math. J.},
   volume={64},
   date={2015},
   number={3},
   pages={943--956},
   issn={0022-2518},
   review={\MR{3361292}},
   doi={10.1512/iumj.2015.64.5568},
}

\bib{Papa}{article}{
   author={Papasoglu, Panos},
   title={Strongly geodesically automatic groups are hyperbolic},
   journal={Invent. Math.},
   volume={121},
   date={1995},
   number={2},
   pages={323--334},
   issn={0020-9910},
   review={\MR{1346209 (96h:20073)}},
   doi={10.1007/BF01884301},
}

\bib{Po-infbrid}{article}{
   author={Polat, Norbert},
   title={On infinite bridged graphs and strongly dismantlable graphs},
   journal={Discrete Math.},
   volume={211},
   date={2000},
   number={1-3},
   pages={153--166},
   issn={0012-365X},
   review={\MR{1735348 (2000k:05232)}},
   doi={10.1016/S0012-365X(99)00142-9},
}

\bib{Po_helly}{article}{
   author={Polat, Norbert},
   title={Convexity and fixed-point properties in Helly graphs},
   note={Combinatorics, graph theory, algorithms and applications},
   journal={Discrete Math.},
   volume={229},
   date={2001},
   number={1-3},
   pages={197--211},
   issn={0012-365X},
   review={\MR{1815607 (2002a:05185)}},
   doi={10.1016/S0012-365X(00)00210-7},
}

\bib{Prisner}{article}{
	author={Prisner, Erich},
	title={Convergence of iterated clique graphs},
	journal={Discrete Math.},
	volume={103},
	date={1992},
	number={2},
	pages={199--207},
	issn={0012-365X},
	review={\MR{1171317}},
	doi={10.1016/0012-365X(92)90270-P},
}

\bib{PrzSch}{article}{
   author={Przytycki, Piotr},
   author={Schultens, Jennifer},
   title={Contractibility of the Kakimizu complex and symmetric Seifert
   surfaces},
   journal={Trans. Amer. Math. Soc.},
   volume={364},
   date={2012},
   number={3},
   pages={1489--1508},
   issn={0002-9947},
   review={\MR{2869183}},
   doi={10.1090/S0002-9947-2011-05465-6},
}

\bib{Sag} {article}{
   author={Sageev, Michah},
   title={Ends of group pairs and non-positively curved cube complexes},
   journal={Proc. Lond. Math. Soc. (3)},
   volume={71},
   date={1995},
   number={3},
   pages={585--617},
   issn={0024-6115},
   review={\MR{1347406 (97a:20062)}},
   doi={10.1112/plms/s3-71.3.585},
}

\bib{Scha}{article}{
   author={Schaefer, Thomas J.},
   title={The complexity of satisfiability problems},
   conference={
      title={Conference Record of the Tenth Annual ACM Symposium on Theory
      of Computing},
      address={San Diego, Calif.},
      date={1978},
   },
   book={
      publisher={ACM, New York},
   },
   date={1978},
   pages={216--226},
   review={\MR{521057 (80d:68058)}},
}

\bib{Shpec}{article}{
   author={Shpectorov, Sergey V.},
   title={On scale embeddings of graphs into hypercubes},
   journal={European J. Combin.},
   volume={14},
   date={1993},
   number={2},
   pages={117--130},
   issn={0195-6698},
   review={\MR{1206617 (94b:05068)}},
   doi={10.1006/eujc.1993.1016},
}

\bib{Sh}{book}{
   author={Shult, Ernest E.},
   title={Points and lines},
   series={Universitext},
   note={Characterizing the classical geometries},
   publisher={Springer},
   place={Heidelberg},
   date={2011},
   pages={xxii+676},
   isbn={978-3-642-15626-7},
   review={\MR{2761484 (2011m:51004)}},
   doi={10.1007/978-3-642-15627-4},
}

\bib{ShuYa}{article}{
   author={Shult, Ernest E.},
   author={Yanushka, Arthur},
   title={Near $n$-gons and line systems},
   journal={Geom. Dedicata},
   volume={9},
   date={1980},
   number={1},
   pages={1--72},
   issn={0304-4637},
   review={\MR{566437 (82b:51018)}},
   doi={10.1007/BF00156473},
}

\bib{SoCh}{article}{
   author={Soltan, Valeriu P.},
   author={Chepoi, Victor},
   title={Conditions for invariance of set diameters under
   $d$-convexification in a graph},
   language={Russian, with English summary},
   journal={Kibernetika (Kiev)},
   date={1983},
   number={6},
   pages={14--18},
   issn={0023-1274},
   translation={
      journal={Cybernetics},
      volume={19},
      date={1983},
      number={6},
      pages={750--756},
      issn={0011-4235},
   },
   review={\MR{765117 (86k:05102)}},
}

\bib{Szwarc}{article}{
   author={Szwarcfiter, Jayme L.},
   title={Recognizing clique-Helly graphs},
   journal={Ars Combin.},
   volume={45},
   date={1997},
   pages={29--32},
   issn={0381-7032},
   review={\MR{1447758 (97m:05226)}},
}

\bib{Swiat} {article}{
   author={{\'S}wi{\c a}tkowski, Jacek},
   title={Regular path systems and (bi)automatic groups},
   journal={Geom. Dedicata},
   volume={118},
   date={2006},
   pages={23--48},
   issn={0046-5755},
   review={\MR{2239447 (2007d:20071)}},
   doi={10.1007/s10711-005-9003-6},
}

\bib{Ti}{book}{
   author={Tits, Jacques},
   title={Buildings of spherical type and finite BN-pairs},
   series={Lecture Notes in Mathematics, Vol. 386},
   publisher={Springer-Verlag},
   place={Berlin},
   date={1974},
   pages={x+299},
   review={\MR{0470099 (57 \#9866)}},
}

\bib{Foundation}{book}{
   author={Ueberberg, Johannes},
   title={Foundations of incidence geometry},
   series={Springer Monographs in Mathematics},
   note={Projective and polar spaces},
   publisher={Springer},
   place={Heidelberg},
   date={2011},
   pages={xii+248},
   isbn={978-3-642-20971-0},
   isbn={978-3-642-20972-7},
   review={\MR{3025282}},
   doi={10.1007/978-3-642-20972-7},
}

\bib{vdV}{book}{
   author={van de Vel, Marcel},
   title={Theory of convex structures},
   series={North-Holland Mathematical Library},
   volume={50},
   publisher={North-Holland Publishing Co.},
   place={Amsterdam},
   date={1993},
   pages={xvi+540},
   isbn={0-444-81505-8},
   review={\MR{1234493 (95a:52002)}},
}

\bib{vdV2}{article}{
   author={van de Vel, Marcel},
   title={Collapsible polyhedra and median spaces},
   journal={Proc. Amer. Math. Soc.},
   volume={126},
   date={1998},
   number={9},
   pages={2811--2818},
   issn={0002-9939},
   review={\MR{1452832 (98k:57041)}},
   doi={10.1090/S0002-9939-98-04413-X},
}

\bib{Vazirani}{book}{
   author={Vazirani, Vijay V.},
   title={Approximation algorithms},
   publisher={Springer-Verlag},
   place={Berlin},
   date={2001},
   pages={xx+378},
   isbn={3-540-65367-8},
   review={\MR{1851303 (2002h:68001)}},
}

\bib{WiNi}{article}{
   author={Winskel, Glynn},
   author={Nielsen, Mogens},
   title={Models for concurrency},
   conference={
      title={Handbook of logic in computer science, Vol.\ 4},
   },
   book={
      series={Handb. Log. Comput. Sci.},
      volume={4},
      publisher={Oxford Univ. Press, New York},
   },
   date={1995},
   pages={1--148},
   review={\MR{1365754 (97g:68072)}},
}

\bib{Wise-manu}{article}{
    title     ={Sixtolic complexes and their fundamental groups},
    author    ={Wise, Daniel T.},
    status    ={unpublished manuscript},
    date={2003}
}

\bib{W-sc}{article}{
   author={Wise, Daniel T.},
   title={Cubulating small cancellation groups},
   journal={Geom. Funct. Anal.},
   volume={14},
   date={2004},
   number={1},
   pages={150--214},
   issn={1016-443X},
  review={\mbox{2053602 (2005c:20069)}},
   doi={10.1007/s00039-004-0454-y},
}

\bib{W-qch}{book}{
    title     ={The structure of groups with quasiconvex hierarchy},
    author    ={Wise, Daniel T.},
    series={Annals of Mathematics Studies},
    publisher={Princeton University Press, Princeton, NJ},
    date={2017, to appear},
}

\end{biblist}
\end{bibdiv}

\printindex

\end{document}